\documentclass{amsart}
\usepackage[margin = 1in]{geometry}
\usepackage{graphicx} 

\usepackage[utf8]{inputenc}

\usepackage{amsmath, amssymb, amsthm, mathtools}
\usepackage{mathrsfs, amsfonts}
\usepackage{color}
\usepackage{hyperref}
\hypersetup{hidelinks}
\usepackage{caption}

\usepackage{enumitem}
\setlist[enumerate,1]{label={(\roman*)}, itemsep=5pt, topsep=5pt}

\usepackage{stackrel}

\usepackage{tikz}
\usepackage{tikz-cd}
\usetikzlibrary{arrows}
\usetikzlibrary{decorations.pathmorphing}

\usepackage{adjustbox}

\usepackage{mathtools} 
\newcommand{\fixedxrightarrow}[2][2cm]{%
  \xrightarrow{\mathmakebox[#1]{#2}}
}
\newcommand{\fixedxleftarrow}[2][2cm]{%
  \xleftarrow{\mathmakebox[#1]{#2}}
}

\numberwithin{equation}{section} 
\makeatletter
\@addtoreset{equation}{section}
\makeatother

\newcommand{\mc}[1]{\mathcal{#1}}
\newcommand{\mb}[1]{\mathbb{#1}}

\newcommand{\A}{\mathbb{A}}

\DeclareMathOperator{\id}{id}
\DeclareMathOperator{\Hom}{Hom}
\DeclareMathOperator{\pr}{pr}
\DeclareMathOperator{\im}{im}
\DeclareMathOperator{\colim}{colim}
\DeclareMathOperator{\Fun}{Fun}
\DeclareMathOperator{\Cat}{Cat}

\DeclareMathOperator{\Span}{Span}
\DeclareMathOperator{\PSh}{PSh}
\DeclareMathOperator{\map}{map}
\DeclareMathOperator{\Fact}{Fact}
\DeclareMathOperator{\Sq}{Sq}
\DeclareMathOperator{\Gr}{Gr}
\DeclareMathOperator{\Pull}{Pull}

\DeclareMathOperator{\diag}{diag}

\newcommand{\Cpt}{\mathrm{Cpt}}
\newcommand{\CCpt}{\mathbb{C}\mathrm{pt}}
\newcommand{\op}{\mathrm{op}}
\newcommand{\Ani}{\mathrm{Spc}}
\DeclareMathOperator{\cnr}{cnr}
\DeclareMathOperator{\sd}{sd}
\DeclareMathOperator{\ascat}{ac}

\DeclareMathOperator{\Hor}{Hor}

\DeclareMathOperator{\Tw}{Tw}

\DeclareMathOperator{\Mor}{Mor}
\DeclareMathOperator{\End}{End}
\DeclareMathOperator{\CAlg}{CAlg}
\DeclareMathOperator{\Th}{TH}
\DeclareMathOperator{\Shv}{Shv}
\DeclareMathOperator{\ind}{ind}
\DeclareMathOperator{\coind}{coind}
\DeclareMathOperator{\Nm}{Nm}
\DeclareMathOperator{\Sp}{Sp}

\DeclareMathOperator{\Vect}{Vect}
\DeclareMathOperator{\Exch}{Exch}
\DeclareMathOperator{\Lan}{Lan}
\DeclareMathOperator{\Ver}{Ver}
\newcommand{\coWald}{\mathrm{coWald}}
\DeclareMathOperator{\Alg}{Alg}
\DeclareMathOperator{\fsd}{fsd}
\newcommand{\TwC}{\mc C^\mathrm{VB}}
\newcommand{\InvTwC}{\mc C^\mathrm{vVB}}

\usepackage[mathscr,mathcal]{euscript}

\usepackage[nameinlink]{cleveref}

\theoremstyle{plain}
\newtheorem{theorem}{Theorem}[section]
\newtheorem{proposition}[theorem]{Proposition}
\AddToHook{env/proposition/begin}{\crefalias{theorem}{proposition}}
\newtheorem{lemma}[theorem]{Lemma}
\AddToHook{env/lemma/begin}{\crefalias{theorem}{lemma}}
\newtheorem{corollary}[theorem]{Corollary}
\AddToHook{env/corollary/begin}{\crefalias{theorem}{corollary}}

\AddToHook{env/conjecture/begin}{\crefalias{theorem}{conjecture}}

\theoremstyle{definition}
\newtheorem{definition}[theorem]{Definition}
\AddToHook{env/definition/begin}{\crefalias{theorem}{definition}}
\newtheorem{remark}[theorem]{Remark}
\AddToHook{env/remark/begin}{\crefalias{theorem}{remark}}

\AddToHook{env/notation/begin}{\crefalias{theorem}{notation}}
\newtheorem{example}[theorem]{Example}
\AddToHook{env/example/begin}{\crefalias{theorem}{example}}

\AddToHook{env/construction/begin}{\crefalias{theorem}{construction}}

\crefname{subsubsection}{\S}{\S\S}
\Crefname{subsubsection}{\S}{\S\S}

\newcounter{mainthms}

\theoremstyle{theorem}
\newtheorem{maintheorem}[mainthms]{Theorem}
\AddToHook{env/maintheorem/begin}{\crefalias{mainthms}{theorem}}
\newtheorem{maincorollary}[mainthms]{Corollary}
\AddToHook{env/maincorollary/begin}{\crefalias{mainthms}{corollary}}

\usepackage{xpatch}

\makeatletter 

\renewcommand{\tocsection}[3]{%
  \indentlabel{\@ifnotempty{#2}{\bfseries\ignorespaces#1 #2\quad}}\bfseries#3}
\renewcommand{\tocsubsection}[3]{%
  \indentlabel{\@ifnotempty{#2}{\ignorespaces#1 #2\quad}}#3}
\newcommand\@dotsep{4.5}
\def\@tocline#1#2#3#4#5#6#7{\relax
  \ifnum #1>\c@tocdepth 
  \else
    \par \addpenalty\@secpenalty\addvspace{#2}%
    \begingroup \hyphenpenalty\@M
    \@ifempty{#4}{%
      \@tempdima\csname r@tocindent\number#1\endcsname\relax
    }{%
      \@tempdima#4\relax
    }%
    \parindent\z@ \leftskip#3\relax \advance\leftskip\@tempdima\relax
    \rightskip\@pnumwidth plus1em \parfillskip-\@pnumwidth
    #5\leavevmode\hskip-\@tempdima{#6}\nobreak
    \ifnum #1< 2
      \hfill
    \else
      \leaders\hbox{$\m@th\mkern \@dotsep mu\hbox{.}\mkern \@dotsep mu$}\hfill
    \fi
    \nobreak
    \hbox to\@pnumwidth{\@tocpagenum{\ifnum#1=1\bfseries\fi#7}}\par
    \nobreak
    \endgroup
  \fi}
\AtBeginDocument{%
\expandafter\renewcommand\csname r@tocindent0\endcsname{0pt}
}
\def\l@subsection{\@tocline{2}{0pt}{2.5pc}{5pc}{}}
\makeatother 

\title{Exchange theorems and coherent duality in six functors}
\author{William Fisher}
\address{University of California, Berkeley, USA}
\email{will\_fisher@berkeley.edu}
\date{\today}

\makeatletter
\renewcommand{\l@section}{\@tocline{1}{0pt}{0em}{}{}}
\renewcommand{\l@subsection}{\@tocline{2}{0pt}{1.2em}{}{}}
\makeatother

\begin{document}

\begin{abstract}
    We define the notion of an exchange theorem and show that any two functors satisfying an exchange theorem are canonically related via twisted norm maps. This is done by identifying the universal category receiving a pair of functors satisfying an exchange theorem. Additionally, we show that the twists occurring are $K$-theoretic in nature, parametrized by a categorified analogue of virtual vector bundles. As an application, we show that every $3$-functor formalism has a canonical extension which encodes Poincar\'e duality and Thom twists internal to the formalism. This gives a $1$-categorical realization of the ``coherent six operations'' outlined by Hoyois.\par
    In the process of proving universality, techniques for computing categories associated to bi- and $n$-simplicial spaces are developed. Many of the results in this direction may be viewed as model-independent rederivations of work of Liu--Zheng.
\end{abstract}

\maketitle

\begin{figure}[h!]
    \vspace{4mm}
    \centering
    \includegraphics[width=90mm]{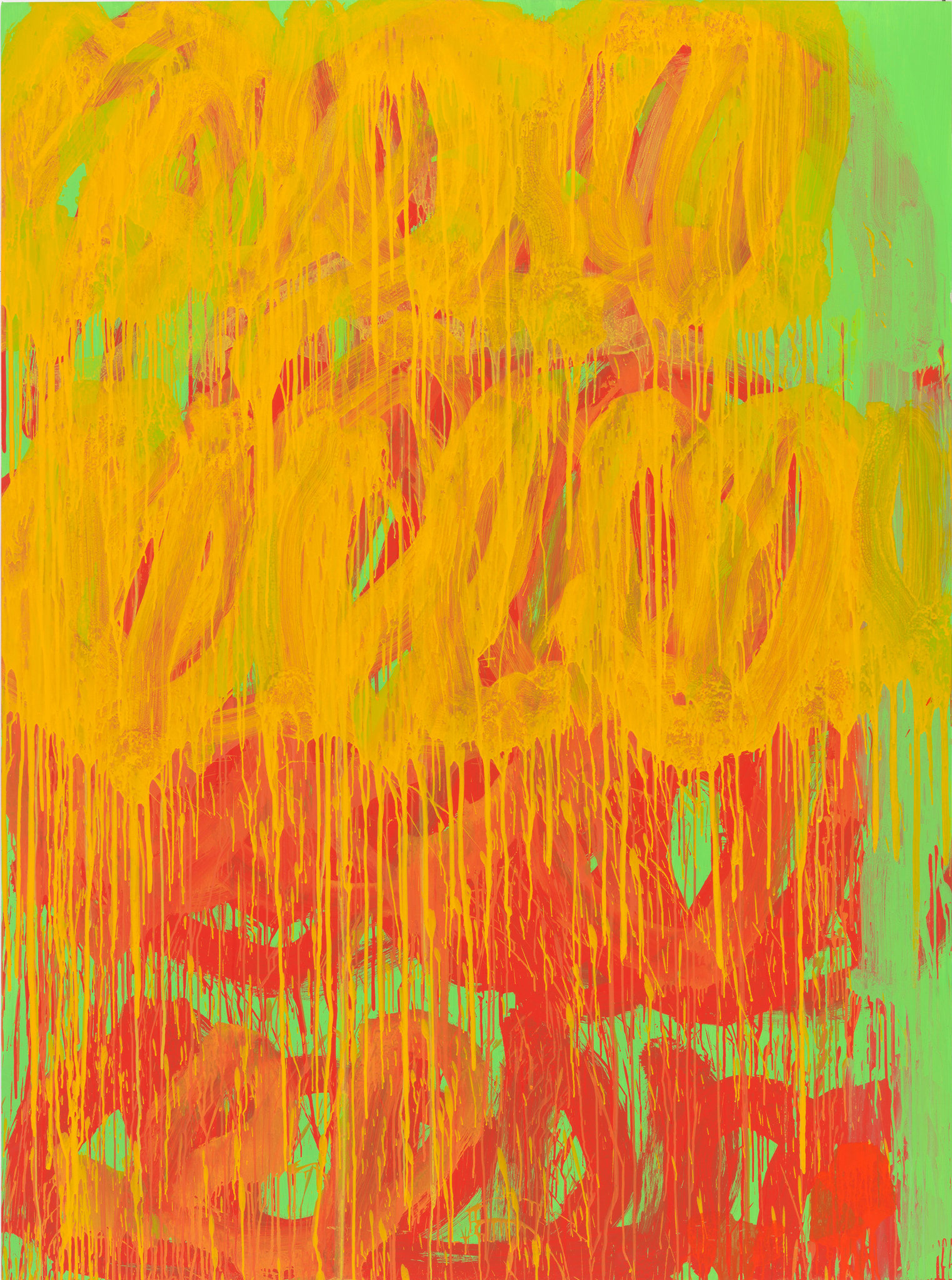}
    \caption*{``Untitled (Camino Real)'' by Cy Twombly.}
\end{figure}

\newpage

\setcounter{tocdepth}{2}
\tableofcontents

\section{Introduction}

Let $\mc C$ be a category and $E\subseteq\mc C$ a wide subcategory which is closed under pullbacks. Given a contravariant functor $(-)^\ast : \mc C^\mathrm{op} \longrightarrow \mc D$ and covariant functor $(-)_! : E\longrightarrow \mc D$ which take the same value on objects, a familiar occurrence in geometry is that of a \emph{base change theorem} between $(-)^\ast$ and $(-)_!$. Informally, a base change theorem posits a collection of coherent equivalences
\begin{equation*}
    \bar f_!\bar g^\ast \simeq g^\ast f_!
\end{equation*}
for every pullback square
\begin{equation*}
    \begin{tikzcd}
    	x & {y'} \\
    	y & z
    	\arrow["{\bar g}", from=1-1, to=1-2]
    	\arrow["{\bar f}"', from=1-1, to=2-1]
    	\arrow["\lrcorner"{anchor=center, pos=0.125}, draw=none, from=1-1, to=2-2]
    	\arrow["f", from=1-2, to=2-2]
    	\arrow["g"', from=2-1, to=2-2]
    \end{tikzcd}
\end{equation*}
in $\mc C$ with $\bar f, f\in E$. In an effort to efficiently package the higher coherences involved in such a setup, Gaitsgory \cite{gaitsgoryindcoh} introduced the span (or correspondence) category $\Span(\mc C, E)$. This category shares the same objects as $\mc C$ but with morphisms given by spans $x\leftarrow y\rightarrow z$ having $y\rightarrow z\in E$ and composition defined via pullback. Shortly after, Liu--Zheng \cite{liu2012enhanced} justified this packaging by proving that the span category $\Span(\mc C, E)$ with its right
\begin{equation*}
    (-)_! : E\longrightarrow \Span(\mc C, E)
\end{equation*}
and left
\begin{equation*}
    (-)^\ast : \mc C^\mathrm{op}\longrightarrow \Span(\mc C, E)
\end{equation*}
leg inclusions is the \emph{universal} base change theorem. That is to say, every other base change theorem between two functors valued in a category $\mc D$ is induced uniquely by a functor $\Span(\mc C, E)\to \mc D$ for which $(-)_!$ and $(-)^\ast$ may be recovered by restriction to the right and left leg spans, respectively. As a result of this, correspondence categories have become central to the modern formalism of Grothendieck's six functors (see \cite{scholze2022six,mann2022p,gaitsgory2019study,cnossen2025universality}). Importantly, Liu--Zheng's theorem allows manipulations of general functors satisfying base change to be done at the universal level, utilizing the internal category theory of $\Span(\mc C, E)$.\par


In this paper, we introduce and study the notion of an \emph{exchange theorem}. Similar to a base change theorem, given two covariant functors $(-)_\natural : E \longrightarrow \mc D$ and $(-)_\ast : \mc C\longrightarrow \mc D$ which take the same value on objects, an exchange theorem between $(-)_\ast$ and $(-)_\natural$ is the data of coherent equivalences
\begin{equation*}
    f_\natural \bar g_\ast \simeq g_\ast \bar f_\natural
\end{equation*}
for every pullback square
\begin{equation*}
    \begin{tikzcd}
    	x & {y'} \\
    	y & z
    	\arrow["{\bar g}", from=1-1, to=1-2]
    	\arrow["{\bar f}"', from=1-1, to=2-1]
    	\arrow["\lrcorner"{anchor=center, pos=0.125}, draw=none, from=1-1, to=2-2]
    	\arrow["f", from=1-2, to=2-2]
    	\arrow["g"', from=2-1, to=2-2]
    \end{tikzcd}
\end{equation*}
in $\mc C$ with $\bar f, f\in E$ (we refer the reader to \Cref{subsec:exchthms} for a rigorous treatment of the coherence data involved). As it turns out, any two functors satisfying exchange are canonically related up to a \emph{twist}---a phenomenon known to geometers and homotopy theorists as ambidexterity. This is a consequence of Verdier's diagonal trick: By contemplating the diagram
\begin{equation*}
    \begin{tikzcd}
    	x && \\
    	& {x\times_y x} & x \\
    	& x & y,
    	\arrow["{\Delta_f}", from=1-1, to=2-2]
    	\arrow["{\pr_2}", from=2-2, to=2-3]
    	\arrow["{\pr_1}"', from=2-2, to=3-2]
    	\arrow["\lrcorner"{anchor=center, pos=0.125}, draw=none, from=2-2, to=3-3]
    	\arrow["f", from=2-3, to=3-3]
    	\arrow["f"', from=3-2, to=3-3]
    \end{tikzcd}
\end{equation*}
exchange yields an equivalence
\begin{equation*}
    f_\natural (\pr_2)_\ast \simeq f_\ast (\pr_1)_\natural.
\end{equation*}
Precomposing with $(\Delta_f)_\ast$, one finds that
\begin{equation}\label{eq:twistnormmapraw}
    f_\natural \simeq f_\ast (\pr_1)_\natural (\Delta_f)_\ast,
\end{equation}
i.e.\ $f_\natural$ canonically agrees with $f_\ast$ up to precomposition with the twisting endomorphism $(\pr_1)_\natural (\Delta_f)_\ast$.\par
These twisting endomorphisms have been studied by Magen \cite{magen2025geometric} and Zavyalov \cite{zavyalov2023poincar} and in fact are instances of a much broader family of endomorphisms. To form the twist $(\pr_1)_\natural (\Delta_f)_\ast$, we only used that $\pr_1$ had a section $\Delta_f$ and belonged to $E$ to make sense of $(\pr_1)_\natural$. We abstract this setup into what we refer to as a vector bundle (see \Cref{rem:vectorbundles} for comparison to geometric notions of vector bundles).

\begin{definition}
    A vector bundle over $x$ is a pair of maps
    \begin{equation*}
        \begin{tikzcd}
            x\arrow{r}{s} & e\arrow{r}{p} & x
        \end{tikzcd}
    \end{equation*}
    with $ps\simeq\id$ and $p\in E$. We denote the category of such objects by $\Vect_{(\mc C, E)}(x)$ or $\Vect(x)$ when $\mc C$ and $E$ are clear from context.
\end{definition}

Every vector bundle $e$ over $x$ induces an endomorphism $\Sigma^e = p_\natural s_\ast$. A consequence of exchange is that the endomorphisms $\Sigma^{(-)}$ are multiplicative in fiber sequences of vector bundles. In particular, they extend to virtual vector bundles, i.e.\ to a map
\begin{equation*}
    \Sigma^{(-)} : \mc K(\Vect_{(\mc C, E)}(x))\longrightarrow \End(\Shv(x)),
\end{equation*}
where $\Shv(x)$ denotes the common value of $(-)_\natural$ and $(-)_\ast$ on objects. In the example of exchange between pullback and exceptional pullback for smooth morphisms arising from Poincar\'e duality (see \Cref{subsec:exchthmexamples} below), these twisting endomorphisms are given by smashing with the Thom spectrum of the associated virtual vector bundle. As such, following \cite{magen2025geometric}, we opt to call these endomorphisms \emph{Thom twists}. Additionally, we refer to the vector bundle
\begin{equation}\label{eq:reltangentbundle}
    T_f\coloneqq\begin{tikzcd}
        x\arrow{r}{\Delta_f} & x\times_y x \arrow{r}{\pr_1} & x
    \end{tikzcd}
\end{equation}
as the \emph{relative tangent bundle of $f$}, denoted $T_f$. Under this formalism, \eqref{eq:twistnormmapraw} may be written as
\begin{equation*}
    f_\natural \simeq f_\ast \Sigma^{T_f}.
\end{equation*}
Ambidextrous relationships of this form appear in motivic homotopy theory \cite{ayoub2007six,hoyois2017six,annala2024atiyah}, representation theory \cite{wirthmuller1974equivariant}, homotopy theory \cite{hopkins2013ambidexterity}, and beyond, and we recall some of these examples in \Cref{subsec:exchthmexamples}.

\subsection{Main results}

In the above, we argued that any two functors satisfying exchange are canonically related via ambidexterity and induce a theory of Thom twists. Conversely, any two functors which satisfy a sufficiently robust form of ambidexterity have an associated exchange theorem. Our main theorem makes this precise by identifying the universal exchange theorem.

\begin{maintheorem}[\Cref{thm:mainthmintext}]\label{thm:mainthm}
    Let $(\mc C, E)$ be a geometric setup (see \Cref{def:geosetup}) and let $\TwC$ be the category whose
    \begin{itemize}
        \item objects are the same as $\mc C$
        \item morphisms $x\to y$ are pairs $(f,\xi)$ with $f : x\to y\in\mc C$ and $\xi\in \mc K(\Vect(x))$
        \item composition is given by $(g, \eta)\circ (f,\xi)\simeq (gf, f^\ast\eta + \xi)$.
    \end{itemize}
    Then $\TwC$ hosts the universal exchange theorem, which is between the two functors
    \begin{equation*}
        \begin{tikzcd}[row sep=0pt]
            (-)_\ast : \mc C\arrow{r} & \TwC \\
            \qquad\; f \arrow[mapsto]{r} & (f,\; 0)
        \end{tikzcd}
    \end{equation*}
    and
    \begin{equation*}
        \begin{tikzcd}[row sep=0pt]
            (-)_\natural : E\arrow{r} & \TwC \\
            \qquad\; f\arrow[mapsto]{r} & (f,\; T_f).
        \end{tikzcd}
    \end{equation*}
    In particular, every other exchange theorem between two functors $(-)_\ast : \mc C\to\mc D$ and $(-)_\natural : E\to\mc D$ valued in a category $\mc D$ is induced uniquely by a functor $\TwC\to\mc D$ with the properties that
    \begin{enumerate}
        \item $(f, 0)$ is sent to $f_\ast$
        \item $(f, T_f)$ is sent to $f_\natural$
        \item for a vector bundle $\xi = \begin{tikzcd}
            x\arrow{r}{s} & e\arrow{r}{p} & x
        \end{tikzcd}$, $(\id_x, \xi)$ is sent to the Thom twist $\Sigma^\xi = p_\natural s_\ast$.
    \end{enumerate}
\end{maintheorem}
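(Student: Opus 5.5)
The approach is to present the universal exchange theorem as a localization of a simpler universal object, and then identify that localization with $\TwC$. An exchange theorem between $(-)_\ast : \mc C\to\mc D$ and $(-)_\natural : E\to\mc D$ is the same as a functor out of a category $\Exch(\mc C, E)$, defined as follows. It is obtained from the "bi-simplicial" data of commuting squares whose vertical legs lie in $E$: take the bisimplicial space whose $(m,n)$-simplices are $[m]\times[n]$-grids in $\mc C$ with vertical maps in $E$ and all squares cartesian. Then $\Exch(\mc C,E)$ is the category generated by the horizontal and vertical directions, with each cartesian square inverted into an identification $f_\natural\bar g_\ast\simeq g_\ast\bar f_\natural$.

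\textbf{Step 1 (model for exchange).} Using the techniques for computing categories associated to bisimplicial spaces developed earlier, I would show that this colimit $\Exch(\mc C,E)$ admits a description as a category of "zigzags". Its morphisms $x\to y$ are pairs consisting of a map $a : x\to w$ in $\mc C$ followed by a map $q : w\to y$ in $E$, with composition computed by pullback; this is analogous to Liu--Zheng's description of $\Span(\mc C,E)$. Explicitly, the exchange relation lets one commute every $\natural$ past every $\ast$, so every morphism normalizes to the form $q_\natural a_\ast$. Moreover, the space of such normal forms is exactly the space of factorizations.

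\textbf{Step 2 (compare to $\TwC$).} I would construct a functor $\Phi:\Exch(\mc C,E)\to\TwC$ by sending $f_\ast\mapsto(f,0)$ and $q_\natural\mapsto(q,T_q)$. To define it, one must check that $(-)_\natural$ is a functor to $\TwC$, i.e.\ $T_{qp}\simeq p^\ast T_q+T_p$ coherently in $\mc K(\Vect(x))$. This comes from the fiber sequence of vector bundles $T_p\to T_{qp}\to p^\ast T_q$, which is induced by the factorization $x\to x\times_w x\to x\times_y x$. One must also check that exchange holds, which amounts to base change of tangent bundles: $T_{\bar f}\simeq \bar g^\ast T_f$ for cartesian squares. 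The image of a normal form $q_\natural a_\ast$ is $(qa, a^\ast T_q)$.

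\textbf{Step 3 (universality).} The claim is that $\TwC$ is obtained from $\Exch(\mc C,E)$ by imposing no further relations beyond identifying the twists via $\mc K$. Concretely, I would show that $\Phi$ is an equivalence by computing mapping spaces. The mapping space $\map_{\Exch}(x,y)$ is the space of factorizations $x\xrightarrow{a}w\xrightarrow{q}y$. Fixing $f=qa$, this is the space of objects $(w, a, q)$ in $E_{/y}$ under $x$. The map to $\mc K(\Vect(x))$ is $(w,a,q)\mapsto a^\ast T_q$. Via the graph factorization $x\to x\times_y w\to w$, such factorizations correspond to vector-bundle-like data over $x$, namely a section of $x\times_y w\to x$, and the morphisms between them are of the same kind. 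The mapping spaces of $\Exch$, which must be computed as the groupoidification of the resulting factorization category, then become the group completion of $\Vect(x)$. This identification is what forces $\mc K$, which should be defined earlier so as to match exactly. The listed properties (i)--(iii) follow by tracing generators: $(\id_x,\xi)$ for $\xi=(s,p)$ is the image of $p_\natural s_\ast$, since $s^\ast T_p$ under exchange is identified with $\xi$, using $T_p$ restricted along $s$.

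\textbf{Main obstacle.} I expect the main obstacle to be Step 3, which requires showing that the localization/groupoid completion of the factorization category is $\mc K(\Vect(x))$ rather than merely $\Vect(x)^{\simeq}$. This means identifying the additive structure (fiber sequences) with composition in $\Exch$, and proving the group completion is realized. I would first attempt a cofinality argument showing the factorization category through $x$ is sifted and that its realization satisfies the universal property of the $K$-theory construction defined in the paper.
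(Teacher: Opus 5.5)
There is a genuine gap, and it sits in Steps 1 and 3. The simplicial space whose $1$-simplices are your normal forms $x\xrightarrow{a}w\xrightarrow{q}y$ is the corner approximation $S_\bullet(\mc C,E)$ of $\Pull(\mc C,E)$. It is \emph{not} a Segal space. Consequently its associated category $\Gr\Pull(\mc C,E)$ does not have these factorizations as mapping spaces, and there is no ``composition by pullback''.

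To compose $q_\natural a_\ast$ with $q'_\natural a'_\ast$ you must rewrite the wrong-way composite $a'_\ast q_\natural$. Exchange does this only when $q$ is the base change of some $E$-map along $a'$, which is neither always possible nor unique. This is exactly the failure of condition (i) of \Cref{prop:condcnrsegal}. In the span case that condition does hold, because cospans have unique pullback completions; this is why the analogy with Liu--Zheng's span description breaks down.

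Concretely, your description would make the endomorphisms of $x$ lying over $\id_x$ equal to $\Vect(x)^\simeq$, whereas the theorem says $\mc K(\Vect(x))$. Take $\mc C$ to be sets and $E$ all maps. The Liu--Zheng corollary gives $\mc K(\Vect(x))\simeq 0$. But $\Vect(x)^\simeq$ is the groupoid of retractive sets over $x$; for $x$ a point this is the groupoid of pointed sets, which is not contractible.

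Your fallbacks in Step 3 do not repair this. The groupoidification of the factorization category over $\id_x$ is $|\Vect(x)|\simeq\ast$, since $\Vect(x)$ has a zero object. Any group completion is also the wrong target: the universal recipient uses non-group-completed partial $K$-theory precisely because Thom twists need not be invertible. Group completing lands you in $\InvTwC$, which is universal only among exchange theorems with invertible Thom twists (\Cref{cor:exttovirtvbexch}). A naive group completion of $\Vect(x)^\simeq$ would moreover miss the relations coming from non-split fiber sequences.

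Step 2 has a separate problem. Defining $\Phi$ on $1$-morphisms and checking $T_{qp}\simeq p^\ast T_q+T_p$ and $T_{\bar f}\simeq \bar g^\ast T_f$ up to homotopy does not produce a map $\Pull(\mc C,E)\to\Sq(\TwC)$; every higher coherence is needed. This is why the paper obtains the exchange theorem on $\TwC$ only by transport along an equivalence.

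The missing idea is to avoid computing mapping spaces of $\ascat S_\bullet(\mc C,E)$ directly. The paper proceeds in four steps.
\begin{enumerate}
\item It shows that the twist simplices over $x$, namely diagrams on $\Cpt^m$ sending the diagonal to equivalences, are exactly Waldhausen's $S_m(\Vect(x))$. This is where $\mc K(\Vect(x))=\End_0\ascat S_\bullet(\Vect(x))$ enters.
\item Using this, Cartesian unstraightening and coend calculus, it writes $\TwC\simeq\Gr\Th_{\bullet,\bullet}(\mc C,E)$.
\item It builds $\gamma:\Gr\Th_{\bullet,\bullet}(\mc C,E)\to\ascat S_\bullet(\mc C,E)$ together with a candidate inverse $\delta$. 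The inverse comes from a fully coherent twist--horizontal factorization of every simplex, a higher-dimensional Verdier diagonal trick organized by the posets $\Pi^n$. This gives $\gamma\delta\simeq\id$ via \Cref{prop:weakfactsystemrightinv}.
\item It proves $\delta\gamma\simeq\id$ by observing that it is an endomorphism of the Cartesian fibration $\TwC\to\mc C$ preserving the zero section. By \Cref{cor:checkonfibers} it therefore suffices to check it on the fibers $B\mc K(\Vect(x))$.
\end{enumerate}
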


Explicitly, $\TwC$ is the Cartesian unstraightening of
\begin{equation*}
    \begin{tikzcd}[row sep=0pt]
        \mc C^\mathrm{op}\arrow{r} & \CAlg(\Ani)\arrow{r}{B} & \Cat \\
        x\arrow[mapsto]{r} & \mc K(\Vect_{(\mc C, E)}(x))
    \end{tikzcd}
\end{equation*}
where $\mc K(-)$ denotes the partial $K$-theory of Yuan \cite{yuan2023integral} and $B$ sends an $\mb E_\infty$-space $A$ to the one-object category with endomorphism space $A$.

\begin{remark}
    When $E$ is left-cancellable, $\mc K(\Vect(x))\simeq K(\Vect(x))$ and the functor $BK(\Vect(-))$ first appeared in work of Dauser--Kuijper \cite{dauser2024uniqueness}. There, it is shown to be intimately related to the uniqueness of six functor formalisms.
\end{remark}

\begin{remark}
    The partial $K$-theory of $\Vect_{(\mc C, E)}(x)$ is an $\mb E_\infty$-space such that
    \begin{enumerate}
        \item its group completion is equivalent to standard algebraic $K$-theory, i.e.\ $\mc K(\Vect(x))^\mathrm{gp}\simeq K(\Vect(x))$
        \item $\pi_0\mc K(\Vect(x))$ is the abelian group freely generated by the objects of $\Vect(x)$ modulo the relation $[e_2] = [e_1] + [e_3]$ for every fiber sequence
        \begin{equation*}
            e_1\longrightarrow e_2\longrightarrow e_3
        \end{equation*}
        of vector bundles with $e_2\to e_3\in E$.
    \end{enumerate}
    That is to say, the partial $K$-theory imposes additivity in fiber sequences without adjoining formal inverses. If one interprets $(\id, \xi)$ in $\TwC$ as the universal Thom twist associated to $\xi$, partial $K$-theory appears because the Thom twist $\Sigma^\xi$ need not be invertible for an arbitrary exchange theorem.
\end{remark}

In the context of ambidexterity, this theorem relates any two functors satisfying exchange via canonical norm maps. The canonicity of these norm maps is a consequence of uniqueness of composition in the category $\TwC$. Indeed, writing $\Sigma^\xi$ for the endomorphism $(\id, \xi)$ in $\TwC$, one has canonical equivalences
\begin{equation*}
    \begin{aligned}
        f_\natural &\coloneqq (f, T_f) \\
        &\simeq (f, 0)\circ (\id, T_f) \\
        &\eqqcolon f_\ast \Sigma^{T_f}
    \end{aligned}
\end{equation*}
in the universal exchange theorem which is then inherited by all other functors satisfying exchange. In existing literature on ambidexterity (see e.g.\ \cite{hopkins2013ambidexterity,cnossen2024parametrized,cnossen2025universality}), norm maps for ambidextrous functors are constructed inductively under the assumption that the maps in question are truncated. This result makes no truncation assumption, and all the standard properties of the norm maps may be deduced from the category theory of $\TwC$.\par
In addition to identifying the universal exchange theorem, we also identify the universal exchange theorem equipped with a compatible base change theorem. By this, we mean three functors $(-)^\ast : \mc C^\mathrm{op}\to \mc D$, $(-)^! : S^\mathrm{op}\to \mc D$ and $(-)_! : E\to \mc D$ such that
\begin{enumerate}
    \item $(-)^\ast$ and $(-)^!$ are equipped with an exchange theorem
    \item $(-)^\ast$ and $(-)_!$ are equipped with a base change theorem
    \item $(-)^!$ and $(-)_!$ are equipped with a base change theorem
\end{enumerate}
in a suitably compatible manner. We refer the reader to \Cref{sec:poincareduality} for more details. Here, $S\subseteq E\subseteq \mc C$ are nested wide subcategories with $(\mc C, E)$ a geometric setup.

\begin{maintheorem}[\Cref{thm:trisimpgluing}]\label{thm:univexchplusbasechange}
    The recipient of the universal exchange theorem with compatible base change theorem is given by the labeled span category $\Span^{\mc K}(\mc C, E)$ whose
    \begin{itemize}
        \item objects are the same as those in $\mc C$
        \item morphisms are labeled spans
        \begin{equation*}
            \begin{tikzcd}
            	& {(Z, \xi)} & \\
            	X && Y
            	\arrow["f"', from=1-2, to=2-1]
            	\arrow["g", from=1-2, to=2-3]
            \end{tikzcd}
        \end{equation*}
        with $g\in E$ and $\xi\in \mc K(\Vect_{(\mc C, S)}(Z))$
        \item composition is given by composing underlying spans and taking external sums of virtual vector bundles.
    \end{itemize}
    Moreover, the three corresponding functors participating in the universal exchange theorem with compatible base change are given by
    \begin{equation*}
        \begin{tikzcd}[row sep=0pt]
            (-)^\ast : \mc C^\mathrm{op} \arrow{r} & \Span^{\mc K}(\mc C, E) \\
            \qquad f : X\to Y\arrow[mapsto]{r} & Y\xleftarrow{f} (X, 0) \;{=\joinrel=}\; X,
        \end{tikzcd}
    \end{equation*}
    \begin{equation*}
        \begin{tikzcd}[row sep=0pt]
            (-)^! : S^\mathrm{op}\arrow{r} & \Span^{\mc K}(\mc C, E) \\
            \qquad f : X\to Y\arrow[mapsto]{r} & Y\xleftarrow{f} (X, T_f) \;{=\joinrel=}\; X
        \end{tikzcd}
    \end{equation*}
    and
    \begin{equation*}
        \begin{tikzcd}[row sep=0pt]
            (-)_! : E \arrow{r} & \Span^{\mc K}(\mc C, E) \\
            \qquad f : X\to Y\arrow[mapsto]{r} &  X \;{=\joinrel=}\; (X, 0) \xrightarrow{f}  Y.
        \end{tikzcd}
    \end{equation*}
    In particular, any other three functors $(-)^\ast : \mc C^\mathrm{op}\to \mc D$, $(-)^! : S^\mathrm{op}\to \mc D$ and $(-)_! : E\to \mc D$ satisfying exchange with compatible base change are induced uniquely by a functor $\Span^{\mc K}(\mc C, E)\to \mc D$ with the properties that
    \begin{enumerate}
        \item $Y\xleftarrow{f} (X, 0) \;{=\joinrel=}\; X$ is sent to $f^\ast$
        \item $Y\xleftarrow{f} (X, T_f) \;{=\joinrel=}\; X$ is sent to $f^!$
        \item $X \;{=\joinrel=}\; (X, 0) \xrightarrow{f}  Y$ is sent to $f_!$
        \item for a vector bundle $\xi = \begin{tikzcd}
            x\arrow{r}{s} & e\arrow{r}{p} & x
        \end{tikzcd}$, $X \;{=\joinrel=}\; (X, \xi) \;{=\joinrel=}\; X$ is sent to the Thom twist $\Sigma^\xi =s^\ast p^!$.
    \end{enumerate}
\end{maintheorem}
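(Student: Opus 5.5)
The plan is to glue two universal properties: Liu--Zheng's universality of $\Span(\mc C, E)$ for base change, and \Cref{thm:mainthm} for exchange. The structure is a tri-simplicial space. Spans compose along one simplicial direction, and exchange/twists along another. A compatible pair of base change theorems together with an exchange theorem is a functor out of the category associated to a trisimplicial object $T_{\bullet,\bullet,\bullet}$. Its $(p,q,r)$-simplices are grids in $\mc C$ of the following kind: the $p$-direction consists of maps reversed by $(-)^\ast$, the $q$-direction of maps in $S$ reversed by $(-)^!$, the $r$-direction of maps in $E$ pushed forward by $(-)_!$, and all squares are Cartesian. This is the same indexing philosophy used for \Cref{thm:mainthm}, where the category hosting an exchange theorem was computed from a bisimplicial space of Cartesian grids.

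First I will set up the definition. ``Exchange with compatible base change'' means a functor from the associated category $\mathrm{ac}(T)$, meaning the localization or colimit of the trisimplicial space along its diagonal, to $\mc D$. So the theorem reduces to computing $\mathrm{ac}(T)\simeq \Span^{\mc K}(\mc C, E)$. Second, I will compute in stages. Fixing the $r$-direction, the $(p,q)$-bisimplicial slices are exactly exchange data between $(-)^\ast$ and $(-)^!$ on $(\mc C^{\mathrm{op}}, S^{\mathrm{op}})$. Applying \Cref{thm:mainthm} levelwise (in its contravariant form), each $r$-level collapses to a twisted category of the form $\mc C^{\mathrm{vVB}}$. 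Its morphisms are pairs $(f,\xi)$ with $\xi\in\mc K(\Vect_{(\mc C,S)}(x))$, and the composite $(f,\xi)$ followed by $(g,\eta)$ uses the twist $f^\ast\eta+\xi$. This gives a simplicial object in categories, indexed by $r$, whose $r$-th term is the twisted category of $r$-chains of $E$-maps with Cartesian compatibility.

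Third, I will run the Liu--Zheng type argument in the remaining direction. The simplicial object in $r$ is now a ``base change'' configuration between the twisted contravariant functoriality and $E$-pushforward. The twists pull back along Cartesian squares, using the functoriality $x\mapsto \mc K(\Vect(x))$ from the Cartesian unstraightening description. So the model-independent span-category computation developed earlier applies, now relative to the Cartesian fibration $B\mc K(\Vect(-))\to\mc C^{\mathrm{op}}$. The output is spans $X\leftarrow Z\to Y$ labeled by $\xi\in \mc K(\Vect(Z))$, where composition pulls labels back to the apex of the composite span and adds them. This is exactly the external sum in $\Span^{\mc K}(\mc C,E)$. Finally, I will identify the three functors and the Thom twist by tracking the generators. $(-)^!$ comes from $(f,T_f)$ as in \Cref{thm:mainthm}. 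The identity $\Sigma^\xi=s^\ast p^!$ follows by decomposing the labeled span $(\id,\xi)$ as $(s,0)$ followed by $(p,T_p)$ with $s^\ast T_p=\xi$, mirroring $p_\natural s_\ast$.

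The main obstacle is the third step, because the order of the two collapses matters. After collapsing the $(p,q)$-directions, I must check that the $r$-direction still satisfies the hypotheses, namely the Segal and completeness conditions and the Cartesian-pullback condition, that the span-category computation requires. I also must check that the collapse commutes with realization, meaning the levelwise equivalences assemble into an equivalence of simplicial objects in categories. I would first try to prove that the equivalence in \Cref{thm:mainthm} is natural in pullback along $E$-maps. Concretely, this means showing that $\Vect_{(\mc C,S)}$ and partial $K$-theory $\mc K$ are functorial for base change, with the twists $T_f$ compatible with Cartesian squares, $g^\ast T_f\simeq T_{\bar f}$. Given that naturality, I would then appeal to a Fubini-type statement for associated categories of multisimplicial spaces.
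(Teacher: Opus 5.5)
Your architecture agrees with the paper's. The recipient is $\Gr$ of the trisimplicial space $\Pull(\mc C,E,S)^{2,3\mathrm{-op}}$, which by diagonal approximation is your associated category of the diagonal. The $\mc C$- and $S$-directions are collapsed levelwise in the $E$-direction; this is what $(-)_{\mathrm{CS}}$ does, and $\Gr$ factors through it. What remains is the bisimplicial model $\Pull^{\mc K}(\mc C,E)^{2\mathrm{-op}}$, whose gluing is $\Span^{\mc K}(\mc C,E)$ by corner approximation.

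You differ from the paper in how you handle the step you rightly call the crux. You apply \Cref{thm:mainthm} separately at each level $r$, to the geometric setup $(\mc E_r,S_r)$ of $r$-chains in $E$ with Cartesian transformations, and then assemble. That requires the whole chain of equivalences in \Cref{thm:mainthmintext} to be functorial in morphisms of geometric setups. It also requires a compatible identification $\Vect_{(\mc E_r,S_r)}(x_\bullet)\simeq\Vect_{(\mc C,S)}(x_r)$. The paper proves neither.

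The paper instead builds a single comparison map. The universal map $\Pull(\mc C,S)\to\Sq(\TwC)$ is extended by \Cref{lemma:expansion} to $\mathrm{PullSq}(\mc C,S)\to\Sq^{(3)}(\TwC)$, with the extra direction sent through the zero section $s$. Restricting and diagonalizing then gives a map from the diagonalized $\Pull(\mc C,E,S)^{2,3\mathrm{-op}}$ to $\Pull^{\mc K}(\mc C,E)^{2\mathrm{-op}}$, with the $E$-direction landing in $s(E)$.

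Both sides are factorial (\Cref{lemma:CntoC0rightfib,lemma:DntoD0rightfib}): for each $E$-level $n$, the $n$-th row is a left fibration over the $0$-th row. This encodes the fact that, with Cartesian transformations, $r$-chains in $E$ form a right fibration over $\mc C$ via the last vertex. Since $\mb L_{\mathrm{CS}}$ preserves such fibrations and their pullback squares (\Cref{thm:rightfibpresbylocal}, \Cref{prop:CSpreservesnicestuff}), \Cref{lemma:dimcollapserightfib} reduces the equivalence after gluing to two checks:
\begin{enumerate}
\item the $E$-fragments, which are both $N(E)$;
\item the $0$-th row, where, once $\ascat$ of that row is identified with $(\TwC)^{\mathrm{op}}$ via \Cref{lemma:ascatofD0}, the map is literally the diagonal approximation map for \Cref{thm:mainthm}.
\end{enumerate}
So the paper uses the universal exchange theorem once and needs no naturality in the setup. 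It also never needs completeness: only associated categories appear, together with the Segal-type condition of \Cref{lemma:equivcondforsegalforD}.

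Your route is more conceptual, since every $E$-level is visibly a universal exchange theorem. The cost is a functoriality statement that the paper sidesteps. You would also still need a rigorous model of $\Span^{\mc K}(\mc C,E)$, which the paper supplies via $N_{\mathrm{unc}}$ and $\Span^{G}(\mc C,E)\simeq\Gr\Pull^{G}(\mc C,E)^{2\mathrm{-op}}$.

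One caution on notation: in the paper $\InvTwC$ is built from group-completed $K$-theory. The levelwise collapse produces $\TwC$, built from partial $K$-theory $\mc K$. Passing to $K$ is only legitimate once the Thom twists are invertible (\Cref{prop:localizationoflabeledspan}).
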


As an application of this, by taking $S$ to be cohomologically smooth morphisms and applying Poincar\'e duality, we show that every six functor formalism has a canonical enhancement to one which encodes Poincar\'e duality and a theory of Thom twists. This may be viewed as a $1$-categorical realization of Hoyois' proposed ``coherent six operations'' \cite{Hoyois2020}.

\begin{maintheorem}[\Cref{thm:extforweakcohom}, \Cref{cor:invThomtwistext}]\label{thm:mainpoincthm}
    For every $3$-functor formalism $\mc D : \Span(\mc C, E)^\otimes \to \Cat$, there exists a class $S\subseteq E$ of cohomologically smooth morphisms along with a canonical extension of $\mc D$ to a functor
    \begin{equation*}
        \mc D^\mathrm{Th} : \Span^K(\mc C, E) \longrightarrow \Cat
    \end{equation*}
    where $\Span^K(\mc C, E)$ is the category whose
    \begin{itemize}
        \item objects are the objects of $\mc C$
        \item morphisms are spans
        \begin{equation*}
            \begin{tikzcd}
            	& {(Z, \xi)} & \\
            	X && Y
            	\arrow["f"', from=1-2, to=2-1]
            	\arrow["g", from=1-2, to=2-3]
            \end{tikzcd}
        \end{equation*}
        with $g\in E$ and $\xi \in K(\Vect_{(\mc C, S)}(Z))$
        \item composition is given by composing underlying spans and taking external sums of virtual bundles
    \end{itemize}
    such that
    \begin{enumerate}
        \item \emph{(Extension)} $\mc D$ is canonically given by the composite
        \begin{equation*}
            \begin{tikzcd}
                \Span(\mc C, E) \arrow[hook]{r}{\xi = 0} & \Span^K(\mc C, E)\arrow{r}{\mc D^\mathrm{Th}} & \Cat
            \end{tikzcd}
        \end{equation*}
        \item \emph{(Poincar\'e duality)} the composite
        \begin{equation*}
            \begin{tikzcd}[row sep=0pt]
                S^\mathrm{op}\arrow{r} & \Span^K(\mc C, E) \arrow{r}{\mc D^\mathrm{Th}} & \Cat \\
                (f : X\to Y)\arrow[mapsto]{r} & Y\xleftarrow{f} (X,\; [T_f]) = X
            \end{tikzcd}
        \end{equation*}
        has a canonical identification with the exceptional pullback $(-)^! : S^\mathrm{op}\longrightarrow \Cat$
        \item \emph{(Thom twists)} For a vector bundle $\xi = \begin{tikzcd}
            x\arrow{r}{s} & e \arrow{r}{p} & x
        \end{tikzcd}\in \Vect(x)$,  $X \;{=\joinrel=}\; (X, \xi) \;{=\joinrel=}\; X$ is sent to $\Sigma^\xi \simeq s^\ast p^!(1_x)\otimes (-)$.
    \end{enumerate}
\end{maintheorem}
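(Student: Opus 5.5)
The plan is to deduce the statement from \Cref{thm:univexchplusbasechange} (in its labeled span form, \Cref{thm:trisimpgluing}). Then we pass from partial $K$-theory to ordinary $K$-theory by checking that all Thom twists are invertible.

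First, fix the $3$-functor formalism $\mc D : \Span(\mc C, E)^\otimes\to\Cat$ and let $S\subseteq E$ be the class of morphisms $f$ that are cohomologically smooth in the strong sense: $f^!$ exists, it satisfies $f^!\simeq f^!(1)\otimes f^\ast$, the dualizing object $f^!(1)$ is invertible, and these properties hold universally, i.e.\ are stable under base change. Standard arguments show that $S$ is a wide subcategory of $E$ that is closed under pullback and composition. Moreover, $(\mc C, S)$ is again a geometric setup, since the diagonal-type conditions of \Cref{def:geosetup} are inherited from $E$.

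Second, we produce the three functors. Restricting $\mc D$ to left and right legs gives $(-)^\ast : \mc C^\op\to\Cat$ and $(-)_! : E\to\Cat$, together with their base change theorem. The exceptional pullback $(-)^! : S^\op\to\Cat$ is obtained by passing to right adjoints of $(-)_!|_S$; more precisely, we take the right-adjointable part of the straightening and use Lurie's adjoint-functor straightening to make this coherent. Base change between $(-)^!$ and $(-)_!$ comes from the standard Beck--Chevalley argument, using the fact that the mates of the $(\ast,!)$-base change equivalences are invertible. The key input is the exchange theorem between $(-)^\ast$ and $(-)^!$, namely $\bar g^! f^\ast\simeq \bar f^\ast g^!$ for $g\in S$. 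Here we use Poincar\'e duality and projection formulas: the mate of the base change equivalence becomes an equivalence once we rewrite $g^!\simeq g^!(1)\otimes g^\ast$ and note that $\bar g^!(1)\simeq \bar f^\ast g^!(1)$ by the universal nature of smoothness. All three functors are then assembled compatibly as a single functor out of the $n$-simplicial (trisimplicial) object of \Cref{sec:poincareduality}.

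Third, we apply \Cref{thm:univexchplusbasechange}. This yields a unique functor $\Span^{\mc K}(\mc C, E)\to\Cat$ restricting to $\mc D$ along $\xi=0$, which gives item (i), and sending $(X,T_f)$-labeled left legs to $f^!$, which gives item (ii). For item (iii), the twist $\Sigma^\xi = s^\ast p^!$ is computed by Poincar\'e duality: $s^\ast p^!\simeq s^\ast(p^!(1)\otimes p^\ast)\simeq s^\ast p^!(1)\otimes(-)$, because $ps\simeq\id$ and $s^\ast$ is symmetric monoidal. Since $p^!(1)$ is invertible, $\Sigma^\xi$ is an autoequivalence. Consequently the map $\mc K(\Vect_{(\mc C,S)}(x))\to\End(\mc D(x))$ factors through the invertible components, and hence uniquely through the group completion $\mc K^{\mathrm{gp}}\simeq K(\Vect_{(\mc C,S)}(x))$.

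The main obstacle is making this last factorization coherent over the whole labeled span category rather than fiberwise. I would first rewrite $\Span^{\mc K}(\mc C,E)$ as the unstraightening, over $\Span(\mc C,E)$, of a functor of the form $B\mc K(\Vect(-))$, analogous to the Cartesian description of $\TwC$. Then $\Span^K$ is the pushout obtained by fiberwise group completion. A functor out of $\Span^{\mc K}$ whose fiberwise restrictions land in equivalences then extends uniquely, because $B(-)^{\mathrm{gp}}$ is the localization of $B(-)$ inverting all morphisms, and localizations are compatible with unstraightening along fibrations. Verifying this compatibility for the span-type (non-Cartesian) fibration is the delicate step. The fallback is to invert the morphisms $(\id,\xi)$ directly in $\Span^{\mc K}(\mc C,E)$ and identify the resulting Dwyer--Kan localization with $\Span^K(\mc C,E)$ by computing mapping spaces.
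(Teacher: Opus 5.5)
Your overall architecture matches the paper. You extend $\mc D$ to $\Span^{\mc K}(\mc C,E)$ via the universal property of the labeled span category, read off (iii) from $s^\ast p^!\simeq s^\ast p^!(1_x)\otimes(-)$, and then descend from $\mc K$ to $K$ using invertibility of the twists.

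\textbf{The gap.} It sits exactly in the step you flag as the main obstacle. You need that a functor out of $\Span^{\mc K}(\mc C,E)$ inverting the twists $X=(X,\xi)=X$ factors uniquely through $\Span^K(\mc C,E)$; this is the content of \Cref{prop:localizationoflabeledspan}. Your primary route presupposes that $\Span^{\mc K}(\mc C,E)\to\Span(\mc C,E)$ is the unstraightening of $X\mapsto B\mc K(\Vect(X))$, and that is false. The label lives on the apex: by \Cref{prop:descr2ofSpanG}, the fiber of $\map_{\Span^{\mc K}}(X,Y)\to\map_{\Span}(X,Y)$ over a span $X\leftarrow Z\to Y$ is $\mc K(\Vect(Z))$. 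In general this is not a torsor under $\mc K(\Vect(X))$ or $\mc K(\Vect(Y))$. So the projection is neither Cartesian nor coCartesian, and there is no fiberwise group completion to perform. Your fallback, computing mapping spaces of the Dwyer--Kan localization directly, only restates the goal, since mapping spaces of a localization are not accessible that way.

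\textbf{How the paper handles it.} It works in the bisimplicial model $\Span^G(\mc C,E)\simeq\Gr\Pull^G(\mc C,E)^{2\mathrm{-op}}$. The comparison map $Y\to X$ between the $G$- and $G^{\mathrm{gp}}$-versions is an equivalence on vertical fragments, hence base compatible. After applying $(-)_{\mathrm{CS}}$, each level $\ascat Y_{\mathrm{CS}}(n,-)$ is a left fibration over $(\mc C^G)^{\mathrm{op}}$, pulled back from the $G^{\mathrm{gp}}$-version. So \Cref{lemma:fibrationsandlocalization} can be applied over the genuine Cartesian fibration $\mc C^G\to\mc C$, where \Cref{prop:locofCG} provides the localization. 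This makes $Y_{\mathrm{CS}}\to X_{\mathrm{CS}}$ a levelwise localization, hence an epimorphism in $\Fun(\Delta^{\mathrm{op}},\Cat)$. A degenerate-section argument then identifies exactly which functors extend. You need either an argument of this kind or a citation of \Cref{prop:localizationoflabeledspan}.

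\textbf{Two smaller points.}
\begin{itemize}
\item Invertibility of $\Sigma^\xi$ for actual bundles yields invertibility for every $\xi\in\mc K(\Vect(x))$ only because $\pi_0\mc K(\Vect(x))$ is generated by classes of bundles; the paper invokes Yuan for this. Your ``consequently'' skips this step.
\item Separately constructing $(-)^\ast$, $(-)^!$, $(-)_!$ and pointwise Beck--Chevalley equivalences and then ``assembling'' them does not by itself produce a coherent map out of $\Pull(\mc C,E,S)^{2,3\mathrm{-op}}$. The paper instead expands $\mc D$ to a trisimplicial map (a variant of \Cref{lemma:expansion}) and restricts to $\Pull(\mc C,E,S)^{3\mathrm{-op}}$. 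It then curries the first and third directions and passes to right adjoints all at once. Your smoothness base-change computations supply the required right adjointability, but the coherence has to come from this mechanism.
\end{itemize}
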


\begin{remark}
    Here, $\Span^K(\mc C, E)$ in \Cref{thm:mainpoincthm} differs from $\Span^{\mc K}(\mc C, E)$ in \Cref{thm:univexchplusbasechange} in that the labels are parametrized by the usual algebraic $K$-theory of $\Vect_{(\mc C, S)}(x)$ rather than the partial $K$-theory. This is a consequence of (and equivalent to by \Cref{prop:localizationoflabeledspan}) the Thom twists being invertible.
\end{remark}

\begin{remark}
    $\Span^K(\mc C, E)$ may be given a symmetric monoidal structure by tensoring underlying spans and summing virtual bundles. In this case, $\mc D^\mathrm{Th}$ may be upgraded to a lax symmetric monoidal functor $\mc D^\mathrm{Th} : \Span^K(\mc C, E)^\otimes \to \Cat$ which recovers the lax monoidal structure of $\mc D$ by restriction. Additionally, the maps
    \begin{equation*}
        \Sigma^{(-)} : K(\Vect_{(\mc C, S)}(x))\longrightarrow \End(\mc D(x))
    \end{equation*}
    factor as
    \begin{equation*}
        \begin{tikzcd}[row sep=0pt]
            K(\Vect_{(\mc C, S)}(x))\arrow{r}{\Sigma^{(-)}1_x} &[20pt] \mathrm{Pic}(\mc D(x))\arrow{r} & \End(\mc D(x)) \\
            & \omega \arrow[mapsto]{r} & \omega \otimes (-)
        \end{tikzcd}
    \end{equation*}
    where the first map is $\mb E_\infty$. We do not prove this in this paper.
\end{remark}

\begin{remark}\label{rem:vectorbundles}
    The categorified notion of vector bundle appearing in \Cref{thm:mainpoincthm} only captures the feature of having a zero section and projection which is cohomologically smooth. While this definition subsumes any reasonable geometric definition of vector bundles via the total space construction, it is often far more general. To bridge this gap, when the input six functor formalism is sufficiently geometric as to possess a notion of
    \begin{itemize}
        \item closed immersions
        \item open complements
        \item gluing fiber sequences for open-closed decompositions
    \end{itemize}
    one may show that for vector bundles $X\xrightarrow{s} E \xrightarrow{p} X$ where the zero section is a closed immersion, $\Sigma^E$ depends only on the infinitesimal neighborhood of the image of the zero section. By the tubular neighborhood theorem or via deformation to the normal cone, one may deduce that $\Sigma^E\simeq \Sigma^{\mc N_s}$ where $\mc N_s$ is the normal bundle of the zero section of $E$. We refer the reader to \cite[Corollary 4.2.12]{zavyalov2023poincar} and \cite{magen2025geometric} for more detailed study of this phenomenon.\par
    In the case of the relative tangent bundle \eqref{eq:reltangentbundle}, this comports with the fact that the normal bundle of $\Delta_f$ is the relative tangent bundle of $f$.
\end{remark}

\subsubsection{Other results of independent interest}

In an orthogonal direction, in the degenerate case where the partial $K$-theory groups $\mc K(\Vect(x))$ all vanish, \Cref{thm:mainthm} says that any two functors satisfying an exchange theorem may be uniquely identified in a manner compatible with the exchange theorem. As a corollary we obtain a result originally proved via model-dependent methods by Liu--Zheng \cite{liu2012enhanced} which has become essential input to the modern machine for constructing six functor formalisms.

\begin{maincorollary}[Liu--Zheng]
    Suppose that $E$ is left-cancellable and every $f : x\to y\in E$ is $n$-truncated for some $n$ depending on $f$, then the exchange equivalences $f_\natural \bar g_\ast \simeq g_\ast \bar f_\natural$ for every pullback square
    \begin{equation*}
        \begin{tikzcd}
        	x & {y'} \\
        	y & z
        	\arrow["{\bar g}", from=1-1, to=1-2]
        	\arrow["{\bar f}"', from=1-1, to=2-1]
        	\arrow["\lrcorner"{anchor=center, pos=0.125}, draw=none, from=1-1, to=2-2]
        	\arrow["f", from=1-2, to=2-2]
        	\arrow["g"', from=2-1, to=2-2]
        \end{tikzcd}
    \end{equation*}
    extend coherently to equivalences $f_\natural g_\ast' \simeq g_\ast f_\natural'$ for every commutative square
    \begin{equation*}
        \begin{tikzcd}
        	x & {y'} \\
        	y & z
        	\arrow["{g'}", from=1-1, to=1-2]
        	\arrow["{f'}"', from=1-1, to=2-1]
        	\arrow["f", from=1-2, to=2-2]
        	\arrow["g"', from=2-1, to=2-2]
        \end{tikzcd}
    \end{equation*}
    in $\mc C$ with $f',f\in E$.
\end{maincorollary}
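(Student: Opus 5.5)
The plan is to deduce the corollary from \Cref{thm:mainthm} by showing that, under the stated hypotheses, the partial $K$-theory spaces $\mc K(\Vect(x))$ are contractible for every $x$. If so, the Cartesian unstraightening of $x\mapsto B\mc K(\Vect(x))$ is the unstraightening of the constant functor at the point, so the projection $\TwC\to\mc C$ is an equivalence. Under this equivalence $(-)_\ast$ is the identity $\mc C\to\mc C$ and $(-)_\natural$ is the inclusion $E\hookrightarrow \mc C$, since $(f,T_f)\simeq (f,0)$ once the label space is trivial. The universal property of \Cref{thm:mainthm} then says that an exchange theorem is the same as a functor $F:\mc C\to\mc D$ with $f_\ast = F(f)$ and $f_\natural \simeq F(f)$ for $f\in E$. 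For an arbitrary commutative square with $f,f'\in E$, the requested equivalence is then
\begin{equation*}
    f_\natural g'_\ast\simeq F(fg')\simeq F(gf')\simeq g_\ast f'_\natural.
\end{equation*}
Coherence is automatic because everything is induced by the single functor $F$. Restricted to pullback squares, this recovers the original exchange data, because the universal exchange theorem in $\TwC\simeq\mc C$ is the one given by functoriality.

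The key step is the contractibility of $\mc K(\Vect(x))$. Left-cancellability gives $\mc K\simeq K$ by the remark following \Cref{thm:mainthm}, so it suffices to show $K(\Vect(x))\simeq\ast$. For this I would use an Eilenberg swindle, or more directly show that $\Vect(x)$ is contractible in a way compatible with the additive structure. Under left-cancellability, in any vector bundle $x\xrightarrow{s}e\xrightarrow{p}x$ the section $s$ lies in $E$, since $ps=\id\in E$ and $p\in E$. I would then induct on the truncation level. If $p$ is $(-1)$-truncated, it is a monomorphism with a section, hence an equivalence, and $e\simeq x$ is the zero object of $\Vect(x)$. For $p$ $n$-truncated, $s$ is $(n-1)$-truncated, as it is a section of an $n$-truncated map. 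Each fiber sequence of vector bundles then relates $e$ to bundles built from $s$, such as the relative tangent bundle of $s$, which is a vector bundle of strictly lower truncation. Applying the additivity relations inductively should show that every class $[e]$ is trivial, and moreover that each filtration stage $\Vect^{\le n}(x)$ has contractible $K$-theory, because it admits a functorial null-homotopy through an additive functor.

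I expect this step to be the main obstacle: producing an honest contraction of the $K$-theory space, not just a vanishing of $\pi_0$. My first attempt would be to show that the identity of $\Vect^{\le n}(x)$ sits in a cofiber sequence of exact functors with a functor factoring through $\Vect^{\le n-1}(x)$ and a functor that is (naturally) zero. Additivity then gives $\id\simeq$ (something factoring through lower truncation) on $K$-theory. Taking the colimit over $n$ yields $K(\Vect(x))\simeq\colim_n K(\Vect^{\le n}(x))\simeq\ast$. Alternatively, if the paper's later sections identify $\mc K(\Vect(x))$ via a diagonal-degeneration formula, I would use that to see directly that all labels can be absorbed.
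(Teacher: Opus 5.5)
Your proposal is correct, and its skeleton is the paper's: reduce via \Cref{thm:mainthm} to $\TwC\simeq\mc C$, replace $\mc K$ by $K$ using left-cancellability, and kill $K(\Vect(x))$ using truncatedness. The key mechanism also coincides once unpacked. Your ``relative tangent bundle of $s$'' is exactly the paper's loop object $\Omega e=x\times_e x$, with diagonal section and projection $\pr_1$. The fiber sequence you want is $T_s\to T_{ps}=0\twoheadrightarrow s^\ast T_p\simeq e$ from \Cref{prop:propsofreltan}(i), which is available precisely because $s\in E$. This is the paper's $\Omega e\to 0\twoheadrightarrow e$, giving $[\Omega e]=-[e]$.

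The difference is in how contractibility is concluded. The paper argues elementwise on $\pi_0$: some iterate of $\Omega$ kills $e$, and $\Omega$ is invertible on $K$-theory. It then asserts the higher homotopy groups vanish ``similarly.'' Your version works at the level of the whole space:
\begin{enumerate}
    \item filter $\Vect(x)$ by truncation level;
    \item apply additivity to the natural sequence $\Omega\Rightarrow 0\Rightarrow\id$ of exact endofunctors of $\Vect^{\le n}(x)$ to get $\id\simeq -K(\Omega)$;
    \item note that this factors through $K(\Vect^{\le n-1}(x))\simeq\ast$, and induct;
    \item pass to the filtered union.
\end{enumerate}
This is what actually makes the paper's ``similarly'' rigorous, since no single iterate of $\Omega$ kills all of $\Vect(x)$.

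To finish it you need two routine checks. First, $\Vect^{\le n}(x)$ is closed under pullback in $\Vect(x)$: maps between $n$-truncated objects over $x$ are $n$-truncated, so the projection of a pullback is a composite of $n$-truncated maps. Hence it is a co-Waldhausen subcategory on which $\Omega$ is exact. Second, $K$ commutes with this filtered colimit.

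Drop the alternatives you float first. $\Vect(x)$ is always weakly contractible because it has a zero object, which says nothing about its $K$-theory. And no infinite sums are available in general for an Eilenberg swindle.
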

\begin{proof}
    By \Cref{thm:mainthm}, it suffices to show that $\TwC\simeq \mc C$, or equivalently that every $\mc K(\Vect(x))$ vanishes. By \Cref{cor:ascatkthy}, we have that $\mc K(\Vect(x))\simeq K(\Vect(x))$.
    Consider a vector bundle
    \begin{equation*}
        \begin{tikzcd}
            x\arrow{r}{s} & e\arrow[two heads]{r}{p} & x
        \end{tikzcd}
    \end{equation*}
    in $\Vect(x)$. If $s$ is $n$-truncated, then $\Omega^{n + 1} e\simeq 0\in \Vect(x)$. Since looping induces an equivalence on $K$-theory, we have that $[e] = 0\in \pi_0K(\Vect(x))$. Similarly, one may show that every element of each $\pi_i K(\Vect(x))$ is zero, and thus $K(\Vect(x))\simeq 0$ for all $x$.
\end{proof}

In the process of proving universality, we also obtain a result involving the gluing of categories equipped with a factorization system.

\begin{definition}
    \begin{enumerate}
        \item Let $\mc C$ be a category with factorization system $(E, M)$. Define $\Fact \mc C$ to be the bisimplicial space whose $(n,m)$-simplex space is the groupoid of $n\times m$-commutative squares in $\mc C$ whose horizontal arrows lie in $E$ and whose vertical arrows lie in $M$.
        \item For a category $\mc D$, let $\Sq(\mc D)$ denote the bisimplicial space whose $(n,m)$-simplex space is the groupoid of $n\times m$-commutative squares in $\mc D$.
    \end{enumerate}
\end{definition}

We then have the following gluing result for categories with factorization systems.

\begin{maintheorem}[\Cref{thm:gluingoffactsystems}]\label{mainthm:gluingoffactsystems}
    Let $\mc C$ be a category with factorization system $(E, M)$. Then the data of a functor $\mc C\to\mc D$ is equivalent to providing a map $\Fact \mc C\to \Sq(\mc D)$. More precisely, we have an equivalence
    \begin{equation*}
        \begin{tikzcd}[row sep=0pt]
            \map_{\Cat}(\mc C, \mc D)\arrow{r}{\simeq} & \map_{\PSh(\Delta^{\times 2})}(\Fact\mc C,\Sq(\mc D)) \\
            F \arrow[mapsto]{r} & (\Fact\mc C\hookrightarrow \Sq(\mc C)\xrightarrow{\Sq(F)}\Sq(\mc D))
        \end{tikzcd}
    \end{equation*}
    for all $\mc D$.
\end{maintheorem}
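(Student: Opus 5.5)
We will show that the map $\Fact\mc C\hookrightarrow\Sq(\mc C)$ becomes an equivalence after applying the left adjoint to $\Sq$. Let
\begin{equation*}
    \delta_! : \PSh(\Delta^{\times 2})\longrightarrow \PSh(\Delta)
\end{equation*}
be left Kan extension along the diagonal $\Delta\to\Delta\times\Delta$, followed by the localization $L : \PSh(\Delta)\to\Cat$ onto complete Segal spaces. Then $\Sq(\mc D)$ is the restriction of $\mc D$ along the diagonal, viewed as a presheaf on $\Delta^{\times2}$ via $([n],[m])\mapsto[n]\times[m]$. This restriction has a left adjoint
\begin{equation*}
    \mc L(X)=\colim_{([n],[m])\to X}[n]\times[m]
\end{equation*}
in $\Cat$. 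With this, the claimed equivalence is equivalent to the counit-type map $\mc L(\Fact\mc C)\to\mc C$ being an equivalence in $\Cat$; the composite map in the statement is precisely the adjunct of this map. So we must show that $\mc C$ is the colimit in $\Cat$ of the grids $[n]\times[m]$ indexed by factorization grids.

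The strategy is to compare with the simplicial nerve. Restricting $\Fact\mc C$ along $\Delta\to\Delta^{\times 2}$, $[n]\mapsto([n],[0])$ or $([0],[n])$, recovers the nerves of $E$ and $M$. Consider instead the diagonal-type restriction that sends $[n]$ to those $n\times n$ grids with $E$-horizontal and $M$-vertical arrows. Using uniqueness of factorizations, an $n$-simplex of $\mc C$, i.e.\ a chain $x_0\to\cdots\to x_n$, should determine an essentially unique $n\times n$ staircase grid lying in $\Fact\mc C$. Concretely, the $(i,j)$ entry for $i\le j$ is the image of $x_i\to x_j$, and the grid is obtained by iteratively factoring each composite. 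The key step is therefore to prove that the space of such grids extending a given chain is contractible. This follows from orthogonality $E\perp M$: the space of factorizations of a map is contractible, and the filling of each subsequent square is unique by the lifting property. Equivalently, $N\mc C_n$ is a retract, with contractible fibers, of a suitable subspace of $\Fact\mc C$.

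From this we build, for each $[n]\to\mc C$, a canonical map $[n]\to\mc L(\Fact\mc C)$, natural in $[n]$. The map goes through the diagonal of the staircase, whose composites in $[n]\times[n]$ are the composites of $E$- and $M$-factors. This yields a functor $\mc C\to\mc L(\Fact\mc C)$ splitting the comparison map. To see it is an inverse, I would show that every generating grid $[n]\times[m]\to\Fact\mc C$ maps into $\mc L$ through the staircase attached to its boundary path. This again uses contractibility of factorization-lift spaces. The upshot is that the colimit is computed by a cofinal subdiagram indexed by $\Delta_{/\mc C}$, whose colimit is $\mc C$ by density.

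The main obstacle is this cofinality and contractibility argument. One must show that the comma categories for the functor $\Delta_{/N\mc C}\to(\Delta^{\times2})_{/\Fact\mc C}$ are weakly contractible, coherently in all degrees and not merely on $\pi_0$. I would attack it by filtering grids by size. The first step is the $2\times 1$ and $1\times 2$ cases, where the space of staircase completions is a space of lifts in an orthogonal square and hence contractible. The general case would then follow by induction on $n+m$, using the Segal condition to decompose $[n]\times[m]$ into elementary squares.
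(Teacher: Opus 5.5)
There is a genuine gap, and it sits exactly at what you call the main obstacle. Your contractibility observation is right, and it is in fact the paper's last step: staircases $\Cpt^n\to\mc C$ with horizontal arrows in $E$ and vertical arrows in $M$ are the same as chains $[n]\to\mc C$, because $\Cpt^n$ is the free category with factorization system on $[n]$. But the cofinality statement you want to prove is not well-posed, and no argument of that shape can work. A staircase is not a bisimplex of $\Fact\mc C$. It is a map $\CCpt^n\to\Fact\mc C$ out of a non-representable bisimplicial space, so a chain does not determine an object of $(\Delta^{\times2})_{/\Fact\mc C}$. Moreover, cofinality compares colimits only if the diagram $[n]$ on $\Delta_{/N\mc C}$ is the restriction along your functor of $([a],[b])\mapsto[a]\times[b]$. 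That forces $a=0$ or $b=0$, i.e.\ chains lying entirely in $M$ or in $E$.

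The obstruction is real. In $\mc C=\Cpt^1$, every grid in $\Fact\Cpt^1$ has the form $(p,q)\mapsto(\alpha(p),\beta(q))$ with $\max\alpha\le\min\beta$. Hence the morphism $(0,0)\to(1,1)$ lies in the image of no single cell $[a]\times[b]$. It exists in $\Gr\Fact\Cpt^1$ only as a composite formed freely in the colimit, so no cell-by-cell matching can see it. For the same reason, your maps $[n]\to\mc L(\Fact\mc C)$ are not yet defined ``naturally in $[n]$''. A staircase only yields $\Gr\CCpt^n\to\Gr\Fact\mc C$, and extracting a diagonal $[n]\to\Gr\CCpt^n$ compatible with inner faces (which compose) requires knowing $\Gr\CCpt^n\simeq\Cpt^n$. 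That same identification is what lets you read maps $\CCpt^n\to\Fact\mc C$ as marked functors $\Cpt^n\to\mc C$ in the first place.

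What is missing is the general comparison $\Gr X\simeq\ascat\cnr X$ for every bisimplicial space $X$ (\Cref{thm:cornerapprox}), together with $\Gr\CCpt^n\simeq\Cpt^n$ (\Cref{prop:GrCCptequivtoCpt}).
\begin{itemize}
\item The paper proves $\Gr\CCpt^n\simeq\Cpt^n$ by an inductive gluing of sub-posets of $\Cpt^n$ along pushouts. Your ``decompose $[n]\times[m]$ into elementary squares'' is only its first lemma, \Cref{lemma:smallboxesbuildbigboxes}.
\item It proves corner approximation in two steps. First it establishes $\Gr X\simeq\ascat\diag X$ via a pointwise left Kan extension along $\Delta_{/\diag X}\to(\Delta^{\times2})_{/X}$ (\Cref{prop:diaglocalequalsgr}). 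Then it inverts $\ascat\diag X\to\ascat\cnr X$ using the canonical factorings $\diag X\to\sd\diag X$ and $\cnr X\to\sd\cnr X$ together with $\fsd\simeq\sd$ on categories.
\end{itemize}
Once these are in hand, no cofinality is needed. The map $\cnr\Fact\mc C\to N\mc C$ is a levelwise equivalence by your freeness observation, so $\Gr\Fact\mc C\simeq\ascat\cnr\Fact\mc C\simeq\mc C$.

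Separately, your opening $\delta_!$ points the wrong way: left Kan extension along the diagonal goes $\PSh(\Delta)\to\PSh(\Delta^{\times2})$. It is also never used. The operation that actually matters is restriction along the diagonal followed by $\ascat$.
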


To unpack this result, we note that the data of a map
\begin{equation*}
    H : \Fact\mc C\longrightarrow\Sq(\mc D)
\end{equation*}
consists of
\begin{enumerate}
    \item a functor $F_E : E\to\mc D$ (whose nerve is) given by
    \begin{equation*}
        \begin{tikzcd}
            N(E)\simeq (\Fact\mc C)_{0,\bullet}\arrow{r}{H_{0,\bullet}} & \Sq(\mc D)_{0,\bullet}\simeq N(\mc D).
        \end{tikzcd}
    \end{equation*}
    \item a functor $F_M : M\to\mc D$ (whose nerve is) given by $H_{\bullet,0}$
    \item $F_E$ and $F_M$ must canonically agree on objects (given by $H_{0,0}$) and for every commutative square
    \begin{equation*}
        \begin{tikzcd}
        	a & b \\
        	c & d
        	\arrow["{e'}", from=1-1, to=1-2]
        	\arrow["m"', from=1-1, to=2-1]
        	\arrow["{m'}", from=1-2, to=2-2]
        	\arrow["e", from=2-1, to=2-2]
        \end{tikzcd}\in (\Fact\mc C)_{1,1}
    \end{equation*}
    an equivalence $F_M(m')F_E(e')\simeq F_E(e)F_M(m)$ which is compatible with pasting of squares.
\end{enumerate}
\Cref{mainthm:gluingoffactsystems} then says that in such a scenario, $F_E$ and $F_M$ uniquely glue to a functor $F : \mc C\to\mc D$ such that $F|_E\simeq F_E$ and $F|_M\simeq F_M$.

\subsection{Exchange theorems in the wild}\label{subsec:exchthmexamples}

We conclude the introduction by recounting some examples of exchange theorems and ambidexterity in geometry. We refer to the example of six functor formalisms for an illustration of how to deduce exchange from a pair of ambidextrous functors.

\subsubsection{Equivariant stable homotopy theory} If $G$ is a compact Lie group with closed subgroup $H\le G$, Wirthm\"uller \cite{wirthmuller1974equivariant} showed that one has an equivalence
\begin{equation*}
    \begin{tikzcd}
        \coind^G_H(-)\arrow{r}{\simeq} & \ind^G_H(-\wedge S^{-L})
    \end{tikzcd}
\end{equation*}
where $L = T_{eH}(G/H)$ is the tangent space of $G/H$ at the identity coset with its induced $H$-representation, and $S^{-L}$ is the stable inverse of the representation sphere of $L$.

\subsubsection{$K(n)$-local stable homotopy theory}
For any $\pi$-finite space $X$ and functor $\rho : X\to \Sp_{K(n)}$ representing an $X$-parametrized family of $K(n)$-local spectra, Hopkins and Lurie \cite{hopkins2013ambidexterity} showed that there is a canonical identification
\begin{equation*}
    \begin{tikzcd}
        \Nm_X(\rho) : \colim(\rho)\arrow{r}{\simeq} & \lim(\rho).
    \end{tikzcd}
\end{equation*}
More generally, for a map $f : X\to Y$ between $\pi$-finite spaces, there is an identification of left and right Kan extension of diagrams $X\to\Sp_{K(n)}$ along $f$, i.e.\ the left and right adjoints of restriction along $f$, respectively.

\subsubsection{$\A^1$-homotopy theory} For a smooth proper map $f : X\to Y$ of schemes, the pullback $f^\ast : \mathrm{SH}(Y)\to \mathrm{SH}(X)$ has both left and right adjoints $f_\natural$ and $f_\ast$, respectively. It was shown by Ayoub \cite{ayoub2007six} that there exist canonical equivalences
\begin{equation*}
    f_\ast\simeq f_\natural \Sigma^{-\Omega_f}
\end{equation*}
where $\Sigma^{-\Omega_f}$ is an automorphism of $\mathrm{SH}(X)$ induced by the sheaf $\Omega_f$ of relative differentials of $f$. This was subsequently extended to equivariant $\A^1$-homotopy theory by Hoyois \cite{hoyois2017six} and even more recently to non-$\A^1$-invariant motivic spectra by Annala--Hoyois--Iwasa \cite{annala2024atiyah}.

\subsubsection{Six functor formalisms}
In the context of a geometry equipped with six functors, the prototypical exchange theorem is that of (shriek) \emph{smooth base change} which provides an exchange theorem between pullback $(-)^\ast$ and exceptional pullback $(-)^!$ for smooth morphisms. To illustrate this, consider the six functor formalism of (spectral) sheaves on locally compact Hausdorff spaces \cite{volpe2025six}. For $f : X\to Y$ a topological submersion, $f$ satisfies a form of Poincar\'e duality which yields a canonical equivalence
\begin{equation*}
    f^!(-)\simeq f^\ast(-)\otimes \omega_f.
\end{equation*}
To produce an exchange theorem, consider a pullback square
\begin{equation*}
    \begin{tikzcd}
    	X & {Y'} \\
    	Y & Z
    	\arrow["{\bar g}", from=1-1, to=1-2]
    	\arrow["{\bar f}"', from=1-1, to=2-1]
    	\arrow["\lrcorner"{anchor=center, pos=0.125}, draw=none, from=1-1, to=2-2]
    	\arrow["f", from=1-2, to=2-2]
    	\arrow["g"', from=2-1, to=2-2]
    \end{tikzcd}
\end{equation*}
of locally compact Hausdorff spaces where $f$ and $\bar f$ are both topological submersions. We then have a chain of equivalences
\begin{equation*}
    \begin{aligned}
        \bar g^\ast f^!(-) &\simeq \bar g^\ast (f^\ast(-)\otimes \omega_f) && \text{(Poincar\'e duality)} \\
        &\simeq \bar g^\ast f^\ast(-)\otimes \bar g^\ast\omega_f && \text{(Symm.\ monoidality of $(-)^\ast$)} \\
        &\simeq \bar g^\ast f^\ast(-)\otimes \omega_{\bar f} && \text{(Base change for dualizing complexes)} \\
        &\simeq \bar f^\ast g^\ast(-)\otimes \omega_{\bar f} && \text{(Functoriality of $(-)^\ast$)} \\
        &\simeq \bar f^!g^\ast(-) && \text{(Poincar\'e duality)}.
    \end{aligned}
\end{equation*}
Fundamentally, the smooth exchange theorem is a consequence of Poincar\'e duality paired with a sufficiently functorial theory of dualizing complexes. To explain the additional functoriality of dualizing complexes, recall relative Atiyah duality \cite[Theorem 7.14]{volpe2025six}. For a submersion $f : X\to Y$ between smooth manifolds, relative Atiyah duality provides an equivalence
\begin{equation*}
    \omega_f \simeq \mathrm{Th}(T_f)
\end{equation*}
between the dualizing complex of $f$ and the Thom spectrum of the relative tangent bundle of $f$. Since the Thom construction extends to all (virtual) vector bundles, the twisting endomorphism $(-)\otimes \omega_f$ occurring in Poincar\'e duality in fact sits within a larger family of endomorphisms
\begin{equation*}
    \begin{tikzcd}[row sep=0pt]
        \Sigma^{(-)} : K(\Vect(X))\arrow{r} & \End(\Shv(X)) \\
        \qquad \xi\arrow[mapsto]{r} & \mathrm{Th}(\xi)\otimes (-)
    \end{tikzcd}
\end{equation*}
referred to as Thom twists. The additional functorialities of $\omega_f$ are then downstream of functorialities within the Thom construction along with relationships between relative tangent bundles.\par
More generally, every six functor formalism has a collection of morphisms referred to as cohomologically smooth morphisms for which Poincar\'e duality holds. This induces a smooth exchange theorem and associated theory of Thom twists internal to that formalism.

\subsubsection{Parametrized category theory}

Given a parametrized category $F : \mc C^\mathrm{op}\longrightarrow \Cat$,
\begin{itemize}
    \item $F$ has $\mc C$-indexed colimits if the image of every pullback square in $\mc C$ is horizontally left adjointable
    \item $F$ has $E$-indexed limits if the image of every pullback square in $\mc C$ with vertical morphisms belonging to $E$ is vertically right adjointable
    \item $\mc C$-indexed colimits commute with $E$-indexed limits in $F$ if, for every pullback square
    \begin{equation*}
        \begin{tikzcd}
        	x & {y'} \\
        	y & z
        	\arrow["{\bar g}", from=1-1, to=1-2]
        	\arrow["{\bar f}"', from=1-1, to=2-1]
        	\arrow["\lrcorner"{anchor=center, pos=0.125}, draw=none, from=1-1, to=2-2]
        	\arrow["f", from=1-2, to=2-2]
        	\arrow["g"', from=2-1, to=2-2]
        \end{tikzcd}
    \end{equation*}
    in $\mc C$ with $\bar f, f\in E$, the image under $F$ is a vertically right adjointable square whose mate is further horizontally left adjointable.
\end{itemize}
In this setting, one obtains an exchange theorem between the pointwise left adjoint $F(-)^L : \mc C\to \Cat$ of $F$ and the pointwise right adjoint $F(-)^R : E \to\Cat$ of $F$. This exchange theorem is given by sending a pullback square
\begin{equation*}
    \begin{tikzcd}
        x & {y'} \\
        y & z
        \arrow["{\bar g}", from=1-1, to=1-2]
        \arrow["{\bar f}"', from=1-1, to=2-1]
        \arrow["\lrcorner"{anchor=center, pos=0.125}, draw=none, from=1-1, to=2-2]
        \arrow["f", from=1-2, to=2-2]
        \arrow["g"', from=2-1, to=2-2]
    \end{tikzcd}
\end{equation*}
in $\mc C$ with $\bar f,f\in E$ to the commutative square
\begin{equation*}
    \begin{tikzcd}
    	{F(x)} & {y'} \\
    	{F(y)} & z
    	\arrow["{F(\bar g)^L}", from=1-1, to=1-2]
    	\arrow["{F(\bar f)^R}"', from=1-1, to=2-1]
    	\arrow["{F(f)^R}", from=1-2, to=2-2]
    	\arrow["{F(g)^L}"', from=2-1, to=2-2]
    \end{tikzcd}
\end{equation*}
with filler given by the associated Beck--Chevalley transformation. For more details on this we refer the reader to \cite{martini2024,shahparam,cnossen2025universality} and \Cref{subsec:exchthms}.

\vspace{\baselineskip}

\noindent\textbf{Outline.} In \Cref{sec:presofcats}, we develop tools for computing the left adjoints of various embeddings of categories into $n$-simplicial spaces. Many of these results may be viewed as rederivations of results of Liu--Zheng \cite{liu2012enhanced} via model-independent techniques. In \Cref{sec:univexchthm}, we define and study various components occurring in the definition of the category which receives the universal exchange theorem and in \Cref{sec:proofofuniv} we prove universality. Finally, \Cref{sec:poincareduality} applies the universal exchange theorem to enhance $3$-functor formalisms into ones which encode Poincar\'e duality. At its core, this section may be understood as identifying the category which hosts the universal exchange theorem equipped with a compatible base change theorem.

\vspace{\baselineskip}

\noindent\textbf{Notation and conventions.}

\begin{itemize}
    \item We make use of the theory of $\infty$-categories, as developed by Lurie in \cite{lurie2009higher,lurie2017higher}. We make no concrete choice of model, however, as all arguments are model independent. Throughout, the word category will always refer to an $\infty$-category. If the hom-spaces happen to be discrete, we may emphasize this by saying a $(1,1)$-category or an ordinary category.
    \item We write $\Ani$ for the ($\infty$-)category of spaces and $\Cat$ for the ($\infty$-)category of ($\infty$-)categories.
    \item For a category $\mc C$ we write $\PSh(\mc C)$ for $\Fun(\mc C^\op, \Ani)$.
    \item We write $\Delta[n_1,n_2,\dots,n_m]$ for the presheaf on $\Delta^{\times m}$ represented by $([n_1], \dots, [n_m])$.
    \item We write $\Lambda_2^2 = 0\rightarrow 2\leftarrow 1$ for the cospan category.
    \item When working with 2D grids or bisimplicial spaces, we will always use (row, column) notation with the upper left corner being $(0,0)$. For example, a point of the $(1,1)$-simplex space of a bisimplicial space $X$ will be denoted by
    \begin{equation*}
        \begin{tikzcd}[row sep=10pt, column sep=10pt]
        	{x_{00}} && {x_{01}} \\
        	& {X_{1,1}} \\
        	{x_{10}} && {x_{11}}.
        	\arrow["{\in X_{0,1}}", from=1-1, to=1-3]
        	\arrow["{\in X_{1,0}}"', from=1-1, to=3-1]
        	\arrow["{\in X_{1,0}}", from=1-3, to=3-3]
        	\arrow["{\in X_{0,1}}"', from=3-1, to=3-3]
        \end{tikzcd}
    \end{equation*}
    \item We denote by $\langle i \rangle : [0]\to [n]$ the inclusion $\{i\}\hookrightarrow [n]$.
\end{itemize}

\vspace{\baselineskip}

\noindent\textbf{Acknowledgements.} I would like to thank my advisor David Nadler for many helpful conversations and guidance during the completion of this work. Additionally, I thank Adam Dauser for enlightening conversations as well as for his work with Josefien Kuijper which inspired many of these ideas. Finally, I also thank Josephine Hlavinka and David Nadler for comments on earlier drafts of this work.

\section{Categorical preliminaries}

In this section we take account of various categorical machinery that will be used throughout. This section may safely be skipped and referred back to as needed.

\subsubsection{Powering and tensoring} Let $\mc V$ be a symmetric monoidal category. In the sequel, we will only need $\mc V = \Ani$.

\begin{definition}
    Let $\mc C$ be a $\mc V$-enriched category.
    \begin{enumerate}
        \item A powering of $\mc C$ is a functor
        \begin{equation*}
            [-, -] : \mc V^\mathrm{op}\times \mc C\longrightarrow\mc C
        \end{equation*}
        equipped with natural equivalences
        \begin{equation*}
            \mc V(v, \mc C(c_1, c_2))\simeq \mc C(c_1, [v, c_2]).
        \end{equation*}
        \item A tensoring of $\mc C$ is a functor
        \begin{equation*}
            (-) \otimes (-) :\mc V\times \mc C\longrightarrow \mc C
        \end{equation*}
        equipped with natural equivalences
        \begin{equation*}
            \mc C(v\otimes c_1, c_2)\simeq \mc V(v, \mc C(c_1, c_2)).
        \end{equation*}
    \end{enumerate}
\end{definition}

We note by the characterizing property, tensoring preserves colimits in each factor whereas powering preserves limits in each variable.

\begin{example}
    Any closed symmetric monoidal category is canonically enriched over itself, as well as tensored and powered over itself via the symmetric monoidal product and internal hom, respectively.\par
    In particular, taking $\Cat$ with the Cartesian monoidal structure, it is tensored and powered over $\Cat$ and by restriction to groupoids $\Ani\subseteq \Cat$ it is tensored and powered over $\Ani$. 
\end{example}

\subsubsection{Some coend calculus} We refer the reader to the introduction of \cite{gepner2017lax} and \cite{loregian2018fubini} for a concise exposition of much of the following content.

\begin{definition}\label{def:twistedarrcat}
    For a category $\mc C$, the \emph{twisted arrow category} $\Tw(\mc C)$ is the total space of the coCartesian unstraightening of the mapping space functor $\map_{\mc C}(-,-) : \mc C^\mathrm{op}\times\mc C\longrightarrow\Ani$.
\end{definition}

One may also explicitly describe the twisted arrow category of $\mc C$ via its nerve. Namely
\begin{equation*}
    N(\Tw(\mc C))_n = \map_{\Cat}([n]\star [n]^\mathrm{op}, \mc C)
\end{equation*}
where $[n]\star [n]^\mathrm{op}\simeq [2n + 1]$ is the join of $[n]$ and $[n]^\mathrm{op}$. The objects of $\Tw(\mc C)$ are morphisms in $\mc C$ and a morphism between $f : x\to y\in\mc C$ and $g : x'\to y'\in \mc C$ in $\Tw(\mc C)$ is the data of a commutative square
\begin{equation*}
    \begin{tikzcd}
    	x & {x'} \\
    	y & {y'}
    	\arrow["f"', from=1-1, to=2-1]
    	\arrow["s"', from=1-2, to=1-1]
    	\arrow["g", from=1-2, to=2-2]
    	\arrow["t"', from=2-1, to=2-2]
    \end{tikzcd}
\end{equation*}
in $\mc C$. By construction of $\Tw(\mc C)$ it comes equipped with a projection $\Tw(\mc C)\longrightarrow \mc C^\mathrm{op}\times\mc C$. In the above description, this corresponds to sending a morphism $f : x\to y$ to its source and target pair $(x,y)$.

\begin{definition}
    Let $F : \mc C^\mathrm{op}\times \mc C\to \mc D$ be a functor. When it exists, the \emph{coend} of $F$ is defined as
    \begin{equation*}
        \int^{c\in C} F(c,c)\coloneqq \colim (\hspace{-4pt}\begin{tikzcd}
            \Tw(\mc C)\arrow{r} & \mc C^\mathrm{op}\times\mc C\arrow{r}{F} & \mc D
        \end{tikzcd}\hspace{-4pt}).
    \end{equation*}
\end{definition}

We recall three important results involving coends that we will make use of.

\begin{proposition}
    Let $\mc D$ be a cocomplete category which is tensored over $\Ani$. Then the left Kan extension of any $F : \mc C\to\mc D$ along any $p : \mc C\to \mc E$ exists and is given by
    \begin{equation*}
        \Lan_p(F)(-) \simeq \int^{c\in C}\hom_{\mc E}(p(c), -)\otimes F(c).
    \end{equation*}
\end{proposition}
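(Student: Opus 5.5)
The plan is to establish the formula pointwise via the Yoneda lemma and the universal property of the coend as a colimit. Write $L(e) \coloneqq \int^{c\in\mc C}\hom_{\mc E}(p(c), e)\otimes F(c)$. This object exists because $\mc D$ is cocomplete, and it is a colimit over $\Tw(\mc C)$ of a functor built from the tensoring. Since $\mc D$ is cocomplete, it suffices to show that $e\mapsto L(e)$ assembles into a functor $\mc E\to\mc D$ and that for every $G : \mc E\to\mc D$ there are natural equivalences
\begin{equation*}
    \map_{\Fun(\mc E,\mc D)}(L, G)\simeq \map_{\Fun(\mc C,\mc D)}(F, G\circ p).
\end{equation*}
Functoriality of $L$ in $e$ is automatic, because the integrand is functorial in $e$ through the covariance of $\hom_{\mc E}(p(c),-)$ and the tensoring is a functor.

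The first step is to compute mapping spaces out of $L$ into a fixed $G$. I will use the end formula for natural transformations,
\begin{equation*}
    \map_{\Fun(\mc E,\mc D)}(L,G)\simeq\int_{e\in\mc E}\map_{\mc D}(L(e),G(e)).
\end{equation*}
I take this as standard coend calculus from the cited references \cite{gepner2017lax,loregian2018fubini}. The colimit defining $L(e)$ turns into a limit under $\map_{\mc D}(-,G(e))$. The tensoring adjunction then rewrites each term as $\map_{\Ani}(\hom_{\mc E}(p(c),e),\map_{\mc D}(F(c),G(e)))$.

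The second step is to apply Fubini for ends, exchanging the end over $\mc E$ with the end over $\mc C$. For fixed $c$, this leaves
\begin{equation*}
    \int_{e}\map_{\Ani}(\hom_{\mc E}(p(c),e),\map_{\mc D}(F(c),G(e))).
\end{equation*}
This is the space of natural transformations $\hom_{\mc E}(p(c),-)\Rightarrow\map_{\mc D}(F(c),G(-))$. By the Yoneda lemma it is equivalent to $\map_{\mc D}(F(c),G(p(c)))$. Taking the remaining end over $c$ then gives $\map_{\Fun(\mc C,\mc D)}(F,G\circ p)$ by the same end formula.

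The main obstacle is making these manipulations coherent in the $\infty$-categorical setting. This requires the end formula for natural transformation spaces, a Fubini statement for (co)ends indexed by $\Tw(\mc C)$ and $\Tw(\mc E)$, and the Yoneda lemma phrased as an end, all of them natural in $G$. I would cite these from \cite{gepner2017lax} rather than reprove them.

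An alternative that avoids most of this is to use the pointwise formula $\Lan_p(F)(e)\simeq\colim_{\mc C\times_{\mc E}\mc E_{/e}}F$. The remaining task would then be to identify this colimit with the coend. The reason they agree is that $\hom_{\mc E}(p(c),e)\otimes F(c)$ is the colimit of the constant diagram on $F(c)$ over the fiber of $\mc C\times_{\mc E}\mc E_{/e}\to\mc C$. The coend over $\Tw(\mc C)$ then computes the left Kan extension along the associated left fibration, so the colimit over the total space follows by transitivity of left Kan extensions.
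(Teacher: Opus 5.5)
Your main argument is correct. It is the classical end-calculus proof: the end formula for natural transformations, the tensoring adjunction, Fubini, and Yoneda as an end. That is exactly the argument the paper defers to with its one-line citation of \cite[Proposition 2.3.6]{loregian2021co}, and you rightly name the $\infty$-categorical coherence inputs needed to run it in the paper's setting.

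Your optional alternative has a slip. The projection $\mc C\times_{\mc E}\mc E_{/e}\to\mc C$ is a right fibration (classifying $\map_{\mc E}(p(-),e)$), not a left one. So left Kan extension along it is not computed fiberwise, and transitivity alone does not identify $\colim_{\mc C\times_{\mc E}\mc E_{/e}}F$ with the coend. The clean fix is to write $\map_{\mc E}(p(-),e)$ as the colimit of representables over this category of elements and then apply the co-Yoneda formula.
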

\begin{proof}
    See, e.g., \cite[Proposition 2.3.6]{loregian2021co}.
\end{proof}

The next result describes a coend formula for the Cartesian unstraightening of a functor.

\begin{theorem}[{\cite[Theorem 1.1]{gepner2017lax}}]\label{thm:gepcoendunstraightening}
    The Cartesian unstraightening of a functor $F : \mc C^\mathrm{op}\to \Cat$ is given by
    \begin{equation*}
        \begin{tikzcd}
            \displaystyle\int^{x\in C} F(x)\times \mc C_{/x}\arrow{r} & \displaystyle\int^{x\in\mc C}\ast\times \mc C_{/x}\simeq \mc C.
        \end{tikzcd}
    \end{equation*}
\end{theorem}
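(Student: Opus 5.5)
The plan is to compare two functors $\Fun(\mc C^\mathrm{op},\Cat)\to\Cat_{/\mc C}$. The first is Cartesian unstraightening $F\mapsto \mathrm{Un}(F)$, followed by the forgetful functor from Cartesian fibrations to $\Cat_{/\mc C}$. The second is the coend functor $F\mapsto \int^{x} F(x)\times \mc C_{/x}$, mapping to $\mc C$ via the terminal map $F\Rightarrow \ast$. I would show that both preserve colimits and agree naturally on a set of generators. Then they agree everywhere, since $\Fun(\mc C^\mathrm{op},\Cat)$ is generated under colimits by the objects $[n]\times h_y$, where $h_y=\map_{\mc C}(-,y)$ and $[n]\in\Delta$. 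This is a density statement: $\Cat$ is generated under colimits by the $[n]$, and presheaves are generated by representables.

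\textbf{Step 1 (generators).} For $F=[n]\times h_y$, the coend $\int^x [n]\times \map(x,y)\times \mc C_{/x}$ simplifies by the co-Yoneda lemma. This is the coend form of the left Kan extension formula recalled above, applied to the functor $x\mapsto \mc C_{/x}$, and it gives $[n]\times \mc C_{/y}$ over $\mc C$. On the other side, $\mathrm{Un}(h_y)\simeq \mc C_{/y}\to\mc C$ is the standard right fibration classified by a representable, and unstraightening commutes with products by constant categories, so $\mathrm{Un}([n]\times h_y)\simeq [n]\times\mc C_{/y}$. These identifications must be natural in $([n],y)\in\Delta\times\mc C$. For the coend this is automatic. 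For $\mathrm{Un}$ it follows from naturality of the Yoneda identification under unstraightening.

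\textbf{Step 2 (colimit preservation).} The coend functor visibly preserves colimits in $F$. This holds because it is a colimit over $\Tw(\mc C)$ of the functors $F(x)\times \mc C_{/y}$, each of which preserves colimits in $F$, as $\Cat$ is cartesian closed. It also commutes with the forgetful functor $\Cat_{/\mc C}\to\Cat$, which creates colimits. For $\mathrm{Un}$ the forgetful functor $\mathrm{Cart}(\mc C)\to\Cat_{/\mc C}$ does not obviously preserve colimits, so I would instead exhibit a right adjoint to $\mathrm{Un}:\Fun(\mc C^\mathrm{op},\Cat)\to\Cat_{/\mc C}$. The candidate is $(p:\mc E\to\mc C)\mapsto \big(x\mapsto \Fun_{/\mc C}(\mc C_{/x},\mc E)\big)$. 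To verify the adjunction, I would compute $\map_{/\mc C}(\mathrm{Un}F,\mc E)$ by writing $F$ as a colimit of generators. This makes the argument circular, so instead I would use the fibrational description of maps out of a Cartesian fibration. A functor over $\mc C$ out of $\mathrm{Un}F$ amounts to a compatible family of functors $F(x)\times\mc C_{/x}\to\mc E$. The idea is that a point of $\mathrm{Un}F$ over $x$, together with a map $x'\to x$, determines a Cartesian lift, so functors out of $\mathrm{Un}F$ over $\mc C$ are determined by their restriction along the map $\coprod_x F(x)\times\mc C_{/x}\to \mathrm{Un}F$ given by Cartesian transport.

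\textbf{Main obstacle.} I expect the crux to be Step 2 for $\mathrm{Un}$, namely that unstraightening viewed in $\Cat_{/\mc C}$ is a left adjoint. This is essentially the content of the theorem: $\mathrm{Un}(F)$ is the lax colimit of $F$. As a fallback, I would directly construct a comparison map $\int^x F(x)\times\mc C_{/x}\to\mathrm{Un}(F)$ over $\mc C$ from the Cartesian transport maps above. I would then check that it is an equivalence fiberwise over each $y\in\mc C$ and on mapping spaces. Fiberwise, the coend restricted over $y$ computes $\int^x F(x)\times \map(y,x)$ modulo the relevant localisation, which is $F(y)$ by co-Yoneda. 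Mapping spaces would be handled via the same co-Yoneda computation. Essential surjectivity is then clear, so full faithfulness is the remaining point to check.
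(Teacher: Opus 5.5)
The paper gives no proof of this statement; it quotes it from \cite{gepner2017lax}. Your proposal therefore has to stand on its own, and it has a genuine gap exactly at the step you flag.

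Your reduction is sound as far as it goes. Write $U:\mathrm{Cart}(\mc C)\to\Cat_{/\mc C}$ for the forgetful functor and $\Phi(F)=\int^x F(x)\times\mc C_{/x}$ for the coend functor. Then $\Phi$ preserves colimits, the objects $[n]\times h_y$ are dense, and Step 1 identifies $\Phi$ and $U\circ\mathrm{Un}$ on them. But all of the content lies in the claim that $U\circ\mathrm{Un}$ preserves colimits, and nothing in the proposal proves it.

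Your candidate right adjoint $\mc E\mapsto\Fun_{/\mc C}(\mc C_{/-},\mc E)$ is, by the end formula for natural transformations, formally the right adjoint of $\Phi$. So ``$\mathrm{Un}$ is left adjoint to it'' restates the theorem rather than advancing it. Your justification, that a functor over $\mc C$ out of $\mathrm{Un}F$ ``amounts to a compatible family'' of functors $F(x)\times\mc C_{/x}\to\mc E$, is precisely the universal property of the coend, i.e.\ the conclusion. Cartesian transport only shows that the maps $F(x)\times\mc C_{/x}\to\mathrm{Un}F$ jointly reach every object and generate every morphism. It says nothing about whether the relations the coend imposes, with their higher coherences over the twisted arrow category, are exactly those holding in $\mathrm{Un}F$.

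The fallback does not repair this, because taking the fibre over $y$ does not commute with colimits in $\Cat_{/\mc C}$. For example, let $y\xrightarrow{i}z\xrightarrow{r}y$ be a retraction. Glue $\{a\to c\}$ lying over $i$ to $\{c\to b\}$ lying over $r$ along $c$. The result is $[2]\to\mc C$, whose fibre over $y$ contains a morphism $a\to b$ that lies in no fibre of the pieces. So $\int^x F(x)\times\map(y,x)\simeq F(y)$ computes the colimit of the fibres, not the fibre of the colimit, and ``modulo the relevant localisation'' hides the entire argument. Likewise, mapping spaces in a colimit of categories admit no pointwise formula, so full faithfulness is the whole difficulty rather than a routine check.

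What is missing is an independent proof that $U$ preserves colimits, or equivalently that it admits a right adjoint $V$ (a cofree Cartesian fibration). Granting that, your plan does close. Writing $\mathrm{St}=\mathrm{Un}^{-1}$, Yoneda and Step 1 give
\[
\map([n],\mathrm{St}(V\mc E)(x))\simeq\map_{/\mc C}([n]\times\mc C_{/x},\mc E)\simeq\map([n],\Fun_{/\mc C}(\mc C_{/x},\mc E)).
\]
So $\mathrm{St}\circ V$ is forced to be your candidate, and uniqueness of left adjoints gives $U\circ\mathrm{Un}\simeq\Phi$. In \cite{gepner2017lax} a genuinely new ingredient enters at this point: their explicit description of the free Cartesian fibration on $\mc E\to\mc C$ as $\mc E\times_{\mc C}\Fun([1],\mc C)\to\mc C$. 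Your proposal has nothing playing that role.
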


Finally, we will need a Fubini theorem for coends which is a consequence of colimits commuting with colimits.

\begin{proposition}\label{prop:fubiniforcoends}
    Let $F : \mc C^\mathrm{op}\times \mc D^\mathrm{op}\to\mc E$ where $\mc E$ is tensored over $\Ani$. For every $X : \mc C\to\Ani$ and $Y : \mc D\to\Ani$ there are canonical equivalences
    \begin{equation*}
        \begin{aligned}
            & \int^{(c,d)\in\mc C\times\mc D} F(c,d)\otimes X(c)\otimes Y(d) \\
            \simeq & \int^{c\in\mc C}\Big(\int^{d\in \mc D} F(c,d)\otimes Y(d)\Big)\otimes X(c) \\
            \simeq & \int^{d\in\mc D}\Big(\int^{c\in \mc C} F(c,d)\otimes X(c)\Big)\otimes Y(d).
        \end{aligned}
    \end{equation*}
\end{proposition}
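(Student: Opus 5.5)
The plan is to reduce the Fubini theorem for coends to the statement that colimits commute with colimits, together with a cofinality identification of twisted arrow categories. By definition, the left-hand coend is the colimit over $\Tw(\mc C\times\mc D)$ of the functor
\begin{equation*}
    (c\to c', d\to d')\longmapsto F(c,d)\otimes X(c')\otimes Y(d').
\end{equation*}
Here $F$ is contravariant, so the variance has to be arranged so that $F$ is evaluated on one end and $X,Y$ on the other. The first step is to observe that $\Tw(-)$ preserves products, i.e.\ $\Tw(\mc C\times\mc D)\simeq \Tw(\mc C)\times\Tw(\mc D)$ compatibly with the projections to $(\mc C\times\mc D)^\op\times(\mc C\times\mc D)$. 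This follows from the nerve description $N(\Tw(\mc C))_n = \map_{\Cat}([2n+1],\mc C)$, which visibly commutes with products in $\mc C$. It can equally be seen from the definition as the unstraightening of $\map_{\mc C\times\mc D}\simeq \map_{\mc C}\times\map_{\mc D}$, since unstraightening sends external products of functors to products of fibrations.

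Next I would write the colimit over the product $\Tw(\mc C)\times\Tw(\mc D)$ as an iterated colimit. That is,
\begin{equation*}
    \colim_{\Tw(\mc C)\times\Tw(\mc D)} \simeq \colim_{\Tw(\mc C)}\colim_{\Tw(\mc D)},
\end{equation*}
which is the standard fact that a colimit over a product is computed by left Kan extending along the projection, the fibers being $\Tw(\mc D)$. This iteration requires the colimits to exist. That holds if $\mc E$ is assumed to have them, which is implicit in the statement being meaningful, or else one interprets the equivalence as saying that either side exists iff the other does.

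The third step moves the tensor factor out of the inner colimit. For fixed $(c\to c')\in\Tw(\mc C)$, the inner colimit over $\Tw(\mc D)$ is of $F(c,d)\otimes X(c')\otimes Y(d')$, where I use the associativity and symmetry of the $\Ani$-tensoring, namely $(A\times B)\otimes e\simeq A\otimes(B\otimes e)$. Since $X(c')\otimes(-)$ is a left adjoint (to the powering $[X(c'),-]$), it preserves colimits. Hence the inner colimit is $\big(\int^{d}F(c,d)\otimes Y(d)\big)\otimes X(c')$, and the outer colimit then gives the second expression. The third expression follows symmetrically, by swapping the order of iteration using $\Tw(\mc C)\times\Tw(\mc D)\simeq\Tw(\mc D)\times\Tw(\mc C)$.

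The only step requiring any real care is the first, the product compatibility of $\Tw$ over the projection maps. I would prove it directly from the nerve formula, since the join $[n]\star[n]^\op\simeq[2n+1]$ is independent of the target category and $\map_{\Cat}([2n+1],-)$ preserves products. The remaining steps are formal consequences of colimits commuting with colimits and of tensoring being a left adjoint in each variable.
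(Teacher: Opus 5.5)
Your proof is correct and follows the same approach as the paper, which cites the general Fubini rule for $\infty$-coends \cite[Theorem 2.2]{loregian2018fubini} and then uses that tensoring commutes with colimits. Your first two steps (the equivalence $\Tw(\mc C\times\mc D)\simeq\Tw(\mc C)\times\Tw(\mc D)$ over the projections, and computing a colimit over a product as an iterated colimit) simply reprove that cited result.

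One small correction: the statement does not assume $\mc E$ is powered, so do not justify that $X(c')\otimes(-)$ preserves colimits by exhibiting it as a left adjoint to $[X(c'),-]$. Argue directly from the defining equivalence $\map(A\otimes e,e')\simeq\map(A,\map(e,e'))$, which turns colimits in $e$ into limits of mapping spaces.
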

\begin{proof}
    This is a consequence of a more general Fubini theorem for coends given by \cite[Theorem 2.2]{loregian2018fubini} along with tensoring commuting with colimits.
\end{proof}

\subsubsection{Nerve and realization}\label{subsubsec:nerverealization}

Let $\mc C$ be a category and
\begin{equation*}
    S_{\mc C} : S\longrightarrow \mc C
\end{equation*}
a functor. We may form the \emph{nerve} associated to $S_{\mc C}$ which is the functor
\begin{equation*}
    \begin{tikzcd}[row sep=0pt]
        N : \mc C\arrow{r} & \PSh(S) \\
        \qquad x\arrow[mapsto]{r} & \map_{\mc C}(S_{\mc C}(-), x).
    \end{tikzcd}
\end{equation*}
Assuming that $\mc C$ is cocomplete, we may also form the \emph{realization} functor which we take to be the left Kan extension of $S_{\mc C} : S\to \mc C$ along the Yoneda embedding $S\to \PSh(S)$:
\begin{equation*}
    \begin{tikzcd}
    	S &&&& {\mc C} \\
    	\\
    	&& {\PSh(S)}
    	\arrow[""{name=0, anchor=center, inner sep=0}, "{S_{\mc C}}", from=1-1, to=1-5]
    	\arrow["{\mc Y}"', from=1-1, to=3-3]
    	\arrow["{|-|\;\coloneqq\; \Lan_{\mc Y}(S_{\mc C})}"', from=3-3, to=1-5]
    	\arrow[shorten >= 5pt, shorten <= 5pt, Rightarrow, from=0, to=3-3]
    \end{tikzcd}
\end{equation*}

We then have the following fact:

\begin{proposition}\label{prop:coendformulaforrealization}
    Realization $|-|$ is left adjoint to the nerve $N$ and if $\mc C$ is tensored over $\Ani$ the realization may be computed as
    \begin{equation*}
        |X|\simeq \int^{s\in S} X(s)\otimes S_{\mc C}(s).
    \end{equation*}
\end{proposition}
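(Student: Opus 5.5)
The plan is to obtain both claims from the universal property of left Kan extension along the Yoneda embedding, combined with the Yoneda lemma and the coend formula for left Kan extensions. The order is: first the adjunction, then the coend formula.

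\emph{Adjunction.} Since $\mc C$ is cocomplete, $|-| = \Lan_{\mc Y}(S_{\mc C})$ exists. Because $\PSh(S)$ is the free cocompletion of $S$, this Kan extension is the unique colimit-preserving functor extending $S_{\mc C}$ along $\mc Y$. Every presheaf $X$ is canonically the colimit of representables over its category of elements, i.e.\ $X\simeq \colim_{(s, x\in X(s))}\mc Y(s)$. Hence, for $c\in\mc C$,
\begin{equation*}
    \map_{\mc C}(|X|, c)\simeq \lim_{(s,x)}\map_{\mc C}(S_{\mc C}(s), c)\simeq \lim_{(s,x)} N(c)(s)\simeq \lim_{(s,x)}\map_{\PSh(S)}(\mc Y(s), N(c))\simeq \map_{\PSh(S)}(X, N(c)).
\end{equation*}
The first equivalence uses that $|-|$ preserves colimits and agrees with $S_{\mc C}$ on representables. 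The third is the Yoneda lemma. Each step is natural in $X$ and $c$, which gives $|-|\dashv N$.

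A cleaner way to organize this is to check the equivalence on representables, where it reduces to Yoneda: $\map(S_{\mc C}(s), c) = N(c)(s) = \map(\mc Y s, N c)$. Both sides are then functors of $X$ taking colimits to limits, so they agree on all of $\PSh(S)$.

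\emph{Coend formula.} Apply the previously stated proposition computing left Kan extensions into a cocomplete category tensored over $\Ani$, taking $p=\mc Y$ and $F=S_{\mc C}$. This yields
\begin{equation*}
    |X|\simeq \int^{s\in S}\map_{\PSh(S)}(\mc Y(s), X)\otimes S_{\mc C}(s).
\end{equation*}
By Yoneda, $\map_{\PSh(S)}(\mc Y(s),X)\simeq X(s)$ naturally in $s$ (as a functor $S^{\op}\to\Ani$), which gives the stated formula.

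The only point requiring care is that the Yoneda identification is natural in $s$ in the variance the coend uses, so that we are substituting an equivalence of functors $S^\op\times S\to\mc C$ inside the coend. This is standard, and I expect no real obstacle.
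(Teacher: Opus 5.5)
Your argument is correct. The paper gives no proof of this proposition; it records it as a standard fact. Your derivation is the standard one:
\begin{itemize}
\item The adjunction follows from the universal property of $\PSh(S)$ as the free cocompletion of $S$ together with the Yoneda lemma. This is also the reasoning the paper uses immediately afterwards to obtain $|X|\simeq\colim_{\mc Y\downarrow X} S_{\mc C}$.
\item The coend formula is the paper's preceding Kan-extension proposition with $p=\mc Y$ and $F=S_{\mc C}$. One then substitutes $\map_{\PSh(S)}(\mc Y(-),X)\simeq X(-)$, which is natural in the contravariant variable, as you note.
\end{itemize}

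The one step worth stating more precisely is ``agree on representables, hence everywhere.'' What makes it work is that restriction along $\mc Y$ is an equivalence from limit-preserving functors $\PSh(S)^{\op}\to\Ani$ to $\Fun(S^{\op},\Ani)$. Both $\map_{\mc C}(|-|,c)$ and $\map_{\PSh(S)}(-,N(c))$ restrict to $N(c)$ as functors of $s$. Since this identification is also natural in $c$, it yields the adjunction.

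Finally, the tensoring hypothesis is automatic: a cocomplete $\mc C$ is tensored over $\Ani$ via $K\otimes c=\colim_K c$.
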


As a consequence of realization being a left adjoint, hence preserving colimits, and every presheaf canonically being the colimit of its simplices, we also get another formula for computing the realization. For this, we recall the arrow category construction.

\begin{definition}
    Let $F : \mc C\to \mc D$ be a functor and $d\in \mc D$. Then the \emph{arrow category} $F\downarrow d$ is the pullback
    \begin{equation*}
        \begin{tikzcd}
        	{F\downarrow d} & {\mc C} \\
        	{\mc D_{/d}} & {\mc D}
        	\arrow[from=1-1, to=1-2]
        	\arrow[from=1-1, to=2-1]
        	\arrow["\lrcorner"{anchor=center, pos=0.125}, draw=none, from=1-1, to=2-2]
        	\arrow["F", from=1-2, to=2-2]
        	\arrow[from=2-1, to=2-2]
        \end{tikzcd}
    \end{equation*}
    in $\Cat$.
\end{definition}

For a presheaf $X\in \PSh(S)$, we have that $X$ is canonically the colimit
\begin{equation*}
    X \simeq \colim (\hspace{-5pt}\begin{tikzcd}
        \mc Y\downarrow X\arrow{r} & S\arrow{r}{\mc Y} & \PSh(S)
    \end{tikzcd}\hspace{-5pt})
\end{equation*}
for which it follows that
\begin{equation*}
    |X|\simeq \colim(\hspace{-5pt}\begin{tikzcd}
        \mc Y\downarrow X\arrow{r} & S\arrow{r}{S_{\mc C}} & \mc C
    \end{tikzcd}\hspace{-5pt}).
\end{equation*}
In the sequel, we will make use of both this colimit formula and the coend formula for computing the realization.

\subsubsection{The (Rezk) nerve and squares functor}

We now recall two instances of nerve and realization that we will make heavy use of in the sequel.

\begin{definition}
    The \emph{(Rezk) nerve} $N : \Cat\to \PSh(\Delta)$ is the nerve construction of \Cref{subsubsec:nerverealization} associated to the inclusion $\Delta\hookrightarrow \Cat$. Its corresponding left adjoint/realization functor $\ascat : \PSh(\Delta)\to \Cat$ is referred to as the \emph{associated category} functor.
\end{definition}

A fundamental result in category theory is that the nerve is fully faithful.

\begin{theorem}[Joyal--Tierney \cite{joyal2007quasi}]\label{thm:nervefullyfaithful}
    The nerve functor $N : \Cat \to \PSh(\Delta)$ is fully faithful.
\end{theorem}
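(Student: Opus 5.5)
The plan is to deduce full faithfulness from the fact that every category is a colimit of simplices in a way the nerve detects. Concretely, for categories $\mc C,\mc D$ we must show that
\begin{equation*}
    \map_{\Cat}(\mc C,\mc D)\longrightarrow \map_{\PSh(\Delta)}(N\mc C, N\mc D)
\end{equation*}
is an equivalence. By the adjunction $\ascat\dashv N$ of \Cref{prop:coendformulaforrealization}, this map is identified with precomposition by the counit $\ascat(N\mc C)\to\mc C$. It therefore suffices to show that the counit is an equivalence for every $\mc C$, i.e.\ that
\begin{equation*}
    \colim\big(\mc Y\downarrow N\mc C\to \Delta\hookrightarrow\Cat\big)\simeq \mc C .
\end{equation*}

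First I would fix a model of $\Cat$ in which the statement is checkable. Take $\Cat$ to be the full subcategory of $\PSh(\Delta)$ on complete Segal spaces. By Rezk's theorem this is a left Bousfield localization, and under it $N$ is just the inclusion. Full faithfulness of $N$ is then tautological, so the real content moves to one claim: the inclusion $\Delta\hookrightarrow\Cat$ in our model-independent sense agrees with the Yoneda image $[n]\mapsto\Delta[n]$, and these are already complete Segal. That claim is clear, since representables satisfy the Segal and completeness conditions.

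If instead one insists on a fully model-independent argument, starting from a presentation of $\Cat$ as $\infty$-categories, I would argue as follows.
\begin{enumerate}
    \item Show that $\Delta$ is dense in $\Cat$, meaning that the canonical colimit above recovers $\mc C$. Equivalently, a functor $\mc C\to\mc D$ is determined by its values on all simplices $[n]\to\mc C$ compatibly in $\Delta$.
    \item Reduce this to the statement that mapping spaces are detected. For objects, $N\mc C_0=\mc C^{\simeq}$. For morphism spaces, $\map_{\mc C}(x,y)$ is the fiber of $N\mc C_1\to N\mc C_0\times N\mc C_0$.
    \item Use the Segal condition, $N\mc C_n\simeq N\mc C_1\times_{N\mc C_0}\cdots\times_{N\mc C_0}N\mc C_1$, to see that composition is encoded in $N\mc C$.
    \item Use completeness to see that equivalences in $\mc C$ are exactly the degenerate $1$-simplices up to homotopy. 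This rules out any collapse of $\mc C^{\simeq}$.
\end{enumerate}

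The main obstacle is step (i): density of $\Delta$ in $\Cat$ without a chosen model. Any proof must eventually identify $\Cat$ with complete Segal spaces, which is exactly the Joyal--Tierney comparison between quasicategories and complete Segal spaces. I would therefore not reprove it. I would cite \cite{joyal2007quasi}: the Quillen equivalence between the Joyal model structure and Rezk's model structure, composed with the identification of the quasicategory nerve, gives that $N$ lands in complete Segal spaces and is an equivalence onto them. Fully faithful then follows.
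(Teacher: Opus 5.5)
Your proposal is correct and matches the paper. The paper gives no argument of its own for this statement and simply cites \cite{joyal2007quasi}. Your reduction to the counit $\ascat(N\mc C)\to\mc C$ being an equivalence, together with the observation that in Rezk's complete Segal space model $N$ becomes the inclusion, correctly places all the real content in that cited comparison; your heuristic steps (ii)--(iv) are therefore not needed.
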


We also have a higher analogue which remembers all commutative squares in $\mc C$ rather than just strings of composable arrows.

\begin{definition}\label{def:squaresfunc}
    The \emph{squares functor} $\Sq : \Cat \to \PSh(\Delta^{\times 2})$ is the nerve construction of \Cref{subsubsec:nerverealization} associated to the functor
    \begin{equation*}
        \begin{tikzcd}[row sep=0pt]
            \Delta^{\times 2} \arrow{r} & \Cat \\
            ([n],[m])\arrow[mapsto]{r} & {[n]\times [m]}.
        \end{tikzcd}
    \end{equation*}
    We denote its left adjoint/realization functor by $\Gr : \PSh(\Delta^{\times 2})\to \Cat$.
\end{definition}

\begin{definition}
    Define the \emph{horizontal embedding} to be the functor
    \begin{equation*}
        \begin{tikzcd}[row sep=0pt]
            \Hor : \Cat \arrow{r} & \PSh(\Delta^{\times 2}) \\
            \qquad \mc C\arrow[mapsto]{r} & ([n], [m])\mapsto \map_{\Cat}([m], \mc C)
        \end{tikzcd}
    \end{equation*}
    and the \emph{vertical embedding} to be the functor
    \begin{equation*}
        \begin{tikzcd}[row sep=0pt]
            \Ver : \Cat \arrow{r} & \PSh(\Delta^{\times 2}) \\
            \qquad \mc C\arrow[mapsto]{r} & ([n], [m])\mapsto \map_{\Cat}([n], \mc C).
        \end{tikzcd}
    \end{equation*}
\end{definition}

\begin{remark}
    To square the terminology, recall that we have adopted (row, column) notation for all 2D diagrams.
\end{remark}

These three functors allow one to produce bisimplicial spaces from categories and we have the following analogue of \Cref{thm:nervefullyfaithful}.

\begin{theorem}
    The functors $\Hor$, $\Ver$ and $\Sq$ are fully faithful. Moreover, the natural maps $\Hor(\mc C)\to \Sq(\mc C)$ and $\Ver(\mc C)\to\Sq(\mc C)$ transpose to equivalences $\Gr\Hor(\mc C)\simeq \mc C$ and $\Gr\Ver(\mc C)\simeq \mc C$ for all $\mc C$.
\end{theorem}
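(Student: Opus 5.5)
The three claims reduce to the single-simplicial statement \Cref{thm:nervefullyfaithful} together with an explicit computation of $\Gr$ on representables. The key structural fact is that $\Hor$ is precomposition of the nerve with the projection $\pi_2 : \Delta^{\times 2}\to\Delta$, $([n],[m])\mapsto [m]$: indeed $\Hor(\mc C)=\pi_2^\ast N(\mc C)$. Similarly $\Ver=\pi_1^\ast N$ and $\Sq(\mc C)_{n,m}=\map([n]\times[m],\mc C)$. The $\Ver$ case is symmetric to the $\Hor$ case, so I treat only $\Hor$ and $\Sq$.

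\textbf{Step 1: $\pi_2^\ast$ is fully faithful.} Since $\Delta$ has a terminal object $[0]$, the projection $\pi_2$ admits the section $[m]\mapsto([0],[m])$, and $\pi_2$ is right adjoint to that section. Hence $\pi_2^\ast$ has left adjoint $\pi_{2!}$ given by restriction along the section, i.e. $(\pi_{2!}X)_m = X_{0,m}$. The counit $\pi_{2!}\pi_2^\ast Y\to Y$ is then the identity, so $\pi_2^\ast$ is fully faithful. Composing with \Cref{thm:nervefullyfaithful} shows that $\Hor$ is fully faithful.

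\textbf{Step 2: $\Gr\Hor(\mc C)\simeq\mc C$.} Both $\Gr$ and $\ascat\circ\pi_{2!}$ are colimit-preserving. On representables they give
\begin{equation*}
\Gr\Delta[n,m]=[n]\times[m], \qquad \ascat\,\pi_{2!}\Delta[n,m]=\ascat\,\Delta[m]=[m].
\end{equation*}
The projections $[n]\times[m]\to[m]$ are natural and assemble to a natural map $\Gr\to\ascat\,\pi_{2!}$. On objects in the image of $\pi_2^\ast$, I would like this map to be an equivalence. Writing $\pi_2^\ast X$ as a colimit of $\Delta[n,m]$, we get $\pi_2^\ast X\simeq \Delta[\bullet]\boxtimes X$, the external product of the terminal simplicial space with $X$. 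Then $\Gr(\pi_2^\ast X)$ is the colimit over $n$ in $\Delta^\op$ of $[n]\times\ascat(X)$. Since $\Cat$ is cartesian closed, this equals $|\Delta^\op|\times\ascat X\simeq\ascat X$, using that $\Delta^\op$ is weakly contractible and that this colimit is the categorical realization of a constant diagram after localization. Taking $X=N\mc C$ gives $\ascat N\mc C\simeq\mc C$. It remains to check that this equivalence agrees with the transpose of $\Hor(\mc C)\to\Sq(\mc C)$; this is a check on representables.

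\textbf{Step 3: $\Sq$ is fully faithful.} The composite $\Hor\to\Sq$ induces, for any $\mc C,\mc D$,
\begin{equation*}
\map(\mc C,\mc D)\to\map(\Sq\mc C,\Sq\mc D)\to\map(\Hor\mc C,\Sq\mc D)\simeq\map(\Gr\Hor\mc C,\mc D)\simeq\map(\mc C,\mc D).
\end{equation*}
The total composite is the identity by Step 2, so it suffices to show that restriction along $\Hor\mc C\to\Sq\mc C$ is injective on mapping spaces into $\Sq\mc D$. For this I would show that $\Sq\mc D$ is local with respect to the maps $\Hor\Delta[m]\to\Delta[n,m]$ after saturation. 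Concretely, a map $\Sq\mc C\to\Sq\mc D$ is determined by its row-functor $\Hor$-part plus compatible vertical parts, and since $\Gr$ of the inclusion $\Delta[n,0]\boxtimes\cdots$ inclusions into $\Delta[n,m]$ is an equivalence after applying $\Gr$, the map $\Gr\Hor\mc C\to\Gr\Sq\mc C$ is an equivalence, i.e. $\Gr\Sq\mc C\simeq\mc C$. Fully faithfulness then follows from the counit $\Gr\Sq\to\id$ being an equivalence.

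\textbf{Main obstacle.} The delicate point is the colimit computation in Step 2: showing that a $\Delta^\op$-indexed colimit of the constant diagram $[n]\times\mc A$ (with degeneracy and face maps acting trivially on $\mc A$) is $\mc A$. I would handle it via the coend formula of \Cref{prop:coendformulaforrealization} and the Fubini theorem \Cref{prop:fubiniforcoends}, splitting off the $n$-variable coend $\int^{n}\ast\otimes[n]$. This coend is the realization of the terminal simplicial space, namely $\ascat(\ast)=[0]$.
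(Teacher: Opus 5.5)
\textbf{Verdict: there is a genuine gap in Step 3.} Steps 1--2 essentially handle $\Hor$ and $\Ver$, modulo fixable slips noted at the end. Step 3 carries all the real content of the theorem, and it is not proved. (The paper does not prove this directly; it outsources exactly this to Abell\'an and Loubaton.)

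\textbf{The gap.} Full faithfulness of $\Sq$ is equivalent to the counit $\Gr\Sq(\mc C)\to\mc C$ being an equivalence. Your reduction lands correctly on this claim, but the justification does not establish it.
\begin{itemize}
\item The locality assertion is false. We have $\Hor(\Delta[m])=\Delta[0,m]$, so locality of $\Sq(\mc D)$ with respect to $\Delta[0,m]\to\Delta[n,m]$ would say $\map([n]\times[m],\mc D)\simeq\map([m],\mc D)$.
\item The appeal to ``$\Gr$ of inclusions into $\Delta[n,m]$'' does not work either. Inclusions assembled from purely horizontal and vertical pieces do not become equivalences under $\Gr$: the spine $\Delta[n,0]\cup_{\Delta[0,0]}\Delta[0,m]$ goes to $[n]\amalg_{[0]}[m]\to[n]\times[m]$.
\item In any case, you never exhibit $\Sq(\mc C)$ as obtained from $\Hor(\mc C)$ by attaching such cells. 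The $(n,m)$-simplices of $\Sq(\mc C)$ are genuinely two-dimensional grids not in the image of $\Hor(\mc C)$.
\end{itemize}
So the key claim is asserted, not shown.

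\textbf{A fix using the paper's tools.} Use \Cref{prop:diaglocalequalsgr}. Its proof uses only $\Gr\Delta[i,j]\simeq[i]\times[j]$ and full faithfulness of $N$, so there is no circularity.
\begin{itemize}
\item Up to the equivalence $\ascat\diag\Sq(\mc C)\simeq\Gr\Sq(\mc C)$, the counit becomes $\ascat(\delta^\ast)$. Here $\delta^\ast\colon\diag\Sq(\mc C)\to N(\mc C)$ is restriction along the diagonal $\delta\colon[n]\to[n]\times[n]$.
\item Restriction along the first projection gives a map $\pr_1^\ast\colon N(\mc C)\to\diag\Sq(\mc C)$ with $\delta^\ast\pr_1^\ast=\id$.
\item The other composite is $\pr_1^\ast\delta^\ast=(\delta\pr_1)^\ast$. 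Set $\mu(i,j)=(i,\min(i,j))$. This gives a zigzag $\id\Leftarrow\mu\Rightarrow\delta\pr_1$ of transformations natural in $[n]$.
\item View such a $\beta\colon u\Rightarrow v$ as a functor $[n]\times[n]\times[1]\to[n]\times[n]$. It induces a homotopy $\diag\Sq(\mc C)\times\Delta[1]\to\diag\Sq(\mc C)$, given by $(F,\theta)\mapsto F\circ\beta\circ\bigl((i,j)\mapsto(i,j,\theta(i))\bigr)$.
\item For a vertex $x$, this homotopy sends $(s_0x,\id_{[1]})$ to the degenerate edge $s_0x$. Since $\ascat$ preserves products, the homotopy becomes a natural isomorphism $\ascat(u^\ast)\simeq\ascat(v^\ast)$.
\item Hence $\ascat(\pr_1^\ast)\ascat(\delta^\ast)\simeq\id$, and $\ascat(\delta^\ast)$ is an equivalence.
\end{itemize}

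\textbf{Smaller corrections to Steps 1--2.}
\begin{itemize}
\item In Step 1 the adjunction runs the other way. Since $[0]$ is terminal, $\pi_2$ is left adjoint to the section $\sigma$, so $\pi_2^\ast\dashv\sigma^\ast$. Restriction to $X_{0,\bullet}$ is therefore the right adjoint of $\pi_2^\ast$. Full faithfulness follows because the unit $Y\to\sigma^\ast\pi_2^\ast Y$ is the identity.
\item The genuine left adjoint $\pi_{2!}$ is realization in the first variable. That is what your formula $\pi_{2!}\Delta[n,m]=\Delta[m]$ in Step 2 actually uses; it is inconsistent with your claim $(\pi_{2!}X)_m=X_{0,m}$.
\item Step 2 is immediate from $\pi_2^\ast\Delta[m]\simeq\Delta[0,m]$. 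Then $\Gr\circ\pi_2^\ast$ and $\ascat$ are colimit-preserving and agree on representables. Equivalently, passing to right adjoints, $\sigma^\ast\Sq\simeq N$, which also gives compatibility with the transpose of $\Hor(\mc C)\to\Sq(\mc C)$ for free.
\item The colimit you describe is indexed by $\Delta$, which has terminal object $[0]$. It is not indexed by $\Delta^\op$, and no groupoid realization is involved.
\end{itemize}
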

\begin{proof}
    This follows from the much more difficult results \cite[Theorem 5]{abellan2023comparing} and \cite[Theorem A]{loubaton2025squares} which prove similar statements but for extensions of these three functors to the category of $2$-categories.
\end{proof}

We also remark that the three functors $\Sq$, $\Hor$ and $\Ver$ extend along the Rezk nerve to arbitrary simplicial spaces. Indeed, we may extend $\Sq$ to the functor
\begin{equation*}
    \begin{tikzcd}[row sep=0pt]
        \Sq : \PSh(\Delta)\arrow{r} & \PSh(\Delta^{\times 2}) \\
        \qquad X_\bullet \arrow[mapsto]{r} & ([n], [m])\mapsto \map_{\PSh(\Delta)}(\Delta[n]\times \Delta[m], X_\bullet).
    \end{tikzcd}
\end{equation*}
In this case, $\Sq(N(\mc C))\simeq \Sq(\mc C)$ so that the extension is compatible with \Cref{def:squaresfunc}. One may similarly extend $\Hor$ and $\Ver$. We will occasionally use these extensions.

\begin{remark}
    There are natural maps $\Hor(X_{0,\bullet})\to X$ and $\Ver(X_{\bullet,0})\to X$ for every bisimplicial space $X$ induced on $(n,m)$-simplices by precomposition with the collapse maps $\Delta[n,m]\to \Delta[n,0]$ and $\Delta[n,m]\to \Delta[0,m]$.
\end{remark}

\subsubsection{Factorization systems}

\begin{definition}[Joyal]\label{def:factsystem}
    Let $\mc C$ be a category. A \emph{factorization system} on $\mc C$ is a pair of wide subcategories $E,\, M\subseteq\mc C$ such that
    \begin{enumerate}
        \item every morphism $f \in \mc C$ has a factorization $f \simeq m\circ e$ for some $m\in M$ and $e\in E$
        \item for every commutative square of the form
        \begin{equation*}
            \begin{tikzcd}
                \bullet\arrow{r}\arrow{d}{e} & \bullet\arrow{d}{m} \\
                \bullet\arrow{r}\arrow[dashed]{ur} & \bullet
            \end{tikzcd}
        \end{equation*}
        with $e\in E$ and $m\in M$, there exists a contractible space of dashed fillers.
    \end{enumerate}
\end{definition}

We define the category of categories with factorization systems as the full subcategory of $\Fun(\Lambda^2_2, \Cat)$ spanned by cospans $E\hookrightarrow \mc C\hookleftarrow M$ of wide inclusions where $(E,M)$ forms a factorization system on $\mc C$. We denote this category by $\mathrm{OFS}$.

\begin{remark}
    The conditions in \Cref{def:factsystem} may be shown to be equivalent to every morphism in $\mc C$ having a contractible space of factorizations into a morphism of $E$ followed by a morphism of $M$ \cite[Proposition 5.2.8.17]{lurie2009higher}. Note that \Cref{def:factsystem} drops the stability under retracts condition of \cite[Definition 5.2.8.8]{lurie2009higher}, but this condition is redundant by \cite[Sec 1.1]{galvez2018decomposition}.
\end{remark}

\begin{example}
    For any two categories $\mc C$, $\mc D$, $\mc C\times \mc D$ has a factorization system given by the pair $(\mc C^\simeq\times \mc D, \mc C\times \mc D^\simeq)$. We denote $\mc C\times \mc D$ with this factorization system by $\mc C\bar{\times}\mc D$.
\end{example}

\begin{definition}
    We let $\Fact : \mathrm{OFS}\to \PSh(\Delta^{\times 2})$ denote the nerve construction of \Cref{subsubsec:nerverealization} applied to the functor
    \begin{equation*}
        \begin{tikzcd}[row sep=0pt]
            \Delta\times \Delta \arrow{r} & \mathrm{OFS} \\
            ([n], [m]) \arrow[mapsto]{r} & {[n]\bar\times [m]}
        \end{tikzcd}
    \end{equation*}
\end{definition}

We remark that there is a natural inclusion $\Fact \mc C\to \Sq(\mc C)$ for all $\mc C$ induced by the forgetful functor $\mathrm{OFS}\to\Cat$.



\subsubsection{Pushouts of categories}

We will need to compute the pushout of posets in the category $\Cat$. For this we will use the following general criterion.

\begin{proposition}\label{prop:pushoutsofposets}
    Let $P$ be a poset with sub-posets $A$, $B\subseteq P$. Suppose that for all $a\in A\setminus B$ and $b\in B\setminus A$ we have that
    \begin{enumerate}
        \item $A\cap B\cap [b, a]$ is weakly contractible whenever $b\le a$
        \item $A\cap B\cap [a, b]$ is weakly contractible whenever $a\le b$
    \end{enumerate}
    then
    \begin{equation*}
        \begin{tikzcd}
        	{A\cap B} & A \\
        	B & {A\cup B}
        	\arrow[from=1-1, to=1-2]
        	\arrow[from=1-1, to=2-1]
        	\arrow[from=1-2, to=2-2]
        	\arrow[from=2-1, to=2-2]
        \end{tikzcd}
    \end{equation*}
    is a pushout in $\Cat$ where all intersections and unions are taken inside $P$.
\end{proposition}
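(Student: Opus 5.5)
Since the nerve $N : \Cat\to\PSh(\Delta)$ is fully faithful (\Cref{thm:nervefullyfaithful}) and colimit-preserving into $\Cat$ after applying $\ascat$, I reduce the statement to a statement about simplicial spaces. Concretely, pushouts in $\Cat$ are computed as $\ascat$ of the pushout of nerves in $\PSh(\Delta)$. So it suffices to show that the canonical map
\begin{equation*}
    N(A)\sqcup_{N(A\cap B)} N(B)\longrightarrow N(A\cup B)
\end{equation*}
becomes an equivalence after applying $\ascat$. Equivalently, for every category $\mc D$, restriction induces an equivalence
\begin{equation*}
    \map_{\Cat}(A\cup B,\mc D)\simeq \map_{\Cat}(A,\mc D)\times_{\map_{\Cat}(A\cap B,\mc D)}\map_{\Cat}(B,\mc D).
\end{equation*}

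I would prove this by exhibiting $A\cup B$ as built from $A$ and $B$ by a left Kan extension argument. Let $i : A\cup_{A\cap B}B \to A\cup B$ denote the comparison, where the source is the pushout in $\Cat$. It is bijective on objects. Since $A\cup B$ is a poset, a functor out of $A\cup B$ is determined by its values on generating chains. The only morphisms not already in $A$ or $B$ are the relations $b\le a$ or $a\le b$ with $a\in A\setminus B$ and $b\in B\setminus A$. (Any relation between two elements of $A$ lies in $A$, since $A$ is a sub-poset of $P$, and similarly for $B$.)

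The key step is to compute mapping spaces in the pushout. I would model the pushout as $\ascat$ of the simplicial space $N(A)\sqcup_{N(A\cap B)}N(B)$. The mapping space from $b$ to $a$ in $\ascat$ of this is computed by zigzag/Segal completion: it is the classifying space of the category of chains $b\le c_1\le\dots\le c_k\le a$ alternating between $A$ and $B$, glued along $A\cap B$. A cofinality argument shows that any such chain factors through a single intermediate element $c\in A\cap B$ with $b\le c\le a$. The step $b\le c$ lies in $B$ and $c\le a$ lies in $A$, because $c\in A\cap B$, $b\in B$ and $a\in A$. Hence the mapping space is equivalent to the nerve of $A\cap B\cap[b,a]$. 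By hypothesis (i) this is contractible, matching $\map_{A\cup B}(b,a)=\ast$.

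Symmetrically, hypothesis (ii) handles $a\le b$. When $b\not\le a$, the interval is empty, matching the empty mapping space in $A\cup B$. This shows $i$ is fully faithful, hence an equivalence.

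The main obstacle is rigorously justifying this mapping-space computation for the pushout, i.e.\ showing the zigzag colimit reduces to the poset $A\cap B\cap[b,a]$. I would first try to do this via the left Kan extension/coend formula (\Cref{prop:coendformulaforrealization}). Alternatively, I would verify the universal property directly: given $F_A$ and $F_B$ agreeing on $A\cap B$, define $F(b\le a)$ as the colimit over $c\in A\cap B\cap[b,a]$ of $F_A(c\le a)F_B(b\le c)$. Contractibility makes this well defined, and higher coherences follow from the same contractibility applied to intervals in chains.
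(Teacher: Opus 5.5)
Your reduction is fine: pushouts in $\Cat$ are $\ascat$ of pushouts of nerves, and the comparison $A\cup_{A\cap B}B\to A\cup B$ is essentially surjective, so it suffices to check full faithfulness. You are also right that $|A\cap B\cap[b,a]|$ is the correct value for the cross mapping spaces. But the proposal never establishes what the mapping spaces of the $\infty$-categorical pushout actually are, and that is the whole content of the statement.

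\begin{itemize}
\item Your description of $\map(b,a)$ as ``the classifying space of the category of alternating chains'' is asserted rather than derived. It is essentially the Dugger--Spivak necklace description of mapping spaces in the quasicategory model, which you neither invoke nor reprove. The ``cofinality argument'' collapsing to one-crossing chains through $A\cap B$ is also never carried out; you flag this yourself as the main obstacle and leave it open.
\item You only treat the pairs $(b,a)$ and $(a,b)$. Full faithfulness also requires that $\map(a,a')$ in the pushout equals $\map_A(a,a')$ for $a,a'\in A$, and similarly within $B$. A priori, composites such as $a\le c\le b\le c'\le a'$ passing through $B\setminus A$ could create new components or higher homotopy there. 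Your remark that every relation between elements of $A$ lies in $A$ is about the target $A\cup B$, not about the pushout. That full subcategories stay full after such a pushout in $\Cat$ is a real theorem, not a formality.
\item The universal-property alternative has the same hole. A contractible space of choices for $F(b\le a)$ only defines $F$ on edges up to homotopy. You would still have to build $F$ coherently on all chains of $A\cup B$, and show that the restriction map $\map_{\Cat}(A\cup B,\mc D)\to\map_{\Cat}(A,\mc D)\times_{\map_{\Cat}(A\cap B,\mc D)}\map_{\Cat}(B,\mc D)$ has contractible fibres. Saying ``higher coherences follow from the same contractibility'' does not do this.
\end{itemize}

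The paper's proof is a single citation of \cite[Corollary 3.2]{haine2025fully}, a general criterion for when a union of two full subcategories is the pushout over their intersection. That is exactly where the two ingredients you are missing are supplied. First, $A$ and $B$ remain full subcategories of $A\cup_{A\cap B}B$. Second, the cross mapping spaces are given by the one-step coend $\int^{c\in A\cap B}\map(b,c)\times\map(c,a)$, that is, by factorizations through $A\cap B$. For posets this coend is the classifying space of $A\cap B\cap[b,a]$, which is empty when $b\not\le a$. So the hypotheses are precisely what is needed. Pairs lying together in $A$ or in $B$ are handled by the full faithfulness, which is why only $a\in A\setminus B$ and $b\in B\setminus A$ appear in the hypotheses. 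To turn your sketch into a proof, either invoke that result or prove the coend formula for pushouts along fully faithful functors yourself.
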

\begin{proof}
    This is a direct translation of the conditions of \cite[Corollary 3.2]{haine2025fully} to the case of posets.
\end{proof}

\section{Presentations of categories}\label{sec:presofcats}

In this section, we develop tools for computing the category $\Gr X$ for $X$ a bisimplicial space. Informally, this left adjoint sends a bisimplicial space $X$ to the following category:
\begin{enumerate}
    \item The \emph{horizontal fragment} $X_{0,\bullet}$ of $X$ has an associated category $\mc H = \ascat X_{0,\bullet}$
    \item The \emph{vertical fragment} $X_{\bullet,0}$ of $X$ has an associated category $\mc V = \ascat X_{\bullet,0}$
    \item $\Gr X$ is then the category freely generated by $\mc H$ and $\mc V$ such that every $(1,1)$-simplex
    \begin{equation}\label{eq:11simplexsquares}
        \begin{tikzcd}
        	\bullet & \bullet \\
        	\bullet & \bullet
        	\arrow["{h'}", from=1-1, to=1-2]
        	\arrow["{v'}"', from=1-1, to=2-1]
        	\arrow["v", from=1-2, to=2-2]
        	\arrow["h", from=2-1, to=2-2]
        \end{tikzcd},\quad v',v\in X_{1,0},\;\; h',h\in X_{0,1},
    \end{equation}
    imposes a relation $v\circ h'\simeq h\circ v'$ in $\Gr X$
    \item Higher simplex spaces impose higher coherences between the equivalences of (iii).
\end{enumerate}
As such, this section may be understood as providing computational tools for computing categories presented in the form $\langle \mc H, \mc V\; |\; R\rangle$ for two generating categories $\mc H$, $\mc V$ and a collection of relations $R$ of the form \eqref{eq:11simplexsquares}.

\subsection{Diagonal approximation}

\begin{definition}
    For a bisimplicial space $X$, define $\diag X$ to be the simplicial space obtained by restriction to the diagonal, i.e.\
    \begin{equation*}
        \begin{tikzcd}
            \Delta^\mathrm{op}\arrow{r}{\diag} & \Delta^\mathrm{op}\times \Delta^\mathrm{op} \arrow{r}{X} & \Ani.
        \end{tikzcd}
    \end{equation*}
\end{definition}

For every category $\mc C$, there is a natural map
\begin{equation*}
    \begin{tikzcd}[row sep=0pt]
        \diag \Sq(\mc C)\arrow{r} & N(\mc C) \\
        (F : [n]\times [n]\to \mc C)\arrow[mapsto]{r} & ([n]\xrightarrow{\diag} [n]\times [n]\xrightarrow{F}\mc C)
    \end{tikzcd}
\end{equation*}
given by restriction to the diagonal. As such, for every bisimplicial space $X$, there is a natural composition along the diagonal map given by the composite
\begin{equation*}
    \begin{tikzcd}
        \diag X\arrow{r}{\diag\circ\mathrm{unit}} &[20pt] \diag \Sq(\Gr X)\arrow{r} & N(\Gr X).
    \end{tikzcd}
\end{equation*}

\begin{proposition}[Diagonal Approximation]\label{prop:diaglocalequalsgr}
    Composition along the diagonal $\diag X\to N(\Gr X)$ transposes to an equivalence $\ascat\diag X\simeq \Gr X$.
\end{proposition}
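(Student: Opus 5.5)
Both $\ascat\circ\diag$ and $\Gr$ are left adjoints $\PSh(\Delta^{\times 2})\to\Cat$. For $\diag$ this holds because restriction along $\diag:\Delta^\op\to\Delta^\op\times\Delta^\op$ has a right adjoint, namely right Kan extension. So both functors preserve colimits. Every bisimplicial space is canonically a colimit of representables $\Delta[n,m]$. It therefore suffices to do two things: check that the comparison map $\ascat\diag X\to\Gr X$ is natural in $X$, and check that it is an equivalence when $X=\Delta[n,m]$.

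Naturality is formal. The map is the transpose of $\diag X\to\diag\Sq(\Gr X)\to N(\Gr X)$. The first arrow is natural because it comes from the unit $X\to\Sq\Gr X$. The second is the restriction-to-diagonal map, which is natural in the category. Hence the transposes assemble into a natural transformation $\ascat\circ\diag\Rightarrow\Gr$ of colimit-preserving functors.

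On representables we have $\Gr\Delta[n,m]\simeq[n]\times[m]$ by construction of the realization. On the other side, $\diag\Delta[n,m]$ is the simplicial set $\Delta[n]\times\Delta[m]$, the product of representables taken as discrete simplicial spaces, since $\map([k],[n])\times\map([k],[m])$. The comparison map is then the canonical map $\ascat(\Delta[n]\times\Delta[m])\to[n]\times[m]$; one checks this by tracing the unit on the identity simplex. The key input is that $\ascat$ preserves finite products. Equivalently, $N([n])\times N([m])\simeq N([n]\times[m])$ is already a nerve, since $N$ preserves products and is fully faithful by \Cref{thm:nervefullyfaithful}. Then $\ascat(N([n]\times[m]))\simeq[n]\times[m]$ by the counit being an equivalence on nerves. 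So the main thing to verify is that $\Delta[n]\times\Delta[m]$, as a simplicial space, is the nerve of the poset $[n]\times[m]$. This is immediate: $\map_\Cat([k],[n]\times[m])=\map([k],[n])\times\map([k],[m])$, because $[n]\times[m]$ is a poset and functors into a product are pairs of functors.

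The only step needing care is matching the comparison map with this identification, so that it is the identity and not merely some equivalence. To handle this, I would compute the transpose of $\diag\Delta[n,m]\to\diag\Sq([n]\times[m])\to N([n]\times[m])$. The unit on $\Delta[n,m]$ sends the tautological simplex to $\id_{[n]\times[m]}$ in $\Sq([n]\times[m])_{n,m}$. Restricting a $(k,k)$-simplex $(\alpha,\beta)$ along the diagonal gives $(\alpha,\beta):[k]\to[n]\times[m]$. This is exactly the identification above, so the map is an equivalence on representables and hence everywhere.
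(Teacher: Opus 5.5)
Your argument is correct and is in substance the paper's. Both reduce to representables, where the two key facts are $\diag\Delta[p,q]\simeq N([p]\times[q])$ (the paper phrases this as $\delta\downarrow([p],[q])\simeq\iota\downarrow[p]\times[q]$) and that the counit $\ascat N([p]\times[q])\to[p]\times[q]$ is an equivalence by full faithfulness of $N$. The only difference is packaging: the paper performs the reduction by checking that the comparison of the colimit diagrams over $\mc Y\circ\delta\downarrow X$ and $\mc Y\downarrow X$ is a pointwise left Kan extension along $\delta_\ast$, whereas you use cocontinuity of $\ascat\circ\diag$ and $\Gr$ together with density of representables, which is slightly more economical.
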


\begin{remark}
    Since the nerve functor is fully faithful, this is equivalent to saying that $\diag X \to N(\Gr X)$ becomes an equivalence after taking associated categories.
\end{remark}

\begin{proof}
    Let $\mc Y : \Delta^{\times 2}\to\PSh(\Delta^{\times 2})$ denote the Yoneda embedding and $\delta : \Delta\to \Delta^{\times 2}$ the diagonal embedding. Since $\Gr \Delta[i,j] \simeq [i]\times [j]$, $\Gr X$ is canonically the colimit of the diagram
    \begin{equation*}
        \begin{tikzcd}[row sep=0pt]
            \mc Y\downarrow X\arrow{r} & \Delta\times\Delta\arrow{r} & \Cat \\
            & ([i],[j])\arrow[mapsto]{r} & {[i]\times [j]}.
        \end{tikzcd}
    \end{equation*}
    On the other hand, $\ascat\diag X$ is canonically the colimit of the diagram
    \begin{equation*}
        \begin{tikzcd}[row sep=0pt]
            \mc Y\circ\delta\downarrow X\arrow{r} & \Delta\arrow{r} & \Cat \\
            & {[i]} \arrow[mapsto]{r} & {[i]}
        \end{tikzcd}
    \end{equation*}
    and the induced map $\ascat\diag X\to\Gr X$ comes from the natural transformation
    \begin{equation}\label{eq:kanextdiag}
        \begin{tikzcd}[row sep=10pt]
            {\mc Y\circ\delta\downarrow X} & \Delta & \Cat \\
            & {\Delta\times\Delta} \\
            {\mc Y\downarrow X}
            \arrow[from=1-1, to=1-2]
            \arrow[Rightarrow, from=1-1, to=2-2]
            \arrow["\delta_\ast"', from=1-1, to=3-1]
            \arrow[from=1-2, to=1-3]
            \arrow[from=2-2, to=1-3]
            \arrow[from=3-1, to=2-2]
        \end{tikzcd}
    \end{equation}
    given pointwise by the diagonal $[i]\to [i]\times [i]$. Thus it suffices to show that \eqref{eq:kanextdiag} is a (pointwise) left Kan extension.\par
    The formula for pointwise left Kan extension requires us to check that, for every $\Delta[p,q]\to X\in \mc Y\downarrow X$, the canonical map
    \begin{equation}\label{eq:formforleftkanext}
        \colim (\delta_\ast \downarrow (\Delta[p,q]\to X)\longrightarrow \mc Y\circ\delta\downarrow X\longrightarrow \Cat)\longrightarrow [p]\times [q]
    \end{equation}
    is an equivalence. For this, we have that
    \begin{equation*}
        \delta_\ast \downarrow (\Delta[p,q]\to X)\simeq \delta\downarrow ([p],[q]).
    \end{equation*}
    Next, notice that every morphism $([n],[n])\to ([p],[q])$ corresponds precisely to a map $[n]\to [p]\times [q]$ of categories, i.e.\ if $\iota : \Delta\to\Cat$ denotes this inclusion then we have an equivalence
    \begin{equation}\label{eq:inclintoallcatmaps}
        \begin{tikzcd}
            \delta\downarrow ([p],[q]) \arrow{r}{\simeq} & \iota\downarrow [p]\times [q].
        \end{tikzcd}
    \end{equation}
    Using these rewritings of the indexing category, \eqref{eq:formforleftkanext} is equivalent to the composite
    \begin{equation*}
        \mathop{\colim}\limits_{\substack{[n]\in\Delta \\ ([n],[n])\to ([p],[q]) \in \delta\downarrow ([p],[q])}} [n]\xrightarrow{\simeq} \mathop{\colim}\limits_{\substack{[n]\in\Delta \\ [n]\to [p]\times [q] \in \iota\downarrow [p]\times [q]}} [n]\to [p]\times [q]
    \end{equation*}
    where the second map is an equivalence as it is the counit of the adjunction $\ascat \dashv N$ evaluated at $[p]\times [q]$, and $N$ is fully faithful. This shows that \eqref{eq:kanextdiag} is a pointwise left Kan extension.
\end{proof}

\begin{remark}
    This should be viewed as a strengthening of the fact that the geometric realization of the diagonal of a bisimplicial space recovers the total realization of the bisimplicial space. Indeed, applying the realization functor $| - | : \Cat\to\Ani$ to the conclusion of \Cref{prop:diaglocalequalsgr} recovers this fact.
\end{remark}

\subsection{Corner approximation}

\begin{definition}
    \begin{enumerate}
        \item We denote by $\Cpt^n$ the free category with factorization system generated by $[n]$. Explicitly, $\Cpt^n$ is the poset $\{(i,j) : 0\le i\le j\le n\}$ with $(i,j)\le (\ell, k)$ whenever $i\le \ell$ and $j\le k$ and factorization system given by horizontal and vertical arrows.
        \item We denote $\CCpt^n = \Fact\Cpt^n$.
    \end{enumerate}
\end{definition}

Note that there is a canonical map $\CCpt^n\to \Sq(\Cpt^n)$ as well as a map $\CCpt^n\to \Delta[n,n] = \Fact [n]\bar\times [n]$ induced by the inclusion $\Cpt^n\subseteq [n]\bar\times [n]$ of categories with factorization systems.

\begin{example}
    The (rotated) Hasse diagram (with the initial object in the upper left corner) of $\Cpt^3$ is given by
    \begin{equation}\label{eq:cpthassediagram}
        \begin{tikzcd}
        	{(0,0)} & {(0,1)} & {(0,2)} & {(0,3)} \\
        	& {(1,1)} & {(1,2)} & {(1,3)} \\
        	&& {(2,2)} & {(2,3)} \\
        	&&& {(3,3)}
        	\arrow[from=1-1, to=1-2]
        	\arrow[from=1-2, to=1-3]
        	\arrow[from=1-2, to=2-2]
        	\arrow[from=1-3, to=1-4]
        	\arrow[from=1-3, to=2-3]
        	\arrow[from=1-4, to=2-4]
        	\arrow[from=2-2, to=2-3]
        	\arrow[from=2-3, to=2-4]
        	\arrow[from=2-3, to=3-3]
        	\arrow[from=2-4, to=3-4]
        	\arrow[from=3-3, to=3-4]
        	\arrow[from=3-4, to=4-4]
        \end{tikzcd}
    \end{equation}
    $\CCpt^3$ is the bisimplicial space represented visually by the above diagram.
\end{example}

Our first goal is to prove the following crucial proposition.

\begin{proposition}\label{prop:GrCCptequivtoCpt}
    The natural map $\CCpt^n \to \Sq \Cpt^n$ transposes to an equivalence $\Gr\CCpt^n \xrightarrow{\simeq} \Cpt^n$.
\end{proposition}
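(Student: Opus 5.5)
The plan is to use Diagonal Approximation (\Cref{prop:diaglocalequalsgr}) to replace $\Gr\CCpt^n$ by $\ascat\diag\CCpt^n$, and then show directly that this simplicial space presents the poset $\Cpt^n$.

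First, compute $\diag\CCpt^n$ explicitly. A $(k,k)$-simplex of $\CCpt^n=\Fact\Cpt^n$ is a map of categories with factorization system $[k]\bar\times[k]\to\Cpt^n$. Since $\Cpt^n$ is a poset, this is just an order-preserving map $[k]\times[k]\to\Cpt^n$ sending horizontal arrows to horizontal arrows and vertical arrows to vertical arrows. Equivalently, it is a pair of monotone maps $a,b:[k]\to[n]$ with $a(r)\le b(c)$ for all $r,c$, namely $(r,c)\mapsto(a(r),b(c))$. The condition reduces to $a(k)\le b(0)$. So $\diag\CCpt^n$ is the discrete simplicial set whose $k$-simplices are pairs of chains $a_0\le\dots\le a_k\le b_0\le\dots\le b_k$ in $[n]$. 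Composition along the diagonal sends such a simplex to the chain $(a_0,b_0)\le\dots\le(a_k,b_k)$ in $\Cpt^n$.

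Second, show that this map $\diag\CCpt^n\to N(\Cpt^n)$ becomes an equivalence after applying $\ascat$. I would compare both sides as colimits of simplices. The nerve of $\Cpt^n$ is a colimit of representables over its nondegenerate simplices, and $\ascat$ of a discrete simplicial set with discrete fibers is computed by the simplex category. The key point is that a chain $(i_0,j_0)\le\dots\le(i_k,j_k)$ in $\Cpt^n$ lifts to $\diag\CCpt^n$ exactly when $i_k\le j_0$. A general chain only satisfies $i_r\le j_r$.

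So the main step is to show that the inclusion of this subsimplicial set of "short" chains into $N(\Cpt^n)$ is inner-anodyne-like, i.e.\ an $\ascat$-equivalence. I would do this by induction on $n$ using \Cref{prop:pushoutsofposets}. Write $\Cpt^n$ as the pushout of $A=\{(i,j):j\le n-1\}\cong\Cpt^{n-1}$ and $B=\{(i,j):i\ge 1\}\cong\Cpt^{n-1}$ along $A\cap B\cong\Cpt^{n-2}$. The contractibility hypotheses should hold because the relevant intervals in $A\cap B$ are products of intervals, each having a minimum or maximum. Then check that $\diag\CCpt^n$ decomposes compatibly, so that $\ascat\diag\CCpt^n$ is the corresponding pushout plus the remaining simplices involving $(0,n)$.

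A cleaner alternative, which I would try first, avoids this induction. A long chain $(i_0,j_0)\le\dots\le(i_k,j_k)$ factors through the 2-simplex-generated relations: each step $(i_r,j_r)\to(i_{r+1},j_{r+1})$ factors as horizontal then vertical. This expresses $N(\Cpt^n)$ as the Segal-type colimit (spine) of composites of diagonal-approximation simplices, and $\ascat$ inverts spine inclusions.

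The main obstacle is this comparison. One must show that adding the missing chains, those with $i_k>j_0$, does not change the associated category. That amounts to checking that each such simplex is obtained by pushouts along spine inclusions $\Delta^{\{0,1\}}\cup\dots\to\Delta[k]$ and $\Lambda^2_1\to\Delta[2]$-type horn fillers whose boundary already lies in $\diag\CCpt^n$. Ordering the missing simplices by $k$ and by the "overlap" $i_k-j_0$ should make this filtration work. The final point is to confirm that the resulting functor is the transpose of the given map $\CCpt^n\to\Sq\Cpt^n$, which holds because the diagonal restriction of the inclusion $[k]\bar\times[k]\to\Cpt^n$ is exactly the chain above.
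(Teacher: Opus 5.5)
Your reduction is sound. Diagonal approximation (\Cref{prop:diaglocalequalsgr}) is proved independently of this proposition, and your computation is correct: $\diag\CCpt^n$ is the sub-simplicial set $X\subseteq N(\Cpt^n)$ of chains $(i_0,j_0)\le\dots\le(i_k,j_k)$ with $i_k\le j_0$. However, given diagonal approximation, the statement that $X\hookrightarrow N(\Cpt^n)$ becomes an equivalence after $\ascat$ is equivalent to the proposition. It is exactly \Cref{cor:formalcomptorealcompequiv}, which the paper derives from the proposition and which is Liu--Zheng's inner-anodyne lemma. So all the content sits in the step you leave open, and neither of your sketches closes it.

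In the first route, $A\cup B=\Cpt^n\setminus\{(0,n)\}$, so it is not a pushout presentation of $\Cpt^n$. Worse, $X$ does not split as $X_A\cup_{X_{A\cap B}}X_B$ even over $A\cup B$: it contains short chains running from $A\setminus B$ to $B\setminus A$, for instance the edge $(0,1)\to(1,n)$. Since $\ascat$ only sees $X$, a pushout decomposition of the poset says nothing about $\ascat X$ unless $X$ decomposes the same way.

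In the second route, the $2$-simplex $(i,j)\le(i,j')\le(i',j')$ realizing the horizontal--vertical factorization of a step lies in $X$ only when $i'\le j$; for example $(0,0)\le(0,1)\le(1,1)\notin X$. Having spines in $X$ is not enough anyway. The four edges of a square, with no $2$-simplices, contain the spines of both nondegenerate $2$-simplices of $N([1]\times[1])$, yet their associated category has two distinct morphisms $(0,0)\to(1,1)$. What you would need is an explicit inner-horn filtration: for each missing simplex, a pivot vertex and a partner face, with all other faces already present. ``Ordering by $k$ and overlap should work'' does not supply this.

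The gap can be closed along your own lines.
\begin{enumerate}
    \item A chain satisfies $i_k\le j_0$ if and only if some $t$ has $i_r\le t\le j_r$ for all $r$, i.e.\ it lies in the box $Q_t=\{(i,j): i\le t\le j\}\cong[t]\times[n-t]$. Hence $X=\bigcup_{t=0}^n N(Q_t)$.
    \item Put $X_t=\bigcup_{s\le t}N(Q_s)$ and $P_t=\{(i,j)\in\Cpt^n : i\le t\}$. Since $Q_s\cap Q_{t+1}\subseteq Q_t\cap Q_{t+1}$ for $s\le t$, one gets $X_t\cap N(Q_{t+1})=N(Q_t\cap Q_{t+1})$.
    \item So $X_{t+1}$ is the union of two discrete sub-presheaves along their intersection, hence a pushout in $\PSh(\Delta)$. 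Applying $\ascat$, and using $\ascat N(Q)\simeq Q$ for posets, gives $\ascat X_{t+1}\simeq\ascat X_t\sqcup_{Q_t\cap Q_{t+1}}Q_{t+1}$.
    \item Inductively $\ascat X_t\simeq P_t$. Indeed $P_t\cup Q_{t+1}=P_{t+1}$ and $P_t\cap Q_{t+1}=Q_t\cap Q_{t+1}$. For $a\in P_t\setminus Q_{t+1}$ and $b\in Q_{t+1}\setminus P_t$ one always has $a\le b$, and $P_t\cap Q_{t+1}\cap[a,b]=\{i_a,\dots,t\}\times\{t+1,\dots,j_b\}$ is a nonempty box, hence contractible. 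So \Cref{prop:pushoutsofposets} gives $P_{t+1}\simeq P_t\sqcup_{Q_t\cap Q_{t+1}}Q_{t+1}$ in $\Cat$.
    \item Since $P_n=\Cpt^n$, this finishes the argument.
\end{enumerate}

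Completed this way, your route genuinely differs from the paper's. The paper stays bisimplicial: it computes $\Gr\Fact P$ for every interval-closed $P$ as a colimit of unit boxes, then glues along columns. Your route is shorter, because it leaves the bisimplicial setting via diagonal approximation and covers $\diag\CCpt^n$ by nerves of the boxes $Q_t$.
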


We will build up to this result in steps by proving the result inductively for certain nice sub-posets of $\Cpt^n$.

\begin{definition}
    \begin{enumerate}
        \item For a sub-poset $P\subseteq \Cpt^n$, we define condition ($\ast$) as
        \begin{enumerate}
            \item[($\ast$)] for every $x,y\in P$, the interval $[x,y]$ in $\Cpt^n$ is contained in $P$
        \end{enumerate}
        \item For $P$ satisfying ($\ast$), $P$ inherits a factorization system from $\Cpt^n$ and we define $\Box^{\mathrm{int},\le 1}_{/P}$ to be the full subcategory of $\mathrm{OFS}_{/P}$ spanned by objects $[i]\bar\times [j]\hookrightarrow P$ which are inclusions of intervals with $i,j\le 1$.
    \end{enumerate}
\end{definition}

\begin{example}
    Morphisms $[i]\bar\times [j] \hookrightarrow P\in \Box^{\mathrm{int},\le 1}_{/P}$ can only take three forms:
    \begin{enumerate}
        \item Inclusions of a ``length one'' horizontal morphism $(i,j)\to (i, j + 1)$
        \item Inclusions of a ``length one'' vertical morphism $(i,j)\to (i + 1, j)$
        \item Inclusions of an ``area one'' square
        \begin{equation*}
            \begin{tikzcd}
            	{(i,j)} & {(i,j + 1)} \\
            	{(i + 1,j)} & {(i+1,j+1)}.
            	\arrow[from=1-1, to=1-2]
            	\arrow[from=1-1, to=2-1]
            	\arrow[from=1-2, to=2-2]
            	\arrow[from=2-1, to=2-2]
            \end{tikzcd}
        \end{equation*}
    \end{enumerate}
\end{example}

\begin{lemma}\label{lemma:smallboxesbuildbigboxes}
    The canonical map
    \begin{equation*}
        \mathop{\mathrm{colim}}_{[i]\bar\times [j]\hookrightarrow [n]\bar\times [m]\in \Box^{\mathrm{int},\le 1}_{/[m]\bar\times [n]}} [i]\times [j]\to [n]\times [m]
    \end{equation*}
    is an equivalence.
\end{lemma}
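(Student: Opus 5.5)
The plan is to induct on the size of the grid, gluing smaller grids along a common edge with \Cref{prop:pushoutsofposets}. The base cases are $n,m\le 1$. There $[n]\bar\times[m]$ is itself a terminal object of $\Box^{\mathrm{int},\le 1}_{/[n]\bar\times[m]}$, since it is an interval inclusion of itself. The colimit is therefore just evaluation at that object, and the map is the identity.

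For the inductive step, assume without loss of generality that $m\ge 2$; the case $n\ge 2$ is symmetric. Set
\begin{equation*}
    A=[n]\times\{0,\dots,m-1\},\qquad B=[n]\times\{m-1,m\},
\end{equation*}
both viewed as sub-posets of $P=[n]\times[m]$. Then $A\cap B=[n]\times\{m-1\}$ and $A\cup B=P$. Next I check the hypotheses of \Cref{prop:pushoutsofposets}. Take $a=(i,j)\in A\setminus B$, so $j\le m-2$, and $b=(k,m)\in B\setminus A$. The only possible relation is $a\le b$, which means $i\le k$. In that case
\begin{equation*}
    A\cap B\cap[a,b]=\{(\ell,m-1): i\le\ell\le k\},
\end{equation*}
which is a nonempty totally ordered set and hence weakly contractible. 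The relation $b\le a$ never holds. Consequently $P\simeq A\amalg_{A\cap B}B$ in $\Cat$.

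It remains to see that the indexing categories decompose compatibly. Every object of $\Box^{\mathrm{int},\le1}_{/P}$ is an interval with side lengths at most $1$, so its column support is either $\{j\}$ or $\{j,j+1\}$. Such a cell therefore lies in $A$ or in $B$, and it lies in both exactly when it is contained in column $m-1$. The indexing categories are posets of intervals ordered by inclusion, with the functor to $\Cat$ sending an interval to itself. It follows that $\Box_{/P}$ is the union of the full subposets $\Box_{/A}$ and $\Box_{/B}$, which meet in $\Box_{/A\cap B}$. This step needs care: $A$, $B$ and $A\cap B$ are again grids of the form $[n']\bar\times[m']$, with interval inclusions into them being exactly interval inclusions into $P$ that land inside them.

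I then want the colimit over $\Box_{/P}$ to be the pushout of the three colimits over $\Box_{/A}$, $\Box_{/B}$ and $\Box_{/A\cap B}$. This is the main obstacle. My approach is to show that the inclusions $\Box_{/A}\to\Box_{/P}$ and $\Box_{/B}\to\Box_{/P}$ are sieves, i.e.\ downward closed. Given that, the nerve of $\Box_{/P}$ is the pushout of nerves along $\Box_{/A\cap B}$, and left Kan extension from the union decomposes the colimit as the pushout of colimits. Downward closure is clear, because sub-intervals of a cell lying in $A$ also lie in $A$. Alternatively, one can apply \Cref{prop:pushoutsofposets} to these posets of cells, where the relevant intervals are small and visibly contractible.

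Finally, by induction each of the three colimits is equivalent to $A$, $B$ and $A\cap B$ respectively. The comparison map from the pushout of colimits to $P$ is then identified with the map $A\amalg_{A\cap B}B\to P$, which we showed above is an equivalence. This completes the induction.
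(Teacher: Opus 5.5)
Your proof is correct. Its overall shape matches the paper's: split the grid along a column or row, check the splitting is a pushout via \Cref{prop:pushoutsofposets}, and apply induction to the pieces. The difference lies in how you connect the colimit over the full cell poset to the pieces.

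\textbf{The paper's route.} It inducts on $n$ alone. The case $n=0$ comes from $[m]\simeq[1]\amalg_{[0]}\cdots\amalg_{[0]}[1]$, and the case $n=1$ is an omitted induction on $m$. In the inductive step it enlarges $\Box^{\mathrm{int},\le 1}_{/[n]\bar\times[m]}$ by three auxiliary objects: $\{0,\dots,n-1\}\bar\times[m]$, $\{n-1,n\}\bar\times[m]$ and $\{n-1\}\bar\times[m]$. The inductive hypothesis says the diagram on the enlarged category is left Kan extended from the original one. Quillen's Theorem A then shows the three-object subcategory is cofinal, which reduces the claim to the pushout of posets.

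\textbf{Your route.} You note that the cell poset is the union of the two sieves $\Box_{/A}$ and $\Box_{/B}$. It is therefore already a pushout in $\Cat$: its nerve is the union of the two sub-nerves, or equivalently \Cref{prop:pushoutsofposets} holds vacuously. You then use that a colimit indexed by such a pushout is the pushout of the partial colimits.

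\textbf{What each buys.} Your route needs no auxiliary objects and no cofinality check. Your symmetric induction on $n+m$, with the terminal-object base case, also handles all cases $n\le 1$ uniformly, including the one the paper only sketches. The paper's route uses only Kan extension and cofinality. The same ``add three objects, show they are cofinal'' template then reappears for sub-posets of $\Cpt^n$ in the proof of \Cref{prop:GrCCptequivtoCpt}.
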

\begin{proof}
    We induct on $n$. For $n = 0$, this follows from the fact that
    \begin{equation*}
        [m] = [1]\cup_{[0]} [1]\cup_{[0]}\cdots\cup_{[0]}[1]
    \end{equation*}
    in $\Cat$.\par
    For $n = 1$, one inducts on $m$ in a fashion similar to the induction we perform below, so we omit the proof.\par
    For $n\ge 2$, consider the subcategory $I'$ of $\mathrm{OFS}_{/[n]\bar\times [m]}$ spanned by the objects of $I\coloneqq \Box^{\mathrm{int},\le 1}_{/[n]\bar\times [m]}$ along with the additional three objects
    \begin{equation}\label{eq:extraobjectsboxcolimit}
        \begin{aligned}
            \{0\le 1\le\cdots \le n - 1\}\bar\times [m]\hookrightarrow [n]\bar\times [m] \\
            \{n - 1\le n\}\bar\times [m] \hookrightarrow [n]\bar\times [m] \\
            \{n - 1\}\bar\times [m] \hookrightarrow [n]\bar\times [m].
        \end{aligned}
    \end{equation}
    Let $I''$ denote the full subcategory spanned by just the three objects in \eqref{eq:extraobjectsboxcolimit}. By the induction hypothesis, the forgetful map $I' \to \Cat$ is pointwise left Kan extended from $I\to \Cat$, so it suffices to show that
    \begin{equation*}
        \mathop{\mathrm{colim}}_{P\hookrightarrow [n]\bar\times [m]\in I'} P\to [n]\times [m]
    \end{equation*}
    is an equivalence.\par
    For this, notice that $I''\hookrightarrow I'$ is cofinal. Indeed, by Quillen's theorem, we need to check that $I''\times_{I'} I'_{x/}$ is weakly contractible for every $x\in I'$. When $x\in I''$, this category has an initial object hence is weakly contractible, so assume $x\in I'\setminus I'' = I$. Then one checks that $I''\times_{I'} I'_{x/}$ is a non-empty poset with either $1$ or $3$ objects depending on whether $x$ factors through $\{n-1\}\times [m]\subseteq [n]\times [m]$. In the case of three objects, the poset has the form
    \begin{equation*}
        \begin{tikzcd}
            & \bullet \\
            \bullet\arrow{ur}\arrow{dr} \\
            & \bullet
        \end{tikzcd}
    \end{equation*}
    and hence is weakly contractible.\par
    By cofinality of $I''\hookrightarrow I'$, we've reduced to showing that
    \begin{equation*}
        \begin{tikzcd}
            {\{n - 1\}\times [m]}\arrow{r}\arrow{d} & {\{0\le 1\le\cdots \le n - 1\}\times [m]}\arrow{d} \\
            {\{n - 1\le n\}\times [m]}\arrow{r} & {[n]\times [m]}
        \end{tikzcd}
    \end{equation*}
    is a pushout in $\Cat$. This follows from \Cref{prop:pushoutsofposets}.
\end{proof}

\begin{lemma}\label{lemma:identofGrFactP}
    For any $P$ satisfying ($\ast$), the map $\Gr\Fact P\to P$ is canonically equivalent to
    \begin{equation*}
        \mathop{\mathrm{colim}}_{[i]\bar\times [j]\to P\in \Box^{\mathrm{int},\le 1}_{/P}} [i]\times [j]\longrightarrow P.
    \end{equation*}
\end{lemma}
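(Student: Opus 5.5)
We compute $\Gr\Fact P$ as a colimit over the simplices of $\Fact P$ and then shrink the indexing category in two steps. By the colimit formula for realization in \Cref{subsubsec:nerverealization},
\begin{equation*}
    \Gr\Fact P\simeq \mathop{\colim}_{\Delta[i,j]\to \Fact P\in \mc Y\downarrow\Fact P} [i]\times[j],
\end{equation*}
and by definition of $\Fact$ as a nerve, the index $\mc Y\downarrow\Fact P$ is the category $\mathrm{OFS}_{/P}$ restricted to objects of the form $[i]\bar\times[j]\to P$, i.e.\ maps of categories with factorization system. Since $P$ is a poset, such a map is a map of posets sending horizontal arrows to horizontal arrows and vertical ones to vertical ones. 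The comparison map to $P$ is the one induced by the cocone $[i]\times[j]\to P$. The claim then amounts to replacing this index category by $\Box^{\mathrm{int},\le 1}_{/P}$.

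\textbf{Step 1.} Let $J\subseteq \mc Y\downarrow \Fact P$ be the full subcategory of injective maps $[i]\bar\times[j]\hookrightarrow P$ whose image is an interval of $P$. Any map $[i]\bar\times[j]\to P$ has image of the form $[a,b]\cap(\text{rows}\times\text{columns})$, and it factors through degeneracies of $[i]$ and $[j]$ followed by an injection. I would show that every $x\in\mc Y\downarrow\Fact P$ admits an initial map to an object of $J$. The candidate is the interval $[x(0,0),x(i,j)]$; it lies in $P$ by ($\ast$), and since $P\subseteq\Cpt^n$ it is a product of two chains. Once this holds, $J\hookrightarrow \mc Y\downarrow\Fact P$ has a left adjoint, hence is cofinal, and the colimit can be taken over $J$. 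There is one subtlety: the map $[i]\times[j]\to$ interval need not be an equivalence, so I need the left Kan extension from $J$ to be the correct functor. Alternatively, I would check cofinality directly by showing that $J\times_{(\cdot)} (\cdot)_{x/}$ is weakly contractible, since it has an initial object, namely the smallest interval containing the image.

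\textbf{Step 2.} Restrict further from $J$ to $\Box^{\mathrm{int},\le 1}_{/P}$. Each object of $J$ is an interval $I\simeq[i]\bar\times[j]$, and its overcategory in $\Box^{\mathrm{int},\le1}_{/P}$ is exactly $\Box^{\mathrm{int},\le 1}_{/I}$. By \Cref{lemma:smallboxesbuildbigboxes}, $[i]\times[j]\simeq\colim_{\Box^{\mathrm{int},\le1}_{/I}}[i']\times[j']$. Hence the functor on $J$ is pointwise left Kan extended from $\Box^{\mathrm{int},\le1}_{/P}$, and the colimits agree by transitivity of Kan extensions. The resulting map to $P$ is compatible with the original cocone.

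\textbf{Main obstacle.} I expect Step 1 to be the hardest part, specifically the precise cofinality or Kan-extension statement for non-injective and non-interval simplices. The check is whether, for each $x:[i]\bar\times[j]\to P$, the category of factorizations of $x$ through objects of $J$ is weakly contractible. I would try to exhibit an initial such factorization, using ($\ast$) and the grid structure of $\Cpt^n$ to produce the minimal enclosing interval.
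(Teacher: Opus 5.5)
Your proposal is correct and is essentially the paper's proof. The paper also shows that the interval boxes $\Box^{\mathrm{int}}_{/P}$ are cofinal in $\Box_{/P}$ because each comma category has the initial object $[x(0,0),x(i,j)]\hookrightarrow P$, and then uses \Cref{lemma:smallboxesbuildbigboxes} to see that the functor on interval boxes is left Kan extended from $\Box^{\mathrm{int},\le 1}_{/P}$.

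The worry you raise in Step 1 is unfounded: cofinality alone identifies the two colimits, whether or not $[i]\times[j]\to[x(0,0),x(i,j)]$ is an equivalence. Also, this interval is a box not merely because $P\subseteq\Cpt^n$ (the interval $\Cpt^1\subseteq\Cpt^n$ is not a box). The real reason is that $x(i,0)\in\Cpt^n$ forces the row index of $x(i,j)$ to be at most the column index of $x(0,0)$.
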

\begin{proof}
    Writing $P$ as a colimit of its simplices and using that $\Gr \Delta[i,j]\simeq [i]\times [j]$, we have that $\Gr\Fact P\to P$ is canonically equivalent to
    \begin{equation}\label{eq:colimitforGrFactP}
        \mathop{\mathrm{colim}}_{\substack{([i],[j])\in\Delta^{\times 2} \\ [i]\bar\times [j]\to P}} [i]\times [j]\to P.
    \end{equation}
    The result will then follow by showing that the natural map
    \begin{equation*}
        \mathop{\mathrm{colim}}_{[i]\bar\times [j]\in \Box^{\mathrm{int},\le 1}_{/P}} [i]\times [j]\longrightarrow\mathop{\mathrm{colim}}_{\substack{([i],[j])\in\Delta^{\times 2} \\ [i]\bar\times [j]\to P}} [i]\times [j]
    \end{equation*}
    is an equivalence.\par
    Let $\Box$ be the full subcategory of $\mathrm{OFS}$ spanned by the boxes $[n]\bar\times [m]$. The embedding
    \begin{equation*}
        \begin{tikzcd}[row sep=0pt]
            \Delta^{\times 2}\arrow{r} & \mathrm{OFS} \\
            ([n],[m])\arrow[mapsto]{r} & {[n]\bar\times [m]}
        \end{tikzcd}
    \end{equation*}
    is fully faithful, so our indexing category in \eqref{eq:colimitforGrFactP} may be described as $\Box_{/P} \coloneqq \Box\times_{\mathrm{OFS}}\mathrm{OFS}_{/P}$. Let $\Box_{/P}^\mathrm{int}$ denote the full subcategory spanned by maps $[n]\bar\times [m]\to P$ which are inclusions of intervals. Note that if $f : [n]\bar\times [m]\to P$ is a map of categories with factorization system, then
    \begin{gather*}
        \min\im f = f(0,0) \\
        \max\im f = f(n,m).
    \end{gather*}
    Letting $\min\im f = (i,j)$ and $\max\im f = (s,t)$, then the interval $[\min\im f, \max\im f]\subseteq \Cpt^n$ is contained in $P$ by the assumption ($\ast$), and it has an induced factorization system from $P$ for which it is isomorphic to $[s - i]\bar\times [t - j]$. It follows that $\Box_{/P}^\mathrm{int}\hookrightarrow \Box_{/P}$ is cofinal. Indeed, for every $f : [n]\bar\times [m]\to P$, we have that $\Box_{/P}^\mathrm{int}\times_{\Box_{/P}} (\Box_{/P})_{[n]\bar\times [m] /}$ has an initial object given by
    \begin{equation*}
        [n]\bar\times [m] \to [\min\im f, \max\im f]\hookrightarrow P
    \end{equation*}
    and thus is weakly contractible.\par
    We therefore reduce to showing that
    \begin{equation*}
        \mathop{\mathrm{colim}}_{[i]\bar\times [j]\in \Box^{\mathrm{int},\le 1}_{/P}} [i]\times [j]\longrightarrow \mathop{\mathrm{colim}}_{[i]\bar\times [j]\in \Box^{\mathrm{int}}_{/P}} [i]\times [j]
    \end{equation*}
    is an equivalence. However, by \Cref{lemma:smallboxesbuildbigboxes}, the forgetful map $\Box_{/P}^{\mathrm{int}}\to \Cat$ is left Kan extended from $\Box_{/P}^{\mathrm{int},\le 1}\to\Cat$ so the colimits agree.
\end{proof}

\begin{proof}[Proof of \Cref{prop:GrCCptequivtoCpt}]
    We show inductively on the size of $P$ that for every $P\subseteq \Cpt^\ell$ satisfying ($\ast$) the natural map $\Gr\Fact P\to P$ is an equivalence. We note that for $P$ of the form $[n]\bar\times [m]$, the result already follows from \Cref{lemma:identofGrFactP,lemma:smallboxesbuildbigboxes}.\par
    For our induction, we prove the result for all $P\subseteq [(0,0),(i,j)]$ inducting on both $i$ and $j$. When $i = 0$, $P$ is necessarily of the form $[0]\bar\times [m]$ and the result holds. When $j = 0$, $P$ is just a point and the result also holds.\par
    Now suppose that $j\ge 1$ and $P\subseteq [(0,0), (i,j)]$. We assume that $P\cap [0,i]\times \{j\}\neq \emptyset$ otherwise $P\subseteq [(0,0), (i, j - 1)]$ and the result holds by the induction hypothesis. Set
    \begin{equation*}
        \begin{aligned}
            P' &= P\cap [(0,0), (i, j - 1)] \\
            P'' &= P\cap [(0, j - 1), (i, j)].
        \end{aligned}
    \end{equation*}
    Since condition ($\ast$) is closed under intersections, it follows that $P'$ and $P''$ satisfy ($\ast$). We also have that
    \begin{equation*}
        \begin{aligned}
            P'\cap P'' &= P\cap ([0,i]\times \{j - 1\}) \\
            P'\cup P'' &= P
        \end{aligned}
    \end{equation*}
    and since $P$ satisfies ($\ast$), it follows that $P'\cap P''$ is an interval. Thus, by \Cref{prop:pushoutsofposets}, we have that
    \begin{equation}\label{eq:pushoutforPfromP'P''}
        \begin{tikzcd}
        	{P'\cap P''} & {P'} \\
        	{P''} & P
        	\arrow[from=1-1, to=1-2]
        	\arrow[from=1-1, to=2-1]
        	\arrow[from=1-2, to=2-2]
        	\arrow[from=2-1, to=2-2]
        \end{tikzcd}
    \end{equation}
    is a pushout in $\Cat$. By induction, the statement holds for $P'\cap P''$ and $P'$. Moreover, since $P''$ satisfies ($\ast$) and is contained in $[0,i]\times \{j - 1 \le j\}$, it must be of the form $[n]\bar\times [m]$ for which we also know the result to hold.\par
    Now, consider the category $I$ which we take to be the full subcategory of $\mathrm{OFS}_{/P}$ spanned by the objects of $\Box^{\mathrm{int},\le 1}_{/P}$ along with the three additional objects $P'\cap P''\hookrightarrow P$, $P'\hookrightarrow P$ and $P''\hookrightarrow P$, and let $I'$ be the subcategory spanned only by these three objects. By the assumption that the result holds for $P'$, $P''$ and $P'\cap P''$, \Cref{lemma:identofGrFactP} tells us that $I\to\Cat$ is left Kan extended from $\Box^{\mathrm{int},\le 1}_{/P}\to \Cat$.\par
    Finally, by similar reason as in the proof of \Cref{lemma:smallboxesbuildbigboxes}, we have that the inclusion $I'\hookrightarrow I$ is cofinal. Thus by \Cref{lemma:identofGrFactP}, the claim for $P$ reduces to checking that \eqref{eq:pushoutforPfromP'P''} is a pushout, which we have already confirmed.\par
    This completes the induction for $j$. The induction on $i$ follows a similar procedure, so we omit it.
\end{proof}

\begin{corollary}\label{cor:formalcomptorealcompequiv}
    The composite
    \begin{equation*}
        \begin{tikzcd}
            \diag \CCpt^n\arrow{r} & \diag \Sq(\Cpt^n)\arrow{r} & N(\Cpt^n),
        \end{tikzcd}
    \end{equation*}
    where the second map is given by composition along the diagonal, becomes an equivalence after passing to associated categories.
\end{corollary}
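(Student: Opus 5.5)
The plan is to combine Diagonal Approximation (\Cref{prop:diaglocalequalsgr}) applied to $X = \CCpt^n$ with \Cref{prop:GrCCptequivtoCpt}. The only real work is a naturality check: the composite in question must agree with the transposed map $\ascat\diag\CCpt^n\to\Gr\CCpt^n\to\Cpt^n$.

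First, for any map of bisimplicial spaces $\phi : X\to Y$, the composition-along-the-diagonal maps $\diag X\to N(\Gr X)$ are natural in $X$. This holds because they are built from the unit $X\to\Sq\Gr X$, which is natural, and from restriction along $\diag : [n]\to[n]\times[n]$, which is natural in the category. Apply this to the unit map $\CCpt^n\to\Sq(\Cpt^n)$, transposed to $\Gr\CCpt^n\to\Cpt^n$. We obtain a commuting square
\begin{equation*}
\begin{tikzcd}
\diag\CCpt^n \arrow{r}\arrow{d} & N(\Gr\CCpt^n)\arrow{d} \\
\diag\Sq(\Cpt^n)\arrow{r} & N(\Cpt^n).
\end{tikzcd}
\end{equation*}
The bottom map is the composite of $\diag$ of the unit of $\Sq(\Cpt^n)$ with diagonal restriction in $\Sq\Gr\Sq(\Cpt^n)$. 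By the triangle identity, using the counit $\Gr\Sq(\Cpt^n)\to\Cpt^n$, this is exactly diagonal restriction $\diag\Sq(\Cpt^n)\to N(\Cpt^n)$. Hence the composite in the statement equals the top map followed by $N$ of the equivalence $\Gr\CCpt^n\simeq\Cpt^n$ from \Cref{prop:GrCCptequivtoCpt}.

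Second, pass to associated categories. The top map becomes an equivalence $\ascat\diag\CCpt^n\simeq\ascat N(\Gr\CCpt^n)\simeq\Gr\CCpt^n$ by \Cref{prop:diaglocalequalsgr}, using that $\ascat N\simeq\id$ because $N$ is fully faithful (\Cref{thm:nervefullyfaithful}). The right vertical map becomes $\Gr\CCpt^n\to\Cpt^n$ under $\ascat N\simeq\id$, which is an equivalence by \Cref{prop:GrCCptequivtoCpt}. The composite is therefore an equivalence.

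The main obstacle is the bookkeeping in the naturality and triangle-identity step, i.e.\ confirming that the bottom row really is diagonal restriction. This is formal once one writes the composition along the diagonal as a natural transformation $\diag\Rightarrow N\Gr$ of functors $\PSh(\Delta^{\times2})\to\PSh(\Delta)$.
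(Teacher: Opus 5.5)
Your proposal is correct and follows the paper's proof. The paper uses the same commutative square: diagonal approximation (\Cref{prop:diaglocalequalsgr}) for $X=\CCpt^n$ on the top edge, and \Cref{prop:GrCCptequivtoCpt} together with the counit $\Gr\Sq(\Cpt^n)\to\Cpt^n$ being an equivalence (full faithfulness of $\Sq$) on the remaining edges; the only difference is that you spell out the naturality and triangle-identity check identifying the bottom edge with diagonal restriction, which the paper leaves implicit.
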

\begin{proof}
    We have a commutative square
    \begin{equation*}
        \begin{tikzcd}
            \diag\CCpt^n\arrow{r}\arrow{d} & N(\Gr\CCpt^n)\arrow{d}{\simeq} \\
            N(\Cpt^n) & N(\Gr\Sq \Cpt^n)\arrow{l}[below]{\simeq}
        \end{tikzcd}
    \end{equation*}
    where the top arrow is the composition along the diagonal map of \Cref{prop:diaglocalequalsgr}. Now, the right vertical arrow is an equivalence by \Cref{prop:GrCCptequivtoCpt} and the bottom horizontal arrow is an equivalence since $\Sq$ is fully faithful. The result then follows as the top horizontal arrow becomes an equivalence after passing to associated categories by \Cref{prop:diaglocalequalsgr}.
\end{proof}

\begin{remark}
    $\diag\CCpt^n$ and $N(\Cpt^n)$ are both discrete simplicial spaces. When viewed as simplicial sets, \cite[Lemma 4.2]{liu2012gluing} is precisely the statement that $\diag\CCpt^n\to N(\Cpt^n)$ is inner anodyne.
\end{remark}

\begin{definition}
    Denote by
    \begin{equation*}
        \begin{tikzcd}
            \cnr : \PSh(\Delta^{\times 2})\arrow{r} & \PSh(\Delta)
        \end{tikzcd}
    \end{equation*}
    the nerve construction of \Cref{subsubsec:nerverealization} induced by the functor
    \begin{equation*}
        \begin{tikzcd}[row sep=0pt]
            \Delta\arrow{r} & \PSh(\Delta^{\times 2}) \\
            {[n]}\arrow[mapsto]{r} & \CCpt^n.
        \end{tikzcd}
    \end{equation*}
\end{definition}

We note that the natural map $\CCpt^n = \Fact \Cpt^n\to \Fact [n]\bar\times[n] = \Delta[n,n]$ induces a natural transformation $\diag\Rightarrow\cnr$.

\begin{definition}
    \begin{enumerate}
        \item Define the \emph{subdivision} functor $\sd : \PSh(\Delta)\to\PSh(\Delta)$ to be the nerve construction of \Cref{subsubsec:nerverealization} applied to the functor
        \begin{equation*}
            \begin{tikzcd}[row sep=0pt]
                \Delta\arrow{r} & \PSh(\Delta) \\
                {[n]}\arrow[mapsto]{r} & N(\Cpt^n)
            \end{tikzcd}
        \end{equation*}
        Restriction along the diagonal $\Delta[n]\hookrightarrow N(\Cpt^n)$ induces a natural map $\sd X\to X$. We call sections of this forgetful map \emph{factorings of $X$}. For a category $\mc C$, we write $\sd\mc C$ for $\sd N(\mc C)$.
        \item For a simplicial space $X$, we write $\fsd X$ to denote $\cnr\Sq(X)$ and refer to $\fsd X$ as the \emph{formal subdivision} of $X$. For a category $\mc C$, write $\fsd \mc C$ for $\fsd N(\mc C)$.
    \end{enumerate}
\end{definition}

\begin{example}
    There are always two degenerate factorings given by factoring every morphism as itself followed by the identity on either the left or right. Explicitly, these are given by pullback along (the nerve of) the two projections $\Cpt^n\hookrightarrow [n]\times [n]\xrightarrow{\pr_i} [n]$ for $i = 1,2$.
\end{example}

\begin{example}\label{ex:factsystemshavefact}
    If $\mc C$ is a category with a factorization system $(E,M)$, then $N(\mc C)$ has a canonical factoring. Indeed, since $\Cpt^n$ with its horizontal--vertical factorization system is the free category with factorization system on $[n]$, we have a natural in $n$ map
    \begin{equation*}
        N(\mc C)_n = \map_{\Cat}([n],\mc C)\simeq \map_{\mathrm{OFS}}(\Cpt^n, \mc C)\to \map_{\Cat}(\Cpt^n, \mc C) = (\sd \mc C)_n
    \end{equation*}
    which gives a section of the forgetful map $\sd \mc C \to N(\mc C)$.
\end{example}

\begin{example}\label{ex:factofdiag}
    There is a canonical factoring of $\diag X$. At level $n$ we take this to be
    \begin{equation*}
        \begin{aligned}
            (\diag X)_n = \map_{\PSh(\Delta^{\times 2})}(\Delta[n,n], X) &\fixedxrightarrow[1cm]{\diag_\ast} \map_{\PSh(\Delta)}(\diag\Delta[n,n],\diag X) \\
            &\simeq \map_{\PSh(\Delta)}(\Delta[n]\times\Delta[n], \diag X) \\
            &\fixedxleftarrow[1cm]{} \map_{\PSh(\Delta)}(N(\Cpt^n), \diag X) \\
            &= (\sd\diag X)_n.
        \end{aligned}
    \end{equation*}
    Informally, this factoring decomposes a point
    \begin{equation*}
        \begin{tikzcd}
            \bullet\arrow{r}\arrow{d} & \bullet\arrow{d} \\
            \bullet\arrow{r} & \bullet
        \end{tikzcd}\in X_{1,1} = (\diag X)_1
    \end{equation*}
    as the composite
    \begin{equation*}
        \begin{tikzcd}
        	\bullet & \bullet & \bullet \\
        	\bullet & \bullet & \bullet \\
        	\bullet & \bullet & \bullet
        	\arrow[from=1-1, to=1-2]
        	\arrow[equals, from=1-1, to=2-1]
        	\arrow[equals, from=1-2, to=1-3]
        	\arrow[equals, from=1-2, to=2-2]
        	\arrow[equals, from=1-3, to=2-3]
        	\arrow[from=2-1, to=2-2]
        	\arrow[from=2-1, to=3-1]
        	\arrow[equals, from=2-2, to=2-3]
        	\arrow[from=2-2, to=3-2]
        	\arrow[from=2-3, to=3-3]
        	\arrow[from=3-1, to=3-2]
        	\arrow[equals, from=3-2, to=3-3]
        \end{tikzcd}\in X_{2,2} = (\diag X)_2 = (\sd \diag X)_1.
    \end{equation*}
\end{example}

We provide some motivation for $\cnr$ and $\fsd$. For a general bisimplicial space $X$, horizontal and vertical morphisms, i.e.\ points of $X_{0,1}$ and $X_{1,0}$ respectively, cannot be composed. However, in the gluing $\Gr X$ these morphisms \emph{can} be composed. Thus an intermediate approximation to $\Gr X$ is a simplicial space $Y$ whose morphism space is ``formal composites'' of horizontal and vertical morphisms, i.e.\
\begin{equation*}
    Y_1 = X_{0,1}\times_{X_{0,0}} X_{1, 0}.
\end{equation*}
Since
\begin{equation*}
    \CCpt^1 = \Delta[0,1]\coprod_{\Delta[0,0]} \Delta[1,0],
\end{equation*}
this can be written as $Y_1 = \map_{\PSh(\Delta^{\times 2})}(\CCpt^1, X)$. We may now try to imagine what $Y_2$ should look like. A naive guess would be
\begin{equation*}
    Y_2 = X_{0,1}\times_{X_{0,0}} X_{1, 0}\times_{X_{0,0}} X_{0,1}\times_{X_{0,0}} X_{1, 0},
\end{equation*}
i.e.\ two formal composites which share a target and source. However, from this description it is unclear how to compose these two formal composites into a formal composite, i.e.\ what the map $d_1 : Y_2\to Y_1$ should be. Additionally, whatever choice we make for this composite should be compatible with taking the bona fide composite after pushing forward to $\Gr X$.\par
To correct this, we first need a way to transform the ``wrong way'' formal composite $X_{1, 0}\times_{X_{0,0}} X_{0,1}$ of a vertical followed by a horizontal morphism back into a horizontal followed by a vertical morphism. This may be done by supplying a $(1,1)$-simplex of $X$. After correcting the wrong way composite, we then need to compose the horizontal and vertical arrows to reduce back down to a single horizontal morphism followed by a vertical morphism, i.e.\ a point of $Y_1$. This is witnessed by a $(2,0)$- and $(0,2)$-simplex of $X$. Taken together, the correct amount of data is corepresented by the bisimplicial space visually depicted as the gluing
\begin{equation}\label{eq:diagforCCpt2}
    \begin{tikzcd}
        \bullet && \bullet &[-20pt]& \bullet \\
        &&& {\Delta[1,1]} \\
        && \bullet && \bullet \\
        \\
        &&&& \bullet
        \arrow[from=1-1, to=1-3]
        \arrow[""{name=0, anchor=center, inner sep=0}, bend left=30, from=1-1, to=1-5]
        \arrow[from=1-3, to=1-5]
        \arrow[from=1-3, to=3-3]
        \arrow[from=1-5, to=3-5]
        \arrow[""{name=1, anchor=center, inner sep=0}, bend left=50, from=1-5, to=5-5]
        \arrow[from=3-3, to=3-5]
        \arrow[from=3-5, to=5-5]
        \arrow["{\Delta[0,2]}"{description, pos=0.7}, draw=none, from=0, to=1-3]
        \arrow["{\Delta[2,0]}"{description, pos=0.6}, draw=none, from=1, to=3-5]
    \end{tikzcd}
\end{equation}
which is precisely $\CCpt^2$. Thus $Y_2 = \map_{\PSh(\Delta^{\times 2})}(\CCpt^2, X)$. Continuing this process for higher simplices one arrives at the definition of $\cnr X$.\par
In the special case of $X = \Sq(\mc C)$ for a category $\mc C$, there is no distinction between horizontal and vertical morphisms---both are morphisms in $\mc C$. The resulting simplicial space $\fsd \mc C = \cnr\Sq(\mc C)$ is then a simplicial space whose $1$-simplex space is the groupoid of two composable morphisms in $\mc C$, i.e.\
\begin{equation*}
    (\fsd \mc C)_1 = N(\mc C)_1\times_{N(\mc C)_0}N(\mc C)_1.
\end{equation*}
This is in contrast to $\sd \mc C$ whose $1$-simplex space is given by
\begin{equation*}
    (\sd \mc C)_1 = \map_{\Cat}(\Cpt^1, \mc C) = N(\mc C)_2,
\end{equation*}
i.e.\ the groupoid of composable arrows in $\mc C$ with a specified choice of composite. Since $\mc C$ is a category, these two spaces are canonically equivalent. The next proposition formalizes this observation.

\begin{proposition}\label{prop:sdequivfsdforcats}
    There is a forgetful natural transformation $\sd(-) \Rightarrow \fsd(-)$ of functors $\PSh(\Delta)\to \PSh(\Delta)$ which is an equivalence when evaluated on (the nerve of) a category.
\end{proposition}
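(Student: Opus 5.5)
The natural transformation is constructed levelwise and then checked on categories by a Yoneda-type reduction to the comparison $\Gr\CCpt^n\simeq\Cpt^n$.

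First, unwind the two nerve constructions. For a simplicial space $X$ we have $(\sd X)_n = \map_{\PSh(\Delta)}(N(\Cpt^n), X)$. We also have
\begin{equation*}
    (\fsd X)_n = \map_{\PSh(\Delta^{\times 2})}(\CCpt^n, \Sq X)\simeq \map_{\PSh(\Delta)}(\mathrm{T}_n, X),
\end{equation*}
where $\mathrm{T}_n$ is the simplicial space obtained by applying to $\CCpt^n$ the left adjoint of the extended functor $\Sq : \PSh(\Delta)\to\PSh(\Delta^{\times 2})$. This left adjoint exists: the extended $\Sq$ is itself a nerve construction, for the functor $([p],[q])\mapsto \Delta[p]\times\Delta[q]$. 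Its realization sends $\Delta[p,q]$ to $\Delta[p]\times\Delta[q]$. Since $\CCpt^n = \Fact\Cpt^n$ comes with a map to $\Sq(\Cpt^n) = \Sq(N(\Cpt^n))$, adjunction gives a map $\mathrm{T}_n\to N(\Cpt^n)$, natural in $[n]$ because $\Fact\Cpt^\bullet\to\Sq\Cpt^\bullet$ is natural. Precomposition with this map gives the forgetful transformation
\begin{equation*}
    \sd X = \map(N(\Cpt^\bullet),X)\longrightarrow \map(\mathrm{T}_\bullet, X) = \fsd X.
\end{equation*}
Equivalently, one can say directly that a functor $\Cpt^n\to\mc C$ gives a point of $\Sq$ by restricting along $\Fact\Cpt^n\hookrightarrow\Sq\Cpt^n$, and this is natural in both $X$ and $[n]$.

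Next, to prove that this is an equivalence when $X = N(\mc C)$, use full faithfulness of $N$ and the adjunction $\ascat\dashv N$. The map $(\sd\mc C)_n\to(\fsd\mc C)_n$ is then identified with precomposition
\begin{equation*}
    \map_{\Cat}(\Cpt^n,\mc C)\longrightarrow \map_{\Cat}(\ascat \mathrm{T}_n, \mc C).
\end{equation*}
So it suffices to show that $\ascat\mathrm{T}_n\to\Cpt^n$ is an equivalence. The key observation is that $\ascat\circ(\text{left adjoint of }\Sq)$ is left adjoint to $\Sq\circ N = \Sq : \Cat\to\PSh(\Delta^{\times2})$, which is exactly the functor $\Gr$. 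Hence $\ascat\mathrm{T}_n\simeq\Gr\CCpt^n$. Under this identification the comparison map is the transpose of $\CCpt^n\to\Sq\Cpt^n$, and that transpose is an equivalence by \Cref{prop:GrCCptequivtoCpt}. Since the statement is levelwise, this completes the argument, naturality in $n$ being automatic from the construction.

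The main obstacle is the bookkeeping in the identification of left adjoints. One needs that the extension of $\Sq$ to simplicial spaces satisfies $\Sq(X)\simeq\Sq(\text{Segal/Rezk completion})$ only when applied to nerves; this causes no problem, since we only evaluate on $N(\mc C)$. One also needs that the composite of adjunctions genuinely recovers the adjunction $\Gr\dashv\Sq$, together with the compatibility of the comparison maps. I would verify this on representables: both composites send $\Delta[p,q]$ to $[p]\times[q]$, because $\ascat(\Delta[p]\times\Delta[q])\simeq[p]\times[q]$, as $\ascat$ preserves finite products of representables, or by full faithfulness of $N$ since $N([p]\times[q]) = \Delta[p]\times\Delta[q]$. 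Both functors preserve colimits, so this identification extends to all bisimplicial spaces.
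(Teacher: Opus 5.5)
Your proposal is correct and is essentially the paper's argument. Your transformation, precomposition with the transpose $\mathrm{T}_n\to N(\Cpt^n)$ of $\CCpt^n\to\Sq(\Cpt^n)$, is the paper's map $g\mapsto\Sq(g)\circ(\CCpt^n\hookrightarrow\Sq(\Cpt^n))$, and on nerves both arguments reduce to the transpose $\Gr\CCpt^n\to\Cpt^n$ being an equivalence, i.e.\ \Cref{prop:GrCCptequivtoCpt}. The only difference is packaging: you pass through the left adjoint of the extended $\Sq$ and the identification $\ascat\circ(\text{left adjoint of }\Sq)\simeq\Gr$, so you only invoke full faithfulness of $N$, whereas the paper phrases the check via full faithfulness of $\Sq$ on $\Cat$.
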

\begin{proof}
    We take the transformation to be given by
    \begin{equation*}
        \begin{aligned}
            \sd X = \map_{\PSh(\Delta)}(N(\Cpt^\bullet), X) &\fixedxrightarrow[1cm]{\Sq_\ast} \map_{\PSh(\Delta^{\times 2})}(\Sq(\Cpt^\bullet), \Sq(X)) \\
            &\fixedxrightarrow[1cm]{} \map_{\PSh(\Delta^{\times 2})}(\CCpt^\bullet, \Sq(X)) = \fsd X
        \end{aligned}
    \end{equation*}
    where the last map is pre-composition with $\CCpt^n\hookrightarrow \Sq(\Cpt^n)$. When $X = N(\mc C)$, this map is an equivalence by \Cref{prop:GrCCptequivtoCpt} and the fact that $\Sq : \Cat\to \PSh(\Delta^{\times 2})$ is fully faithful.
\end{proof}

It turns out that $\cnr X$ is a good approximation to $\Gr X$. The unit $X\to \Sq(\Gr X)$ induces a natural map
\begin{equation*}
    \cnr X\longrightarrow \fsd \Gr X
\end{equation*}
and by \Cref{prop:sdequivfsdforcats}, the target is naturally equivalent to $\sd \Gr X$. We therefore obtain a map
\begin{equation*}
    \begin{tikzcd}
        \cnr X\arrow{r} & \fsd\Gr X\simeq \sd \Gr X\arrow{r}{\diag^\ast} & N(\Gr X)
    \end{tikzcd}
\end{equation*}
which takes a formal composite of horizontal and vertical morphisms in $X$ and performs the composition in $\Gr X$. Our next goal is to show that this map transposes to an equivalence $\ascat\cnr X\simeq \Gr X$.

We show this using the comparison map $\diag X\to \cnr X$ and invoking diagonal approximation. The following is originally due to Liu--Zheng via model dependent methods.

\begin{theorem}[Corner Approximation {\cite[Theorem 4.27]{liu2012gluing}}]\label{thm:cornerapprox}
    The natural map $\diag X\to \cnr X$ induces an equivalence on associated categories.
\end{theorem}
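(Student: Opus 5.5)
The plan is to prove the statement by sandwiching $\ascat\cnr X$ between two copies of $\ascat\diag X$, using only diagonal approximation (\Cref{prop:diaglocalequalsgr}) and \Cref{cor:formalcomptorealcompequiv}. The functor $\cnr$ is a nerve-type construction: it maps out of $\CCpt^n$ and does not commute with colimits in $X$. So I do not expect to reduce to representables $X = \Delta[p,q]$ by a colimit argument. Instead I will build a natural map in the other direction and show it is inverse to $\ascat\diag X\to\ascat\cnr X$.

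\textbf{Step 1: construct $r_X\colon \cnr X\to N(\ascat\diag X)$.} Start with an $n$-simplex $\sigma\colon \CCpt^n\to X$. Applying $\diag$ gives $\diag\CCpt^n\to\diag X$, and hence a functor $\ascat\diag\CCpt^n\to\ascat\diag X$. By \Cref{cor:formalcomptorealcompequiv}, $\ascat\diag\CCpt^n\simeq\Cpt^n$, so we obtain a functor $\Cpt^n\to\ascat\diag X$. This is an $n$-simplex of $\sd(\ascat\diag X)$. Restricting along the diagonal $[n]\hookrightarrow\Cpt^n$ gives an $n$-simplex of $N(\ascat\diag X)$. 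Every step is natural in $[n]$ and in $X$, because the identification of \Cref{cor:formalcomptorealcompequiv} is natural in $[n]$. Transposing, we get $\bar r_X\colon\ascat\cnr X\to\ascat\diag X$.

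\textbf{Step 2: $\bar r_X$ is a left inverse.} I will show that the composite $\diag X\to\cnr X\xrightarrow{r_X}N(\ascat\diag X)$ is the unit. An $n$-simplex of $\diag X$ is a map $\Delta[n,n]\to X$. Its image under $r_X$ is computed through $\diag\CCpt^n\to\diag\Delta[n,n]=\Delta[n]\times\Delta[n]$, followed by the diagonal $\Delta[n]\to\Delta[n]\times\Delta[n]$. By compatibility of these maps with the canonical factoring of $\diag X$ in \Cref{ex:factofdiag}, the resulting simplex is the diagonal of the square, i.e.\ the original simplex viewed in $\diag X$. Hence $\bar r_X\circ(\ascat\diag X\to\ascat\cnr X)\simeq\id$.

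\textbf{Step 3: the other composite is the identity.} I must show that
\[
\cnr X\xrightarrow{r_X} N(\ascat\diag X)\to N(\ascat\cnr X)
\]
agrees with the unit of $\ascat\dashv N$. Both sides are natural in $X$. A simplex $\sigma\colon\CCpt^n\to X$ is the image of the tautological simplex $\id\in(\cnr\CCpt^n)_n$ under $\cnr(\sigma)$. Therefore it suffices to compare the two maps on this tautological simplex for $X=\CCpt^n$, inside $N(\ascat\cnr\CCpt^n)_n$, naturally in $[n]$.

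On this universal example, the target receives a map from $\ascat\diag\CCpt^n\simeq\Cpt^n$. Both candidate $n$-simplices factor through the diagonal chain $(0,0)\to(1,1)\to\cdots\to(n,n)$ in $\Cpt^n$. For the composite $r$-route this holds by construction. For the unit it holds because an $n$-simplex of $\cnr$ represents the formal composite of corner pieces, which the squares of $\CCpt^n$ identify with the diagonal composite.

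\textbf{Main obstacle.} The hardest point is Step 3: making the identification coherent in $n$ rather than only checking it level by level. My plan is to phrase both maps as natural transformations of functors $\Delta\to\PSh(\Delta)$, namely $[n]\mapsto$ (maps out of $\CCpt^n$). Once both are written in this form, the comparison reduces, via the Yoneda lemma in $\PSh(\Delta)$, to the equivalence $\ascat\diag\CCpt^\bullet\simeq\Cpt^\bullet$ of \Cref{cor:formalcomptorealcompequiv} together with the fact that $\diag\colon[\bullet]\to\Cpt^\bullet$ is a map of cosimplicial categories.

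Given Steps 2 and 3, $\ascat\diag X\to\ascat\cnr X$ is an equivalence.
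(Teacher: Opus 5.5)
Your Steps 1 and 2 are sound. Via the adjunction $\diag\dashv\Sq$ on simplicial spaces, your $r_X$ is exactly the paper's left inverse $\cnr X\to\fsd\diag X\to\fsd\ascat\diag X\simeq\sd\ascat\diag X\to N(\ascat\diag X)$. The Yoneda reduction at the start of Step 3 is also legitimate.

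The gap is the universal check itself. Write $\alpha\colon\diag X\to\cnr X$. You must show that the unit sends the tautological simplex $\tau_n=\id_{\CCpt^n}\in(\cnr\CCpt^n)_n$ to $\ascat\alpha$ of the diagonal chain. But $\tau_n$ is not in the image of $\alpha$. Already for $n=1$, no map of factorization systems $[1]\bar\times[1]\to\Cpt^1$ sends $(0,0)\mapsto(0,0)$ and $(1,1)\mapsto(1,1)$, because the image of $(1,0)$ would have to be both $(0,0)$ and $(1,1)$. So $\diag\CCpt^1$ has no edge from $(0,0)$ to $(1,1)$ at all. To identify the image of $\tau_1$ with the composite of the horizontal and vertical edges in $\ascat\cnr\CCpt^1$, you need a $2$-simplex of $\cnr\CCpt^1$, i.e.\ a map $\CCpt^2\to\CCpt^1$, with those three faces. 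In general you need simplices of $\cnr\CCpt^n$ outside the image of $\alpha$. Diagonal approximation and \Cref{cor:formalcomptorealcompequiv} only describe $\ascat\diag\CCpt^n$, so they cannot supply such simplices. Your sentence about ``the squares of $\CCpt^n$'' therefore has no mechanism behind it, and coherence in $n$ is not the real obstacle.

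The missing ingredient is a map $\phi_n\colon N(\Cpt^n)\to\cnr\CCpt^n$, natural in $n$. It sends $a\colon[m]\to\Cpt^n$ to $\Fact(\tilde a)$, where $\tilde a\colon\Cpt^m\to\Cpt^n$ is the unique extension of $a$ to a map of factorization systems. This is one direction of \Cref{lemma:cnrofCCpt}, and it needs only the freeness of $\Cpt^m$. Let $c_n\colon\diag\CCpt^n\to N(\Cpt^n)$ be composition along the diagonal, as in \Cref{cor:formalcomptorealcompequiv}. One checks on the nose that $\phi_n$ applied to the diagonal $[n]\to\Cpt^n$ is $\tau_n$. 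One also checks that $\alpha_{\CCpt^n}=\phi_n\circ c_n$, since an $m$-simplex $g\colon[m]\bar\times[m]\to\Cpt^n$ goes to $\Fact(g|_{\Cpt^m})$ along both routes. Your identification $\ascat\diag\CCpt^n\simeq\Cpt^n$ is $\ascat c_n$. Hence both of your $n$-simplices are the image of the diagonal $n$-simplex of $N(\Cpt^n)\simeq N(\ascat N(\Cpt^n))$ under $N(\ascat\phi_n)$. The comparison takes place in the discrete $N(\Cpt^\bullet)$ before applying this natural map, so naturality in $n$ is automatic. This $\phi_n$ is exactly what the paper uses to build its factoring $\cnr X\to\sd\cnr X$. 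With it added, your argument is the paper's right-inverse argument, repackaged through Yoneda.
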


To show this, we need a lemma.

\begin{lemma}\label{lemma:cnrofCCpt}
    There is a natural in $n$ equivalence $\cnr \CCpt^n\simeq N(\Cpt^n)$.
\end{lemma}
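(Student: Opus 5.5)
The natural route is to unwind the definition: $\cnr\CCpt^n$ has $k$-simplices $\map_{\PSh(\Delta^{\times 2})}(\CCpt^k, \CCpt^n)$. Since $\CCpt^k = \Fact\Cpt^k$ and $\CCpt^n = \Fact\Cpt^n$, the plan is to show that $\Fact$ is fully faithful on the relevant objects. Concretely, we will prove that
\begin{equation*}
    \map_{\PSh(\Delta^{\times 2})}(\Fact P, \Fact Q)\simeq \map_{\mathrm{OFS}}(P, Q)
\end{equation*}
for posets $P, Q$ with factorization systems of the form $\Cpt^k$ and $\Cpt^n$. Granting this, the freeness of $\Cpt^k$ as a category with factorization system on $[k]$ (used in \Cref{ex:factsystemshavefact}) gives
\begin{equation*}
    \map_{\mathrm{OFS}}(\Cpt^k, \Cpt^n)\simeq \map_{\Cat}([k], \Cpt^n) = N(\Cpt^n)_k,
\end{equation*}
naturally in $k$ and $n$, which is the claim. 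Naturality in $n$ is automatic, since everything is defined by precomposition and postcomposition.

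For full faithfulness we would proceed as follows. A map $\Fact\Cpt^k\to\Fact\Cpt^n$ is a map of bisimplicial sets, since both sides are discrete. We compose it with $\Fact\Cpt^n\hookrightarrow\Sq(\Cpt^n)$ and transpose via $\Gr\dashv\Sq$. Using $\Gr\CCpt^k\simeq\Cpt^k$ (\Cref{prop:GrCCptequivtoCpt}), this yields a functor $F:\Cpt^k\to\Cpt^n$. Conversely, a functor $F$ induces $\Sq(F)$, and the question is exactly when $\Sq(F)$ restricts to $\Fact\Cpt^k\to\Fact\Cpt^n$. This happens precisely when $F$ sends horizontal arrows to horizontal arrows and vertical arrows to vertical arrows, that is, when $F$ is a map in $\mathrm{OFS}$.

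So $\map(\CCpt^k,\CCpt^n)$ is identified with the subspace of $\map(\CCpt^k,\Sq\Cpt^n)\simeq\map_{\Cat}(\Cpt^k,\Cpt^n)$ cut out by this condition. Everything here is a set because posets are involved, so it suffices to check that the inclusion $\Fact\Cpt^n\subseteq\Sq\Cpt^n$ is a monomorphism of discrete presheaves. It is one, being levelwise an inclusion of sets of squares. The second step is therefore a direct check.

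The main obstacle is the bookkeeping in the first step. One must confirm that the transpose of $\Fact\Cpt^k\to\Sq\Cpt^n$ really recovers the underlying functor, i.e.\ that this transpose is compatible with $\Gr\Fact\Cpt^k\simeq\Cpt^k$ as stated in \Cref{prop:GrCCptequivtoCpt}. One must also confirm that the condition ``factors through $\Fact$'' only needs to be checked on $(0,1)$- and $(1,0)$-simplices. The latter holds because the simplices of $\Fact\Cpt^n$ are determined by their edges: a square in a poset lies in $\Fact$ iff its horizontal and vertical edges do.
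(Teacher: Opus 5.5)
Your argument is correct, and it reaches the key step by a different route from the paper. The paper cites \cite[Theorem A]{juran2026orthogonal} for full faithfulness of $\Fact:\mathrm{OFS}\to\PSh(\Delta^{\times 2})$. It then runs the same chain you do: $\map(\Fact\Cpt^\ell,\Fact\Cpt^n)\simeq\map_{\mathrm{OFS}}(\Cpt^\ell,\Cpt^n)\simeq\map_{\Cat}([\ell],\Cpt^n)$.

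You instead prove the needed special case directly. The equivalence $\Gr\CCpt^k\simeq\Cpt^k$ of \Cref{prop:GrCCptequivtoCpt} is by definition the transpose of $\eta:\CCpt^k\hookrightarrow\Sq(\Cpt^k)$. So the adjunction identifies $\map(\CCpt^k,\Sq\Cpt^n)$ with $\map_{\Cat}(\Cpt^k,\Cpt^n)$ via $F\mapsto\Sq(F)\circ\eta$, which settles the ``bookkeeping'' worry you raised. Since $\Fact\Cpt^n\subseteq\Sq\Cpt^n$ is a levelwise injection of sets and membership is detected on edges, the maps landing in $\Fact\Cpt^n$ are exactly the functors preserving horizontal and vertical arrows, i.e.\ the $\mathrm{OFS}$-maps. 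Testing against single-edge simplices of $\CCpt^k$ gives necessity. This is not circular, because \Cref{prop:GrCCptequivtoCpt} is proved independently of corner approximation.

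Your route makes the lemma self-contained within the paper at the cost of a few lines. The paper's citation is shorter and places the lemma inside a general result. The same argument with \Cref{thm:gluingoffactsystems} and the monomorphism $\Sq(\mc C,E,M)\to\Sq(\mc C)$ from the proof of \Cref{prop:cnrapproxforsq} would recover full faithfulness of $\Fact$ in general. It could only do so after the fact, though, since that theorem depends on corner approximation and hence on this lemma. Both proofs rely equally on $\Cpt^k$ being free on $[k]$.
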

\begin{proof}
    By \cite[Theorem A]{juran2026orthogonal}, $\Fact : \mathrm{OFS}\longrightarrow\PSh(\Delta^{\times 2})$ is fully faithful. Thus we have natural in $n$ and $\ell$ equivalences
    \begin{equation*}
        \begin{aligned}
            (\cnr \CCpt^n)_\ell &= \map_{\PSh(\Delta^{\times 2})}(\CCpt^\ell, \CCpt^n) \\
            &= \map_{\PSh(\Delta^{\times 2})}(\Fact \Cpt^\ell, \Fact \Cpt^n) \\
            &\simeq \map_{\mathrm{OFS}}(\Cpt^\ell, \Cpt^n) \\
            &\simeq \map_{\Cat}([\ell], \Cpt^n) \\
            &\simeq N(\Cpt^n)_\ell
        \end{aligned}
    \end{equation*}
    as required. Here we have used that $\Cpt^\ell$ is the free category with factorization system generated by $[\ell]$. In fact, in \cite{juran2026orthogonal}, fully faithfulness of $\Fact$ is shown by constructing a natural equivalence $\cnr\Fact (-)\simeq N(-)$ of functors $\mathrm{OFS}\to \PSh(\Delta)$.
\end{proof}

\begin{proof}[Proof of \Cref{thm:cornerapprox}]
    Denote by $\alpha : \diag X\to \cnr X$ the natural map. There is a morphism of factorings
    \begin{equation*}
        \begin{tikzcd}
        	{\diag X} & {\cnr X} \\
        	{\sd \diag X} & {\sd \cnr X} \\
        	{\diag X} & {\cnr X}
        	\arrow["\alpha", from=1-1, to=1-2]
        	\arrow["f_1"', from=1-1, to=2-1]
        	\arrow["f_2", from=1-2, to=2-2]
        	\arrow["{\sd\alpha}", from=2-1, to=2-2]
        	\arrow[from=2-1, to=3-1]
        	\arrow[from=2-2, to=3-2]
        	\arrow["\alpha", from=3-1, to=3-2]
        \end{tikzcd}
    \end{equation*}
    where the left factoring is that of \Cref{ex:factofdiag} and the right factoring is given by
    \begin{equation*}
        \begin{tikzcd}
            (\cnr X)_n = \map(\CCpt^n, X)\arrow{r}{\cnr_\ast} & \map(\cnr \CCpt^n, \cnr X)\simeq \map(N(\Cpt^n), \cnr X) = (\sd \cnr X)_n
        \end{tikzcd}
    \end{equation*}
    where we have applied \Cref{lemma:cnrofCCpt}.\par
    Moreover, there is a natural map $\beta : X\to \Sq(\diag X)$ given on level $(n,m)$ by
    \begin{equation*}
        \begin{aligned}
            X_{n,m} &\simeq \map(\Delta[n,m], X) \\
            &\fixedxrightarrow[.75cm]{\diag_\ast}\map(\diag \Delta[n,m], \diag X) \\
            &\simeq \map(\Delta[n]\times \Delta[m], \diag X) \\
            &= \Sq(\diag X)_{n,m}
        \end{aligned}
    \end{equation*}
    which fits into a commutative diagram
    \begin{equation}\label{eq:diagtocnrfct}
        \begin{tikzcd}
        	{\diag X} & {\cnr X} & {\sd \cnr X} \\
        	{\sd \diag X} & {\fsd \diag X} & {\fsd \cnr X}
        	\arrow["\alpha", from=1-1, to=1-2]
        	\arrow["{f_1}"', from=1-1, to=2-1]
        	\arrow["{f_2}", from=1-2, to=1-3]
        	\arrow["{\cnr\beta}"', from=1-2, to=2-2]
        	\arrow[from=1-3, to=2-3]
        	\arrow[from=2-1, to=2-2]
        	\arrow["{{\cnr \alpha}}", from=2-2, to=2-3]
        \end{tikzcd}
    \end{equation}
    where the bottom map is the forgetful map of \Cref{prop:sdequivfsdforcats}.\par
    By \Cref{prop:sdequivfsdforcats}, the natural transformation $\sd (-)\Rightarrow \fsd (-)$ of functors $\Cat \to \PSh(\Delta^{\times 2})$ is an equivalence, so we fix a choice of natural inverse. The left square of \eqref{eq:diagtocnrfct} then yields a diagram
    \begin{equation*}
        \begin{tikzcd}
        	{\diag X}\arrow{r}{\alpha} & {\cnr X} \\
        	{\sd \diag X} & {\fsd \diag X} \\
        	{\sd \ascat\diag X} & {\fsd \ascat\diag X} \\
        	{N(\ascat\diag X)}
        	\arrow[from=1-1, to=1-2]
        	\arrow["f_1", from=1-1, to=2-1]
        	\arrow["{\mathrm{unit}}"', bend right=50, out=-70, in=-110, from=1-1, to=4-1]
        	\arrow["\cnr\beta", from=1-2, to=2-2]
        	\arrow[from=2-1, to=2-2]
        	\arrow[from=2-1, to=3-1]
        	\arrow[from=2-2, to=3-2]
        	\arrow["\simeq"', from=3-1, to=3-2]
        	\arrow[from=3-1, to=4-1]
        \end{tikzcd}
    \end{equation*}
    where the left most composite is the unit since $\diag X\to \sd\diag X$ is a factoring. It follows that the composite
    \begin{equation}\label{eq:leftinvcnrapprox}
        \begin{tikzcd}
            \cnr X\arrow{r} & \fsd \diag X\arrow{r} & \fsd \ascat\diag X\simeq \sd \ascat \diag X\arrow{r} & N(\ascat\diag X)
        \end{tikzcd}
    \end{equation}
    when precomposed with $\alpha$ is the unit. In particular, after passing to associated categories, \eqref{eq:leftinvcnrapprox} is a left inverse to $\ascat \alpha$.\par
    It thus remains to show that, after passing to associated categories, \eqref{eq:leftinvcnrapprox} is also a right inverse to $\ascat\alpha$. For this, the right square of \eqref{eq:diagtocnrfct} gives rise to a diagram
    \begin{equation*}
        \begin{tikzcd}[column sep=45pt]
        	{\cnr X} & {\sd \cnr X} \\
        	{\fsd \diag X} & {\fsd \cnr X} \\
        	{\fsd \ascat\diag X} & {\fsd \ascat\cnr X} \\
        	{\sd \ascat\diag X} & {\sd \ascat \cnr X} \\
        	{N(\ascat\diag X)} & {N(\ascat\cnr X)}
        	\arrow["f_2", from=1-1, to=1-2]
        	\arrow["\cnr\beta"', from=1-1, to=2-1]
        	\arrow[from=1-2, to=2-2]
        	\arrow["{{\fsd \alpha}}", from=2-1, to=2-2]
        	\arrow[from=2-1, to=3-1]
        	\arrow[from=2-2, to=3-2]
        	\arrow["{{\fsd \ascat\alpha}}", from=3-1, to=3-2]
        	\arrow["\simeq", from=3-1, to=4-1]
        	\arrow["\simeq", from=3-2, to=4-2]
        	\arrow["{{\sd \ascat\alpha}}", from=4-1, to=4-2]
        	\arrow[from=4-1, to=5-1]
        	\arrow[from=4-2, to=5-2]
        	\arrow["{{N(\ascat\alpha)}}", from=5-1, to=5-2]
        \end{tikzcd}
    \end{equation*}
    Traversing the left-most arrows followed by the bottom arrow gives $N(\ascat \alpha)\circ$\eqref{eq:leftinvcnrapprox}. On the other hand, since $\cnr X\to \sd \cnr X$ is a factoring, traversing the upper and right-most arrows gives the unit $\cnr X\to N(\ascat\cnr X)$ for which the result follows.
\end{proof}

The essential content of this theorem is as follows. A priori, $\cnr X$ has strictly more points in its $1$-simplex space than $\diag X$. Indeed, the points of $(\cnr X)_1$ are diagrams of the form
\begin{equation}\label{eq:morincnrD}
    \begin{tikzcd}
    	\bullet & \bullet \\
    	& \bullet
    	\arrow["{\in X_{0,1}}", from=1-1, to=1-2]
    	\arrow["{\in X_{1,0}}", from=1-2, to=2-2]
    \end{tikzcd}
\end{equation}
which need not be in the image of $(\diag X)_1\to (\cnr X)_1$ which takes a $(1,1)$-simplex of $X$ and forgets the bottom left half. However, in $\cnr X$, \eqref{eq:morincnrD} can be decomposed as a composite
\begin{equation*}
    \begin{tikzcd}
        \bullet & \bullet & \bullet \\
        & \bullet & \bullet \\
        && \bullet.
        \arrow[from=1-1, to=1-2]
        \arrow[equals, from=1-2, to=1-3]
        \arrow[equals, from=1-2, to=2-2]
        \arrow[equals, from=1-3, to=2-3]
        \arrow[equals, from=2-2, to=2-3]
        \arrow[from=2-3, to=3-3]
    \end{tikzcd}
\end{equation*}
where both arrows in the composite \emph{do} extend to $(1,1)$-simplices in $X$. This is precisely the factoring $\cnr X\to \sd \cnr X$ occurring in the proof of \Cref{thm:cornerapprox}. However, the formal composite
\begin{equation*}
    \begin{tikzcd}
    	\bullet & \bullet & \\
    	\bullet & \bullet & \bullet \\
    	& \bullet & \bullet
    	\arrow[from=1-1, to=1-2]
    	\arrow[equals, from=1-1, to=2-1]
    	\arrow[equals, from=1-2, to=2-2]
    	\arrow[from=2-1, to=2-2]
    	\arrow[equals, from=2-2, to=2-3]
    	\arrow[from=2-2, to=3-2]
    	\arrow[from=2-3, to=3-3]
    	\arrow[equals, from=3-2, to=3-3]
    \end{tikzcd}\in (\diag X)_1\times_{(\diag X)_0}(\diag X)_1
\end{equation*}
which lifts this composite in $\cnr X$ need not possess a composite in $\diag X$, i.e.\ extend to a $2$-simplex of $\diag X$. It is only after passing to associated categories that we can compose this ``formal composite.'' Thus the procedure becomes:
\begin{enumerate}
    \item Functorially factor simplices in $\cnr X$ into factorings that lift to formal composites in $\diag X$. This is modeled by a map $\cnr X\to \fsd \diag X$.
    \item Pushforward formal composites along $\diag X\to \ascat\diag X$, i.e.\ post-compose with $\fsd \diag X \to \fsd \ascat \diag X$.
    \item Complete the formal composites into bona fide factorings using $\fsd \ascat\diag X\simeq \sd \ascat\diag X$ since $\ascat \diag X$ is a category.
    \item Forget the factorings and restrict to the composite arrows, i.e.\ postcompose with $\sd \ascat\diag X\to N(\ascat\diag X)$.
\end{enumerate}
This allows us to build the map $\cnr X\to N(\ascat\diag X)$ which is inverse (after passing to associated categories) to $\diag X\to\cnr X$. This philosophy is discussed further in \Cref{subsec:weakfactsystems} as it will be used again.\par

We may summarize the results of this section as follows.

\begin{theorem}[Approximation]\label{thm:approximationtheorem}
    There exists a natural in $X$ commutative diagram
    \begin{equation*}
        \begin{tikzcd}
            \diag X\arrow{rr}\arrow{dr} && \cnr X\arrow{dl} \\
            & N(\Gr X)
        \end{tikzcd}
    \end{equation*}
    whose downwards arrows are composition along the diagonal. Moreover, all maps become equivalences after passing to associated categories.
\end{theorem}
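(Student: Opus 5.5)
The plan is to assemble the triangle from maps already built, check that it commutes, and then use two-out-of-three for equivalences after applying $\ascat$.

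\textbf{The maps.} The horizontal arrow is the natural map $\alpha\colon \diag X\to\cnr X$, induced by $\CCpt^n\to\Delta[n,n]$. The left arrow is composition along the diagonal,
\begin{equation*}
\diag X\to\diag\Sq(\Gr X)\to N(\Gr X),
\end{equation*}
as in \Cref{prop:diaglocalequalsgr}. The right arrow is
\begin{equation*}
\cnr X\to\fsd\Gr X\simeq\sd\Gr X\xrightarrow{\diag^\ast}N(\Gr X),
\end{equation*}
built just before \Cref{thm:cornerapprox}. Every ingredient is natural in $X$: the units $X\to\Sq(\Gr X)$ and the transformation of \Cref{prop:sdequivfsdforcats}, together with its inverse on categories. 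Hence the whole diagram is natural.

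\textbf{Commutativity.} First I would use naturality of $\alpha$ in the bisimplicial space along the unit $X\to\Sq\Gr X$. This reduces the claim to $X=\Sq(\mc C)$ with $\mc C=\Gr X$. There I must compare two maps $\diag\Sq(\mc C)\to N(\mc C)$.
\begin{itemize}
\item[(a)] Restriction of a square $[n]\times[n]\to\mc C$ along the diagonal.
\item[(b)] Restriction along $\CCpt^n\to\Delta[n,n]$, i.e.\ along $\Cpt^n\subseteq[n]\times[n]$; then completion to a map $\Cpt^n\to\mc C$ via $\fsd\mc C\simeq\sd\mc C$; then restriction to the diagonal of $\Cpt^n$.
\end{itemize}
The map $\sd\mc C\to\fsd\mc C$ is itself restriction along $\CCpt^n\hookrightarrow\Sq\Cpt^n$. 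So a square $F\colon[n]\times[n]\to\mc C$ restricted to $\Cpt^n$ is already a preimage in $\sd\mc C$ of its image in $\fsd\mc C$. The inverse therefore returns $F|_{\Cpt^n}$, naturally in $F$, because it is inverse to the forgetful map. Restricting $F|_{\Cpt^n}$ to the diagonal of $\Cpt^n$, which is the diagonal of $[n]\times[n]$, recovers (a).

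This is the step needing the most care. The identification must be made as a homotopy of natural transformations, not merely pointwise. I would handle it by writing both composites as induced by maps of corepresenting objects $\Delta[n]\to\Cpt^n\to[n]\times[n]$, using Yoneda and full faithfulness of $\Sq$.

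\textbf{Equivalences.} The left arrow becomes an equivalence on associated categories by \Cref{prop:diaglocalequalsgr}. The horizontal arrow does by \Cref{thm:cornerapprox}. Since the triangle commutes, applying $\ascat$ and two-out-of-three shows the right arrow also becomes an equivalence after passing to associated categories. This completes the proof.
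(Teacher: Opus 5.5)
Your proposal is correct and matches the paper's proof. Your reduction via naturality along the unit $X\to\Sq(\Gr X)$ is the top square of the paper's diagram. Your observation that $\diag\Sq(\mc C)\to\fsd\mc C$ factors through $\sd\mc C$ by restriction to $\Cpt^n\subseteq[n]\times[n]$ is how the paper chooses the inverse of $\sd(-)\Rightarrow\fsd(-)$ in $\Fun(\Cat,\PSh(\Delta))_{\diag\Sq(-)/}$, and the equivalences follow, as in the paper, from \Cref{prop:diaglocalequalsgr} and \Cref{thm:cornerapprox} with two-out-of-three.
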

\begin{proof}
    We construct the map $\cnr X\to N(\Gr X)$ and argue naturality. We have the following commutative diagram of functors $\Cat\to \PSh(\Delta)$:
    \begin{equation*}
        \begin{tikzcd}
        	{\diag\Sq(-)} & {\fsd (-)} \\
        	{N(-)} & {\sd (-)}
        	\arrow[from=1-1, to=1-2]
        	\arrow[from=1-1, to=2-1]
        	\arrow[from=1-1, to=2-2]
        	\arrow[from=2-2, to=2-1]
        	\arrow["\simeq"', from=2-2, to=1-2]
        \end{tikzcd}
    \end{equation*}
    where $\diag \Sq(-)\to N(-)$ is restriction along the diagonal of \Cref{prop:diaglocalequalsgr}, $\sd (-)\to \fsd (-)$ is the forgetful map of \Cref{prop:sdequivfsdforcats}, and $\diag \Sq(-)\to \sd (-)$ is given by
    \begin{equation*}
        \begin{tikzcd}[row sep=0pt]
            \diag\Sq(\mc C)\arrow{r} & \sd \mc C \\
            F : [n]\times [n]\to \mc C\arrow[mapsto]{r} & \Cpt^n\hookrightarrow [n]\times [n]\xrightarrow{F}\mc C.
        \end{tikzcd}
    \end{equation*}
    It follows that $\sd (-)\Rightarrow \fsd (-)$ is invertible in $\Fun(\Cat, \PSh(\Delta))_{\diag\Sq(-)/}$ so we may invert it in such a way as to obtain a commutative square
    \begin{equation*}
        \begin{tikzcd}
        	{\diag\Sq(-)} & {\fsd (-)} \\
        	{N(-)} & {\sd (-)}.
        	\arrow[from=1-1, to=1-2]
        	\arrow[from=1-1, to=2-1]
        	\arrow[from=1-1, to=2-2]
        	\arrow[from=2-2, to=2-1]
        	\arrow["\simeq"', from=1-2, to=2-2]
        \end{tikzcd}
    \end{equation*}
    Our diagram is then the outer triangle of
    \begin{equation*}
        \begin{tikzcd}
        	{\diag X} & {\cnr X} \\
        	{\diag \Sq(\Gr X)} & {\fsd \Gr X} \\
        	{N(\Gr X)} & {\sd \Gr X}
        	\arrow[from=1-1, to=1-2]
        	\arrow["{\diag\circ\mathrm{unit}}"', from=1-1, to=2-1]
        	\arrow["{\cnr\circ\mathrm{unit}}", from=1-2, to=2-2]
        	\arrow[from=2-1, to=2-2]
        	\arrow[from=2-1, to=3-1]
        	\arrow["\simeq", from=2-2, to=3-2]
        	\arrow[from=3-2, to=3-1]
        \end{tikzcd}
    \end{equation*}
    which is evidently natural in $X$. The fact that all maps become equivalences after passing to associated categories follows from \Cref{prop:diaglocalequalsgr} and \Cref{thm:cornerapprox}.
\end{proof}

\subsection{Gluing of categories with factorization system}

\begin{definition}
    Define the category of \emph{marked categories} to be the full subcategory of $\Fun(\Lambda_2^2, \Cat)$ spanned by cospans $E_1\rightarrow \mc C\leftarrow E_2$ where $E_1,E_2\to\mc C$ are inclusions of wide subcategories. We denote the category by $\Cat^+$ and will write objects as triples $(\mc C, E_1, E_2)$.
\end{definition}

There is a fully faithful embedding $\mathrm{OFS}\subseteq \Cat^+$ given by sending $\mc C$ with factorization system $(E,M)$ to $E\rightarrow\mc C\leftarrow M$. As such, we get a simultaneous extension of $\Fact$ and $\Sq$ to all marked categories by taking the nerve functor associated to the cobisimplicial object
\begin{equation*}
    \begin{tikzcd}[row sep=0pt]
        \Delta^{\times 2}\arrow{r} & \Cat^+ \\
        {([n],[m])}\arrow[mapsto]{r} & {[n]\bar\times [m]}.
    \end{tikzcd}
\end{equation*}
By overload of notation, we refer to the induced functor $\Cat^+\to \PSh(\Delta^{\times 2})$ as the squares functor and denote it by $\Sq$. The $(n,m)$-simplex space of $\Sq(\mc C, E_1, E_2)$ is given by the classifying space of $n\times m$-commutative grids in $\mc C$ whose horizontal arrows lie in $E_1$ and vertical arrows lie in $E_2$.\par
Note that every $(\mc C, E_1, E_2)\in\Cat^+$ has a canonical map $(\mc C, E_1, E_2)\to (\mc C,\mc C,\mc C)$ given by the identity $\id :\mc C\to \mc C$. This induces a natural map $\Sq(\mc C, E_1, E_2)\to \Sq(\mc C)$. Our goal in this section is to study when this natural map transposes to an equivalence $\Gr\Sq(\mc C, E_1, E_2)\to \mc C$. One may informally view $\Gr\Sq(\mc C, E_1, E_2)$ as the category freely generated by morphisms in both $E_1$ and $E_2$, modulo the relations $e_2 e_1 \simeq e_1' e_2'$ for every commutative square
\begin{equation*}
    \begin{tikzcd}
        \bullet\arrow{r}{e_1}\arrow{d}[left]{e_2'} & \bullet\arrow{d}{e_2} \\
        \bullet\arrow{r}{e_1'} & \bullet
    \end{tikzcd}
\end{equation*}
in $\mc C$ with $e_1,e_1'\in E_1$, $e_2,e_2'\in E_2$.\par
Our approach to this question will follow \cite{liu2012gluing} and make use of \Cref{thm:approximationtheorem}. Indeed, by \Cref{thm:approximationtheorem}, we have a commutative diagram
\begin{equation*}
    \begin{tikzcd}[column sep=0pt]
        {\diag\Sq(\mc C, E_1, E_2)} && {\cnr\Sq(\mc C, E_1, E_2)} \\
        & {N(\Gr\Sq(\mc C, E_1, E_2))} && {\diag\Sq(\mc C)} && {\fsd\mc C} \\
        &&&& {N(\Gr\Sq(\mc C))\simeq N(\mc C)}
        \arrow[from=1-1, to=1-3]
        \arrow[from=1-1, to=2-2]
        \arrow[from=1-1, to=2-4]
        \arrow[from=1-3, to=2-2]
        \arrow[from=1-3, to=2-6]
        \arrow[from=2-2, to=3-5]
        \arrow[from=2-4, to=2-6]
        \arrow[from=2-4, to=3-5]
        \arrow[from=2-6, to=3-5]
    \end{tikzcd}
\end{equation*}
where the front and back triangles consist of equivalences after passing to associated categories. Hence our question is equivalent to asking when the composite
\begin{equation*}
    \begin{tikzcd}
        \cnr\Sq(\mc C, E_1, E_2)\arrow{r} & \fsd\mc C\arrow{r} & N(\mc C)
    \end{tikzcd}
\end{equation*}
is an equivalence after passing to associated categories.

\begin{proposition}\label{prop:cnrapproxforsq}
    Under the equivalence $\fsd\mc C \simeq \sd \mc C$ of \Cref{prop:sdequivfsdforcats}, the map
    \begin{equation*}
        \cnr\Sq(\mc C, E_1, E_2)\to \fsd \mc C
    \end{equation*}
    is an equivalence onto the sub-simplicial space
    \begin{equation*}
        \map_{\Cat^+}(\Cpt^\bullet, (\mc C, E_1, E_2))\hookrightarrow\map_{\Cat}(\Cpt^\bullet, \mc C) = \sd\mc C
    \end{equation*}
    where $\Cpt^n$ is viewed as a marked category via its usual factorization system.
\end{proposition}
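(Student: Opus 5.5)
The plan is to compute both sides levelwise and compare them as subspaces of $(\sd\mc C)_n = \map_{\Cat}(\Cpt^n,\mc C)$. By definition $(\cnr\Sq(\mc C,E_1,E_2))_n = \map_{\PSh(\Delta^{\times 2})}(\CCpt^n, \Sq(\mc C,E_1,E_2))$. Write $\CCpt^n$ as the colimit of its simplices, which are the $\Delta[i,j]$ mapping into $\Fact\Cpt^n$. Since $\Sq(\mc C,E_1,E_2)$ is the nerve associated to $[i]\bar\times[j]\in\Cat^+$, this mapping space becomes the limit
\begin{equation*}
    \lim_{[i]\bar\times[j]\to \Cpt^n}\map_{\Cat^+}([i]\bar\times[j], (\mc C,E_1,E_2)).
\end{equation*}
The indexing category here is $\Box_{/\Cpt^n}$. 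The analogous limit for $(\mc C,\mc C,\mc C)$ computes $(\fsd\mc C)_n$.

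Next I observe that $\map_{\Cat^+}([i]\bar\times[j],(\mc C,E_1,E_2))\hookrightarrow\map_{\Cat}([i]\times[j],\mc C)$ is a monomorphism. It is the union of those components of grids whose horizontal edges lie in $E_1$ and whose vertical edges lie in $E_2$, because $E_1,E_2\subseteq\mc C$ are wide subcategories and hence inclusions of path components on mapping spaces. Limits preserve monomorphisms, so the map $\cnr\Sq(\mc C,E_1,E_2)_n\to(\fsd\mc C)_n$ is a monomorphism. Its image consists of those compatible families of grids such that each constituent grid has horizontal edges in $E_1$ and vertical edges in $E_2$.

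Now transport this along the equivalence $(\sd\mc C)_n\simeq(\fsd\mc C)_n$ of \Cref{prop:sdequivfsdforcats}. That equivalence is restriction along $\CCpt^n\to\Sq(\Cpt^n)$, and by \Cref{lemma:identofGrFactP} and \Cref{prop:GrCCptequivtoCpt} it amounts to $\map_{\Cat}(\Cpt^n,\mc C)\simeq\lim_{\Box_{/\Cpt^n}}\map_{\Cat}([i]\times[j],\mc C)$. A functor $F:\Cpt^n\to\mc C$ therefore lies in the image exactly when, for every map of categories with factorization system $[i]\bar\times[j]\to\Cpt^n$, the composite sends horizontal arrows into $E_1$ and vertical arrows into $E_2$. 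It suffices to check this on the length-one horizontal and vertical arrows of $\Cpt^n$, since $E_1$ and $E_2$ are closed under composition. This is precisely the condition that $F$ is a map of marked categories $\Cpt^n\to(\mc C,E_1,E_2)$, where the horizontal arrows of $\Cpt^n$ form its left class and the vertical arrows its right class.

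It remains to check naturality in $n$ and that the identification matches the subobject $\map_{\Cat^+}(\Cpt^\bullet,(\mc C,E_1,E_2))$. The simplicial structure maps on both sides are precomposition with $\Cpt^\ell\to\Cpt^n$, which are maps in $\mathrm{OFS}$. Both sides are subspaces of $\sd\mc C$ defined by a componentwise condition, so an equivalence of subspaces levelwise automatically assembles into an equivalence of sub-simplicial spaces. The main obstacle is the bookkeeping in the previous paragraph: I need to ensure that the image under the $\sd\simeq\fsd$ equivalence is described via restriction along the \emph{same} simplices $[i]\bar\times[j]\to\Cpt^n$, and not merely up to some abstract equivalence. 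I will handle this by writing the equivalence explicitly as the composite of $\Sq_\ast$ with restriction along $\CCpt^n\hookrightarrow\Sq(\Cpt^n)$, so that membership in the marked subspace is checked on the same grids.
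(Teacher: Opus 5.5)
Your proposal is correct and follows essentially the same route as the paper. Both arguments first show that $\cnr\Sq(\mc C,E_1,E_2)\to\fsd\mc C$ is a levelwise monomorphism: the paper argues that monomorphisms in functor categories are pointwise, while you use the limit formula over $\Box_{/\Cpt^n}$ together with the fact that limits preserve monomorphisms. Both then identify the image under $\map(\CCpt^n,\Sq(\mc C))\simeq\map_{\Cat}(\Cpt^n,\mc C)$ by observing that membership is detected on the horizontal and vertical arrows of $\Cpt^n$, and your explicit description of this equivalence as restriction along the simplices $[i]\bar\times[j]\to\Cpt^n$ is precisely the bookkeeping compressed into the paper's final sentence.
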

\begin{proof}
    Since $E_1$, $E_2$ are wide subcategories, the map
    \begin{equation*}
        \map_{\Cat^+}([n]\bar\times [m], (\mc C, E_1, E_2)) \simeq \Sq(\mc C, E_1, E_2)_{n,m}\to \Sq(\mc C)_{n,m}\simeq \map_{\Cat}([n]\times [m], \mc C)
    \end{equation*}
    is a monomorphism which corresponds to the inclusion of connected components consisting of diagrams whose horizontal arrows lie in $E_1$ and whose vertical arrows lie in $E_2$.\par
    We now make use of the following observation: Suppose that $\mc C$ has finite limits and $K$ is another category. Then the monomorphisms in $\Fun(K, \mc C)$ are precisely the natural transformations which are componentwise monomorphisms. Indeed, $\Fun(K,\mc C)$ has finite limits, so $f : X\to Y\in\Fun(K,\mc C)$ is a monomorphism if and only if $\Delta_f : X\to X\times_Y X$ is an equivalence. But limits in $\Fun(K,\mc C)$ are computed pointwise and equivalences are precisely the pointwise equivalences. Thus $\Delta_f$ is an equivalence if and only if each $X(k) \to X(k)\times_{Y(k)}X(k)$ is an equivalence, which is to say each $f_k : X(k)\to Y(k)$ is a monomorphism.\par
    It follows from this observation and the above that $\Sq(\mc C, E_1, E_2)\to \Sq(\mc C)$ is a monomorphism. Again checking pointwise,
    \begin{equation*}
        \cnr \Sq(\mc C, E_1, E_2)\to \fsd \mc C
    \end{equation*}
    is therefore also a monomorphism. At level $n$, it corresponds to the inclusion of connected components of $\map_{\PSh(\Delta^{\times 2})}(\CCpt^n, \Sq(\mc C))$ such that the induced map $(\CCpt^n)_{0,\bullet}\to \Sq(C)_{0,\bullet}\simeq N(\mc C)$ lands inside $N(E_1)$ and $(\CCpt^n)_{\bullet,0}\to \Sq(\mc C)_{\bullet, 0}\simeq N(\mc C)$ lands inside $N(E_2)$. But under the equivalence
    \begin{equation*}
        \map_{\PSh(\Delta^{\times 2})}(\CCpt^n, \Sq(\mc C))\simeq \map_{\Cat}(\Cpt^n, \mc C),
    \end{equation*}
    the maps on horizontal and vertical fragments may be recovered by restricting to the horizontal and vertical arrows in $\Cpt^n$, from which the result follows.
\end{proof}

We thus get the following equivalent formulation of the problem.

\begin{corollary}\label{cor:condforgluing}
    The natural map $\Gr\Sq(\mc C, E_1, E_2)\to\mc C$ is equivalent, after passing to associated categories, to the map
    \begin{equation*}
        \map_{\Cat^+}(\Cpt^\bullet, (\mc C, E_1, E_2))\to N(\mc C)
    \end{equation*}
    given by restriction along the diagonal.
\end{corollary}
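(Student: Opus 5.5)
The plan is to assemble the commutative diagram displayed just before \Cref{prop:cnrapproxforsq}, which is built from two instances of \Cref{thm:approximationtheorem}, and then substitute the identification of \Cref{prop:cnrapproxforsq}.

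Start with the naturality of \Cref{thm:approximationtheorem} in $X$, applied to the map $\Sq(\mc C, E_1, E_2)\to\Sq(\mc C)$. This gives a commutative prism. Its top face is $\cnr\Sq(\mc C,E_1,E_2)\to\cnr\Sq(\mc C)=\fsd\mc C$. The map $\cnr\Sq(\mc C,E_1,E_2)\to N(\Gr\Sq(\mc C,E_1,E_2))$ becomes an equivalence after applying $\ascat$. The composite with $N(\Gr\Sq(\mc C,E_1,E_2))\to N(\Gr\Sq(\mc C))\simeq N(\mc C)$ agrees with
\[
\cnr\Sq(\mc C,E_1,E_2)\to\fsd\mc C\to N(\mc C).
\]
Here the second map is the composition map of \Cref{thm:approximationtheorem} for $X=\Sq(\mc C)$, using the counit $\Gr\Sq(\mc C)\simeq\mc C$ from full faithfulness of $\Sq$. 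Consequently, after applying $\ascat$, the map $\Gr\Sq(\mc C,E_1,E_2)\to\mc C$ is identified, up to the equivalence $\ascat\cnr\Sq(\mc C,E_1,E_2)\simeq\Gr\Sq(\mc C,E_1,E_2)$, with $\ascat$ of this composite.

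Next, by construction in the proof of \Cref{thm:approximationtheorem}, the map $\fsd\mc C\to N(\mc C)$ is the inverse of the equivalence $\sd\mc C\to\fsd\mc C$ of \Cref{prop:sdequivfsdforcats}, followed by restriction along the diagonal $\Delta[n]\hookrightarrow N(\Cpt^n)$. Applying \Cref{prop:cnrapproxforsq}, the composite $\cnr\Sq(\mc C,E_1,E_2)\to\sd\mc C$ is the inclusion of $\map_{\Cat^+}(\Cpt^\bullet,(\mc C,E_1,E_2))$. The whole map is then restriction along the diagonal, as claimed.

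The main point needing care is checking that the chosen inverse in the proof of \Cref{thm:approximationtheorem} is compatible with the identification in \Cref{prop:cnrapproxforsq}. This should hold because both are defined through the same forgetful map $\sd\Rightarrow\fsd$, and a monomorphism into $\fsd\mc C$ determines its lift to $\sd\mc C$ uniquely up to contractible choice.
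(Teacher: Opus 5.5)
Your proposal is correct and is essentially the paper's own argument: the paper deduces the corollary directly from the naturality prism of \Cref{thm:approximationtheorem}, applied to $\Sq(\mc C,E_1,E_2)\to\Sq(\mc C)$, together with \Cref{prop:cnrapproxforsq}. The compatibility you flag at the end needs no monomorphism argument, since the inverses of the equivalence $\sd\mc C\to\fsd\mc C$ already form a contractible space.
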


From this, we may immediately derive a vast generalization of \Cref{prop:GrCCptequivtoCpt}.

\begin{theorem}\label{thm:gluingoffactsystems}
    If $(\mc C,E,M)$ is a category with factorization system, then the natural map $\Fact\mc C\to \Sq(\mc C)$ transposes to an equivalence.
\end{theorem}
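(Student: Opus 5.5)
The plan is to reduce to \Cref{cor:condforgluing}. Viewing $(\mc C, E, M)$ as a marked category, we have $\Sq(\mc C, E, M) = \Fact\mc C$. By \Cref{cor:condforgluing} it therefore suffices to show one thing: the restriction-along-the-diagonal map
\begin{equation*}
    \map_{\Cat^+}(\Cpt^\bullet, (\mc C, E, M))\longrightarrow N(\mc C)
\end{equation*}
becomes an equivalence after passing to associated categories. I will in fact show it is already a levelwise equivalence of simplicial spaces.

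For the levelwise statement, note first that $\mathrm{OFS}\subseteq\Cat^+$ is fully faithful. Since both $\Cpt^n$ and $(\mc C,E,M)$ lie in $\mathrm{OFS}$, we get
\begin{equation*}
    \map_{\Cat^+}(\Cpt^n, (\mc C,E,M))\simeq\map_{\mathrm{OFS}}(\Cpt^n, \mc C).
\end{equation*}
Next, $\Cpt^n$ is the free category with factorization system on $[n]$, with unit the diagonal inclusion $[n]\hookrightarrow\Cpt^n$, $i\mapsto(i,i)$. Restriction along this diagonal therefore gives an equivalence $\map_{\mathrm{OFS}}(\Cpt^n,\mc C)\simeq\map_{\Cat}([n],\mc C) = N(\mc C)_n$. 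This is exactly the identification used in \Cref{ex:factsystemshavefact}, and it is natural in $[n]$ because the diagonal inclusions form a cosimplicial map $[\bullet]\to\Cpt^\bullet$. Hence the composite is a levelwise equivalence of simplicial spaces, and so certainly an equivalence after applying $\ascat$.

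The main obstacle is justifying the freeness of $\Cpt^n$ as a genuine $\infty$-categorical statement. The definition in the paper asserts it, but a careful argument would go as follows. A map $\Cpt^n\to\mc C$ of marked categories sends horizontal arrows into $E$ and vertical arrows into $M$. The square $(i,j)\to(i,j+1)\to(i+1,j+1)$ versus $(i,j)\to(i+1,j)\to(i+1,j+1)$ need not be interpreted as an $E$/$M$ factorization square; what matters is that each composite $(i,i)\to(i,j)\to(j,j)$ is an $(E,M)$-factorization of the image of $i\to j$. Since $\Cpt^n$ is a poset, one can show by induction on $n$ that the space of such extensions is contractible. Concretely, write $\Cpt^n$ as an iterated pushout of posets via \Cref{prop:pushoutsofposets}, similar to the proof of \Cref{prop:GrCCptequivtoCpt}. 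At each step, use that the space of factorizations of a morphism is contractible (the remark after \Cref{def:factsystem}), together with unique lifting for the squares, to fill in the new corner.

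Alternatively, I would bypass this induction by citing \cite{juran2026orthogonal}, as in \Cref{lemma:cnrofCCpt}. The point there is that $\map_{\mathrm{OFS}}(\Cpt^\ell,\Cpt^n)\simeq\map_{\Cat}([\ell],\Cpt^n)$ is the universal instance, and the general case follows the same way.

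Finally, I would check that the composite map agrees with the one in \Cref{cor:condforgluing}, i.e.\ that it is restriction along the diagonal. This holds by construction, since the transpose of $\Fact\mc C\to\Sq(\mc C)$ is identified via \Cref{thm:approximationtheorem} with composition along the diagonal.
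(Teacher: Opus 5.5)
Your proposal is correct and is essentially the paper's proof: identify $\Fact\mc C$ with $\Sq(\mc C,E,M)$, then use full faithfulness of $\mathrm{OFS}\subseteq\Cat^+$ and freeness of $\Cpt^n$ on $[n]$ to see that the map in \Cref{cor:condforgluing} is already a levelwise equivalence of simplicial spaces, before passing to associated categories. Your extra paragraph justifying the freeness of $\Cpt^n$ goes beyond the paper, which simply takes this fact as known.
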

\begin{proof}
    First note that $\Fact \mc C = \Sq(\mc C, E, M)$ by definition. Since $\Cpt^n$ is the free category with factorization system generated by $[n]$ and $\mathrm{OFS}\subseteq \Cat^+$ is a full subcategory, we have that
    \begin{equation*}
        \begin{tikzcd}
            \map_{\Cat^+}(\Cpt^n, (\mc C, E, M))\simeq \map_{\mathrm{OFS}}(\Cpt^n, (\mc C, E, M))\simeq \map_{\Cat}([n], \mc C)
        \end{tikzcd}
    \end{equation*}
    where the composite is restriction along the diagonal. Thus the map of \Cref{cor:condforgluing} is an equivalence of simplicial spaces, before passing to associated categories.
\end{proof}

\subsection{Factorization systems for simplicial spaces}\label{subsec:weakfactsystems}

In this section we introduce a technique for factoring simplicial spaces which will be used in the sequel.\par
Suppose that $X$ is a simplicial space which has two distinguished sub-simplicial spaces $E,M\subseteq X$ such that every $n$-simplex of $X$ coherently extends to a factorization $N(\Cpt^n)\to X$ where the horizontal arrows lie in $E$ and the vertical arrows lie in $M$. We can then imagine that we have access to some bisimplicial space $Y_{\bullet,\bullet}$ which takes the role of $\Fact X$ for this weak version of a factorization system. In particular, we should imagine that
\begin{enumerate}
    \item $Y_{0,\bullet}\simeq E$
    \item $Y_{\bullet,0}\simeq M$
    \item $Y_{n,m}$ represents an $n\times m$ square in $X$ where the horizontal arrows lie in $E$ and the vertical arrows lie in $M$.
\end{enumerate}
In particular, we should have a map $\alpha : Y\to \Sq(X)$ which captures (iii). On horizontal fragments the map $\alpha_{0,\bullet} : Y_{0,\bullet}\to \Sq(X)_{0,\bullet}\simeq X$ should be a monomorphism onto $E$, recovering (i) and on vertical fragments $\alpha_{\bullet,0} : Y_{\bullet,0}\to \Sq(X)_{\bullet,0}\simeq X$ should be a monomorphism onto $M$, recovering (ii).\par
Additionally, one would expect that the factorization $X\longrightarrow \sd X$ of simplices in $X$ into those belonging to $E$ and $M$ lifts to a diagram
\begin{equation*}
    \begin{tikzcd}
    	{\cnr Y} & {\fsd X} \\
    	X & {\sd X}
    	\arrow[from=1-1, to=1-2, "\cnr\alpha"]
    	\arrow[from=2-1, to=1-1]
    	\arrow[from=2-1, to=2-2]
    	\arrow[from=2-2, to=1-2]
    \end{tikzcd}
\end{equation*}
where the right vertical map is the forgetful map of \Cref{prop:sdequivfsdforcats}.\par
The map $\alpha : Y\to \Sq(X)$ induces a map $\Gr Y\to \ascat X$ by transposing the composite
\begin{equation*}
    \begin{tikzcd}
        Y\arrow{r}{\alpha} & \Sq(X)\arrow{r}{\Sq\circ\mathrm{unit}} &[20pt] \Sq(\ascat X).
    \end{tikzcd}
\end{equation*}
A hopeful thought may be that this induced map $\Gr Y\to\ascat X$ is an equivalence in analogy with \Cref{thm:gluingoffactsystems}. However, the current hypotheses only guarantee the existence of factorizations, not the uniqueness that is also inherent to bona fide factorization systems. In applications, uniqueness is often too strong to ask. This is because, if $X$ is a mere simplicial space, morphisms which may later become equivalent in $\ascat X$ may not easily be witnessed as equivalent in $X$. However, what we may show is that the map $\Gr Y\to \ascat X$ has a right inverse defined by factorizing morphisms. We encode this in \Cref{prop:weakfactsystemrightinv} below.\par
Before, however, we take stock of additional desirable properties in this setup.

\begin{definition}\label{def:degenerateonhoriz}
    In the setup above, we say that the factorization is \emph{degenerate on horizontal fragments} if the composite
    \begin{equation*}
        \begin{tikzcd}
            Y_{0,\bullet}\arrow{r}{\alpha_{0,\bullet}} & \Sq(X)_{0,\bullet}\simeq X\arrow{r} & \cnr Y
        \end{tikzcd}
    \end{equation*}
    is equivalent to
    \begin{equation*}
        \begin{tikzcd}
            Y_{0,\bullet} \simeq \map_{\PSh(\Delta^{\times 2})}(\Delta[0,\bullet], Y)\arrow{r} & \map_{\PSh(\Delta^{\times 2})}(\CCpt^\bullet, Y)\simeq \cnr Y
        \end{tikzcd}
    \end{equation*}
    where the middle map is precomposition with the projection $\CCpt^n = \Fact\Cpt^n\to \Fact [0]\bar\times [n] = \Delta[0,n]$ onto the second coordinate.\par
    Similarly, we say that the factorization is \emph{degenerate on vertical fragments} if the composite
    \begin{equation*}
        \begin{tikzcd}
            Y_{\bullet,0}\arrow{r}{\alpha_{\bullet,0}} & \Sq(X)_{\bullet,0}\simeq X\arrow{r} & \cnr Y
        \end{tikzcd}
    \end{equation*}
    is equivalent to
    \begin{equation*}
        \begin{tikzcd}
            Y_{\bullet,0} \simeq \map_{\PSh(\Delta^{\times 2})}(\Delta[\bullet,0], Y)\arrow{r} & \map_{\PSh(\Delta^{\times 2})}(\CCpt^\bullet, Y)\simeq \cnr Y
        \end{tikzcd}
    \end{equation*}
    where the middle map is precomposition with the projection $\CCpt^n = \Fact\Cpt^n\to \Fact [n]\bar\times [0] = \Delta[n,0]$ onto the first coordinate.
\end{definition}

\begin{proposition}\label{prop:weakfactsystemrightinv}
    Let $Y$ be a bisimplicial space and $X$ a simplicial space. Suppose we have a map $\alpha : Y\to \Sq(X)$ and a factoring $X\to\sd X$ of $X$ along with a commutative diagram
    \begin{equation}\label{eq:factorizationdiagYX}
        \begin{tikzcd}
        	{\cnr Y} & {\fsd X} & \\
        	X & {\sd X}.
        	\arrow[from=1-1, to=1-2]
        	\arrow[from=2-1, to=1-1]
        	\arrow[from=2-1, to=2-2]
        	\arrow[from=2-2, to=1-2]
        \end{tikzcd}
    \end{equation}
    Then the induced map $\gamma : \Gr Y\to \ascat X$ has a right inverse $\delta$ given by (taking associated categories of)
    \begin{equation*}
        \begin{tikzcd}
            X\arrow{r} & \cnr Y\arrow{r} & N(\Gr Y)
        \end{tikzcd}
    \end{equation*}
    where $\cnr Y\to N(\Gr Y)$ is the map of \Cref{thm:approximationtheorem}.\par
    Additionally, viewing $\Gr Y$ and $\ascat X$ as categories under $\ascat Y_{0,\bullet}$ via
    \begin{equation*}
        \ascat Y_{0,\bullet}\simeq \Gr \Hor(Y_{0,\bullet})\longrightarrow\Gr Y
    \end{equation*}
    and $\ascat \alpha_{0,\bullet} : \ascat Y_{0,\bullet}\to \ascat X$, and similarly as categories under $\ascat Y_{\bullet,0}$, then
    \begin{enumerate}
        \item if the factorization is degenerate on horizontal fragments then both $\gamma$ and $\delta$ lift to morphisms in $\Cat_{\ascat Y_{0,\bullet}/}$
        \item if the factorization is degenerate on vertical fragments then both $\gamma$ and $\delta$ lift to morphisms in $\Cat_{\ascat Y_{\bullet,0}/}$.
    \end{enumerate}
\end{proposition}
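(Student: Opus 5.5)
The plan is to check $\gamma\circ\delta\simeq\id_{\ascat X}$ by computing the composite at the level of simplicial spaces before passing to associated categories. Since $\ascat$ is left adjoint to the fully faithful nerve, a map $\ascat X\to\ascat X$ is determined by its restriction along the unit $X\to N(\ascat X)$. So it suffices to show that the composite
\begin{equation*}
    X\longrightarrow \cnr Y\longrightarrow N(\Gr Y)\xrightarrow{N(\gamma)} N(\ascat X)
\end{equation*}
is homotopic to the unit of $X$.

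First, I would unwind $N(\gamma)$ composed with $\cnr Y\to N(\Gr Y)$ using the naturality in \Cref{thm:approximationtheorem}, applied to the map $Y\to\Sq(\ascat X)$ transposing to $\gamma$. Naturality gives a commutative diagram from $\cnr Y\to N(\Gr Y)$ to $\cnr\Sq(\ascat X)\to N(\Gr\Sq(\ascat X))\simeq N(\ascat X)$. The latter map is $\fsd\ascat X\simeq\sd\ascat X\xrightarrow{\diag^\ast} N(\ascat X)$, composition along the diagonal. Hence the composite equals
\begin{equation*}
    X\to\cnr Y\xrightarrow{\cnr\alpha}\fsd X\to\fsd\ascat X\simeq\sd\ascat X\to N(\ascat X).
\end{equation*}

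Next, I would use the commutative square \eqref{eq:factorizationdiagYX} to replace $X\to\cnr Y\to\fsd X$ by $X\to\sd X\to\fsd X$. Naturality of the transformation $\sd\Rightarrow\fsd$ from \Cref{prop:sdequivfsdforcats}, applied along the unit $X\to N(\ascat X)$, then identifies the composite with $X\to\sd X\to\sd\ascat X\to N(\ascat X)$. By naturality of $\diag^\ast:\sd\Rightarrow\id$, this is $X\to\sd X\to X\to N(\ascat X)$. Since $X\to\sd X$ is a factoring, i.e.\ a section of $\sd X\to X$, this is the unit, as required. This is exactly the pattern used in the proof of \Cref{thm:cornerapprox}.

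For the refinements, I would treat the horizontal case; the vertical case is symmetric. Assume the factorization is degenerate on horizontal fragments. For $\gamma$, I would check that $\Hor(Y_{0,\bullet})\to Y\to\Sq(\ascat X)$ agrees with $\Sq$ applied to $\ascat\alpha_{0,\bullet}$. This is immediate from the definition of $\gamma$ via $\alpha$, since $\Sq(X)_{0,\bullet}\simeq X$. For $\delta$, we need $Y_{0,\bullet}\xrightarrow{\alpha_{0,\bullet}}X\to\cnr Y\to N(\Gr Y)$ to agree with $Y_{0,\bullet}\to N(\Gr Y)$ induced by $\Hor(Y_{0,\bullet})\to Y$. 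Degeneracy rewrites the first map as precomposition with $\CCpt^\bullet\to\Delta[0,\bullet]$. Naturality of the approximation square, applied to $\Hor(Y_{0,\bullet})\to Y$, then reduces the claim to $X=\Hor(Z)$. There $\cnr\Hor(Z)\to N(\Gr\Hor Z)\simeq N(\ascat Z)$ is restriction along the projection, which is the unit.

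The main obstacle is the coherence bookkeeping. The equivalence $\fsd\simeq\sd$ is only available on categories, and the chosen inverse must be made compatible with the maps out of $\diag\Sq$, as in the proof of \Cref{thm:approximationtheorem}. I would handle this by working throughout with the inverse already fixed in that proof.
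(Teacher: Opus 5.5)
Your proposal is correct and follows essentially the same route as the paper. The paper draws a single ladder diagram from $X\to\cnr Y\to N(\Gr Y)$ down to $N(\ascat X)$ through $\fsd\ascat X\simeq\sd\ascat X$; you reach the same diagram by invoking naturality of \Cref{thm:approximationtheorem} along the transpose $Y\to\Sq(\ascat X)$ of $\gamma$, then using the given square, naturality of $\sd\Rightarrow\fsd$, and the section property of the factoring. The refinements are also handled as in the paper: degeneracy factors $Y_{0,\bullet}\to X\to\cnr Y$ through $\cnr\Hor(Y_{0,\bullet})\simeq Y_{0,\bullet}$, and the compatibility of $\alpha$ with horizontal fragments gives the lift of $\gamma$.
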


\begin{remark}
    In the statement of \Cref{prop:weakfactsystemrightinv} we have excluded the hypothesis that $\alpha_{0,\bullet}$ and $\alpha_{\bullet,0}$ induce monomorphisms onto subsimplicial spaces of $X$ as it is not necessary for the proof. However, in the setting discussed earlier, (i) and (ii) of \Cref{prop:weakfactsystemrightinv} say that both $\gamma$ and $\delta$ lift to morphisms in $\Cat_{\ascat E/}$ and $\Cat_{\ascat M/}$, respectively.
\end{remark}

\begin{proof}
    Building off \eqref{eq:factorizationdiagYX}, we have a commutative diagram
    \begin{equation*}
        \begin{tikzcd}
        	X & {\sd X} \\
        	{\cnr Y} & {\fsd X} \\
        	{\fsd \Gr Y} & {\fsd\ascat X} \\
        	{\sd\Gr Y} & {\sd\ascat X} \\
        	{N(\Gr Y)} & {N(\ascat X).}
        	\arrow[from=1-1, to=1-2]
        	\arrow[from=1-1, to=2-1]
        	\arrow[from=1-2, to=2-2]
        	\arrow["{\cnr\alpha}", from=2-1, to=2-2]
        	\arrow[from=2-1, to=3-1]
        	\arrow[from=2-2, to=3-2]
        	\arrow["{\fsd\gamma}", from=3-1, to=3-2]
        	\arrow["\simeq", from=3-1, to=4-1]
        	\arrow["\simeq", from=3-2, to=4-2]
        	\arrow["{\sd\gamma}", from=4-1, to=4-2]
        	\arrow[from=4-1, to=5-1]
        	\arrow[from=4-2, to=5-2]
        	\arrow["{N(\gamma)}", from=5-1, to=5-2]
        \end{tikzcd}
    \end{equation*}
    By definition, traversing the left--bottom edges gives (after passing to associated categories) $\gamma\delta$. However, since $X\to\sd X$ is a factoring traversing the top--right edges gives the unit $X\to N(\ascat X)$ and thus $\gamma\delta\simeq \id$.\par
    For (i), notice that for any simplicial space $Z$, the composite
    \begin{equation*}
        \begin{tikzcd}
            Z\simeq \map_{\PSh(\Delta^{\times 2})}(\Delta[0,\bullet], \Hor(Z))\arrow{r} & \map_{\PSh(\Delta^{\times 2})}(\CCpt^\bullet, \Hor(Z))\simeq \cnr \Hor(Z),
        \end{tikzcd}
    \end{equation*}
    where the middle map is precomposition with $\CCpt^n\to \Delta[0,n]$, is an equivalence. Thus the hypothesis yields a commutative diagram
    \begin{equation*}
        \begin{tikzcd}
        	{Y_{0,\bullet}} & X \\
        	{\cnr \Hor(Y_{0,\bullet})} & {\cnr Y} \\
        	{N(\Gr \Hor(Y_{0,\bullet}))} & {N(\Gr Y)}
        	\arrow["{\alpha_{0,\bullet}}", from=1-1, to=1-2]
        	\arrow["\simeq", from=1-1, to=2-1]
        	\arrow[from=1-2, to=2-2]
        	\arrow[from=2-1, to=2-2]
        	\arrow[from=2-1, to=3-1]
        	\arrow[from=2-2, to=3-2]
        	\arrow[from=3-1, to=3-2]
        \end{tikzcd}
    \end{equation*}
    which, upon taking associated categories, lifts $\delta$ to a morphism in $\Cat_{\ascat Y_{0,\bullet}/}$. On the other hand, $\gamma$ always lifts to a morphism in $\Cat_{\ascat Y_{0,\bullet}/}$ as a consequence of the diagram
    \begin{equation*}
        \begin{tikzcd}
        	{\Hor(Y_{0,\bullet})} &[15pt] {\Hor(X)} & {\Hor(\ascat X)} \\
        	Y & {\Sq(X)} & {\Sq(\ascat X)}
        	\arrow["{\Hor(\alpha_{0,\bullet})}", from=1-1, to=1-2]
        	\arrow[from=1-1, to=2-1]
        	\arrow[from=1-2, to=1-3]
        	\arrow[from=1-2, to=2-2]
        	\arrow[from=1-3, to=2-3]
        	\arrow["\alpha", from=2-1, to=2-2]
        	\arrow[from=2-2, to=2-3]
        \end{tikzcd}
    \end{equation*}
    and the fact that $\Hor(\ascat X)\to \Sq(\ascat X)$ transposes to an equivalence.\par
    (ii) follows from a symmetrical argument to (i).
\end{proof}

\section{The universal exchange theorem}\label{sec:univexchthm}

In this section, we discuss the machinery involved in constructing the recipient of the universal exchange theorem. For this, we restrict our attention to geometric setups.

\begin{definition}\label{def:geosetup}
    A \emph{geometric setup} is a pair $(\mc C, E)$ where $\mc C$ is a category and $E\subseteq \mc C$ is a wide subcategory such that
    \begin{enumerate}
        \item $\mc C$ has pullbacks along morphisms in $E$
        \item $E$ is closed under pullbacks.
    \end{enumerate}
\end{definition}

For the remainder of this section, fix a geometric setup $(\mc C, E)$.

\subsection{Exchange theorems}\label{subsec:exchthms}

Informally, an exchange theorem for the pair $(\mc C, E)$ valued in a category $\mc D$ is a pair of functors $(-)_\ast : \mc C \to \mc D$ and $(-)_\natural : E\to \mc D$ which take the same value on objects along with coherent equivalences
\begin{equation*}
    f_\natural \bar g_\ast \simeq g_\ast \bar f_\natural
\end{equation*}
for every pullback square
\begin{equation*}
    \begin{tikzcd}
    	x & {y'} \\
    	y & z
    	\arrow["{\bar g}", from=1-1, to=1-2]
    	\arrow["{\bar f}"', from=1-1, to=2-1]
    	\arrow["\lrcorner"{anchor=center, pos=0.125}, draw=none, from=1-1, to=2-2]
    	\arrow["f", from=1-2, to=2-2]
    	\arrow["g"', from=2-1, to=2-2]
    \end{tikzcd}
\end{equation*}
in $\mc C$ with $\bar f, f\in E$. To handle the higher coherences involved, we make use of bisimplicial spaces.

\begin{definition}
    Let $\Pull(\mc C, E)$ denote the bisimplicial space whose $(n,m)$-simplex space is the groupoid of diagrams
    \begin{equation*}
        \begin{tikzcd}
            {x_{00}} & {x_{01}} & \cdots & {x_{0m}} \\
            {x_{10}} & {x_{11}} & \cdots & {x_{1m}} \\
            {x_{20}} & {x_{21}} & \cdots & {x_{2m}} \\
            \vdots & \vdots & \ddots & \vdots \\
            {x_{n0}} & {x_{n1}} & \cdots & {x_{nm}}
            \arrow[from=1-1, to=1-2]
            \arrow[from=1-1, to=2-1]
            \arrow["\lrcorner"{anchor=center, pos=0.125}, draw=none, from=1-1, to=2-2]
            \arrow[from=1-2, to=1-3]
            \arrow[from=1-2, to=2-2]
            \arrow["\lrcorner"{anchor=center, pos=0.125}, draw=none, from=1-2, to=2-3]
            \arrow[from=1-3, to=1-4]
            \arrow[from=1-4, to=2-4]
            \arrow[from=2-1, to=2-2]
            \arrow[from=2-1, to=3-1]
            \arrow["\lrcorner"{anchor=center, pos=0.125}, draw=none, from=2-1, to=3-2]
            \arrow[from=2-2, to=2-3]
            \arrow[from=2-2, to=3-2]
            \arrow["\lrcorner"{anchor=center, pos=0.125}, draw=none, from=2-2, to=3-3]
            \arrow[from=2-3, to=2-4]
            \arrow[from=2-4, to=3-4]
            \arrow[from=3-1, to=3-2]
            \arrow[from=3-1, to=4-1]
            \arrow["\lrcorner"{anchor=center, pos=0.125}, draw=none, from=3-1, to=4-2]
            \arrow[from=3-2, to=3-3]
            \arrow[from=3-2, to=4-2]
            \arrow["\lrcorner"{anchor=center, pos=0.125}, draw=none, from=3-2, to=4-3]
            \arrow[from=3-3, to=3-4]
            \arrow[from=3-4, to=4-4]
            \arrow[from=4-1, to=5-1]
            \arrow[from=4-2, to=5-2]
            \arrow[from=4-4, to=5-4]
            \arrow[from=5-1, to=5-2]
            \arrow[from=5-2, to=5-3]
            \arrow[from=5-3, to=5-4]
        \end{tikzcd}
    \end{equation*}
    in $\mc C$ where each square is Cartesian and the vertical arrows belong to $E$.
\end{definition}

\begin{definition}\label{def:exchangethm}
    An \emph{exchange theorem} for the pair $(\mc C, E)$ valued in $\mc D$ is a map $\Pull(\mc C, E)\to \Sq(\mc D)$. We write
    \begin{equation*}
        \Exch_{(\mc C, E)}(-)\coloneqq \map_{\PSh(\Delta^{\times 2})}(\Pull(\mc C, E), \Sq(-)) : \Cat \longrightarrow \Ani
    \end{equation*}
    for the functor classifying exchange theorems.
\end{definition}

\begin{remark}
    Given a map $F : \Pull(\mc C, E)\longrightarrow \Sq(\mc D)$, one may recover the informal description by taking the map
    \begin{equation*}
        \begin{tikzcd}
            N(\mc C)\simeq \Pull(\mc C, E)_{0,\bullet}\arrow{r}{F_{0,\bullet}} & \Sq(\mc D)_{0,\bullet}\simeq N(\mc D)
        \end{tikzcd}
    \end{equation*}
    to be (the nerve of) $(-)_\ast$ and taking
    \begin{equation*}
        \begin{tikzcd}
            N(E)\simeq \Pull(\mc C, E)_{\bullet,0}\arrow{r}{F_{\bullet,0}} & \Sq(\mc D)_{\bullet,0}\simeq N(\mc D)
        \end{tikzcd}
    \end{equation*}
    to be (the nerve of) $(-)_\natural$. These two functors canonically agree on objects with common value $F_{0,0}$. Looking at $(1,1)$-simplices, $F$ provides a map $F_{1,1} : \Pull(\mc C, E)_{1,1}\to \map_{\Cat}([1]\times [1], \mc D)$ which sends a point
    \begin{equation*}
        \begin{tikzcd}
        	x & {y'} \\
        	y & z
        	\arrow["{\bar g}", from=1-1, to=1-2]
        	\arrow["{\bar f}"', from=1-1, to=2-1]
        	\arrow["\lrcorner"{anchor=center, pos=0.125}, draw=none, from=1-1, to=2-2]
        	\arrow["f"', from=1-2, to=2-2]
        	\arrow["g", from=2-1, to=2-2]
        \end{tikzcd}\in \Pull(\mc C, E)_{1,1}
    \end{equation*}
    to a commutative square
    \begin{equation*}
        \begin{tikzcd}
        	{F_{0,0}(x)} & {F_{0,0}(y')} \\
        	{F_{0,0}(y)} & {F_{0,0}(z)}
        	\arrow["{\bar g_\ast}", from=1-1, to=1-2]
        	\arrow["{\bar f_\natural}"', from=1-1, to=2-1]
        	\arrow["{f_\natural}"', from=1-2, to=2-2]
        	\arrow["{g_\ast}", from=2-1, to=2-2]
        \end{tikzcd}\in \Sq(\mc D)_{1,1}.
    \end{equation*}
    The maps $F_{n,m}$ on higher simplex spaces ensure that these equivalences are compatible with pasting of pullback squares.
\end{remark}

\subsubsection{Exchange theorems from biadjointability}

In this section we discuss an important source of exchange theorems coming from \emph{biadjointable} functors which is relevant to the study of six functor formalisms, as well as parametrized category theory. This will not be utilized in the sequel, so the uninterested reader may skip this section.\par
We recall the definition of biadjointability from \cite{cnossen2025universality}.

\begin{definition}
    Let $\mb D$ be a $2$-category.
    \begin{enumerate}
        \item A square
        \begin{equation*}
            \begin{tikzcd}
            	a & b \\
            	c & d
            	\arrow["{\ell'}", from=1-1, to=1-2]
            	\arrow["{r'}"', from=1-1, to=2-1]
            	\arrow["r"', from=1-2, to=2-2]
            	\arrow["\ell", from=2-1, to=2-2]
            \end{tikzcd}
        \end{equation*}
        in $\mb D$ is said to be \emph{biadjointable} if it is vertically left adjointable and the mated square
        \begin{equation*}
            \begin{tikzcd}
            	a & b \\
            	c & d
            	\arrow["{\ell'}", from=1-1, to=1-2]
            	\arrow["{(r')^L}", from=2-1, to=1-1]
            	\arrow["\ell", from=2-1, to=2-2]
            	\arrow["{r^L}", from=2-2, to=1-2]
            \end{tikzcd}
        \end{equation*}
        is further horizontally right adjointable.
        \item A functor $F : \mc C^\mathrm{op}\to \mb D$ is said to be $(\mc C, E)$-biadjointable if, for every pullback square
        \begin{equation*}
            \begin{tikzcd}
            	x & {y'} \\
            	y & z
            	\arrow["{\bar g}", from=1-1, to=1-2]
            	\arrow["{\bar f}"', from=1-1, to=2-1]
            	\arrow["\lrcorner"{anchor=center, pos=0.125}, draw=none, from=1-1, to=2-2]
            	\arrow["f", from=1-2, to=2-2]
            	\arrow["g"', from=2-1, to=2-2]
            \end{tikzcd}
        \end{equation*}
        in $\mc C$ with $f,\bar f\in E$, its image under $F$ is biadjointable. We write $\map_{(\mc C, E)\mathrm{-badj}}(\mc C^\mathrm{op}, \mb D)$ for the subspace of components of $\map_{\Cat}(\mc C^\mathrm{op}, \mb D)$ consisting of $(\mc C, E)$-biadjointable functors.
    \end{enumerate}
\end{definition}

The exchange theorem associated to a biadjointable functor $F : \mc C^\mathrm{op}\to \mb D$ will send a pullback square
\begin{equation*}
    \begin{tikzcd}
        x & {y'} \\
        y & z
        \arrow["{\bar g}", from=1-1, to=1-2]
        \arrow["{\bar f}"', from=1-1, to=2-1]
        \arrow["\lrcorner"{anchor=center, pos=0.125}, draw=none, from=1-1, to=2-2]
        \arrow["f", from=1-2, to=2-2]
        \arrow["g"', from=2-1, to=2-2]
    \end{tikzcd}\in \Pull(\mc C, E)_{1,1}
\end{equation*}
to the biadjointed square
\begin{equation*}
    \begin{tikzcd}
    	{F(x)} & {F(y')} \\
    	{F(y)} & {F(z)}.
    	\arrow["{F(\bar g)^R}", from=1-1, to=1-2]
    	\arrow["{F(\bar f)^L}"', from=1-1, to=2-1]
    	\arrow["{F(f)^L}", from=1-2, to=2-2]
    	\arrow["{F(g)^R}"', from=2-1, to=2-2]
    \end{tikzcd}
\end{equation*}
To do this, we will make use of the \emph{walking adjunction} $\textsc{Adj}$ from \cite{riehl2016homotopy} which is the 2-category freely generated by two adjoint morphisms
\begin{equation*}
    \begin{tikzcd}
        \ell : \ominus
        \arrow[r, ""{name=F}, bend left=10, yshift=3pt] &
        \oplus : r.
        \arrow[l, ""{name=G}, bend left=10, yshift=-3pt]
        \arrow[phantom, from=F, to=G, "\dashv" rotate=-90]
    \end{tikzcd}
\end{equation*}
From its universal property, one may show (see, e.g., \cite[Proposition 2.12 (3)]{cnossen2025universality}) that the restriction
\begin{equation*}
    (\ell\times r)^\ast : \map_{\Cat_2}(\textsc{Adj}\times \textsc{Adj}, \mb D)\longrightarrow \map_{\Cat}([1]\times [1], \mb D)
\end{equation*}
is an equivalence onto the subspace of components consisting of biadjointable squares.\par
More generally, let $\textsc{Adj}_n$ denote the $2$-category consisting of $n$ composable adjunctions, i.e.\ the iterated coproduct
\begin{equation*}
    \textsc{Adj}_n\coloneqq \textsc{Adj}\coprod_{\oplus\simeq \ominus} \textsc{Adj} \coprod_{\oplus\simeq \ominus}\cdots \coprod_{\oplus\simeq \ominus}\textsc{Adj}
\end{equation*}
taken in the category $\Cat_2$ of $2$-categories. Since the inclusion $\Cat \hookrightarrow\Cat_2$ has both adjoints, hence preserves colimits, we get an induced map
\begin{equation*}
    \begin{tikzcd}[column sep=50pt]
        \ell_n : [n]\simeq [1]\,\displaystyle\coprod_{[0]}\,[1]\,\displaystyle\coprod_{[0]}\,\cdots\, \displaystyle\coprod_{[0]} \,[1]\arrow{r}{\ell\,\amalg\,\ell\,\amalg\,\cdots\,\amalg\,\ell} & \textsc{Adj}_n
    \end{tikzcd}
\end{equation*}
and similarly a map $r_n : [n]\longrightarrow \textsc{Adj}_n$. Mimicking the proof of \cite[Proposition 2.12 (3)]{cnossen2025universality}, one has that the restriction
\begin{equation*}
    (\ell_n\times r_m)^\ast : \map_{\Cat_2}(\textsc{Adj}_n\times \textsc{Adj}_m, \mb D)\longrightarrow \map_{\Cat}([n]\times [m], \mb D)
\end{equation*}
is an equivalence onto the subspace of components consisting of $n\times m$-grids such that every square consisting of vertical and horizontal arrows is biadjointable.

\begin{definition}
    Let $\mb D$ be a $2$-category. Define the bisimplicial of space of biadjointable squares to be
    \begin{equation*}
        \Sq_\mathrm{badj}(\mb D)\coloneqq \map_{\Cat_2}(\textsc{Adj}_\bullet\times\textsc{Adj}_\bullet, \mb D).
    \end{equation*}
\end{definition}

By the above discussion, there is a natural monomorphism $\Sq_\mathrm{badj}(\mb D)\to \Sq(\mb D)$ onto the sub-bisimplicial space of commutative grids consisting of biadjointable squares.

\begin{proposition}\label{prop:biadjoinsqsymmetry}
    There is a natural in $\mb D$ equivalence $\Sq_\mathrm{badj}(\mb D)\simeq \Sq_\mathrm{badj}(\mb D^{\mathrm{1-op},\;\mathrm{2-op}})$ which sends a biadjointable square to its biadjoined square.
\end{proposition}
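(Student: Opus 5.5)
The plan is to build the equivalence at the level of the corepresenting objects. Since $\Sq_\mathrm{badj}(\mb D) = \map_{\Cat_2}(\textsc{Adj}_\bullet\times\textsc{Adj}_\bullet, \mb D)$, and since $(-)^{\mathrm{1-op},\,\mathrm{2-op}}$ is an involutive automorphism of $\Cat_2$, we have
\begin{equation*}
    \Sq_\mathrm{badj}(\mb D^{\mathrm{1-op},\;\mathrm{2-op}}) \simeq \map_{\Cat_2}\big((\textsc{Adj}_\bullet\times\textsc{Adj}_\bullet)^{\mathrm{1-op},\;\mathrm{2-op}}, \mb D\big).
\end{equation*}
It therefore suffices to produce an equivalence of cobisimplicial $2$-categories $\textsc{Adj}_n\times \textsc{Adj}_m \simeq (\textsc{Adj}_n\times\textsc{Adj}_m)^{\mathrm{1-op},\,\mathrm{2-op}}$, natural in $([n],[m])\in\Delta^{\times 2}$. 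Naturality in $\mb D$ is then automatic.

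First, treat the walking adjunction. Reversing both $1$- and $2$-morphisms sends an adjunction $\ell\dashv r$ to an adjunction in which $\ell^{\mathrm{op}}$ is now the right adjoint, since the unit and counit swap roles. By the universal property of $\textsc{Adj}$ (the $2$-category freely generated by an adjunction), there is therefore a canonical equivalence $\sigma : \textsc{Adj}\simeq \textsc{Adj}^{\mathrm{1-op},\,\mathrm{2-op}}$. It swaps the objects $\ominus\leftrightarrow\oplus$ and sends $\ell\mapsto r^{\mathrm{op}}$ and $r\mapsto \ell^{\mathrm{op}}$. Because $\mathrm{op}$ commutes with colimits and products in $\Cat_2$, we can glue copies of $\sigma$ along the pushouts defining $\textsc{Adj}_n$. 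This requires reversing the order of the factors, because $1$-op reverses the direction of the chain. The result is an equivalence $\textsc{Adj}_n\simeq \textsc{Adj}_n^{\mathrm{1-op},\,\mathrm{2-op}}$ that covers the order-reversing automorphism of $[n]$.

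The main obstacle is naturality in $[n]$: the cosimplicial structure maps of $\textsc{Adj}_\bullet$ must be matched compatibly. The cosimplicial maps of $\textsc{Adj}_\bullet$ are induced from those of $[\bullet]$ via $\ell_\bullet$, or equivalently via $r_\bullet$, since these define the same cosimplicial object. Under $\sigma$, the subcategory $\ell_n([n])$ goes to $r_n([n])^{\mathrm{op}}$. The order reversal on $[n]^{\mathrm{op}}\simeq[n]$ is compensated by the fact that the objects of $\textsc{Adj}_n$ are indexed so that this identification is the canonical one. To make this rigorous rather than chasing faces and degeneracies, I would characterize everything by the universal property. $\map(\textsc{Adj}_n,\mb D)$ is the subspace of $\map([n],\mb D)$ on chains of left adjoints, via $\ell_n^\ast$. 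Via $r_n^\ast$, it is the subspace of $\map([n]^{\mathrm{op}},\mb D)$ on chains of right adjoints (right adjoints compose in the reverse direction). Passing to adjoints then gives a natural identification between chains of left adjoints in $\mb D$ and chains of right adjoints, equivalently chains of left adjoints in $\mb D^{\mathrm{1-op},\,\mathrm{2-op}}$. Naturality holds because passing to adjoints is functorial in restriction along maps of $[n]$, both being computed by restricting along $\ell_n$ and $r_n$ respectively.

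Finally, take products. The equivalence for $\textsc{Adj}_n\times\textsc{Adj}_m$ is the product of the two one-variable equivalences, and is natural in both variables. Tracing a $(1,1)$-simplex, a square of left adjoints $\ell', r', r, \ell$ as in the definition of biadjointability is sent to the square of their adjoints, with the mate $2$-cells. This is exactly the biadjoined square, and it identifies the map concretely as claimed. One should also check that the composite respects the monomorphisms to $\Sq(\mb D)$ and $\Sq(\mb D^{\mathrm{1-op},\,\mathrm{2-op}})$ up to this mate description. That check reduces, via the cited \cite[Proposition 2.12]{cnossen2025universality}, to the statement that the mate of a biadjointable square is again biadjointable, with its mate being the original square.
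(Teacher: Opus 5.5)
Your route is the paper's: use that $(-)^{\mathrm{1-op},\,\mathrm{2-op}}$ is an automorphism of $\Cat_2$, get $\textsc{Adj}\simeq\textsc{Adj}^{\mathrm{1-op},\,\mathrm{2-op}}$ from the universal property, glue to $\textsc{Adj}_n$, and take products. The step you single out as the main obstacle, naturality in $[n]$, is a genuine gap, and as you have set things up it cannot be closed.

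You are right that in $\mb X^{\mathrm{1-op},\,\mathrm{2-op}}$ an adjunction $\ell\dashv r$ becomes $\ell^{\mathrm{op}}\dashv r^{\mathrm{op}}$ with $\ell^{\mathrm{op}}\colon\oplus\to\ominus$. Hence every equivalence $\textsc{Adj}\simeq\textsc{Adj}^{\mathrm{1-op},\,\mathrm{2-op}}$ exchanges $\ominus$ and $\oplus$: fixing $\ominus$ would give a monoidal equivalence $\End(\ominus)\simeq\Delta_+\simeq\Delta_+^{\mathrm{op}}$, but the unit is initial in the first and not in the second.

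Such an equivalence must send $\ell\mapsto\ell^{\mathrm{op}}$, not $r^{\mathrm{op}}$, since $\sigma(\ell)$ has to be a $1$-morphism $\oplus\to\ominus$. So your $\sigma$ does not type-check. Moreover the correct one takes no adjoints at all: on a $(1,1)$-simplex it returns the original square, read in $\mb D^{\mathrm{1-op},\,\mathrm{2-op}}$ with its corners rotated by $180^\circ$, not the biadjoined square.

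Naturality also fails. As you note, the glued equivalence sends $x_i\mapsto x_{n-i}$. By Yoneda, a natural-in-$\mb D$ equivalence of these corepresented bisimplicial spaces is the same as an equivalence $\textsc{Adj}_\bullet\times\textsc{Adj}_\bullet\simeq(\textsc{Adj}_\bullet\times\textsc{Adj}_\bullet)^{\mathrm{1-op},\,\mathrm{2-op}}$ of cobisimplicial $2$-categories. At level $(1,0)$, compatibility with the two cofaces from level $(0,0)=\ast$ forces the component to fix $\ominus$ and $\oplus$. No re-indexing compensates; you only get an equivalence after twisting by the reversal automorphism of $\Delta^{\times 2}$.

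Your universal-property paragraph contains the matching slip. Chains of right adjoints $[n]^{\mathrm{op}}\to\mb D$ are chains of left adjoints $[n]\to\mb D^{\mathrm{1-op}}$, not $[n]\to\mb D^{\mathrm{1-op},\,\mathrm{2-op}}$, because reversing both $1$- and $2$-cells preserves left adjoints.

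The repair is to use the identification the paper's proof actually invokes, one that fixes $\ominus$ and $\oplus$ and swaps $\ell$ and $r$. That is the $1$-op symmetry: $\ell\dashv r$ in $\mb X$ iff $r^{\mathrm{op}}\dashv\ell^{\mathrm{op}}$ in $\mb X^{\mathrm{1-op}}$, so $\textsc{Adj}\simeq\textsc{Adj}^{\mathrm{1-op}}$ via $\ell\mapsto r^{\mathrm{op}}$, $r\mapsto\ell^{\mathrm{op}}$.

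Because it fixes objects, it glues along $\oplus\simeq\ominus$ without reordering and is natural in $[n]$. Your passing-to-adjoints argument, read correctly, proves exactly $\map(\textsc{Adj}_n,\mb D)\simeq\map(\textsc{Adj}_n,\mb D^{\mathrm{1-op}})$ naturally in $[n]$.

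On a $(1,1)$-simplex it replaces each arrow by its adjoint, with the mate $2$-cells, so it does produce the biadjoined square. The adjointability convention is preserved in $\mb D^{\mathrm{1-op}}$: $(r^L)^{\mathrm{op}}$ has left adjoint $r^{\mathrm{op}}$, and $(\ell^R)^{\mathrm{op}}$ has right adjoint $\ell^{\mathrm{op}}$.

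So the statement your argument, and the paper's, actually establishes is $\Sq_\mathrm{badj}(\mb D)\simeq\Sq_\mathrm{badj}(\mb D^{\mathrm{1-op}})$. Applied to $\mb D^{\mathrm{2-op}}$ it gives $\Sq_\mathrm{badj}(\mb D^{\mathrm{1-op},\,\mathrm{2-op}})\simeq\Sq_\mathrm{badj}(\mb D^{\mathrm{2-op}})$. That suffices for the subsequent construction of exchange theorems, since the result is composed into $\Sq(\mb D^{\mathrm{2-op}})=\Sq(\mb D)$.
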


\begin{remark}
    The appearance of $2$-op is because we have fixed the convention that in a biadjointable square, the vertical morphisms are left adjointable and the horizontal morphisms are right adjointable. By passing to the biadjoined square, we get a biadjointable square where the convention has been swapped.
\end{remark}

\begin{proof}
    Since $(-)^{\mathrm{1-op},\;\mathrm{2-op}}$ is an automorphism of $\Cat_2$, we have a natural in $\mb D$ equivalence
    \begin{equation*}
        \Sq_\mathrm{badj}(\mb D)\simeq \map_{\Cat_2}(\textsc{Adj}_\bullet^{\mathrm{1-op},\;\mathrm{2-op}}\times \textsc{Adj}_\bullet^{\mathrm{1-op},\;\mathrm{2-op}}, \mb D^{\mathrm{1-op},\;\mathrm{2-op}}).
    \end{equation*}
    However, by the universal property of $\textsc{Adj}$ we have that
    \begin{equation*}
        \textsc{Adj}^{\mathrm{1-op},\;\mathrm{2-op}}\simeq \textsc{Adj}
    \end{equation*}
    via the functor which fixes the two objects $\oplus$, $\ominus$ and swaps the left and right adjoints $\ell$ and $r$. Since $(-)^{\mathrm{1-op},\;\mathrm{2-op}}$ is an automorphism, and hence preserves colimits, this equivalence extends to a functorial in $n$ equivalence
    \begin{equation*}
        \textsc{Adj}_n^{\mathrm{1-op},\;\mathrm{2-op}}\simeq \textsc{Adj}_n.
    \end{equation*}
    Thus
    \begin{equation*}
        \begin{aligned}
            \Sq_\mathrm{badj}(\mb D) &\simeq \map_{\Cat_2}(\textsc{Adj}_\bullet^{\mathrm{1-op},\;\mathrm{2-op}}\times \textsc{Adj}_\bullet^{\mathrm{1-op},\;\mathrm{2-op}}, \mb D^{\mathrm{1-op},\;\mathrm{2-op}}) \\
            &\simeq \map_{\Cat_2}(\textsc{Adj}_\bullet\times \textsc{Adj}_\bullet, \mb D^{\mathrm{1-op},\;\mathrm{2-op}}) \\
            &= \Sq_\mathrm{badj}(\mb D^{\mathrm{1-op},\;\mathrm{2-op}})
        \end{aligned}
    \end{equation*}
    as required.
\end{proof}

We may now construct the desired exchange theorems.

\begin{proposition}
    There is a natural in $\mb D$ map
    \begin{equation*}
        \map_{(\mc C, E)\mathrm{-badj}}(\mc C^\mathrm{op}, \mb D)\longrightarrow \Exch_{(\mc C, E)}(\mb D).
    \end{equation*}
\end{proposition}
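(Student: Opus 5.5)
The map will be the composite of three natural maps: apply $\Sq_\mathrm{badj}$ to the biadjointable functor, use the biadjoint symmetry of \Cref{prop:biadjoinsqsymmetry}, and then forget.

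Given a $(\mc C, E)$-biadjointable $F : \mc C^\mathrm{op}\to \mb D$, first produce a map of bisimplicial spaces
\begin{equation*}
    \Pull(\mc C, E)\longrightarrow \Sq_\mathrm{badj}(\mb D).
\end{equation*}
A point of $\Pull(\mc C, E)_{n,m}$ is a functor $[n]\times[m]\to\mc C$, so composing with $F$ gives an $(n,m)$-grid in $\mb D$ with all arrows reversed. Reindex via $[n]^\mathrm{op}\cong[n]$ and $[m]^\mathrm{op}\cong[m]$ to view it as a point of $\Sq(\mb D)_{n,m}$. This gives a natural map
\begin{equation*}
    \Pull(\mc C, E)\to\Sq(\mc C^\mathrm{op})\to\Sq(\mb D),
\end{equation*}
where the first map is reflection along both axes. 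Every elementary square of such a grid is the image of a pullback square with vertical maps in $E$. Biadjointability applies only to elementary squares, but $\Sq_\mathrm{badj}(\mb D)\to\Sq(\mb D)$ is a monomorphism onto grids all of whose elementary squares are biadjointable, as noted after the definition of $\Sq_\mathrm{badj}$. So the map factors uniquely through $\Sq_\mathrm{badj}(\mb D)$. Since monomorphisms are detected levelwise, this factorization is a map of bisimplicial spaces, natural in $F$ and in $\mb D$. It also depends functorially on $F$ as a point of $\map_{(\mc C,E)\mathrm{-badj}}(\mc C^\mathrm{op},\mb D)$, because the subspace is a union of components.

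Next, compose with the equivalence
\begin{equation*}
    \Sq_\mathrm{badj}(\mb D)\simeq\Sq_\mathrm{badj}(\mb D^{\mathrm{1-op},\;\mathrm{2-op}})
\end{equation*}
of \Cref{prop:biadjoinsqsymmetry}, which replaces each biadjointable grid by its biadjoined grid. Then forget down to $\Sq(\mb D^{\mathrm{1-op},\;\mathrm{2-op}})$. The result is a point of $\Exch_{(\mc C,E)}(\mb D^{\mathrm{1-op},\;\mathrm{2-op}})$, natural in $\mb D$. At this stage the horizontal arrows are the right adjoints $F(g)^R$ and the vertical arrows are the left adjoints $F(f)^L$, but as morphisms of $\mb D^{\mathrm{1-op}}$ they point the wrong way.

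The main obstacle is reconciling these directions and the $2$-op conventions so that the output lands in $\Sq(\mb D)$, with horizontal $F(\bar g)^R$ and vertical $F(\bar f)^L$ pointing forward. I would handle it by choosing the initial reflection carefully. One option is to regard $F$ as a functor $\mc C\to\mb D^{\mathrm{1-op}}$, so that $\Pull(\mc C,E)\to\Sq(\mb D^{\mathrm{1-op}})$ needs no reindexing, and then apply \Cref{prop:biadjoinsqsymmetry} to $\mb D^{\mathrm{1-op}}$. This exchanges $\mb D^{\mathrm{1-op}}$ with $\mb D^{\mathrm{2-op}}$, and $\Sq$ of a $2$-category only sees its underlying $(\infty,1)$-category, which is insensitive to $2$-op. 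One must also check that the left/right convention for biadjointable squares in $\mb D^{\mathrm{1-op}}$ matches the one in \Cref{prop:biadjoinsqsymmetry}, using $(\mb D^{\mathrm{1-op}})^{\mathrm{1-op},\;\mathrm{2-op}}=\mb D^{\mathrm{2-op}}$. If a mismatch remains, swap the roles of rows and columns using the transpose symmetry of $\Sq_\mathrm{badj}$ induced by $\textsc{Adj}_n\times\textsc{Adj}_m\simeq\textsc{Adj}_m\times\textsc{Adj}_n$. Every step is a composite of natural maps, so naturality in $\mb D$ is automatic. Finally, evaluate on a $(1,1)$-simplex to confirm that the output square is the Beck--Chevalley square described before the statement.
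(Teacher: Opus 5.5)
Your final route is essentially the paper's. You regard $F$ as a functor $\mc C\to\mb D^{\mathrm{1-op}}$, compose with $\Pull(\mc C,E)\hookrightarrow\Sq(\mc C)$, lift through the monomorphism $\Sq_{\mathrm{badj}}\hookrightarrow\Sq$, apply \Cref{prop:biadjoinsqsymmetry}, and forget, using that $\Sq$ only sees the underlying $1$-category. Abandoning your first version was the right call, since reindexing via $[n]^{\mathrm{op}}\cong[n]$ does not commute with the face maps and so does not give a map of bisimplicial spaces.

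The one difference is where the $2$-op goes, and it settles the convention question you left open. The paper lifts into $\Sq_{\mathrm{badj}}(\mb D^{\mathrm{1-op},\;\mathrm{2-op}})$. Since $1$-op alone interchanges left and right adjoints while $1$-op together with $2$-op preserves them, this is what makes the grid biadjointable in the convention of the definition (vertical arrows admitting left adjoints). In $\mb D^{\mathrm{1-op}}$ the mismatch you anticipated does occur, so your transpose would be needed there, whereas the paper's placement avoids it.
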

\begin{proof}
    Given a biadjointable $F : \mc C^\mathrm{op}\to\mb D$ we may consider the composite
    \begin{equation*}
        \begin{tikzcd}
            \Pull(\mc C, E)\arrow[hook]{r} & \Sq(\mc C)\arrow{r}{\Sq(F^\mathrm{op})} & \Sq(\mb D^{\mathrm{1-op},\;\mathrm{2-op}})
        \end{tikzcd}
    \end{equation*}
    where we are free to add the $2$-op since $\Sq$ only depends on the underlying $1$-category of $\mb D$. By the definition of biadjointability, this lands inside the sub-bisimplicial space of biadjointable squares and hence has a unique lift
    \begin{equation*}
        \Pull(\mc C, E)\longrightarrow \Sq_\mathrm{badj}(\mb D^{\mathrm{1-op},\;\mathrm{2-op}})
    \end{equation*}
    which by \Cref{prop:biadjoinsqsymmetry} produces via biadjoining squares an exchange theorem
    \begin{equation*}
        \Pull(\mc C, E)\longrightarrow \Sq_\mathrm{badj}(\mb D^{\mathrm{1-op},\;\mathrm{2-op}})\simeq \Sq_\mathrm{badj}(\mb D)\longrightarrow \Sq(\mb D).\qedhere
    \end{equation*}
\end{proof}


\subsection{Vector bundles}\label{subsec:vectorbundles}

In this section, we define and study the analog of vector bundles which appear in the universal exchange theorem.

\begin{definition}
    \begin{enumerate}
        \item For each $x\in \mc C$, we let $\Vect_{(\mc C, E)}(x)$ denote the full subcategory of the double slice category
        \begin{equation*}
            \mc C_{x/,/x}\coloneqq (\mc C_{x/})_{/\id_x}
        \end{equation*}
        spanned by objects
        \begin{equation*}
            \begin{tikzcd}
                x\arrow{r}{s} & e\arrow{r}{p} & x
            \end{tikzcd}
        \end{equation*}
        with $p\in E$. We refer to objects of $\Vect_{(\mc C, E)}(x)$ as vector bundles over $x$.
        \item Denote by $\Vect_{(\mc C, E)}(x)_\dagger$ the wide subcategory of $\Vect_{(\mc C, E)}(x)$ fitting into the pullback
        \begin{equation*}
            \begin{tikzcd}
            	{\Vect_{(\mc C, E)}(x)_\dagger} & {\Vect_{(\mc C, E)}(x)} \\
            	E & {\mc C}
            	\arrow[hook, from=1-1, to=1-2]
            	\arrow[from=1-1, to=2-1]
            	\arrow["\lrcorner"{anchor=center, pos=0.125}, draw=none, from=1-1, to=2-2]
            	\arrow[from=1-2, to=2-2]
            	\arrow[hook, from=2-1, to=2-2]
            \end{tikzcd}
        \end{equation*}
        where $\Vect_{(\mc C, E)}(x)\to\mc C$ is the forgetful map. That is, $\Vect_{(\mc C, E)}(x)_\dagger$ is the wide subcategory consisting of morphisms
        \begin{equation*}
            \begin{tikzcd}
            	& x & \\
            	e && {e'} \\
            	& x
            	\arrow[from=1-2, to=2-1]
            	\arrow[from=1-2, to=2-3]
            	\arrow["f", two heads, from=2-1, to=2-3]
            	\arrow[two heads, from=2-1, to=3-2]
            	\arrow[two heads, from=2-3, to=3-2]
            \end{tikzcd}
        \end{equation*}
        of vector bundles with $f\in E$.
    \end{enumerate}
\end{definition}

Since $\mc C$ and $E$ have been fixed, for convenience of notation we will simply write $\Vect(x)$ and $\Vect(x)_\dagger$ for $\Vect_{(\mc C, E)}(x)$ and $\Vect_{(\mc C, E)}(x)_\dagger$, respectively.\par
Given a morphism $f : x\to y\in \mc C$, since $\mc C$ has pullbacks along morphisms in $E$ and $E$ is closed under pullback, we obtain a functor
\begin{equation*}
    \begin{tikzcd}[row sep=0pt]
        f^\ast : \Vect(y)\arrow{r} & \Vect(x) \\
        y\to e\twoheadrightarrow y\arrow[mapsto]{r} & x\to e\times_y x\twoheadrightarrow x
    \end{tikzcd}
\end{equation*}
given by pulling back vector bundles along $f$. These assemble into a functor
\begin{equation*}
    \begin{tikzcd}[row sep=0pt]
        \Vect(-) : \mc C^\mathrm{op}\arrow{r} & \Cat.
    \end{tikzcd}
\end{equation*}\par
We now show that the pair $(\Vect(x), \Vect(x)_\dagger)$ is a co-Waldhausen category in the sense of \cite{barwick2016algebraic}, the definition of which we recall now.

\begin{definition}[{\cite[Definition 2.7, Definition 2.16]{barwick2016algebraic}}]\label{def:cowaldhausencat}
    A co-Waldhausen category is a pair $(\mc D, \mc D_\dagger)$ of a category $\mc D$ and a wide subcategory $\mc D_\dagger\subseteq \mc D$ such that
    \begin{enumerate}
        \item $\mc D$ has a zero object
        \item every map $x\to 0$ to a zero object in $\mc D$ belongs to $\mc D_\dagger$
        \item pullbacks along morphisms in $\mc D_\dagger$ in $\mc D$ exist
        \item $\mc D_\dagger$ is closed under pullback.
    \end{enumerate}
    If $(\mc E,\mc E_\dagger)$ is another co-Waldhausen category, a functor $\psi : \mc D\to \mc E$ is said to be \emph{exact} if
    \begin{enumerate}
        \item $\psi(\mc D_\dagger)\subseteq \mc E_\dagger$
        \item $\psi$ preserves zero objects
        \item $\psi$ preserves pullbacks along morphisms in $\mc D_\dagger$.
    \end{enumerate}
    We denote by $\coWald$ the category of co-Waldhausen categories and exact functors.
\end{definition}

\begin{proposition}\label{prop:liftofVecttocoWald}
    The pair $(\Vect(x), \Vect(x)_\dagger)$ is a co-Waldhausen category and the functor $\Vect : \mc C^\mathrm{op}\to \Cat$ lifts to a functor $\Vect : \mc C^\mathrm{op}\to\coWald$.
\end{proposition}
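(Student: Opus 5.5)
We verify the four axioms of \Cref{def:cowaldhausencat} for $(\Vect(x),\Vect(x)_\dagger)$ by working in the double slice $\mc C_{x/,/x}$. The plan is to express every limit we need in $\Vect(x)$ through limits in $\mc C$, and to use only the two axioms of a geometric setup (\Cref{def:geosetup}).

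\emph{Zero object.} The candidate is the trivial bundle $x\xrightarrow{\id}x\xrightarrow{\id}x$; note $\id_x\in E$ because $E$ is wide. It is initial in $\mc C_{x/,/x}$ because $\id_x$ is initial in $\mc C_{x/}$ and slices preserve initial objects. It is terminal because $(\mc C_{x/})_{/\id_x}$ has terminal object $\id_{\id_x}$. Both properties persist in the full subcategory $\Vect(x)$. For any bundle $x\xrightarrow{s}e\xrightarrow{p}x$, the unique map to the zero object has underlying map $p\in E$, so it lies in $\Vect(x)_\dagger$. This gives (i) and (ii).

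\emph{Pullbacks.} Take a cospan $e_1\xrightarrow{f}e_3\xleftarrow{g}e_2$ in $\Vect(x)$ with $f\in E$. Since $\mc C$ has pullbacks along maps in $E$, the pullback $e_1\times_{e_3}e_2$ exists in $\mc C$. The sections $s_1,s_2$ are compatible over $e_3$, so they induce a section $x\to e_1\times_{e_3}e_2$, and the projections compose to a common map $p$ to $x$.

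To identify this with the pullback in $\mc C_{x/,/x}$, I would use that the forgetful functor $\mc C_{/x}\to\mc C$ creates pullbacks (a general fact about slices, since pullbacks are connected limits). Likewise $\mc C_{x/}\to\mc C$ creates all limits, so in particular pullbacks. Composing these two facts shows that pullbacks in the double slice are computed underlying in $\mc C$.

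It remains to check the new object is a vector bundle, i.e.\ that $p\in E$. The projection $e_1\times_{e_3}e_2\to e_2$ is a base change of $f$, hence lies in $E$ because $E$ is closed under pullback; then $p$ is this projection followed by $p_2\in E$. The same base-change argument shows the projection to $e_2$ lies in $\Vect(x)_\dagger$, giving (iii) and (iv).

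\emph{Exactness of pullback.} For $h:x\to y$, the functor $h^\ast$ is pullback along $h$ of total spaces over $y$. It preserves zero objects, since $y\times_y x\simeq x$. It sends $E$-maps to $E$-maps, because a map $e\to e'$ over $y$ pulls back to a base change of it, which lies in $E$. It preserves pullbacks along $\dagger$-maps because limits commute with limits, i.e.\ $(e_1\times_{e_3}e_2)\times_y x\simeq (e_1\times_y x)\times_{e_3\times_y x}(e_2\times_y x)$, all of which exist by the geometric setup.

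The main subtlety is functoriality: $\coWald$ is a subcategory of $\Fun(\Lambda^2_2\text{-type data})$, or of pairs in $\Cat$ with exact functors, and it is not full in $\Cat$. So lifting $\Vect:\mc C^\op\to\Cat$ amounts to the following. First, lift to pairs $(\mc D,\mc D_\dagger)$, which I would do by the same pullback construction as in the definition of $\Vect(x)_\dagger$, since it is natural in $x$. Second, check objectwise and morphismwise membership. I expect $\coWald\subseteq\Fun([1],\Cat)$ to be a subcategory whose inclusion is a monomorphism, so a lift exists and is unique once these conditions hold. This reduces the functoriality to the pointwise checks above.
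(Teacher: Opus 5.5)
Your argument is correct and is essentially the paper's proof: the same zero object $x\xrightarrow{\id}x\xrightarrow{\id}x$, pullbacks computed in $\mc C$ through the double slice with structure map $e_1\times_{e_3}e_2\to e_2\to x$ a composite of two $E$-maps, and base change for (iv) and for exactness of $h^\ast$ (which the paper leaves to the reader); the only thing left implicit is the trivial check, made first in the paper, that $\Vect(x)_\dagger$ is wide. One correction to your functoriality paragraph: $\Vect(x)\times_{\mc C}E$ is not literally natural in $x$, because $\Vect(y)\to\mc C$ and $\Vect(y)\xrightarrow{h^\ast}\Vect(x)\to\mc C$ differ by the maps $e\times_y x\to e$; your conclusion still holds (and the paper passes over this point) because pairs of a category and a wide subcategory form a cartesian fibration over $\Cat$ with posetal fibers, so a lift of $\Vect$ is exactly the pointwise choice of $\Vect(x)_\dagger$ together with your check that each $h^\ast$ carries $\dagger$-maps to $\dagger$-maps.
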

\begin{proof}
    We check each condition of \Cref{def:cowaldhausencat}.\par
    Firstly, the forgetful map $\mc C_{x/,/x}\to\mc C$ is conservative, so the fact that $\Vect(x)_\dagger$ is a wide subcategory follows from the fact that $E$ is a wide subcategory.\par
    For (i), we have that $\mc C_{x/,/x}$ has a zero object given by $x\xrightarrow{\id} x\xrightarrow{\id} x$. Since $E$ is a wide subcategory of $\mc C$, this object belongs to the full subcategory $\Vect(x)$ and is thus still a zero object in $\Vect(x)$. (ii) then follows from the definition of $\Vect(x)_\dagger$ and $\Vect(x)$ as every map $e\to 0$ is equivalent to the structure map $e\to x$ which is assumed to lie in $E$.\par
    For (iii), $\mc C_{x/,/x}$ has all weakly contractible limits that $\mc C$ does, and the forgetful functor $\mc C_{x/,/x}\to\mc C$ preserves and reflects these limits. Thus, suppose we have a cospan $e_1\xrightarrow{f} e_2\xleftarrow{g} e_3$ in $\Vect(x)$ with $f\in \Vect(x)_\dagger$. Then we have a pullback square
    \begin{equation*}
        \begin{tikzcd}
        	{e_1\times_{e_2}e_3} & {e_1} \\
        	{e_3} & {e_2}
        	\arrow["{\bar g}", from=1-1, to=1-2]
        	\arrow["{\bar f}"', from=1-1, to=2-1]
        	\arrow["\lrcorner"{anchor=center, pos=0.125}, draw=none, from=1-1, to=2-2]
        	\arrow["f", from=1-2, to=2-2]
        	\arrow["g"', from=2-1, to=2-2]
        \end{tikzcd}
    \end{equation*}
    in $\mc C_{x/,/x}$ where the underlying diagram in $\mc C$ is a pullback and the incoming structure map on $e_1\times_{e_2}e_3$ is the product of the incoming structure maps on $e_1$ and $e_3$, and the outgoing structure map is
    \begin{equation*}
        e_1\times_{e_2}e_3\longrightarrow e_2\longrightarrow x.
    \end{equation*}
    This is therefore a pullback in $\Vect(x)$ so long as $e_1\times_{e_2}e_3\in \Vect(x)$. However, the outgoing structure map of $e_1\times_{e_2}e_3$ is equivalent to
    \begin{equation*}
        \begin{tikzcd}
            e_1\times_{e_2}e_3\arrow{r}{\bar f} & e_3\arrow{r} & x
        \end{tikzcd}
    \end{equation*}
    which is a composite of two morphisms belonging to $E$ since $E$ is stable under pullbacks in $\mc C$.\par
    Finally, for (iv), the above shows that $\Vect(x)\hookrightarrow\mc C_{x/,/x}$ preserves pullbacks along morphisms in $\Vect(x)_\dagger$ and hence so does the composite $\Vect(x)\to \mc C$. It then follows from the definition of $\Vect(x)_\dagger$ that $\Vect(x)_\dagger$ is closed under pullback by virtue of $E$ being closed under pullback.\par
    Lastly, one checks that for every $f : x\to y$, the pullback functor $f^\ast : \Vect(y)\to\Vect(x)$ is exact. This uses similar techniques, so we omit the proof.
\end{proof}



\subsection{Partial $K$-theory}

For the remainder of this subsection, let $(\mc D, \mc D_\dagger)$ be a co-Waldhausen category in the sense of \Cref{def:cowaldhausencat}. The goal of this section is to introduce a non-group completed variant of $K$-theory. Upon writing this, the author was made aware of \cite{yuan2023integral} which introduces the same construction. For completeness, we continue to include this section but we refer the reader to work of Yuan \cite{yuan2023integral} for a more comprehensive study of partial $K$-theory.\par

Informally, to a co-Waldhausen category $(\mc D, \mc D_\dagger)$, we wish to associate an $\mb E_\infty$-space $\mc K(\mc D)$ which at the level of $\pi_0$ is freely generated by the objects of $\mc D$ modulo the relation $[b] = [a] + [c]$ for every fiber sequence
\begin{equation*}
    a\to b\twoheadrightarrow c
\end{equation*}
with $b\twoheadrightarrow c\in \mc D_\dagger$.\par
The partial $K$-theory should recover the ordinary algebraic $K$-theory of $(\mc D,\mc D_\dagger)$ upon group completion, i.e.\ $\mc K(\mc D)^\mathrm{gp}\simeq K(\mc D)$ where $(-)^\mathrm{gp}$ is the left adjoint to the inclusion of grouplike $\mb E_\infty$-spaces into $\mb E_\infty$-spaces. We remark that this distinction only appears in the $K$-theory of (co-)Waldhausen categories. For a stable category $\mc D$, which may be viewed as a (co-)Waldhausen category with $\mc D_\dagger = \mc D$, the existence of inverses is already guaranteed by the relations imposed via fiber sequences. Indeed, for every $x$ we may consider the fiber sequence
\begin{equation*}
    y\to 0\twoheadrightarrow x
\end{equation*}
which in $\pi_0\mc K(\mc D)$ gives the relation
\begin{equation*}
    0 = [x] + [y],
\end{equation*}
witnessing $[y]$ as an inverse to $[x]$. However, for a general co-Waldhausen category, it will often not be the case that $0\to x$ belongs to $\mc D_\dagger$.\par
To do this, we first recall the $S_\bullet$-construction.

\begin{definition}
    The $S_\bullet$-construction associated to $\mc D$, denoted $S_\bullet(\mc D)$, is the simplicial space whose $n$-simplex space is the groupoid of diagrams $\Cpt^n\to\mc D$
    \begin{equation*}
        \begin{tikzcd}
        	0 & {x_{11}} & {x_{12}} & \cdots & {x_{1n}} \\
        	& 0 & {x_{22}} & \cdots & {x_{2n}} \\
        	&& 0 & \ddots & \vdots \\
        	&&& \ddots & {x_{nn}} \\
        	&&&& 0
        	\arrow[from=1-1, to=1-2]
        	\arrow[from=1-2, to=1-3]
        	\arrow[two heads, from=1-2, to=2-2]
        	\arrow["\lrcorner"{anchor=center, pos=0.125}, draw=none, from=1-2, to=2-3]
        	\arrow[from=1-3, to=1-4]
        	\arrow[two heads, from=1-3, to=2-3]
        	\arrow["\lrcorner"{anchor=center, pos=0.125}, draw=none, from=1-3, to=2-4]
        	\arrow[from=1-4, to=1-5]
        	\arrow[two heads, from=1-5, to=2-5]
        	\arrow[from=2-2, to=2-3]
        	\arrow[from=2-3, to=2-4]
        	\arrow[two heads, from=2-3, to=3-3]
        	\arrow["\lrcorner"{anchor=center, pos=0.125}, draw=none, from=2-3, to=3-4]
        	\arrow[from=2-4, to=2-5]
        	\arrow[two heads, from=2-5, to=3-5]
        	\arrow[two heads, from=3-5, to=4-5]
        	\arrow[two heads, from=4-5, to=5-5]
        \end{tikzcd}
    \end{equation*}
    where the diagonal consists of zero objects, all squares consisting of vertical and horizontal arrows are Cartesian, and all vertical arrows belong to $\mc D_\dagger$.
\end{definition}

The ordinary algebraic $K$-theory of $\mc D$ is then defined as endomorphisms of $0$ in the geometric realization of $S_\bullet(\mc D)$ (see \cite{barwick2016algebraic}). To define partial $K$-theory, we take the associated category rather than the associated groupoid.

\begin{definition}
    The \emph{partial $K$-theory} of $\mc D$, denoted $\mc K(\mc D)$, is defined as $\End_0(\ascat S_\bullet(\mc D))$.
\end{definition}

We first want to show that $\mc K(\mc D)$ has an $\mb E_\infty$-structure. This will be a consequence of the following lemma.

\begin{lemma}\label{lemma:Sbulletandascatpresprod}
    Both $S_\bullet : \coWald\to \PSh(\Delta)$ and $\ascat : \PSh(\Delta)\to\Cat$ preserve products.
\end{lemma}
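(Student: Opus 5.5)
We treat the two functors separately.

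For $S_\bullet$, we would write it as a nerve-type construction. Its $n$-simplex space is the subspace of $\map_{\Cat}(\Cpt^n, \mc D)$ cut out by three conditions: the diagonal is zero, the squares are Cartesian, and the vertical arrows lie in $\mc D_\dagger$. Products in $\coWald$ should be computed as $(\mc D\times\mc E, \mc D_\dagger\times\mc E_\dagger)$. We would first verify this: zero objects, pullbacks along $\dagger$-maps and the $\dagger$-maps themselves are all componentwise, and a pair of functors is exact iff each component is. Then $\map_{\Cat}(\Cpt^n, \mc D\times\mc E)\simeq \map(\Cpt^n,\mc D)\times\map(\Cpt^n,\mc E)$. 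Each defining condition holds for a pair iff it holds componentwise, since limits and zero objects in $\mc D\times\mc E$ are computed componentwise. Hence $S_n(\mc D\times\mc E)\simeq S_n(\mc D)\times S_n(\mc E)$, naturally in $n$. The terminal object of $\coWald$ is the trivial category, whose $S_\bullet$ is the point, so finite products (and indeed arbitrary products, by the same argument) are preserved.

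For $\ascat$, we would use that $\ascat$ is a left Bousfield localization of $\PSh(\Delta)$ onto complete Segal spaces, via the fully faithful nerve of \Cref{thm:nervefullyfaithful}. The key claim is that this localization is compatible with finite products: if $f : X\to X'$ is an $\ascat$-equivalence, then so is $f\times Y$ for every $Y$. We would reduce this to representables. Because products in $\PSh(\Delta)$ commute with colimits in each variable, and $\ascat$ preserves colimits, it suffices to check the map $\ascat(X\times\Delta[m])\to \ascat X\times [m]$ for $X=\Delta[n]$. There it is the statement $\ascat(\Delta[n]\times\Delta[m])\simeq[n]\times[m]$. This holds because $N$ preserves products and $\Delta[n]\times\Delta[m]\simeq N([n]\times[m])$ is already local. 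The general case then follows by writing $X$ and $Y$ as colimits of representables.

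A cleaner route, which we would take if the reduction gets messy, is to argue via exponentials. The local objects (complete Segal spaces) form an exponential ideal: for complete Segal $Z$ and any $Y$, the internal hom $Z^Y$ is complete Segal. This amounts to $Z^{\Delta[m]}\simeq N(\Fun([m],\mc C))$ for $Z=N(\mc C)$. An exponential ideal is equivalent to the localization preserving finite products.

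The main obstacle is this product compatibility of $\ascat$. We would lean on the identification $Z^{\Delta[m]}\simeq N(\mc C^{[m]})$, which comes from $N$ preserving products and from $\Cat$ being Cartesian closed. For the terminal object, $\ascat\Delta[0]=[0]$ is immediate. Combining the two parts gives the lemma. As the paper indicates, it follows that $\ascat S_\bullet$ preserves products, so $\End_0$ of its value is an $\mb E_\infty$-space.
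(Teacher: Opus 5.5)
Your proposal is correct and follows essentially the same route as the paper. Products in $\coWald$ are computed componentwise (you verify this directly where the paper cites Barwick), so $S_\bullet$ splits levelwise; for $\ascat$, your first argument is the paper's reduction of $\ascat(X\times Y)\to\ascat X\times\ascat Y$ to representables, where it becomes $\ascat N([n]\times[m])\simeq[n]\times[m]$. The one point to state explicitly in that reduction is that the target $\ascat X\times\ascat Y$ also preserves colimits in each variable, which uses that $\Cat$ is Cartesian closed; the exponential-ideal argument is a valid but unnecessary alternative.
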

\begin{proof}
    By a dual version of \cite[Proposition 4.4]{barwick2016algebraic}, $\coWald$ has small limits and the forgetful map
    \begin{equation*}
        \begin{tikzcd}[row sep=0pt]
            \coWald\arrow{r} & \Fun([1], \Cat) \\
            (\mc D, \mc D_\dagger)\arrow[mapsto]{r} & \mc D_\dagger \hookrightarrow\mc D
        \end{tikzcd}
    \end{equation*}
    preserves them. In particular, we have that $(\mc D,\mc D_\dagger)\times (\mc E, \mc E_\dagger)\simeq (\mc D\times \mc E, \mc D_\dagger\times \mc E_\dagger)$. Since limits in the product category $\mc D\times\mc E$ are computed component-wise, we see that the induced projections
    \begin{equation*}
        \begin{tikzcd}
            S_\bullet(\mc D)\arrow[<-]{r} & S_\bullet(\mc D\times \mc E)\arrow{r} & S_\bullet(\mc E)
        \end{tikzcd}
    \end{equation*}
    realize $S_\bullet(\mc D\times \mc E)$ as the product $S_\bullet(\mc D)\times S_\bullet(\mc E)$.\par
    For $\ascat$, we have a natural transformation $\ascat((-)\times (-))\Rightarrow \ascat(-)\times \ascat(-)$ and both sides preserve colimits in each variable. Thus it suffices to check for $\Delta[m]$ and $\Delta[n]$ in which case the natural transformation is an equivalence, both yielding $[n]\times [m]$.
\end{proof}

\begin{proposition}\label{prop:partialKthygroupcmpl}
    The following hold.
    \begin{enumerate}
        \item $\mc K : \coWald\to \Ani$ naturally lifts to $\mc K : \coWald\to \CAlg(\Ani)$
        \item There is a natural transformation $\mc K\Rightarrow K$ of functors $\coWald\to \CAlg(\Ani)$ which realizes $K(\mc D)$ as the group completion of $\mc K(\mc D)$.
    \end{enumerate}
\end{proposition}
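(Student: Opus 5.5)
For (i), the plan is to show that $\mc K$ is a product-preserving functor from a Cartesian monoidal category, so that it carries commutative monoids to commutative monoids. First, $\coWald$ has finite products, computed as in \Cref{lemma:Sbulletandascatpresprod}. Every co-Waldhausen category $\mc D$ is canonically a commutative monoid in $\coWald$. The sum map is the exact functor $\mc D\times\mc D\to\mc D$, $(a,b)\mapsto a\times_0 b$; it exists because $a\to 0$ lies in $\mc D_\dagger$ and pullbacks along $\mc D_\dagger$ exist. To make this precise I would argue that $\coWald$ is semiadditive. Products are also coproducts, with inclusions $a\mapsto(a,0)$, and exact functors preserve the zero object. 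Hence $\CAlg(\coWald)\to\coWald$ is an equivalence, and so $\CAlg(\coWald)\simeq\coWald$.

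Next, the composite $\coWald\xrightarrow{S_\bullet}\PSh(\Delta)\xrightarrow{\ascat}\Cat$ preserves products by \Cref{lemma:Sbulletandascatpresprod}. It therefore induces a functor $\coWald\simeq\CAlg(\coWald)\to\CAlg(\Cat)$. The object $0$ is the unique object of $\ascat S_0(\mc D)$, and $S_0(\mc D)\simeq\ast$ because there is a contractible space of zero objects. So we land in commutative monoids in categories pointed by the unit. Taking $\End_0$ preserves products, since $\End_{(0,0)}(\mc A\times\mc B)\simeq\End_0\mc A\times\End_0\mc B$. Thus $\End_0$ gives a functor to $\CAlg(\Ani)$. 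The $\mb E_\infty$ structure coming from the monoid structure on the category is compatible with composition by Eckmann--Hilton, but only product preservation is needed for the lift.

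For (ii), the natural transformation comes from the localization $\ascat S_\bullet(\mc D)\to|\ascat S_\bullet(\mc D)|\simeq|S_\bullet(\mc D)|$. This realization is product preserving and natural. Taking loops at $0$ gives an $\mb E_\infty$ map $\mc K(\mc D)=\End_0\to\Omega_0|S_\bullet\mc D|=K(\mc D)$. Since $K(\mc D)$ is grouplike, the map factors through $\mc K(\mc D)^\mathrm{gp}$. It remains to show that $\mc K(\mc D)^\mathrm{gp}\to K(\mc D)$ is an equivalence.

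The key input is that $\mc A\coloneqq\ascat S_\bullet(\mc D)$ has a single object up to equivalence and is a commutative monoid in $\Cat$ with this object as unit. I would first check that $S_0\simeq\ast$, so that $\mc A$ is generated by one object. Since $\ascat$ does not change the space of objects, every object is equivalent to $0$. Consequently $\mc A\simeq B\End_0(\mc A)=B\mc K(\mc D)$, the one-object category. Geometric realization then gives $|\mc A|\simeq|B\mc K(\mc D)|$. For an $\mb E_\infty$ (indeed $\mb E_1$) monoid $M$, the group completion theorem identifies $\Omega|BM|\simeq M^\mathrm{gp}$. Since $|\mc A|\simeq|S_\bullet\mc D|$, we get $K(\mc D)\simeq\mc K(\mc D)^\mathrm{gp}$. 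Finally, I would verify that this identification agrees with the map constructed above; this follows because both come from the unit $\mc A\to|\mc A|$.

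I expect the main obstacle to be the justification that every object of $\ascat S_\bullet(\mc D)$ is equivalent to $0$, together with the identification $\mc A\simeq B\End_0$. The point is that $\ascat$ of a simplicial space with contractible $0$-simplices has contractible object space; this would be handled via the explicit colimit formula for $\ascat$. The other delicate point is that the two $\mb E_\infty$ structures on $\End_0$, composition and the monoidal sum, agree. This matters for comparing $\Omega|BM|$ with the group completion in the correct monoid structure, and I would handle it by an Eckmann--Hilton argument in the product-preserving setting.
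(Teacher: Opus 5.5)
Your proposal is correct and follows the paper's proof. The paper also uses a section $\coWald\to\CAlg(\coWald)$, which your semiadditivity argument upgrades to an equivalence, pushes it through the product-preserving composite $\ascat\circ S_\bullet$, and takes $\End$ at the unit; it then gets (ii) by looping the symmetric monoidal map $\ascat S_\bullet(-)\to|S_\bullet(-)|$ and using that $\ascat S_\bullet(\mc D)$ is a one-object category.

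One correction to the step you flag as the main obstacle. $\ascat$ does not preserve the space of objects: completion changes it. Consequently the object space of $\ascat S_\bullet(\mc D)$ is generally not contractible; for instance, when $\mc K(\mc D)$ is grouplike it is $BK(\mc D)$. What you actually need is only that $\ast\simeq S_0(\mc D)\to\ascat S_\bullet(\mc D)^\simeq$ is $\pi_0$-surjective. That does hold, and it gives $\ascat S_\bullet(\mc D)\simeq B\mc K(\mc D)$.
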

\begin{proof}
    We may equip each of $\coWald$, $\PSh(\Delta)$, $\Cat$ and $\Ani$ with the Cartesian monoidal structure. Then by \Cref{lemma:Sbulletandascatpresprod}, along with the standard fact that $|-| : \Cat\to \Ani$ preserves products, we may lift all these functors to functors
    \begin{equation*}
        \begin{aligned}
            S_\bullet : & \CAlg(\coWald)\longrightarrow \CAlg(\PSh(\Delta)) \\
            \ascat : & \CAlg(\PSh(\Delta))\longrightarrow \CAlg(\Cat) \\
            | - | : & \CAlg(\Cat)\longrightarrow \CAlg(\Ani).
        \end{aligned}
    \end{equation*}
    Moreover, since products in $\coWald$ are computed as products of the underlying pair, and exact functors preserve products, there is a section $\coWald\to \CAlg(\coWald)$ of the forgetful map $\CAlg(\coWald)\to \coWald$ which equips each coWaldhausen category with its Cartesian monoidal structure.\par
    Thus we may take the lift of $\mc K$ to be the composite
    \begin{equation*}
        \begin{tikzcd}
            \coWald\arrow{r} & \CAlg(\coWald)\arrow{r}{S_\bullet} & \CAlg(\PSh(\Delta))\arrow{r}{\ascat} & \CAlg(\Cat)\arrow{r}{\End_{1}(-)} & \CAlg(\Ani),
        \end{tikzcd}
    \end{equation*}
    which agrees with the definition of $\mc K$ since $0$ is the monoidal unit for $\ascat S_\bullet(\mc D)$.\par
    The natural transformation $\mc K\Rightarrow K$ is given by taking endomorphisms at the unit of the symmetric monoidal transformation
    \begin{equation*}
        \ascat S_\bullet(-)\Longrightarrow |S_\bullet(-)|.
    \end{equation*}
    Since $\ascat S_\bullet(-)$ is (equivalent to) a one object category, $|S_\bullet(-)|$ is formed by group completing the endomorphism space of this object and the claim follows.
\end{proof}

We also prove the claim that, when $\mc D_\dagger$ is sufficiently large, the partial $K$-theory of $\mc D$ agrees with the usual algebraic $K$-theory of $\mc D$.

\begin{lemma}\label{lemma:simpspacerightinverses}
    Let $X$ be a simplicial space in which every $f\in X_1$ has a right (resp.\ left) inverse, i.e.\ there exists some $h\in X_2$ such that $[d_0 h] = [f]\in \pi_0(X_1)$ and $[d_1 h]\in \pi_0(X_1)$ is in the image of $\pi_0 s_0 : \pi_0(X_0)\to \pi_0(X_1)$, then $\ascat X$ is a groupoid.
\end{lemma}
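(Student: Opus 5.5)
The plan is to show that every morphism of $\ascat X$ is invertible. Since $\ascat X$ is generated under composition by images of $1$-simplices, it is enough to show that each such image is invertible. The relevant facts are as follows. The unit $X\to N(\ascat X)$ is surjective on $\pi_0$ of $0$-simplices, since $\ascat$ is the realization and $X_0\to N(\ascat X)_0$ is an equivalence onto the objects. Every morphism of $\ascat X$ is a finite composite of images of points of $X_1$; this follows from the colimit formula $\ascat X\simeq\colim_{\Delta[n]\to X}[n]$, because a colimit of categories is generated by the images of the generating morphisms. A category in which every morphism is invertible is a groupoid. So we reduce to showing that the image $\bar f$ of each $f\in X_1$ is invertible in $\ascat X$.

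Assume right inverses; the left case is symmetric. Given $f\in X_1$, choose $h\in X_2$ with $[d_0h]=[f]$ and $[d_1h]=[s_0x]$. Applying the unit $X\to N(\ascat X)$, the image of $h$ is a $2$-simplex of $N(\ascat X)$. It witnesses $\overline{d_0h}\circ\overline{d_2h}\simeq\overline{d_1h}$. Since $\overline{s_0x}=\id$ (degeneracies go to identities) and these identifications depend only on $\pi_0$ classes, we get $\bar f\circ\bar g\simeq\id$, where $g=d_2h$. So every image $\bar f$ has a right inverse $\bar g$ in the homotopy category, and $\bar g$ is again the image of a $1$-simplex of $X$.

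We then apply the hypothesis to $g$. This gives some $\bar k$, the image of a $1$-simplex, with $\bar g\bar k\simeq\id$. The standard argument then applies: $\bar f=\bar f(\bar g\bar k)=(\bar f\bar g)\bar k=\bar k$. Hence $\bar g\bar f\simeq\bar g\bar k\simeq\id$, and $\bar g$ is a two-sided inverse of $\bar f$. Invertibility in the homotopy category means equivalence in $\ascat X$. Composites of equivalences are equivalences, so every morphism is invertible and $\ascat X$ is a groupoid.

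The main obstacle is justifying the generation claim: that every morphism of $\ascat X$ is, up to homotopy, a composite of images of $1$-simplices. I would argue it as follows. Let $\mc W\subseteq\ascat X$ be the wide subcategory spanned by equivalences. By the above, the unit $X\to N(\ascat X)$ factors through $N(\mc W)$. Indeed, simplices of $N(\ascat X)$ lie in $N(\mc W)$ exactly when their edges are invertible, and the edges of images of simplices are images of $1$-simplices. By adjunction, the identity of $\ascat X$ then factors through $\mc W\hookrightarrow\ascat X$, so $\mc W=\ascat X$. This argument avoids the generation claim entirely.
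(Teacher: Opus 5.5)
Your proof is correct, but it takes a genuinely different route from the paper's.

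\textbf{The paper's route.} It first reduces to a discrete simplicial set: it replaces $X_0$ by a discrete $\pi_0$-surjective cover, then passes to $\pi_0 X$, which leaves $h\ascat X$ unchanged. It then uses the Barkan--Steinebrunner presentation of $\Mor h\ascat X$ as strings of composable $1$-simplices modulo composition and degeneracy relations. Reversing a string of right inverses $(g_n,\dots,g_1)$ gives a right inverse of $(f_1,\dots,f_n)$. So every morphism of $h\ascat X$ has a right inverse, and is therefore invertible.

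\textbf{Your route.} You prove invertibility only for images of $1$-simplices, using the $2$-simplex pushed into $N(\ascat X)$ and the trick $\bar f=\bar f\bar g\bar k=\bar k$. The universal property of $\ascat$ then does the rest. The unit lands in $N((\ascat X)^\simeq)$, so $\id_{\ascat X}\simeq i\circ r$ with $i$ the core inclusion. Hence every morphism $\psi$ is isomorphic in the arrow category to the equivalence $i(r\psi)$.

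\textbf{What each buys.} Your argument is shorter and fully model-independent, with no discreteness reductions and no external formula. It also proves the more general statement that $\ascat X$ is a groupoid as soon as every $1$-simplex maps to an equivalence. The paper's route gives an explicit description of morphisms. However, it needs the reduction to discrete $X_0$ precisely because that description fails otherwise.

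\textbf{Claims you were right to drop.} The generation claim in your first paragraph is false in general. For the constant simplicial space on $S^1$, every $1$-simplex is degenerate, yet $\ascat X\simeq S^1$ has non-identity automorphisms even in the homotopy category. The assertion that $X_0\to N(\ascat X)_0$ is an equivalence is also false; the map is only $\pi_0$-surjective, as the discrete nerve of the walking isomorphism shows. It is good that your final argument uses neither.

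\textbf{A harmless imprecision.} Since $[d_0h]=[f]$ and $[d_1h]=[s_0x]$ hold only in $\pi_0$, the relation $\bar f\circ\bar g\simeq\id$ holds only after conjugating by equivalences coming from paths in $X_0$. This does not affect the argument, because having a right inverse is invariant under isomorphism in the arrow category of $h\ascat X$.
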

\begin{proof}
    We will show that in the homotopy category of $\ascat X$, every morphism has a right inverse at which point it follows that every morphism is in fact invertible. Indeed, if $\mc C$ is an ordinary category in which every morphism has a right inverse, then given an arbitrary morphism $f$ let $g$ be its right inverse and $h$ be the right inverse of $g$. Then $fg = \id$ by construction and $f = fgh = h$ so $g$ is also the left inverse of $f$.\par
    First, we reduce to the case where each $X_n$ is discrete, following \cite[Example 1.22]{barkan2025segalification}. Let $Z_0\to X_0$ be a $\pi_0$-surjective map and define $Z_n = Z_0^{n + 1}\times_{X_0^{n + 1}} X_n$ to get a new simplicial space $Z_\bullet$. Using the path space model for the homotopy fiber product, one readily checks that every morphism of $Z_\bullet$ also has a right inverse. Since the projection $Z\to X$ induces an equivalence on associated categories, by taking $Z_0$ to be discrete we may reduce to the case that $X_0$ is discrete.\par
    If $X_0$ is discrete, then the simplicial space $\pi_0 X\coloneqq (\pi_0 X_n)_n$ has the property that $X\to \pi_0 X$ induces an equivalence on homotopy categories of associated categories. Since $\pi_0 X$ also clearly inherits the property of having right inverses from $X$, we reduce to the case where $X$ is discrete.\par
    If $X$ is discrete, then we have from \cite[Example 1.22]{barkan2025segalification} that
    \begin{equation*}
        \Mor h\ascat X = \left(\coprod_{n\ge 0} X_1\times_{X_0}X_1\times_{X_0}\cdots \times_{X_0}X_1\right)/\sim
    \end{equation*}
    where $(f_1, f_2,\dots, f_n)\sim (f_1, \dots, f_{i-1}, g, f_{i + 1},\dots, f_n)$ whenever there is a $2$-simplex witnessing $f_{i + 1}\circ f_i = g$ and $(f_1,\dots, f_n)\sim (f_1,\dots, f_{i - 1}, f_{i + 1}, \dots, f_n)$ whenever $f_i$ is a degenerate simplex, and composition is given by concatenation. From this formula, it is clear that if $(f_1, \dots, f_n)$ represents a morphism and $g_1,\dots, g_n$ are right inverses of $f_1,\dots,f_n$ respectively, then $(g_n,\dots, g_1)$ is a right inverse to $(f_1,\dots, f_n)$ as required.
\end{proof}

\begin{corollary}\label{cor:ascatkthy}
    Suppose that $(\mc D,\mc D_\dagger)$ is such that every morphism $0\to x$ belongs to $\mc D_\dagger$. Then $\mc K(\mc D)$ is grouplike, i.e.\ $\mc K(\mc D)\simeq K(\mc D)$.
\end{corollary}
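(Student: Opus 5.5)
The plan is to apply \Cref{lemma:simpspacerightinverses} to $X = S_\bullet(\mc D)$. Then $\ascat S_\bullet(\mc D)$ is a groupoid. Since $(S_\bullet\mc D)_0 \simeq \ast$, $\ascat S_\bullet(\mc D)$ is the one-object category with endomorphism space $\mc K(\mc D)$, so being a groupoid means that $\mc K(\mc D)$ is grouplike. The comparison $\ascat S_\bullet(\mc D)\to |S_\bullet(\mc D)|$ is then an equivalence, because the associated category of a simplicial space whose associated category is already a groupoid agrees with its realization. By \Cref{prop:partialKthygroupcmpl}(ii), this identifies $\mc K(\mc D)\simeq \mc K(\mc D)^{\mathrm{gp}}\simeq K(\mc D)$.

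It remains to verify the hypothesis of \Cref{lemma:simpspacerightinverses}, namely that every $1$-simplex of $S_\bullet(\mc D)$ has a right inverse. A $1$-simplex is the diagram $0\to x\twoheadrightarrow 0$, i.e.\ just an object $x$, since every map $x\to 0$ lies in $\mc D_\dagger$. A $2$-simplex is a square $x_{11}\to x_{12}$, $x_{12}\twoheadrightarrow x_{22}$ with $x_{11}$ the fiber of $x_{12}\twoheadrightarrow x_{22}$. Its faces are $d_2 = x_{11}$, $d_0 = x_{22}$ and $d_1 = x_{12}$; this face ordering should be checked against the conventions.

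Given $x$, the construction is to find $y$ and a $2$-simplex with $d_1$ equal to the zero object (the degenerate $1$-simplex $s_0(\ast)$) and with $x$ as the appropriate outer face. Take $x_{12}=0$ and $x_{22}=x$, using the map $0\to x$, which lies in $\mc D_\dagger$ by hypothesis. Its fiber $y = 0\times_x 0$ exists by the pullback axiom, since pullbacks along $\mc D_\dagger$-maps exist. This gives a $2$-simplex $(y\to 0\twoheadrightarrow x)$ with $d_0 = x$, $d_2 = y$, and $d_1 = 0$ degenerate. It witnesses $[x]\circ[y] = \id$ in the orientation required by the lemma: $x$ is the $d_0$-face and $d_1$ is degenerate, which is exactly the right-inverse condition $[d_0h]=[f]$, $[d_1h]\in \im \pi_0 s_0$ as stated.

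The main point needing care is the index bookkeeping: matching which face of $S_2$ is $d_0$ versus $d_2$ so that the constructed simplex has the target object $x$ in the $d_0$ slot. If the orientation comes out reversed, the symmetric version of the lemma (left inverses) applies, so either case suffices.
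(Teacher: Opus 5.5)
Your proposal is correct and is essentially the paper's proof: the $2$-simplex you build, with $x_{12}=0\twoheadrightarrow x_{22}=x$ and fiber $y=0\times_x 0=\Omega x$ (so $d_0=x$, $d_1$ degenerate, $d_2=\Omega x$), is exactly the one the paper uses to verify the hypothesis of \Cref{lemma:simpspacerightinverses}, and your face bookkeeping is right. The paper leaves implicit the step from ``$\ascat S_\bullet(\mc D)$ is a groupoid'' to $\mc K(\mc D)\simeq K(\mc D)$, and you spell that step out correctly.
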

\begin{proof}
    Let $0$ be a zero object of $\mc D$ and let (up to equivalence)
    $0\to x\twoheadrightarrow 0$ be an arbitrary morphism in $S_1(\mc D)$. Then we have a $2$-simplex
    \begin{equation*}
        \begin{tikzcd}
            0 & {\Omega x} & 0 \\
            & 0 & x \\
            && 0
            \arrow[from=1-1, to=1-2]
            \arrow[from=1-2, to=1-3]
            \arrow[two heads, from=1-2, to=2-2]
            \arrow["\lrcorner"{anchor=center, pos=0.125}, draw=none, from=1-2, to=2-3]
            \arrow[two heads, from=1-3, to=2-3]
            \arrow[from=2-2, to=2-3]
            \arrow[two heads, from=2-3, to=3-3]
        \end{tikzcd}
    \end{equation*}
    which witnesses a right inverse to $0\to x\twoheadrightarrow 0$. The result then follows from \Cref{lemma:simpspacerightinverses}.
\end{proof}

For the case of vector bundles as discussed in \Cref{subsec:vectorbundles}, this has the following consequence.

\begin{corollary}
    If $E$ is left-cancellable, then $\Vect(x)_\dagger = \Vect(x)$ for all $x$. In particular, $\mc K(\Vect(x))\simeq K(\Vect(x))$.
\end{corollary}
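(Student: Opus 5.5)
The plan is to verify the two claims in order: first that $\Vect(x)_\dagger = \Vect(x)$ when $E$ is left-cancellable, and then that the hypothesis of \Cref{cor:ascatkthy} holds for $(\Vect(x), \Vect(x)_\dagger)$.

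For the first claim, take an arbitrary morphism of vector bundles over $x$. It consists of a map $f : e \to e'$ in $\mc C$, compatible with the sections $s, s'$ and with the projections, so that $p' \circ f \simeq p$. Here both $p, p' \in E$. Left-cancellability of $E$ means that if $g \circ h \in E$ and $g \in E$, then $h \in E$. Applying this with $g = p'$ and $h = f$ gives $f \in E$. Since $\Vect(x)_\dagger$ was defined as the pullback of $E \hookrightarrow \mc C$ along the forgetful functor, this shows every morphism of $\Vect(x)$ lies in $\Vect(x)_\dagger$. Hence the two coincide.

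For the second claim, recall that by \Cref{prop:liftofVecttocoWald} the zero object of $\Vect(x)$ is $x \xrightarrow{\id} x \xrightarrow{\id} x$. Any morphism $0 \to e$ lies in $\Vect(x)$, so by the first claim it lies in $\Vect(x)_\dagger$. The hypothesis of \Cref{cor:ascatkthy} is therefore satisfied, and that corollary gives $\mc K(\Vect(x)) \simeq K(\Vect(x))$.

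No step is a genuine obstacle. The only point requiring care is that left-cancellability is meant in the sense just stated, which is the standard one. Since the forgetful functor $\mc C_{x/,/x} \to \mc C$ reflects the relevant commutativity, the morphism $f$ and the identification $p' f \simeq p$ really do live in $\mc C$, so the cancellation applies directly.
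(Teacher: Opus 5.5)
Your proof is correct and follows the paper's argument. Left-cancellability applied to $p'\circ f\simeq p$, with $p,p'\in E$, forces $f\in E$, so $\Vect(x)_\dagger=\Vect(x)$. The paper leaves the ``in particular'' implicit; you make it explicit by noting that every morphism $0\to e$ (i.e.\ the section $s$) now lies in $\Vect(x)_\dagger$ and then invoking \Cref{cor:ascatkthy}, which is exactly what the paper intends.
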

\begin{proof}
    Given a morphism of vector bundles
    \begin{equation*}
        \begin{tikzcd}
        	& x & \\
        	e && {e'}, \\
        	& x
        	\arrow[from=1-2, to=2-1]
        	\arrow[from=1-2, to=2-3]
        	\arrow["g", from=2-1, to=2-3]
        	\arrow["p"', two heads, from=2-1, to=3-2]
        	\arrow["{p'}", two heads, from=2-3, to=3-2]
        \end{tikzcd}
    \end{equation*}
    where $p,p'\in E$ by definition of $\Vect(x)$, left-cancellability of $E$ implies that $g\in E$ and the result follows.
\end{proof}

\subsection{Adjoining endomorphisms and the universal recipient}\label{subsec:adjendos}

We are now equipped to construct the category which is the recipient of the universal exchange theorem.

\begin{definition}\label{def:TwC}
    Define $\TwC$ to be the total space of the Cartesian unstraightening of the functor
    \begin{equation*}
        \begin{tikzcd}
            \mc C^\mathrm{op}\arrow{r}{\Vect} & \coWald\arrow{r}{\mc K} & \CAlg(\Ani)\arrow{r}{B} & \Cat.
        \end{tikzcd}
    \end{equation*}
    where $\Vect : \mc C^\mathrm{op}\to\coWald$ is the lift of \Cref{prop:liftofVecttocoWald}.
\end{definition}

Since each $B\mc K(\Vect(x))$ is a one object category, $\TwC$ may informally be described as the category whose:
\begin{itemize}
    \item objects are the same as those in $\mc C$
    \item a morphism $x\to y$ in $\TwC$ is a pair $(f, \xi)$ where $f : x\to y\in\mc C$ and $\xi\in \mc K(\Vect(x))$
    \item composition is given by
    \begin{equation*}
        \begin{tikzcd}[row sep=35pt]
            x\arrow{r}{(f,\;\xi)} & y\arrow{r}{(g,\;\eta)} & z
        \end{tikzcd} = \begin{tikzcd}[column sep=55pt]
            x\arrow{r}{(gf,\; f^\ast \eta + \xi)} & z.
        \end{tikzcd}
    \end{equation*}
\end{itemize}
We spend the remainder of this section making precise this claim.\par
More generally, given a functor
\begin{equation}\label{eq:Gfuncofendos}
    \begin{tikzcd}
        G : \mc D^\mathrm{op}\arrow{r} & \Alg_{\mb E_1}(\Ani)
    \end{tikzcd}
\end{equation}
where $\Alg_{\mb E_1}(\Ani)$ is the category of $\mb E_1$-spaces, we give a description of the Cartesian unstraightening of
\begin{equation}\label{eq:paramofendos}
    \begin{tikzcd}
        \mc D^\mathrm{op}\arrow{r}{G} & \Alg_{\mb E_1}(\Ani)\arrow{r}{B} & \Cat
    \end{tikzcd}
\end{equation}
which has the effect of adding $G(x)$ worth of endomorphisms to each object $x$.

\begin{definition}
    Given a functor \eqref{eq:Gfuncofendos}, we set $\mc D^G$ to be the Cartesian unstraightening of \eqref{eq:paramofendos}.
\end{definition}

\begin{remark}
    Because unstraightening is functorial, the construction $(\mc D, G)\mapsto \mc D^G$ is functorial in $(\mc D, G)\in \mathrm{LabCat}$ where $\mathrm{LabCat}$ is the category whose
    \begin{itemize}
        \item objects are pairs $(\mc D, G)$ where $\mc D$ is a category and $G : \mc D^\mathrm{op}\longrightarrow \Alg_{\mb E_1}(\Ani)$ is a functor
        \item morphisms $(\mc D, G)\to (\mc D', G')$ are pairs $(F, \eta)$ where $F : \mc D\to\mc D'$ is a functor and $\eta : G\to G'\circ F^\mathrm{op}$ is a natural transformation.
    \end{itemize}
    Explicitly, $\mathrm{LabCat}$ is the (total space of the) Cartesian unstraightening of
    \begin{equation*}
        \begin{tikzcd}
            \Fun((-)^\mathrm{op}, \Alg_{\mb E_1}(\Ani)) : \Cat^\mathrm{op}\arrow{r} & \Cat.
        \end{tikzcd}
    \end{equation*}
\end{remark}

Importantly, the functor \eqref{eq:paramofendos} factors as
\begin{equation*}
    \begin{tikzcd}
        \mc D^\mathrm{op}\arrow{r}{G} & \Alg_{\mb E_1}(\Ani)\arrow{r}{B} & \Cat_{\ast/}\arrow{r} & \Cat
    \end{tikzcd}
\end{equation*}
by remembering the pointing coming from delooping. This pointing under the unstraightening correspondence gives rise to a Cartesian section
\begin{equation*}
    s : \mc D\longrightarrow\mc D^G
\end{equation*}
of the fibration $\mc D^G\to \mc D$. Alternatively, by functoriality of the construction $(\mc D, G)\mapsto \mc D^G$, this section arises from the morphism $(\mc D, \ast)\to (\mc D, G)$ of pairs given by $(\id_{\mc D}, 1 : \ast\Rightarrow G)$ where $1 : \ast\Rightarrow G$ is the transformation picking out the unit at each level.

\begin{proposition}\label{prop:descrofadjendos}
    $s : \mc D\longrightarrow\mc D^G$ is essentially surjective and there exist equivalences
    \begin{equation*}
        \map_{\mc D^G}(s(x), s(y))\simeq G(x)\times \map_{\mc D}(x,y)
    \end{equation*}
    under which composition corresponds to
    \begin{equation*}
        \begin{tikzcd}[row sep=0pt]
            G(x)\times \map_{\mc D}(x,y)\times G(y)\times \map_{\mc D}(y,z)\arrow{r} & G(x)\times \map_{\mc D}(x,z) \\
            ((\xi,\; f),\; (\eta,\; g))\arrow[mapsto]{r} & (G(f)(\eta) \cdot \xi,\; gf).
        \end{tikzcd}
    \end{equation*}
\end{proposition}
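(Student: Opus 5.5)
The plan is to compute mapping spaces in a Cartesian unstraightening directly from the defining property of Cartesian fibrations, and then to read off composition from the functoriality of the straightening.

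\emph{Essential surjectivity.} Write $p : \mc D^G\to\mc D$ for the Cartesian fibration classifying $F = B\circ G$. The fiber over $x$ is $BG(x)$. This is a one-object category, so every object of $\mc D^G$ lying over $x$ is equivalent, inside the fiber and hence in $\mc D^G$, to the base point $s(x)$. Hence $s$ is essentially surjective.

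\emph{Mapping spaces.} For a Cartesian fibration we have the standard fiber sequence
\[
\map_{\mc D^G}(a,b)\longrightarrow \map_{\mc D}(p a, p b),
\]
whose fiber over $f$ is $\map_{F(pa)}(a, f^\ast b)$. This follows by factoring any morphism over $f$ as a fiberwise morphism followed by a Cartesian lift of $f$. Now take $a = s(x)$ and $b = s(y)$. Since $s$ is a Cartesian section, $f^\ast s(y)\simeq s(x)$ canonically, via the point-preserving functor $BG(f)$. So the fiber over $f$ is $\map_{BG(x)}(\ast,\ast) = G(x)$.

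Moreover, the section $s$ trivializes this fibration. The map
\[
G(x)\times\map_{\mc D}(x,y)\to\map_{\mc D^G}(s x, s y),\qquad (\xi, f)\mapsto s(f)\circ \xi,
\]
with $\xi$ viewed as an endomorphism in the fiber, is a map over $\map_{\mc D}(x,y)$ that induces the identity on fibers. It is therefore an equivalence.

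\emph{Composition.} We must compute $(s(g)\circ\eta)\circ(s(f)\circ\xi)$. The key step is the interchange relation
\[
\eta\circ s(f)\simeq s(f)\circ G(f)(\eta),
\]
which is precisely how the Cartesian section intertwines the fiberwise action: transporting an endomorphism along the Cartesian edge $s(f)$ is computed by the functor $BG(f)$, which acts by $G(f)$ on endomorphisms. Granting this, the composite becomes $s(g)s(f)\circ G(f)(\eta)\cdot\xi = s(gf)\circ (G(f)(\eta)\cdot\xi)$, giving the formula in the statement.

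\emph{Main obstacle.} Making this interchange, and the associativity coherences, precise at the $\infty$-level is the hard part. I would try two routes. The first is to use the coend model of \Cref{thm:gepcoendunstraightening}, or equivalently the description of the unstraightening over simplices $[n]\to\mc D$. The second is to reduce via functoriality in $\mathrm{LabCat}$ to the universal case $\mc D = [1]$ with the single map $G(1)\to G(0)$, where the unstraightening is a collage and the relation can be checked directly. At the level of the statement, which only asserts the formula for binary composition, it suffices to identify the composition map on the fibers over $(f,g)$, and this is exactly the interchange computation above.
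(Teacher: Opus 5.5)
Your proposal is correct and follows the paper's proof. Both arguments get essential surjectivity from the one-object fibers $BG(x)$. Both then show that $(\xi,f)\mapsto s(f)\circ\xi$ gives an equivalence $G(x)\times\map_{\mc D}(x,y)\simeq\map_{\mc D^G}(s(x),s(y))$, using Cartesianness of $s(f)$ together with $G(x)\simeq\map_{\pi^{-1}(x)}(s(x),s(x))$. Your interchange relation $\eta\circ s(f)\simeq s(f)\circ G(f)(\eta)$ simply spells out the composition check that the paper dismisses with ``one readily checks''.
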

\begin{proof}
    Let $\pi : \mc D^G\to\mc D$ be the projection map. For essential surjectivity of $s$, let $\alpha\in \mc D^G$. Then we have that $\alpha$ and $s(\pi(\alpha))$ both belong to the fiber $\pi^{-1}(\pi(\alpha))\simeq BG(\pi(\alpha))$ which is a one object category. Hence we have an equivalence $s(\pi(\alpha))\simeq \alpha$ in $\pi^{-1}(\pi(\alpha))$ and therefore in $\mc D^G$ as well.\par
    Next, because $s$ preserves Cartesian edges, for every $f : x\to y\in\mc D$ we have that $s(f)$ is a $\pi$-Cartesian lift of $f$. It follows that we have a pullback square
    \begin{equation}\label{eq:bigcartsqcomp}
        \begin{tikzcd}
        	{(\alpha, f)} & {s(f)\circ \alpha} \\[-15pt]
        	{\map_{\mc D^G}(s(x),s(x))\times\map_{\mc D}(x,y)} & {\map_{\mc D^G}(s(x), s(y))} \\
        	{\map_{\mc D}(x,x)\times \map_{\mc D}(x,y)} & {\map_{\mc D}(x,y)}. \\[-15pt]
        	{(\beta, f)} & {f\circ \beta}
        	\arrow[maps to, from=1-1, to=1-2]
        	\arrow[from=2-1, to=2-2]
        	\arrow[from=2-1, to=3-1]
        	\arrow["\lrcorner"{anchor=center, pos=0.125}, draw=none, from=2-1, to=3-2]
        	\arrow[from=2-2, to=3-2]
        	\arrow[from=3-1, to=3-2]
        	\arrow[maps to, from=4-1, to=4-2]
        \end{tikzcd}
    \end{equation}
    This may be verified as a pullback by checking it on fibers and applying \cite[Proposition 2.4.4.3]{lurie2009higher}. By \cite[Proposition 2.4.4.2]{lurie2009higher}, we also have a fiber sequence
    \begin{equation*}
        \begin{tikzcd}
            G(x)\simeq \map_{\pi^{-1}(x)}(s(x),s(x))\arrow{r} & \map_{\mc D^G}(s(x), s(x))\arrow{r} & \map_{\mc D}(x,x)
        \end{tikzcd}
    \end{equation*}
    where the fiber is taken over $\id_x$. Thus pulling back \eqref{eq:bigcartsqcomp} along $\{\id_x\}\times \map_{\mc D}(x,y)\to \map_{\mc D}(x,x)\times \map_{\mc D}(x,y)$ we have a Cartesian square
    \begin{equation*}
        \begin{tikzcd}
        	{G(x)\times\map_{\mc D}(x,y)} & {\map_{\mc D^G}(s(x), s(y))} \\
        	{\map_{\mc D}(x,y)} & {\map_{\mc D}(x,y)}
        	\arrow[from=1-1, to=1-2]
        	\arrow[from=1-1, to=2-1]
        	\arrow["\lrcorner"{anchor=center, pos=0.125}, draw=none, from=1-1, to=2-2]
        	\arrow[from=1-2, to=2-2]
        	\arrow[equals, from=2-1, to=2-2]
        \end{tikzcd}
    \end{equation*}
    from which we deduce the top map is an equivalence. Moreover, the top morphism is given by the composite
    \begin{equation*}
        \begin{tikzcd}[column sep=35pt]
            G(x)\times \map_{\mc D}(x,y)\arrow{r} & \map_{\pi^{-1}(x)}(s(x), s(x))\times \map_{\mc D^G}(s(x), s(y))\arrow{r}{(-)\circ (-)} & \map_{\mc D^G}(s(x), s(y)).
        \end{tikzcd}
    \end{equation*}
    from which one readily checks that composition is as claimed.
\end{proof}

This gives an easy criterion for checking when endomorphisms of $\mc D^G$ are equivalences, which we will use later.

\begin{corollary}\label{cor:checkonfibers}
    Suppose one has a commutative diagram
    \begin{equation}\label{eq:triangleunders}
        \begin{tikzcd}
        	& {\mc D} & \\
        	{\mc D^G} && {\mc D^G} \\
        	& {\mc D}
        	\arrow["s"', from=1-2, to=2-1]
        	\arrow["s", from=1-2, to=2-3]
        	\arrow["F"', from=2-1, to=2-3]
        	\arrow[from=2-1, to=3-2]
        	\arrow[from=2-3, to=3-2]
        \end{tikzcd}
    \end{equation}
    Then $F$ is an equivalence if and only if it induces equivalences on fibers.
\end{corollary}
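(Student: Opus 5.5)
The plan is to reduce the question to essential surjectivity plus full faithfulness, and to check full faithfulness with the mapping-space description of \Cref{prop:descrofadjendos}. The "only if" direction is immediate: an equivalence over $\mc D$ restricts to equivalences on fibers. Conversely, suppose $F$ induces equivalences on fibers. In the diagram \eqref{eq:triangleunders}, the fiber over $x$ is $\pi^{-1}(x)\simeq BG(x)$, and the restriction of $F$ to it is a pointed endofunctor of $BG(x)$, pointed by $s(x)$. Saying this restriction is an equivalence therefore amounts to saying that the induced map of $\mb E_1$-spaces $G(x)\simeq\map_{\pi^{-1}(x)}(s(x),s(x))\to G(x)$ is an equivalence.

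Essential surjectivity of $F$ is formal. By \Cref{prop:descrofadjendos}, every object of $\mc D^G$ is equivalent to some $s(x)$, and $F(s(x))\simeq s(x)$ by commutativity of the upper triangle.

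For full faithfulness, it suffices to check objects of the form $s(x)$ and $s(y)$. I would compare the maps $\map_{\mc D^G}(s(x),s(y))\to\map_{\mc D^G}(Fs(x),Fs(y))\simeq\map_{\mc D^G}(s(x),s(y))$ over $\map_{\mc D}(x,y)$. The lower triangle makes $F$ commute with $\pi$, so this map lies over $\map_{\mc D}(x,y)$. It is then enough to show it is an equivalence on fibers over each $f\in\map_{\mc D}(x,y)$. By the proof of \Cref{prop:descrofadjendos}, the fiber over $f$ is identified with $G(x)$ through the map $\xi\mapsto s(f)\circ\xi$, where $\xi$ ranges over $\map_{\pi^{-1}(x)}(s(x),s(x))$. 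Since $F$ is a functor and $F(s(f))\simeq s(f)$ by the upper triangle, we get $F(s(f)\circ\xi)\simeq s(f)\circ F(\xi)$. Hence, under this identification, the map on fibers is exactly the fiberwise map $G(x)\to G(x)$, which is an equivalence by hypothesis.

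The only real care needed is in this last identification. We must make sure that the equivalence $F s\simeq s$ supplied by the diagram is compatible with the Cartesian lifts $s(f)$, so that the fiber comparison really is the restriction of $F$ to $\pi^{-1}(x)$ up to the conjugation induced by this equivalence. Conjugation by an equivalence does not affect whether a map is an equivalence, so this is harmless. The argument is then concluded by the fiberwise criterion for equivalences of spaces over $\map_{\mc D}(x,y)$.
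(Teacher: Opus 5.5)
Your proposal is correct and follows the paper's proof. Essential surjectivity comes from that of $s$ together with $Fs\simeq s$, and full faithfulness reduces via the decomposition $\map_{\mc D^G}(s(x),s(y))\simeq G(x)\times\map_{\mc D}(x,y)$ of \Cref{prop:descrofadjendos} to the fiberwise map $G(x)\to G(x)$; your explicit fiberwise check over $\map_{\mc D}(x,y)$, using $F(s(f)\circ\xi)\simeq s(f)\circ F(\xi)$, simply spells out the compatibility the paper leaves implicit when it invokes the triangle of mapping spaces.
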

\begin{proof}
    Only if is clear, so we show that this is sufficient.\par
    By the essential surjectivity of $s$, we have that $F$ is essentially surjective. Thus we only need to check fully faithfulness.\par
    By \eqref{eq:triangleunders}, we have a commutative diagram
    \begin{equation*}
        \begin{tikzcd}
        	& {\map_{\mc D}(x,y)} & \\
        	{\map_{\mc D^G}(s(x), s(y))} && {\map_{\mc D^G}(s(x),s(y))}.
        	\arrow["{s_\ast}"', from=1-2, to=2-1]
        	\arrow["{s_\ast}", from=1-2, to=2-3]
        	\arrow["{F_\ast}"', from=2-1, to=2-3]
        \end{tikzcd}
    \end{equation*}
    Thus by \Cref{prop:descrofadjendos}, $F_\ast$ is an equivalence if the composite
    \begin{equation*}
        \begin{tikzcd}
            G(x)\simeq \map_{(\mc D^G)_x}(s(x), s(x))\arrow{r}{(F_x)_\ast} & \map_{(\mc D^G)_x}(s(x), s(x))\simeq G(x)
        \end{tikzcd}
    \end{equation*}
    is an equivalence, i.e.\ if $F$ induces an equivalence on fibers.
\end{proof}

\begin{remark}
    Alternatively, the fact that $F\circ s\simeq s$ shows that for every $f : x\to y\in \mc D$, $F$ preserves the preferred Cartesian lift $s(f)$ of $f$ terminating at the essentially unique lift $s(y)$ of $y$. By uniqueness of Cartesian lifts, $F$ sends Cartesian morphisms to Cartesian morphisms and thus by unstraightening $F$ is an equivalence if and only if it induces an equivalence on fibers.
\end{remark}

There is also a global version of \Cref{prop:descrofadjendos} that we will make use of.

\begin{proposition}\label{prop:globaldescrofadjendos}
    Let $\mc G$ be the Cartesian unstraightening of
    \begin{equation*}
        \begin{tikzcd}
            (\mc D^\simeq)^\mathrm{op}\subseteq \mc D^\mathrm{op}\arrow{r}{G} & \Ani.
        \end{tikzcd}
    \end{equation*}
    Then there is a map $\alpha : \mc G\to \map([1],\mc D^G)\times_{\map(\partial[1], \mc D^G)}\mc D^\simeq$ over $\mc D^\simeq$ which on the fiber over $x\in\mc D^\simeq$ corresponds to
    \begin{equation*}
        \mc G_x \simeq G(x)\simeq \map_{(\mc D^G)_x}(s(x), s(x))\longrightarrow \map_{\mc D^G}(s(x), s(x)).
    \end{equation*}
    Moreover, the map
    \begin{equation*}
        \begin{tikzcd}
            \mc G\times_{\mc D^\simeq} N(\mc D)_1\arrow{r} & N(\mc D^G)_1\times_{\map(\partial[1],\mc D^G)}\map(\partial[1], \mc D)
        \end{tikzcd}
    \end{equation*}
    given by sending $(\xi\in G(x), f : x\to y)$ to $s(f)\circ \alpha(\xi)$ is an equivalence.
\end{proposition}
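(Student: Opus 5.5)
The plan is to build $\alpha$ by unstraightening the pointed structure globally, and then to deduce the equivalence fiberwise from \Cref{prop:descrofadjendos}.

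\textbf{Constructing $\alpha$.} Restrict the fibration $\pi : \mc D^G\to\mc D$ along $\mc D^\simeq\hookrightarrow\mc D$. This gives the Cartesian unstraightening $\mc D^G\times_{\mc D}\mc D^\simeq$ of $B\circ G|_{(\mc D^\simeq)^\mathrm{op}}$, together with the restricted section $s$. The functor $\mc D^\simeq\to\mc D^G\times_{\mc D}\mc D^\simeq$ given by $s$ exhibits each object as $s(x)$. The space of endomorphisms of the section inside the fiberwise category is then computed by applying $\Omega$ fiberwise to the pointed functor $B G : (\mc D^\simeq)^\mathrm{op}\to\Cat_{\ast/}$. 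Concretely, it is the unstraightening of $x\mapsto\map_{BG(x)}(\ast,\ast)\simeq G(x)$, which is $\mc G$.

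This produces a map $\mc G\to\map([1],\mc D^G\times_{\mc D}\mc D^\simeq)\times_{\map(\partial[1],-)}\mc D^\simeq$, where the base map is $(s,s)$ composed with the diagonal. Composing with the inclusion of the fiber subcategory into $\mc D^G$ gives $\alpha$. By construction, on the fiber over $x$ it is the inclusion $G(x)\simeq\map_{(\mc D^G)_x}(s(x),s(x))\to\map_{\mc D^G}(s(x),s(x))$.

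This is the step needing the most care: we must produce the map coherently over $\mc D^\simeq$ rather than pointwise. I would argue via the standard identification of mapping spaces in a fibration over a groupoid with fiberwise mapping spaces, using that $\mc D^\simeq$ is a groupoid so that all edges are equivalences. Equivalently, one can apply $\map([1],-)$ to the unstraightening and use that unstraightening over a groupoid commutes with forming mapping spaces.

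\textbf{Defining the comparison map.} Composition in $\mc D^G$ gives a map $N(\mc D^G)_1\times_{N(\mc D^G)_0}N(\mc D^G)_1\to N(\mc D^G)_1$, via $N(\mc D^G)_2$ and the Segal condition. Precompose it with $\alpha\times s_\ast$, where $s_\ast : N(\mc D)_1\to N(\mc D^G)_1$. The result lies over $\map(\partial[1],\mc D)$ by sending the endpoints through $\pi$, which yields the stated map.

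\textbf{Checking it is an equivalence.} Both sides map to $\map(\partial[1],\mc D)\simeq \mc D^\simeq\times\mc D^\simeq$, so it suffices to check on fibers over each pair $(x,y)$. The fiber of the source is $G(x)\times\map_{\mc D}(x,y)$.

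For the target, use essential surjectivity of $s$ (\Cref{prop:descrofadjendos}) to identify objects of $\mc D^G$ over $x$ with $s(x)$. More precisely, $(\mc D^G)^\simeq\times_{\mc D^\simeq}\{x\}\simeq BG(x)^\simeq$, and this is connected. The fiber over $(x,y)$ then becomes $\map_{\mc D^G}(s(x),s(y))$, after quotienting by the automorphisms of $s(x)$ and $s(y)$ inside the fibers. Those automorphisms act compatibly on the source through the fiber identification of $\mc G$, so the comparison reduces to the map $(\xi,f)\mapsto s(f)\circ\alpha(\xi)$ from $G(x)\times\map_{\mc D}(x,y)$ to $\map_{\mc D^G}(s(x),s(y))$. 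This map is exactly the equivalence constructed in the proof of \Cref{prop:descrofadjendos}.

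A cleaner route is to compare fibers over $\mc D^G$-endpoints instead, pulling back along the $\pi_0$-surjective map $\mc D^\simeq\times\mc D^\simeq\to(\mc D^G)^\simeq\times(\mc D^G)^\simeq$ given by $s\times s$. This avoids having to discuss the automorphism quotient, and equivalences can be detected after such a base change.
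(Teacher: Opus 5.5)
Your proposal is correct and follows essentially the paper's route: $\alpha$ comes from unstraightening the equivalence $G\simeq\Omega BG$, and the equivalence is then checked on fibers over $(x,y)\in\map(\partial[1],\mc D)$, where it is exactly the map shown to be an equivalence in \Cref{prop:descrofadjendos}; the paper performs the unstraightening over all of $\mc D$, using that unstraightening preserves limits and cotensors, and only then restricts to $\mc D^\simeq$ and takes cores. You can drop the discussion of quotienting by automorphisms of $s(x)$ and $s(y)$: the target is by definition pulled back along $s\times s$, so its fiber over $(x,y)$ is literally $\map_{\mc D^G}(s(x),s(y))$ (a homotopy quotient would only arise if you fibered $N(\mc D^G)_1$ over endpoints via $\pi$), and your ``cleaner route'' is just this definition.
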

\begin{proof}
    Since delooping $B : \Alg_{\mb E_1}(\Ani)\longrightarrow\Cat_{\ast/}$ is fully faithful, the unit of the $B\dashv \Omega$ adjunction gives an equivalence
    \begin{equation*}
        G(-)\simeq \map([1], BG(-))\times_{\map(\partial [1], BG(-))} \ast
    \end{equation*}
    of functors $\mc D^\mathrm{op}\to \Ani$. Since unstraightening is a monoidal equivalence, it preserves internal homs and limits. Thus we see that
    \begin{equation*}
        \mathrm{Un}(G)\simeq \Fun_{/\mc D}(\mc D\times [1], \mc D^G)\times_{\Fun_{/\mc C}(\mc D\times\partial [1], \mc D^G)} \mc D.
    \end{equation*}
    Now, we have a pullback
    \begin{equation*}
        \begin{tikzcd}
            {\Fun_{/\mc D}(\mc D\times K,\mc D^G)} & {\Fun(K,\mc D^G)} \\
            {\mc D} & {\Fun(K,\mc D)}
            \arrow[from=1-1, to=1-2]
            \arrow[from=1-1, to=2-1]
            \arrow["\lrcorner"{anchor=center, pos=0.125, rotate=0}, draw=none, from=1-1, to=2-2]
            \arrow["{\pi_\ast}", from=1-2, to=2-2]
            \arrow["{\mathrm{const}}"', from=2-1, to=2-2]
        \end{tikzcd}
    \end{equation*}
    for all categories $K$. Thus we see that
    \begin{equation*}
        \mathrm{Un}(G)\simeq \mc D\times_{\Fun([1],\mc D)\times_{\Fun(\partial[1],\mc D)}\mc D} (\Fun([1],\mc D^G)\times_{\Fun(\partial [1], \mc D^G)}\mc D).
    \end{equation*}
    In particular, we have a projection map
    \begin{equation}\label{eq:projfromunG}
        \mathrm{Un}(G)\longrightarrow \Fun([1],\mc D^G)\times_{\Fun(\partial [1], \mc D^G)}\mc D.
    \end{equation}
    Now,
    \begin{equation*}
        \mc G = \mathrm{Un}(G|_{\mc D^{\simeq,\mathrm{op}}}) \simeq \mc D^\simeq\times_{\mc D}\mathrm{Un}(G)
    \end{equation*}
    and we also know that $\mc G$ is a groupoid as it is a right fibration over a groupoid. Thus pulling back \eqref{eq:projfromunG} along $\mc D^\simeq\to\mc D$ and taking $(-)^\simeq$ which preserves limits we get a map
    \begin{equation*}
        \mc G\longrightarrow \map([1], \mc D^G)\times_{\map(\partial [1], \mc D^G)} \mc D^\simeq
    \end{equation*}
    which by construction has the claimed properties.\par
    To verify the final claimed map is an equivalence, it suffices to check it on fibers over $(x,y)\in \map(\partial [1],\mc D)$. But on fibers, this map is an equivalence by \Cref{prop:descrofadjendos}.
\end{proof}

We conclude this section with one final result that will be used later.

\begin{proposition}\label{prop:locofCG}
    Let $(\mc C, G)\in \mathrm{LabCat}$ and let $G^\mathrm{gp} : \mc C^\mathrm{op}\to \Alg_{\mb E_1}(\Ani)$ be the functor given by pointwise groupifying $G$. Then the natural map $\mc C^G\to \mc C^{G^\mathrm{gp}}$ is a localization of $\mc C^G$ at morphisms of the form $(\id_x, \xi)$, $x\in \mc C$, $\xi\in G(x)$.
\end{proposition}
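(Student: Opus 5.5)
The plan is to test the universal property of localization directly on mapping spaces, using the explicit description from \Cref{prop:descrofadjendos}. Write $W$ for the collection of morphisms $(\id_x,\xi)$, $\xi\in G(x)$, and $\phi : \mc C^G\to\mc C^{G^\mathrm{gp}}$ for the natural map, induced by the morphism $(\id_{\mc C}, G\Rightarrow G^\mathrm{gp})$ in $\mathrm{LabCat}$. First, $\phi$ sends every element of $W$ to an equivalence, because $(\id_x,\xi)$ maps to $(\id_x,[\xi])$ and $[\xi]$ is invertible in the grouplike $G^\mathrm{gp}(x)$. This yields an induced functor $\bar\phi : \mc C^G[W^{-1}]\to \mc C^{G^\mathrm{gp}}$, and the task is to show $\bar\phi$ is an equivalence. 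Since both sections $s$ are essentially surjective, so is $\bar\phi$, and only fully faithfulness remains.

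For fully faithfulness I would reduce to the fibers. The idea is to carry out the localization in two steps, first on each fiber and then globally. Both $\mc C^G$ and $\mc C^{G^\mathrm{gp}}$ are Cartesian fibrations over $\mc C$, and $W$ consists exactly of the fiberwise morphisms. A standard result (Hinich / Lurie, localization of Cartesian fibrations along fiberwise classes) says that if $p:\mc X\to\mc C$ is Cartesian with straightening $F$, and $W$ is a class of fiberwise morphisms stable under the Cartesian transport functors, then $\mc X[W^{-1}]$ is the unstraightening of $x\mapsto F(x)[W_x^{-1}]$. In our case $F(x)=BG(x)$ and $W_x$ consists of all endomorphisms, so $BG(x)[W_x^{-1}]=|BG(x)|=BG(x)^\mathrm{gp}=BG^\mathrm{gp}(x)$. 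Stability of $W$ under transport holds because the transport $f^\ast$ sends $(\id,\xi)$ to $(\id,G(f)\xi)$. Unstraightening the pointwise localization then gives $\mc C^{G^\mathrm{gp}}$, compatibly with $\phi$. The identification $|BM|\simeq B(M^\mathrm{gp})$ for an $\mb E_1$-space $M$ is standard (group completion theorem at the level of classifying spaces, or the definition of $M^\mathrm{gp}=\Omega BM$).

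The main obstacle is that the fiberwise localization statement is usually formulated for coCartesian fibrations with localizations taken in $\Cat$, and it requires care. Concretely, one must show that the Cartesian unstraightening of a pointwise localization is a localization of the unstraightening. I would prove this directly with \Cref{thm:gepcoendunstraightening}. There the unstraightening is the coend $\int^x F(x)\times\mc C_{/x}$, and localization is a colimit-compatible operation: $\mc X[W^{-1}]$ is the pushout of $\mc X\leftarrow \coprod_W[1]\to\coprod_W[0]$, which commutes with colimits and with products by $\mc C_{/x}$. Hence
\begin{equation*}
    \Big(\int^x BG(x)\times\mc C_{/x}\Big)[W^{-1}]\simeq \int^x BG(x)[W_x^{-1}]\times\mc C_{/x},
\end{equation*}
where it suffices to check that the morphisms in the coend being inverted are generated by the images of $W_x\times\{\id\}$. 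Alternatively, \Cref{cor:checkonfibers} can be applied as a check to the comparison between the two candidate constructions once both are known to be Cartesian fibrations over $\mc C$ with a section.
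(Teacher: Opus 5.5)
Your proposal is correct and is essentially the paper's proof: identify $BG^{\mathrm{gp}}(x)\simeq |BG(x)|$, so that $BG(x)\to BG^{\mathrm{gp}}(x)$ is the localization at all morphisms. Then use the fact that unstraightening a pointwise localization gives a localization of the total space at the fiberwise morphisms, which up to equivalence are exactly the $(\id_x,\xi)$.

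The paper simply cites Kerodon (Tag 038U) for this last fact. Your coend argument via \Cref{thm:gepcoendunstraightening}, which uses that localization commutes with colimits and with products by $\mc C_{/x}$, is a valid self-contained proof of it. Two parts of your write-up are unnecessary. The opening mapping-space plan is superseded by the two-step localization, which proves the whole statement and not just fully faithfulness. The closing appeal to \Cref{cor:checkonfibers} would not apply directly anyway, since $\mc C^G[W^{-1}]$ is not a priori of the form $\mc C^{G'}$.
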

\begin{proof}
    We have that $BG^{\mathrm{gp}} \simeq |BG|$ and the natural map $BG\to BG^\mathrm{gp}$ corresponds to localization $BG\to |BG|$. It follows by \cite[\href{https://kerodon.net/tag/038U}{Tag 038U}]{kerodon} that the induced map on total spaces of the Cartesian unstraightening is a localization at the claimed arrows, which are precisely the morphisms (up to equivalence) lying within a single fiber.
\end{proof}

\subsection{Tangent bundles and the universal exchange theorem}

In order to construct the exchange theorem on $\TwC$ rigorously, it is easier to prove an equivalence between $\TwC$ and $\Gr\Pull(\mc C, E)$ and then pushforward the tautological exchange theorem on $\Gr\Pull(\mc C, E)$. The universal exchange theorem is easy to describe informally, however, so we outline it in this section.

\begin{definition}
    For $f : x\to y \in E$, we define the \emph{relative tangent bundle} of $f$ to be
    \begin{equation*}
        \begin{tikzcd}
            T_f \coloneqq x\arrow{r}{\Delta_f} & x\times_y x\arrow[two heads]{r}{\pr_1} & x\in \Vect(x).
        \end{tikzcd}
    \end{equation*}
\end{definition}

\begin{remark}
    For matters of convention, we will always take the projection in $T_f$ to be projection onto the left factor. However, the two choices are canonically equivalent in $\Vect(x)$.
\end{remark}

We remark that this belongs to $\Vect(x)$ since $E$ is closed under base change, so $\pr_1 : x\times_y x\to x\in E$. The relative tangent bundle satisfies the following properties.

\begin{proposition}\label{prop:propsofreltan}
    The following hold:
    \begin{enumerate}
        \item If $f : x\to y$, $g : y\to z\in E$, then there exists a canonical fiber sequence
        \begin{equation*}
            \begin{tikzcd}
                T_f\arrow{r} & T_{gf}\arrow[two heads]{r} & f^\ast T_g
            \end{tikzcd}
        \end{equation*}
        in $\Vect(x)$. In particular, $[T_{gf}] = f^\ast[T_g] + [T_f]$ in $\pi_0\mc K(\Vect(x))$.
        \item Given a pullback square
        \begin{equation*}
            \begin{tikzcd}
            	x & {y'} \\
            	y & z
            	\arrow["{\bar g}", from=1-1, to=1-2]
            	\arrow["{\bar f}"', from=1-1, to=2-1]
            	\arrow["\lrcorner"{anchor=center, pos=0.125}, draw=none, from=1-1, to=2-2]
            	\arrow["f", from=1-2, to=2-2]
            	\arrow["g"', from=2-1, to=2-2]
            \end{tikzcd}
        \end{equation*}
        in $\mc C$ with $f,\bar f\in E$, there is a canonical equivalence
        \begin{equation*}
            T_{\bar f}\simeq \bar g^\ast T_f
        \end{equation*}
        in $\Vect(x)$.
    \end{enumerate}
\end{proposition}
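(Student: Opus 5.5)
For (ii), I would use the pasting law for pullbacks. Since the square is Cartesian, there is a canonical equivalence
\begin{equation*}
    x\times_y x\simeq (y'\times_z y')\times_{y'\times y'}(x\times x)\simeq x\times_{y'} (y'\times_z y')
\end{equation*}
over $x$. Concretely, $x\times_y x\simeq (y\times_z y')\times_y (y\times_z y')\simeq y\times_z (y'\times_z y')$. Pulling back $\pr_1 : y'\times_z y'\to y'$ along $\bar g$ gives
\begin{equation*}
    x\times_{y'}(y'\times_z y')\simeq x\times_z y'\simeq x\times_y (y\times_z y')\simeq x\times_y x.
\end{equation*}
Under this identification the projection corresponds to $\pr_1$. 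The section $\bar g^\ast\Delta_f = (\id_x, \Delta_f\circ\bar g)$ corresponds to $\Delta_{\bar f}$, since both are induced by the universal property from the pair $(\id_x,\id_x)$. The cleanest way to make this canonical in $\Vect(x)$ is to observe that both objects corepresent the same functor. A map from $w$ over $x$ into either of them is a point of $\map(w,x)$ over the given map to $x$, together with compatible data over $z$. I would phrase the whole argument in $\mc C_{x/,/x}$ and compare the two diagrams via the universal property of limits of shape $\Lambda^2_2$ pasted together, which avoids coherence issues.

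For (i), the candidate fiber sequence is
\begin{equation*}
    x\times_y x\longrightarrow x\times_z x\longrightarrow x\times_z y,
\end{equation*}
with everything viewed over $x$ via $\pr_1$ and pointed by the diagonal-type sections. The first map is the canonical map $x\times_y x\to x\times_z x$, which is the pullback of $\Delta_g : y\to y\times_z y$ along $f\times f$. The second map is $\id\times_z f$. The target $x\times_z y$, with projection $\pr_1$ and section $(\id, f)$, is exactly $f^\ast T_g$ by definition of pullback of vector bundles, since $x\times_y(y\times_z y)\simeq x\times_z y$. I then need three things:
\begin{enumerate}
    \item $\id\times_z f$ lies in $E$, so the second map lies in $\Vect(x)_\dagger$. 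This holds because it is a base change of $f\in E$.
    \item The square exhibiting $x\times_y x$ as the fiber, i.e.\ as the pullback of $x\times_z x\to x\times_z y$ along the zero section $x\to x\times_z y$, is Cartesian. I would verify this by the pasting law: the fiber of $x\times_z x\to x\times_z y$ over $(\id,f):x\to x\times_z y$ is $x\times_y x$, because $x\times_z x\simeq (x\times_z y)\times_y x$.
    \item All sections and projections are compatible, so that this is a Cartesian square in $\Vect(x)$, whose computation of pullbacks along $\dagger$-maps was established in \Cref{prop:liftofVecttocoWald}.
\end{enumerate}
Such a fiber sequence is a point of $S_2(\Vect(x))$. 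Its faces give a $2$-simplex in $\ascat S_\bullet(\Vect(x))$ witnessing the relation $[T_{gf}] = f^\ast[T_g]+[T_f]$ in $\mc K(\Vect(x))$.

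I expect the main obstacle to be the canonicity and coherence of the identifications in (i), namely identifying $x\times_z y$ with $f^\ast T_g$ compatibly with both structure maps, rather than any single pullback computation. I would handle this by working throughout in the double slice $\mc C_{x/,/x}$, where the forgetful functor to $\mc C$ preserves and reflects the relevant pullbacks. Each identification is then a comparison of limits of diagrams built from $f,g$ and the identity.
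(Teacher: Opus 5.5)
Your proposal is correct and is essentially the paper's argument: in (i) the paper also uses the Cartesian square exhibiting $x\times_y x$ as the fiber of $x\times_z x\to x\times_z y$ over the section $(\id, f)$, obtained by pasting with $x\times_z x\simeq (x\times_z y)\times_y x$; in (ii) it identifies $\bar g^\ast T_f$ and $T_{\bar f}$ as pullbacks of the same outer cospan, which is exactly your chain $x\times_{y'}(y'\times_z y')\simeq x\times_z y'\simeq x\times_y x$. The only slip is the middle term of your first display in (ii): $(y'\times_z y')\times_{y'\times y'}(x\times x)$ is $x\times_z x$, not $x\times_y x$, and it uses products that $\mc C$ need not have; your subsequent concrete chain is correct and is all you need.
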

\begin{proof}
    (i) follows from contemplating the diagram
    \begin{equation*}
        \begin{tikzcd}
        	{x\times_y x} & {x\times_z x} & x \\
        	x & {x\times_z y} & y \\
        	& x & z
        	\arrow[from=1-1, to=1-2]
        	\arrow[two heads, from=1-1, to=2-1]
        	\arrow["\lrcorner"{anchor=center, pos=0.125}, draw=none, from=1-1, to=2-2]
        	\arrow[two heads, from=1-2, to=1-3]
        	\arrow[two heads, from=1-2, to=2-2]
        	\arrow["\lrcorner"{anchor=center, pos=0.125}, draw=none, from=1-2, to=2-3]
        	\arrow["f", two heads, from=1-3, to=2-3]
        	\arrow[from=2-1, to=2-2]
        	\arrow[two heads, from=2-2, to=2-3]
        	\arrow[two heads, from=2-2, to=3-2]
        	\arrow["\lrcorner"{anchor=center, pos=0.125}, draw=none, from=2-2, to=3-3]
        	\arrow["g", two heads, from=2-3, to=3-3]
        	\arrow["gf"', two heads, from=3-2, to=3-3]
        \end{tikzcd}
    \end{equation*}
    for which the top left square gives the claimed fiber sequence in $\Vect(x)$.\par
    For (ii), on one hand we have a diagram
    \begin{equation*}
        \begin{tikzcd}
        	{\bar g^\ast T_f} & {T_f} & {y'} \\
        	x & {y'} & z.
        	\arrow[from=1-1, to=1-2]
        	\arrow[from=1-1, to=2-1]
        	\arrow["\lrcorner"{anchor=center, pos=0.125}, draw=none, from=1-1, to=2-2]
        	\arrow[from=1-2, to=1-3]
        	\arrow[from=1-2, to=2-2]
        	\arrow["\lrcorner"{anchor=center, pos=0.125}, draw=none, from=1-2, to=2-3]
        	\arrow["f", from=1-3, to=2-3]
        	\arrow["{\bar g}"', from=2-1, to=2-2]
        	\arrow["f"', from=2-2, to=2-3]
        \end{tikzcd}
    \end{equation*}
    However, the outer cospan agrees with the outer cospan of
    \begin{equation*}
        \begin{tikzcd}
        	{T_{\bar f}} & x & {y'} \\
        	x & y & z,
        	\arrow[from=1-1, to=1-2]
        	\arrow[from=1-1, to=2-1]
        	\arrow["\lrcorner"{anchor=center, pos=0.125}, draw=none, from=1-1, to=2-2]
        	\arrow["{\bar g}", from=1-2, to=1-3]
        	\arrow["{\bar f}", from=1-2, to=2-2]
        	\arrow["\lrcorner"{anchor=center, pos=0.125}, draw=none, from=1-2, to=2-3]
        	\arrow["f", from=1-3, to=2-3]
        	\arrow["{\bar f}"', from=2-1, to=2-2]
        	\arrow["g"', from=2-2, to=2-3]
        \end{tikzcd}
    \end{equation*}
    hence giving an equivalence $T_{\bar f}\simeq \bar g^\ast T_f$.
\end{proof}

As a consequence of \Cref{prop:propsofreltan}(i) and \Cref{prop:descrofadjendos}, we may informally define two sections of the projection $\pi : \TwC\to\mc C$. Namely,
\begin{equation}\label{eq:horsectionofTw2}
    \begin{tikzcd}[row sep=0pt]
        (-)_\ast : \mc C\arrow{r} & \TwC \\
        \qquad\;\;\; x\arrow[mapsto]{r} & x \\
        (f : x\to y)\arrow[mapsto]{r} & (f,\; 0)
    \end{tikzcd}
\end{equation}
and
\begin{equation}\label{eq:versectionofTw2}
    \begin{tikzcd}[row sep=0pt]
        (-)_\natural : E\arrow{r} & \TwC \\
        \qquad\;\;\; x\arrow[mapsto]{r} & x \\
        (f : x\to y)\arrow[mapsto]{r} & (f,\; [T_f]).
    \end{tikzcd}
\end{equation}
Moreover, by \Cref{prop:propsofreltan}(ii), these two sections satisfy an exchange theorem valued in $\TwC$ which sends a square
\begin{equation*}
    \begin{tikzcd}
        x\arrow{r}{\bar g}\arrow{d}[left]{\bar f}\arrow[draw=none, "\lrcorner"{anchor=center, pos=0.125}]{dr} & 
        y'\arrow{d}{f} & 
        {}\arrow[d, draw=none, ""{name=phantomLeft}] & 
        {}\arrow[d, draw=none, ""{name=phantomRight}] && 
        x\arrow{r}{(\bar g,\; 0)}\arrow[d, "{(\bar f,\; {[T_{\bar f}]})}"'] & 
        y'\arrow{d}[right]{(f,\; [T_f])} \\
        y\arrow{r}[below]{g} & z & {} & {} && y\arrow{r}[below]{(g,\; 0)} & z.
        \arrow[squiggly, from=phantomLeft, to=phantomRight]
    \end{tikzcd}
\end{equation*}
making use of the canonical identification $T_{\bar f}\simeq \bar g^\ast T_f$.\par
The goal of the next section will be to rigorously construct this exchange theorem and prove its universality.

\section{Proof of universality}\label{sec:proofofuniv}

Since
\begin{equation*}
    \begin{aligned}
        \Exch_{(\mc C, E)}(-) &= \map_{\PSh(\Delta)}(\Pull(\mc C, E), \Sq(-)) \\
        &\simeq \map_{\Cat}(\Gr\Pull(\mc C, E), -),
    \end{aligned}
\end{equation*}
$\Gr \Pull(\mc C, E)$ is the tautological recipient of the universal exchange theorem. In this section we explicitly provide an equivalence $\Gr\Pull(\mc C, E)\simeq \TwC$.

\subsection{Reformulation and outline of strategy}

First, we give an alternative description of $\Gr\Pull(\mc C, E)$ using corner approximation.

\begin{definition}\label{def:SCEx}
    Let $S_\bullet(\mc C, E)$ denote the simplicial space which at level $n$ is given by the subspace of connected components of $\map_{\Cat^+}(\Cpt^n, \mc C\;{=\joinrel=}\;\mc C\hookleftarrow E)$ consisting of diagrams where every square consisting of vertical and horizontal arrows is a pullback, i.e.\ the $(n,m)$-simplex space is the groupoid of diagrams $\Cpt^n\to\mc C$ of the form
    \begin{equation*}
        \begin{tikzcd}
        	{x_{00}} & {x_{01}} & {x_{02}} & \cdots & {x_{0m}} \\
        	& {x_{11}} & {x_{12}} & \cdots & {x_{1m}} \\
        	&& {x_{22}} & \ddots & {x_{2m}} \\
        	&&&& \vdots \\
        	&&&& {x_{nm}}
        	\arrow[from=1-1, to=1-2]
        	\arrow[from=1-2, to=1-3]
        	\arrow[from=1-2, to=2-2]
        	\arrow["\lrcorner"{anchor=center, pos=0.125}, draw=none, from=1-2, to=2-3]
        	\arrow[from=1-3, to=1-4]
        	\arrow[from=1-3, to=2-3]
        	\arrow["\lrcorner"{anchor=center, pos=0.125}, draw=none, from=1-3, to=2-4]
        	\arrow[from=1-4, to=1-5]
        	\arrow[from=1-5, to=2-5]
        	\arrow[from=2-2, to=2-3]
        	\arrow[from=2-3, to=2-4]
        	\arrow[from=2-3, to=3-3]
        	\arrow["\lrcorner"{anchor=center, pos=0.125}, draw=none, from=2-3, to=3-4]
        	\arrow[from=2-4, to=2-5]
        	\arrow[from=2-5, to=3-5]
        	\arrow[from=3-5, to=4-5]
        	\arrow[from=4-5, to=5-5]
        \end{tikzcd}
    \end{equation*}
    where all squares are Cartesian and the vertical arrows belong to $E$.
\end{definition}

We remark that there is a monomorphism $S_\bullet(\mc C, E)\to \sd \mc C$ by only remembering the underlying diagram $\Cpt^n\to\mc C$. The following lemma identifies this simplicial space as the corner approximation of $\Pull(\mc C, E)$.

\begin{lemma}\label{lemma:sbulletisgrpull}
    Under the equivalence $\fsd \mc C\simeq \sd \mc C$ of \Cref{prop:sdequivfsdforcats}, the map
    \begin{equation*}
        \cnr\Pull(\mc C, E)\to \fsd \mc C
    \end{equation*}
    induced by the inclusion $\Pull(\mc C, E)\to \Sq(\mc C)$ is an equivalence onto the sub-simplicial space $S_\bullet(\mc C, E)$. In particular, there is a natural equivalence $\ascat S_\bullet(\mc C, E)\xrightarrow{\simeq}\Gr\Pull(\mc C, E)$.\par
    Moreover, under this equivalence,
    \begin{enumerate}
        \item the map $\Gr \Pull(\mc C, E)\to \mc C$ given by transposing $\Pull(\mc C, E)\to \Sq(\mc C)$ corresponds to restriction along the diagonal $\diag^\ast : S_\bullet(\mc C, E)\to N(\mc C)$
        \item the map
        \begin{equation*}
            \mc C \simeq \Gr \Pull(\mc C, E)_{0,\bullet}\to \Gr \Pull(\mc C, E)
        \end{equation*}
        corresponds to (associated categories of) the map
        \begin{equation*}
            N(\mc C)\longrightarrow S_\bullet(\mc C, E)
        \end{equation*}
        given by diagrams which are degenerate in the vertical component
        \item the map
        \begin{equation*}
            E \simeq \Gr \Pull(\mc C, E)_{\bullet,0}\to \Gr \Pull(\mc C, E)
        \end{equation*}
        corresponds to (associated categories of) the map
        \begin{equation*}
            N(E)\longrightarrow S_\bullet(\mc C, E)
        \end{equation*}
        given by diagrams which are degenerate in the horizontal component.
    \end{enumerate}
\end{lemma}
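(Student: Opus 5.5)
The plan is to mirror the proof of \Cref{prop:cnrapproxforsq} and then apply \Cref{thm:approximationtheorem}. The first step is to observe that $\Pull(\mc C, E)$ is a sub-bisimplicial space of $\Sq(\mc C, \mc C, E)$, hence of $\Sq(\mc C)$. At each level $(n,m)$ it is the union of those connected components consisting of grids whose squares are all Cartesian, because being a pullback is a property of a diagram. The inclusion is therefore levelwise a monomorphism. Applying $\cnr$, which is computed levelwise as a mapping space out of $\CCpt^n$, the induced map $\cnr\Pull(\mc C, E)\to\cnr\Sq(\mc C)=\fsd\mc C$ is also a monomorphism, by the same pointwise argument used in \Cref{prop:cnrapproxforsq}.

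The next step is to identify its image inside $\sd\mc C$, using $\fsd\mc C\simeq\sd\mc C$ from \Cref{prop:sdequivfsdforcats}. A point of $\map(\CCpt^n,\Sq(\mc C))$ factors through $\Pull(\mc C,E)$ exactly when the following holds: every nondegenerate simplex $\Delta[i,j]\to\CCpt^n$ (an interval $[i]\bar\times[j]\subseteq\Cpt^n$) is sent to a grid with Cartesian squares and vertical arrows in $E$. By \Cref{prop:cnrapproxforsq} applied to $(\mc C,\mc C,E)$, we already know that the image of $\cnr\Sq(\mc C,\mc C,E)$ is $\map_{\Cat^+}(\Cpt^\bullet,(\mc C,\mc C,E))$. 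It remains to check that the Cartesian condition on all interval sub-boxes of $\Cpt^n$ is equivalent to the condition that every unit square of $\Cpt^n$ is Cartesian. This follows from the pasting law for pullbacks: every interval $[i]\bar\times[j]$ in $\Cpt^n$ is tiled by unit squares. This identifies the image with $S_\bullet(\mc C,E)$.

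The equivalence $\ascat S_\bullet(\mc C,E)\simeq\ascat\cnr\Pull(\mc C,E)\simeq\Gr\Pull(\mc C,E)$ then follows from \Cref{thm:approximationtheorem}, which says that $\cnr X\to N(\Gr X)$ becomes an equivalence on associated categories.

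For (i), I would use the naturality of the triangle in \Cref{thm:approximationtheorem} applied to the map $\Pull(\mc C,E)\to\Sq(\mc C)$. Together with the identification $\Gr\Sq(\mc C)\simeq\mc C$, this shows that the composite $\cnr\Pull\to\fsd\mc C\simeq\sd\mc C\to N(\mc C)$ is the diagonal restriction; this is exactly how the map $\fsd\Gr X\to N(\Gr X)$ was built.

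For (ii) and (iii), I would use the commutative square relating $\Hor(X_{0,\bullet})\to X$ with $\cnr\Hor(X_{0,\bullet})\simeq X_{0,\bullet}$, as in the proof of \Cref{prop:weakfactsystemrightinv}(i). Under $\cnr$, the map $\Hor(N(\mc C))\to\Pull(\mc C,E)$ is precomposition with the projection $\CCpt^n\to\Delta[0,n]$. Under the identification with $\sd\mc C$, this corresponds to pulling back along the projection $\Cpt^n\to[n]$ onto the second coordinate. That produces diagrams that are degenerate in the vertical direction, whose squares are trivially Cartesian. The vertical case (iii) is symmetric, using the first projection.

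I expect the main obstacle to be the bookkeeping in the image identification: making precise that a map $\CCpt^n\to\Sq(\mc C)$ factoring through $\Pull$ is detected on the unit squares of the corresponding functor $\Cpt^n\to\mc C$. This requires tracking how the equivalence $\map(\CCpt^n,\Sq(\mc C))\simeq\map(\Cpt^n,\mc C)$ restricts on sub-boxes. I would handle it via \Cref{lemma:identofGrFactP}, which says that $\CCpt^n$ is built as a colimit of its interval boxes of size at most $1$.
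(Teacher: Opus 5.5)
Your proposal is correct and follows the paper's argument. The paper's proof only says it runs as in \Cref{prop:cnrapproxforsq}: a levelwise monomorphism, whose image is identified under $\map(\CCpt^n,\Sq(\mc C))\simeq\map(\Cpt^n,\mc C)$ using pasting of pullbacks, with the equivalence and (i)--(iii) then coming from \Cref{thm:approximationtheorem} and its naturality exactly as you describe. One harmless slip: nondegenerate simplices of $\CCpt^n$ need not be intervals, since they can skip rows or columns. Your pasting argument covers arbitrary sub-rectangles anyway, and the grid attached to $\beta$ is simply $F\circ\beta$ because the equivalence is precomposition with the unit $\CCpt^n\to\Sq(\Cpt^n)$, so \Cref{lemma:identofGrFactP} is not needed.
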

\begin{proof}
    This follows the same proof as \Cref{prop:cnrapproxforsq}.
\end{proof}

Under this description, the tautological universal exchange theorem between
\begin{equation*}
    (-)_\ast : \mc C \simeq \Gr \Pull(\mc C, E)_{0,\bullet}\to \Gr \Pull(\mc C, E)
\end{equation*}
and
\begin{equation*}
    (-)_\natural : E \simeq \Gr \Pull(\mc C, E)_{\bullet,0}\to \Gr \Pull(\mc C, E)
\end{equation*}
corresponds to an exchange theorem between morphisms of the form
\begin{equation*}
    \begin{tikzcd}
    	\bullet & \bullet \\
    	& \bullet
    	\arrow[equals, from=1-1, to=1-2]
    	\arrow[from=1-2, to=2-2]
    \end{tikzcd}
\end{equation*}
and
\begin{equation*}
    \begin{tikzcd}
    	\bullet & \bullet \\
    	& \bullet
    	\arrow[from=1-1, to=1-2]
    	\arrow[equals, from=1-2, to=2-2]
    \end{tikzcd}
\end{equation*}
in $S_\bullet(\mc C, E)$, which we refer to as vertical and horizontal morphisms, respectively. By ambidexterity, vertical morphisms should be related to horizontal morphisms up to a twist.\par
On the level of homotopy categories, this follows from \emph{Verdier's diagonal trick}. Given a vertical morphism
\begin{equation*}
    \begin{tikzcd}
    	x & x \\
    	& y
    	\arrow[equals, from=1-1, to=1-2]
    	\arrow["f", from=1-2, to=2-2]
    \end{tikzcd}
\end{equation*}
in $S_1(\mc C, E)$ representing $f_\natural \in \ascat S_\bullet(\mc C, E)\simeq \Gr\Pull(\mc C, E)$, we may express it as the composite
\begin{equation*}
    \begin{tikzcd}
    	x & {x\times_y x} & x \\
    	& x & y \\
    	&& y.
    	\arrow["{\Delta_f}", from=1-1, to=1-2]
    	\arrow["{\pr_2}", from=1-2, to=1-3]
    	\arrow["{\pr_1}"', from=1-2, to=2-2]
    	\arrow["\lrcorner"{anchor=center, pos=0.125}, draw=none, from=1-2, to=2-3]
    	\arrow["f", from=1-3, to=2-3]
    	\arrow["f", from=2-2, to=2-3]
    	\arrow[equals, from=2-3, to=3-3]
    \end{tikzcd}
\end{equation*}
This $2$-simplex witnesses the relationship
\begin{equation*}
    f_\natural \simeq f_\ast (\Delta_f)_\natural (\pr_1)_\ast
\end{equation*}
in $\Gr\Pull(\mc C, E)\simeq \ascat S_\bullet(\mc C, E)$. In general, we may distinguish three subclasses of morphisms in $S_\bullet(\mc C, E)$.

\begin{definition}
    \begin{enumerate}
        \item The sub-simplicial space of \emph{twists} is the sub-simplicial space of $S_\bullet(\mc C, E)$ given at level $n$ by the union of components of diagrams $F : \Cpt^n\to\mc C\in S_n(\mc C, E)$ which send all morphisms $(i,i)\to (j,j)$ on the diagonal to equivalences.
        \item The sub-simplicial space of \emph{horizontal morphisms} is the sub-simplicial space of $S_\bullet(\mc C, E)$ given at level $n$ by the union of components of diagrams $F : \Cpt^n\to\mc C\in S_n(\mc C, E)$ which send all vertical morphisms $(i,j)\to (i',j)$ to equivalences.
        \item The sub-simplicial space of \emph{vertical morphisms} is the sub-simplicial space of $S_\bullet(\mc C, E)$ given at level $n$ by the union of components of diagrams $F : \Cpt^n\to\mc C\in S_n(\mc C, E)$ which send all horizontal morphisms $(i,j)\to (i,j')$ to equivalences.
    \end{enumerate}
\end{definition}

A general morphism $x\xrightarrow{f}y\xrightarrow{g} z\in S_1(\mc C, E)$ has a canonical factorization
\begin{equation*}
    \begin{tikzcd}
        x & y & y \\
        & y & y \\
        && z
        \arrow["f", from=1-1, to=1-2]
        \arrow[equals, from=1-2, to=1-3]
        \arrow[equals, from=1-2, to=2-2]
        \arrow[equals, from=1-3, to=2-3]
        \arrow[equals, from=2-2, to=2-3]
        \arrow["g", from=2-3, to=3-3]
    \end{tikzcd}
\end{equation*}
into a horizontal followed by a vertical morphism. Verdier's diagonal trick on the other hand shows that any morphism may be factored as a twist followed by a horizontal morphism. The identification $\ascat S_\bullet(\mc C, E)\simeq \TwC$ will be proven by generalizing Verdier's diagonal trick to higher dimensional simplices, constructing a twist--horizontal factorization system in the spirit of \Cref{subsec:weakfactsystems}.

\subsection{A gluing description}\label{subsec:gluingdescr}

In this section, we reformulate $\TwC$ as the gluing $\Gr$ of a bisimplicial space. Recall from \Cref{def:TwC} that $\TwC$ is the Cartesian unstraightening of the functor
\begin{equation*}
    \begin{tikzcd}[row sep=0pt]
        \mc C^\mathrm{op} \arrow{r} & \Cat \\
        x\arrow[mapsto]{r} & B\mc K(\Vect(x)).
    \end{tikzcd}
\end{equation*}

\begin{definition}
    \begin{enumerate}[wide, labelwidth=!, labelindent=0pt]
        \item For a category $\mc D$, let $\Fun^\mathrm{cart}(\mc D, \mc C)$ denote the non-full subcategory of $\Fun(\mc D, \mc C)$ consisting of natural transformations $\eta : F_1\Rightarrow F_2$ such that every square
        \begin{equation*}
            \begin{tikzcd}
            	{F_1(x)} & {F_2(x)} \\
            	{F_1(y)} & {F_2(y)}
            	\arrow["{\eta_x}", from=1-1, to=1-2]
            	\arrow[from=1-1, to=2-1]
            	\arrow[from=1-2, to=2-2]
            	\arrow["{\eta_y}"', from=2-1, to=2-2]
            \end{tikzcd}
        \end{equation*}
        is Cartesian.
        \item Let $\mc T_m(\mc C, E)\subseteq \Fun^\mathrm{cart}(\Cpt^m, \mc C)$ denote the full subcategory on objects $F : \Cpt^m\to \mc C$ such that
        \begin{enumerate}[leftmargin=40pt]
            \item $F\in S_m(\mc C, E)$
            \item $F$ sends all diagonal morphisms $(i,i)\to (j,j)$ to equivalences
        \end{enumerate}
        These categories assemble to form a simplicial category $\mc T_\bullet(\mc C, E)$. We will also write $T_m(\mc C, E)$ for $\mc T_m(\mc C, E)^\simeq$.
        \item Given $x\in\mc C$, write $T_\bullet(\mc C, E, x)$ for the fiber product
        \begin{equation*}
            \begin{tikzcd}
            	{T_\bullet(\mc C, E, x)} & {T_\bullet(\mc C, E)} \\
            	{\{x\}} & {N(\mc C)}.
            	\arrow[from=1-1, to=1-2]
            	\arrow[from=1-1, to=2-1]
            	\arrow["\lrcorner"{anchor=center, pos=0.125}, draw=none, from=1-1, to=2-2]
            	\arrow[from=1-2, to=2-2, "\diag^\ast"]
            	\arrow[from=2-1, to=2-2]
            \end{tikzcd}
        \end{equation*}
    \end{enumerate}
\end{definition}

We remark that $T_\bullet(\mc C, E, x)$ is the simplicial space whose $m$-simplex space is the groupoid of diagrams $\Cpt^m\to \mc C$ which are (equivalent to) one of the form
\begin{equation}\label{eq:diagrinTmCEx}
    \begin{tikzcd}
    	x & \bullet & \bullet & \cdots & \bullet \\
    	& x & \bullet & \cdots & \bullet \\
    	&&& \ddots & \vdots \\
    	&&&& \bullet \\
    	&&&& x
    	\arrow[from=1-1, to=1-2]
    	\arrow[from=1-2, to=1-3]
    	\arrow[two heads, from=1-2, to=2-2]
    	\arrow["\lrcorner"{anchor=center, pos=0.125}, draw=none, from=1-2, to=2-3]
    	\arrow[from=1-3, to=1-4]
    	\arrow[two heads, from=1-3, to=2-3]
    	\arrow["\lrcorner"{anchor=center, pos=0.125}, draw=none, from=1-3, to=2-4]
    	\arrow[from=1-4, to=1-5]
    	\arrow[two heads, from=1-5, to=2-5]
    	\arrow[from=2-2, to=2-3]
    	\arrow[from=2-3, to=2-4]
    	\arrow[from=2-4, to=2-5]
    	\arrow[two heads, from=2-5, to=3-5]
    	\arrow[two heads, from=3-5, to=4-5]
    	\arrow[two heads, from=4-5, to=5-5]
    \end{tikzcd}
\end{equation}
where
\begin{itemize}
    \item the diagonal morphisms are all equivalences
    \item each square consisting of vertical and horizontal morphisms is a pullback square in $\mc C$
    \item all vertical morphisms belong to $E$.
\end{itemize}
Since the forgetful map $\Vect(x)\to \mc C$ preserves pullbacks along morphisms in $\Vect(x)_\dagger$ (see the proof of \Cref{prop:liftofVecttocoWald}), pushforward along $\Vect(x)\to \mc C$ induces a map $S_\bullet(\Vect(x))\to T_\bullet(\mc C, E, x)$.

\begin{proposition}\label{prop:twistsareKthy}
    The forgetful map $S_\bullet(\Vect(x))\longrightarrow T_\bullet(\mc C, E, x)$ is an equivalence.
\end{proposition}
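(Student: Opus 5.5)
The plan is to prove that the map is a levelwise equivalence of spaces. Fix $m$, and identify $S_m(\Vect(x))$ with the groupoid of functors $\Cpt^m\to \mc C_{x/,/x}$ that land in $\Vect(x)$. Such a functor must send the diagonal to zero objects, make every square Cartesian in $\Vect(x)$, and have vertical arrows in $\Vect(x)_\dagger$.

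The key observation concerns the double slice. A functor $\Cpt^m\to\mc C_{x/,/x}$ is the same as a functor $\{-\infty\}\star\Cpt^m\star\{+\infty\}\to\mc C$ sending both cone points to $x$, with the composite $-\infty\to+\infty$ identified with $\id_x$. We compare this with the data of the underlying diagram $F:\Cpt^m\to\mc C$.

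Suppose each $F(i,i)\simeq x$ via diagonal equivalences, with $F$ Cartesian and vertical arrows in $E$. Then the object $(0,0)$ is initial in $\Cpt^m$, so the incoming structure maps $x\to F(i,j)$ are determined by the map from $F(0,0)\simeq x$. Dually, $(m,m)$ is terminal, so the outgoing maps $F(i,j)\to x$ are determined by the map to $F(m,m)\simeq x$. I would make this precise as follows. The forgetful map from the space of extensions $\{-\infty\}\star\Cpt^m\star\{+\infty\}\to\mc C$ of $F$ (with cone points $x$ and the identification to $\id_x$) to the space of pairs of equivalences $x\simeq F(0,0)$, $F(m,m)\simeq x$ whose composite through $F$ is $\id_x$ is an equivalence. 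This holds because restricting along the inclusion of an initial or terminal object into a cone is an equivalence on mapping data; equivalently, $\mc C_{F(0,0)/}\to$ the slice under $F$ is an equivalence. The remaining pair of equivalences is exactly the fibre data of $\diag^\ast$ over $x$ in $T_\bullet(\mc C,E,x)$. These data are naturally compatible with simplicial structure maps, since those preserve the initial and terminal corners up to the diagonal equivalences.

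It remains to match the conditions on the two sides.
\begin{enumerate}
\item The diagonal objects are zero objects in $\Vect(x)$ exactly when the underlying maps $x\to F(i,i)\to x$ are equivalences. Given the extension above, this is equivalent to the diagonal morphisms of $F$ being equivalences.
\item Vertical arrows lie in $\Vect(x)_\dagger$ iff their underlying maps lie in $E$, by definition. The condition that each $F(i,j)$ lies in $\Vect(x)$ (structure map in $E$) is automatic: $F(i,j)\to x$ factors as a composite of vertical maps in $E$ down to $F(j,j)\simeq x$.
\item Squares are Cartesian in $\Vect(x)$ iff they are Cartesian in $\mc C$. This follows from the proof of \Cref{prop:liftofVecttocoWald}: the forgetful functor preserves and reflects pullbacks along $\dagger$-maps.
\end{enumerate}

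The main obstacle is making the slice-extension step genuinely homotopy coherent rather than pointwise. I would handle it via the pullback description $\Fun(\Cpt^m,\mc C_{x/,/x})\simeq \Fun(\{-\infty\}\star\Cpt^m\star\{+\infty\},\mc C)\times_{\Fun(\{-\infty<+\infty\},\mc C)}\{\id_x\}$. I would then use that $\{(0,0)\}\to\Cpt^m$ is initial and $\{(m,m)\}\to\Cpt^m$ is final, so restriction to $\{-\infty\}\star\{(0,0)\}$ and $\{(m,m)\}\star\{+\infty\}$ is a trivial fibration on the relevant cones. This gives the required natural-in-$m$ equivalence.
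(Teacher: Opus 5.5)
Your argument is correct, but it takes a different route from the paper. The paper never works with full $\Cpt^m$-shaped diagrams in $\Vect(x)$. Instead it proceeds in three reductions:
\begin{enumerate}
\item It replaces $T_m(\mc C,E,x)$ by the fibre of $\mathrm{ev}_{(m,m)}\colon T_m(\mc C,E)\to\mc C^\simeq$ over $x$, using that $\map(|[m]|,\mc C)\to\map([m],\mc C)$ is a monomorphism.
\item It uses the right Kan extension description of good diagrams (\Cref{prop:goodisdetectonspine}, \Cref{cor:descrofTnCE}) to identify $T_m(\mc C,E)$ with $\map_E(L_m,\mc C)$.
\item On the other side it uses the standard equivalence $S_m(\Vect(x))\simeq\map([m-1],\Vect(x)_\dagger)$.
\end{enumerate}
The only slice computation it then needs is for the linear poset $[\ell]$, via $\{-\infty\}\star[\ell]\star\{+\infty\}=[\ell+2]$ and the pushout defining $L_{\ell+1}$. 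You instead apply the double-join description of $\mc C_{x/,/x}$ directly to $\Cpt^m$. You use its initial object $(0,0)$ and terminal object $(m,m)$ to collapse the structure maps to corner data. Then you match the $S_\bullet$-conditions object by object and square by square, using the facts established in \Cref{prop:liftofVecttocoWald}.

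What each route buys: yours is more self-contained. It bypasses the spine/Kan-extension machinery and the $S_\bullet$-to-$\dagger$-chain lemma, and it exhibits the forgetful map itself as the equivalence. The paper, by contrast, compares both sides to a third space and then has to check that the comparison square is the right one. The paper's route concentrates all coherence issues in one computation with linear posets. It also reuses the spine description of good diagrams, which it needs anyway for the twist--horizontal factorization.

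Some minor tidying:
\begin{itemize}
\item Since the map is already simplicial, a levelwise equivalence suffices. Your naturality remark is therefore unnecessary (face maps do not preserve $(0,0)$ and $(m,m)$ anyway).
\item The extension data is really pairs of arbitrary maps $a\colon x\to F(0,0)$, $b\colon F(m,m)\to x$ together with a homotopy $b\circ F((0,0)\to(m,m))\circ a\simeq\id_x$. Only the zero-object condition cuts this down to equivalences.
\item Once $a$ and $b$ are equivalences, $a$ (with its homotopy) is contractibly determined by $b$. You are left with a single equivalence $F(m,m)\simeq x$, which is the fibre of $\diag^\ast$ only after the same monomorphism reduction the paper makes first.
\item The slice equivalences you want are $\mc C_{/F}\simeq\mc C_{/F(0,0)}$ and $\mc C_{F/}\simeq\mc C_{F(m,m)/}$, not ``$\mc C_{F(0,0)/}\to$ the slice under $F$''.
\item Splitting the double join into a cone and a cocone uses that $\Cpt^m$ is weakly contractible, e.g.\ $\{-\infty\}\star\Cpt^m\star\{+\infty\}\simeq[1]\cup_{[0]}\Cpt^m\cup_{[0]}[1]$.
\end{itemize}
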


This should be believable as each diagram of the form \eqref{eq:diagrinTmCEx} uniquely lifts to a diagram in $\Vect(x)$ where each object gets its incoming structure map from the initial object which is (equivalent to) $x$, and its outgoing structure map from the terminal object which is (equivalent to) $x$. Since the forgetful map $\Vect(x)\to\mc C$ creates pullbacks along $\Vect(x)_\dagger$, all diagrams in $S_m(\Vect(x))$ should come from this procedure.\par
To do this rigorously, we need an alternative description of the functors $D : \Cpt^n\to\mc C$ which belong to $S_n(\mc C, E)$ (and hence to each $T_m(\mc C, E, x)$).

\begin{definition}
    Denote by $\mathrm{Sp}^n$ the subposet of $\Cpt^n = \{(i,j) : 0\le i\le j\le n\}$ spanned by the objects $\{(i,i) : 0\le i\le n\}\cup \{(i, n) : 0\le i \le n\}$.
\end{definition}


\begin{example}
    The (rotated) Hasse diagram of $\Cpt^3$, so that the initial object is in the top-left corner, is given by
    \begin{equation*}
        \begin{tikzpicture}
            \node at (0,0) {$\bullet$};
            \draw (0,0) -- (1,0);
            \node at (1,0) {$\bullet$};
            \draw (1,0) -- (2,0);
            \node at (2,0) {$\bullet$};
            \node at (3,0) {$\bullet$};
            \node at (1,-1) {$\bullet$};
            \node at (2, -1) {$\bullet$};
            \node at (3, -1) {$\bullet$};
            \node at (2, -2) {$\bullet$};
            \node at (3, -2) {$\bullet$};
            \node at (3,-3) {$\bullet$};
            \draw (2,0) -- (3,0);
            \draw (1,-1) -- (2,-1);
            \draw (2,-1) -- (3, -1);
            \draw (2, -2) -- (3, -2);
            \draw (1,0) -- (1, -1);
            \draw (2,0) -- (2, -1);
            \draw (3,0) -- (3, -1);
            \draw (2,-1) -- (2,-2);
            \draw (3,-1) -- (3,-2);
            \draw (3, -2) -- (3, -3);
        \end{tikzpicture}
    \end{equation*}
    and the rotated Hasse diagram of $\mathrm{Sp}^3$ is given by
    \begin{equation*}
        \begin{tikzpicture}
            \node at (0,0) {$\bullet$};
            \node at (3,0) {$\bullet$};
            \node at (1,-1) {$\bullet$};
            \node at (3, -1) {$\bullet$};
            \node at (2, -2) {$\bullet$};
            \node at (3, -2) {$\bullet$};
            \node at (3,-3) {$\bullet$};

            \draw (0,0) -- (1, -1);
            \draw (1, -1) -- (2, -2);
            \draw (2,-2) -- (3,-3);
            \draw (3,0) -- (3, -1);
            \draw (3,-1) -- (3,-2);
            \draw (3, -2) -- (3, -3);
            \draw (0,0) -- (3, 0);
            \draw (1, -1) -- (3, -1);
            \draw (2, -2) -- (3, -2);
        \end{tikzpicture}
    \end{equation*}
\end{example}

\begin{proposition}\label{prop:goodisdetectonspine}
    A functor $D : \Cpt^n\to \mc C$ belongs to $S_n(\mc C, E)$ if and only if
    \begin{enumerate}
        \item $D$ is right Kan extended from its restriction $\mathrm{Sp}^n\to \Cpt^n\xrightarrow{D}\mc C$
        \item the right-most vertical arrows $\{(i,n) : 0\le i\le n\}\hookrightarrow \mathrm{Sp}^n\to \Cpt^n\xrightarrow{D} \mc C$ lie inside $E\subseteq \mc C$.
    \end{enumerate}
    In particular, if $\map_E(\mathrm{Sp}^n, \mc C)$ denotes the subspace of components of $\map_{\Cat}(\mathrm{Sp}^n, \mc C)$ satisfying (ii), then the restriction
    \begin{equation*}
        S_n(\mc C, E)\to \map_E(\mathrm{Sp}^n, \mc C)
    \end{equation*}
    is an equivalence.
\end{proposition}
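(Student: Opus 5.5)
The plan is to compute the right Kan extension from $\mathrm{Sp}^n$ pointwise. For $(i,j)\in\Cpt^n$ not in $\mathrm{Sp}^n$ (so $i<j<n$), the category $\mathrm{Sp}^n_{(i,j)/}$ consists of $(j',j')$ with $j'\ge j$ and $(i',n)$ with $i'\ge i$. Its subposet spanned by the three objects $(j,j)$, $(j,n)$, $(i,n)$ is the cospan $(j,j)\to(j,n)\leftarrow(i,n)$, and this inclusion should be initial: for each $s$ in $\mathrm{Sp}^n_{(i,j)/}$, the poset of objects of the cospan lying below $s$ is either $\{(j,j)\}$ (when $s=(j',j')$ with $j'<n$) or the whole cospan (when $s=(i',n)$ with $i'\ge j$) or $\{(i,n)\}$ (when $i\le i'<j$). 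In each case it has a terminal object or is the full cospan, hence is weakly contractible. So condition (i) is equivalent to requiring that every square
\begin{equation*}
\begin{tikzcd}
D(i,j)\arrow{r}\arrow{d} & D(i,n)\arrow{d} \\
D(j,j)\arrow{r} & D(j,n)
\end{tikzcd}
\end{equation*}
is Cartesian for $i\le j\le n$.

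Given the spine-square characterization, I would show it is equivalent to $D\in S_n(\mc C,E)$ by the pasting law. For the direction from $S_n$: any such rectangle is a vertical then horizontal pasting of elementary squares of $\Cpt^n$, all of which are Cartesian. The last column $(i,n)\to(i+1,n)$ lies in $E$, which gives (ii). Conversely, assume (i) and (ii). Each vertical arrow $(i,j)\to(i+1,j)$ is then a base change of a composite of vertical arrows in the last column, via the rectangle with corners $(i,j),(i,n),(j,j),(j,n)$ and the analogous one for $i+1$. Because $E$ is closed under composition and pullback, all vertical arrows lie in $E$, so the pullbacks needed exist in $\mc C$.

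For the Cartesian property of an elementary square with corners $(i,j),(i,j+1),(i+1,j),(i+1,j+1)$, I would compare the rectangles from $(i,\cdot)$ and from $(i+1,\cdot)$ to column $n$ over the row $j$. By the pasting law, the rectangle from $(i,j)$ to $(j,n)$ is the vertical composite of the rectangle from $(i,j)$ to $(i+1,n)$ and the rectangle from $(i+1,j)$ to $(j,n)$. The latter is Cartesian by (i), so the former is too. The same argument with $j+1$ in place of $j$, together with the pasting law in the horizontal direction, should then give that the elementary square is Cartesian. I expect this bookkeeping to be the fiddliest step, and would organize it by induction on $n-j$.

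Finally, the "in particular" follows from the Kan extension characterization. Restriction identifies the full subcategory of $\Fun(\Cpt^n,\mc C)$ on functors right Kan extended from $\mathrm{Sp}^n$ with the full subcategory of $\Fun(\mathrm{Sp}^n,\mc C)$ on functors admitting such an extension (Lurie, HTT 4.3.2.15). Under (ii), every $\mathrm{Sp}^n\to\mc C$ admits the extension, since the required limits are pullbacks along compositions of last-column arrows, which lie in $E$. Passing to cores gives $S_n(\mc C,E)\simeq\map_E(\mathrm{Sp}^n,\mc C)$.
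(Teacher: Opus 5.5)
Your proposal is correct and follows essentially the same route as the paper. Both arguments use the pointwise formula with an initiality argument to reduce (i) to the Cartesianness of the rectangles $D(i,j)\to D(j,j)\times_{D(j,n)}D(i,n)$, then use pasting laws and base-change stability of $E$ to match membership in $S_n(\mc C,E)$, and finally existence and uniqueness of pointwise right Kan extensions to get the last equivalence; your two pasting steps already show the elementary squares are Cartesian directly, so the induction on $n-j$ you anticipate is unnecessary.
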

\begin{proof}
    By the pointwise formula for right Kan extensions and standard finality arguments, we see that $D$ is pointwise right Kan extended from $\mathrm{Sp}^n\to \Cpt^n\xrightarrow{D} \mc C$ if and only if the natural maps
    \begin{equation}\label{eq:condforrightkan}
        \begin{tikzcd}
            D(i,j)\arrow{r}{\simeq} & D(j,j)\times_{D(j,n)} D(i,n)
        \end{tikzcd}
    \end{equation}
    are equivalences. This shows that if $D$ belongs to $S_n(\mc C, E)$, then (i) and (ii) are satisfied.\par
    For the other direction, suppose that $D$ satisfies (i) and (ii). Since $\mc C$ has pullbacks along morphisms in $E$, given a functor $\mathrm{Sp}^n\to \mc C$ satisfying (ii), \eqref{eq:condforrightkan} tells us that the right Kan extension along $\mathrm{Sp}^n\hookrightarrow \Cpt^n$ always exists and is computed pointwise. Therefore we may assume in (i) that the extension is pointwise. Then, by \eqref{eq:condforrightkan} and the pasting laws for pullback diagrams, we learn that $D : \Cpt^n\to \mc C$ sends every square consisting of horizontal and vertical arrows to a pullback. Using this, since $E$ is closed under pullback, (ii) then also tells us that every vertical arrow gets sent to a morphism in $E$ by $D$. These two conditions then precisely say that $D$ belongs to $S_n(\mc C, E)$.\par
    The final claim then follows by general results on the uniqueness of Kan extensions as well as the aforementioned fact that every $\mathrm{Sp}^n\to\mc C$ satisfying (ii) has a right Kan extension along $\mathrm{Sp}^n\hookrightarrow \Cpt^n$.
\end{proof}

\begin{corollary}\label{cor:descrofTnCE}
    Let $L_n = [n + 1]((0\to n + 1)^{-1})$. Then restriction to
    \begin{equation*}
        [n + 1]\simeq \{(0,0)\}\cup \{(i,n) : 0\le i\le n\}\subseteq \Cpt^n
    \end{equation*}
    induces an equivalence
    \begin{equation*}
        \begin{tikzcd}
            T_n(\mc C, E)\arrow{r}{\simeq} & \map_E(L_n, \mc C).
        \end{tikzcd}
    \end{equation*}
    where $\map_E(L_n, \mc C)\subseteq \map(L_n,\mc C)$ is the subspace of components which send $\{1\le 2\le \cdots \le n + 1\}\subseteq [n + 1]\subseteq L_n$ to morphisms in $E$.
\end{corollary}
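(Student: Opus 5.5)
The plan is to derive this from \Cref{prop:goodisdetectonspine}, which identifies $S_n(\mc C, E)$ with $\map_E(\mathrm{Sp}^n,\mc C)$ by restriction. The first step is to record what condition (b) in the definition of $\mc T_n(\mc C,E)$ means after restricting to $\mathrm{Sp}^n$: a diagram on $\mathrm{Sp}^n$ whose diagonal edges $(i,i)\to(i+1,i+1)$ are equivalences. The diagonal morphisms $(i,i)\to(j,j)$ are composites of these edges, so requiring all of them to be equivalences is the same as requiring the generating ones to be. Hence $T_n(\mc C,E)$ is identified with the subspace of $\map_E(\mathrm{Sp}^n,\mc C)$ sending the spine diagonal to equivalences, that is, with $\map_E(\mathrm{Sp}^n[W^{-1}],\mc C)$. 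Here $W$ is the set of diagonal edges, and the identification uses the universal property of localization: maps out of $\mathrm{Sp}^n[W^{-1}]$ are a union of components of maps out of $\mathrm{Sp}^n$.

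The remaining task is to identify $\mathrm{Sp}^n[W^{-1}]$ with $L_n$, compatibly with the inclusion of $[n+1]=\{(0,0)\}\cup\{(i,n)\}$. I would argue in two steps.

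First, $\mathrm{Sp}^n$ is a union of pieces. Each piece is a commutative square $(i,i)\to(i+1,i+1)$, $(i,n)\to(i+1,n)$, glued along the diagonal chain and the right column. Concretely, $\mathrm{Sp}^n$ is the iterated pushout of the squares $[1]\times[1]$ on $\{(i,i),(i+1,i+1),(i,n),(i+1,n)\}$ along the edges $(i+1,i+1)\to(i+1,n)$. This decomposition is checked with \Cref{prop:pushoutsofposets} by induction on $n$, since the relevant intervals meet the intersection in a single edge or point, which is contractible.

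Second, localization commutes with these pushouts. Inverting the diagonal edge of a square $[1]\times[1]$ collapses it to $[1]$ with the top-left corner identified with the bottom-left corner. Gluing these pieces then yields the chain $(0,0)\to(0,n)\to(1,n)\to\cdots\to(n,n)$ together with an inverse from $(n,n)$ back to $(0,0)$. The square relations identify the composite with the diagonal $(0,0)\simeq(n,n)$ equivalence. This is exactly $[n+1]$ with $0\to n+1$ inverted, namely $L_n$. The composite $[n+1]\to\mathrm{Sp}^n\to L_n$ is the localization map, and the $E$-condition on $\{1\le\cdots\le n+1\}$ matches condition (ii) of \Cref{prop:goodisdetectonspine}.

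I expect the main obstacle to be making this localization computation rigorous, that is, showing $\mathrm{Sp}^n[W^{-1}]\simeq L_n$ as $\infty$-categories rather than merely at the level of homotopy categories. My first attempt would avoid the pushout bookkeeping. Instead, I would observe that $[n+1]\hookrightarrow\mathrm{Sp}^n$ admits a retraction-like comparison. Concretely, I would check that the functor $\mathrm{Sp}^n\to L_n$ sending $(i,i)\mapsto 0$ (equivalently $n+1$) and $(i,n)\mapsto i+1$ exhibits a localization, using the fact that each square with inverted diagonal is a pushout $[1]\cup_{[0]}[1]$ in which one leg is inverted. If that fails, I would fall back on the explicit pushout induction above, using that localizations are pushouts along $\coprod[1]\to\coprod[0]$ and therefore commute with colimits.
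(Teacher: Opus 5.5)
Your proposal is the paper's proof: \Cref{prop:goodisdetectonspine} identifies $T_n(\mc C,E)$ with the components of $\map_E(\mathrm{Sp}^n,\mc C)$ that invert the diagonal, i.e.\ with $\map_E(\mathrm{Sp}^n[\diag^{-1}],\mc C)$, and one concludes from $L_n\simeq\mathrm{Sp}^n[\diag^{-1}]$ via $[n+1]\hookrightarrow\mathrm{Sp}^n$, which the paper simply asserts. Your fallback sketch of that last step has two slips: inverting $(i,i)\to(i+1,i+1)$ in the square on $\{(i,i),(i+1,i+1),(i,n),(i+1,n)\}$ gives $[2]$, not $[1]$, and the piece for $i=n-1$ is a triangle whose long edge is inverted, giving $L_1$; however, your first idea makes the step rigorous directly, since the poset map $\phi\colon\mathrm{Sp}^n\to[n+1]$ with $(i,i)\mapsto 0$ for $i<n$ and $(i,n)\mapsto i+1$, together with the inclusion $j$, both send the arrows to be inverted to arrows that become invertible, satisfy $\phi j=\id$ and $j\phi\le\id$ with components $(0,0)\to(i,i)$ that become invertible, and hence induce inverse equivalences $\mathrm{Sp}^n[\diag^{-1}]\simeq L_n$.
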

\begin{proof}
    The restriction is well defined as, given a diagram $D : \Cpt^n\to\mc C\in T_n(\mc C, E)$, $T(0,0)\to T(n,n)$ is an equivalence.\par
    By \Cref{prop:goodisdetectonspine}, we have that restriction to $\mathrm{Sp}^n$ induces an equivalence
    \begin{equation*}
        \begin{tikzcd}
            T_n(\mc C, E)\arrow{r}{\simeq} & \map_E(\mathrm{Sp}^n[\diag^{-1}], \mc C)
        \end{tikzcd}
    \end{equation*}
    where $\map_E(\mathrm{Sp}^n[\diag^{-1}], \mc C)$ is the subspace of components of $\map(\mathrm{Sp}^n[\diag^{-1}], \mc C)$ which send the rightmost vertical arrows into $E$. The result then follows by remarking that the composite
    \begin{equation*}
        [n + 1]\simeq \{(0,0)\}\cup \{(i,n) : 0\le i\le n\}\hookrightarrow \mathrm{Sp}^n\longrightarrow \mathrm{Sp}^n[\diag^{-1}]
    \end{equation*}
    induces an equivalence $L_n \simeq \mathrm{Sp}^n[\diag^{-1}]$ of categories.
\end{proof}

We may now prove \Cref{prop:twistsareKthy}.

\begin{proof}[Proof of \Cref{prop:twistsareKthy}]
    We need to prove that $S_m(\Vect(x))\to T_m(\mc C, E, x)$ is an equivalence for each $m$.\par
    By definition, we have a pullback
    \begin{equation*}
        \begin{tikzcd}
        	{T_m(\mc C, E, x)} & {T_m(\mc C, E)} \\
        	{\{x\}} & {\map([m],\mc C)}.
        	\arrow[from=1-1, to=1-2]
        	\arrow[from=1-1, to=2-1]
        	\arrow["\lrcorner"{anchor=center, pos=0.125}, draw=none, from=1-1, to=2-2]
        	\arrow[from=1-2, to=2-2, "\diag^\ast"]
        	\arrow[from=2-1, to=2-2]
        \end{tikzcd}
    \end{equation*}
    Since the morphisms on the diagonal for diagrams in $T_m(\mc C, E)$ are equivalences, the rightmost morphism factors as
    \begin{equation*}
        \begin{tikzcd}
            T_m(\mc C, E)\arrow{r} & \map(|[m]|, \mc C)\arrow{r} & \map([m], \mc C).
        \end{tikzcd}
    \end{equation*}
    Thus we have a diagram
    \begin{equation*}
        \begin{tikzcd}
        	{T_m(\mc C, E, x)} & {T_m(\mc C, E)} & {T_m(\mc C, E)} \\
        	{\{x\}} & {\map(|[m]|,\mc C)} & {\mc C^\simeq} \\
        	{\{x\}} & {\map([m],\mc C)}
        	\arrow[from=1-1, to=1-2]
        	\arrow[from=1-1, to=2-1]
        	\arrow["\lrcorner"{anchor=center, pos=0.125}, draw=none, from=1-1, to=2-2]
        	\arrow[equals, from=1-2, to=1-3]
        	\arrow[from=1-2, to=2-2]
        	\arrow["\lrcorner"{anchor=center, pos=0.125}, draw=none, from=1-2, to=2-3]
        	\arrow["{\mathrm{ev}_{(m,m)}}", from=1-3, to=2-3]
        	\arrow[from=2-1, to=2-2]
        	\arrow[equals, from=2-1, to=3-1]
        	\arrow["\lrcorner"{anchor=center, pos=0.125}, draw=none, from=2-1, to=3-2]
        	\arrow["{\mathrm{ev}_m}"', "\simeq", from=2-2, to=2-3]
        	\arrow[from=2-2, to=3-2]
        	\arrow[from=3-1, to=3-2]
        \end{tikzcd}
    \end{equation*}
    of pullbacks. Here, the bottom left square is a pullback because we are taking the fiber of a monomorphism, and the rightmost square is a pullback because both horizontal morphisms are equivalences. We conclude that $T_m(\mc C, E, x)$ also naturally fits into a pullback square
    \begin{equation*}
        \begin{tikzcd}
        	{T_m(\mc C, E, x)} & {T_m(\mc C, E)} \\
        	{\{x\}} & {\mc C^\simeq}.
        	\arrow[from=1-1, to=1-2]
        	\arrow[from=1-1, to=2-1]
        	\arrow["\lrcorner"{anchor=center, pos=0.125}, draw=none, from=1-1, to=2-2]
        	\arrow["{\mathrm{ev}_{(m,m)}}", from=1-2, to=2-2]
        	\arrow[from=2-1, to=2-2]
        \end{tikzcd}
    \end{equation*}
    Applying \Cref{cor:descrofTnCE}, we learn that we have a pullback
    \begin{equation*}
        \begin{tikzcd}
        	{T_m(\mc C, E, x)} & {\map_E(L_m, \mc C)} \\
        	{\{x\}} & {\mc C^\simeq}
        	\arrow[from=1-1, to=1-2]
        	\arrow[from=1-1, to=2-1]
        	\arrow["\lrcorner"{anchor=center, pos=0.125}, draw=none, from=1-1, to=2-2]
        	\arrow["{\mathrm{ev}_{m + 1}}", from=1-2, to=2-2]
        	\arrow[from=2-1, to=2-2]
        \end{tikzcd}
    \end{equation*}
    so it suffices to show that the induced square
    \begin{equation}\label{eq:SmVectpullbacksq}
        \begin{tikzcd}
        	{S_m(\Vect(x))} & {\map_E(L_m, \mc C)} \\
        	{\{x\}} & {\mc C^\simeq}
        	\arrow[from=1-1, to=1-2]
        	\arrow[from=1-1, to=2-1]
        	\arrow["{\mathrm{ev}_{m + 1}}", from=1-2, to=2-2]
        	\arrow[from=2-1, to=2-2]
        \end{tikzcd}
    \end{equation}
    is a pullback. By standard results involving the $S_\bullet$-construction, the map
    \begin{equation*}
        \begin{tikzcd}
            S_m(\Vect(x))\arrow{r}{\simeq} & \map([m - 1], \Vect(x)_\dagger)
        \end{tikzcd}
    \end{equation*}
    given by restriction along $[m - 1]\simeq \{(i, m) : 0\le i \le m - 1\}\subseteq \Cpt^m$ is an equivalence. We proceed by computing $\map([m - 1], \Vect(x)_\dagger)$.\par
    By the definition of slice categories, for any category $\mc D$, there is a Cartesian square
    \begin{equation*}
        \begin{tikzcd}
            N(\mc D_{/x})_n\arrow{r}\arrow{d} & N(\mc D)_{n + 1}\arrow{d}{\mathrm{ev}_{n + 1}} \\
            \{x\}\arrow{r} & \mc D^\simeq.
        \end{tikzcd}
    \end{equation*}
    Similarly for under slice categories. It follows that we have a diagram
    \begin{equation}\label{eq:doubleslicepullback}
        \begin{tikzcd}
        	{\map([\ell],\mc C_{x/,/x})} & {\map([\ell + 1], \mc C_{/x})} & {\map([\ell + 2], \mc C)} \\
        	{\{\id_x\}} & {\mc C_{/x}^\simeq} & {\map([1], \mc C)} \\
        	& {\{x\}} & {\mc C^\simeq}
        	\arrow[from=1-1, to=1-2]
        	\arrow[from=1-1, to=2-1]
        	\arrow["\lrcorner"{anchor=center, pos=0.125}, draw=none, from=1-1, to=2-2]
        	\arrow[from=1-2, to=1-3]
        	\arrow["{\mathrm{ev}_0}", from=1-2, to=2-2]
        	\arrow["\lrcorner"{anchor=center, pos=0.125}, draw=none, from=1-2, to=2-3]
        	\arrow["{(0\to \ell + 2)^\ast}", from=1-3, to=2-3]
        	\arrow[from=2-1, to=2-2]
        	\arrow[from=2-2, to=2-3]
        	\arrow[from=2-2, to=3-2]
        	\arrow["\lrcorner"{anchor=center, pos=0.125}, draw=none, from=2-2, to=3-3]
        	\arrow[from=2-3, to=3-3]
        	\arrow[from=3-2, to=3-3]
        \end{tikzcd}
    \end{equation}
    of pullback squares. Indeed, in \eqref{eq:doubleslicepullback}, the upper right square is Cartesian by the pasting lemma looking at the bottom right square and the composite of the two right-most squares. Because $\{\id_x\}\to \map([1],\mc C)$ factors through $\map(|[1]|,\mc C)\to \map([1],\mc C)$, the composite of the two upper squares in \eqref{eq:doubleslicepullback} may then alternatively be written as the composite
    \begin{equation}\label{eq:compofsquaresSmlemma}
        \begin{tikzcd}
        	{\map([\ell], \mc C_{x/,/x})} & {\map(L_{\ell + 1},\mc C)} & {\map([\ell + 2], \mc C)} \\
        	{\{\id_x\}} & {\map(|[1]|,\mc C)} & {\map([1],\mc C)}
        	\arrow[from=1-1, to=1-2]
        	\arrow[from=1-1, to=2-1]
        	\arrow["\lrcorner"{anchor=center, pos=0.125}, draw=none, from=1-1, to=2-2]
        	\arrow[from=1-2, to=1-3]
        	\arrow["{(0\to \ell + 2)^\ast}"', from=1-2, to=2-2]
        	\arrow["\lrcorner"{anchor=center, pos=0.125}, draw=none, from=1-2, to=2-3]
        	\arrow["{(0\to \ell + 2)^\ast}", from=1-3, to=2-3]
        	\arrow[from=2-1, to=2-2]
        	\arrow[from=2-2, to=2-3]
        \end{tikzcd}
    \end{equation}
    where the right-most square is a pullback since we have a pushout
    \begin{equation*}
        \begin{tikzcd}[column sep=35pt]
        	{[1]} & {[\ell + 2]} \\
        	{|[1]|} & {L_{\ell + 1}}
        	\arrow["{0\to\ell + 2}", from=1-1, to=1-2]
        	\arrow[from=1-1, to=2-1]
        	\arrow[from=1-2, to=2-2]
        	\arrow[from=2-1, to=2-2]
        	\arrow["\ulcorner"{anchor=center, pos=0.125, rotate=180}, draw=none, from=2-2, to=1-1]
        \end{tikzcd}
    \end{equation*}
    in $\Cat$ by definition of $L_{\ell + 1}$. Looking at the left-most square in \eqref{eq:compofsquaresSmlemma} and using that $|[1]|\simeq\ast$ yields a pullback square
    \begin{equation*}
        \begin{tikzcd}
        	{\map([\ell], \mc C_{x/,/x})} & {\map(L_{\ell + 1},\mc C)} \\
        	{\{x\}} & {\mc C^\simeq}.
        	\arrow[from=1-1, to=1-2]
        	\arrow[from=1-1, to=2-1]
        	\arrow["\lrcorner"{anchor=center, pos=0.125}, draw=none, from=1-1, to=2-2]
        	\arrow["{\mathrm{ev}_{\ell + 2}}", from=1-2, to=2-2]
        	\arrow[from=2-1, to=2-2]
        \end{tikzcd}
    \end{equation*}
    It then follows from the definition of $\Vect(x)_\dagger$ that there is a pullback
    \begin{equation*}
        \begin{tikzcd}
        	{\map([m - 1], \Vect(x)_\dagger)} & {\map_E(L_{m},\mc C)} \\
        	{\{x\}} & {\mc C^\simeq}.
        	\arrow[from=1-1, to=1-2]
        	\arrow[from=1-1, to=2-1]
        	\arrow["\lrcorner"{anchor=center, pos=0.125}, draw=none, from=1-1, to=2-2]
        	\arrow["{\mathrm{ev}_{m + 1}}", from=1-2, to=2-2]
        	\arrow[from=2-1, to=2-2]
        \end{tikzcd}
    \end{equation*}
    One then checks that precomposing with $S_m(\Vect(x))\xrightarrow{\simeq} \map([m - 1],\Vect(x)_\dagger)$ recovers the square \eqref{eq:SmVectpullbacksq} and thus \eqref{eq:SmVectpullbacksq} is a pullback as required.
\end{proof}

Having identified $T_\bullet(\mc C, E, x)$ with $S_\bullet(\Vect(x))$, we now proceed to study the categories $\mc T_m(\mc C, E)$. In particular, we will show that for each $m$, $\mc T_m(\mc C, E)\to\mc C$ given by evaluating diagrams at $(m,m)$ is a Cartesian fibration. We first prove a lemma that is of a similar flavor to \Cref{prop:goodisdetectonspine}.

\begin{lemma}\label{lemma:AnmdescrbyKan}
    The natural map $N(\mc T_m(\mc C, E))_n\subseteq \map_{\Cat}(\Cpt^m\times [n],\mc C)$ is an inclusion of connected components. Moreover, a functor $F : \Cpt^m\times [n]\to\mc C$ belongs to $N(\mc T_m(\mc C, E))_n$ if and only if
    \begin{enumerate}
        \item $F|_{\Cpt^m\times \{n\}} : \Cpt^m\to\mc C\in S_m(\mc C, E)$ and sends diagonals to equivalences
        \item $F$ is right Kan extended from its restriction to $\Cpt^m\times \{n\}\cup \{(m,m)\}\times [n] \subseteq \Cpt^m\times [n]$.
    \end{enumerate}
\end{lemma}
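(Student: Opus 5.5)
The plan is to treat a point of $N(\mc T_m(\mc C, E))_n$ as a functor $F : \Cpt^m\times [n]\to\mc C$ and read off both conditions. Such a point is a string of $n$ composable morphisms in $\Fun^\mathrm{cart}(\Cpt^m,\mc C)$ between objects of $\mc T_m(\mc C, E)$. Since $\Fun^\mathrm{cart}(\Cpt^m,\mc C)\subseteq\Fun(\Cpt^m,\mc C)$ is a subcategory, containing all equivalences, whose morphisms are specified by a property, the map $N(\mc T_m(\mc C,E))_n\to \map_{\Cat}(\Cpt^m\times[n],\mc C)$ is a monomorphism. Its image is the union of components of those $F$ such that:
\begin{itemize}
    \item each $F|_{\Cpt^m\times\{k\}}$ lies in $S_m(\mc C,E)$ and inverts the diagonal;
    \item every square
    \begin{equation*}
        \begin{tikzcd}
            F(a,k)\arrow{r}\arrow{d} & F(a,k')\arrow{d} \\
            F(b,k)\arrow{r} & F(b,k')
        \end{tikzcd}
    \end{equation*}
    is Cartesian.
\end{itemize}
Both conditions are invariant under equivalence, which gives the first claim. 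It then remains to show that these conditions are equivalent to (i) and (ii).

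For (ii), compute the pointwise right Kan extension along $K\coloneqq \Cpt^m\times\{n\}\cup\{(m,m)\}\times[n]$. For $(a,k)$ with $k<n$, the category $K_{(a,k)/}$ has a final subcategory given by the cospan
\begin{equation*}
    (a,n)\to (m,m,n)\leftarrow (m,m,k).
\end{equation*}
I will check this by a direct Quillen-A argument: the poset $(a,k)/K$ consists of $\{(b,n) : b\ge a\}$, which has initial object $(a,n)$ and maps to $(m,m,n)$, together with the chain $\{(m,m,k') : k'\ge k\}$, which has initial object $(m,m,k)$. Hence $F$ is right Kan extended from $K$ if and only if
\begin{equation*}
    F(a,k)\simeq F(a,n)\times_{F(m,m,n)}F(m,m,k)
\end{equation*}
for all $a$ and $k$. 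This pullback exists because $F(a,n)\to F(m,m,n)$ is a composite of a diagonal equivalence and vertical $E$-maps, using $(a,n)\to(m,n)$ and the fact that $F|_{\Cpt^m\times\{n\}}\in S_m(\mc C,E)$.

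With this criterion, each direction is pasting. If $F$ comes from $\mc T_m$, the square on $(a,k)\to(a,n)$ over $(m,m,k)\to(m,m,n)$ is Cartesian by the Cartesian-transformation condition, so (ii) holds, and (i) is immediate. Conversely, suppose (i) and (ii) hold. Every comparison square between levels $k\le k'$, along $a\to b$, is Cartesian by the pasting law applied to the formula above at $k$ and at $k'$. Each level $F|_{\Cpt^m\times\{k\}}$ is then a base change of $F|_{\Cpt^m\times\{n\}}$ along $F(m,m,k)\to F(m,m,n)$. Pullback preserves Cartesian squares, equivalences and $E$, so each level lies in $S_m(\mc C,E)$ and sends diagonals to equivalences.

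The main obstacle I expect is the finality computation for $(a,k)/K$, together with making sure the pointwise right Kan extension exists. The existence reduces to the stated pullbacks existing, which holds by the geometric setup axioms.
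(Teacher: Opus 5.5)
Your proposal is correct and follows essentially the same route as the paper. It uses the same final cospan $((i,j),n)\to((m,m),n)\leftarrow((m,m),k)$ to identify (ii) with Cartesianness of the squares comparing level $k$ to level $n$, then pasting and base change of the top level (which preserves Cartesian squares, equivalences and $E$) to recover the full membership conditions. The only slip is the phrase ``using $(a,n)\to(m,n)$'': for $a=(i,j)$, the factorization that puts $F(a,n)\to F((m,m),n)$ in $E$ is the vertical map $(i,j)\to(j,j)$ followed by the diagonal $(j,j)\to(m,m)$.
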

\begin{proof}
    The fact that $N(\mc T_m(\mc C, E))_n\subseteq \map_{\Cat}(\Cpt^m\times [n],\mc C)$ is an inclusion of connected components is immediate from the definition of $\mc T_m(\mc C, E)$. From the definition, one finds that $N(\mc T_m(\mc C, E))_n$ consists of precisely those $F : \Cpt^m\times [n]\to\mc C$ such that
    \begin{enumerate}[label=(\alph*)]
        \item for all $0\le i\le n$, $F|_{\Cpt^m\times\{i\}} : \Cpt^m\to\mc C$ belongs to $S_m(\mc C, E)$ and sends diagonal morphisms to equivalences
        \item every square of the form
        \begin{equation*}
            \begin{tikzcd}
                F((i,j),k)\arrow{r}\arrow{d} & F((i,j), k + 1)\arrow{d} \\
                F((p,q), k)\arrow{r} & F((p,q), k + 1)
            \end{tikzcd}
        \end{equation*}
        is Cartesian.
    \end{enumerate}
    Thus, it suffices to show that conditions (i) and (ii) of the lemma are equivalent to (a) and (b).\par
    We make use of the following temporary notation:
    \begin{itemize}
        \item let $P = \Cpt^m\times \{n\}\cup \{(m,m)\}\times [n] \subseteq \Cpt^m\times [n]$
        \item let $\iota$ be the inclusion $\iota : P \hookrightarrow \Cpt^m\times [n]$
    \end{itemize}
    Consider $((i,j), k)\in \Cpt^m\times [n]$. Since $\iota$ is a map of posets, we have that
    \begin{equation*}
        ((i,j), k)\downarrow\iota = \{x\in P : x \le ((i,j), k)\}.
    \end{equation*}
    An easy application of Quillen's theorem shows that the inclusion of
    \begin{equation*}
        \begin{tikzcd}
        	{((i,j), n)} & \\
        	{((m,m), n)} & {((m,m),k)}
        	\arrow[from=1-1, to=2-1]
        	\arrow[from=2-2, to=2-1]
        \end{tikzcd}\subseteq ((i,j), k)\downarrow P
    \end{equation*}
    is final. Under the assumption that $F : P\to\mc C$ satisfies $F|_{\Cpt^m\times\{n\}}\in S_m(\mc C, E)$ it follows that, since $\mc C$ has pullbacks along $E$, the right Kan extension of any such $F : P\to \mc C$ along $\iota$ exists and is computed pointwise. This also shows that under the assumption of (i), condition (ii) is equivalent to
    \begin{enumerate}
        \item[(ii)'] Every square of the form
        \begin{equation*}
            \begin{tikzcd}
                 F((i,j),k) \arrow{d}\arrow{r} & F((i,j),n)\arrow{d} \\
                F((m,m),k)\arrow{r} & F((m,m),n)
            \end{tikzcd}
        \end{equation*}
        is Cartesian.
    \end{enumerate}
    By pasting pullbacks appearing in (ii)', one also readily checks that condition (ii)' is equivalent to the a priori stronger condition (b).\par
    Since (i) is a strictly weaker condition than (a), it suffices to show that (i) and (b) imply (a). However, this follows from pasting laws of pullbacks for cubes and the stability of both $E$ and equivalences under pullback.
\end{proof}

As a consequence of the proof of the above lemma which shows that the relevant right Kan extensions exist, we conclude the following corollary from the uniqueness of right Kan extensions.

\begin{corollary}\label{cor:identofAnmSbullet}
    Restriction along $\Cpt^m\times \{n\}\cup \{(m,m)\}\times [n] \subseteq \Cpt^m\times [n]$ induces an equivalence
    \begin{equation}\label{eq:descrofAnm}
        \begin{tikzcd}
            N(\mc T_m(\mc C, E))_n\arrow{r}{\simeq} & T_m(\mc C, E)\times_{\mc C^\simeq}\map_{\Cat}([n], \mc C)
        \end{tikzcd}
    \end{equation}
    where the maps $T_m(\mc C, E)\to \mc C^\simeq$ and $\map([n],\mc C)\to\mc C^\simeq$ are given by evaluation at $(m,m)$ and $n$ respectively.
\end{corollary}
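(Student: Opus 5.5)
The plan is to deduce \Cref{cor:identofAnmSbullet} directly from \Cref{lemma:AnmdescrbyKan} together with the standard fact that restriction along a fully faithful inclusion induces an equivalence between right Kan extended functors and functors on the subcategory, whenever the relevant Kan extensions exist.

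Write $P = \Cpt^m\times\{n\}\cup\{(m,m)\}\times[n]$ and $\iota : P\hookrightarrow \Cpt^m\times[n]$ for the inclusion. The first step is to identify $P$ as a pushout. By \Cref{prop:pushoutsofposets}, or by direct inspection since the two pieces meet only in the single object $((m,m),n)$, we have $P\simeq \Cpt^m\amalg_{[0]}[n]$, glued along $(m,m)\in\Cpt^m$ and $n\in[n]$. Here we use that no nonidentity morphism of $P$ passes between the two pieces except through $((m,m),n)$, which is terminal in $\Cpt^m\times\{n\}$ and is the endpoint of $\{(m,m)\}\times[n]$. Consequently
\begin{equation*}
    \map_{\Cat}(P,\mc C)\simeq \map(\Cpt^m,\mc C)\times_{\mc C^\simeq}\map([n],\mc C),
\end{equation*}
where the fiber product is formed using evaluation at $(m,m)$ and at $n$. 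Restricting to the components satisfying condition (i) of \Cref{lemma:AnmdescrbyKan} cuts the first factor down to $T_m(\mc C, E)$. This identifies the target of \eqref{eq:descrofAnm} with the space of functors $P\to\mc C$ satisfying (i).

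The second step uses the proof of \Cref{lemma:AnmdescrbyKan}, which shows that for every $F:P\to\mc C$ satisfying (i), the pointwise right Kan extension $\iota_\ast F$ exists. Indeed, the required limits are pullbacks along morphisms of $E$, via the final subdiagram $((i,j),n)\to((m,m),n)\leftarrow((m,m),k)$. Since $\iota$ is fully faithful, $\iota^\ast\iota_\ast F\simeq F$. It follows that $\iota^\ast$ restricts to an equivalence from the full subcategory of $\Fun(\Cpt^m\times[n],\mc C)$ spanned by functors right Kan extended from $P$ whose restriction satisfies (i), onto the full subcategory of $\Fun(P,\mc C)$ of functors satisfying (i); this is \cite[Proposition 4.3.2.15]{lurie2009higher}. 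Passing to cores, and using \Cref{lemma:AnmdescrbyKan} to identify the source with $N(\mc T_m(\mc C, E))_n$ as a union of components, gives the equivalence \eqref{eq:descrofAnm}.

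The only step needing some care is the pushout decomposition of $P$. This amounts to checking that every morphism in $P$ between an object of $\Cpt^m\times\{n\}$ and an object of $\{(m,m)\}\times[n]$ factors uniquely through $((m,m),n)$. In a poset this is immediate: an object $((i,j),n)$ lies below $((m,m),k)$ only if $k = n$. So the hypotheses of \Cref{prop:pushoutsofposets} hold trivially, because the intervals between the two pieces are either empty or reduce to the single point $((m,m),n)$.
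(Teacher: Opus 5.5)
Your proposal is correct and matches the paper's argument. The paper likewise deduces the corollary from \Cref{lemma:AnmdescrbyKan}, the existence of the pointwise right Kan extensions established in that lemma's proof, and uniqueness of right Kan extensions; your appeal to the dual of \cite[Proposition 4.3.2.15]{lurie2009higher} is exactly that step. The identification $P\simeq \Cpt^m\amalg_{[0]}[n]$, which the paper leaves implicit, is verified correctly: no element of one piece outside $((m,m),n)$ is comparable to an element of the other piece outside it, so the hypotheses of \Cref{prop:pushoutsofposets} hold vacuously.
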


\begin{corollary}\label{cor:projisconservative}
    The map $\pi : \mc T_m(\mc C, E)\to\mc C$ given by evaluation at $(m,m)$ is conservative.
\end{corollary}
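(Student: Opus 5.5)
To show that $\pi$ is conservative, take a morphism $\eta : F_1\Rightarrow F_2$ in $\mc T_m(\mc C, E)$ such that $\pi(\eta) = \eta_{(m,m)}$ is an equivalence in $\mc C$. The aim is to show that $\eta$ is an equivalence in $\mc T_m(\mc C, E)$. Since $\mc T_m(\mc C, E)$ is a non-full subcategory of $\Fun(\Cpt^m,\mc C)$, two things need checking. First, $\eta$ must be a natural equivalence in $\Fun(\Cpt^m,\mc C)$. Second, its inverse must again lie in $\mc T_m(\mc C,E)$.

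The second point is automatic. Any square whose horizontal arrows are equivalences is Cartesian, so the inverse of a componentwise equivalence is a Cartesian transformation. Moreover, $\mc T_m(\mc C,E)$ is full on objects satisfying (a) and (b). So an inverse in $\Fun(\Cpt^m,\mc C)$ is automatically an inverse in $\Fun^{\mathrm{cart}}(\Cpt^m,\mc C)$, and hence in $\mc T_m(\mc C,E)$.

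For the first point, fix $(i,j)\in\Cpt^m$. Use the morphism $(i,j)\le (m,m)$ in $\Cpt^m$ and the Cartesianness of $\eta$. This gives a pullback square
\begin{equation*}
    \begin{tikzcd}
        F_1(i,j)\arrow{r}{\eta_{(i,j)}}\arrow{d} & F_2(i,j)\arrow{d} \\
        F_1(m,m)\arrow{r}{\eta_{(m,m)}} & F_2(m,m).
    \end{tikzcd}
\end{equation*}
Equivalences are stable under pullback, so $\eta_{(i,j)}$ is an equivalence. Hence $\eta$ is a componentwise equivalence, and therefore an equivalence in $\Fun(\Cpt^m,\mc C)$.

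Alternatively, one can phrase the argument through \Cref{cor:identofAnmSbullet} with $n=1$. A morphism of $\mc T_m(\mc C,E)$ is determined by its target in $T_m(\mc C,E)$ together with a morphism of $\mc C$ into its value at $(m,m)$. The identity morphisms correspond exactly to the pairs whose second coordinate is an identity. This description identifies the equivalences with those morphisms whose image in $\mc C$ is an equivalence.

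There is no real obstacle in either argument. The only point needing care is the non-fullness of $\mc T_m(\mc C,E)$, which is handled by the remark that the inverse is automatically Cartesian.
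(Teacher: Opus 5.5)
Your proof is correct, and your main argument takes a different, more elementary route than the paper; your ``alternative'' paragraph is essentially the paper's proof.

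\textbf{The paper's route.} It argues at the level of nerves. By \Cref{cor:identofAnmSbullet} with $n=1$, a morphism $f\colon F_1\to F_2$ of $\mc T_m(\mc C,E)$ amounts to its target $F_2\in T_m(\mc C,E)$ together with an arrow of $\mc C$ into $F_2(m,m)$. If $\pi(f)$ is an equivalence, it is equivalent to $\id_{\pi(F_2)}$ as an object of $\mc C_{/\pi(F_2)}$. Transporting this back gives $f\simeq\id_{F_2}$.

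\textbf{Your route.} You use only that $(m,m)$ is terminal in $\Cpt^m$ and that naturality squares of morphisms in $\Fun^{\mathrm{cart}}$ are pullbacks. Hence each component $\eta_{(i,j)}$ is a base change of $\eta_{(m,m)}$ and is an equivalence. You handle the non-fullness correctly by noting that the inverse of a componentwise equivalence is automatically Cartesian.

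\textbf{Comparison.} Your argument does not depend on the Kan-extension description of \Cref{lemma:AnmdescrbyKan}. It shows more generally that evaluation at a terminal object is conservative on $\Fun^{\mathrm{cart}}(\mc D,\mc C)$, and on any full subcategory of it, for any $\mc D$ with a terminal object. The paper's route is more economical within its own development. The pullback description of $N(\mc T_m(\mc C,E))_n$ is needed anyway for \Cref{cor:slicesofTmissliceofCm} and for the right-fibration statement in \Cref{prop:TmisCartunstraightofSm}, so conservativity comes for free.
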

\begin{proof}
    On the level of nerves, the induced map $N(\mc T_m(\mc C, E))_n\to N(\mc C)_n$ is the equivalence of \Cref{cor:identofAnmSbullet} followed by the projection. Thus, take $f : F_1\to F_2\in N(\mc T_m(\mc C, E))_1$ such that $\pi(f)$ is an equivalence. Then we have an equivalence $\pi(f)\simeq \id_{\pi(F_2)}$ in $\mc C_{/\pi(F_2)}$. From this we obtain an equivalence between the image of $f$ and the image of $\id_{F_2}$ under the equivalence of \Cref{cor:identofAnmSbullet}. Thus $f\simeq \id_{F_2}$ in $N(\mc T_m(\mc C, E))_1$ and so $f$ is an equivalence.
\end{proof}

\begin{corollary}\label{cor:slicesofTmissliceofCm}
    Given $F\in\mc T_m(\mc C, E)$, evaluation at $(m,m)$ induces an equivalence $\mc T_m(\mc C, E)_{/F}\to \mc C_{/F(m,m)}$.
\end{corollary}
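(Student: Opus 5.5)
The plan is to compute slice categories at the level of nerves, using \Cref{cor:identofAnmSbullet}. For any category $\mc D$ and object $d$, the space $N(\mc D_{/d})_n$ is the fiber of $\mathrm{ev}_{n+1}\colon N(\mc D)_{n+1}\to \mc D^\simeq$ over $d$. This fact was already used in the proof of \Cref{prop:twistsareKthy}. So it suffices to show that the square
\begin{equation*}
    \begin{tikzcd}
        N(\mc T_m(\mc C, E))_{n+1}\arrow{r}{\pi}\arrow{d}[left]{\mathrm{ev}_{n+1}} & N(\mc C)_{n+1}\arrow{d}{\mathrm{ev}_{n+1}} \\
        T_m(\mc C, E)\arrow{r}{\mathrm{ev}_{(m,m)}} & \mc C^\simeq
    \end{tikzcd}
\end{equation*}
is Cartesian, naturally in $[n]\in\Delta$. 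Taking fibers over $F$ in the bottom left and $F(m,m)$ in the bottom right then gives a levelwise equivalence $N(\mc T_m(\mc C, E)_{/F})_n\simeq N(\mc C_{/F(m,m)})_n$, compatible with the simplicial structure. By \Cref{thm:nervefullyfaithful} this is the claimed equivalence induced by $\pi$.

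To show the square is Cartesian, replace the top left corner using \Cref{cor:identofAnmSbullet}:
\[
N(\mc T_m(\mc C, E))_{n+1}\simeq T_m(\mc C, E)\times_{\mc C^\simeq}\map([n+1],\mc C),
\]
where the fiber product is taken via $\mathrm{ev}_{(m,m)}$ and $\mathrm{ev}_{n+1}$. Under this identification:
\begin{itemize}
    \item the projection $\pi$ to $N(\mc C)_{n+1}=\map([n+1],\mc C)$ is the second projection, since evaluating at $(m,m)$ recovers the restriction to $\{(m,m)\}\times[n+1]$;
    \item the map $\mathrm{ev}_{n+1}$ to $T_m(\mc C, E)$ is restriction to $\Cpt^m\times\{n+1\}$, which is the first projection.
\end{itemize}
Hence the square is literally the defining pullback square of the fiber product, and so it is Cartesian.

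The main obstacle is bookkeeping rather than mathematics. First, the restriction map of \Cref{cor:identofAnmSbullet} must be natural in $[n]$ with respect to those simplicial operators that fix the final vertex. These are exactly the operators needed for the slice nerve $[n]\mapsto N(-)_{n+1}\times_{\mathrm{ev}_{n+1}}\{\ast\}$, and the required naturality holds because restriction along $\Cpt^m\times\{n\}\cup\{(m,m)\}\times[n]$ commutes with such operators. Second, one must check that the simplicial structure on the slice nerves matches the one induced on fibers. This is handled by the same join description $[n]\star[0]\simeq[n+1]$ used for $\mc C_{/x}$.

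Once this is in place, an alternative check is available: the resulting map on slices lies over $\mc C$ and is fully faithful and essentially surjective on fibers over each $[n]$. This also recovers the conservativity of \Cref{cor:projisconservative} as a special case.
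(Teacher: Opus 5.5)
Your proposal is correct and is essentially the paper's proof: the paper also uses \Cref{cor:identofAnmSbullet} to see that the square $N(\mc T_m(\mc C, E))_{n+1}\to N(\mc C)_{n+1}$ over $T_m(\mc C, E)\to \mc C^\simeq$ is Cartesian, and pastes it with the defining pullbacks of the slice nerves to identify the fibers over $F$ and $F(m,m)$. Your remarks on naturality in $[n]$ and on the simplicial structure of the slice nerves are bookkeeping that the paper leaves implicit.
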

\begin{proof}
    We check this on nerves. Indeed, we have a commutative cube
    \begin{equation*}
        \begin{tikzcd}
        	& {N(\mc C_{/F(m,m)})_n} && {N(\mc C)_{n + 1}} \\
        	{N(\mc T_m(\mc C, E)_{/F})_n} && {N(\mc T_m(\mc C, E))_{n + 1}} \\
        	& {\{F(m,m)\}} && {\mc C^\simeq} \\
        	{\{F\}} && {T_m(\mc C, E)}
        	\arrow[from=1-2, to=1-4]
        	\arrow[from=1-2, to=3-2]
        	\arrow["\lrcorner"{anchor=center, pos=0.125, rotate=0}, draw=none, from=1-2, to=3-4]
        	\arrow["\mathrm{ev}_{n + 1}", from=1-4, to=3-4]
        	\arrow[from=2-1, to=1-2]
        	\arrow[from=2-1, to=2-3]
        	\arrow[from=2-1, to=4-1]
        	\arrow["\lrcorner"{anchor=center, pos=0.125, rotate=0}, draw=none, from=2-1, to=4-3]
        	\arrow[from=2-3, to=1-4]
        	\arrow[from=2-3, to=4-3]
        	\arrow[from=3-2, to=3-4]
        	\arrow[from=4-1, to=3-2]
        	\arrow[from=4-1, to=4-3]
        	\arrow[from=4-3, to=3-4]
        \end{tikzcd}
    \end{equation*}
    where the front and back faces are pullbacks by definition of slice categories. However, by \Cref{cor:identofAnmSbullet}, we also have that the right face is a pullback. It follows by the pasting lemma that the left face is a pullback. Since the bottom arrow on the left face is an equivalence it follows that $N(\mc T_m(\mc C, E)_{/F})_n\to N(\mc C_{/F(m,m)})_n$ is an equivalence, as required.
\end{proof}

\begin{proposition}\label{prop:TmisCartunstraightofSm}
    The map $\mc T_m(\mc C, E)\to \mc C$ given by evaluation at $(m,m)$ is the Cartesian unstraightening of the functor
    \begin{equation*}
        \begin{tikzcd}[row sep=0pt]
            \mc C^\mathrm{op}\arrow{r} & \Ani \\
            \qquad x\arrow[mapsto]{r} & S_m(\Vect(x)).
        \end{tikzcd}
    \end{equation*}
\end{proposition}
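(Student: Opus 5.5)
Let $\pi : \mc T_m(\mc C, E)\to\mc C$ denote evaluation at $(m,m)$. The plan is to show first that $\pi$ is a right fibration, so that it is the Cartesian unstraightening of a space-valued functor. Then I would identify its fibers with $S_m(\Vect(x))$ using \Cref{prop:twistsareKthy}, and finally check that the transport functors are pullback of vector bundles.

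\emph{Step 1: $\pi$ is a right fibration.} \Cref{cor:slicesofTmissliceofCm} says that for every $F\in\mc T_m(\mc C, E)$ the induced map $\mc T_m(\mc C, E)_{/F}\to\mc C_{/F(m,m)}$ is an equivalence. This is the standard model-independent characterization of right fibrations. Equivalently, it says that \Cref{cor:identofAnmSbullet} exhibits $N(\mc T_m(\mc C,E))_n\simeq N(\mc T_m(\mc C,E))_0\times_{\mc C^\simeq}N(\mc C)_n$, with the fiber product formed using evaluation at the last vertex. Either form gives a right fibration, and \Cref{cor:projisconservative} is consistent with this. Consequently $\pi$ is the Cartesian (indeed right) unstraightening of a functor $\Phi : \mc C^\mathrm{op}\to\Ani$ with $\Phi(x)\simeq \pi^{-1}(x)$.

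\emph{Step 2: identifying the fibers.} The fiber $\pi^{-1}(x)$ is a space, namely the fiber of $T_m(\mc C,E)\to\mc C^\simeq$ over $x$. The pullback square in the proof of \Cref{prop:twistsareKthy} shows that evaluation at $(m,m)$ and $\diag^\ast$ give the same fiber. Hence $\pi^{-1}(x)\simeq T_m(\mc C,E,x)\simeq S_m(\Vect(x))$ by \Cref{prop:twistsareKthy}.

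\emph{Step 3: the transport maps.} It remains to identify $\Phi$ as a functor, not just objectwise. For $f : y\to x$, the transport $\pi^{-1}(x)\to\pi^{-1}(y)$ sends $F$ to the source of the Cartesian lift of $f$. By \Cref{lemma:AnmdescrbyKan} (with $n=1$), this lift is the right Kan extension, i.e. the diagram $F\times_{F(m,m)} y$ obtained by pulling back every object along $f$. Under the equivalence with $S_m(\Vect(-))$, this is exactly the functor $S_m(f^\ast)$ induced by $f^\ast : \Vect(x)\to\Vect(y)$, since $f^\ast$ is computed by pullback and the forgetful functor preserves these pullbacks.

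To make this coherent rather than pointwise, I would construct a map of right fibrations over $\mc C$ from the unstraightening $\int S_m(\Vect(-))$ to $\mc T_m(\mc C,E)$ and then conclude by checking on fibers, as in Step 2. One concrete route is to realize $\int \Vect(-)$ as a full subcategory of $\Fun^\mathrm{cart}$ of spans $x\to e\to x$ over $\mc C$; then $S_m$ applied fiberwise forgets to diagrams $\Cpt^m\to\mc C$ with Cartesian transformations between them. The main obstacle is producing this comparison map globally. If the direct construction is awkward, I would instead use the description in \Cref{cor:descrofTnCE}: present both sides via $\map_E(L_m,\mc C)$ and the slice-category pullbacks from the proof of \Cref{prop:twistsareKthy}, which are natural in $x$ by the functoriality of $\mc C_{x/,/x}$ under pullback.
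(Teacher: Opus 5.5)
Your proposal is correct and follows essentially the same route as the paper. The paper shows $\pi$ is a right fibration via \Cref{cor:slicesofTmissliceofCm}, identifies the fiber over $x$ with $T_m(\mc C,E,x)\simeq S_m(\Vect(x))$ using \Cref{prop:twistsareKthy}, and takes as its global comparison the natural transformation $S_m(\Vect(-))\Rightarrow \map_{/\mc C}(\mc C_{/-},\mc T_m(\mc C,E))$, $D\mapsto(f\mapsto f^\ast D)$; this is the straightened form of your map of right fibrations, and since evaluation at $\id_x$ reduces the check to the forgetful equivalence of \Cref{prop:twistsareKthy}, your separate pointwise analysis of transport maps in Step 3 is not needed.
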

\begin{proof}
    The fact that $\pi : \mc T_m(\mc C, E)\to \mc C$ is a right fibration is immediate from \Cref{cor:slicesofTmissliceofCm}. Indeed, this implies that every morphism in $\mc T_m(\mc C, E)$ is a $\pi$-Cartesian morphism. Moreover, every morphism has a lift by lifting the source and target to constant diagrams. See, e.g., \cite{mazel2015user} for a model independent definition of Cartesian lifts and fibrations.\par
    Since $\mc C_{/x}\to \mc C$ is the unstraightening of $\map_{\mc C}(-,x) : \mc C^\mathrm{op}\to\Ani$, it follows from the Yoneda lemma that $\mc T_m(\mc C, E)\to\mc C$ unstraightens to the functor
    \begin{equation*}
        \begin{tikzcd}[row sep=0pt]
            G : \mc C^\mathrm{op}\arrow{r} & \Ani \\
            \quad x\arrow[mapsto]{r} & \map_{/\mc C}(\mc C_{/x}, \mc T_m(\mc C, E))
        \end{tikzcd}
    \end{equation*}
    Now, we have a natural transformation $S_m(\Vect(-)) \Rightarrow G$ given at level $x$ by
    \begin{equation*}
        \begin{tikzcd}[row sep=0pt]
            S_m(\Vect(x))\arrow{r} & \map_{/\mc C}(\mc C_{/x}, \mc T_m(\mc C, E)) \\
            D\arrow[mapsto]{r} & ((y\xrightarrow{f} x)\mapsto f^\ast D)
        \end{tikzcd}
    \end{equation*}
    which we argue is an equivalence.\par
    Let $F$ be the fiber of $\mc T_m(\mc C, E)\to\mc C$ over $x$. We have a diagram
    \begin{equation*}
        \begin{tikzcd}
        	{N(F)_n} & {\map(|[n]|, \mc T_m(\mc C, E))} & {N(\mc T_m(\mc C, E))_n} \\
        	{\{x\}} & {\map(|[n]|, \mc C)} & {N(\mc C)_n}
        	\arrow[from=1-1, to=1-2]
        	\arrow[from=1-1, to=2-1]
        	\arrow["\lrcorner"{anchor=center, pos=0.125, rotate=0}, draw=none, from=1-1, to=2-2]
        	\arrow[from=1-2, to=1-3]
        	\arrow[from=1-2, to=2-2]
        	\arrow["\lrcorner"{anchor=center, pos=0.125, rotate=0}, draw=none, from=1-2, to=2-3]
        	\arrow[from=1-3, to=2-3]
        	\arrow[from=2-1, to=2-2]
        	\arrow[from=2-2, to=2-3]
        \end{tikzcd}
    \end{equation*}
    of pullbacks. Indeed, the outer square is a pullback by definition of $F$ and $N$ preserving limits and the right inner square is a pullback by \Cref{cor:projisconservative}. However, applying $\map(|[n]|, -)$ to the definition of $T_m(\mc C, E, x)$, we have a diagram of pullback squares
    \begin{equation*}
        \begin{tikzcd}
        	{\map(|[n]|, T_m(\mc C, E, x))} & {\map(|[n]|,\mc T_m(\mc C, E))} & {\map(|[n]|,\mc T_m(\mc C, E)).} \\
        	{\{x\}} & {\map(|[n]|,\map(|[m]|, \mc C))} & {\map(|[n]|,\mc C)}
        	\arrow[from=1-1, to=1-2]
        	\arrow[from=1-1, to=2-1]
        	\arrow["\lrcorner"{anchor=center, pos=0.125, rotate=0}, draw=none, from=1-1, to=2-2]
        	\arrow[equals, from=1-2, to=1-3]
        	\arrow[from=1-2, to=2-2]
        	\arrow["\lrcorner"{anchor=center, pos=0.125, rotate=0}, draw=none, from=1-2, to=2-3]
        	\arrow[from=1-3, to=2-3]
        	\arrow[from=2-1, to=2-2]
        	\arrow["{(\mathrm{ev}_{m})_\ast}", "\simeq"', from=2-2, to=2-3]
        \end{tikzcd}
    \end{equation*}
    Thus we obtain an equivalence $N(F)\simeq N(T_m(\mc C, E,x))$, i.e.\ $F\simeq T_m(\mc C, E,x)$.\par
    Moreover, since $\mc T_m(\mc C, E)\to\mc C$ is already known to be a right fibration, the Yoneda lemma says that evaluation at $\id_x\in\mc C_{/x}$ is an equivalence onto the fiber, i.e.\ we have a composite
    \begin{equation*}
        \begin{tikzcd}
            S_m(\Vect(x))\arrow{r} & \map_{/\mc C}(\mc C_{/x}, \mc T_m(\mc C, E))\arrow{r}[above]{\mathrm{ev}_{\id_x}}[below]{\simeq} & T_m(\mc C, E, x).
        \end{tikzcd}
    \end{equation*}
    But this composite is the forgetful map $S_m(\Vect(x))\to T_m(\mc C, E, x)$, so the result follows from \Cref{prop:twistsareKthy}.
\end{proof}

Since $\ascat S_\bullet(\Vect(x))$ is a model for $B\mc K(\Vect(x))$, we may combine the categories $\mc T_m(\mc C, E)$ to obtain a model for $\TwC$.

\begin{corollary}\label{cor:TwiscolimitofTm}
    The map
    \begin{equation}\label{eq:intofTmtoC}
        \begin{tikzcd}
            \displaystyle\int^{m\in\Delta} \mc T_m(\mc C, E)\times [m]\arrow{r} & \mc C
        \end{tikzcd}
    \end{equation}
    induced by the natural in $m$ maps
    \begin{equation*}
        \begin{tikzcd}
            \mc T_m(\mc C, E)\arrow{r} & \Fun([m], \mc C)
        \end{tikzcd}
    \end{equation*}
    given by restriction along the diagonal is equivalent to $\TwC\to\mc C$ in $\Cat_{/\mc C}$.
\end{corollary}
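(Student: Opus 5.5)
The plan is to express both sides as colimits over $\Delta$ of Cartesian fibrations over $\mc C$ and to compare them fiberwise. By \Cref{prop:TmisCartunstraightofSm}, each $\mc T_m(\mc C, E)\to\mc C$ is the unstraightening of $x\mapsto S_m(\Vect(x))$. These identifications are natural in $m$, since the simplicial structure maps of $\mc T_\bullet(\mc C, E)$ are restrictions along maps $\Cpt^{m'}\to\Cpt^m$, compatible with the forgetful equivalences $S_\bullet(\Vect(x))\simeq T_\bullet(\mc C,E,x)$ of \Cref{prop:twistsareKthy}. So $\mc T_\bullet(\mc C,E)\simeq \mathrm{Un}(S_\bullet(\Vect(-)))$ as simplicial objects of $\Cat_{/\mc C}$, where $\mathrm{Un}$ denotes Cartesian unstraightening. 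On the other side, $\TwC=\mathrm{Un}(B\mc K(\Vect(-)))$, and $B\mc K(\Vect(x))\simeq \ascat S_\bullet(\Vect(x))\simeq \int^{m\in\Delta} S_m(\Vect(x))\times[m]$ by \Cref{prop:coendformulaforrealization}. The last equivalence uses that $\ascat S_\bullet(\Vect(x))$ has a single object up to equivalence, namely $0$, because $S_0$ is contractible. This identification is natural in $x$, since $f^\ast$ is exact.

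The key step is to show that unstraightening commutes with this coend. I would use the Gepner--Haugseng--Nikolaus formula \Cref{thm:gepcoendunstraightening}: $\mathrm{Un}(F)\simeq \int^{x\in\mc C}F(x)\times\mc C_{/x}$, which is manifestly colimit-preserving in $F$. It follows that
\[
\mathrm{Un}\Big(\int^m S_m(\Vect(-))\times[m]\Big)\simeq \int^m \mathrm{Un}(S_m(\Vect(-)))\times[m].
\]
Here the Fubini theorem \Cref{prop:fubiniforcoends} is used to swap the coends, and the tensoring by the constant $[m]$ is pulled out of $\mathrm{Un}$, since $\mathrm{Un}(F\times[m])\simeq\mathrm{Un}(F)\times[m]$ for a constant factor $[m]$. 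Combining this with the previous paragraph gives an equivalence $\int^m\mc T_m(\mc C,E)\times[m]\simeq\TwC$ in $\Cat$.

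It remains to check that this equivalence lies over $\mc C$ via the map \eqref{eq:intofTmtoC}. The structure map $\TwC\to\mc C$ corresponds under the coend to the maps $\mc T_m\times[m]\to\mc C$ sending $(F,i)\mapsto F(i,i)$, i.e.\ restriction along the diagonal. This holds because a diagram in $\mc T_m$ has all diagonal maps equivalences, so $F(i,i)\simeq F(m,m)$ is the projection $\pi$ composed with collapse $[m]\to[0]$, up to the canonical natural equivalence. I expect the main obstacle to be exactly this bookkeeping. One must make the naturality in $m$ of the equivalences from \Cref{prop:TmisCartunstraightofSm} coherent, rather than merely levelwise, and match the diagonal restriction with the projection to the base. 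I would handle it by constructing the comparison $S_m(\Vect(-))\Rightarrow G_m$ functorially in $m$, using the Yoneda description from that proof, which is visibly natural in $m$. Working throughout in $\Cat_{/\mc C}$, where colimits are computed in $\Cat$, then makes the identification of structure maps automatic.
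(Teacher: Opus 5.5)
Your proposal is correct and is essentially the paper's proof: write $\TwC$ via the Gepner--Haugseng--Nikolaus coend formula, expand $B\mc K(\Vect(x))\simeq\ascat S_\bullet(\Vect(x))$ by the coend formula for realization, swap the coends by Fubini, and identify the inner coend with $\mc T_m(\mc C,E)$ using \Cref{prop:TmisCartunstraightofSm}. Your extra attention to naturality in $m$, and to matching the projection with restriction along the diagonal (using that diagonal maps in $\mc T_m(\mc C,E)$ are equivalences), fills in points the paper leaves implicit; one small correction is that the one-object observation justifies $B\mc K(\Vect(x))\simeq\ascat S_\bullet(\Vect(x))$, not the coend formula.
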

\begin{proof}
    By the coend formula for the Cartesian unstraightening (\Cref{thm:gepcoendunstraightening}), we know that $\TwC$ is equivalent to
    \begin{equation}\label{eq:intformulaforTw}
        \int^{x\in \mc C}\ascat S_\bullet(\Vect(x)) \times \mc C_{/x}
    \end{equation}
    with projection to $\mc C$ given by
    \begin{equation*}
        \int^{x\in \mc C}\ascat S_\bullet(\Vect(x)) \times \mc C_{/x}\longrightarrow \int^{x\in \mc C} \mc C_{/x}\simeq \mc C.
    \end{equation*}
    By the coend formula for associated categories (\Cref{prop:coendformulaforrealization}), we have that
    \begin{equation*}
        \ascat S_\bullet(\Vect(x)) \simeq \int^{m\in\Delta} S_m(\Vect(x))\times [m].
    \end{equation*}
    It follows that
    \begin{equation*}
        \TwC \simeq \int^{x\in\mc C}\Big(\int^{m\in\Delta}S_m(\Vect(x))\times [m]\Big)\times \mc C_{/x}.
    \end{equation*}
    By the Fubini theorem for coends (\Cref{prop:fubiniforcoends}), this is equivalent to
    \begin{equation*}
        \int^{m\in\Delta}\Big(\int^{x\in\mc C} S_m(\Vect(x))\times \mc C_{/x}\Big)\times [m]\simeq \int^{m\in\Delta}\mc T_m(\mc C, E)\times [m].
    \end{equation*}
    For the last equivalence we again applied the coend formula for unstraightening and \Cref{prop:TmisCartunstraightofSm}.
\end{proof}

We now finally arrive at a gluing description for $\TwC$.

\begin{corollary}\label{cor:GrdescriptionofTw}
    The diagram of bisimplicial spaces
    \begin{equation*}
        N(\mc T_\bullet(\mc C, E))_\ast \longrightarrow \Sq(\mc C)
    \end{equation*}
    given by restriction along the diagonal adjoins to $\TwC\to \mc C$.\par
    Moreover, under this equivalence, the zero section $s : \mc C\to\TwC$ corresponds (after gluing) to
    \begin{equation*}
        \Ver(\mc C)\simeq N(\mc T_0(\mc C, E))_\ast \longrightarrow N(\mc T_\bullet(\mc C, E))_\ast.
    \end{equation*}
\end{corollary}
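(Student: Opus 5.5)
The plan is to compute $\Gr$ of the bisimplicial space $Y_{m,n}\coloneqq N(\mc T_m(\mc C, E))_n$ directly as a coend and match it with \Cref{cor:TwiscolimitofTm}. Take the simplicial direction $m$ for $\mc T_\bullet$ as the row (vertical) index and the nerve direction $n$ as the column index, consistently with the claim that $\mc T_0(\mc C, E)\simeq\mc C$ gives $\Ver(\mc C)$; the precise row/column bookkeeping only affects which projection appears below.

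By \Cref{prop:coendformulaforrealization}, applied to $\Gr$ with $\Delta^{\times 2}\to\Cat$, $([m],[n])\mapsto[m]\times[n]$, we have
\begin{equation*}
    \Gr Y\simeq\int^{([m],[n])\in\Delta^{\times 2}} Y_{m,n}\times [m]\times [n].
\end{equation*}
By the Fubini theorem for coends (\Cref{prop:fubiniforcoends}), this is equivalent to
\begin{equation*}
    \int^{m\in\Delta}\Big(\int^{n\in\Delta} N(\mc T_m(\mc C, E))_n\times[n]\Big)\times[m].
\end{equation*}
The inner coend is $\ascat N(\mc T_m(\mc C,E))\simeq\mc T_m(\mc C, E)$, using the coend formula for $\ascat$ together with full faithfulness of the nerve (\Cref{thm:nervefullyfaithful}). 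So $\Gr Y\simeq\int^{m}\mc T_m(\mc C,E)\times[m]$, which \Cref{cor:TwiscolimitofTm} identifies with $\TwC$ over $\mc C$.

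It remains to check that these identifications match the specified maps. The map $Y\to\Sq(\mc C)$ is restriction along the diagonal $[m]\hookrightarrow\Cpt^m$, paired with the nerve direction: an $n$-simplex of $\mc T_m$ is a functor $\Cpt^m\times[n]\to\mc C$, and we restrict to $[m]\times[n]$. Its transpose $\Gr Y\to\mc C$ is therefore computed, under the coend above, by the maps $\mc T_m(\mc C,E)\times[m]\to\mc C$ adjoint to $\mc T_m(\mc C,E)\to\Fun([m],\mc C)$. These are exactly the maps inducing \eqref{eq:intofTmtoC}. The main point requiring care is naturality: the Fubini and $\ascat\circ N\simeq\id$ equivalences must be compatible with these structure maps. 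This follows because all identifications are natural in the coefficient functors and the transposition $\Gr\dashv\Sq$ is computed termwise on representables.

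For the zero section, $\mc T_0(\mc C,E)\simeq\mc C$ (diagrams on $\Cpt^0=[0]$ with Cartesian transformations), so $Y_{0,\bullet}$-type restriction along $[0]\to[m]$ in the $\Delta$-variable gives $\Ver(\mc C)\to Y$. Under the coend, this is the inclusion of the $m=0$ term $\mc C\times[0]\to\int^m\mc T_m\times[m]$. In the proof of \Cref{cor:TwiscolimitofTm} this term corresponds to $\int^x S_0(\Vect(x))\times\mc C_{/x}$, i.e.\ the unstraightening of the basepoints $\ast\to B\mc K(\Vect(x))$. That is precisely the section $s$ arising from the morphism $(\id,1)$ in $\mathrm{LabCat}$. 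The expected obstacle is tracking this basepoint compatibility through Fubini, which I would handle by noting that $S_0(\Vect(x))=\ast$ includes into $\ascat S_\bullet(\Vect(x))$ as the object $0$, naturally in $x$.
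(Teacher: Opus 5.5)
Your proposal is correct and is essentially the paper's proof: the coend formula for $\Gr$, Fubini to isolate the inner coend $\int^{n} N(\mc T_m(\mc C,E))_n\times[n]\simeq\mc T_m(\mc C,E)$, then \Cref{cor:TwiscolimitofTm}; your discussion of the structure maps and the zero section spells out what the paper leaves implicit. Two bookkeeping slips, neither affecting the argument: first, in the paper's convention $\Th_{n,m}(\mc C,E)=N(\mc T_m(\mc C,E))_n$ the nerve direction is the row index, which is what makes $N(\mc T_0(\mc C,E))_\ast$ equal to $\Ver(\mc C)$ (with your choice of $m$ as row index it would be $\Hor(\mc C)$); second, the map $\mc T_0(\mc C,E)\to\mc T_m(\mc C,E)$ is induced by the degeneracy $[m]\to[0]$, not by restriction along $[0]\to[m]$.
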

\begin{proof}
    By the coend formula for the gluing of bisimplicial spaces (\Cref{prop:coendformulaforrealization}), we have that
    \begin{equation*}
        \begin{aligned}
            \Gr N(\mc T_\bullet(\mc C, E))_\ast &\simeq \int^{m,n\in\Delta} N(\mc T_m(\mc C, E))_n \times [n]\times [m] \\
            &\simeq \int^{m\in \Delta}\Big(\int^{n\in\Delta} N(\mc T_m(\mc C, E))_n\times [n]\Big)\times [m] \\
            &\simeq \int^{m\in\Delta}\mc T_m(\mc C, E)\times [m].
        \end{aligned}
    \end{equation*}
    The result then follows from \Cref{cor:TwiscolimitofTm}.
\end{proof}

\begin{definition}
    We let $\Th_{\ast,\bullet}(\mc C, E)$ denote the bisimplicial space $N(\mc T_\bullet(\mc C, E))_\ast$.
\end{definition}

In the philosophy of \Cref{subsec:weakfactsystems}, $\Th_{\bullet,\bullet}(\mc C, E)$ will represent the analog of $\Fact S_\bullet(\mc C, E)$ for the twist--horizontal factorization system. Towards this, we show that there is a natural in $n$ and $m$ map
\begin{equation*}
    \Th_{n,m}(\mc C, E)\longrightarrow \map(\Delta[n]\times\Delta[m], S_\bullet(\mc C, E))
\end{equation*}
which on horizontal fragments is the inclusion of the sub-simplicial space of twists, and on vertical fragments is the inclusion of the sub-simplicial space of horizontal morphisms.

\begin{lemma}\label{lemma:buildsquarefromnerve}
    Suppose that $F : \Cpt^m\times [n]\to\mc C\in \Th_{n,m}(\mc C, E)$. Then for every $(\alpha,\beta) : [\ell]\to [n]\times [m]$, the induced composite
    \begin{equation*}
        \begin{tikzcd}[row sep=0pt]
            \Cpt^\ell\arrow{r} & \Cpt^m\times \Cpt^n\arrow{r}{\id\times\pr_2} & \Cpt^m\times [n]\arrow{r}{F} & \mc C, \\
            (i,j)\arrow[mapsto]{r} & ((\beta(i), \beta(j)), (\alpha(i), \alpha(j))) \\
            & ((i,j), (p,q))\arrow[mapsto]{r} & ((i,j), q)
        \end{tikzcd}
    \end{equation*}
    belongs to $S_\ell(\mc C, E)$.
\end{lemma}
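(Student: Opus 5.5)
Write $G\colon\Cpt^\ell\to\mc C$ for the composite, so $G(i,j) = F((\beta(i),\beta(j)),\alpha(j))$. The plan is to check the criterion of \Cref{prop:goodisdetectonspine} directly:
\begin{enumerate}
    \item every vertical arrow $G(i,j)\to G(i+1,j)$ lies in $E$;
    \item every square built from horizontal and vertical arrows in $\Cpt^\ell$ goes to a pullback.
\end{enumerate}
By pasting of pullbacks, it suffices to treat the unit squares with corners $(i,j)$, $(i,j+1)$, $(i+1,j)$, $(i+1,j+1)$, together with the unit vertical arrows.

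Both checks use the structure of $F$ recorded in \Cref{lemma:AnmdescrbyKan}, via conditions (a) and (b) from its proof. Condition (a) says each slice $F|_{\Cpt^m\times\{k\}}$ lies in $S_m(\mc C,E)$ and sends diagonal morphisms to equivalences. Condition (b) says every square of the form $F((i,j),k)\to F((i,j),k+1)$ over $F((p,q),k)\to F((p,q),k+1)$ is Cartesian.

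\emph{Vertical arrows.} A vertical arrow $(i,j)\to(i+1,j)$ is sent to
\[
F((\beta(i),\beta(j)),\alpha(j))\to F((\beta(i+1),\beta(j)),\alpha(j)).
\]
This is a vertical arrow inside the single slice $\Cpt^m\times\{\alpha(j)\}$, because the second coordinate $\alpha(j)$ is unchanged. It is a composite of unit vertical arrows of that slice, so it lies in $E$ by (a).

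\emph{Unit squares.} The square at $(i,j)$ factors into two pieces.
\begin{itemize}
    \item The first piece is the square
    \[
    F((\beta(i),\beta(j)),\alpha(j)) \to F((\beta(i),\beta(j)),\alpha(j+1))
    \]
    over
    \[
    F((\beta(i+1),\beta(j)),\alpha(j)) \to F((\beta(i+1),\beta(j)),\alpha(j+1)).
    \]
    It is Cartesian by (b), since $\mc T_m$ consists of Cartesian transformations; if $\alpha(j)=\alpha(j+1)$ it is trivially Cartesian.
    \item The second piece is the square at level $\alpha(j+1)$ with corners $(\beta(i),\beta(j))$, $(\beta(i),\beta(j+1))$, $(\beta(i+1),\beta(j))$, $(\beta(i+1),\beta(j+1))$. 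This is a rectangle of unit squares inside a single slice lying in $S_m(\mc C,E)$, so it is Cartesian by pasting.
\end{itemize}
The total square is the horizontal pasting of these two pieces. Its top row $G(i,j)\to G(i,j+1)$ factors as the $[n]$-direction arrow followed by the $\Cpt^m$-horizontal arrow, using functoriality of $F$ on $\Cpt^m\times[n]$. The bottom row factors the same way. Pasting two Cartesian squares gives a Cartesian square.

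\emph{Degenerate cases.} The main care needed is when $\beta(i)=\beta(j)$, or when the intervals involved touch the diagonal, so that some corners coincide or some unit squares degenerate. The hypothesis that slices invert diagonal morphisms is not actually needed for the pullback claims here, since all the relevant squares are genuine squares in $\Cpt^m$ or identities. Everything reduces to functoriality plus (a) and (b). The remaining subtle point is that the factorization of the square into the two pieces must be coherent as a diagram $\Cpt^\ell\to\mc C$, not merely objectwise. This holds because $G$ is defined as an honest composite of functors, so the pasting decomposition lives in $\Cpt^m\times[n]$.
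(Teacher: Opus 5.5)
Your proof is correct and follows essentially the paper's argument. Each unit square of $\Cpt^\ell$ is written as the pasting of a square inside a single slice $\Cpt^m\times\{k\}$, which is Cartesian because that slice lies in $S_m(\mc C,E)$, with an $[n]$-direction square, which is Cartesian because $\mc T_m(\mc C,E)$ has Cartesian transformations; vertical arrows stay in one slice and so lie in $E$. The only differences are that you do the $[n]$-step before the slice step (the paper does it after), which is immaterial, and that what you verify is really the definition of $S_\ell(\mc C,E)$ rather than the spine criterion of \Cref{prop:goodisdetectonspine}.
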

\begin{proof}
    Consider a square
    \begin{equation*}
        \begin{tikzcd}
            (i,j)\arrow{r}\arrow{d} & (i + 1, j)\arrow{d} \\
            (i, j + 1)\arrow{r} & (i + 1, j + 1)
        \end{tikzcd}
    \end{equation*}
    in $\Cpt^\ell$. Under this composite, this gets sent to
    \begin{equation*}
        \begin{tikzcd}
            F((\beta(i), \beta(j)), \alpha(j))\arrow{r}\arrow{d} & F((\beta(i + 1), \beta(j)), \alpha(j + 1))\arrow{d} \\
            F((\beta(i), \beta(j + 1)), \alpha(j))\arrow{r} & F((\beta(i + 1), \beta(j + 1)), \alpha(j + 1)).
        \end{tikzcd}
    \end{equation*}
    This is the composite of the two squares
    \begin{equation*}
        \begin{tikzcd}
            F((\beta(i), \beta(j)), \alpha(j))\arrow{r}\arrow{d} & F((\beta(i + 1), \beta(j)), \alpha(j))\arrow{d} \\
            F((\beta(i), \beta(j + 1)), \alpha(j))\arrow{r} & F((\beta(i + 1), \beta(j + 1)), \alpha(j))
        \end{tikzcd}
    \end{equation*}
    and
    \begin{equation*}
        \begin{tikzcd}
            F((\beta(i + 1), \beta(j)), \alpha(j))\arrow{r}\arrow{d} & F((\beta(i + 1), \beta(j)), \alpha(j + 1))\arrow{d} \\
            F((\beta(i + 1), \beta(j + 1)), \alpha(j))\arrow{r} & F((\beta(i + 1), \beta(j + 1)), \alpha(j + 1))
        \end{tikzcd}
    \end{equation*}
    which are both pullbacks.\par
    We also have that a vertical morphism $(i,j) \to (i', j)$ in $\Cpt^\ell$ gets sent to $F((\beta(i), \beta(j)),\alpha(j))\to F((\beta(i'), \beta(j)), \alpha(j))$ which belongs to $E$.
\end{proof}

\begin{proposition}\label{prop:mapsonTm}
    There exists a map
    \begin{equation*}
        \Th_{\bullet,\bullet}(\mc C, E)\longrightarrow \Sq(S_\bullet(\mc C, E))
    \end{equation*}
    of bisimplicial spaces over $\Sq(\mc C)$ which
    \begin{enumerate}
        \item on vertical fragments is the inclusion of the horizontal sub-simplicial space
        \item on horizontal fragments is the inclusion of the twist sub-simplicial space.
    \end{enumerate}
\end{proposition}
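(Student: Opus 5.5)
The map is built levelwise from \Cref{lemma:buildsquarefromnerve}, and naturality comes for free because everything is precomposition of diagrams in $\mc C$. A point of $\Th_{n,m}(\mc C, E)$ is a functor $F : \Cpt^m\times [n]\to\mc C$ in $N(\mc T_m(\mc C, E))_n$. A point of $\Sq(S_\bullet(\mc C, E))_{n,m} = \map_{\PSh(\Delta)}(\Delta[n]\times\Delta[m], S_\bullet(\mc C, E))$ is a compatible family, indexed by $\ell$-simplices $(\alpha,\beta) : [\ell]\to [n]\times[m]$, of elements of $S_\ell(\mc C, E)$. Since $\Delta[n]\times\Delta[m] = N([n]\times[m])$, this is the colimit of its simplices.

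For each $(\alpha,\beta)$, I would send $F$ to the composite
\begin{equation*}
    \Cpt^\ell\longrightarrow \Cpt^m\times\Cpt^n\xrightarrow{\id\times\pr_2}\Cpt^m\times[n]\xrightarrow{F}\mc C.
\end{equation*}
By \Cref{lemma:buildsquarefromnerve}, this composite lies in $S_\ell(\mc C, E)$. The assignment $(\alpha,\beta)\mapsto\big((i,j)\mapsto((\beta(i),\beta(j)),(\alpha(i),\alpha(j)))\big)$ is a functor $\Delta_{/[n]\times[m]}\to \Cat_{/\Cpt^m\times\Cpt^n}$, natural in $([n],[m])$. Precomposition therefore gives maps
\begin{equation*}
    \map(\Cpt^m\times[n],\mc C)\to \lim_{(\alpha,\beta)}\map(\Cpt^\ell,\mc C)\simeq \map(\Delta[n]\times\Delta[m],\sd\mc C),
\end{equation*}
natural in $(n,m)$ and in the simplicial structure in $m$ coming from $\Cpt^\bullet$. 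Since $S_\bullet(\mc C, E)\subseteq \sd\mc C$ is a union of components, the lemma shows the map factors through $\Sq(S_\bullet(\mc C, E))$. I expect the only real bookkeeping obstacle to be making this functoriality honest in both variables simultaneously. The cleanest route is to write the whole construction as restriction along a map of bicosimplicial objects
\begin{equation*}
    \Cpt^\bullet \Rightarrow \big(([n],[m])\mapsto \Cpt^m\times[n]\big)
\end{equation*}
evaluated on the category of simplices of $[n]\times[m]$. Doing so avoids choosing coherences by hand.

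Next I would check that the map lies over $\Sq(\mc C)$. The map $\Sq(S_\bullet(\mc C, E))\to\Sq(\mc C)$ is induced by $\diag^\ast$. Restricting the composite along the diagonal of $\Cpt^\ell$ gives $i\mapsto F((\beta(i),\beta(i)),\alpha(i))$. This is exactly the restriction of $F$ along the diagonal $[m]\times[n]\to\Cpt^m\times[n]$, evaluated on $(\alpha,\beta)$, which is the structure map $\Th\to\Sq(\mc C)$ of \Cref{cor:GrdescriptionofTw}.

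Finally I would identify the two fragments.

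For the vertical fragment, take $m=0$. Then $\Cpt^0$ is a point, so $F$ is an $n$-simplex of $\mc C$, and the composite $\Cpt^\ell\to\mc C$ is $(i,j)\mapsto F(\alpha(j))$. This is constant in the vertical direction, so it lies in the sub-simplicial space of horizontal morphisms. Via $\Th_{\bullet,0}\simeq N(\mc C)$, it is the inclusion described in \Cref{lemma:sbulletisgrpull}(ii).

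For the horizontal fragment, take $n=0$. Then $F\in T_m(\mc C, E)$, and the composite is $F\circ\Cpt(\beta)$. Diagonal arrows go to equivalences, so the composite lies in the twist sub-simplicial space. This is exactly the defining inclusion $T_\bullet(\mc C, E)\subseteq S_\bullet(\mc C, E)$.
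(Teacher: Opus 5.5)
Your proposal is correct and follows the paper's construction: send $F\in N(\mc T_m(\mc C, E))_n$ to the family of composites $\Cpt^\ell\to\Cpt^m\times\Cpt^n\to\Cpt^m\times[n]\xrightarrow{F}\mc C$, use \Cref{lemma:buildsquarefromnerve} to land in $S_\ell(\mc C, E)$, and read off (i) and (ii) by setting $m=0$ and $n=0$. Your additional checks only spell out what the paper dismisses as ``evidently functorial and natural.'' These are the compatibility over $\Sq(\mc C)$ via $\diag^\ast$ and making naturality strict by working at the level of posets and the category of simplices of $[n]\times[m]$.
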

\begin{proof}
    We must give a natural in $n$, $m$ map
    \begin{equation*}
        N(\mc T_m(\mc C, E))_n\longrightarrow \map_{\PSh(\Delta)}(\Delta[n]\times\Delta[m], S_\bullet(\mc C, E))
    \end{equation*}
    of spaces. For a given $F : \Cpt^m\times [n]\to \mc C\in N(\mc T_m(\mc C, E))_n$, we send this to the map $\Delta[n]\times\Delta[m]\to S_\bullet(\mc C, E)$ which sends an $\ell$-simplex $[\ell]\to [n]\times [m]$ of $\Delta[n]\times \Delta[m]$ to the composite
    \begin{equation*}
        \begin{tikzcd}
            \Cpt^\ell\arrow{r} & \Cpt^m\times \Cpt^n\arrow{r} & \Cpt^m\times [n]\arrow{r}{F} & \mc C
        \end{tikzcd}
    \end{equation*}
    of \Cref{lemma:buildsquarefromnerve} which, by \Cref{lemma:buildsquarefromnerve}, belongs to $S_\ell(\mc C, E)$. This is evidently functorial and natural in all necessary variables.\par
    (i) and (ii) are then immediate from the definition.
\end{proof}

As in \Cref{subsec:weakfactsystems}, this induces a map $\Gr\Th_{\bullet,\bullet}(\mc C, E)\to \ascat S_\bullet(\mc C, E)$ by taking the transpose of the composite
\begin{equation*}
    \Th_{\bullet,\bullet}(\mc C, E)\longrightarrow \Sq(S_\bullet(\mc C, E))\longrightarrow \Sq(\ascat S_\bullet(\mc C, E)).
\end{equation*}

\subsection{The twist--horizontal factorization system}

In this section, we show that every simplex in $S_\bullet(\mc C, E)$ may functorially be represented as a composite of a twist and a horizontal morphism. At its heart, this generalizes the observation that every $1$-simplex in $S_\bullet(\mc C, E)$ represented by a diagram
\begin{equation*}
    \begin{tikzcd}
    	x & y \\
    	& z
    	\arrow["f", from=1-1, to=1-2]
    	\arrow["g", from=1-2, to=2-2]
    \end{tikzcd}
\end{equation*}
may be written as a composite of a twist and a horizontal morphism via the $2$-simplex
\begin{equation*}
    \begin{tikzcd}
    	x & {x\times_z y} & y \\
    	& x & z \\
    	&& {z.}
    	\arrow["{(\id, f)}", from=1-1, to=1-2]
    	\arrow[from=1-2, to=1-3]
    	\arrow[from=1-2, to=2-2]
    	\arrow["\lrcorner"{anchor=center, pos=0.125}, draw=none, from=1-2, to=2-3]
    	\arrow["g", from=1-3, to=2-3]
    	\arrow["gf", from=2-2, to=2-3]
    	\arrow[equals, from=2-3, to=3-3]
    \end{tikzcd}
\end{equation*}

\begin{definition}
    We call a diagram $\Cpt^n\to \mc C$ \emph{good} if it belongs to the image of $S_n(\mc C, E)\hookrightarrow \map_{\Cat}(\Cpt^n, \mc C)$.
\end{definition}

In \Cref{subsec:gluingdescr}, we showed (\Cref{prop:goodisdetectonspine}) that a good diagram $D : \Cpt^n\to \mc C$ corresponding to an $n$-simplex of $S_\bullet(\mc C, E)$ is fully determined by its restriction to $\mathrm{Sp}^n$. Moreover, the value of $D(i,j)$ may be recovered by three elements on $\mathrm{Sp}^n$, namely the canonical map
\begin{equation*}
    D(i,j) \xrightarrow{\simeq} D(j,j)\times_{D(j, n)} D(i, n)
\end{equation*}
is an equivalence where $(j,j),(i,n),(j,n)\in\mathrm{Sp}^n$. To construct our twist--horizontal factorizations for simplices of $S_\bullet(\mc C, E)$ we will need to reference more general fiber products of images of elements of $\mathrm{Sp}^n$. Namely, we will need to reference
\begin{equation}\label{eq:genfiberproductforPielement}
    D(i,i)\times_{D(j,n)}D(k,n)
\end{equation}
for all $i,j,k$ with $i,k\le j$. We encode these possible fiber products by a poset $\Pi^n$ which we define below. In what follows, one should think of the element $(i,j,k)\in \Pi^n$ as representing the fiber product \eqref{eq:genfiberproductforPielement} for some fixed good diagram $D : \Cpt^n\to\mc C$.

\begin{definition}
    We denote the poset $\Pi^n$ to be the collection $\{(i,j,k) : i,k\le j\}$ with the relation $(i,j,k)\le (p,q,r)$ if and only if $i\le p$, $j\le q$ and $k\le r$. This upgrades to a functor $\Pi^\bullet : \Delta\to \Cat$ where for $d : [n]\to [m]$ we have
    \begin{equation*}
        \begin{tikzcd}[row sep=0pt]
            \Pi(d) : \Pi^n\arrow{r} & \Pi^m \\
            \qquad\;\;\; (i,j,k)\arrow[mapsto]{r} & (d(i), d(j), d(k)).
        \end{tikzcd}
    \end{equation*}
\end{definition}

\begin{remark}
    There is a natural transformation $\Cpt^\bullet\Rightarrow \Pi^\bullet$ of functors $\Delta\to\Cat$ given at level $n$ by
    \begin{equation*}
        \begin{tikzcd}[row sep=0pt]
            \iota : \Cpt^n\arrow{r} & \Pi^n \\
            \quad (i,j)\arrow[mapsto]{r} & (j,j,i).
        \end{tikzcd}
    \end{equation*}
\end{remark}

Given a good diagram $D : \Cpt^n\to\mc C$, we may produce a functor $D' : \Pi^n\to\mc C$ by sending $(i,j,k)\in \Pi^n$ to $D(i,i)\times_{D(j,n)}D(k,n)$. In light of \Cref{prop:goodisdetectonspine}, the composition $\Cpt^n\xrightarrow{\iota}\Pi^n\xrightarrow{D'}\mc C$ recovers the original functor $D$. We will only ever care about functors $\Pi^n\to\mc C$ that arise in this manner, so we abstract some of their properties into what we will also call a good functor.

\begin{definition}\label{def:Pigooddiagram}
    We call a functor $\Pi^n\to \mc C$ \emph{good} if
    \begin{enumerate}
        \item every $(n,n,i)\to (n,n,j)$ gets sent to an arrow in $E$
        \item for every $i\le p\le j\le q$, the square
        \begin{equation*}
            \begin{tikzcd}
                (j,j,i)\arrow{r}\arrow{d} & (j,j,p)\arrow{d} \\
                (q,q,i)\arrow{r} & (q,q,p)
            \end{tikzcd}
        \end{equation*}
        gets sent to a pullback in $\mc C$
        \item for every $j\le i,k$ the square
        \begin{equation*}
            \begin{tikzcd}
                (i,j,k)\arrow{r}\arrow{d} & (n,n,k)\arrow{d} \\
                (i,j,j)\arrow{r} & (n,n,j)
            \end{tikzcd}
        \end{equation*}
        gets sent to a pullback square in $\mc C$
        \item for every $i\le j\le k$, the map $(i,j,j)\to (i,k,k)$ gets sent to an equivalence in $\mc C$.
    \end{enumerate}
    Since this notion is invariant under equivalence, we will denote by $\map^\mathrm{good}(\Pi^n, \mc C)$ the union of connected components of $\map_{\Cat}(\Pi^n, \mc C)$ consisting of good functors.
\end{definition}

\begin{remark}\label{rem:reformofcondiandii}
    (i) and (ii) of \Cref{def:Pigooddiagram} are equivalent to asking that the restriction $\Cpt^n\xrightarrow{\iota}\Pi^n\to \mc C$ is good.
\end{remark}

\begin{lemma}\label{lemma:extrapullbacksingoodPi}
    \begin{enumerate}
        \item \Cref{def:Pigooddiagram}(iii) is equivalent to every diagram of the form
        \begin{equation}\label{eq:biggerclassofsquares}
            \begin{tikzcd}
                (i,j,k)\arrow{r}\arrow{d} & (i',j',k)\arrow{d} \\
                (i,j,k')\arrow{r} & (i',j',k')
            \end{tikzcd}
        \end{equation}
        being sent to a pullback in $\mc C$.
        \item If $\Pi^m\to\mc C$ is good, then every morphism of the form $(i,j,k)\to (i,j,k')$ gets sent to a morphism in $E$.
    \end{enumerate}
\end{lemma}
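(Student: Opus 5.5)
For (i), one direction is immediate: the squares in \Cref{def:Pigooddiagram}(iii) are the special case of \eqref{eq:biggerclassofsquares} with $(i,j,k)\le (n,n,k)$ and $k'=j$. So the work is to show that (iii) implies every square \eqref{eq:biggerclassofsquares} is a pullback. Here $(i,j,k)\le(i',j',k)$ and $k\le k'$, with all four triples lying in $\Pi^n$. I would first treat the squares with target column $(n,n,-)$:
\begin{equation*}
    \begin{tikzcd}
        (i,j,k)\arrow{r}\arrow{d} & (n,n,k)\arrow{d} \\
        (i,j,k')\arrow{r} & (n,n,k').
    \end{tikzcd}
\end{equation*}
Since $k'\le j$, there is a further vertical map $(i,j,k')\to(i,j,j)$. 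The composite rectangle with bottom row $(i,j,j)\to(n,n,j)$ is a pullback by (iii) for $k$. The lower square, with top row $(i,j,k')\to(n,n,k')$, is a pullback by (iii) for $k'$. The pasting lemma then shows that the upper square is a pullback.

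For a general square \eqref{eq:biggerclassofsquares}, I would paste it horizontally with the square from $(i',j',-)$ to $(n,n,-)$. Both the composite rectangle (from $(i,j,-)$ to $(n,n,-)$) and the right-hand square are pullbacks by the previous step. Pasting then gives that \eqref{eq:biggerclassofsquares} is a pullback. One should check that every intermediate triple, such as $(i,j,j)$ or $(n,n,k')$, satisfies the constraint $i,k\le j$; this holds because $k\le k'\le j\le j'$. It also matters that all the pullbacks used are pullbacks along $E$-morphisms, which exist in $\mc C$.

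For (ii), given $(i,j,k)\to(i,j,k')$ with $k\le k'\le j$, part (i) applied with $(i',j')=(n,n)$ shows that the square from this arrow to $(n,n,k)\to(n,n,k')$ is a pullback. The right-hand arrow lies in $E$ by \Cref{def:Pigooddiagram}(i). Since $E$ is closed under pullback (\Cref{def:geosetup}), the left-hand arrow lies in $E$ as well.

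The main obstacle is bookkeeping rather than mathematics: in the first pasting step I must make sure the square in (iii) is oriented and indexed correctly. In particular, (iii) is stated as ``for every $j\le i,k$'', which I read as $i,k\le j$ for the triple $(i,j,k)$, and the auxiliary vertical map $(i,j,k')\to(i,j,j)$ must exist in $\Pi^n$.
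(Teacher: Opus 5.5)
Your proof is correct, and it is more economical than the paper's. Both arguments use only the pasting lemma, but they decompose the square differently.

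The paper first factors the top edge through $(i,j',k)$ and treats the cases $i=i'$ and $j=j'$ separately.
\begin{itemize}
\item For $i=i'$, it reduces to $j'=n$ and then to $k'=j$. It then needs an auxiliary square $(i,n,k)\to(n,n,k)$ over $(i,n,j)\to(n,n,j)$, which it gets by a further vertical pasting against the row $(i,n,n)\to(n,n,n)$.
\item For $j=j'$, it reduces to $k'=j$ and pastes against $(n,n,-)$.
\end{itemize}

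Your route has two steps. First, a single vertical pasting gives all squares whose right column is $(n,n,k)\to(n,n,k')$, for arbitrary $k\le k'\le j$. Here both the rectangle and the lower square are literally instances of (iii). Second, a single horizontal pasting against $(i',j',-)\to(n,n,-)$ gives an arbitrary square. This works because the column $(n,n,-)$ receives maps from every column at once.

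Your version removes the case split and the detour through $(i,n,-)$ and $(i,n,n)$. The paper's more granular reduction gains nothing in generality. Part (ii) is the same as in the paper.

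Two small remarks. Your reading of ``$j\le i,k$'' as $i,k\le j$ is the right one; the paper has a typo there. Your caveat that the relevant pullbacks are along $E$-morphisms is unnecessary, since the pasting lemma applies to given pullback squares in any category, but it does no harm.
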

\begin{proof}
    First we prove (i). It is clear that this condition implies the condition of \Cref{def:Pigooddiagram}(iii) as a special case, so we prove the other direction.\par
    For the other direction, we will show that each of these squares may be deduced to be Cartesian via pasting laws from squares occurring in \Cref{def:Pigooddiagram}(iii). To start, \eqref{eq:biggerclassofsquares} may be decomposed as the composite
    \begin{equation*}
        \begin{tikzcd}
            (i,j,k)\arrow{r}\arrow{d} & (i,j',k)\arrow{r}\arrow{d} & (i',j',k)\arrow{d} \\
            (i,j,k')\arrow{r} & (i,j',k')\arrow{r} & (i',j',k'),
        \end{tikzcd}
    \end{equation*}
    which by pasting reduces to the two cases of $i = i'$ and $j = j'$.\par
    First we handle the case of $i = i'$. By considering the composite diagram
    \begin{equation*}
        \begin{tikzcd}
            (i,j,k)\arrow{r}\arrow{d} & (i,j',k)\arrow{d}\arrow{r} & (i,n,k)\arrow{d} \\
            (i,j,k')\arrow{r} & (i,j',k')\arrow{r} & (i,n,k')
        \end{tikzcd}
    \end{equation*}
    and looking at the outer and right-most square, we reduce further by pasting to the case of $j' = n$. Next, looking at the composite diagram
    \begin{equation*}
        \begin{tikzcd}
            (i,j,k)\arrow{r}\arrow{d} & (i,n,k)\arrow{d} \\
            (i,j,k')\arrow{r}\arrow{d} & (i,n,k')\arrow{d} \\
            (i,j,j)\arrow{r} & (i,n,j)
        \end{tikzcd}
    \end{equation*}
    we reduce to when $k' = j$. For this, we look at the pasted diagram
    \begin{equation*}
        \begin{tikzcd}
            (i,j,k)\arrow{r}\arrow{d} & (i,n,k)\arrow{d}\arrow{r} & (n,n,k)\arrow{d} \\
            (i,j,j)\arrow{r} & (i,n,j)\arrow{r} & (n,n,j).
        \end{tikzcd}
    \end{equation*}
    The outer square gets sent to a pullback by \Cref{def:Pigooddiagram}(iii), so it suffices to prove that the right-most square gets sent to a pullback. For this, we consider the composite
    \begin{equation*}
        \begin{tikzcd}
            (i,n,k)\arrow{d}\arrow{r} & (n,n,k)\arrow{d} \\
            (i,n,j)\arrow{r}\arrow{d} & (n,n,j)\arrow{d} \\
            (i,n,n)\arrow{r} & (n,n,n).
        \end{tikzcd}
    \end{equation*}
    The outer and bottom squares get sent to pullbacks by \Cref{def:Pigooddiagram}(iii) so we conclude that the top square gets sent to a pullback as required.\par
    Next we handle the case of $j = j'$. By considering the composite diagram
    \begin{equation*}
        \begin{tikzcd}
            (i,j,k)\arrow{r}\arrow{d} & (i',j,k)\arrow{d} \\
            (i,j,k')\arrow{r}\arrow{d} & (i',j, k')\arrow{d} \\
            (i,j,j)\arrow{r} & (i',j,j),
        \end{tikzcd}
    \end{equation*}
    we reduce to the case of $k' = j$. We may then consider the composite diagram
    \begin{equation*}
        \begin{tikzcd}
            (i,j,k)\arrow{r}\arrow{d} & (i',j,k)\arrow{r}\arrow{d} & (n,n,k)\arrow{d} \\
            (i,j,j)\arrow{r} & (i',j,j)\arrow{r} & (n,n,j)
        \end{tikzcd}
    \end{equation*}
    from which we see that the right-most square and outer square get sent to pullbacks by \Cref{def:Pigooddiagram}(iii). The result then follows by pasting.\par
    For (ii), using (i), we have that the diagram
    \begin{equation*}
        \begin{tikzcd}
            (i,j,k)\arrow{r}\arrow{d} & (n,n,k)\arrow{d} \\
            (i,j,k')\arrow{r} & (n,n,k')
        \end{tikzcd}
    \end{equation*}
    gets sent to a pullback in $\mc C$. Since $E$ is stable under base change, the result then follows from \Cref{def:Pigooddiagram}(i).
\end{proof}

\begin{corollary}\label{cor:restrictionpresgoodnessPi}
    For every $d : [n]\to [m]$, the induced restriction $\map_{\Cat}(\Pi^m, \mc C)\to \map_{\Cat}(\Pi^n, \mc C)$ sends good diagrams to good diagrams. In particular, $\map^\mathrm{good}(\Pi^n,\mc C)$ upgrades to a simplicial space $\map^\mathrm{good}(\Pi^\bullet, \mc C) : \Delta^\mathrm{op}\to \Ani$.
\end{corollary}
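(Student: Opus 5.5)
The plan is to fix a good functor $D : \Pi^m \to \mc C$ and an order-preserving map $d : [n]\to [m]$, and to verify the four conditions of \Cref{def:Pigooddiagram} for the composite $D\circ\Pi(d) : \Pi^n\to\mc C$. Recall that $\Pi(d)(i,j,k) = (d(i),d(j),d(k))$, and that $i,k\le j$ implies $d(i),d(k)\le d(j)$. Hence $\Pi(d)$ is a well-defined map of posets. Functoriality in $d$ is automatic, since $\Pi^\bullet$ is a functor $\Delta\to\Cat$. So the simplicial structure on $\map^\mathrm{good}(\Pi^\bullet,\mc C)$ follows once we know that goodness is preserved: it is a union of components closed under all restrictions.

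For the individual conditions, I will use \Cref{lemma:extrapullbacksingoodPi}, which upgrades the conditions to ones that are visibly stable under reindexing.

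\emph{Condition (iv).} For $i\le j\le k$, the map $(i,j,j)\to(i,k,k)$ is sent to $(d(i),d(j),d(j))\to(d(i),d(k),d(k))$ with $d(i)\le d(j)\le d(k)$. This is an equivalence by (iv) for $D$.

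\emph{Condition (iii).} By \Cref{lemma:extrapullbacksingoodPi}(i), it suffices to show that $D\circ\Pi(d)$ sends every square of the form \eqref{eq:biggerclassofsquares} to a pullback. Such a square is mapped by $\Pi(d)$ to a square of the same shape: the first two coordinates move together and the third moves independently. This image is a pullback by \Cref{lemma:extrapullbacksingoodPi}(i) applied to $D$. One degenerate case needs a remark. If $d$ collapses indices, some edges of the image square may become identities, but a square with two parallel identity edges is trivially a pullback, so nothing breaks.

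\emph{Condition (i).} The map $(n,n,i)\to(n,n,j)$ goes to $(d(n),d(n),d(i))\to(d(n),d(n),d(j))$. This lies in $E$ by \Cref{lemma:extrapullbacksingoodPi}(ii), since it only changes the last coordinate.

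\emph{Condition (ii).} This is the one place where the failure $d(n)\neq m$ matters, and I expect it to be the main obstacle. For $i\le p\le j\le q$, the square with vertices $(j,j,i),(j,j,p),(q,q,i),(q,q,p)$ maps to the square with $d$ applied coordinatewise, with $d(i)\le d(p)\le d(j)\le d(q)$. In that image, the first two coordinates move together and the third moves independently, so it is again of the form \eqref{eq:biggerclassofsquares}. It is therefore a pullback by \Cref{lemma:extrapullbacksingoodPi}(i) applied to $D$.

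Thus (ii), like (iii), reduces to the extended pullback class, and the only genuine content lies in \Cref{lemma:extrapullbacksingoodPi}. It remains to check the bookkeeping: that every square in (ii) and (iii), and every arrow in (i), only ever changes coordinates in the pattern allowed by that lemma.
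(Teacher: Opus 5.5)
Your proof is correct. For conditions (iii) and (iv) it is exactly the paper's argument, but for (i) and (ii) you take a different route.

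\begin{itemize}
\item \textbf{The paper's route.} It handles (i) and (ii) together through \Cref{rem:reformofcondiandii}: these two conditions say precisely that $D\circ\iota$ is a good $\Cpt^m$-diagram. Naturality of $\iota:\Cpt^\bullet\Rightarrow\Pi^\bullet$ gives $D\circ\Pi(d)\circ\iota = D\circ\iota\circ\Cpt(d)$. This is good because $S_\bullet(\mc C,E)$ is a simplicial space.
\item \textbf{Your route.} You push the test arrows of (i) and the test squares of (ii) forward along $\Pi(d)$. You then observe that they land in the extended classes of \Cref{lemma:extrapullbacksingoodPi}, which are visibly stable under coordinatewise reindexing. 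Your check for (ii) is right: the image square, with vertices $(d(j),d(j),d(i))$, $(d(j),d(j),d(p))$, $(d(q),d(q),d(i))$, $(d(q),d(q),d(p))$, is a transposed square of the form \eqref{eq:biggerclassofsquares}.
\end{itemize}

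Your route makes the whole corollary hinge on the single lemma. It also shows something in passing. The proof of \Cref{lemma:extrapullbacksingoodPi}(i) uses only \Cref{def:Pigooddiagram}(iii), so taking $d=\id$ in your argument shows that condition (ii) of the definition is already implied by (iii).

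The paper's route instead reuses the simplicial structure of $S_\bullet(\mc C,E)$. It keeps (i)--(ii) phrased as ``the $\Cpt$-part is good.'' That phrasing is also what makes $\iota^\ast:\map^\mathrm{good}(\Pi^\bullet,\mc C)\to S_\bullet(\mc C,E)$ in \Cref{prop:lifttoPiposet} a map of simplicial spaces.

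One correction to your commentary: (ii) is not the place where $d(n)\neq m$ matters, since it never mentions the top index. It is (i) and (iii) that refer to $(n,n,\cdot)$. Their image $(d(n),d(n),\cdot)$ need not be $(m,m,\cdot)$, and that is exactly why you needed the extended forms of the lemma for those two conditions.
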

\begin{proof}
    Fix $F : \Pi^m\to \mc C$ a good diagram. We check that $F\circ \Pi(d)$ is good from the definition.\par
    For (i) and (ii), this follows from \Cref{rem:reformofcondiandii} and the fact that the inclusion $\Cpt^n\to \Pi^n$ is natural in $n$ and that restrictions of good diagrams $\Cpt^m\to \mc C$ along $\Cpt(d) : \Cpt^n\to\Cpt^m$ are good.\par
    For (iii), the reformulation given by \Cref{lemma:extrapullbacksingoodPi} is clearly preserved under restriction.\par
    (iv) is immediate.
\end{proof}

We now come to our quintessential source of good diagrams $\Pi^n\to\mc C$.

\begin{proposition}\label{prop:lifttoPiposet}
    There exists a functor $S_\bullet(\mc C, E)\to \map^\mathrm{good}(\Pi^\bullet, \mc C)$ fitting into a commutative diagram
    \begin{equation*}
        \begin{tikzcd}
            S_\bullet(\mc C, E)\arrow{r}\arrow[bend right=20]{rr}[below]{\id} & \map^\mathrm{good}(\Pi^\bullet, \mc C)\arrow{r}{\iota^\ast} & S_\bullet(\mc C, E)
        \end{tikzcd}
    \end{equation*}
    which sends a simplex $\Delta^n\to S_\bullet(\mc C, E)$ given by a functor $D : \Cpt^n\to \mc C$ to the map
    \begin{equation*}
        \begin{tikzcd}[row sep=0pt]
            \Pi^n \arrow{r} & \mc C \\
            (i,j,k)\arrow[mapsto]{r} & D(i,i)\times_{D(j,n)} D(k,n).
        \end{tikzcd}
    \end{equation*}
\end{proposition}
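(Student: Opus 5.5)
The plan is to realize the assignment $D\mapsto D'$ as a right Kan extension, so that functoriality in $[n]\in\Delta$ and naturality come for free from uniqueness of Kan extensions. Concretely, consider the subposet $Q^n\subseteq\Pi^n$ spanned by the image of $\mathrm{Sp}^n$ under $\iota$, namely the elements $(i,i,i)=\iota(i,i)$ and $(n,n,i)=\iota(i,n)$. For each $(i,j,k)\in\Pi^n$, the subposet $Q^n_{(i,j,k)/}$ of elements of $Q^n$ lying above $(i,j,k)$ contains the cospan
\begin{equation*}
(i,i,i)\longrightarrow (n,n,j)\longleftarrow (n,n,k),
\end{equation*}
and this cospan should be initial in it. To check this, I would use Quillen's theorem: the relevant comma posets have a minimum or a contractible zig-zag shape, exactly as in the proofs of \Cref{prop:goodisdetectonspine} and \Cref{lemma:AnmdescrbyKan}. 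Hence the pointwise right Kan extension of $D|_{\mathrm{Sp}^n}$ along $\mathrm{Sp}^n\xrightarrow{\iota}Q^n\subseteq\Pi^n$ exists, because $D(j,n)\leftarrow D(k,n)$ lies in $E$ by \Cref{prop:goodisdetectonspine}(ii) and pullbacks along $E$ exist. Its value at $(i,j,k)$ is $D(i,i)\times_{D(j,n)}D(k,n)$, as required.

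Next, define $S_n(\mc C,E)\to\map(\Pi^n,\mc C)$ as the composite of the equivalence $S_n(\mc C,E)\simeq\map_E(\mathrm{Sp}^n,\mc C)$ of \Cref{prop:goodisdetectonspine} with right Kan extension along $\mathrm{Sp}^n\to\Pi^n$. For $d:[n]\to[m]$, restriction along $\Pi(d)$ does not carry $Q^n$ into $Q^m$. To get naturality, I would compare two things. On one side is the right Kan extension along $\Cpt^n\to\Pi^n$ of the full good diagram $D:\Cpt^n\to\mc C$. Since $D$ is itself right Kan extended from $\mathrm{Sp}^n$, this agrees with the extension from $\mathrm{Sp}^n$ by transitivity of Kan extensions; the same finality argument applies with $\Cpt^n$ in place of $Q^n$. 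On the other side is the functor $\iota_*:\map(\Cpt^\bullet,\mc C)\to\map(\Pi^\bullet,\mc C)$.

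The naturality check then says that for good $D$ on $\Cpt^m$, the Beck--Chevalley map
\begin{equation*}
(\iota_*D)\circ\Pi(d)\to\iota_*(D\circ\Cpt(d))
\end{equation*}
is an equivalence. Evaluated at $(i,j,k)$, this is the canonical map
\begin{equation*}
D(d i,d i)\times_{D(d j,m)}D(d k,m)\to D(d i,d i)\times_{D(d j,d n)}D(d k,d n).
\end{equation*}
That map is an equivalence by the pullback pasting coming from goodness of $D$: the square with corners $(dk,dn)$, $(dk,m)$, $(dj,dn)$, $(dj,m)$ is Cartesian. Thus $\iota_*$ restricted to good diagrams is natural in $[n]$ and assembles into a map of simplicial spaces $S_\bullet(\mc C,E)\to\map(\Pi^\bullet,\mc C)$.

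It remains to verify three things. First, the image lands in good diagrams. Conditions (i) and (ii) of \Cref{def:Pigooddiagram} follow from \Cref{rem:reformofcondiandii} once $\iota^*\iota_*D\simeq D$, and that holds because $\iota$ is fully faithful, so the counit of a pointwise right Kan extension along a fully faithful functor is invertible. Condition (iii) is pullback pasting on explicit fiber products. Condition (iv) holds because $(i,j,j)\mapsto D(i,i)\times_{D(j,n)}D(j,n)\simeq D(i,i)$. Second, the identity $\iota^*\circ(\text{lift})\simeq\id$ is exactly this same counit equivalence. Third, the formula $D'(i,j,k)=D(i,i)\times_{D(j,n)}D(k,n)$ is the pointwise computation above.

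The main obstacle is the naturality in $[n]$. The identification of the Beck--Chevalley map with the displayed comparison of fiber products requires the cofinality computation to be done carefully, uniformly in $d$.
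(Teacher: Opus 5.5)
There is a genuine gap at the first step: the functor in the statement is not a right Kan extension along $\iota$. With the componentwise order, $\iota(p,p)=(p,p,p)$ lies above $(i,j,k)$ iff $p\ge j$, and $\iota(p,n)=(n,n,p)$ lies above it iff $p\ge k$. So $(i,i,i)\notin Q^n_{(i,j,k)/}$ whenever $i<j$, and your cospan is not in the indexing poset at all. The initial cospan there is $(j,j,j)\to(n,n,j)\leftarrow(n,n,k)$. More simply, $\{(p,q)\in\Cpt^n:\iota(p,q)\ge(i,j,k)\}$ has minimum $(k,j)$, so
\begin{equation*}
(\iota_*D)(i,j,k)\simeq D(j,j)\times_{D(j,n)}D(k,n)\simeq D(k,j),
\end{equation*}
i.e.\ $\iota_*D=D\circ\rho$ for the retraction $\rho(i,j,k)=(k,j)$ of $\iota$.

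This agrees with $D(i,i)\times_{D(j,n)}D(k,n)$ only when $i=j$. It is also not good: it sends $(i,j,j)\to(i,k,k)$ to the diagonal map $D(j,j)\to D(k,k)$, so \Cref{def:Pigooddiagram}(iv) fails for any $D$ that is not a twist. Your check of (iv) uses the target formula, not the functor you actually built. It would also be useless downstream: composed with $\mathrm{Fct}$ it gives $(i,j)\mapsto D(c(i),c(j))$, independent of the row coordinate $r$, so \Cref{thm:twisthorfact} would produce no twist--horizontal factorization. For the same reason your naturality step is vacuous, since $D\circ\rho$ is strictly natural in $[n]$. The functor $D'$ genuinely mixes variances. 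On the elements $(i,j,j)$ it is \emph{left} Kan extended from the diagonal, $D'(i,j,j)\simeq D(i,i)$, and only afterwards is it right Kan extended via the pullbacks of \Cref{def:Pigooddiagram}(iii).

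The paper instead defines $D'$ directly as $\lim\circ D_*\circ C^n$. Here $C^n:\Pi^n\to\Fun(\Lambda_2^2,\Cpt^n)$ is the order-preserving map $(i,j,k)\mapsto\bigl((i,i)\to(j,n)\leftarrow(k,n)\bigr)$. The right legs land in $E$, so the limits exist, and goodness is read off from the formula. Finally, $\iota^*D'$ agrees with $D$ on $\mathrm{Sp}^n$, hence everywhere by \Cref{prop:goodisdetectonspine}.

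Your fiber-product comparison is the salvageable part. $C^\bullet$ commutes with $d:[n]\to[m]$ only up to the maps $(dj,dn)\to(dj,m)$. The induced map $D(di,di)\times_{D(dj,dn)}D(dk,dn)\to D(di,di)\times_{D(dj,m)}D(dk,m)$ is an equivalence by pasting, and that is exactly what makes this construction simplicial.

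Alternatively, if you want naturality for free, use the correct two-step characterization. By (iv), a good $F:\Pi^n\to\mc C$ is left Kan extended from $\iota(\mathrm{Sp}^n)$ onto $\{(i,j,j)\}\cup\{(n,n,k)\}$. By (iii), it is right Kan extended from there. Every $D\in S_n(\mc C,E)$ admits this two-step extension, and it is given by the stated formula. Hence $\iota^*:\map^{\mathrm{good}}(\Pi^\bullet,\mc C)\to S_\bullet(\mc C,E)$ is a levelwise equivalence, and the desired lift is its inverse.
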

\begin{proof}
    We have a functor
    \begin{equation*}
        \begin{tikzcd}[row sep=0pt]
            C^n : \Pi^n\arrow{r} & \Fun(\Lambda_2^2, \Cpt^n) \\
            (i,j,k)\arrow[mapsto]{r} & (i,i)\rightarrow (j,n)\leftarrow (k,n)
        \end{tikzcd}
    \end{equation*}
    which is natural in $n$. We may then consider the composite
    \begin{equation*}
        \begin{tikzcd}[column sep=17pt]
            S_\bullet(\mc C, E) \arrow[hook]{r} & \map_{\Cat}(\Cpt^\bullet, \mc C)\arrow{r}{\Fun(\Lambda_2^2, -)_\ast} &[30pt] \map_{\Cat}(\Fun(\Lambda_2^2, \Cpt^\bullet), \Fun(\Lambda_2^2,\mc C))\arrow{r}[above]{(C^\bullet)^\ast} & \map_{\Cat}(\Pi^\bullet, \Fun(\Lambda_2^2, \mc C)).
        \end{tikzcd}
    \end{equation*}
    Since diagrams $F :\Cpt^n\to\mc C\in S_n(\mc C, E)$ send vertical morphisms to morphisms in $E$ this composite factors as
    \begin{equation*}
        S_\bullet(\mc C, E)\longrightarrow  \map_{\Cat}(\Pi^\bullet, \Fun_E(\Lambda_2^2,\mc C))\longrightarrow \map_{\Cat}(\Pi^\bullet, \Fun(\Lambda_2^2,\mc C))
    \end{equation*}
    where $\Fun_E(\Lambda_2^2, \mc C)$ denotes the full subcategory of $\Fun(\Lambda_2^2,\mc C)$ consisting of cospans which send the right leg $2\leftarrow 1$ to a morphism in $E$. Since $\mc C$ has pullbacks along morphisms in $E$, there is a functor $\lim : \Fun_E(\Lambda_2^2, \mc C)\to \mc C$ which forms the limit of these cospans. We may thus take the composite
    \begin{equation}\label{eq:upgradetoPi}
        \begin{tikzcd}
            S_\bullet(\mc C, E)\arrow{r} &  \map_{\Cat}(\Pi^\bullet, \Fun_E(\Lambda_2^2,\mc C))\arrow{r}{\lim_\ast} & \map(\Pi^\bullet, \mc C).
        \end{tikzcd}
    \end{equation}
    This map sends a simplex $\Delta^n\to S_\bullet(\mc C, E)$ corresponding to a diagram $D : \Cpt^n\to\mc C$ to the diagram
    \begin{equation*}
        \begin{tikzcd}[row sep=0pt]
            \Pi^n \arrow{r} & \mc C \\
            (i,j,k)\arrow[mapsto]{r} & D(i,i)\times_{D(j,n)} D(k,n)
        \end{tikzcd}
    \end{equation*}
    which one readily checks from the definition is good. It follows that the map \eqref{eq:upgradetoPi} lands inside the sub-simplicial space $\map^\mathrm{good}(\Pi^\bullet,\mc C)\subseteq \map_{\Cat}(\Pi^\bullet, \mc C)$.\par
    When restricted to $\mathrm{Sp}^n\subseteq \Cpt^n\subseteq \Pi^n$, this formula reduces to the identity up to canonical homotopy so that we have a commutative diagram
    \begin{equation*}
        \begin{tikzcd}
            S_\bullet(\mc C, E)\arrow{r}\arrow{dr}[below left]{\mathrm{res}} & \map^\mathrm{good}(\Pi^\bullet, \mc C)\arrow{r}{\iota^\ast} & S_\bullet(\mc C, E)\arrow{dl}[below right]{\mathrm{res}} \\
            & \map_E(\mathrm{Sp}^\bullet, \mc C)
        \end{tikzcd}
    \end{equation*}
    and thus the result follows from \Cref{prop:goodisdetectonspine}.
\end{proof}

\begin{definition}\label{def:pathinPifromCpt}
    For every $[m]\in \Delta$, define
    \begin{equation*}
        \begin{tikzcd}[row sep=0pt]
            \mathrm{Fct}(m) : N(\Cpt^m)\arrow{r} & \map_{\Cat}(\Cpt^\bullet, \Pi^m)
        \end{tikzcd}
    \end{equation*}
    which on $n$-simplices sends $(r,c) : [n]\to \Cpt^m$ to
    \begin{equation*}
        (i,j)\mapsto (r(j), c(j), c(i)) : \Cpt^n\to \Pi^m.
    \end{equation*}
    This is natural in $m$ and upgrades to a map $\mathrm{Fct} : N(\Cpt^\ast)_\bullet\to \map(\Cpt^\bullet, \Pi^\ast)$ of bisimplicial spaces.
\end{definition}

\begin{remark}
    All the categories occurring in \Cref{def:pathinPifromCpt} are ordinary categories, and therefore all the simplicial and bisimplicial spaces are discrete. Thus one is able to define the maps pointwise and verify naturality in $m$ on the nose, as there are no higher topological considerations.
\end{remark}

\begin{definition}
    We call a functor $\Cpt^n\to \Pi^m$ \emph{universally good} if for every good diagram $\Pi^m\to\mc C$, the composite $\Cpt^n\to \Pi^m\to\mc C$ is good.
\end{definition}

\begin{lemma}
    Let $\map^\mathrm{good}(\Cpt^n, \Pi^m)$ denote the subspace of components of $\map_{\Cat}(\Cpt^n, \Pi^m)$ consisting of universally good functors.
    \begin{enumerate}
        \item $\map^\mathrm{good}(\Cpt^n, \Pi^m)$ upgrades to a sub-bisimplicial space of $\map_{\Cat}(\Cpt^\bullet, \Pi^\ast)$. That is,
        \begin{enumerate}
            \item For every $d : [m]\to [m']$, if $\Cpt^n\to\Pi^m$ is universally good then so is the composite $\Cpt^n\to\Pi^m\to\Pi^{m'}$
            \item For every $d : [n']\to [n]$, if $\Cpt^n\to\Pi^m$ is universally good then so is $\Cpt^{n'}\to\Cpt^n\to \Pi^m$
        \end{enumerate}
        \item $\mathrm{Fct}$ lands inside the sub-bisimplicial space $\map^\mathrm{good}(\Cpt^\bullet, \Pi^\ast)\subseteq \map(\Cpt^\bullet, \Pi^\ast)$.
    \end{enumerate}
\end{lemma}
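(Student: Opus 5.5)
Part (i)(b) is formal. If $\phi:\Cpt^n\to\Pi^m$ is universally good and $G:\Pi^m\to\mc C$ is good, then $G\phi$ is good, i.e.\ lies in $S_n(\mc C,E)$. Since $S_\bullet(\mc C,E)$ is a simplicial space, restriction along $\Cpt(d)$ preserves goodness, so $G\phi\Cpt(d)$ is good.

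Part (i)(a) is equally formal once we use \Cref{cor:restrictionpresgoodnessPi}. Precomposition with $\Pi(d):\Pi^m\to\Pi^{m'}$ sends good diagrams $G':\Pi^{m'}\to\mc C$ to good diagrams $G'\Pi(d)$. Hence $G'\Pi(d)\phi$ is good whenever $\phi$ is universally good.

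Part (ii) is the actual content, and I would check it directly from \Cref{def:Pigooddiagram} together with \Cref{lemma:extrapullbacksingoodPi}. Fix a good $G:\Pi^m\to\mc C$ and an $n$-simplex $(r,c):[n]\to\Cpt^m$, so $r(j)\le c(j)$ and both $r,c$ are monotone. Set $\phi(i,j)=(r(j),c(j),c(i))$ for $i\le j$. This lands in $\Pi^m$ because $r(j)\le c(j)$ and $c(i)\le c(j)$, and it is monotone. We must show $G\phi\in S_n(\mc C,E)$, which amounts to two conditions.

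\emph{Vertical arrows in $E$.} A vertical arrow $(i,j)\to(i',j)$ maps to $(r(j),c(j),c(i))\to(r(j),c(j),c(i'))$, which changes only the third coordinate. By \Cref{lemma:extrapullbacksingoodPi}(ii), $G$ sends it into $E$.

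\emph{Unit squares are Cartesian.} The square with corners $(i,j)$, $(i+1,j)$, $(i,j+1)$, $(i+1,j+1)$ maps to
\begin{equation*}
    \begin{tikzcd}
        (r(j),c(j),c(i))\arrow{r}\arrow{d} & (r(j+1),c(j+1),c(i))\arrow{d} \\
        (r(j),c(j),c(i+1))\arrow{r} & (r(j+1),c(j+1),c(i+1)).
    \end{tikzcd}
\end{equation*}
Here the horizontal maps fix the third coordinate and the vertical maps change only the third coordinate. This is exactly a square of the form \eqref{eq:biggerclassofsquares} with $i'=r(j+1)$, $j'=c(j+1)$, $k=c(i)$, $k'=c(i+1)$. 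By \Cref{lemma:extrapullbacksingoodPi}(i), $G$ sends it to a pullback. Pasting unit squares then gives all squares, as in \Cref{lemma:buildsquarefromnerve}.

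The main obstacle is making sure these two conditions really characterise membership in $S_n(\mc C,E)$, rather than needing, say, the right Kan extension condition of \Cref{prop:goodisdetectonspine}. The definition of $S_\bullet(\mc C,E)$ only asks for Cartesian squares and vertical arrows in $E$, so the two checks above suffice. No use of conditions (iii)/(iv) beyond \Cref{lemma:extrapullbacksingoodPi} is needed.

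Naturality in $m$ and $n$ of the resulting lift is automatic, because the bisimplicial spaces involved are discrete and the subspace is a union of components.
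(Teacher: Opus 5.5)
Your proposal is correct and matches the paper's proof: (i)(a) via \Cref{cor:restrictionpresgoodnessPi}, (i)(b) via closure of $S_\bullet(\mc C,E)$ under restriction, and (ii) via \Cref{lemma:extrapullbacksingoodPi}. For (ii), vertical arrows change only the third coordinate, and squares have the shape \eqref{eq:biggerclassofsquares}. The only cosmetic difference is that you check unit squares and paste, whereas the paper applies \Cref{lemma:extrapullbacksingoodPi}(i) to arbitrary squares $(i,j)\to(p,q)$ directly.
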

\begin{proof}
    For (i)(a), this follows from the fact that if $\Pi^{m'}\to \mc C$ is good, then the composite $\Pi^m\to \Pi^{m'}\to \mc C$ is good (see \Cref{cor:restrictionpresgoodnessPi}). For (i)(b), this follows from the fact that if $\Cpt^n\to\mc C$ is good, then so is $\Cpt^{n'}\to\Cpt^n\to\mc C$.\par
    For (ii), fix $(r,c) : [n]\to\Cpt^m$ and suppose that $F : \Pi^m\to\mc C$ is good. We need to check that $F\circ \mathrm{Fct}(m)([n]\to\Cpt^m)$ is good. Firstly, given
    \begin{equation*}
        \begin{tikzcd}
            (i,j)\arrow{r}\arrow{d} & (p,j)\arrow{d} \\
            (i,q)\arrow{r} & (p,q)
        \end{tikzcd}
    \end{equation*}
    in $\Cpt^n$, we need
    \begin{equation*}
        \begin{tikzcd}
            (r(j), c(j), c(i))\arrow{r}\arrow{d} & (r(j), c(j), c(p))\arrow{d} \\
            (r(q), c(q), c(i))\arrow{r} & (r(q), c(q), c(p))
        \end{tikzcd}
    \end{equation*}
    to be sent to a pullback under $F : \Pi^m\to\mc C$. This follows from \Cref{lemma:extrapullbacksingoodPi}. Additionally, we need to check that every morphism $(i,p)\to (j,p)$ gets sent to a morphism in $E$, or equivalently that $F$ sends morphisms of the form $(r(p), c(p), c(i))\to (r(p), c(p), c(j))$ to a morphism in $E$. This again follows from \Cref{lemma:extrapullbacksingoodPi}.
\end{proof}

We are now equipped to construct the twist--horizontal factorization. We refer to \Cref{ex:workedfactex} below for a worked example to illustrate this factorization procedure.

\begin{theorem}\label{thm:twisthorfact}
    There exists a map
    \begin{equation*}
        \begin{tikzcd}
            ff_{\mathrm{TH}} : S_\bullet(\mc C, E)\arrow{r} & \cnr \Th_{\bullet,\bullet}(\mc C, E)
        \end{tikzcd}
    \end{equation*}
    and a factoring
    \begin{equation*}
        \begin{tikzcd}
            f_\mathrm{TH} : S_\bullet(\mc C, E)\arrow{r} & \sd S_\bullet(\mc C, E)
        \end{tikzcd}
    \end{equation*}
    fitting into a commutative diagram
    \begin{equation}\label{eq:twhorfactsquare}
        \begin{tikzcd}
            \cnr \Th_{\bullet,\bullet}(\mc C, E)\arrow{r}{\cnr\alpha} & \fsd S_\bullet(\mc C, E) \\
            S_\bullet(\mc C, E)\arrow{u}[left]{ff_\mathrm{TH}}\arrow{r}[below]{f_\mathrm{TH}} & \sd S_\bullet(\mc C, E)\arrow{u}
        \end{tikzcd}
    \end{equation}
    where $\alpha : \Th_{\bullet,\bullet}(\mc C, E)\to \Sq(S_\bullet(\mc C, E))$ is the map of \Cref{prop:mapsonTm}. Moreover, this factorization is degenerate on horizontal and vertical fragments (see \Cref{def:degenerateonhoriz}).
\end{theorem}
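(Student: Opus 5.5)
The plan is to build both maps from a single construction. First lift a simplex to a good diagram on $\Pi^n$ via \Cref{prop:lifttoPiposet}. Then pull it back along the universally good "paths" supplied by $\mathrm{Fct}$.

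Concretely, for an $n$-simplex $D\in S_n(\mc C, E)$ let $D'\colon\Pi^n\to\mc C$ be its good extension. The twist--horizontal factorization of $D$ should be the diagram on $\Cpt^n\times\Cpt^n$ (or its restriction to $\Cpt^n$) given by
\begin{equation*}
    ((a,b),(i,j))\longmapsto D'(\text{the triple attached to } (a,b),(i,j)).
\end{equation*}
Its twist part uses triples of the form $(i,j,k)$ with $i$ moving along the diagonal, and its horizontal part moves $j,k$ with $i$ fixed. On $1$-simplices this recovers Verdier's diagonal trick $x\to x\times_z y\to y$.

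To make this precise I would realise $ff_{\mathrm{TH}}$ as the composite
\begin{equation*}
    S_n(\mc C,E)\to\map^{\mathrm{good}}(\Pi^n,\mc C)\to\map(\CCpt^n,\Th_{\bullet,\bullet}(\mc C,E)).
\end{equation*}
The second map sends a good $G$ to the map whose value on a $(p,q)$-simplex $\Delta[p,q]\to\CCpt^n$, i.e.\ a square in $\Cpt^n$ with horizontal and vertical legs, is $G$ composed with a map $\Cpt^q\times[p]\to\Pi^n$. That map is built from $\mathrm{Fct}(n)$: an object $((a,b),k)$ goes to $(r(k),c(b),c(a))$, where the first coordinate records the vertical (twist) direction. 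I would check in order:
\begin{enumerate}
    \item This lands in $N(\mc T_q(\mc C,E))_p$. Membership in $S_q$ follows from the lemma that $\mathrm{Fct}$ is universally good. Diagonals going to equivalences follows from \Cref{def:Pigooddiagram}(iv). The Cartesian condition in the $[p]$-direction follows from \Cref{lemma:extrapullbacksingoodPi}(i).
    \item Everything is natural in $n,p,q$, which is automatic because the posets are discrete.
\end{enumerate}
The factoring $f_{\mathrm{TH}}$ is defined analogously, by precomposing $D'$ with a natural map $\Cpt^{n}\times_{\text{diag}}\ldots$. More precisely, I would use the map $N(\Cpt^n)\to\map(\Cpt^\bullet,\Pi^n)$ from $\mathrm{Fct}$ evaluated on the corner path, and set $(\sd S)_n\ni$ the diagram $N(\Cpt^n)\to S_\bullet$ induced by $\mathrm{Fct}(n)$. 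It is a section of $\sd S\to S$ because restricting to the diagonal path $(j,j)$ gives the triples $(j,j,i)=\iota(i,j)$, so $\iota^\ast D'\simeq D$ by \Cref{prop:lifttoPiposet}.

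Commutativity of \eqref{eq:twhorfactsquare} should hold essentially on the nose. Both composites send $D$ to $D'$ precomposed with the same maps $\Cpt^\ell\to\Pi^n$. Through $\alpha$ these maps are given by the formula of \Cref{lemma:buildsquarefromnerve}, and through $\sd\to\fsd$ by restriction along $\CCpt^n\to\Sq\Cpt^n$. Verifying this is a discrete combinatorial identity.

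For degeneracy on fragments, take a horizontal morphism. Its extension $D'$ is constant in the first coordinate up to equivalence, by condition (iv) and the fact that vertical maps are identities. So the factorization reduces to the projection $\CCpt^n\to\Delta[n,0]$ or $\Delta[0,n]$, as required in \Cref{def:degenerateonhoriz}; the twist case is symmetric. The main obstacle I expect is pinning down the exact formula for $\Cpt^q\times[p]\to\Pi^n$ so that both the $\mc T$-membership (especially the Cartesianness of naturality squares) and the two degeneracy conditions hold at once. I would first get it right on $\CCpt^2$ by hand, as in the worked example, and then generalize.
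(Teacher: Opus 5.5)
Your constructions match the paper's. $f_{\mathrm{TH}}$ is the lift of \Cref{prop:lifttoPiposet} followed by precomposition along $\mathrm{Fct}$. $ff_{\mathrm{TH}}$ sends $\beta=(r,c)$ to $F_\beta((a,b),k)=D'(r(k),c(b),c(a))$. The square \eqref{eq:twhorfactsquare} commutes on the nose. One small correction to your check that $F_\beta\in N(\mc T_q(\mc C,E))_p$: a square pairing a $[p]$-edge with a \emph{horizontal} edge of $\Cpt^q$ moves only the first and second coordinates. So it is not an instance of \Cref{lemma:extrapullbacksingoodPi}(i). Instead, reduce to the squares (ii)' of \Cref{lemma:AnmdescrbyKan} and use \Cref{def:Pigooddiagram}(iv) as well.

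The gap is in the degeneracy clause, where your bookkeeping of directions is reversed. In \Cref{ex:workedfactex}, the twist $t_1$ comes from the triples $(0,0,0),(0,1,0),(0,1,1)$, with the first coordinate fixed. The horizontal morphism $h_1$ comes from $(0,1,1),(1,1,1)$, with the first coordinate moving. So $r(k)$ records the $[p]$-direction, which is the vertical fragment $\Th_{\bullet,0}(\mc C,E)\simeq N(\mc C)$ of horizontal morphisms, not the twists.

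Your key assertion is therefore false. If $D$ is horizontal, then $D(i,j)\simeq x_j$ and $D'(i,j,k)\simeq x_i\times_{x_n}x_n\simeq x_i$, which is not constant in the first coordinate. What actually holds is the following.
\begin{enumerate}
\item For horizontal $D$, $D'$ is independent of $(j,k)$, so $F_\beta$ sees only $r$. This gives degeneracy along $\CCpt^n\to\Delta[n,0]$.
\item For a twist $D$, $D'$ inverts first-coordinate maps because $D$ inverts its diagonal. Condition (iv) holds for every good diagram, so it cannot be what distinguishes the two cases. Hence $F_\beta$ is independent of $r$, but only up to equivalence. This gives degeneracy along $\CCpt^n\to\Delta[0,n]$.
\end{enumerate}

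In either case, ``reduces to the projection'' does not yet give an equivalence of maps of simplicial spaces out of $\Th_{0,\bullet}(\mc C,E)$ (resp.\ $\Th_{\bullet,0}(\mc C,E)$) into $\cnr\Th_{\bullet,\bullet}(\mc C,E)$. You need a comparison that is natural in $\beta$, $n$ and $D$.

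The paper supplies this for twists as follows. It extends $F_\beta$ to $\tilde F_\beta:\Cpt^q\times|[p]|\to\mc C$. It notes that $\Hor(\Th_{0,\bullet}(\mc C,E))\to\widetilde{\Th}_{\bullet,\bullet}(\mc C,E)$ is an equivalence because $|[p]|\simeq\ast$. It then uses $\cnr\Hor(X)\simeq X$.

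Alternatively, you could use the poset relations $(r(k),c(b),c(a))\le(c(b),c(b),c(a))$ and $(r(k),r(k),r(k))\le(r(k),c(b),c(a))$ in $\Pi^n$. These are natural in everything. After applying $D'$ they become pointwise equivalences, for twists and for horizontal morphisms respectively, and they compare $F_\beta$ with the degenerate element.

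This clause is not cosmetic. It is what lets \Cref{prop:grthsimeqacS} view $\gamma$ and $\delta$ as maps under $\ascat\Th_{\bullet,0}(\mc C,E)$ and $\ascat\Th_{0,\bullet}(\mc C,E)$.
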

\begin{proof}
    We first construct the factoring on $S_\bullet(\mc C, E)$. Observe that we have a map
    \begin{equation*}
        \begin{tikzcd}[row sep=0pt]
            \mathrm{pc} : \map^{\mathrm{good}}(\Pi^\bullet,\mc C)\arrow{r} & \map_{\PSh(\Delta)}(\map^\mathrm{good}(\Cpt^\ast, \Pi^\bullet), S_\ast(\mc C, E)) \\
            \quad (F : \Pi^m\to\mc C)\arrow[mapsto]{r} & (\Cpt^n\to\Pi^m\mapsto \Cpt^n\to \Pi^m\xrightarrow{F}\mc C).
        \end{tikzcd}
    \end{equation*}
    We may then define $f_\mathrm{TH}$ to be the composite
    \begin{equation*}
        \begin{aligned}
            S_\bullet(\mc C, E)\fixedxrightarrow[2cm]{\text{\normalfont\Cref{prop:lifttoPiposet}}} \map^\mathrm{good}(\Pi^\bullet, \mc C) &\fixedxrightarrow[1cm]{\mathrm{pc}} \map_{\PSh(\Delta)}(\map^\mathrm{good}(\Cpt^\ast, \Pi^\bullet), S_\ast(\mc C, E)) \\
            &\fixedxrightarrow[1cm]{\mathrm{Fct}^\ast} \map_{\PSh(\Delta)}(N(\Cpt^\bullet), S_\ast(\mc C, E)).
        \end{aligned}
    \end{equation*}
    The fact that this is a factoring follows from \Cref{prop:lifttoPiposet} and the fact that $\iota^\ast \simeq \diag^\ast\circ\;\mathrm{Fct}^\ast\circ\mathrm{pc}$.\par
    We now need to construct $ff_\mathrm{TH}$. However, $ff_\mathrm{TH}$ is essentially determined by \eqref{eq:twhorfactsquare}. Indeed, for each $D : \Cpt^\ell\to \mc C\in S_\ell(\mc C, E)$ we must produce a map $\CCpt^\ell\to \Th_{\bullet,\bullet}(\mc C, E)$. This means that for every
    \begin{equation*}
        \beta : [n]\bar\times [m]\longrightarrow \Cpt^\ell
    \end{equation*}
    we must provide a map
    \begin{equation*}
        F_\beta : \Cpt^m\times [n]\longrightarrow \mc C\in N(\mc T_m(\mc C, E))_n.
    \end{equation*}
    \eqref{eq:twhorfactsquare} is then the statement that, for every
    \begin{equation*}
        (r,c) : [s]\longrightarrow [n]\times [m],
    \end{equation*}
    the composite
    \begin{equation}\label{eq:bigcompwithFalpha}
        \begin{tikzcd}[column sep=35pt, row sep=0pt]
            \Cpt^s\arrow{r} & \Cpt^m\times \Cpt^n\arrow{r}{\id\times\pr_2} & \Cpt^m\times [n]\arrow{r}{F_\beta} & \mc C\in S_s(\mc C, E) \\
            (i,j)\arrow[mapsto]{rr} && ((c(i), c(j)), r(j))
        \end{tikzcd}
    \end{equation}
    agrees with the $s$-simplex of $S_\bullet(\mc C, E)$ given by applying $f_\mathrm{TH}(D)$ to the $s$-simplex
    \begin{equation}\label{eq:ssimplofCpt}
        \begin{tikzcd}
            {[s]}\arrow{r}{(r,c)} & {[n]\times [m]}\arrow{r}{\beta} & \Cpt^\ell.
        \end{tikzcd}
    \end{equation}
    of $N(\Cpt^\ell)$. To check this condition, let $\beta = (\varepsilon, \eta)$. Since $\beta$ preserves horizontal and vertical arrows, $\varepsilon$ only depends on the row and $\eta$ only depends on the column, i.e.\
    \begin{equation*}
        \beta(i,j) = (\varepsilon(i), \eta(j)).
    \end{equation*}
    Then, the composite \eqref{eq:bigcompwithFalpha} becomes
    \begin{equation*}
        \begin{tikzcd}[row sep=0pt]
            \Cpt^s\arrow{r} & \Cpt^m\times\Cpt^n\arrow{r} & \Cpt^m\times [n]\arrow{r} & \mc C \\
            (i,j)\arrow[mapsto]{rr} && ((c(i), c(j)), r(j))\arrow[mapsto]{r} & F_\beta((c(i), c(j)), r(j)).
        \end{tikzcd}
    \end{equation*}
    On the other hand, $f_\mathrm{TH}(D)$ sends the $s$-simplex \eqref{eq:ssimplofCpt} to
    \begin{equation*}
        \begin{tikzcd}[row sep=0pt]
            \Cpt^s\arrow{r} & \Pi^\ell\arrow{r} & \mc C. \\
            (i,j)\arrow[mapsto]{r} & (\varepsilon(r(j)),\eta(c(j)),\eta(c(i))) \\
            & (i,j,k)\arrow[mapsto]{r} & D(i,i)\times_{D(j,n)} D(k,n)
        \end{tikzcd}
    \end{equation*}
    From this, it is clear that $F_\beta$ needs to be the composite
    \begin{equation*}
        \begin{tikzcd}[row sep=0pt]
            F_\beta : \Cpt^m\times [n]\arrow{r} & \Pi^\ell\arrow{r} & \mc C \\
            ((i,j), k)\arrow[mapsto]{r} & (\varepsilon(k), \eta(j), \eta(i)) \\
            & (i,j,k)\arrow[mapsto]{r} & D(i,i)\times_{D(j,n)} D(k,n)
        \end{tikzcd}
    \end{equation*}
    We leave the details of this construction to the reader as it is analogous to $f_\mathrm{TH}$. \eqref{eq:twhorfactsquare} is then immediate from the construction as it holds on the nose.\par
    To see that this factorization is degenerate on horizontal fragments, i.e.\ on twists, we need to show that $F_\beta$ depends only on the horizontal projection of $\beta$ when working with $D : \Cpt^\ell\to\mc C\in \Th_{0,\bullet}(\mc C, E)\subseteq S_\ell(\mc C, E)$. Recall that
    \begin{equation*}
        F_\beta((i,j), k) = D(\varepsilon(k), \varepsilon(k)) \times_{D(\eta(j), \ell)} D(\eta(i), \ell).
    \end{equation*}
    Since on $\Th_{0,\bullet}(\mc C, E)$, each $D$ sends diagonal maps to equivalences, we have that $F_\beta$ extends to a map
    \begin{equation*}
        \tilde F_\beta : \Cpt^m\times |[n]|\longrightarrow \mc C.
    \end{equation*}
    In particular, if we consider the bisimplicial space $\widetilde{\Th}_{\bullet,\bullet}(\mc C, E)$ where the $(n,m)$-simplex space are diagrams $\Cpt^m\times |[n]|\to \mc C$ such that each restriction $\Cpt^m\times \{i\}\to\mc C$ belongs to $\Th_{0,m}(\mc C, E)$, then we have a factorization
    \begin{equation*}
        \begin{tikzcd}
        	{\Th_{0,\bullet}(\mc C, E)} & {S_\bullet(\mc C, E)} \\
        	{\cnr \widetilde{\Th}_{\bullet,\bullet}(\mc C, E)} & {\cnr \Th_{\bullet,\bullet}(\mc C, E).}
        	\arrow["\alpha_{0,\bullet}", hook, from=1-1, to=1-2]
        	\arrow[from=1-1, to=2-1]
        	\arrow[from=1-2, to=2-2, "ff_\mathrm{TH}"]
        	\arrow[from=2-1, to=2-2]
        \end{tikzcd}
    \end{equation*}
    But the map
    \begin{equation*}
        \Hor(\Th_{0,\bullet}(\mc C, E))\longrightarrow \widetilde{\Th}_{\bullet,\bullet}(\mc C, E)
    \end{equation*}
    given on level $(n,m)$ by
    \begin{equation*}
        \begin{tikzcd}[row sep=0pt]
            \Th_{0,m}(\mc C, E)\arrow{r} & \widetilde{\Th}_{n,m}(\mc C, E) \\
            F : \Cpt^m\to\mc C\arrow[mapsto]{r} & (\Cpt^m\times [n]\to \Cpt^m\xrightarrow{F} \mc C)
        \end{tikzcd}
    \end{equation*}
    is an equivalence. Thus we have a factoring
    \begin{equation*}
        \begin{tikzcd}
        	{\Th_{0,\bullet}(\mc C, E)} & {S_\bullet(\mc C, E)} \\
        	{\cnr \Hor(\Th_{0,\bullet}(\mc C, E))} & {\cnr \Th_{\bullet,\bullet}(\mc C, E).}
        	\arrow[hook, from=1-1, to=1-2]
        	\arrow[from=1-1, to=2-1]
        	\arrow[from=1-2, to=2-2]
        	\arrow[from=2-1, to=2-2]
        \end{tikzcd}
    \end{equation*}
    But
    \begin{equation*}
        \cnr \Hor(X) \simeq \map_{\PSh(\Delta^{\times 2})}(\Delta[0,\bullet], \Hor(X))\simeq X
    \end{equation*}
    for any simplicial space $X$ and the claim follows.\par
    Degeneracy on vertical fragments follows by similar definition chasing so we omit it.
\end{proof}

\begin{example}\label{ex:workedfactex}
    We illustrate how this factorization works in the $n = 2$ case. Consider the following $2$-simplex of $S_\bullet(\mc C, E)$
    \begin{equation*}
        \begin{tikzcd}
            a & b & f \\
            & c & d \\
            && e.
            \arrow[from=1-1, to=1-2]
            \arrow[from=1-2, to=1-3]
            \arrow[from=1-2, to=2-2]
            \arrow["\lrcorner"{anchor=center, pos=0.125}, draw=none, from=1-2, to=2-3]
            \arrow[from=1-3, to=2-3]
            \arrow[from=2-2, to=2-3]
            \arrow[from=2-3, to=3-3]
        \end{tikzcd}
    \end{equation*}
    which we will refer to by $D : \Cpt^2\to\mc C$. $f_\mathrm{TH}$ produces a map $N(\Cpt^2)\to S_\bullet(\mc C, E)$ which is a factoring of the above $2$-simplex into twists and horizontal morphisms. We denote the diagram by
    \begin{equation*}
        \begin{tikzcd}
            \bullet & \bullet & \bullet \\
            & \bullet & \bullet \\
            && \bullet.
            \arrow["{t_1}", from=1-1, to=1-2]
            \arrow["{t_3}", from=1-2, to=1-3]
            \arrow["{h_1}", from=1-2, to=2-2]
            \arrow["{h_3}", from=1-3, to=2-3]
            \arrow["{t_2}", from=2-2, to=2-3]
            \arrow["{h_2}", from=2-3, to=3-3]
        \end{tikzcd}
    \end{equation*}
    The path
    \begin{equation*}
        \gamma = (0,0)\to (0,1)\to (0,2)\to (1,2)\to (2,2)
    \end{equation*}
    in $\Cpt^2$ should produce a $4$-simplex in $S_\bullet(\mc C, E)$ corresponding to the image of $h_2h_3t_3t_1$. To get this simplex, we first apply $\mathrm{Fct}$ to get the following diagram $\Cpt^4\to \Pi^2$:
    \begin{equation*}
        \begin{tikzcd}
            {(0,0,0)} & {(0,1,0)} & {(0,2,0)} & {(1,2,0)} & {(2,2,0)} \\
            & {(0,1,1)} & {(0,2,1)} & {(1,2,1)} & {(2,2,1)} \\
            && {(0,2,2)} & {(1,2,2)} & {(2,2,2)} \\
            &&& {(1,2,2)} & {(2,2,2)} \\
            &&&& {(2,2,2).}
            \arrow[from=1-1, to=1-2]
            \arrow[from=1-2, to=1-3]
            \arrow[from=1-2, to=2-2]
            \arrow[from=1-3, to=1-4]
            \arrow[from=1-3, to=2-3]
            \arrow[from=1-4, to=1-5]
            \arrow[from=1-4, to=2-4]
            \arrow[from=1-5, to=2-5]
            \arrow[from=2-2, to=2-3]
            \arrow[from=2-3, to=2-4]
            \arrow[from=2-3, to=3-3]
            \arrow[from=2-4, to=2-5]
            \arrow[from=2-4, to=3-4]
            \arrow[from=2-5, to=3-5]
            \arrow[from=3-3, to=3-4]
            \arrow[from=3-4, to=3-5]
            \arrow[from=3-4, to=4-4]
            \arrow[from=3-5, to=4-5]
            \arrow[from=4-4, to=4-5]
            \arrow[from=4-5, to=5-5]
        \end{tikzcd}
    \end{equation*}
    Then one sends $(i,j,k)\in \Pi^2$ to $D(i,i)\times_{D(j,n)}D(k,n)$ to obtain the $4$-simplex
    \begin{equation*}
        f_\mathrm{TH}(D)(\gamma) = \begin{tikzcd}
            a & {a\times_c b\simeq a\times_d f} & {a\times_e f} & {c\times_ef} & f \\
            & a & {a\times_e d} & {c\times_e d} & d \\
            && a & c & e \\
            &&& c & e \\
            &&&& e.
            \arrow[from=1-1, to=1-2]
            \arrow[from=1-2, to=1-3]
            \arrow[from=1-2, to=2-2]
            \arrow["\lrcorner"{anchor=center, pos=0.125}, draw=none, from=1-2, to=2-3]
            \arrow[from=1-3, to=1-4]
            \arrow[from=1-3, to=2-3]
            \arrow["\lrcorner"{anchor=center, pos=0.125}, draw=none, from=1-3, to=2-4]
            \arrow[from=1-4, to=1-5]
            \arrow[from=1-4, to=2-4]
            \arrow["\lrcorner"{anchor=center, pos=0.125}, draw=none, from=1-4, to=2-5]
            \arrow[from=1-5, to=2-5]
            \arrow[from=2-2, to=2-3]
            \arrow[from=2-3, to=2-4]
            \arrow[from=2-3, to=3-3]
            \arrow["\lrcorner"{anchor=center, pos=0.125}, draw=none, from=2-3, to=3-4]
            \arrow[from=2-4, to=2-5]
            \arrow[from=2-4, to=3-4]
            \arrow["\lrcorner"{anchor=center, pos=0.125}, draw=none, from=2-4, to=3-5]
            \arrow[from=2-5, to=3-5]
            \arrow[from=3-3, to=3-4]
            \arrow[from=3-4, to=3-5]
            \arrow[equals, from=3-4, to=4-4]
            \arrow["\lrcorner"{anchor=center, pos=0.125}, draw=none, from=3-4, to=4-5]
            \arrow[equals, from=3-5, to=4-5]
            \arrow[from=4-4, to=4-5]
            \arrow[equals, from=4-5, to=5-5]
        \end{tikzcd}
    \end{equation*}
    of $S_\bullet(\mc C, E)$. Similarly, one finds that the $1$-simplices of the factorization are given by the following points of $S_1(\mc C, E)$:
    \begin{equation*}
        \begin{array}{ccc}
            t_1 = \begin{tikzcd}
                a\arrow{r} & a\times_c b\arrow{d} \\
                & a
            \end{tikzcd} & h_1 = \begin{tikzcd}
                a\arrow{r} & c\arrow[equals]{d} \\
                & c
            \end{tikzcd} & t_2 = \begin{tikzcd}
                c\arrow{r} & c\times_e d\arrow{d} \\
                & c
            \end{tikzcd} \\
            h_2 = \begin{tikzcd}
                c\arrow{r} & e\arrow[equals]{d} \\
                & e
            \end{tikzcd} & t_3 = \begin{tikzcd}
                a\arrow{r} & a\times_e d\arrow{d} \\
                & a
            \end{tikzcd} & h_3 = \begin{tikzcd}
                a\arrow{r} & c\arrow[equals]{d} \\
                & c.
            \end{tikzcd}
        \end{array}
    \end{equation*}
    For the lift $ff_\mathrm{TH}(D)$ to $\Th_{\bullet,\bullet}(\mc C, E)$, the square
    \begin{equation*}
        \begin{tikzcd}
            (0,1)\arrow{r}\arrow{d} & (0,2)\arrow{d} \\
            (1,1)\arrow{r} & (1,2)
        \end{tikzcd}
    \end{equation*}
    in $\Cpt^2$ gives rise to a $(1,1)$-simplex of $\Th_{\bullet,\bullet}(\mc C, E)$ which in this case is the diagram
    \begin{equation*}
        \begin{tikzcd}[row sep=10pt, column sep=15pt]
        	&&& c &&&& {c\times_e d} \\
        	\\
        	a &&&& {a\times_e d} \\
        	&&&&&&& c. \\
        	\\
        	&&&& a
        	\arrow[from=1-4, to=1-8]
        	\arrow[from=1-8, to=4-8]
        	\arrow[from=3-1, to=1-4]
        	\arrow["\lrcorner"{anchor=center, pos=0.15, rotate=90}, draw=none, from=3-1, to=1-8]
        	\arrow[from=3-1, to=3-5]
        	\arrow[from=3-5, to=1-8]
        	\arrow["\lrcorner"{anchor=center, pos=0.1, rotate=0}, draw=none, from=3-5, to=4-8]
        	\arrow[from=3-5, to=6-5]
        	\arrow[from=6-5, to=4-8]
        \end{tikzcd}
    \end{equation*}
\end{example}

We are now equipped to show the desired equivalence.

\begin{proposition}\label{prop:grthsimeqacS}
    The map $\Gr \Th_{\bullet,\bullet}(\mc C, E)\longrightarrow \ascat S_\bullet(\mc C, E)$ induced by the map $\alpha : \Th_{\bullet,\bullet}(\mc C, E)\to \Sq(S_\bullet(\mc C, E))$ of \Cref{prop:mapsonTm} is an equivalence.
\end{proposition}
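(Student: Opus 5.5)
By \Cref{prop:weakfactsystemrightinv} applied to $Y = \Th_{\bullet,\bullet}(\mc C, E)$, $X = S_\bullet(\mc C, E)$, the map $\alpha$ of \Cref{prop:mapsonTm}, and the twist--horizontal factoring of \Cref{thm:twisthorfact}, the map $\gamma : \Gr\Th_{\bullet,\bullet}(\mc C, E)\to \ascat S_\bullet(\mc C, E)$ has a right inverse $\delta$. Since the factoring is degenerate on both horizontal and vertical fragments, both $\gamma$ and $\delta$ are compatible with the maps from $\ascat\Th_{0,\bullet}$ (twists) and $\ascat\Th_{\bullet,0}\simeq \mc C$ (horizontal morphisms). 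It therefore remains to show that $\delta\gamma\simeq\id$. I would prove this by showing that $\gamma$ is conservative-and-faithful enough, or more concretely that $\delta\gamma$ is an equivalence, and then conclude from $\gamma\delta\simeq\id$.

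The cleanest route uses \Cref{cor:GrdescriptionofTw}. There, $\Gr\Th_{\bullet,\bullet}(\mc C,E)\simeq\TwC$ over $\mc C$, and the zero section $s$ corresponds to $\Ver(\mc C)\to\Th$. The composite $\ascat S_\bullet(\mc C,E)\to\mc C$ given by $\diag^\ast$ (\Cref{lemma:sbulletisgrpull}(i)) makes $\gamma$ a map over $\mc C$, because $\alpha$ is a map over $\Sq(\mc C)$. The map $\delta$ lies over $\mc C$ as well, since $\diag^\ast\circ f_\mathrm{TH}=\id$. Hence $\delta\gamma$ is an endomorphism of $\TwC$ over $\mc C$. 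By vertical degeneracy it commutes with $s$, since the horizontal morphisms of $S_\bullet$ correspond to $\Th_{\bullet,0}$, and these correspond to the zero section. We are then in the situation of \Cref{cor:checkonfibers}, which reduces the claim to checking that $\delta\gamma$ induces an equivalence on each fiber $B\mc K(\Vect(x))$.

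On fibers, the fiber of $\TwC$ over $x$ is $\ascat T_\bullet(\mc C,E,x)\simeq\ascat S_\bullet(\Vect(x))$ by \Cref{prop:twistsareKthy}, and this is the image of the horizontal fragment $\Th_{0,\bullet}$ restricted over $x$. By horizontal degeneracy, $\delta\gamma$ restricted along $\ascat\Th_{0,\bullet}\to\Gr\Th$ agrees with the canonical inclusion. Applying $\delta\gamma$ to the fiber inclusion $\ascat T_\bullet(\mc C,E,x)\to \TwC$ therefore gives back this inclusion, so on fibers $\delta\gamma$ is the identity. \Cref{cor:checkonfibers} then shows that $\delta\gamma$ is an equivalence. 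Since $\gamma\delta\simeq\id$, it follows that $\gamma$ has both a right inverse and a map $\delta$ for which $\delta\gamma$ is invertible, and so $\gamma$ is an equivalence.

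The main obstacle is the bookkeeping in this last identification. One must check that the fiber inclusion $\ascat T_\bullet(\mc C,E,x)\to\TwC$ really factors through $\ascat\Th_{0,\bullet}\to\Gr\Th$ compatibly with the unstraightening description, rather than only up to an unspecified automorphism. One must also check that the triangle over $\mc C$ with $s$ commutes coherently, so that \Cref{cor:checkonfibers} applies. If this is troublesome, I would instead argue directly with \Cref{prop:globaldescrofadjendos}. Every morphism of $\TwC$ is $s(f)\circ\alpha(\xi)$, that is, a horizontal morphism after a twist. Both kinds of morphism are fixed by $\delta\gamma$ by the two degeneracy statements, so $\delta\gamma$ is an equivalence on mapping spaces.
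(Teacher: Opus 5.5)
Your reduction is the paper's. You get $\gamma\delta\simeq\id$ from \Cref{prop:weakfactsystemrightinv} and \Cref{thm:twisthorfact}. Then $\delta\gamma$ is an endomorphism of $\Gr\Th_{\bullet,\bullet}(\mc C,E)\simeq\TwC$ over $\mc C$ and under both $\ascat\Th_{\bullet,0}(\mc C,E)\simeq\mc C$ (the zero section) and $\ascat\Th_{0,\bullet}(\mc C,E)$. Finally \Cref{cor:checkonfibers} reduces the claim to fibers.

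The gap is the step you assert and then defer as bookkeeping: that the fiber of $\Gr\Th_{\bullet,\bullet}(\mc C,E)\to\mc C$ over $x$ is reached by $\ascat\Th_{0,\bullet}(\mc C,E)$. This is the only thing that lets horizontal degeneracy control $\delta\gamma$ on fibers. \Cref{prop:twistsareKthy} together with the definition of $\TwC$ only identifies that fiber abstractly with $\ascat T_\bullet(\mc C,E,x)$. It says nothing about how the fiber sits relative to the map $\ascat\Th_{0,\bullet}(\mc C,E)\to\Gr\Th_{\bullet,\bullet}(\mc C,E)$. The needed statement amounts to $\Gr$ commuting with base change along $\mc C^\simeq\to\mc C$, which is not automatic. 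Your fallback does not avoid this either. Calling the fiber endomorphisms of \Cref{prop:globaldescrofadjendos} ``twists fixed by $\delta\gamma$'' presupposes exactly that they lie in the image of $\ascat\Th_{0,\bullet}(\mc C,E)$.

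The paper fills this by proving, globally over $\mc C^\simeq$, that the square with top edge $\ascat\Th_{0,\bullet}(\mc C,E)\to\Gr\Th_{\bullet,\bullet}(\mc C,E)$ and bottom edge $\mc C^\simeq\to\mc C$ is Cartesian. The steps are:
\begin{enumerate}
\item By \Cref{cor:projisconservative}, $\mc T_m(\mc C,E)^\simeq\simeq\mc T_m(\mc C,E)\times_{\mc C}\mc C^\simeq$. Hence, by \Cref{prop:TmisCartunstraightofSm}, the map $\mc T_m(\mc C,E)^\simeq\to\mc C^\simeq$ unstraightens $x\mapsto S_m(\Vect(x))$ restricted to $\mc C^\simeq$.
\item Rerunning the coend and Fubini computation of \Cref{cor:TwiscolimitofTm,cor:GrdescriptionofTw} over $\mc C^\simeq$ shows that $\Gr N_\ast(\mc T_\bullet(\mc C,E)^\simeq)\to\mc C^\simeq$ unstraightens $x\mapsto B\mc K(\Vect(x))$ on $\mc C^\simeq$. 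In other words, it is the base change of $\TwC\to\mc C$.
\item Each $\mc T_m(\mc C,E)^\simeq$ is a groupoid, so $N_\ast(\mc T_\bullet(\mc C,E)^\simeq)\simeq\Hor(\Th_{0,\bullet}(\mc C,E))$, whose gluing is $\ascat\Th_{0,\bullet}(\mc C,E)$.
\end{enumerate}
Granting this Cartesian square, your fiberwise argument is the paper's. Note that you only need $\ascat\Th_{0,\bullet}(\mc C,E)\times_{\mc C^\simeq}\{x\}\to\mathrm{fib}_x$ to be an equivalence, since all you require is that $\delta\gamma$ be invertible on fibers. So your worry about an ``unspecified automorphism'' is not the real issue; the missing content is this base-change statement.
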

\begin{proof}
    Since $\alpha : \Th_{\bullet,\bullet}(\mc C, E)\to \Sq(S_\bullet(\mc C, E))$ is a map over $\Sq(\mc C)$, the induced map $\gamma : \Gr \Th_{\bullet,\bullet}(\mc C, E)\to \ascat S_\bullet(\mc C, E)$ lifts to one in $\Cat_{/\mc C}$. Additionally, the twist--horizontal factorization system is compatible with projection to $\mc C$ in the sense that
    \begin{equation*}
        \begin{tikzcd}
        	{S_\bullet(\mc C, E)} & {\cnr \Th_{\bullet,\bullet}(\mc C, E)} & {\fsd \mc C} \\
        	& {N(\mc C)}
        	\arrow["{ff_\mathrm{TH}}", from=1-1, to=1-2]
        	\arrow[from=1-1, to=2-2]
        	\arrow[from=1-2, to=1-3]
        	\arrow[from=1-3, to=2-2]
        \end{tikzcd}
    \end{equation*}
    commutes, where the second horizontal morphism is $\cnr$ of $\Th_{\bullet,\bullet}(\mc C, E)\to\Sq(\mc C)$. It follows that the induced map $\delta : \ascat S_\bullet(\mc C, E)\to \Gr\Th_{\bullet,\bullet}(\mc C, E)$ given by transposing
    \begin{equation*}
        \begin{tikzcd}
            S_\bullet(\mc C, E)\arrow{r}{ff_\mathrm{TH}} & \cnr \Th_{\bullet,\bullet}(\mc C, E)\arrow{r} & N(\Gr \Th_{\bullet,\bullet}(\mc C, E))
        \end{tikzcd}
    \end{equation*}
    lifts to a map in $\Cat_{/\mc C}$ as well.\par
    Combining \Cref{thm:twisthorfact} and \Cref{prop:weakfactsystemrightinv}, we have that $\gamma\delta\simeq \id$. Thus it only remains to show that $\delta\gamma\simeq \id$. By the above, $\delta\gamma : \Gr\Th_{\bullet,\bullet}(\mc C, E)\to \Gr\Th_{\bullet,\bullet}(\mc C, E)$ lifts to a map in $\Cat_{/\mc C}$. Moreover, since the twist--horizontal factorization system is degenerate on horizontal and vertical fragments, it also lifts to a morphism under $\ascat\Th_{\bullet,0}(\mc C, E)$ and $\ascat\Th_{0,\bullet}(\mc C, E)$. We show that any such morphism must be an equivalence.\par
    Let $f$ be any such endomorphism of $\Gr\Th_{\bullet,\bullet}(\mc C, E)$. Under the equivalence $\Gr\Th_{\bullet,\bullet}(\mc C, E)\simeq\TwC$ of \Cref{cor:GrdescriptionofTw}, $\Gr\Th_{\bullet,\bullet}(\mc C, E)\to\mc C$ corresponds to the Cartesian projection $\TwC\to \mc C$ and $\mc C\simeq \ascat \Th_{\bullet,0}(\mc C, E)\to \Gr\Th_{\bullet,\bullet}(\mc C, E)$ corresponds to the zero section. Thus, by \Cref{cor:checkonfibers}, it suffices to check that $f$ is an equivalence on fibers over $x\in\mc C$. For this, it will suffice to show that
    \begin{equation*}
        \begin{tikzcd}
        	{\ascat \Th_{0,\bullet}(\mc C, E)} & {\Gr \Th_{\bullet,\bullet}(\mc C, E)} \\
        	{\mc C^\simeq} & {\mc C}
        	\arrow[from=1-1, to=1-2]
        	\arrow[from=1-1, to=2-1]
        	\arrow[from=1-2, to=2-2]
        	\arrow[from=2-1, to=2-2]
        \end{tikzcd}
    \end{equation*}
    is Cartesian. Equivalently, that $\ascat \Th_{0,\bullet}(\mc C, E)\to\mc C^\simeq$ is the Cartesian unstraightening of the restricted functor
    \begin{equation}\label{eq:restrtwfunc}
        \begin{tikzcd}[row sep=0pt]
            (\mc C^\simeq)^\mathrm{op}\subseteq \mc C^\mathrm{op}\arrow{r} & \Cat \\
            \;\;\qquad x\arrow[mapsto]{r} & B\mc K(\Vect(x)).
        \end{tikzcd}
    \end{equation}
    By \Cref{cor:projisconservative}, we have that the square
    \begin{equation*}
        \begin{tikzcd}
        	{\mc T_m(\mc C, E)^\simeq} & {\mc T_m(\mc C, E)} \\
        	{\mc C^\simeq} & {\mc C}
        	\arrow[from=1-1, to=1-2]
        	\arrow["{\mathrm{ev}_{(m,m)}}"', from=1-1, to=2-1]
        	\arrow["\lrcorner"{anchor=center, pos=0.125, rotate=0}, draw=none, from=1-1, to=2-2]
        	\arrow["{\mathrm{ev}_{(m,m)}}", from=1-2, to=2-2]
        	\arrow[from=2-1, to=2-2]
        \end{tikzcd}
    \end{equation*}
    is Cartesian from which it follows that $\mc T_m(\mc C, E)^\simeq\to\mc C^{\simeq}$ is the Cartesian unstraightening of the restricted functor
    \begin{equation*}
        \begin{tikzcd}[row sep=0pt]
            (\mc C^\simeq)^\mathrm{op}\subseteq \mc C^\mathrm{op}\arrow{r} & \Cat \\
            \;\;\qquad x\arrow[mapsto]{r} & S_m(\Vect(x)).
        \end{tikzcd}
    \end{equation*}
    Arguing the same as \Cref{cor:GrdescriptionofTw}, it follows that $\Gr N_\ast(\mc T_\bullet(\mc C, E)^\simeq)\to \mc C^\simeq$ is the unstraightening of \eqref{eq:restrtwfunc}. But
    \begin{equation*}
        N_\ast(\mc T_\bullet(\mc C, E)^\simeq)\simeq \Hor(\Th_{0,\bullet}(\mc C, E))
    \end{equation*}
    under which the result follows.
\end{proof}

Book-keeping everything up to this point, we have the following theorem identifying the universal exchange theorem.

\begin{theorem}\label{thm:mainthmintext}
    There exists an exchange theorem on the pair $(\mc C, E)$ valued in $\TwC$ with $(-)_\ast : \mc C\to \TwC$ the zero section which witnesses $\TwC$ as corepresenting $\Exch_{(\mc C, E)}(-)$.\par
    Moreover, this universal exchange theorem has the following properties:
    \begin{enumerate}
        \item \emph{(Thom twists)} For every $x\in \mc C$, there exists an $\mb E_1$-map
        \begin{equation*}
            \Sigma^{(-)} : \mc K(\Vect(x))\longrightarrow \End_{\TwC}(x)
        \end{equation*}
        parametrized by the partial $K$-theory of vector bundles over $x$ with the following properties:
        \begin{enumerate}
            \item for a class
            \begin{equation*}
                \xi = [\begin{tikzcd}
                    x\arrow{r}{s} & e\arrow[two heads]{r}{p} & x
                \end{tikzcd}]\in \mc K(\Vect(x))
            \end{equation*}
            the endomorphism is given by $\Sigma^\xi = p_\natural s_\ast$
            \item for every $f : x\to y\in \mc C$ and $\xi\in\mc K(\Vect(y))$, there is a canonical equivalence
            \begin{equation*}
                \Sigma^\xi f_\ast \simeq f_\ast\, \Sigma^{f^\ast\xi}.
            \end{equation*}
        \end{enumerate}
        \item \emph{(Tangent bundles)} For every $f : x\to y \in E$, there exists a relative tangent bundle
        \begin{equation*}
            \begin{tikzcd}
                T_f \coloneqq x\arrow{r}{\Delta_f} & x\times_y x\arrow[two heads]{r}{\pr_1} & x \in \Vect(x)
            \end{tikzcd}
        \end{equation*}
        with the following properties:
        \begin{enumerate}
            \item for composable morphisms $f,g\in E$, there is a canonical fiber sequence
            \begin{equation*}
                \begin{tikzcd}
                    T_f\arrow{r} & T_{gf}\arrow[two heads]{r} & f^\ast T_g
                \end{tikzcd}
            \end{equation*}
            \item for every pullback square
            \begin{equation*}
                \begin{tikzcd}
                	x & {y'} \\
                	y & z
                	\arrow["{\bar g}", from=1-1, to=1-2]
                	\arrow["{\bar f}"', from=1-1, to=2-1]
                	\arrow["\lrcorner"{anchor=center, pos=0.125}, draw=none, from=1-1, to=2-2]
                	\arrow["f", from=1-2, to=2-2]
                	\arrow["g"', from=2-1, to=2-2]
                \end{tikzcd}
            \end{equation*}
            in $\mc C$ with $\bar f, f\in E$ there is a canonical equivalence $\bar g^\ast T_f\simeq T_{\bar f}$
        \end{enumerate}
        \item \emph{(Ambidexterity)} There exists a canonical equivalence
        \begin{equation*}
            f_\natural\simeq f_\ast\Sigma^{T_f}
        \end{equation*}
        for every $f \in E$.
    \end{enumerate}
\end{theorem}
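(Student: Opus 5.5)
The plan is to assemble the chain of equivalences
\begin{equation*}
    \TwC\simeq \Gr\Th_{\bullet,\bullet}(\mc C, E)\simeq \ascat S_\bullet(\mc C, E)\simeq \Gr\Pull(\mc C, E),
\end{equation*}
given respectively by \Cref{cor:GrdescriptionofTw}, \Cref{prop:grthsimeqacS} and \Cref{lemma:sbulletisgrpull}. Since $\Exch_{(\mc C, E)}(-)\simeq\map_{\Cat}(\Gr\Pull(\mc C, E),-)$ tautologically, transporting the unit $\Pull(\mc C, E)\to\Sq(\Gr\Pull(\mc C, E))$ along this chain produces an exchange theorem valued in $\TwC$ which corepresents $\Exch_{(\mc C, E)}$.

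To see that $(-)_\ast$ is the zero section, I will track the horizontal fragment through the chain. By \Cref{lemma:sbulletisgrpull}(ii), $\mc C\to\Gr\Pull(\mc C, E)$ is the inclusion of vertically degenerate simplices of $S_\bullet(\mc C, E)$, which are exactly the horizontal morphisms. By \Cref{thm:twisthorfact}, the factorization is degenerate on vertical fragments, so \Cref{prop:weakfactsystemrightinv}(ii) makes the inverse $\delta$ a map under $\ascat\Th_{\bullet,0}(\mc C, E)\simeq\mc C$. Finally, \Cref{cor:GrdescriptionofTw} identifies $\Ver(\mc C)\to\Th$ with the zero section $s$. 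Applying the same bookkeeping to twists, which form the horizontal fragment $\Th_{0,\bullet}$, shows that $\mc K(\Vect(x))\simeq\End_{(\TwC)_x}(s(x))$ embeds via \Cref{prop:twistsareKthy}. This gives the $\mb E_1$-map $\Sigma^{(-)}$, using \Cref{prop:descrofadjendos} for the composition law.

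The individual properties then follow.
\begin{enumerate}
    \item Property (i)(b) follows from the composition formula $(f,0)\circ(\id,f^\ast\xi)\simeq(\id,\xi)\circ(f,0)$ of \Cref{prop:descrofadjendos}.
    \item Property (ii) is \Cref{prop:propsofreltan}.
    \item For (i)(a), take the $1$-simplex $x\xrightarrow{s}e\xrightarrow{p}x$ of $S_\bullet(\mc C, E)$, which is a twist. Its horizontal--vertical factorization is $p_\natural s_\ast$, and in $\TwC$ the same simplex is $(\id,[e])=\Sigma^{e}$.
    \item Ambidexterity (iii) is the $2$-simplex of Verdier's diagonal trick displayed after \Cref{lemma:sbulletisgrpull}. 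There $f_\natural$ factors as a twist $T_f$ followed by the horizontal morphism $f$, and its image in $\TwC$ is $(f,0)\circ(\id,T_f)$. As a consistency check, $(-)_\natural$ then sends $f$ to $(f,[T_f])$.
\end{enumerate}

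The main obstacle is the compatibility bookkeeping. I must check that the identifications of the horizontal and vertical fragments of $\Pull$ with those of $\Th$ really match under the zig-zag $\gamma,\delta$, which requires the degeneracy clauses of \Cref{thm:twisthorfact}. I also need to identify explicitly the image of a twist simplex in $\TwC$ with the class in $\mc K(\Vect(x))$. I would first verify this on $1$-simplices via the fiber description $T_\bullet(\mc C, E, x)\simeq S_\bullet(\Vect(x))$, which suffices since $\pi_0$ and composition determine the stated equivalences.
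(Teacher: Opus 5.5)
Your proposal is correct and follows the paper's proof essentially step for step. It uses the same chain $\TwC\simeq\Gr\Th_{\bullet,\bullet}(\mc C, E)\simeq\ascat S_\bullet(\mc C, E)\simeq\Gr\Pull(\mc C, E)$, then \Cref{prop:descrofadjendos} for (i)(b), \Cref{prop:propsofreltan} for (ii), and the horizontal--vertical factorization of the twist $x\to e\to x$ together with Verdier's $2$-simplex for (i)(a) and (iii). Your explicit tracking of the zero section through the degeneracy clause of \Cref{thm:twisthorfact} and \Cref{prop:weakfactsystemrightinv}(ii) simply spells out what the paper asserts in one line. One small wording point: the fiber inclusion $\mc K(\Vect(x))\to\End_{\TwC}(x)$ need not be an embedding, but the statement only requires an $\mb E_1$-map.
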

\begin{proof}
    We have a composite of equivalences
    \begin{equation*}
        \begin{tikzcd}[column sep=50pt]
            \TwC\arrow{r}[below]{\simeq}[above]{\text{\normalfont\Cref{cor:GrdescriptionofTw}}} & \Gr \Th_{\bullet,\bullet}(\mc C, E)\arrow{r}[below]{\simeq}[above]{\text{\normalfont\Cref{prop:grthsimeqacS}}} & \ascat S_\bullet(\mc C, E)\arrow{r}[below]{\simeq}[above]{\text{\normalfont\Cref{lemma:sbulletisgrpull}}} & \Gr\Pull(\mc C, E)
        \end{tikzcd}
    \end{equation*}
    which shows that $\TwC$ hosts the universal exchange theorem. Moreover, under these equivalences, the map
    \begin{equation*}
        (-)_\ast : \mc C\simeq \Gr \Hor(\Pull(\mc C, E)_{0,\bullet})\longrightarrow \Gr \Pull(\mc C, E)
    \end{equation*}
    gets sent to the zero section $s : \mc C\to\TwC$.\par
    The Thom twists come from inclusions of the fibers $\mathrm{fib}_x (\TwC\to\mc C)\simeq B\mc K(\Vect(x))$ over each $x\in\mc C$. (i)(b) is then a consequence of \Cref{prop:descrofadjendos} which gives
    \begin{equation*}
        \begin{aligned}
            \Sigma^\xi f_\ast &= (\id,\xi)\circ (f, 0) \\
            &\simeq (f, f^\ast \xi) \\
            &\simeq (f, 0)\circ (\id, f^\ast\xi) \\
            &= f_\ast \Sigma^{f^\ast\xi}.
        \end{aligned}
    \end{equation*}
    For (i)(a), given $\xi = [\begin{tikzcd} x\arrow{r}{s} & e\arrow[two heads]{r}{p} & x \end{tikzcd}]$ we have that $\Sigma^\xi$ is the image of $x\to e\to x$ under $\mc T_1(\mc C, E)\to \Fun([1], \TwC)$. Chasing through the equivalences, this gets sent to
    \begin{equation*}
        \begin{tikzcd}
        	x & e \\
        	& x
        	\arrow["s", from=1-1, to=1-2]
        	\arrow["p", from=1-2, to=2-2]
        \end{tikzcd}\in S_1(\mc C, E).
    \end{equation*}
    Factoring this as
    \begin{equation*}
        \begin{tikzcd}
        	x & e & e \\
        	& e & e \\
        	&& x,
        	\arrow["s", from=1-1, to=1-2]
        	\arrow[equals, from=1-2, to=1-3]
        	\arrow[equals, from=1-2, to=2-2]
        	\arrow[equals, from=1-3, to=2-3]
        	\arrow[equals, from=2-2, to=2-3]
        	\arrow["p", from=2-3, to=3-3]
        \end{tikzcd}
    \end{equation*}
    under the equivalence $\ascat S_\bullet(\mc C, E)\simeq \Gr\Pull(\mc C, E)$ this then gets sent to $p_\natural s_\ast$ as claimed.\par
    (ii) is a restatement of \Cref{prop:propsofreltan}.\par
    For ambidexterity, we have by \Cref{lemma:sbulletisgrpull} that $(-)_\natural : E\to \Gr\Pull(\mc C, E)$ corresponds to (associated categories) of the map $N(E)\to S_\bullet(\mc C, E)$ which sends a string of $n$-composable arrows in $E$ to the degenerate diagram $\Cpt^n\xrightarrow{\pr_1} [n]\to E$. The twist--horizontal factorization system then sends
    \begin{equation*}
        \begin{tikzcd}
        	x & x \\
        	& y
        	\arrow[equals, from=1-1, to=1-2]
        	\arrow["f", from=1-2, to=2-2]
        \end{tikzcd}
    \end{equation*}
    to the composite
    \begin{equation*}
        \begin{tikzcd}
        	x & {x\times_y x} & x \\
        	& x & y \\
        	&& y
        	\arrow["{\Delta_f}", from=1-1, to=1-2]
        	\arrow["{\pr_2}", from=1-2, to=1-3]
        	\arrow["{\pr_1}", from=1-2, to=2-2]
        	\arrow["f", from=1-3, to=2-3]
        	\arrow["f"', from=2-2, to=2-3]
        	\arrow[equals, from=2-3, to=3-3]
        \end{tikzcd}
    \end{equation*}
    which is the image of the composite $f_\ast\Sigma^{T_f}$ under $\TwC\simeq \ascat S_\bullet(\mc C, E)$, as claimed.
\end{proof}

As a corollary, we may also identify exactly which exchange theorems may extend to have Thom twists for every virtual vector bundle.

\begin{corollary}\label{cor:exttovirtvbexch}
    Let $\InvTwC$ be the unstraightening of
    \begin{equation*}
        \begin{tikzcd}[row sep=0pt]
            \mc C^\mathrm{op}\arrow{r} & \Ani \\
            x\arrow[mapsto]{r} & BK(\Vect(x)).
        \end{tikzcd}
    \end{equation*}
    Then the natural in $\mc D$ map
    \begin{equation*}
        \map_{\Cat}(\InvTwC, \mc D)\longrightarrow \map_{\Cat}(\TwC,\mc D)\simeq \Exch_{(\mc C, E)}(\mc D)
    \end{equation*}
    induced by precomposition with $\TwC\to\InvTwC$ is a monomorphism onto the connected components of exchange theorems which satisfy the following equivalent conditions:
    \begin{enumerate}
        \item For every $x\in\mc D$ and $\xi\in \mc K(\Vect(x))$, $\Sigma^\xi$ is invertible
        \item For every $x\in \mc D$ and $\begin{tikzcd}
            x\arrow{r}{s} & e\arrow[two heads]{r}{p} & x
        \end{tikzcd}\in \Vect(x)$, $\Sigma^\xi\simeq p_\natural s_\ast$ is invertible.
    \end{enumerate}
\end{corollary}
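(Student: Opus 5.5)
The plan is to identify $\TwC\to\InvTwC$ as a localization and then invoke the universal property of localizations. Note that $\InvTwC$ is the unstraightening of $x\mapsto BK(\Vect(x))$, and by \Cref{prop:partialKthygroupcmpl} we have $K(\Vect(x))\simeq \mc K(\Vect(x))^\mathrm{gp}$ naturally in $x$. So $\InvTwC\simeq \mc C^{G^\mathrm{gp}}$ with $G=\mc K(\Vect(-))$, and $\TwC = \mc C^{G}$. (Here I read $BK(\Vect(x))$ as the delooping, valued in $\Cat$; it is a groupoid anyway.) By \Cref{prop:locofCG}, the natural map $\TwC\to\InvTwC$ is the localization of $\TwC$ at the morphisms $(\id_x,\xi)$ for $x\in\mc C$ and $\xi\in\mc K(\Vect(x))$. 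By the universal property of localization, precomposition
\begin{equation*}
\map_{\Cat}(\InvTwC,\mc D)\to\map_{\Cat}(\TwC,\mc D)
\end{equation*}
is then a monomorphism onto the components of functors inverting every $(\id_x,\xi)$. Under the equivalence $\map_\Cat(\TwC,\mc D)\simeq\Exch_{(\mc C,E)}(\mc D)$ of \Cref{thm:mainthmintext}, the image of $(\id_x,\xi)$ is exactly $\Sigma^\xi$ by part (i) of that theorem. This gives condition (i).

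It remains to show (i)$\iff$(ii). The direction (i)$\Rightarrow$(ii) is immediate, since each vector bundle gives a class $[e]$ and \Cref{thm:mainthmintext}(i)(a) identifies $\Sigma^{[e]}\simeq p_\natural s_\ast$. For the converse, I will use that $\Sigma^{(-)}:\mc K(\Vect(x))\to\End(x)$ is an $\mb E_1$-map, so the preimage of invertible endomorphisms is a union of components closed under the monoid operation. Hence it suffices that the classes $[e]$ of vector bundles generate $\pi_0\mc K(\Vect(x))$ as a monoid. Invertibility of an endomorphism only depends on its homotopy class, so working on $\pi_0$ is enough.

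The main point to verify is this generation statement. I would argue it from the definition $\mc K=\End_0(\ascat S_\bullet)$. Every morphism of $\ascat S_\bullet(\Vect(x))$ is a composite of images of $1$-simplices; see the explicit description of $h\ascat X$ recalled in the proof of \Cref{lemma:simpspacerightinverses}, after reducing to the discrete case as done there. The $1$-simplices are exactly vector bundles $e$. Composition corresponds to the monoid sum, so every class is a finite sum of classes $[e]$. Since a composite of invertibles is invertible, (ii) implies (i).
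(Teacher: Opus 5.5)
Your proposal is correct and follows the paper's proof. The localization statement comes from \Cref{prop:locofCG} together with \Cref{prop:partialKthygroupcmpl}, and (ii)$\Rightarrow$(i) reduces to vector bundle classes generating $\pi_0\mc K(\Vect(x))$. The paper gets this generation, in the stronger form of $\pi_0$-surjectivity of $\Vect(x)^\simeq\to\mc K(\Vect(x))$, by citing Yuan's presentation of $\pi_0\mc K$; you instead derive monoid generation directly from the homotopy-category description of $\ascat S_\bullet(\Vect(x))$ and then use that $\Sigma^{(-)}$ is $\mb E_1$, which is a slightly more self-contained justification.
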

\begin{proof}
    Condition (i) follows by combining \Cref{prop:locofCG} and \Cref{prop:partialKthygroupcmpl} which says that $\TwC\to\InvTwC$ is a localization at the morphisms $\Sigma^\xi$ for $x\in\mc C$ and $\xi\in\mc K(\Vect(x))$.\par
    It therefore only remains to show that conditions (i) and (ii) are equivalent. Since (i) $\Rightarrow$ (ii), it suffices to show that (ii) $\Rightarrow$ (i). By (the dual of) \cite[Proposition 4.10]{yuan2023integral} we have that $\pi_0 \mc K(\Vect(x))$ is freely generated by the objects of $\Vect(x)$ modulo the relationship $[e_2] = [e_1] + [e_3]$ for every fiber sequence
    \begin{equation*}
        \begin{tikzcd}
            e_1\arrow{r} & e_2\arrow[two heads]{r} & e_3
        \end{tikzcd}
    \end{equation*}
    in $\Vect(x)$. In particular, $\Vect(x)^\simeq\to \mc K(\Vect(x))$ is surjective on $\pi_0$ so (ii) implies (i).
\end{proof}

\section{Poincar\'e duality}\label{sec:poincareduality}

In this section, we apply the universal exchange theorem to canonically enhance every six functor formalism to one which coherently encodes Thom twists and Poincar\'e duality.\par
First we recall the notion of a three and six functor formalism in the sense of Mann \cite{mann2022p}.

\begin{definition}
    Let $(\mc C, E)$ be a geometric setup. A \emph{3-functor formalism} is a lax symmetric monoidal functor $\mc D : \Span(\mc C, E)\to\Cat$ where $\Cat$ is given the Cartesian monoidal structure.
\end{definition}

Given a 3-functor formalism $\mc D : \Span(\mc C, E)\to \Cat$, one obtains three functors:
\begin{itemize}
    \item Restriction to $\mc C^\mathrm{op}\subseteq \Span(\mc C, E)$ yields for each $f : x\to y\in\mc C$ a pullback functor $f^\ast : \mc D(y)\to\mc D(x)$
    \item Restriction to $E\subseteq \Span(\mc C, E)$ yields for each $f : x\to y\in E$ an exceptional pushforward functor $f_! : \mc D(x)\to\mc D(y)$
    \item Lax monoidality of $\mc D$ equips each $\mc D(x)$ with a symmetric monoidal structure with tensor product given by
    \begin{equation*}
        \begin{tikzcd}
            (-)\otimes (-) : \mc D(x)\times \mc D(x)\arrow{r} & \mc D(x\times x)\arrow{r}{\Delta^\ast} & \mc D(x).
        \end{tikzcd}
    \end{equation*}
\end{itemize}

Positing additional adjoints, one arrives at the definition of a six functor formalism.

\begin{definition}
    A \emph{six functor formalism} is a 3-functor formalism $\mc D : \Span(\mc C, E)\to\Cat$ such that
    \begin{enumerate}
        \item Every $f^\ast$ has a right adjoint $f_\ast$
        \item Every $f_!$ has a right adjoint $f^!$
        \item Each $\mc D(x)$ is closed symmetric monoidal, i.e.\ possesses an internal hom $\underline{\Hom}$.
    \end{enumerate}
\end{definition}

In this abstract setting, Poincar\'e duality attempts to relate $f^!$ and $f^\ast$ up to a twist, i.e.\ to give an equivalence
\begin{equation}\label{eq:poincaredualitystatement}
    f^!(-)\simeq f^\ast(-)\otimes \omega_f
\end{equation}
for some $\otimes$-invertible object $\omega_f$. Evaluating at the monoidal unit, one necessarily has $\omega_f\simeq f^!(1)$. By imposing compatibility with base change as well as canonicity of the equivalence \eqref{eq:poincaredualitystatement}, one arrives at the notion of \emph{cohomologically smooth} morphisms originally defined by Scholze \cite{scholze2017etale}.

\begin{definition}[{\cite[Definition 5.1]{scholze2022six}}]\label{def:weakcohomsmooth}
    Let $\mc D : \Span(\mc C, E)\to\Cat$ be a 3-functor formalism and let $f : x\to y\in E$. We say that $f$ is \emph{weakly} $(\mc D-)$\emph{cohomologically smooth} if
    \begin{enumerate}
        \item the right adjoint $f^!$ to $f_!$ exists and the natural transformation
        \begin{equation*}
            f^!(1_y)\otimes f^\ast(-)\longrightarrow f^!(-)
        \end{equation*}
        coming from transposing
        \begin{equation*}
            f_!(f^!(1_y)\otimes f^\ast(-))\simeq f_!f^!(1_y)\otimes (-)\longrightarrow (-)
        \end{equation*}
        is an equivalence
        \item for any $g : y'\to y$, (i) also holds for the base change $\bar f : x\times_y y'\to y'$ and the natural map
        \begin{equation*}
            \bar g^\ast f^!(1_y)\longrightarrow \bar f^!(1_{y'})
        \end{equation*}
        is an equivalence where $\bar g$ is the base change of $g$.
    \end{enumerate}
    We say that $f$ is \emph{cohomologically smooth} if it is weakly cohomologically smooth and in addition $f^!(1_y)$ is $\otimes$-invertible.
\end{definition}

\begin{remark}
    Weak cohomological smoothness is preserved under base change by definition. Cohomological smoothness is as well by \Cref{def:weakcohomsmooth}(ii) since $\bar g^\ast$ is symmetric monoidal, hence preserves $\otimes$-invertible objects.
\end{remark}

\subsection{Labeled span categories}

Let $(\mc C, E)$ be a geometric setup. Assume we are also given a functor
\begin{equation*}
    \begin{tikzcd}
        G : \mc C^\mathrm{op}\arrow{r} & \Alg_{\mb E_1}(\Ani).
    \end{tikzcd}
\end{equation*}
In this section, we construct a labeled span category $\Span^G(\mc C, E)$ whose
\begin{itemize}
    \item objects are the objects of $\mc C$
    \item morphisms from $X$ to $Y$ are spans
    \begin{equation}\label{eq:labeledspanmorphism}
        \begin{tikzcd}
            & (Z, \xi)\arrow{dl}[above left]{f}\arrow{dr}[above right]{g} \\
            X && Y
        \end{tikzcd}
    \end{equation}
    with $g\in E$ and $\xi\in G(Z)$
    \item composition of morphisms is given by
    \begin{equation*}
        \begin{tikzcd}[column sep=10pt, row sep=5pt]
        	& {(W,\xi)} && {(V,\eta)} &&&& {(W\times_Y V, \pi_V^\ast\eta\cdot\pi_W^\ast\xi)} & \\
        	&&&&& \simeq \\
        	X && Y && Z && X && Z
        	\arrow[from=1-2, to=3-1]
        	\arrow[from=1-2, to=3-3]
        	\arrow[from=1-4, to=3-3]
        	\arrow[from=1-4, to=3-5]
        	\arrow[from=1-8, to=3-7]
        	\arrow[from=1-8, to=3-9]
        \end{tikzcd}
    \end{equation*}
    i.e.\ by composing underlying spans and taking external products of the elements of $G$.
\end{itemize}

\begin{remark}\label{rem:labspansofelmanto}
    A more general procedure for producing such labeled spans has already been developed in \cite{elmanto2021motivic} with further extensions in \cite{elmanto2020modules}. One may realize our labeled span categories as a special instance of the labeled span categories of Elmanto \emph{et al.} However, for our applications, it will be easier to work from scratch with a more concrete model for this special case.
\end{remark}

\subsubsection{Construction}

First we recall the construction of classical, unlabeled span categories. Let $\Sigma_n$ denote the poset
\begin{equation*}
    \Sigma_n = \{(i,j) : 0\le i\le j\le n\}
\end{equation*}
with the relation that $(i,j)\le (k,\ell)$ if and only if $i\le k$ and $j\ge \ell$. For example, $\Sigma_2$ is represented by the diagram
\begin{equation*}
    \begin{tikzcd}[column sep=5pt]
        && {(0\le 2)} && \\
        & {(0\le 1)} && {(1\le 2)} \\
        {(0\le 0)} && {(1\le 1)} && {(2\le 2)}.
        \arrow[from=1-3, to=2-2]
        \arrow[from=1-3, to=2-4]
        \arrow[from=2-2, to=3-1]
        \arrow[from=2-2, to=3-3]
        \arrow[from=2-4, to=3-3]
        \arrow[from=2-4, to=3-5]
    \end{tikzcd}
\end{equation*}
The classical span category $\Span(\mc C, E)$ is built by declaring the space of $n$-simplices to be a certain subspace of diagrams $\Sigma_n\to\mc C$. To organize this, we make use of the category of \emph{triples} $\mathrm{Trip}$ (see \cite[Section 5]{barwick2017spectral}), which is defined to be category whose:
\begin{itemize}
    \item objects are triples $(\mc C, M, N)$ where $\mc C$ is a category and $M$, $N$ are wide subcategories which have and are preserved by pullbacks along each other
    \item morphisms $(\mc C, M, N)\to (\mc C', M', N')$ are functors $f : \mc C\to\mc C'$ satisfying $f(M)\subseteq M'$, $f(N)\subseteq N'$ and which preserve pullbacks of morphisms in $M$ along morphisms in $N$.
\end{itemize}
The posets $\Sigma_n$ upgrade to a cosimplicial triple
\begin{equation*}
    \begin{tikzcd}[row sep=0pt]
        \Delta\arrow{r} & \mathrm{Trip} \\
        {[n]}\arrow[mapsto]{r} & (\Sigma_n, \Sigma_n^L, \Sigma_n^R)
    \end{tikzcd}
\end{equation*}
where $\Sigma_n^L$ and $\Sigma_n^R$ are the wide subcategories consisting of left facing and right facing arrows in $\Sigma_n$, respectively. Applying the nerve construction of \Cref{subsubsec:nerverealization} yields a functor
\begin{equation*}
    \begin{tikzcd}
        N(\Span(-)) : \mathrm{Trip}\arrow{r} & \PSh(\Delta).
    \end{tikzcd}
\end{equation*}

\begin{proposition}[{\cite[Proposition 5.9]{barwick2017spectral}}]
    For every triple $(\mc C, M, N)\in\mathrm{Trip}$, $N(\Span(\mc C, M, N))$ is a complete Segal space.
\end{proposition}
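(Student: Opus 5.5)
We must show that $N(\Span(\mc C, M, N))$, the simplicial space with $n$-simplices the groupoid of diagrams $\Sigma_n\to\mc C$ sending left arrows into $M$, right arrows into $N$, and every square to a pullback, is a complete Segal space. The plan is to verify the Segal condition and completeness separately, using the same Kan-extension technique as in \Cref{prop:goodisdetectonspine}.

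\textbf{Segal condition.} Let $\mathrm{Sp}_n\subseteq\Sigma_n$ be the subposet on $(i,i)$ and $(i,i+1)$; this is the zigzag spine. First show that a functor $D:\Sigma_n\to\mc C$ lies in the span nerve if and only if two conditions hold. The first is that $D$ is right Kan extended from $\mathrm{Sp}_n$. The second is that the spine's left-facing arrows lie in $M$ and its right-facing arrows lie in $N$.

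For the key computation, the comma poset of $\mathrm{Sp}_n$ over $(i,j)$ is the spine on $\{i,\dots,j\}$, a zigzag. Its limit is an iterated fiber product
\begin{equation*}
D(i,i+1)\times_{D(i+1,i+1)}\cdots\times_{D(j-1,j-1)}D(j-1,j).
\end{equation*}
This fiber product exists because the triple axioms guarantee pullbacks of $M$ along $N$. By pasting pullbacks, the Kan-extension condition is equivalent to every square in $\Sigma_n$ being Cartesian. Stability of $M$ and $N$ under pullback along each other then propagates the markings to all arrows.

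It follows that restriction gives an equivalence
\begin{equation*}
N(\Span)_n\simeq N(\Span)_1\times_{N(\Span)_0}\cdots\times_{N(\Span)_0}N(\Span)_1,
\end{equation*}
by uniqueness of Kan extensions. This step is routine.

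\textbf{Completeness.} Here we must show that the inclusion of $N(\Span)_0=\mc C^\simeq$ into the subspace of invertible $1$-simplices is an equivalence.

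First, identify which spans are invertible. We will show that a span $X\xleftarrow{f}Z\xrightarrow{g}Y$ is invertible exactly when both $f$ and $g$ are equivalences. Suppose the span has an inverse. Composing in both orders yields spans equivalent to identities. The composite's legs factor through pullbacks, and inspecting them shows that $f$ has a right inverse and $g$ a left inverse, up to homotopy. Using the other composite, the roles swap, so both $f$ and $g$ are equivalences.

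Next, note that $M$ and $N$ contain all equivalences, since they are wide subcategories (and hence closed under equivalence). Therefore the subspace of invertible spans is the space of spans with both legs equivalences. That space is equivalent to $\mc C^\simeq$ via the degeneracy map, as we wanted.

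\textbf{Expected obstacle.} The main obstacle is making the invertibility identification model-independent. This requires care in extracting one-sided inverses from $2$-simplices. The cleanest route should be to work in the homotopy category of the Segal space, which is legitimate once the Segal condition has been established.
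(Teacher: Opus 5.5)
Your Segal-condition argument is correct. It is the same right-Kan-extension-from-the-spine argument the paper uses for $S_\bullet(\mathcal C,E)$ in \Cref{prop:goodisdetectonspine}; the paper gives no proof of this statement beyond the citation to Barwick, whose treatment is quasicategorical.

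\textbf{The gap is in completeness}, at the step identifying the invertible spans. Write $\sigma=(X\xleftarrow{f}Z\xrightarrow{g}Y)$ and let $\tau=(Y\xleftarrow{f'}W\xrightarrow{g'}X)$ be an inverse.
\begin{itemize}
\item An equivalence $\tau\sigma\simeq\id_X$ gives $\phi\colon X\xrightarrow{\sim}Z\times_YW$ with $fp_Z\phi\simeq\id_X\simeq g'p_W\phi$. That is, it gives \emph{sections} of $f$ and of $g'$.
\item Symmetrically, $\sigma\tau\simeq\id_Y$ gives sections of $f'$ and of $g$.
\end{itemize}
In particular, $\tau\sigma\simeq\id_X$ does not give a left inverse of $g$. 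In finite sets, take $X=\ast$, $Z=\{z_0,z_1,z_2\}$, $Y=\{y_0,y_1\}$, $f$ constant, $g(z_0)=y_0$, $g(z_1)=g(z_2)=y_1$, and $\tau=(Y\leftarrow\ast\to X)$ picking out $y_0$. Then $Z\times_Y\ast=\{z_0\}$, so $\tau\sigma=\id_X$, but $g$ is not injective. Split epimorphisms need not be equivalences, so your ``roles swap'' step does not go through. The issue is logical rather than one of model-independence: you have to use both composites at once.

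\textbf{One way to close the gap.} Set $f^*=(X\xleftarrow{f}Z=Z)$ and $g_!=(Z=Z\xrightarrow{g}Y)$, so that $\sigma\simeq g_!f^*$.
\begin{itemize}
\item The composite $f^*\tau g_!$ is computed from the same pullback $Z\times_YW$. Because $g'p_W$ is an equivalence, $f^*\tau g_!\simeq(sf)^*=(Z\xleftarrow{sf}Z=Z)$, where $s=p_Z\phi$. Note that $sf\in M$.
\item Using $\sigma\tau\simeq\id_Y$, in the homotopy category we get $g_!\simeq g_!f^*\tau g_!\simeq(Z\xleftarrow{sf}Z\xrightarrow{g}Y)$.
\item An equivalence between these two spans over fixed endpoints has apex component homotopic to $sf$. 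So $sf$ is an equivalence, and together with $fs\simeq\id_X$ this makes $f$ an equivalence.
\item Hence $f^*$ is invertible (with inverse $(f^{-1})^*$), and so $g_!\simeq\sigma(f^*)^{-1}$ is invertible.
\item The section $v$ of $g$ produced by $\sigma\tau\simeq\id_Y$ is a base change of $g'$ followed by an equivalence, so $v\in N$. Thus $v_!$ is a right inverse of $g_!$, hence its inverse.
\item Therefore $(vg)_!\simeq v_!g_!\simeq\id_Z$, which forces $vg\simeq\id_Z$, so $g$ is an equivalence.
\end{itemize}
With this in place, the rest of your completeness argument goes through: the spans with invertible legs form $\mathcal C^\simeq$ via the degeneracy.
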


The associated category of $N(\Span(\mc C, M, N))$, denoted $\Span(\mc C, M, N)$, is then the ordinary span category associated to the triple $(\mc C, M, N)$.\par
To motivate our construction of the labeled span category, recall the construction $\mc C^G$ of \Cref{subsec:adjendos} which is the Cartesian unstraightening of
\begin{equation*}
    \begin{tikzcd}
        \mc C^\mathrm{op}\arrow{r}{G} & \Alg_{\mb E_1}(\Ani)\arrow{r}{B} & \Cat.
    \end{tikzcd}
\end{equation*}
\Cref{prop:descrofadjendos} tells us that this is the category whose
\begin{itemize}
    \item objects are the objects of $\mc C$
    \item morphisms $x\to y$ are pairs $(f, \xi)$ with $f : x\to y\in\mc C$ and $\xi\in G(x)$
    \item composition is given by $(f,\xi)\circ (g, \eta)\simeq (fg, g^\ast \xi\cdot \eta)$.
\end{itemize}
Additionally, $\mc C^G$ comes equipped with an identity section map $s : \mc C\to \mc C^G$ which sends a morphism $f$ to $(f, 1)$. In a labeled span \eqref{eq:labeledspanmorphism}, we may pair the element $\xi\in G(Z)$ with the morphism $f$ and represent the span as the diagram
\begin{equation*}
    \begin{tikzcd}
        & s(Z)\arrow{dl}[above left]{(f,\, \xi)}\arrow{dr}[above right]{s(g)} \\
        s(X) && s(Y)
    \end{tikzcd}
\end{equation*}
in $\mc C^G$. Given a cospan
\begin{equation*}
    \begin{tikzcd}
        s(W)\arrow{dr}[below left]{s(g)} && s(V)\arrow{dl}[below right]{(f,\, \eta)} \\
        & s(Y)
    \end{tikzcd}
\end{equation*}
there is an essentially unique extension of this diagram to a square of the form
\begin{equation*}
    \begin{tikzcd}
    	& s(T) & \\
    	s(W) && s(V) \\
    	& s(Y)
    	\arrow["{(\bar f,\, \bar\eta)}"', from=1-2, to=2-1]
    	\arrow["{s(\bar g)}", from=1-2, to=2-3]
    	\arrow["{s(g)}"', from=2-1, to=3-2]
    	\arrow["{(f,\,\eta)}", from=2-3, to=3-2]
    \end{tikzcd}
\end{equation*}
in $\mc C^G$ such that the pushforward along the projection $\pi : \mc C^G\to\mc C$ is Cartesian. Indeed, $\bar f$ and $\bar g$ are determined as the respective pullbacks of $f$ and $g$ in $\mc C$ and the composition rule forces $\bar\eta \simeq \bar g^\ast\eta$. Using this to compose spans we arrive at the composition rule
\begin{equation*}
    \begin{tikzcd}[column sep=12pt]
    	&&&&&&&& {s(W\times_Y V)} && \\
    	& s(W) && s(V) && \simeq && s(W) && s(V) \\
    	s(X) && s(Y) && s(Z) && s(X) && s(Y) && s(Z) \\
    	\\
    	&&&&& \simeq && {s(W\times_Y V)} \\
    	&&&&&& s(X) && s(Z)
    	\arrow["{(\bar{f_2},\, \bar{g_1}^\ast\eta)}"', from=1-9, to=2-8]
    	\arrow["{s(\bar{g_1})}", from=1-9, to=2-10]
    	\arrow["{(f_1,\,\xi)}"', from=2-2, to=3-1]
    	\arrow["{s(g_1)}"', from=2-2, to=3-3]
    	\arrow["{(f_2,\,\eta)}", from=2-4, to=3-3]
    	\arrow["{s(g_2)}", from=2-4, to=3-5]
    	\arrow["{(f_1,\,\xi)}"', from=2-8, to=3-7]
    	\arrow["{s(g_1)}"', from=2-8, to=3-9]
    	\arrow["{(f_2,\,\eta)}", from=2-10, to=3-9]
    	\arrow["{s(g_2)}", from=2-10, to=3-11]
    	\arrow["{(\bar{f_2}f_1,\, \bar{g_1}^\ast\eta \cdot \bar{f_2}^\ast\xi)}"', from=5-8, to=6-7]
    	\arrow["{s(g_2\bar{g_1})}", from=5-8, to=6-9]
    \end{tikzcd}
\end{equation*}
which is compatible with the composition rule outlined at the beginning of the section.\par
With this in mind we make the following definition.

\begin{definition}
    Define $N_\mathrm{unc}(\Span^G(\mc C, E))$ to be the simplicial space which at level $n$ is given by the pullback
    \begin{equation*}
        \begin{tikzcd}
            N_\mathrm{unc}(\Span^G(\mc C, E))_n \arrow{r}\arrow{d} & \map_{\Fun(\Lambda_2^2, \Cat)}(\Sigma_n^L\hookrightarrow\Sigma_n\hookleftarrow\Sigma_n^R, \mc C^G\; {=\joinrel=}\; \mc C^G\xleftarrow{s}\, E)\arrow{d}{\pi_\ast} \\
            N(\Span(\mc C, E))_n\arrow{r} & \map_{\Cat}(\Sigma_n,\mc C)
        \end{tikzcd}
    \end{equation*}
    where the right vertical map sends a diagram $D : \Sigma_n\to \mc C^G$ to $\Sigma_n\xrightarrow{D} \mc C^G\xrightarrow{\pi}\mc C$ and the bottom horizontal map is the forgetful map.
\end{definition}

\begin{remark}
    Because $N(\Span(\mc C, E))_n\to \map_{\Cat}(\Sigma_n,\mc C)$ is a monomorphism, the map
    \begin{equation*}
        N_\mathrm{unc}(\Span^G(\mc C, E))_n \longrightarrow \map_{\Fun(\Lambda_2^2, \Cat)}(\Sigma_n^L\hookrightarrow\Sigma_n\hookleftarrow\Sigma_n^R, \mc C^G\; {=\joinrel=}\; \mc C^G\xleftarrow{s}\, E)
    \end{equation*}
    is also a monomorphism, namely onto the subspace of components consisting of diagrams $D$ such that the composite $\Sigma_n\xrightarrow{D}\mc C^G\xrightarrow{\pi}\mc C$ sends every square in $\Sigma_n$ of the form
    \begin{equation*}
        \begin{tikzcd}[column sep=5pt, row sep=10pt]
        	& {(i\le \ell)} & \\
        	{(i\le j)} && {(k\le\ell)} \\
        	& {(k\le j)}
        	\arrow[from=1-2, to=2-1]
        	\arrow[from=1-2, to=2-3]
        	\arrow[from=2-1, to=3-2]
        	\arrow[from=2-3, to=3-2]
        \end{tikzcd}
    \end{equation*}
    to a Cartesian square.
\end{remark}

We may now define our labeled span categories.

\begin{definition}\label{def:labeledspancat}
    The \emph{labeled span category} $\Span^G(\mc C, E)$ is the associated category of the simplicial space $N_\mathrm{unc}(\Span^G(\mc C, E))$.
\end{definition}

The definition also comes with an inherent functoriality. Let $\mathrm{LabPair}$ denote the category whose
\begin{itemize}
    \item objects are triples $(\mc C, E, G)$ with $(\mc C, E)$ a geometric setup and $G : \mc C^\mathrm{op}\to \Alg_{\mb E_1}(\Ani)$ a functor
    \item morphisms $(\mc C, E, G)\to (\mc C', E', G')$ are pairs $(F, \eta)$ where $F : \mc C\to \mc C'$ is a functor satisfying $F(E)\subseteq E'$ which preserves pullbacks along morphisms in $E$, and $\eta : G\Rightarrow G'\circ F^\mathrm{op}$.
\end{itemize}
Explicitly, let $\mathrm{Pair}$ denote the full subcategory of $\mathrm{Trip}$ spanned by triples of the form $(\mc C, \mc C, E)$. Then $\mathrm{LabPair}$ is the total space of the Cartesian unstraightening of the composite
\begin{equation*}
    \begin{tikzcd}[row sep=0pt]
        \mathrm{Pair}^\mathrm{op}\arrow{r} & \Cat^\mathrm{op}\arrow{r}{\Fun((-)^\mathrm{op},\; \Alg_{\mb E_1}(\Ani))} &[65pt] \Cat. \\
        (\mc C, E)\arrow[mapsto]{r} & \mc C 
    \end{tikzcd}
\end{equation*}
Functoriality of $\Span^G(\mc C, E)$ in $(\mc C, E, G)\in\mathrm{LabPair}$ then follows from the definition of $N_\mathrm{unc}(\Span^G(\mc C, E))$ and the functoriality of $(\mc C, G)\mapsto \mc C^G$.\par

The remainder of this section is devoted to proving the following two propositions which connect $\Span^G(\mc C, E)$ to the informal description at the start of the section.

\begin{proposition}\label{prop:descr1ofSpanG}
    \begin{enumerate}
        \item There is a natural in $(\mc C, E)$ equivalence $\Span^{\ast}(\mc C, E)\simeq \Span(\mc C, E)$ where $\ast : \mc C^\mathrm{op}\to \Alg_{\mb E_1}(\Ani)$ is the constant functor with value $\ast$.
        \item There is a natural in $(\mc C, G)$ equivalence $\Span^G(\mc C, \mc C^\simeq)\simeq (\mc C^G)^\mathrm{op}$.
    \end{enumerate}
\end{proposition}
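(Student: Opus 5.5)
For (i), I will compare simplicial spaces directly. When $G = \ast$, each $BG(x)$ is the point, so the unstraightening $\mc C^\ast \to \mc C$ is an equivalence and $s : \mc C \to \mc C^\ast$ is its inverse. The right-hand vertical map $\pi_\ast$ in the pullback defining $N_\mathrm{unc}(\Span^\ast(\mc C, E))_n$ then identifies the mapping space of marked cospans into $(\mc C^\ast, \mc C^\ast, E)$ with the space of functors $\Sigma_n \to \mc C$ whose right-facing arrows lie in $E$. The latter contains $N(\Span(\mc C, E))_n$ as a union of components, so the pullback is $N(\Span(\mc C,E))_n$, naturally in $n$ and in $(\mc C, E)$ by the functoriality of $(\mc C, G) \mapsto \mc C^G$ at $G = \ast$. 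Since $N(\Span(\mc C,E))$ is a complete Segal space by \cite[Proposition 5.9]{barwick2017spectral}, taking associated categories gives $\Span^\ast(\mc C, E) \simeq \Span(\mc C, E)$.

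For (ii), I take $E = \mc C^\simeq$. Then every right-facing arrow of a diagram $\Sigma_n \to \mc C^G$ in $N_\mathrm{unc}$ is $s$ of an equivalence, hence an equivalence in $\mc C^G$. The Cartesian condition on the image in $\mc C$ is automatic, since pulling back along equivalences gives Cartesian squares. Thus $N_\mathrm{unc}(\Span^G(\mc C, \mc C^\simeq))_n$ is the space of functors $\Sigma_n \to \mc C^G$ sending $\Sigma_n^R$ into $s(\mc C^\simeq)$. The key observation is that $s(\mc C^\simeq) \subseteq (\mc C^G)^\simeq$ is an equivalence of groupoids, because $s$ is essentially surjective (\Cref{prop:descrofadjendos}) and, restricted to invertible morphisms, is an equivalence on mapping spaces. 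This last point uses that an invertible $(f, \xi)$ has $f$ invertible in $\mc C$, and for a Cartesian lift $s(f)$ of an equivalence the remaining factor $(\id, \xi)$ must be invertible in the fiber. Consequently $N_\mathrm{unc}(\Span^G(\mc C, \mc C^\simeq))_n$ is the space of functors $\Sigma_n \to \mc C^G$ inverting $\Sigma_n^R$, that is, $\map(\Sigma_n[(\Sigma_n^R)^{-1}], \mc C^G)$.

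It then remains to check that $\Sigma_n[(\Sigma_n^R)^{-1}] \simeq [n]^\mathrm{op}$, naturally in $n$, via the projection $(i \le j) \mapsto i$. I would show this by induction, exhibiting $\Sigma_n$ as an iterated pushout of the $\Sigma_1$ pieces glued along the $(i \le i)$, which is the standard argument that $\Span(\mc C, \mc C^\simeq) \simeq \mc C^\mathrm{op}$. Given this, $N_\mathrm{unc} \simeq N((\mc C^G)^\mathrm{op})$ is already a complete Segal space, so its associated category is $(\mc C^G)^\mathrm{op}$. Naturality in $(\mc C, G)$ comes from the naturality of $s$ and of the localization.

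I expect the main obstacle to be making the localization identification rigorous and natural in $n$, together with the claim that right-facing arrows in $s(\mc C^\simeq)$ are exactly the equivalences of $\mc C^G$. I would try to argue the latter at the level of mapping spaces using \Cref{prop:globaldescrofadjendos}.
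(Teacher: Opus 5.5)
Part (i) is fine and matches the paper. Part (ii), however, has a genuine gap: the ``key observation'' that $s$ restricts to an equivalence $\mc C^\simeq\to(\mc C^G)^\simeq$ is false.

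Your own argument only shows that an invertible $(f,\xi)\colon s(x)\to s(y)$ has $f$ invertible and $\xi$ a unit of $G(x)$. It does not force $\xi\simeq 1$. By \Cref{prop:descrofadjendos}, the invertible part of $\map_{\mc C^G}(s(x),s(y))$ is $G(x)^\times\times\map_{\mc C^\simeq}(x,y)$, where $G(x)^\times\subseteq G(x)$ denotes the components of units. The map $s$ only hits $\{1\}\times\map_{\mc C^\simeq}(x,y)$, so this is not an equivalence in general:

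\begin{itemize}
\item For $\mc C=\ast$ and $G$ a non-contractible grouplike $\mb E_1$-space, $s$ is the map $\ast\to BG$.
\item Whenever $\mc K(\Vect(x))$ is grouplike (\Cref{cor:ascatkthy}), every label is a unit.
\end{itemize}

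For the same reason, factoring the restriction to $\Sigma_n^R$ through $s$ is extra data, not a condition, since $s$ is not a monomorphism on mapping spaces. Consequently the identification $N_{\mathrm{unc}}(\Span^G(\mc C,\mc C^\simeq))\simeq N((\mc C^G)^{\mathrm{op}})$ already fails in degree $0$. Since $\Sigma_0=\Sigma_0^R$, the degree-$0$ space is $\mc C^\simeq$, not $(\mc C^G)^\simeq$; for $\mc C=\ast$ this is a point versus the space $BG$. In particular, this simplicial space is not complete.

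What your localization argument actually gives, using $\Sigma_n^R\cong\coprod_{j=0}^n[j]$ and $\map([j],\mc C^\simeq)\simeq\mc C^\simeq$, is
\[
N_{\mathrm{unc}}(\Span^G(\mc C,\mc C^\simeq))_n\simeq N((\mc C^G)^{\mathrm{op}})_n\times_{((\mc C^G)^{\simeq})^{\times(n+1)}}(\mc C^\simeq)^{\times(n+1)}.
\]
This is a Segal space with the ``wrong'' space of objects. The missing step is to show that its associated category is still $(\mc C^G)^{\mathrm{op}}$:
\begin{itemize}
\item The induced functor is essentially surjective because $s$ is.
\item It is fully faithful because completing a Segal space does not change mapping spaces between degree-$0$ points, and here those are exactly $\map_{(\mc C^G)^{\mathrm{op}}}(s(x),s(y))$.
\end{itemize}
This is precisely \Cref{lemma:ascatofD0}, which is the key input of the paper's proof.

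With this repair your route is a valid alternative. The paper goes through the bisimplicial model $\Span^G(\mc C,E)\simeq\Gr\Pull^G(\mc C,E)^{2\mathrm{-op}}$, notes that for $E=\mc C^\simeq$ the horizontal fragment includes by an equivalence, and then applies \Cref{lemma:ascatofD0}. You instead work directly with $N_{\mathrm{unc}}$ and localize $\Sigma_n$, which avoids the gluing machinery.

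Two smaller corrections to that part:
\begin{itemize}
\item The projection inverting $\Sigma_n^R$ is $(i\le j)\mapsto j$, since right-facing arrows fix $j$; it is not $(i\le j)\mapsto i$.
\item $\Sigma_n$ is not an iterated pushout of copies of $\Sigma_1$ (that pushout misses objects such as $(0\le 2)$).
\end{itemize}
No induction is needed for the localization. The section $j\mapsto(0\le j)$ of $p\colon\Sigma_n\to[n]^{\mathrm{op}}$, together with the natural transformation whose components $(0\le j)\to(i\le j)$ lie in $\Sigma_n^R$, shows that $p^\ast$ identifies functors out of $[n]^{\mathrm{op}}$ with functors out of $\Sigma_n$ inverting $\Sigma_n^R$, naturally in $n$.
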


\begin{proposition}\label{prop:descr2ofSpanG}
    Fix $(\mc C, E, G)\in\mathrm{LabPair}$. Let
    \begin{itemize}
        \item $\iota : \Span(\mc C, E)\to \Span^G(\mc C, E)$ denote the map induced by $(\mc C, E, \ast)\to (\mc C, E, G)$ where $\ast\Rightarrow G$ is the identity section
        \item $\pi : \Span^G(\mc C, E)\to \Span(\mc C, E)$ the map induced by $(\mc C, E, G)\to (\mc C, E, \ast)$.
    \end{itemize}
    Then,
    \begin{enumerate}
        \item $\iota$ is essentially surjective
        \item if $\mc G$ denotes the Cartesian unstraightening of
        \begin{equation*}
            \begin{tikzcd}
                (\mc C^\simeq)^\mathrm{op} \subseteq \mc C^\mathrm{op}\arrow{r}{G} & \Ani
            \end{tikzcd}
        \end{equation*}
        then there exist equivalences
        \begin{equation*}
            \map_{\Span^G(\mc C, E)}(\iota X, \iota Y)\simeq \mc G\times_{\mc C^\simeq} \map_{\Span(\mc C, E)}(X, Y)
        \end{equation*}
        where the projection $\map_{\Span^G(\mc C, E)}(\iota X, \iota Y)\to \map_{\Span(\mc C, E)}(X, Y)$ is given by pushforward along $\pi$ and $\map_{\Span(\mc C, E)}(X, Y)\to \mc C^\simeq$ is given by projection to the apex of the span.
    \end{enumerate}
    Additionally, representing a point $(\xi\in G(Z), X\leftarrow Z\rightarrow Y)\in \mc G\times_{\mc C^\simeq}\map_{\Span(\mc C, E)}(X, Y)$ by
    \begin{equation*}
        \begin{tikzcd}
            & (Z,\xi)\arrow{dl}\arrow{dr} \\
            X && Y,
        \end{tikzcd}
    \end{equation*}
    composition is given by
    \begin{equation*}
        \begin{tikzcd}
            & {(W,\xi)} && {(V,\eta)} & {} && {(W\times_Y V, \eta\boxtimes\xi)} & \\
            X && Y && Z & X && Z.
            \arrow[from=1-2, to=2-1]
            \arrow[from=1-2, to=2-3]
            \arrow[from=1-4, to=2-3]
            \arrow[from=1-4, to=2-5]
            \arrow["\simeq"{description}, draw=none, from=1-5, to=2-6]
            \arrow[from=1-7, to=2-6]
            \arrow[from=1-7, to=2-8]
        \end{tikzcd}
    \end{equation*}
\end{proposition}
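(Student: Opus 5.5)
The plan is to compare the simplicial space $N_\mathrm{unc}(\Span^G(\mc C, E))$ with $N(\Span(\mc C, E))$ levelwise and fiberwise, via the forgetful map $\pi_\ast$. From there, mapping spaces in the associated category can be read off once we know the simplicial space is already Segal and complete. Essential surjectivity in (i) is immediate. On $0$-simplices, $N_\mathrm{unc}(\Span^G(\mc C,E))_0$ is the space of objects of $\mc C^G$ lying over objects of $\mc C$. By \Cref{prop:descrofadjendos}, $s$ is essentially surjective, so every object is equivalent to some $\iota X = s(X)$. The rest is organized as follows.

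\textbf{Step 1 (describe the fibers of $\pi_\ast$).} Fix an $n$-simplex $D\in N(\Span(\mc C,E))_n$. A lift of $D$ to $\mc C^G$ assigns to each right-facing arrow the section value $s(g)$, which carries no data since $s$ is a section. To each left-facing arrow it assigns a morphism $(f,\xi)$ of $\mc C^G$ over $D(f)$, compatibly with composition.
\begin{itemize}
\item By \Cref{prop:globaldescrofadjendos}, the space of lifts of a single left arrow $Z\to X$ is $G(Z)$.
\item Each square of $\Sigma_n$ has two right legs sent to $s(-)$. Commutativity then forces the label on the upper left arrow to be the pullback label, as in the informal computation preceding the definition.
\end{itemize}
So I expect the fiber of $\pi_\ast$ over $D$ to be $\prod_{k} G(D(k\le k))$ for $n$-simplices. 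Equivalently, after reparametrizing by the left-facing spine $(0\le n)\to(0\le 0)$, $(1\le n)\to(1\le 1)$, and so on, it is a product of labels on the vertices $(k\le k)$ pulled back to the apex. Concretely, restricting to the subposet $\Sigma_n^L$ gives a Cartesian section problem. Using that $\mc C^G\to\mc C$ is a Cartesian fibration, a lift of $D|_{\Sigma_n^L}$ in which right arrows go to Cartesian edges is determined by its values on the ``left edges'' $(i\le i+1)\to(i\le i)$. I would make this precise by a right Kan extension or Cartesian lift argument, analogous to \Cref{prop:goodisdetectonspine}.

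\textbf{Step 2 (Segal and completeness).} From Step 1, $N_\mathrm{unc}(\Span^G)_n$ is identified with the space of $n$-simplices of $\Span(\mc C,E)$ together with labels $\xi_k\in G(\text{apex of the }k\text{-th elementary span})$. The Segal maps are compatible with the Segal maps of $N(\Span(\mc C,E))$, which is complete Segal by \cite[Proposition 5.9]{barwick2017spectral}. They are also compatible on labels, because labels of an $n$-simplex are exactly the pullbacks of the labels of its spine. Hence $N_\mathrm{unc}(\Span^G)$ is Segal. For completeness, an invertible labeled span must have invertible underlying span, hence apex equivalent to both endpoints. The label $\xi$ must then be invertible in $G(Z)$, since composition multiplies labels. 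So completeness is not automatic when $G$ is not grouplike. Rather than insisting on completeness, I would use that the associated category of a Segal space has the same mapping spaces as its $1$-simplices over pairs of objects. This gives (ii) directly:
\[
\map(\iota X,\iota Y)\simeq \mathrm{fib}_{(X,Y)}N_\mathrm{unc}(\Span^G)_1\simeq \mc G\times_{\mc C^\simeq}\map_{\Span(\mc C,E)}(X,Y).
\]

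\textbf{Step 3 (composition).} Composition is read off the $2$-simplices. By Step 1, a $2$-simplex over $X\leftarrow W\times_YV\to Z$ with labels $\xi,\eta$ has apex label $\bar f_2^\ast\xi\cdot \bar g_1^\ast\eta$, using the composition law of \Cref{prop:descrofadjendos}. This is the external product $\eta\boxtimes\xi$.

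I expect the main obstacle to be Step 1. The issue is showing rigorously, model-independently, that the space of lifts is the product of spine labels, with the square constraints contributing no extra data. My first attempt would be to factor $\Sigma_n^L$-diagrams through $\pi$-Cartesian lifts using the Cartesian section $s$, and to invoke \Cref{prop:globaldescrofadjendos} inductively along a cofinal filtration of $\Sigma_n$ by adding one elementary span at a time. The pushout decomposition $\Sigma_n\simeq\Sigma_{n-1}\cup_{\Sigma_0}\Sigma_1$ modulo Cartesian squares would handle the gluing step.
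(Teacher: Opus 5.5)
Your treatment of (i), (ii) and the composition law matches the paper's. Once $N_\mathrm{unc}(\Span^G(\mc C,E))$ is known to be a Segal space, mapping spaces of its associated category between objects coming from $N_\mathrm{unc}(\Span^G(\mc C,E))_0$ are the fibers of the $1$-simplex space. The paper computes these exactly as you do, splitting $\Sigma_1$ into its two legs and applying \Cref{prop:globaldescrofadjendos}, and reads composition off the evident $2$-simplex. For (i), note that $N_\mathrm{unc}(\Span^G(\mc C,E))_0$ is already $\mc C^\simeq$, because the $\Sigma_0^R$-component must come from $E$ through $s$. So (i) is just essential surjectivity of $X_0\to\ascat X$.

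The genuine divergence is the Segal property. The paper does not compute the higher simplex spaces. It identifies $N_\mathrm{unc}(\Span^G(\mc C,E))\simeq\cnr\Pull^G(\mc C,E)^{2\mathrm{-op}}$ and applies Juran's criterion, \Cref{prop:condcnrsegal}. This reduces Segal-ness to Cartesianness of the single square \eqref{eq:formulaforD11} of $(1,1)$-simplices (\Cref{lemma:equivcondforsegalforD}, \Cref{cor:NuncSpanGsegal}). That square is Cartesian for precisely the reason in your Step 1: over a pullback square in $\mc C$, a commuting square in $\mc C^G$ with two parallel legs given by $s$-images has its remaining label forced to be the pulled-back one. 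So the key input is shared. The paper lets Juran's lemma do the higher-dimensional bookkeeping, whereas you do it by hand. Your route buys an explicit description of every $N_\mathrm{unc}(\Span^G(\mc C,E))_n$, at the price of a genuine Kan extension argument.

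As written, Step 1 does not yet supply that argument, and it needs two corrections.
\begin{enumerate}
\item The fiber of $\pi_\ast$ over an $n$-simplex $D$ is $\prod_{k=0}^{n-1}G(D(k\le k+1))$, the labels on the apexes of the elementary spans (as you say in Step 2). It is not $\prod_k G(D(k\le k))$.
\item $\Sigma_n$ is not $\Sigma_{n-1}\cup_{\Sigma_0}\Sigma_1$ in $\Cat$. That pushout is the zigzag spine, and the higher apexes $(i\le j)$ with $j-i\ge 2$ are recovered only as limits.
\end{enumerate}

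To complete your route, first check via \Cref{prop:descrofadjendos}, testing against objects $s(A)$, that the following square is a pullback in $\mc C^G$: the one with legs $(\bar f,\bar g^\ast\eta)$ and $s(\bar g)$ over $s(W)\xrightarrow{s(g)}s(Y)\xleftarrow{(f,\eta)}s(V)$. It is then a $\pi$-relative limit. Consequently, lifts of $D$ whose right-facing legs are $\pi$-Cartesian are exactly the relative right Kan extensions of their restrictions to the spine.

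You must also check that the identification of the right-facing legs with $s\circ D$ is determined on the spine. This is data rather than a property, since $s$ need not induce monomorphisms on mapping spaces. It holds because a Cartesian section over each chain $(0\le j)\to\cdots\to(j\le j)$ is determined by its value at the terminal vertex $(j\le j)$, which lies on the spine. With this, your product formula and hence the Segal property follow.
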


\subsubsection{Bisimplicial description}

To connect these labeled span categories to the informal description at the start of the section, as well as for later use, we give a bisimplicial gluing description of $\Span^G(\mc C, E)$.

\begin{definition}
    \begin{enumerate}
        \item Denote by $(-)^\mathrm{op} : \Delta\to \Delta$ the automorphism of $\Delta$ given by
        \begin{equation*}
            \begin{tikzcd}[row sep=0pt]
                (-)^\mathrm{op} : \Delta\arrow{r} & \Delta. \\
                \quad\;\;\qquad {[n]}\arrow[mapsto]{r} & {[n]^\mathrm{op}}
            \end{tikzcd}
        \end{equation*}
        \item For an $n$-simplicial space $X$ and a subset $I\subseteq \{1,2,\dots, n\}$, denote by $X^{I\mathrm{-op}}$ the simplicial space
        \begin{equation*}
            \begin{tikzcd}
                (\Delta^{\times n})^\mathrm{op}\arrow{r}{\prod_{1\le i\le n} \varepsilon_i} &[35pt] (\Delta^{\times n})^\mathrm{op}\arrow{r}{X} & \Ani
            \end{tikzcd}
        \end{equation*}
        where $\varepsilon_i = \id$ if $i\not\in I$ and $\varepsilon_i = (-)^\mathrm{op}$ if $i\in I$. We will also write $X^\mathrm{op}$ for $X^{\{1,2,\dots, n\}\mathrm{-op}}$.
    \end{enumerate}
\end{definition}

We may now define our bisimplicial space of interest.

\begin{definition}
    Let $\Pull^G(\mc C, E)$ be the bisimplicial space given as the pullback
    \begin{equation*}
        \begin{tikzcd}
            \Pull^G(\mc C, E)\arrow{r}\arrow{d} & \Sq(\mc C^G\;{=\joinrel=}\; \mc C^G \,\xleftarrow{s}\,E)\arrow{d} \\
            \Pull(\mc C, E)\arrow{r} & \Sq(\mc C)
        \end{tikzcd}
    \end{equation*}
    where the right vertical arrow is the composite $\Sq(\mc C^G\;{=\joinrel=}\; \mc C^G \,\xleftarrow{s}\,E)\to \Sq(\mc C^G)\xrightarrow{\pi_\ast}\Sq(\mc C)$.
\end{definition}

\begin{remark}
    Here we are overloading notation to have $\Sq$ represent the functor
    \begin{equation*}
        \Sq : \Fun(\Lambda_2^2, \Cat)\longrightarrow \PSh(\Delta^{\times 2})
    \end{equation*}
    given by the nerve construction of \Cref{subsubsec:nerverealization} applied to the cobisimplicial cospan of categories
    \begin{equation*}
        \begin{tikzcd}[row sep=0pt]
            \Delta^{\times 2}\arrow{r} & \mathrm{OFS}\subseteq \Fun(\Lambda_2^2, \Cat). \\
            ([n], [m])\arrow[mapsto]{r} & {[n]\bar\times [m]}
        \end{tikzcd}
    \end{equation*}
\end{remark}

Since $\Pull(\mc C, E)\to \Sq(\mc C)$ is a monomorphism, the map $\Pull^G(\mc C, E)\to \Sq(\mc C^G\;{=\joinrel=}\; \mc C^G \,\xleftarrow{s}\,E)$ is a monomorphism as well, namely onto the subspace of components consisting of diagrams where the underlying commutative squares in $\mc C$ consist of Cartesian squares.\par
At this point, we remark that $\Sigma_n$ with its left and right facing arrows forms a factorization system $(\Sigma_n, \Sigma_n^L, \Sigma_n^R)$. Moreover, under this factorization system we have natural in $n$ equivalences
\begin{equation*}
    (\CCpt^n)^{2\mathrm{-op}}\simeq \Fact \Sigma_n.
\end{equation*}
From this we obtain a diagram
\begin{equation*}
    \begin{tikzcd}[column sep=-12pt]
    	& {\cnr \Pull^G(\mc C, E)^{2\mathrm{-op}}} && {\cnr \Sq(\mc C^G\;{=\joinrel=}\; \mc C^G \,\xleftarrow{s}\,E)^{2\mathrm{-op}}} \\
    	{N_\mathrm{unc}(\Span^G(\mc C, E))} && {\map_{\Fun(\Lambda_2^2, \Cat)}(\Sigma^L_\bullet\hookrightarrow\Sigma_\bullet\hookleftarrow\Sigma^R_\bullet, \mc C^G\;{=\joinrel=}\;\mc C^G\,\xleftarrow{s} E)} \\
    	& {\cnr \Pull(\mc C, E)^{2\mathrm{-op}}} && {\cnr \Sq(\mc C)^{2\mathrm{-op}}} \\
    	{N(\Span(\mc C, E))_n} && {\map_{\Cat}(\Sigma_\bullet,\mc C)}
    	\arrow[from=1-2, to=1-4]
    	\arrow[from=1-2, to=3-2]
    	\arrow["\lrcorner"{anchor=center, pos=0.125, rotate=0}, draw=none, from=1-2, to=3-4]
    	\arrow[from=1-4, to=3-4]
    	\arrow[from=2-1, to=1-2]
    	\arrow[from=2-1, to=2-3]
    	\arrow[from=2-1, to=4-1]
    	\arrow["\lrcorner"{anchor=center, pos=0.125, rotate=0}, draw=none, from=2-1, to=4-3]
    	\arrow["\simeq"{description}, from=2-3, to=1-4]
    	\arrow[from=2-3, to=4-3]
    	\arrow[from=3-2, to=3-4]
    	\arrow["\simeq"{description}, from=4-1, to=3-2]
    	\arrow[from=4-1, to=4-3]
    	\arrow["\simeq"{description}, from=4-3, to=3-4]
    \end{tikzcd}
\end{equation*}
where we have used that
\begin{itemize}
    \item $(-)^{2\mathrm{-op}}$ is a self-inverse automorphism of $\PSh(\Delta^{\times 2})$
    \item the functor
    \begin{equation*}
        \Sq : \Fun(\Lambda_2^2, \Cat)\longrightarrow \PSh(\Delta^{\times 2})
    \end{equation*}
    has left adjoint given by sending $X_{\bullet,\bullet}$ to $\ascat X_{\bullet, 0}\to \Gr X\leftarrow \ascat X_{0,\bullet}$
    \item \Cref{thm:gluingoffactsystems} to deduce $\Gr\Fact \Sigma_n\simeq \Sigma_n$.
\end{itemize}
It follows therefore that
\begin{equation*}
    N_\mathrm{unc}(\Span^G(\mc C, E))\simeq \cnr \Pull^G(\mc C, E)^{2\mathrm{-op}}
\end{equation*}
and hence by corner approximation that
\begin{equation*}
    \Span^G(\mc C, E)\simeq \Gr \Pull^G(\mc C, E)^{2\mathrm{-op}}
\end{equation*}
yielding the desired bisimplicial model for $\Span^G(\mc C, E)$. We remark as well that this equivalence is natural in $(\mc C, E, G)\in \mathrm{LabPair}$ by naturality of the construction $(\mc C, E, G)\mapsto \Pull^G(\mc C, E)$ and corner approximation.\par

The following is a rephrasing of \cite[Lemma 3.12]{juran2026orthogonal}. The cited result is proven under the definition of ``double category'' being a bisimplicial space $X$ such that $X(n, -)$ and $X(-, n)$ are \emph{complete} Segal spaces for every $n$, however the proof only makes use of the assumption that each is a Segal space.

\begin{proposition}\label{prop:condcnrsegal}
    Let $X$ be a bisimplicial space such that for every $n\ge 0$ both $X(n, -)$ and $X(-, n)$ are Segal spaces. Then the following are equivalent:
    \begin{enumerate}
        \item the square
        \begin{equation*}
            \begin{tikzcd}[row sep=30pt, column sep=40pt]
                X(1, 1)\arrow{r}{(\id,\, \langle 0\rangle)^\ast}\arrow{d}[left]{(\langle 1\rangle,\,\id)^\ast} & X(1, 0)\arrow{d}[right]{(\langle 1\rangle,\,\id)^\ast} \\
                X(0,1)\arrow{r}[below]{(\id,\, \langle 0\rangle)^\ast} & X(0,0)
            \end{tikzcd}
        \end{equation*}
        is Cartesian
        \item $\cnr X$ is a Segal space.
    \end{enumerate}
\end{proposition}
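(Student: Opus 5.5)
The plan is to compute $(\cnr X)_n=\map(\CCpt^n,X)$ by decomposing $\CCpt^n$ as an iterated pushout of representables. Once that is done, the Segal condition for $\cnr X$ becomes the statement that a certain limit of the spaces $X(i,j)$ splits along the spine of $[n]$. The key input is a description of $\CCpt^n$ "up to Segal equivalence". Let $\mathrm{Seg}$ denote the class of maps in $\PSh(\Delta^{\times 2})$ obtained from inclusions of horizontal or vertical spines $\Delta[k,0]\cup\cdots\to\Delta[k,m]$, and similarly in the other variable. Bisimplicial spaces $X$ whose rows and columns are Segal are exactly those local with respect to $\mathrm{Seg}$.

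First, for such local $X$, I claim $\map(\CCpt^n,X)$ is the limit over the grid of unit squares of $\Cpt^n$ (the poset $\Box^{\mathrm{int},\le1}_{/\Cpt^n}$ from \Cref{lemma:identofGrFactP}). That is,
\begin{equation*}
(\cnr X)_n\simeq \lim_{[i]\bar\times[j]\hookrightarrow \Cpt^n,\ i,j\le 1} X(i,j),
\end{equation*}
with the unit squares $(k,k+1)\to(k+1,k+1)$ on the staircase boundary contributing copies of $X(1,1)$, edges contributing $X(1,0)$ or $X(0,1)$, and vertices contributing $X(0,0)$. This should follow because $\Fact\Cpt^n$ is the colimit of its representable subobjects $\Delta[i,j]$, and the Segal conditions in each direction allow one to replace every box $\Delta[i,j]$ by its unit cells. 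This is the bisimplicial analogue of the cofinality arguments in \Cref{lemma:smallboxesbuildbigboxes}, but carried out in $\PSh(\Delta^{\times 2})$ modulo $\mathrm{Seg}$ rather than in $\Cat$.

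Second, I will check the Segal condition $(\cnr X)_n\simeq (\cnr X)_1\times_{(\cnr X)_0}\cdots\times_{(\cnr X)_0}(\cnr X)_1$, where $(\cnr X)_1=X(0,1)\times_{X(0,0)}X(1,0)$ is a corner. By induction on $n$ it suffices to treat $n=2$, since the general case follows by the same pasting applied row by row. For $n=2$, the grid description gives
\begin{equation*}
(\cnr X)_2\simeq X(0,1)\times_{X(0,0)}X(1,1)\times_{X(0,0)}X(1,0),
\end{equation*}
with the middle square $(0,1),(0,2),(1,1),(1,2)$ glued along its top-left edges. The spine side is instead $X(0,1)\times_{X(0,0)}X(1,0)\times_{X(0,0)}X(0,1)\times_{X(0,0)}X(1,0)$. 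The comparison map is the identity on the outer factors, and in the middle it is exactly the map $X(1,1)\to X(1,0)\times_{X(0,0)}X(0,1)$ (the left and bottom edges) from condition (i). Hence (i) implies (ii).

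Conversely, if (ii) holds, this middle comparison is a base change of the map in (i) along the surjection-free projections from the outer corner factors. Taking fibers over a point of $X(0,1)\times X(1,0)$ recovers (i), so (ii) implies (i). For general $n$, the comparison map is an iterated base change of copies of this same map, one for each square adjacent to the diagonal, since all other squares are determined by Segal conditions. This gives the equivalence.

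I expect the first step to be the main obstacle. It requires verifying that the inclusion of the unit-grid colimit into $\Fact\Cpt^n$ (equivalently into $\CCpt^n$) is a $\mathrm{Seg}$-local equivalence, handling the staircase shape rather than a full rectangle. I would attempt this via the same inductive pushout decomposition $P=P'\cup_{P'\cap P''}P''$ used in the proof of \Cref{prop:GrCCptequivtoCpt}, checking at each stage that the pushout is preserved after mapping into Segal-local $X$.
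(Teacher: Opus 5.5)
The paper gives no argument of its own for this: it cites Juran's Lemma 3.12 and only remarks that completeness is never used there. Your direct argument is a reasonable self-contained replacement, and its skeleton is sound. For $X$ with Segal rows and columns, $(\cnr X)_n$ is the limit of $X$ over $\Box^{\mathrm{int},\le 1}_{/\Cpt^n}$. The Segal maps of $\cnr X$ are then built from base changes of $\phi\colon X(1,1)\to X(1,0)\times_{X(0,0)}X(0,1)$, the left-and-bottom-edge map.

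Your $n=2$ computation is correct. So is the converse, once you say why base change detects equivalences there: the map $X(0,1)\times X(1,0)\to X(0,0)\times X(0,0)$ (target, source) is $\pi_0$-surjective because degenerate edges exist.

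Step 1 is also easier than you expect. The cofinality of $\Box^{\mathrm{int}}_{/P}\subseteq \Box_{/P}$ in the proof of \Cref{lemma:identofGrFactP} is a statement about indexing categories. Hence $\CCpt^n\simeq\colim_{\Box^{\mathrm{int}}_{/\Cpt^n}}\Delta[i,j]$ holds already in $\PSh(\Delta^{\times 2})$. Segal rows and columns say exactly that $X(i,j)$ is the limit over the unit cells of $[i]\times[j]$, so no pushout induction is needed.

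The general-$n$ step, however, rests on a false claim. The unit squares of $\Cpt^n$ that do not touch the diagonal are \emph{not} determined by the Segal conditions. $\Cpt^n$ has $\binom{n}{2}$ unit squares, only $n-1$ of which touch the diagonal, and each contributes its own factor of $X(1,1)$ to $(\cnr X)_n$. For $n=3$, the square on $(0,2),(0,3),(1,2),(1,3)$ is glued to the rest only along its left edge and its bottom edge. The Segal conditions express larger rectangles in terms of unit cells, but impose no relation between distinct unit cells beyond agreement on shared edges.

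The correct bookkeeping is as follows. Adjoin the unit squares in order of increasing $j-i$. Each is attached exactly along its left and bottom edges, which are already present: they are the right edge of the square to its left and the top edge of the square below it, or spine edges when $j-i=1$. So the level-$n$ Segal comparison for $\cnr X$ is a composite of $\binom{n}{2}$ base changes of $\phi$. Under (i) all of these are equivalences, so your conclusion survives.

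For the same reason, ``by induction it suffices to treat $n=2$'' is not a valid reduction. For a general simplicial space the degree-$2$ Segal condition does not imply the higher ones; for example, attaching a $3$-cell to a nerve along $\partial\Delta^3$ preserves the degree-$2$ condition but breaks degree $3$. What you actually need is the filling argument at every level. Done column by column, it exhibits $(\cnr X)_n\to(\cnr X)_{n-1}\times_{(\cnr X)_0}(\cnr X)_1$ as a composite of $n-1$ base changes of $\phi$.
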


We then have the following two results.

\begin{lemma}\label{lemma:equivcondforsegalforD}
    Let $\mb D = \Pull^G(\mc C, E)^{2\mathrm{-op}}$. Then for every $n\ge 0$, $\mb D(n, -)$ and $\mb D(-, n)$ are Segal spaces and the equivalent conditions of \Cref{prop:condcnrsegal} hold.
\end{lemma}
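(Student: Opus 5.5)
The plan is to describe the simplex spaces of $\mb D$ explicitly and reduce each claim to a statement about Cartesian fibrations and mapping spaces in $\mc C^G$. By definition, $\Pull^G(\mc C, E)_{n,m}$ is the subspace of $\Sq(\mc C^G\;{=\joinrel=}\;\mc C^G\xleftarrow{s}E)_{n,m}$ made of grids whose vertical arrows are of the form $s(e)$ with $e\in E$, whose horizontal arrows are arbitrary morphisms $(f,\xi)$ of $\mc C^G$, and whose image under $\pi$ in $\mc C$ is a grid of pullback squares. Reversing the second coordinate does not change the Segal conditions, so it suffices to treat $\Pull^G(\mc C,E)$ directly.

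\textbf{Segal conditions.}
\begin{enumerate}
\item For fixed $n$, the space $\Pull^G(n,-)$ consists of $m$-strings of horizontal morphisms $[m]\to\Fun([n],\mc C^G)$, where the $n$ vertical morphisms must come from $E$ via $s$ and every square must lie over a pullback. These conditions are local on squares and closed under composition by the pasting law. So $\Pull^G(n,-)$ is the nerve of a (non-full) subcategory of $\Fun([n],\mc C^G)$, and in particular it is Segal.
\item For fixed $m$, the space $\Pull^G(-,m)$ is the nerve of the subcategory of $\Fun([m],\mc C^G)$ whose morphisms are levelwise $s(E)$ with Cartesian squares underneath. It is again a nerve, hence Segal.
\end{enumerate}

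\textbf{Condition (i) of \Cref{prop:condcnrsegal}.} I would show that restricting a $(1,1)$-simplex to its top horizontal edge and right vertical edge (adjusted for the 2-op) is an equivalence onto the space of composable corners. Concretely, given $s(g)\colon W\to Y$ with $g\in E$ and $(f,\eta)\colon V\to Y$, the space of squares completing them, with $\pi$ of the square Cartesian, should be contractible. This is exactly the informal statement preceding \Cref{def:labeledspancat}, and I would prove it rigorously from \Cref{prop:descrofadjendos}, or more cleanly from \Cref{prop:globaldescrofadjendos}.
\begin{enumerate}
\item Mapping spaces in $\mc C^G$ split as $G(\text{source})\times\map_{\mc C}$, and the arrows of the form $s(-)$ correspond to the unit label.
\item A square in $\mc C^G$ with two $s$-sides is therefore determined by: a square in $\mc C$; labels $\bar\eta$ and $\eta$ on the two non-$s$ sides; and a path between the composite labels, namely $\bar\eta\cdot 1\simeq s(g)^\ast$ applied to $\eta$, i.e. $\bar\eta\simeq\bar g^\ast\eta$.
\item The space of such data with fixed boundary corner is a product of two factors. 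The first is the space of Cartesian completions of the corner in $\mc C$, which is contractible since the pullback exists because $g\in E$. The second is the space of pairs $(\bar\eta,\ \bar\eta\simeq\bar g^\ast\eta)$, which is a contractible path space.
\end{enumerate}

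\textbf{Main obstacle.} The hardest step is making the last bullet coherent. One has to identify the space of commutative squares in $\mc C^G$ with prescribed edge types with the corresponding fiber product of mapping spaces. I would do this by writing $\Sq(\mc C^G)_{1,1}$ as $\map([1]\times[1],\mc C^G)\simeq N(\mc C^G)_2\times_{N(\mc C^G)_1}N(\mc C^G)_2$ and applying \Cref{prop:globaldescrofadjendos} to each $2$-simplex. This step also needs the fact that Cartesian morphisms, namely the image of $s$, compose uniquely over prescribed maps in $\mc C$.
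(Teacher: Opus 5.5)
Your proposal is correct and is essentially the paper's argument written fiberwise. The paper first shows $\mb D(1,1)\simeq\mc G\times_{\mc C^\simeq}\Pull(\mc C,E)_{1,1}$ via \Cref{prop:descrofadjendos}: the top label is forced, $\bar\xi\simeq\bar f^\ast\xi$ in its notation, which is your path-space factor. It then pastes a cube whose back face is uniqueness of pullback squares in $\mc C$ (your Cartesian-completion factor), using \Cref{prop:globaldescrofadjendos} to identify $\mb D(0,1)$. One small correction to your Segal step: $s$ need not be faithful, so the $E$-lifts of the vertical edges are data rather than a property, and $\Pull^G(n,-)$ is not literally the nerve of a subcategory of $\Fun([n],\mc C^G)$ (it need not even be complete); it is, however, a pullback of Segal spaces, which is how the paper obtains the Segal conditions.
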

\begin{proof}
    The statement that each $\mb D(n, -)$ and $\mb D(-, n)$ are Segal spaces follows from the analogous statement for $\Pull^G(\mc C, E)$ which follows from the definition of $\Pull^G(\mc C, E)$ as a pullback of three bisimplicial spaces having this property.\par
    We show condition (i) of \Cref{prop:condcnrsegal}. Let $\mc G$ be the unstraightening of the restricted functor
    \begin{equation*}
        \begin{tikzcd}[row sep=0pt]
            (\mc C^\simeq)^\mathrm{op} \subseteq \mc C^\mathrm{op}\arrow{r}{G} & \Ani
        \end{tikzcd}
    \end{equation*}
    which comes with a projection $\pi : \mc G\to \mc C^\simeq$. We have a commutative square
    \begin{equation}\label{eq:formulaforD11}
        \begin{tikzcd}
            \mb D(1,1)\arrow{r}\arrow{d} & \Pull(\mc C, E)_{1,1}\arrow{d} \\
            \mc G \arrow{r}{\pi} & \mc C^\simeq
        \end{tikzcd}
    \end{equation}
    where
    \begin{itemize}
        \item $\mb D(1,1)\to \Pull(\mc C, E)_{1,1}$ is given by pushforward along $\mc C^G\to \mc C$, i.e.\
        \begin{equation*}
            \mb D(1,1)\ni \begin{tikzcd}
                s(x) & {s(y')} && x & {y'} \\
            	s(y) & s(z) && y & z
            	\arrow["{(\bar g, \bar \xi)}", from=1-1, to=1-2]
            	\arrow["{(\bar f, 0)}"', from=1-1, to=2-1]
            	\arrow[""{name=0, anchor=center, inner sep=0}, "{(f, 0)}", from=1-2, to=2-2]
            	\arrow["{\bar g}", from=1-4, to=1-5]
            	\arrow[""{name=1, anchor=center, inner sep=0}, "{\bar f}"', from=1-4, to=2-4]
            	\arrow["\lrcorner"{anchor=center, pos=0.125}, draw=none, from=1-4, to=2-5]
            	\arrow["f"', from=1-5, to=2-5]
            	\arrow["{(g, \xi)}"', from=2-1, to=2-2]
            	\arrow["g", from=2-4, to=2-5]
            	\arrow[shorten >=15pt, shorten <=25pt, maps to, from=0, to=1]
            \end{tikzcd}\in \Pull(\mc C, E)_{1,1}
        \end{equation*}
        \item $\mb D(1,1)\to\mc G$ is given by
        \begin{equation*}
            \mb D(1,1)\ni \begin{tikzcd}
                s(x) & {s(y')} \\
            	s(y) & s(z)
            	\arrow["{(\bar g, \bar \xi)}", from=1-1, to=1-2]
            	\arrow["{(\bar f, 0)}"', from=1-1, to=2-1]
            	\arrow["{(f, 0)}", from=1-2, to=2-2]
            	\arrow["{(g, \xi)}"', from=2-1, to=2-2]
            \end{tikzcd}\mapsto (y, \xi) \in \mc G
        \end{equation*}
        \item $\Pull(\mc C, E)_{1,1}\to\mc C^\simeq$ is evaluation at the bottom left corner.
    \end{itemize}
    By \Cref{prop:descrofadjendos}, given a square
    \begin{equation*}
        \begin{tikzcd}
            s(x) & {s(y')} \\
            s(y) & s(z)
            \arrow["{(\bar g, \bar \xi)}", from=1-1, to=1-2]
            \arrow["{(\bar f, 0)}"', from=1-1, to=2-1]
            \arrow["{(f, 0)}", from=1-2, to=2-2]
            \arrow["{(g, \xi)}"', from=2-1, to=2-2]
        \end{tikzcd}
    \end{equation*}
    belonging to $\mb D(1,1)$, $\bar \xi$ is essentially uniquely determined as $\bar\xi\simeq \bar f^\ast\xi$. From this one deduces that \eqref{eq:formulaforD11} is Cartesian.\par
    Next, we have a commutative cube
    \begin{equation*}
        \begin{tikzcd}
        	& {\Pull(\mc C, E)_{1,1}} && {N(\mc C)_1} \\
        	{\mb D(1,1)} && {\mb D(0,1)} \\
        	& {N(E)_1} && {\mc C^\simeq} \\
        	{\mb D(1,0)} && {\mb D(0,0)}
        	\arrow[from=1-2, to=1-4]
        	\arrow[from=1-2, to=3-2]
        	\arrow["\lrcorner"{anchor=center, pos=0.125}, draw=none, from=1-2, to=3-4]
        	\arrow[from=1-4, to=3-4]
        	\arrow[from=2-1, to=1-2]
        	\arrow[from=2-1, to=2-3]
        	\arrow[from=2-1, to=4-1]
        	\arrow[from=2-3, to=1-4]
        	\arrow[from=2-3, to=4-3]
        	\arrow[from=3-2, to=3-4]
        	\arrow["\simeq", from=4-1, to=3-2]
        	\arrow["\lrcorner"{anchor=center, pos=0.125, rotate=90}, draw=none, from=4-1, to=3-4]
        	\arrow[from=4-1, to=4-3]
        	\arrow["\simeq"', from=4-3, to=3-4]
        \end{tikzcd}
    \end{equation*}
    where the bottom face is a pullback since two of the parallel arrows are equivalences, and the back face is a pullback by uniqueness of pullback squares. Thus by pasting laws for pullback squares, the front face is Cartesian (as required) if and only if the top face is Cartesian.\par
    However, by definition of $\mb D$, we have that
    \begin{equation*}
        \begin{aligned}
            \mb D(0,1) &\simeq N(\mc C^G)_1\times_{(\mc C^G)^\simeq\times (\mc C^G)^\simeq} (\mc C^\simeq \times \mc C^\simeq) \\
            &\simeq \mc G\times_{\mc C^\simeq} N(\mc C)_1
        \end{aligned}
    \end{equation*}
    where the last line uses \Cref{prop:globaldescrofadjendos}. Thus the upper face being a pullback is the statement that the morphism
    \begin{equation*}
        \begin{aligned}
            \mb D(1,1) &\longrightarrow \mb D(0,1)\times_{N(\mc C)_1} \Pull(\mc C, E)_{1,1} \\
            &\simeq (\mc G\times_{\mc C^\simeq} N(\mc C)_1)\times_{N(\mc C)_1} \Pull(\mc C, E)_{1,1} \\
            &\simeq \mc G\times_{\mc C^\simeq} \Pull(\mc C, E)_{1,1}
        \end{aligned}
    \end{equation*}
    is an equivalence, i.e.\ that \eqref{eq:formulaforD11} is Cartesian.
\end{proof}

\begin{corollary}\label{cor:NuncSpanGsegal}
    $N_\mathrm{unc}(\Span^G(\mc C, E))$ is a Segal space.
\end{corollary}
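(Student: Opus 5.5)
The plan is to combine the identification
\[
N_\mathrm{unc}(\Span^G(\mc C, E))\simeq \cnr \Pull^G(\mc C, E)^{2\mathrm{-op}},
\]
established just before \Cref{prop:condcnrsegal}, with \Cref{lemma:equivcondforsegalforD}. Write $\mb D = \Pull^G(\mc C, E)^{2\mathrm{-op}}$. By \Cref{lemma:equivcondforsegalforD}, each $\mb D(n,-)$ and $\mb D(-,n)$ is a Segal space, so $\mb D$ satisfies the hypotheses of \Cref{prop:condcnrsegal}. The same lemma shows that condition (i) of that proposition holds for $\mb D$. The implication (i)$\Rightarrow$(ii) of \Cref{prop:condcnrsegal} then gives that $\cnr \mb D$ is a Segal space.

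It remains to transport the Segal property along the equivalence of simplicial spaces above. The Segal condition asks that the maps $X_n\to X_1\times_{X_0}\cdots\times_{X_0}X_1$ be equivalences. This condition is invariant under levelwise equivalence of simplicial spaces, so the conclusion follows.

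The only point needing care is that the identification $N_\mathrm{unc}(\Span^G(\mc C, E))\simeq\cnr\mb D$ is an equivalence of simplicial spaces, natural in $[n]$, and not merely a levelwise equivalence. This holds because it is built from the natural in $n$ equivalences $(\CCpt^n)^{2\mathrm{-op}}\simeq \Fact\Sigma_n$, together with the adjunction between $\Sq$ and its left adjoint and \Cref{thm:gluingoffactsystems}. All of these are natural in $n$, so the cube diagram yields an equivalence of simplicial spaces. I expect no further obstacle: the substantive input, the Cartesianness of the square for $\mb D$, was already established in \Cref{lemma:equivcondforsegalforD}.
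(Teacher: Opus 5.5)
Your proposal is correct and follows the paper's own (implicit) argument: \Cref{lemma:equivcondforsegalforD} checks the Segal hypotheses and condition (i) of \Cref{prop:condcnrsegal} for $\mb D = \Pull^G(\mc C, E)^{2\mathrm{-op}}$, so $\cnr \mb D$ is a Segal space. The equivalence of simplicial spaces $N_\mathrm{unc}(\Span^G(\mc C, E))\simeq \cnr \mb D$ coming from the cube diagram then transfers this property to $N_\mathrm{unc}(\Span^G(\mc C, E))$.
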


\begin{remark}
    This may be compared to \cite[Theorem 4.1.23]{elmanto2021motivic} which gives the analogous result for the more general labeled spans of Elmanto \emph{et al.} (see \Cref{rem:labspansofelmanto}).
\end{remark}

\begin{lemma}\label{lemma:ascatofD0}
    Let $\mb D = \Pull^G(\mc C, E)^{2\mathrm{-op}}$. Then there are natural in $n$ pullback squares
    \begin{equation*}
        \begin{tikzcd}
        	{\mb D(0,n)} & {(\mc C^{\simeq})^{\times (n + 1)}} \\
        	{N((\mc C^G)^\mathrm{op})_n} & {((\mc C^G)^{\simeq})^{\times (n + 1)}}
        	\arrow[from=1-1, to=1-2]
        	\arrow[from=1-1, to=2-1]
        	\arrow["\lrcorner"{anchor=center, pos=0.125}, draw=none, from=1-1, to=2-2]
        	\arrow["{s^{\times n}}", from=1-2, to=2-2]
        	\arrow["{\prod_i \langle i\rangle^\ast}", from=2-1, to=2-2]
        \end{tikzcd}
    \end{equation*}
    Moreover, the projection $\mb D(0, -)\to N((\mc C^G)^\mathrm{op})$ is the localization onto complete Segal spaces.
\end{lemma}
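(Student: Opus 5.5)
First I would unwind what $\mb D(0,n)$ is. Since $(-)^{2\mathrm{-op}}$ only reverses the second coordinate, $\mb D(0,n) = \Pull^G(\mc C,E)_{0,n}$ with the ordering of $[n]$ reversed. By definition, $\Pull^G(\mc C,E)_{0,n}$ is the pullback of $\Pull(\mc C,E)_{0,n}\to\Sq(\mc C)_{0,n}$ against $\Sq(\mc C^G\;{=\joinrel=}\;\mc C^G\xleftarrow{s}E)_{0,n}\to\Sq(\mc C)_{0,n}$. On $(0,n)$-simplices there are no vertical arrows and no squares. So the Cartesian-square condition is vacuous and $\Pull(\mc C,E)_{0,n}\simeq N(\mc C)_n\simeq\Sq(\mc C)_{0,n}$. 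Hence $\Pull^G(\mc C,E)_{0,n}\simeq \Sq(\mc C^G\;{=\joinrel=}\;\mc C^G\xleftarrow{s}E)_{0,n}$.

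The latter is the space of maps $[0]\bar\times[n]\to(\mc C^G,\mc C^G,E)$ of cospans. Concretely, this is a functor $[n]\to\mc C^G$ together with, at the level of the $E$-leg, a functor $[0]\to E$ for each object, compatible via $s$. In other words, it is an $n$-simplex of $N(\mc C^G)$ whose vertices are lifted along $s:\mc C^\simeq\to(\mc C^G)^\simeq$ (only the objects of the vertical fragment appear). This gives a pullback
\begin{equation*}
N(\mc C^G)_n\times_{((\mc C^G)^\simeq)^{n+1}}(\mc C^\simeq)^{n+1}.
\end{equation*}
Reversing the order of $[n]$ identifies $N(\mc C^G)_n$ with $N((\mc C^G)^{\mathrm{op}})_n$, which gives the claimed square. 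Naturality in $n$ is clear, since all identifications are restrictions along cosimplicial maps. To make the middle step precise, I would use that $\Sq$ on cospans is the nerve of $[n]\bar\times[m]$, and that $\map_{\Fun(\Lambda^2_2,\Cat)}$ is the fiber product of the mapping spaces of the three legs; for $[0]\bar\times[n]$ the $E$-leg is the discrete category $\{0,\dots,n\}$.

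For the localization claim, I would first note that $\mb D(0,-)$ is a Segal space by \Cref{lemma:equivcondforsegalforD}. I would then show that $\mb D(0,-)\to N((\mc C^G)^{\mathrm{op}})$ is fully faithful, meaning that it is a pullback on mapping spaces over pairs of objects. This holds because the square is a pullback along a map of vertex spaces, so the fibers over $(x,y)\in\mb D(0,0)^2$ agree with the mapping spaces of $\mc C^G$ between $s(x)$ and $s(y)$. It is also essentially surjective, since $s$ is essentially surjective by \Cref{prop:descrofadjendos}. A fully faithful and essentially surjective map of Segal spaces into a complete Segal space is a Dwyer--Kan equivalence, hence it exhibits the target as the completion, that is, the localization onto complete Segal spaces.

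The main obstacle is the careful identification of $\Sq(\mc C^G\;{=\joinrel=}\;\mc C^G\xleftarrow{s}E)_{0,n}$, in particular checking that the $E$-leg contributes exactly $(\mc C^\simeq)^{n+1}$ rather than something involving morphisms of $E$. I would verify this by computing $\map$ out of the cospan $\{0,\dots,n\}^\simeq\to[n]\leftarrow[0]\times\{\ldots\}$ explicitly, being careful about which coordinate is horizontal under the (row, column) conventions.
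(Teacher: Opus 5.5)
Your proposal is correct and follows essentially the same route as the paper. The square comes from unwinding the definition; the one input you leave implicit is $E^\simeq\simeq\mc C^\simeq$, which the paper justifies by $E$ being wide. The localization claim then follows from three facts: $\mb D(0,-)$ is already a Segal space, the mapping spaces can be read off the pullback square, and $s$ is essentially surjective by \Cref{prop:descrofadjendos}. Your phrasing via a Dwyer--Kan equivalence into a complete Segal space is just a repackaging of the paper's remark that completing a Segal space does not change its mapping spaces.
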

\begin{proof}
    The pullback square is immediate from the definition of $\mb D$ using the fact that $E^\simeq \simeq \mc C^\simeq$ since $E$ is wide, so we prove the localization claim.\par
    For this, notice that the composite $\mc C^\simeq = \mb D(0,0)\to \ascat \mb D(0, -)\to (\mc C^G)^\mathrm{op}$ is the restriction of (opposite of) the zero section $s : \mc C\to \mc C^G$, hence essentially surjective by \Cref{prop:descrofadjendos}. Moreover, $\mb D(0,-)$ is already a Segal space so completion does not change the mapping spaces for objects in the image of $\mc C^\simeq = \mb D(0,0)\to \ascat \mb D(0, -)$. Thus, for $x,y\in \mb D(0,0)$, we have that
    \begin{equation*}
        \begin{aligned}
            \map_{\ascat \mb D(0, -)}(x, y) &\simeq N((\mc C^G)^\mathrm{op})_1\times_{((\mc C^G)^{\simeq})^{\times 2}} (\mc C^{\simeq})^{\times 2}\times_{(\mc C^{\simeq})^{\times 2}} \{(x,y)\} \\
            &\simeq N((\mc C^G)^\mathrm{op})_1 \times_{((\mc C^G)^{\simeq})^{\times 2}} \{(s(x), s(y))\} \\
            &\simeq \map_{(\mc C^G)^\mathrm{op}}(s(x), s(y))
        \end{aligned}
    \end{equation*}
    which shows that the map $\ascat \mb D(0, -)\to (\mc C^G)^\mathrm{op}$ is also fully faithful, as required.
\end{proof}

We may now prove the two promised propositions.

\begin{proof}[Proof of \Cref{prop:descr1ofSpanG}]
    For (i), under the functorial in $\mc C$ equivalence $\mc C^\ast\simeq \mc C$, the definition of $N_\mathrm{unc}(\Span^G(\mc C, E))$ reduces to the nerve of the classical span category $\Span(\mc C, E)$.\par
    For (ii), we make use of the natural equivalence $\Span^G(\mc C, E)\simeq \Gr \Pull^G(\mc C, E)^{2\mathrm{-op}}$. On the right hand side, one sees from definition that the inclusion
    \begin{equation*}
        \Hor(\Pull^G(\mc C, \mc C^\simeq)^{2\mathrm{-op}}_{0,\bullet})\to \Pull^G(\mc C, \mc C^\simeq)^{2\mathrm{-op}}
    \end{equation*}
    of the horizontal fragment is an equivalence. The result then follows from \Cref{lemma:ascatofD0}.
\end{proof}

\begin{proof}[Proof of \Cref{prop:descr2ofSpanG}]
    The induced map
    \begin{equation*}
        \begin{tikzcd}
            \mc C^\simeq \subseteq \Span(\mc C, E)^\simeq\subseteq \Span(\mc C, E) \arrow{r}{\iota} & \Span^G(\mc C, E)
        \end{tikzcd}
    \end{equation*}
    by definition of $\iota$ is the natural map $X_0\to \ascat X$ where $X = N_\mathrm{unc}(\Span^G(\mc C, E))$. By \Cref{cor:NuncSpanGsegal}, $X$ is already a Segal space and completion does not change the mapping spaces of objects in the image of this map. Thus we have that the mapping space may be computed as the fiber product
    \begin{equation*}
        \map_{\Fun(\Lambda_2^2, \Cat)}(\Sigma_1, \; \TwC \; {=\joinrel=}\; \TwC \xleftarrow{s} E)\times_{E^\simeq \times E^\simeq} \{(X,Y)\}.
    \end{equation*}
    Now, as an element of $\Fun(\Lambda_2^2, \Cat)$, $\Sigma_1$ is given by
    \begin{equation*}
        \begin{tikzcd}[column sep=-40pt]
            \{(0\le 0)\leftarrow (0\le 1)\}\cup \{(1\le 1)\} \arrow{dr} && \{(0\le 1)\rightarrow (1\le 1)\}\cup \{(0\le 0)\}\arrow{dl}\\
            & \{(0\le 0)\leftarrow (0\le 1)\}\;\displaystyle\coprod_{(0\le 1)}\;\{(0\le 1)\rightarrow (1\le 1)\}
        \end{tikzcd}
    \end{equation*}
    so
    \begin{equation}\label{eq:arrowgrpspank}
        \begin{aligned}
            \map_{\Fun(\Lambda_2^2,\Cat)}(\Sigma_1, \; \TwC \; {=\joinrel=}\; \TwC \xleftarrow{s} E) &\simeq (N((\TwC)^\mathrm{op})_1\times (\mc C^G)^{\simeq})\times_{((\mc C^G)^{\simeq})^{\times 3}} (E^\simeq \times N(E)_1) \\
            &\simeq (N((\TwC)^\mathrm{op})_1 \times_{((\mc C^G)^{\simeq})^{\times 2}} (E^{\simeq})^{\times 2})\times_{E^\simeq} N(E)_1 \\
            &\simeq \mc G\times_{\mc C^\simeq} N(\mc C^\mathrm{op})_1 \times_{E^\simeq} N(E)_1
        \end{aligned}
    \end{equation}
    where we have made use of \Cref{prop:globaldescrofadjendos}. Taking the fiber over $\{(X,Y)\}$ we see that
    \begin{equation*}
        \map_{\Span^G(\mc C, E)}(\iota X,\iota Y)\simeq \mc G\times_{\mc C^\simeq}\map_{\Span(\mc C, E)}(X,Y)
    \end{equation*}
    as claimed. Moreover, the decomposition \eqref{eq:arrowgrpspank} is compatible with the $\pi$ so that the projection $\map_{\Span^G(\mc C, E)}(\iota X,\iota Y)\to \map_{\Span(\mc C, E)}(X,Y)$ is induced by pushforward along $\pi$.\par
    Finally, for the claimed composition, the morphism $X\xleftarrow{g} (Z,\xi)\xrightarrow{f} Y$ represents the $\Sigma_1$-indexed diagram
    \begin{equation*}
        \begin{tikzcd}
            & s(Z)\arrow{dl}[above left]{(g,\xi)}\arrow{dr}{(f, 0)} \\
            s(X) && s(Y)
        \end{tikzcd}
    \end{equation*}
    and so composition is witnessed by the $2$-simplex
    \begin{equation*}
        \begin{tikzcd}
        	&& {s(W\times_Y V)} && \\
        	& s(W) && s(V) \\
        	s(X) && s(Y) && s(Z).
        	\arrow["{(\bar g_2,\, \bar f_1^\ast\eta)}"', from=1-3, to=2-2]
        	\arrow["{(\bar f_1,\, 0)}", from=1-3, to=2-4]
        	\arrow["{(g_1,\, \xi)}"', from=2-2, to=3-1]
        	\arrow["{(f_1,\, 0)}", from=2-2, to=3-3]
        	\arrow["{(g_2,\, \eta)}"', from=2-4, to=3-3]
        	\arrow["{(f_2,\, 0)}", from=2-4, to=3-5]
        \end{tikzcd}
    \end{equation*}
    which has composite
    \begin{equation*}
        \begin{tikzcd}
        	& {s(W\times_Y V)} & \\
        	s(X) && s(Z)
        	\arrow["{(g_1\bar g_2, \bar f_1^\ast\eta \cdot \bar g_2^\ast\xi)}"', from=1-2, to=2-1]
        	\arrow["{(f_2\bar f_1, 0)}", from=1-2, to=2-3]
        \end{tikzcd}
    \end{equation*}
    as claimed.
\end{proof}

\subsection{Collapsing dimensions and a trisimplicial representation}

Fix a geometric setup $(\mc C, E)$ along with another wide subcategory $S\subseteq E$ which is closed under pullback. We also fix $\mc K$ to refer to the functor
\begin{equation*}
    \begin{tikzcd}[row sep=0pt]
        \mc C^\mathrm{op}\arrow{r} & \Alg_{\mb E_\infty}(\Ani). \\
        x\arrow[mapsto]{r} & \mc K(\Vect_{(\mc C, S)}(x))
    \end{tikzcd}
\end{equation*}
In this section, we give a representation of $\Span^{\mc K}(\mc C, E)$ as the gluing of a certain trisimplicial space.\par
We have already seen in the case of the universal exchange theorem that the bisimplicial space whose $(n,m)$-simplex space is $n\times m$ commutative grids in $\mc C$ built out of Cartesian squares of the form
\begin{equation*}
    \begin{tikzcd}
    	\bullet & \bullet \\
    	\bullet & \bullet
    	\arrow["{\in\mc C}", from=1-1, to=1-2]
    	\arrow["{\in S}"', from=1-1, to=2-1]
    	\arrow["\lrcorner"{anchor=center, pos=0.125}, draw=none, from=1-1, to=2-2]
    	\arrow["{\in S}", from=1-2, to=2-2]
    	\arrow["{\in\mc C}"', from=2-1, to=2-2]
    \end{tikzcd}
\end{equation*}
glues to $\mc C^{\mc K}$. We also saw in the previous section that the bisimplicial space with $(n,m)$-simplex space consisting of $n\times m$ commutative grids in $\mc C$ built out of Cartesian squares of the form
\begin{equation*}
    \begin{tikzcd}
    	\bullet & \bullet \\
    	\bullet & \bullet
    	\arrow["{\in E}"', from=1-1, to=2-1]
    	\arrow["{\in\mc C}"', from=1-2, to=1-1]
    	\arrow["\lrcorner"{anchor=center, pos=0.125, rotate=-90}, draw=none, from=1-2, to=2-1]
    	\arrow["{\in E}", from=1-2, to=2-2]
    	\arrow["{\in\mc C}", from=2-2, to=2-1]
    \end{tikzcd}
\end{equation*}
glues to $\Span(\mc C, E)$. In this section, we will prove that $\Span^{\mc K}(\mc C, E)$ may be obtained by gluing a trisimplicial space whose $(n,m,\ell)$-simplex space is given by $n\times m\times\ell$ commutative grids built out of cubes of the form
\begin{equation*}
    \begin{tikzcd}
    	& \bullet && \bullet \\
    	\bullet && \bullet \\
    	& \bullet && \bullet \\
    	\bullet && \bullet
    	\arrow["{\in E}"{pos=0.3}, from=1-2, to=3-2]
    	\arrow["{\in \mc C}", from=1-4, to=1-2]
    	\arrow["{\in E}"{pos=0.3}, from=1-4, to=3-4]
    	\arrow["{\in S}"', from=2-1, to=1-2]
    	\arrow["{\in E}"{pos=0.3}, from=2-1, to=4-1]
    	\arrow["{\in S}"'{pos=0.4}, from=2-3, to=1-4]
    	\arrow["{\in \mc C}"{pos=0.1}, from=2-3, to=2-1]
    	\arrow["{\in E}"{pos=0.3}, from=2-3, to=4-3]
    	\arrow["{\in \mc C}"{pos=0.7}, from=3-4, to=3-2]
    	\arrow["{\in S}"', from=4-1, to=3-2]
    	\arrow["{\in S}"', from=4-3, to=3-4]
    	\arrow["{\in \mc C}", from=4-3, to=4-1]
    \end{tikzcd}
\end{equation*}
where all faces are Cartesian. This will be done by keeping the $E$ component fixed and gluing the $\mc C$ and $S$ components, applying the universal exchange theorem.

\subsubsection{Higher squares and diagonal approximation}

First we define what it means to glue a trisimplicial space by extending the squares functor to higher dimensions.

\begin{definition}
    For each $n\ge 1$, define $\Sq^{(n)} : \Cat\to \PSh(\Delta^{\times n})$ to be the nerve construction corresponding to the functor
    \begin{equation*}
        \begin{tikzcd}[row sep=0pt]
            \Delta^{\times n}\arrow{r} & \Cat \\
            ([m_1],\dots, [m_n])\arrow[mapsto]{r} & {[m_1]\times\cdots\times [m_n]}.
        \end{tikzcd}
    \end{equation*}
    This has a left adjoint which we denote by $\Gr^{(n)}$. Since $n$ may implicitly be determined by the input to $\Gr^{(n)}$, we will occasionally abuse notation and simply write $\Gr$ for $\Gr^{(n)}$, leaving $n$ implicit.
\end{definition}

\begin{example}
    $\Sq^{(1)}$ is the (Rezk) nerve $N$ of a category and $\Sq^{(2)}$ is the squares functor $\Sq$ of \Cref{def:squaresfunc}.
\end{example}

It turns out that diagonal approximation still holds as a corollary of the $n = 2$ case. For an $n$-simplicial space $X$, we denote by $\diag X$ the simplicial space
\begin{equation*}
    \begin{tikzcd}
        \Delta^\mathrm{op}\arrow{r}{\diag} & (\Delta^{\times n})^\mathrm{op}\arrow{r}{X} & \Ani.
    \end{tikzcd}
\end{equation*}
We then again have a composition along the diagonal map
\begin{equation*}
    \begin{tikzcd}[row sep=0pt]
        \diag X\arrow{r}{\diag\circ\mathrm{unit}} &[10pt] \diag \Sq^{(n)}\Gr X\arrow{r} & N(\Gr X). \\
        & F : [m]^{\times n}\to \Gr X\arrow[mapsto]{r} & ([m]\xrightarrow{\diag} [m]^{\times n}\xrightarrow{F} \Gr X)
    \end{tikzcd}
\end{equation*}

\begin{proposition}
    Composition along the diagonal $\diag X\to N(\Gr X)$ transposes to an equivalence $\ascat\diag X\simeq \Gr X$.
\end{proposition}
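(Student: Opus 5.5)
The plan is to rerun the argument of \Cref{prop:diaglocalequalsgr} directly for an arbitrary $n$-simplicial space $X$, replacing $\delta : \Delta\to\Delta^{\times 2}$ by the $n$-fold diagonal $\delta_n : \Delta\to\Delta^{\times n}$, $[m]\mapsto([m],\dots,[m])$. This avoids any induction on $n$.

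First I write both sides as colimits over categories of simplices. Since $\Gr^{(n)}$ is the realization attached to $([m_1],\dots,[m_n])\mapsto[m_1]\times\cdots\times[m_n]$, it sends representables to these products. Writing $X$ as the colimit of its simplices, $\Gr X$ is then the colimit of
\begin{equation*}
    \mc Y\downarrow X\longrightarrow\Delta^{\times n}\longrightarrow\Cat,\qquad ([m_1],\dots,[m_n])\longmapsto [m_1]\times\cdots\times[m_n].
\end{equation*}
Likewise, $\ascat\diag X$ is the colimit of $\mc Y\circ\delta_n\downarrow X\to\Delta\hookrightarrow\Cat$, since $(\diag X)_m=\map(\Delta[m,\dots,m],X)$ identifies simplices of $\diag X$ with objects of $\mc Y\circ\delta_n\downarrow X$.

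Next I identify the comparison map. The composition-along-the-diagonal map $\diag X\to N(\Gr X)$ transposes to the map of colimits induced by the functor $(\delta_n)_\ast:\mc Y\circ\delta_n\downarrow X\to\mc Y\downarrow X$. It comes together with the natural transformation given pointwise by the diagonal $[m]\to[m]^{\times n}$. To check this identification, I note that on a simplex $\Delta[m,\dots,m]\to X$ both descriptions send it to the composite $[m]\xrightarrow{\diag}[m]^{\times n}\to\Gr X$. It then suffices to show that this transformation exhibits the product functor on $\mc Y\downarrow X$ as a pointwise left Kan extension along $(\delta_n)_\ast$, because left Kan extension preserves colimits.

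For the pointwise condition, fix an object $\Delta[p_1,\dots,p_n]\to X$. The relevant comma category is $(\delta_n)_\ast\downarrow(\Delta[p_1,\dots,p_n]\to X)\simeq\delta_n\downarrow([p_1],\dots,[p_n])$, since a map from a diagonal simplex factoring through the given one is the same as a map $\delta_n[m]\to([p_1],\dots,[p_n])$. A morphism $([m],\dots,[m])\to([p_1],\dots,[p_n])$ in $\Delta^{\times n}$ is exactly an $n$-tuple of order-preserving maps, i.e.\ a functor $[m]\to[p_1]\times\cdots\times[p_n]$. Hence $\delta_n\downarrow([p_i])\simeq\iota\downarrow([p_1]\times\cdots\times[p_n])$ with $\iota:\Delta\hookrightarrow\Cat$. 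The required map then becomes
\begin{equation*}
    \colim_{[m]\to\prod_i[p_i]}[m]\longrightarrow\textstyle\prod_i[p_i],
\end{equation*}
which is the counit of $\ascat\dashv N$ at $\prod_i[p_i]$. It is an equivalence because $N$ is fully faithful (\Cref{thm:nervefullyfaithful}).

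The main point requiring care is the identification of the comma category $\delta_n\downarrow([p_i])$ with $\iota\downarrow\prod_i[p_i]$, compatibly with the functors to $\Cat$. This is where one uses that $\Delta^{\times n}\to\Cat$ is fully faithful on the relevant hom-sets from diagonal objects. Since all categories involved are ordinary posets, it can be checked on the nose.

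An alternative route, which is what the paper's phrase ``corollary of the $n=2$ case'' suggests, is to iterate \Cref{prop:diaglocalequalsgr}. One would factor $\delta_n$ through $\Delta\times\Delta^{\times(n-1)}$ and glue one pair of coordinates at a time, in the form $\Gr^{(n)}\simeq\Gr^{(2)}\circ(\text{partial }\Gr^{(n-1)})$. I would mention this route but not rely on it, since the direct argument above is self-contained.
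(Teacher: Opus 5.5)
Your argument is correct, and it takes a different route from the paper's. You rerun the proof of \Cref{prop:diaglocalequalsgr} with $\delta$ replaced by the $n$-fold diagonal $\delta_n$, and every step goes through:
\begin{itemize}
\item the comma category $(\delta_n)_\ast\downarrow(\Delta[p_1,\dots,p_n]\to X)$ is still $\delta_n\downarrow([p_1],\dots,[p_n])$, because the category of simplices of $X$ is a right fibration over $\Delta^{\times n}$;
\item that category is still $\iota\downarrow\prod_i[p_i]$;
\item so the pointwise Kan extension condition reduces to the counit of $\ascat\dashv N$ at $\prod_i[p_i]$.
\end{itemize}

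The paper instead inducts on $n$. It writes $\Gr^{(n)}$ as $\Gr\circ N_\ast\circ\Gr^{(n-1)}_\ast$, i.e.\ it glues the last $n-1$ coordinates levelwise and then glues the resulting simplicial category. It then combines the inductive hypothesis with the $n=2$ case through a diagram of $\diag$'s and $\ascat$'s. This is exactly your ``alternative route.''

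Your version is self-contained and uniform in $n$; it even reproves the case $n=2$. It also avoids facts that the paper's diagram uses without proof:
\begin{itemize}
\item that $\Gr\circ N_\ast$ preserves colimits on $\Fun(\Delta^{\mathrm{op}},\Cat)$, so that the factorization of $\Gr^{(n)}$ holds;
\item that a map of bisimplicial spaces which becomes an equivalence after applying $\ascat$ levelwise in one direction induces an equivalence on $\Gr$.
\end{itemize}
In exchange, the paper's route uses the $n=2$ case as a black box. It also records the iterated-gluing description of $\Gr^{(n)}$, which fits the dimension-collapsing arguments that follow.

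One small correction to your ``main point requiring care'' paragraph. The identification $\delta_n\downarrow([p_1],\dots,[p_n])\simeq\iota\downarrow\prod_i[p_i]$ does not use any full faithfulness of $([m_1],\dots,[m_n])\mapsto\prod_i[m_i]$. That functor is not full even on maps out of diagonal objects: the swap of $[1]\times[1]$ is not in its image. What you actually use is the bijection $\Delta^{\times n}(\delta_n[m],([p_1],\dots,[p_n]))\cong\prod_i\Delta([m],[p_i])\cong\Cat([m],\prod_i[p_i])$. This comes from the universal property of the product and the fact that $\Delta\subseteq\Cat$ is full, which is what you correctly stated in the preceding paragraph.
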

\begin{proof}
    We induct on $n$. The case $n = 1$ is trivial and the case $n = 2$ is given by \Cref{prop:diaglocalequalsgr}. For $n > 2$, notice that $\Gr^{(n)}$ may be computed as the composite
    \begin{equation*}
        \begin{tikzcd}
            \PSh(\Delta^{\times n})\simeq \Fun(\Delta^\mathrm{op}, \PSh(\Delta^{\times (n - 1)}))\arrow{r}{\Gr^{(n - 1)}_\ast} & \Fun(\Delta^\mathrm{op},\Cat)\arrow[hook]{r}{N_\ast} & \PSh(\Delta^{\times 2})\arrow{r}{\Gr} & \Cat.
        \end{tikzcd}
    \end{equation*}
    Moreover, by the inductive hypotheses and the $n = 2$ case, these fit into a commutative diagram
    \begin{equation*}
        \begin{tikzcd}
        	{\PSh(\Delta^{\times n})} &[20pt] {\PSh(\Delta^{\times 2})} & {\PSh(\Delta^{\times 2})} \\
        	{\PSh(\Delta)} & {\PSh(\Delta)} \\
        	& \Cat
        	\arrow["{N_\ast\,\circ\, \Gr^{(n - 1)}_\ast}", from=1-1, to=1-2]
        	\arrow["\diag", from=1-1, to=2-1]
        	\arrow[equals, from=1-2, to=1-3]
        	\arrow["\diag", from=1-2, to=2-2]
        	\arrow["\Gr", bend left=20, from=1-3, to=3-2]
        	\arrow["\ascat"', from=2-1, to=3-2]
        	\arrow["\ascat", from=2-2, to=3-2]
        \end{tikzcd}
    \end{equation*}
    yielding the result.
\end{proof}

\subsubsection{Right fibrations and reduction of dimension}

In order to collapse dimensions in our trisimplicial spaces, we will need the notion of right fibrations of simplicial spaces.

\begin{definition}
    A morphism $p : X\to Y$ of simplicial spaces is said to be a \emph{right fibration} if
    \begin{equation*}
        \begin{tikzcd}
            X_n\arrow{r}{\langle n\rangle^\ast}\arrow{d}{p_n} & X_{0}\arrow{d}{p_{0}} \\
            Y_n\arrow{r}{\langle n\rangle^\ast} & Y_{0}
        \end{tikzcd}
    \end{equation*}
    is Cartesian for every $n\ge 1$.\par
    Dually, $p$ is said to be a \emph{left fibration} if the analogous square with $\langle 0\rangle$ rather than $\langle n\rangle$ is Cartesian. Alternatively, $p$ is a left fibration if $p^\mathrm{op} : X^\mathrm{op}\to Y^\mathrm{op}$ is a right fibration.
\end{definition}

The following result explains the choice of notation. This theorem is originally due to Rasekh \cite[Theorem 4.18
and 5.1]{rasekh2023yoneda} via model-dependent methods, and then later re-derived via model independent techniques by Barkan and Steinebrunner \cite{barkan2025segalification}. Let $\mb L_{\mathrm{CS}}\simeq N\circ\ascat$ be localization onto complete Segal spaces, i.e.\ onto the essential image of $N : \Cat\to \PSh(\Delta)$.

\begin{theorem}[Rasekh]\label{thm:rightfibpresbylocal}
    For every simplicial space $X$, there is a pair of adjoint equivalences
    \begin{equation*}
        \begin{tikzcd}
        	{\mb L_\mathrm{CS}:\PSh(\Delta)_{/X}^\mathrm{r-fib}} && {\PSh(\Delta)_{/\mb L_\mathrm{CS}X}^\mathrm{r-fib}: X\times_{\mb L_\mathrm{CS}X}(-)}.
        	\arrow[shift left=2, from=1-1, to=1-3]
        	\arrow["\simeq"{description}, draw=none, from=1-1, to=1-3]
        	\arrow[shift left=2, from=1-3, to=1-1]
        \end{tikzcd}
    \end{equation*}
    Moreover, a map $\mc C\to\mc D$ of categories is a right fibration if and only if $N(\mc C)\to N(\mc D)$ is.
\end{theorem}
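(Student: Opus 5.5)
The statement to prove is Rasekh's theorem: for any simplicial space $X$, $\mb L_\mathrm{CS}$ and pullback along $X\to\mb L_\mathrm{CS}X$ are inverse equivalences between right fibrations over $X$ and over $\mb L_\mathrm{CS}X$. In addition, a functor of categories is a right fibration iff its nerve is. I would prove this by straightening right fibrations into presheaves and checking that both sides compute the same presheaf category.

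The first step is to identify right fibrations over a simplicial space $X$ with $\PSh(\ascat X)$. A right fibration $p:Z\to X$ is determined by the spaces $Z_0\times_{X_0}\{x\}$ together with the transport maps along simplices of $X$. Concretely, the right fibration condition says $Z_n\simeq X_n\times_{X_0}Z_0$ via $\langle n\rangle^\ast$. So $Z$ is the same as a functor $(\Delta_{/X})^\mathrm{op}\to\Ani$ that inverts all maps of simplices preserving the last vertex. For $\Delta[n]$ this category of functors is $\PSh([n])$, since the last-vertex maps exhibit $[n]$ as the relevant localization.

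Both sides of this identification send colimits in $X$ to limits. On the right fibration side this holds because right fibrations over $\colim X_i$ are glued from right fibrations over the $X_i$; this is descent in the topos $\PSh(\Delta)$, since the right fibration condition is pulled back from the $X_i$. On the presheaf side it holds because $\ascat$ preserves colimits and $\PSh(-)$ turns colimits of categories into limits. Writing $X$ as the colimit of its simplices then gives
\[
\PSh(\Delta)^\mathrm{r-fib}_{/X}\simeq \PSh(\ascat X).
\]

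Next I apply the same identification to $\mb L_\mathrm{CS}X=N(\ascat X)$. Since $\ascat N\ascat X\simeq\ascat X$, both categories of right fibrations are $\PSh(\ascat X)$. I would check that pullback along the unit $X\to\mb L_\mathrm{CS}X$ corresponds to the identity of $\PSh(\ascat X)$ under these identifications; this is naturality of the identification in $X$. Pullback is therefore an equivalence, and its left adjoint agrees with $\mb L_\mathrm{CS}$ restricted to right fibrations. For that last point I need $\mb L_\mathrm{CS}Z\to\mb L_\mathrm{CS}X$ to again be a right fibration with $Z\simeq X\times_{\mb L_\mathrm{CS}X}\mb L_\mathrm{CS}Z$. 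I would deduce this by showing that the right fibration over $\mb L_\mathrm{CS}X$ corresponding to $Z$ is complete Segal, and that $Z\to$ (this fibration) is an $\mb L_\mathrm{CS}$-equivalence. The latter follows because it is a pullback of the $\mb L_\mathrm{CS}$-equivalence $X\to\mb L_\mathrm{CS}X$ along a right fibration. Pullbacks along right fibrations preserve these equivalences, which reduces, by colimit-stability and universality of colimits, to the generating cases of spine and completeness inclusions.

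The final claim about functors of categories is the comparison of Lurie's lifting characterization of right fibrations with the nerve-level Cartesian square $N(\mc C)_n\simeq N(\mc D)_n\times_{\mc D^\simeq}\mc C^\simeq$. It suffices to check this for $n=1$, which is exactly the statement that the maps $\mc C_{/c}\to\mc D_{/pc}$ are equivalences.

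I expect the main obstacle to be the descent step showing that right fibrations over $X$ satisfy colimit-to-limit. The cleanest route is to write right fibrations over $X$ as $\Fun(\Delta^\mathrm{op},\Ani)_{/X}$ restricted by a condition local on simplices, and then use that $\PSh(\Delta)_{/(-)}$ sends colimits to limits.
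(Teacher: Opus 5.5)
The paper does not prove this statement; it imports it from Rasekh's model-dependent argument and from Barkan--Steinebrunner's model-independent rederivation via Segalification. Your route is genuinely different and essentially self-contained. You straighten right fibrations over an arbitrary simplicial space $X$ as presheaves on $\Delta_{/X}$ inverting last-vertex maps. You then identify these with $\PSh(\ascat X)$ by descent from the representable case, where the last-vertex functor $\Delta_{/[n]}\to[n]$ is even a reflective localization, with fully faithful right adjoint $i\mapsto(\{0,\dots,i\}\subseteq[n])$. Finally, a saturation argument recognizes the inverse of pullback along $X\to\mb L_\mathrm{CS}X$ as $\mb L_\mathrm{CS}$. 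What you gain is a stronger intermediate result: the natural equivalence $\PSh(\Delta)^{\mathrm{r-fib}}_{/X}\simeq\PSh(\ascat X)$ for every simplicial space, of which the first half of the theorem is a formal consequence. What the citation gains is brevity.

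Some steps need more than your sketch gives them.

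\emph{Naturality of the comparison.} Descent needs a comparison natural in $X$, not just agreement on simplices. Restriction along the functor $\Delta_{/X}\to\ascat X$ extending the last-vertex functors works, since $X\mapsto\Delta_{/X}$ and $\ascat$ both preserve colimits (e.g.\ by \Cref{thm:gepcoendunstraightening}).

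\emph{Reduction to generators.} More importantly, reducing ``pullback along a right fibration preserves $\mb L_\mathrm{CS}$-equivalences'' to generators is justified only because your base $Y=\mb L_\mathrm{CS}X$ is complete Segal. Fix $p\colon Z'\to Y$. The maps in $\PSh(\Delta)_{/Y}$ whose pullback along $p$ is an $\mb L_\mathrm{CS}$-equivalence form a strongly saturated class. Hence they contain the saturation of $\mathrm{Sp}[n]\to\Delta[n]\to Y$ and $\Delta[0]\to N(J)\to Y$, taken over all structure maps to $Y$. When $Y$ is local, the objects of $\PSh(\Delta)_{/Y}$ local for these generators are exactly the complete Segal spaces over $Y$. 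So that saturation consists of every map over $Y$ which is an $\mb L_\mathrm{CS}$-equivalence, including the unit $X\to Y$. For a non-local base this identification is not automatic: if $h\circ f$ and $h$ are left orthogonal to a fixed map, $f$ need not be. So state the lemma only for local bases, which is all you use.

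\emph{Generator cases.} These take a line each. By your first step and the closure of right fibrations under colimits in the slice, a right fibration over $\Delta[n]$ is a colimit of initial segments $\Delta[i]\subseteq\Delta[n]$. Over each of these the pulled-back spine inclusion is $\mathrm{Sp}[i]\to\Delta[i]$. Likewise, a right fibration over $N(J)$ is $K\times N(J)$ for a space $K$.

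\emph{The final claim.} The $n=1$ condition only gives $(\mc C_{/c})^\simeq\simeq(\mc D_{/pc})^\simeq$. The Segal condition on both nerves propagates it to all $n$, which gives the full equivalence of slices.
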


With this notion in mind, we define and study a class of bisimplicial spaces which will appear repeatedly.

\begin{definition}
    \begin{enumerate}
        \item Say a bisimplicial space $X$ is \emph{factorial} if for all $m\ge 0$, $(\id, \langle 0\rangle)^\ast : X(-, m)\to X(-,0)$ is a right fibration.
        \item Let $X$, $Y$ be factorial. A map $X\to Y$ is said to be \emph{base compatible} if for all $n\ge 0$, the square
        \begin{equation*}
            \begin{tikzcd}
            	{X(n,-)} & {Y(n,-)} \\
            	{X(0,-)} & {Y(0,-)}
            	\arrow[from=1-1, to=1-2]
            	\arrow["{(\langle n\rangle,\; \id)^\ast}"', from=1-1, to=2-1]
            	\arrow["{(\langle n\rangle,\; \id)^\ast}", from=1-2, to=2-2]
            	\arrow[from=2-1, to=2-2]
            \end{tikzcd}
        \end{equation*}
        is Cartesian.
    \end{enumerate}
\end{definition}

\begin{remark}
    In \cite{juran2026orthogonal}, Juran shows that $\Fact :\mathrm{OFS}\to \PSh(\Delta^{\times 2})$ is fully faithful with essential image factorial bisimplicial spaces $X$ such that for all $n$, $m\ge 0$, $X(n,-)$ and $X(-,m)$ are complete Segal spaces. Thus factorial bisimplicial spaces should be thought of as a mild weakening of belonging to the essential image of $\Fact$.
\end{remark}

\begin{proposition}\label{prop:equivofbaseimpbasecompat}
    Let $X\to Y$ be a map of factorial bisimplicial spaces. The following are equivalent:
    \begin{enumerate}
        \item $X\to Y$ is base compatible and $X(0,0)\to Y(0,0)$ is an equivalence
        \item $X(-,0)\to Y(-,0)$ is an equivalence.
    \end{enumerate}
\end{proposition}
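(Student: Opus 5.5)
The argument reduces to levelwise statements about spaces, using only the factorial hypothesis and pasting of pullbacks. Fix $n,m\ge 0$ and consider the commutative cube built from the squares
\begin{equation*}
    \begin{tikzcd}
        X(n,m)\arrow{r}\arrow{d} & X(n,0)\arrow{d} \\
        X(0,m)\arrow{r} & X(0,0)
    \end{tikzcd}
\end{equation*}
and the analogous square for $Y$, connected by the map $X\to Y$. Here the horizontal maps are $(\id,\langle 0\rangle)^\ast$ and the vertical maps are $(\langle n\rangle,\id)^\ast$.

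Factoriality says that for each $m$, $X(-,m)\to X(-,0)$ is a right fibration. Applied at level $n$ of the right fibration, with the right fibration condition taken along $\langle n\rangle$, this says exactly that the displayed square for $X$ is Cartesian for every $n,m$. The same holds for $Y$. So both the $X$-face and the $Y$-face of the cube are pullbacks.

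\emph{(i)$\Rightarrow$(ii).} Base compatibility at $m=0$ gives a Cartesian square with corners $X(n,0)$, $Y(n,0)$, $X(0,0)$ and $Y(0,0)$. Its bottom map $X(0,0)\to Y(0,0)$ is an equivalence by hypothesis, so its top map $X(n,0)\to Y(n,0)$ is an equivalence, being a base change of an equivalence. This holds for all $n$, which is (ii).

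\emph{(ii)$\Rightarrow$(i).} The map $X(0,0)\to Y(0,0)$ is the $n=0$ level of $X(-,0)\to Y(-,0)$, hence an equivalence. For base compatibility we must show that for each $n$ and $m$ the square with corners $X(n,m)$, $Y(n,m)$, $X(0,m)$ and $Y(0,m)$ is Cartesian. Since the $X$- and $Y$-faces are pullbacks, we have
\begin{equation*}
    X(n,m)\simeq X(n,0)\times_{X(0,0)}X(0,m).
\end{equation*}
Using $X(n,0)\simeq Y(n,0)$ and $X(0,0)\simeq Y(0,0)$, this becomes
\begin{equation*}
    X(n,m)\simeq Y(n,0)\times_{Y(0,0)}X(0,m)\simeq \bigl(Y(n,0)\times_{Y(0,0)}Y(0,m)\bigr)\times_{Y(0,m)}X(0,m),
\end{equation*}
and the last expression is $Y(n,m)\times_{Y(0,m)}X(0,m)$. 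Phrased as pasting: the composite of the face $X(n,m)\to X(0,m)$ over $X(n,0)\to X(0,0)$ with the equivalence square $X(n,0)\to X(0,0)$ over $Y(n,0)\to Y(0,0)$ equals the composite of the base compatibility square with the Cartesian $Y$-face. The pasting lemma then gives that the base compatibility square is Cartesian. All identifications are natural in $m$, so they assemble into the required Cartesian square of simplicial spaces.

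The main point requiring care is that the pullback identifications are natural in the simplicial variable $m$, so that the levelwise statements give the square of simplicial spaces. Since limits in presheaves are computed levelwise, it suffices to check each level, so this is not a real obstacle.
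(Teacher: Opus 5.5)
Your proof is correct and follows the paper's argument: (i)$\Rightarrow$(ii) is a base change of the equivalence $X(0,0)\to Y(0,0)$. For (ii)$\Rightarrow$(i), both you and the paper use the same cube: the $X$- and $Y$-faces are Cartesian by factoriality, the face between $X(n,0)\to X(0,0)$ and $Y(n,0)\to Y(0,0)$ is Cartesian because its parallel edges are equivalences, and pasting shows the base-compatibility face is Cartesian.
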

\begin{proof}
    (i) implies (ii) by definition of base compatibility and preservation of equivalences under pullback. For (ii) $\Rightarrow$ (i), consider the commutative cube
    \begin{equation*}
        \begin{tikzcd}
        	& {Y(n,0)} && {X(n,0)} \\
        	{Y(n,m)} && {X(n,m)} \\
        	& {Y(0,0)} && {X(0,0)}. \\
        	{Y(0,m)} && {X(0,m)}
        	\arrow["\simeq"{description, pos=0.3}, from=1-2, to=1-4]
        	\arrow[from=1-2, to=3-2]
        	\arrow[from=1-4, to=3-4]
        	\arrow[from=2-1, to=1-2]
        	\arrow[from=2-1, to=2-3]
        	\arrow["\lrcorner"{anchor=center, pos=0.125}, draw=none, from=2-1, to=3-2]
        	\arrow[from=2-1, to=4-1]
        	\arrow[from=2-3, to=1-4]
        	\arrow["\lrcorner"{anchor=center, pos=0.125}, draw=none, from=2-3, to=3-4]
        	\arrow[from=2-3, to=4-3]
        	\arrow["\simeq"{description, pos=0.3}, from=3-2, to=3-4]
        	\arrow[from=4-1, to=3-2]
        	\arrow[from=4-1, to=4-3]
        	\arrow[from=4-3, to=3-4]
        \end{tikzcd}
    \end{equation*}
    The left and right faces are Cartesian by factoriality of $X$ and $Y$, and the back face is Cartesian since both horizontal arrows are equivalences. It follows by pasting that the front face is Cartesian, as required.
\end{proof}

Given a bisimplicial space $X$, an intermediate approximation to $\Gr X$ is given by localizing onto $\Fun(\Delta^\mathrm{op},\Cat)\subseteq\PSh(\Delta^{\times 2})$, i.e.\ by passing to associated categories along one of the dimensions. The next lemma shows that this procedure preserves factorial bisimplicial spaces and base compatible maps.

\begin{proposition}\label{prop:CSpreservesnicestuff}
    Let $(-)_\mathrm{CS} : \PSh(\Delta^{\times 2})\to \PSh(\Delta^{\times 2})$ be the functor which sends a bisimplicial space $X$ to
    \begin{equation*}
        \begin{tikzcd}[row sep=0pt]
            \Delta^\mathrm{op}\arrow{r} & \PSh(\Delta) \\
            {[n]}\arrow[mapsto]{r} & \mb L_\mathrm{CS}X(n,-)
        \end{tikzcd}
    \end{equation*}
    where $\mb L_\mathrm{CS}$ is localization onto complete Segal spaces. Then $(-)_\mathrm{CS}$ preserves factorial bisimplicial spaces and base compatible maps.
\end{proposition}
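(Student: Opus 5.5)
The proof has two parts. First, $(-)_\mathrm{CS}$ preserves factorial bisimplicial spaces. Second, it preserves base compatible maps. Both rest on Rasekh's \Cref{thm:rightfibpresbylocal}. It says that $\mb L_\mathrm{CS}$ restricts to an equivalence between right fibrations over $Z$ and right fibrations over $\mb L_\mathrm{CS}Z$. The inverse is pullback along the unit $Z\to\mb L_\mathrm{CS}Z$.

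One point of orientation first. Factoriality concerns the maps $X(-,m)\to X(-,0)$, which are simplicial spaces in the first variable. But $(-)_\mathrm{CS}$ localizes each row $X(n,-)$, i.e.\ the second variable. So we cannot apply Rasekh's theorem directly to a column. Instead, we unpack the right fibration condition levelwise. Factoriality of $X$ says that for each $n,m$ the square
\begin{equation*}
\begin{tikzcd}
X(n,m)\arrow{r}\arrow{d} & X(0,m)\arrow{d}\\
X(n,0)\arrow{r} & X(0,0)
\end{tikzcd}
\end{equation*}
is Cartesian, with horizontal maps $(\langle n\rangle,\id)^\ast$ and vertical maps $(\id,\langle 0\rangle)^\ast$. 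Varying $m$, this says that the square of simplicial spaces
\begin{equation*}
\begin{tikzcd}
X(n,-)\arrow{r}\arrow{d} & X(0,-)\arrow{d}\\
\mathrm{const}\,X(n,0)\arrow{r} & \mathrm{const}\,X(0,0)
\end{tikzcd}
\end{equation*}
is Cartesian in $\PSh(\Delta)$. Here $\mathrm{const}\,X(n,0)\to X(n,-)$ picks out the $0$-simplices through $\langle 0\rangle$.

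Since $\langle 0\rangle$ is involved, the natural fibration notion is that of left fibrations. A map $A\to \mathrm{const}\,K$ of simplicial spaces with the property that, for every $m$, the square formed by $A_m\to A_0$ over $K\to K$ is Cartesian, is exactly $A\simeq \mathrm{const}\,A_0$. So this formulation is not quite right. I would reformulate factoriality instead as the statement that the map of simplicial spaces $X(n,-)\to X(0,-)$ is a \emph{left fibration} for each $n$, relative to the structure maps over the $0$-th level. Concretely, the Cartesian squares above say that for every $m$ the square
\begin{equation*}
\begin{tikzcd}
X(n,m)\arrow{r}\arrow{d} & X(0,m)\arrow{d}\\
X(n,0)\arrow{r} & X(0,0)
\end{tikzcd}
\end{equation*}
with vertical maps $\langle 0\rangle^\ast$ is Cartesian, and this is precisely the left fibration condition for $X(n,-)\to X(0,-)$. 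So factoriality is equivalent to each $(\langle n\rangle,\id)^\ast : X(n,-)\to X(0,-)$ being a left fibration. Likewise, base compatibility of $X\to Y$ says that $X(n,-)\simeq Y(n,-)\times_{Y(0,-)}X(0,-)$.

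With these reformulations the proof is direct. Apply the dual (left fibration) form of \Cref{thm:rightfibpresbylocal} over the base $Z=X(0,-)$. Since $X(n,-)\to X(0,-)$ is a left fibration, its localization $\mb L_\mathrm{CS}X(n,-)\to \mb L_\mathrm{CS}X(0,-)$ is again a left fibration. Moreover, the square formed with the units is Cartesian, because the inverse equivalence is pullback. This shows $X_\mathrm{CS}$ is factorial.

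For base compatibility, write $X(n,-)\simeq Y(n,-)\times_{Y(0,-)}X(0,-)$. This is a pullback of the left fibration $Y(n,-)\to Y(0,-)$ along $X(0,-)\to Y(0,-)$. Rasekh's equivalences are compatible with base change along $X(0,-)\to Y(0,-)$. Indeed, the pullback functors commute up to the units, so both ways of going around the square are computed by pulling back along units of $Z$. Hence $\mb L_\mathrm{CS}X(n,-)\simeq \mb L_\mathrm{CS}Y(n,-)\times_{\mb L_\mathrm{CS}Y(0,-)}\mb L_\mathrm{CS}X(0,-)$. To verify this, I would compare both sides after pulling back along the unit of $X(0,-)$, where both recover $X(n,-)$, and then use that the pullback functor is an equivalence on left fibrations.

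The main obstacle is this last base change compatibility. It requires checking that the right-hand side is a left fibration over $\mb L_\mathrm{CS}X(0,-)$ (it is, being a pullback of one) and carefully pasting the unit squares. A secondary issue is that the reformulation must be natural in $n$, so that $X_\mathrm{CS}$ is really a bisimplicial space with the stated property. This is a formality, since everything is functorial in $n$.
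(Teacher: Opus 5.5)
Your argument is correct. Your first step matches the paper's: factoriality is equivalent to each $(\langle n\rangle,\id)^\ast : X(n,-)\to X(0,-)$ being a left fibration, and then the dual of \Cref{thm:rightfibpresbylocal} applies. You should delete the discarded $\mathrm{const}$ square. The levelwise maps $\langle 0\rangle^\ast$ do not assemble into a map of simplicial spaces $X(n,-)\to\mathrm{const}\,X(n,0)$, so that square was never well defined.

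For base compatibility, you and the paper both reduce to showing that the square $\mb L_\mathrm{CS}X(n,-)\to\mb L_\mathrm{CS}Y(n,-)$ over $\mb L_\mathrm{CS}X(0,-)\to\mb L_\mathrm{CS}Y(0,-)$ is Cartesian, but you finish differently.

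The paper's route:
\begin{enumerate}
\item The vertical maps are left fibrations of categories, so Cartesianness can be checked on fibers.
\item By essential surjectivity of $X(0,0)\to\ascat X_\mathrm{CS}(0,-)$, it suffices to check fibers over points of $X(0,0)$.
\item It then pastes with the unit squares.
\end{enumerate}

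Your route pulls the comparison map back along the whole unit $X(0,-)\to\mb L_\mathrm{CS}X(0,-)$. It then uses that this pullback is an equivalence, hence conservative, on left fibrations. This avoids both the fiberwise criterion and the essential-surjectivity step. In effect it is a base-change compatibility of Rasekh's equivalence.

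The one point you should state explicitly is that the comparison map itself becomes an equivalence, not merely that its source and target both pull back to $X(n,-)$. Write $f : X(0,-)\to Y(0,-)$. Naturality of the unit gives $\mb L_\mathrm{CS}(f)\circ\eta_{X(0,-)}\simeq \eta_{Y(0,-)}\circ f$. So the pulled-back target is $(\mb L_\mathrm{CS}Y(n,-)\times_{\mb L_\mathrm{CS}Y(0,-)}Y(0,-))\times_{Y(0,-)}X(0,-)$, which is $Y(n,-)\times_{Y(0,-)}X(0,-)$ by the unit square for $Y$. Now precompose with the unit-square equivalence $X(n,-)\simeq \mb L_\mathrm{CS}X(n,-)\times_{\mb L_\mathrm{CS}X(0,-)}X(0,-)$. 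The pulled-back comparison map becomes the canonical map $X(n,-)\to Y(n,-)\times_{Y(0,-)}X(0,-)$. That map is an equivalence precisely by base compatibility of $X\to Y$.
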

\begin{proof}
    First, notice that by definition a bisimplicial space $X$ is factorial if and only if for every $n\ge 0$, $(\langle n\rangle, \id)^\ast : X(n,-)\to X(0,-)$ is a left fibration. Using this alternative characterization and (the dual of) \Cref{thm:rightfibpresbylocal} which says that localization onto complete Segal spaces preserves left fibrations, we see that $(-)_\mathrm{CS}$ preserves factorial bisimplicial spaces.\par
    For the second claim, let $X\to Y$ be a base compatible map of factorial bisimplicial spaces. We then have for all $n\ge 0$ a commutative cube
    \begin{equation}\label{eq:fundamentalcube}
        \begin{tikzcd}
        	& {X_\mathrm{CS}(n,-)} && {Y_\mathrm{CS}(n,-)} \\
        	{X(n,-)} && {Y(n,-)} \\
        	& {X_\mathrm{CS}(0,-)} && {Y_\mathrm{CS}(0,-)} \\
        	{X(0,-)} && {Y(0,-)}
        	\arrow[from=1-2, to=1-4]
        	\arrow[from=1-2, to=3-2]
        	\arrow[from=1-4, to=3-4]
        	\arrow[from=2-1, to=1-2]
        	\arrow[from=2-1, to=2-3]
        	\arrow[from=2-1, to=4-1]
        	\arrow[from=2-3, to=1-4]
        	\arrow[from=2-3, to=4-3]
        	\arrow[from=3-2, to=3-4]
        	\arrow[from=4-1, to=3-2]
        	\arrow[from=4-1, to=4-3]
        	\arrow[from=4-3, to=3-4]
        \end{tikzcd}
    \end{equation}
    where the front face is Cartesian by base compatibility and the left and right faces are Cartesian by (the dual of) \Cref{thm:rightfibpresbylocal}. To complete the proof, we need to show that the back face is Cartesian.\par
    To do this, notice that both vertical morphisms in the back face are (the nerve of) left fibrations of categories. Thus being Cartesian may be checked fiberwise. Since $X(0,0)\to \ascat X_\mathrm{CS}(0,-)$ is essentially surjective, it suffices to check on fibers belonging to points of $X(0,0)$, i.e.\ it will suffice to verify that the outer square of
    \begin{equation}\label{eq:outersquaretocheck}
        \begin{tikzcd}
        	F & {X_\mathrm{CS}(n,-)} & {Y_\mathrm{CS}(n,-)} \\
        	{\mathrm{const}\, X(0,0)} & {X_\mathrm{CS}(0,-)} & {Y_\mathrm{CS}(0,-)}
        	\arrow[from=1-1, to=1-2]
        	\arrow[from=1-1, to=2-1]
        	\arrow["\lrcorner"{anchor=center, pos=0.125}, draw=none, from=1-1, to=2-2]
        	\arrow[from=1-2, to=1-3]
        	\arrow[from=1-2, to=2-2]
        	\arrow[from=1-3, to=2-3]
        	\arrow[from=2-1, to=2-2]
        	\arrow[from=2-2, to=2-3]
        \end{tikzcd}
    \end{equation}
    is Cartesian, where $\mathrm{const}$ denotes the constant simplicial space functor. However, the outer square of \eqref{eq:outersquaretocheck} agrees with the outer square of
    \begin{equation*}
        \begin{tikzcd}
        	F & {X(n,-)} & {X_\mathrm{CS}(n,-)} & {Y_\mathrm{CS}(n,-)} \\
        	{\mathrm{const}\, X(0,0)} & {X(0,-)} & {X_\mathrm{CS}(0,-)} & {Y_\mathrm{CS}(0,-)}
        	\arrow[from=1-1, to=1-2]
        	\arrow[from=1-1, to=2-1]
        	\arrow["\lrcorner"{anchor=center, pos=0.125}, draw=none, from=1-1, to=2-2]
        	\arrow[from=1-2, to=1-3]
        	\arrow[from=1-2, to=2-2]
        	\arrow["\lrcorner"{anchor=center, pos=0.125}, draw=none, from=1-2, to=2-3]
        	\arrow[from=1-3, to=1-4]
        	\arrow[from=1-3, to=2-3]
        	\arrow[from=1-4, to=2-4]
        	\arrow[from=2-1, to=2-2]
        	\arrow[from=2-2, to=2-3]
        	\arrow[from=2-3, to=2-4]
        \end{tikzcd}
    \end{equation*}
    which, making use of \eqref{eq:fundamentalcube}, agrees with the outer square of
    \begin{equation*}
        \begin{tikzcd}
        	F & {X(n,-)} & {Y(n,-)} & {Y_\mathrm{CS}(n,-)} \\
        	{\mathrm{const}\, X(0,0)} & {X(0,-)} & {Y(0,-)} & {Y_\mathrm{CS}(0,-)}.
        	\arrow[from=1-1, to=1-2]
        	\arrow[from=1-1, to=2-1]
        	\arrow["\lrcorner"{anchor=center, pos=0.125}, draw=none, from=1-1, to=2-2]
        	\arrow[from=1-2, to=1-3]
        	\arrow[from=1-2, to=2-2]
        	\arrow["\lrcorner"{anchor=center, pos=0.125}, draw=none, from=1-2, to=2-3]
        	\arrow[from=1-3, to=1-4]
        	\arrow[from=1-3, to=2-3]
        	\arrow["\lrcorner"{anchor=center, pos=0.125}, draw=none, from=1-3, to=2-4]
        	\arrow[from=1-4, to=2-4]
        	\arrow[from=2-1, to=2-2]
        	\arrow[from=2-2, to=2-3]
        	\arrow[from=2-3, to=2-4]
        \end{tikzcd}
    \end{equation*}
    However, the above outer square is evidently Cartesian by pasting, as claimed.
\end{proof}

From this we may deduce our primary tool for collapsing dimensions.

\begin{lemma}\label{lemma:dimcollapserightfib}
    Let $X$, $Y$ be factorial bisimplicial spaces and suppose we are given a map $Y\to X$ such that
    \begin{enumerate}
        \item $Y(-,0)\to X(-,0)$ is an equivalence
        \item $\ascat Y(0,-)\to \ascat X(0,-)$ is an equivalence.
    \end{enumerate}
    Then $Y_\mathrm{CS}\to X_\mathrm{CS}$ is an equivalence of bisimplicial spaces, and in particular the induced map $\Gr Y\to \Gr X$ is an equivalence.
\end{lemma}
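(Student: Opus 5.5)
The plan is to apply \Cref{prop:CSpreservesnicestuff} and \Cref{prop:equivofbaseimpbasecompat} to the localized map $Y_\mathrm{CS}\to X_\mathrm{CS}$, and then check levelwise.

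By \Cref{prop:CSpreservesnicestuff}, both $Y_\mathrm{CS}$ and $X_\mathrm{CS}$ are factorial. Next I will show that $Y\to X$ is base compatible. Hypothesis (i) says that $Y(-,0)\to X(-,0)$ is an equivalence of simplicial spaces. In particular $Y(0,0)\to X(0,0)$ is an equivalence, so \Cref{prop:equivofbaseimpbasecompat}, direction (ii) $\Rightarrow$ (i), gives that $Y\to X$ is base compatible. Applying \Cref{prop:CSpreservesnicestuff} again, $Y_\mathrm{CS}\to X_\mathrm{CS}$ is then a base compatible map of factorial bisimplicial spaces. Concretely, for every $n$ the square
\begin{equation*}
\begin{tikzcd}
Y_\mathrm{CS}(n,-)\arrow{r}\arrow{d} & X_\mathrm{CS}(n,-)\arrow{d}\\
Y_\mathrm{CS}(0,-)\arrow{r} & X_\mathrm{CS}(0,-)
\end{tikzcd}
\end{equation*}
is Cartesian.

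The bottom map is $\mb L_\mathrm{CS}Y(0,-)\to \mb L_\mathrm{CS}X(0,-)$, which is $N$ applied to $\ascat Y(0,-)\to\ascat X(0,-)$. By hypothesis (ii) this is an equivalence. Pulling back along the Cartesian square, the top map $Y_\mathrm{CS}(n,-)\to X_\mathrm{CS}(n,-)$ is an equivalence for every $n$, so $Y_\mathrm{CS}\to X_\mathrm{CS}$ is a levelwise equivalence of bisimplicial spaces.

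For the final claim, I will use that $\Gr$ factors through $(-)_\mathrm{CS}$. The unit $Z\to Z_\mathrm{CS}$ is a levelwise $\mb L_\mathrm{CS}$-localization in the second variable. Since $\Gr Z\simeq\ascat\diag Z$, or alternatively since $\Gr$ is the left Kan extension of the functor $[n]\times[m]\mapsto [n]\times[m]$, which already lands in categories in each variable, the map $\Gr Z\to\Gr Z_\mathrm{CS}$ is an equivalence. One clean way to see this: maps $Z\to \Sq(\mc D)$ correspond to maps from $Z$ into a bisimplicial space that is levelwise complete Segal in the second variable, and such maps factor uniquely through $Z_\mathrm{CS}$. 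Hence $\Gr Y\simeq\Gr Y_\mathrm{CS}\simeq\Gr X_\mathrm{CS}\simeq\Gr X$.

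The main point to be careful about is that hypothesis (ii) feeds in correctly: the bottom row of the Cartesian square has to be identified with $N$ of $\ascat Y(0,-)\to\ascat X(0,-)$. This holds because $(-)_\mathrm{CS}$ is computed row by row in the second variable, and the row with $n=0$ is exactly $\mb L_\mathrm{CS}Y(0,-)$.
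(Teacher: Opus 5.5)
Your proof is correct and follows the paper's route. You get base compatibility of $Y\to X$ from \Cref{prop:equivofbaseimpbasecompat}, transport it through $(-)_\mathrm{CS}$ via \Cref{prop:CSpreservesnicestuff}, pull back the equivalence supplied by hypothesis (ii), and finish by noting that $\Gr$ factors through $(-)_\mathrm{CS}$. The only difference is cosmetic and slightly in your favour: you pull back along every row $Y_\mathrm{CS}(n,-)\to X_\mathrm{CS}(n,-)$ directly, whereas the paper first uses factoriality to reduce to horizontal and vertical fragments and then pulls back only at column $0$.
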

\begin{proof}
    By \Cref{prop:equivofbaseimpbasecompat} we have that $X\to Y$ is base compatible. Using this, by \Cref{prop:CSpreservesnicestuff} we deduce that $Y_\mathrm{CS}\to X_\mathrm{CS}$ is a base compatible map between factorial bisimplicial spaces.\par
    Since both bisimplicial spaces are factorial, this map is an equivalence if and only if the maps $Y_\mathrm{CS}(0,-)\to X_\mathrm{CS}(0,-)$ and $Y_\mathrm{CS}(-,0)\to X_\mathrm{CS}(-,0)$ on horizontal and vertical fragments are equivalences. (ii) is precisely the statement that this map is an equivalence on horizontal fragments. For the map on vertical fragments, by base compatibility, for all $n\ge 0$ we have a pullback square
    \begin{equation*}
        \begin{tikzcd}
        	{Y_\mathrm{CS}(n,0)} & {X_{\mathrm{CS}}(n,0)} \\
        	{Y_\mathrm{CS}(0,0)} & {X_\mathrm{CS}(0,0)}
        	\arrow[from=1-1, to=1-2]
        	\arrow[from=1-1, to=2-1]
        	\arrow["\lrcorner"{anchor=center, pos=0.125}, draw=none, from=1-1, to=2-2]
        	\arrow[from=1-2, to=2-2]
        	\arrow["\simeq", from=2-1, to=2-2]
        \end{tikzcd}
    \end{equation*}
    where the bottom map is an equivalence by (ii). The claim then follows by preservation of equivalences under pullback.\par
    The final claim follows as $\Sq$ lands inside $\Fun(\Delta^\mathrm{op},\Cat)\subseteq \PSh(\Delta^{\times 2})$ so $\Gr$ factors through $(-)_\mathrm{CS}$.
\end{proof}

\begin{remark}\label{rem:genofrightfiblemma}
    Since $\Sq$ is symmetric in both factors, $\Gr$ is invariant under swapping the two factors of $\Delta^\mathrm{op}$. Additionally, there is a natural equivalence $\Gr X^\mathrm{op}\simeq (\Gr X)^\mathrm{op}$. It follows that the analogous versions of \Cref{lemma:dimcollapserightfib} hold with coordinates swapped and/or right fibration replaced with left fibration in the definition of factorial bisimplicial spaces.
\end{remark}

\subsubsection{Trisimplicial representations of labeled spans}

\begin{definition}
    Denote by $\Pull(\mc C, E, S)$ the sub-trisimplicial space of $\Sq^{(3)}(\mc C)$ consisting at level $(n,m,\ell)$ of connected components of diagrams $D : [n]\times [m]\times [\ell]\to\mc C$ such that
    \begin{enumerate}
        \item $D$ sends each $(i,j,k)\to (i',j,k)$ to a morphism in $E$
        \item $D$ sends each $(i,j,k)\to (i,j',k)$ to a morphism in $S$
        \item $D$ sends each $(i,j,k)\to (i,j,k')$ to a morphism in $\mc C$
        \item all squares in $[n]\times [m]\times [\ell]$ consisting of morphisms which alter at most one coordinate are sent under $D$ to Cartesian squares in $\mc C$.
    \end{enumerate}
\end{definition}

\begin{lemma}\label{lemma:CntoC0rightfib}
    Let $\mb C = \Pull(\mc C, E, S)^{2,3\mathrm{-op}}$. Then for every $m,\ell$, the map
    \begin{equation*}
        \begin{tikzcd}
            (\id,\langle 0\rangle,\langle 0\rangle)^\ast : \mb C(-, m, \ell)\arrow{r} & \mb C(-, 0, 0)
        \end{tikzcd}
    \end{equation*}
    is a right fibration of simplicial spaces.
\end{lemma}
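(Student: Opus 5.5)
The plan is to imitate the proof of \Cref{lemma:AnmdescrbyKan} and \Cref{prop:goodisdetectonspine}: characterize the diagrams in $\Pull(\mc C, E, S)$ as right Kan extensions from a small subposet, and deduce the Cartesian squares from uniqueness of Kan extensions. First I unwind the opposites. Applying $(-)^{2,3\mathrm{-op}}$ does not change the space $\mb C(n,m,\ell)=\Pull(\mc C,E,S)_{n,m,\ell}$; it only reindexes the simplicial structure in the last two coordinates. Hence $\langle 0\rangle^\ast$ in the second and third coordinates of $\mb C$ corresponds to evaluation at the \emph{terminal} vertices $m\in[m]$ and $\ell\in[\ell]$ of the original diagrams. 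Likewise, $\langle n\rangle^\ast$ in the first coordinate is restriction to $\{n\}\times[m]\times[\ell]$.

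So the claim is the following. Let $P=\{n\}\times[m]\times[\ell]\,\cup\,[n]\times\{m\}\times\{\ell\}\subseteq[n]\times[m]\times[\ell]$, and let $\iota$ denote this inclusion. Then restriction along $\iota$ gives an equivalence
\begin{equation*}
    \Pull(\mc C,E,S)_{n,m,\ell}\xrightarrow{\simeq}\Pull(\mc C,E,S)_{0,m,\ell}\times_{\Pull(\mc C,E,S)_{0,0,0}}\Pull(\mc C,E,S)_{n,0,0}.
\end{equation*}
Here $\Pull(\mc C,E,S)_{n,0,0}\simeq N(E)_n$ is a string of $E$-morphisms, and the fiber product is taken over $\mc C^\simeq$ via the common vertex $(n,m,\ell)$. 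The right-hand side is exactly the space of functors $P\to\mc C$ whose restriction to $\{n\}\times[m]\times[\ell]$ lies in $\Pull$ and whose restriction to the edge $[n]\times\{m\}\times\{\ell\}$ lies in $E$.

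The key step is to compute, for each $(i,j,k)$, the comma poset $(i,j,k)\downarrow\iota=\{x\in P: x\geq(i,j,k)\}$. I claim the cospan $(i,m,\ell)\to(n,m,\ell)\leftarrow(n,j,k)$ is initial in it, which is the analogue of the finality computation in \Cref{lemma:AnmdescrbyKan}. The check is a routine Quillen-type argument: an element of $\{n\}\times[m]\times[\ell]$ lying above $(i,j,k)$ lies above $(n,j,k)$, and an element of the edge lies above $(i,m,\ell)$. Consequently, for any $F:P\to\mc C$ of the above type, the right Kan extension along $\iota$ exists and is computed pointwise by
\begin{equation*}
    D(i,j,k)\simeq F(i,m,\ell)\times_{F(n,m,\ell)}F(n,j,k).
\end{equation*}
This pullback exists because $F(i,m,\ell)\to F(n,m,\ell)$ lies in $E$.

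It remains to show that $D\in\Pull(\mc C,E,S)$ if and only if $D|_P$ is of the above type and $D$ is right Kan extended from $P$. For the forward direction, given $D$ in $\Pull$, the square with corners $(i,j,k),(n,j,k),(i,m,\ell),(n,m,\ell)$ is a pasting of squares in which at most one coordinate changes at a time. Each such square is Cartesian by condition (iv), so the whole square is Cartesian, which is the pointwise Kan extension formula. For the converse, I use pasting of pullbacks on the formula to show that every square changing one coordinate at a time is Cartesian. Then stability of $E$, $S$ and $\mc C$ under base change gives conditions (i)--(iii). Finally, uniqueness of Kan extensions, as in \Cref{cor:identofAnmSbullet}, yields the displayed equivalence, which is the required Cartesian square for every $n\geq1$. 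I expect the main obstacle to be the bookkeeping in this pasting step, in particular verifying that the mixed squares varying the second and third coordinates are Cartesian and send the relevant edges into $S$. I would first reduce the $i$-direction using the formula, and then invoke that $\{n\}\times[m]\times[\ell]$ already lies in $\Pull$.
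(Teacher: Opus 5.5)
Your proposal is correct and follows the paper's proof. The paper likewise characterizes $\mb C(n,m,\ell)$ as the diagrams whose restriction to $\{n\}\times[m]\times[\ell]$ lies in $\mb C(0,m,\ell)$ and which are pointwise right Kan extended from $\{n\}\times[m]\times[\ell]\cup[n]\times\{0\}\times\{0\}$, by analogy with \Cref{prop:goodisdetectonspine} and \Cref{lemma:AnmdescrbyKan}, and reads off the Cartesian square from uniqueness of Kan extensions. That subposet is your $[n]\times\{m\}\times\{\ell\}$ once the $2,3$-op is unwound, and your initiality check and pasting arguments for the converse supply the details the paper leaves implicit.
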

\begin{proof}
    By arguments similar to \Cref{prop:goodisdetectonspine} and \Cref{lemma:AnmdescrbyKan}, one may show that a diagram $D : [n]\times [m]\times [\ell]\to\mc C$ belongs to $\mb C(n,m,\ell)$ if and only if
    \begin{enumerate}
        \item $\{n\}\times [m]\times [\ell]\hookrightarrow [n]\times [m]\times [\ell]\xrightarrow{D} \mc C$ belongs to $\mb C(0, m, \ell)$
        \item $D$ is right Kan extended from its restriction to
        \begin{equation*}
            \begin{tikzcd}
                {\{n\}\times [m]\times [\ell]\cup [n]\times \{0\}\times \{0\}}\arrow[hook]{r} & {[n]\times [m]\times [\ell]}\arrow{r}{D} & \mc C,
            \end{tikzcd}
        \end{equation*}
    \end{enumerate}
    and moreover that these right Kan extensions exist and are computed pointwise. In particular, one deduces that the square
    \begin{equation*}
        \begin{tikzcd}[column sep=45pt]
            \mb C(n,m,\ell)\arrow{r}{(\langle n\rangle,\,\id,\,\id)^\ast}\arrow{d}[left]{(\id,\,\langle 0\rangle,\,\langle 0\rangle)^\ast} & \mb C(0, m, \ell)\arrow{d}[right]{(\id,\,\langle 0\rangle,\,\langle 0\rangle)^\ast} \\
            \mb C(n,0,0)\arrow{r}[below]{(\langle n\rangle,\, \id,\,\id)^\ast} & \mb C(0,0,0)
        \end{tikzcd}
    \end{equation*}
    is Cartesian, i.e.\ that $\mb C(-, m, \ell)\longrightarrow \mb C(-,0,0)$ is a right fibration.
\end{proof}

\begin{lemma}\label{lemma:DntoD0rightfib}
    Let $\mb D = \Pull^G(\mc C, E)^{2\mathrm{-op}}$ for some $(\mc C, E, G)\in\mathrm{LabPair}$. Then for every $m$, the map
    \begin{equation*}
        (\id,\langle 0\rangle)^\ast : \mb D(-, m)\longrightarrow \mb D(-, 0)
    \end{equation*}
    is a right fibration.
\end{lemma}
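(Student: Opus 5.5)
We need to show that for every $n\ge 1$ the square
\begin{equation*}
    \begin{tikzcd}
        \mb D(n,m)\arrow{r}{(\langle n\rangle,\id)^\ast}\arrow{d} & \mb D(0,m)\arrow{d} \\
        \mb D(n,0)\arrow{r}{(\langle n\rangle,\id)^\ast} & \mb D(0,0)
    \end{tikzcd}
\end{equation*}
is Cartesian, where the vertical maps are $(\id,\langle 0\rangle)^\ast$. The approach is to reduce to the unlabeled case by splitting off the labels. First, the unlabeled statement for $\Pull(\mc C, E)^{2\mathrm{-op}}$ holds by the same argument as \Cref{lemma:CntoC0rightfib}, or as in \Cref{prop:goodisdetectonspine} and \Cref{lemma:AnmdescrbyKan}. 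A diagram $[n]\times[m]\to\mc C$ of Cartesian squares with vertical arrows in $E$ is right Kan extended from $\{n\}\times[m]\cup[n]\times\{0\}$ (with the $2$-op convention, $\{0\}$ in the second coordinate is the terminal column). These pointwise extensions exist because $\mc C$ has pullbacks along $E$. Consequently the unlabeled square is Cartesian.

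Next, I describe $\mb D(n,m)$ relative to $\Pull(\mc C,E)^{2\mathrm{-op}}_{n,m}$. Vertical arrows are of the form $s(e)$ with $e\in E$ and carry no label. Horizontal arrows are morphisms of $(\mc C^G)^{\mathrm{op}}$ lying over $\mc C$. By \Cref{prop:descrofadjendos} and \Cref{prop:globaldescrofadjendos}, a horizontal arrow is a pair $(g,\xi)$ with $\xi$ in $G$ of its $\mc C$-source. The computation in the proof of \Cref{lemma:equivcondforsegalforD} shows that in a square the label on the top horizontal arrow is forced to be $\bar f^\ast$ of the label on the bottom one. Iterating, I would show that $\mb D(n,m)\simeq \mb D(0,m)\times_{\Pull(\mc C,E)^{2\mathrm{-op}}_{0,m}}\Pull(\mc C,E)^{2\mathrm{-op}}_{n,m}$, where the map to the labels is restriction to the bottom row $\{n\}\times[m]$. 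Concretely, every labeled grid is determined by its underlying grid together with the labels on its last row, since all other labels are obtained by pulling back along the vertical $E$-morphisms.

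To prove this identification, I would proceed as in \Cref{lemma:equivcondforsegalforD}. Use $\mb D(0,m)\simeq \mc G\times_{\mc C^\simeq}\cdots$ (iterated copies of $\mc G\times_{\mc C^\simeq}N(\mc C)_1$, via Segal conditions and \Cref{prop:globaldescrofadjendos}). Then verify the claim on $(1,1)$-simplices, which is exactly the Cartesian square \eqref{eq:formulaforD11}, and bootstrap to $(n,m)$ using that $\mb D(n,-)$ and $\mb D(-,m)$ are Segal spaces (\Cref{lemma:equivcondforsegalforD}). The key point is that $\Sq(\mc C^G\;{=\joinrel=}\;\mc C^G\xleftarrow{s}E)$ is built from the nerve of $\mc C^G$, whose grids are determined by their edges up to the coherent composition data of \Cref{prop:descrofadjendos}.

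Given this decomposition, $\mb D(n,0)\simeq\Pull(\mc C,E)^{2\mathrm{-op}}_{n,0}$, since $m=0$ has no horizontal arrows and hence no labels. The desired square is then the pasting of the unlabeled Cartesian square with the tautologically Cartesian square relating $\mb D(0,m)$ to $\Pull(\mc C,E)^{2\mathrm{-op}}_{0,m}$, so it is Cartesian by the pasting law. I expect the main obstacle to be the bootstrapping step: making the ``labels are determined by the last row'' statement coherent for general $(n,m)$ rather than only on $(1,1)$-simplices. For this I would use the Segal decompositions in both directions to reduce to \eqref{eq:formulaforD11}.
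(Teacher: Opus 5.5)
Your proof is correct, but it is organized differently from the paper's.

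The paper does not check the right-fibration square at every level $(n,m)$. Since each $\mb D(-,m)$ is a Segal space, it invokes \cite[Proposition 1.7]{de2016segal}: a map of Segal spaces is a right fibration once the level-one square is Cartesian. That reduces the lemma to condition (i) of \Cref{prop:condcnrsegal}, which was already established in \Cref{lemma:equivcondforsegalforD}.

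You instead prove the condition at all levels directly. You exhibit $\mb D(n,m)$ as the base change of $P_{n,m}$ along the bottom-row labels, where $P=\Pull(\mc C,E)^{2\mathrm{-op}}$. You then paste this with the unlabeled Kan-extension statement. Both routes rest on the same input, the Cartesian square \eqref{eq:formulaforD11}. Your Segal bootstrapping is in effect a hand proof of the de Brito reduction, together with the passage from $m=1$ to general $m$, which the paper leaves implicit.

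The bootstrap does go through, with one point of care in the $n$-direction. In $\mb D(n,1)\simeq\mb D(n-1,1)\times_{\mb D(0,1)}\mb D(1,1)$, the second factor maps via its \emph{top} row, whereas \eqref{eq:formulaforD11} is phrased via the bottom row. So you should first rewrite $P_{n-1,1}\times_{P_{0,1}}\mb D(1,1)\simeq P_{n,1}\times_{P_{1,1}}\mb D(1,1)$ using the Segal condition for $P$, and only then apply \eqref{eq:formulaforD11}. In the $m$-direction, the square is a fiber product of the $m=1$ squares over the trivially Cartesian $m=0$ square.

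Your route buys a self-contained argument and a stronger structural statement: labeled grids are pulled back from unlabeled ones along their last row. The paper's route buys brevity, reusing \Cref{lemma:equivcondforsegalforD} and an off-the-shelf criterion.

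One wording point: the square comparing $\mb D(n,m)\to\mb D(0,m)$ with $P_{n,m}\to P_{0,m}$ is Cartesian because of the decomposition you prove, not tautologically.
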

\begin{proof}
    Since each $\mb D(-, m)$ is a Segal space, by \cite[Proposition 1.7]{de2016segal} this is equivalent to the a priori weaker condition (i) of \Cref{prop:condcnrsegal}. The result then follows from \Cref{lemma:equivcondforsegalforD}.
\end{proof}

We also have one other trisimplicial space of interest.

\begin{definition}
    Let $\mathrm{PullSq}(\mc C, E)$ denote the sub-trisimplicial space of $\Sq^{(3)}(\mc C)$ consisting of components of diagrams $D : [n]\times [m]\times [\ell]\to\mc C$ such that
    \begin{enumerate}
        \item for all $0\le i\le \ell$, $D|_{[n]\times [m]\times \{i\}}$ belongs to $\Pull(\mc C, E)_{n,m}$
        \item for all $0\le i\le m$, $D|_{[n]\times \{i\}\times [\ell]}$ belongs to $\Pull(\mc C, E)_{n,\ell}$.
    \end{enumerate}
\end{definition}

That is to say, the $(n,m,\ell)$-simplex space of $\mathrm{PullSq}(\mc C, E)$ is the groupoid of $n\times m\times\ell$ commutative grids built from cubes of the form
\begin{equation*}
    \begin{tikzcd}
    	& \bullet && \bullet \\
    	\bullet && \bullet \\
    	& \bullet && \bullet \\
    	\bullet && \bullet
    	\arrow["{\in \mc C}"', from=1-2, to=1-4]
    	\arrow["{\in E}"'{pos=0.7}, from=1-2, to=3-2]
    	\arrow["{\in E}", from=1-4, to=3-4]
    	\arrow["{\in \mc C}"', from=2-1, to=1-2]
    	\arrow["{\in \mc C}"'{pos=0.8}, from=2-1, to=2-3]
    	\arrow["{\in E}", from=2-1, to=4-1]
    	\arrow["{\in \mc C}"', from=2-3, to=1-4]
    	\arrow["{\in E}"{pos=0.3}, from=2-3, to=4-3]
    	\arrow["{\in \mc C}"'{pos=0.3}, from=3-2, to=3-4]
    	\arrow["{\in \mc C}"', from=4-1, to=3-2]
    	\arrow["{\in \mc C}"', from=4-1, to=4-3]
    	\arrow["{\in \mc C}"', from=4-3, to=3-4]
    \end{tikzcd}
\end{equation*}
where the front, back, left and right faces are assumed to be Cartesian, but the top and bottom faces are only assumed to be commutative.

\begin{lemma}[Expansion]\label{lemma:expansion}
    There exists an equivalence $\Gr \mathrm{PullSq}(\mc C, E)\simeq \Gr \Pull(\mc C, E)$ under which
    \begin{enumerate}
        \item the two inclusions
        \begin{equation*}
            \Pull(\mc C, E)\simeq \mathrm{PullSq}(\mc C, E)_{\bullet, 0, \bullet}\longrightarrow \mathrm{PullSq}(\mc C, E)
        \end{equation*}
        and
        \begin{equation*}
            \Pull(\mc C, E)\simeq \mathrm{PullSq}(\mc C, E)_{\bullet, \bullet, 0}\longrightarrow \mathrm{PullSq}(\mc C, E)
        \end{equation*}
        give inverses upon taking $\Gr$
        \item there is a commutative diagram
        \begin{equation*}
            \begin{tikzcd}
            	{\Gr\mathrm{PullSq}(\mc C, E)_{0,\bullet,\bullet}\simeq \Gr\Sq\mc C} & {\mc C \simeq \Gr\Pull(\mc C, E)_{0,\bullet}} \\
            	{\Gr\mathrm{PullSq}(\mc C, E)} & {\Gr \Pull(\mc C, E)}
            	\arrow[from=1-1, to=1-2]
            	\arrow[from=1-1, to=2-1]
            	\arrow[from=1-2, to=2-2]
            	\arrow["\simeq", from=2-1, to=2-2]
            \end{tikzcd}
        \end{equation*}
        where the top morphism is the counit.
    \end{enumerate}
\end{lemma}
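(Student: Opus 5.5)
The plan is to collapse the third coordinate of $\mathrm{PullSq}(\mc C, E)$ using \Cref{lemma:dimcollapserightfib}, in the form of \Cref{rem:genofrightfiblemma} with the roles of the coordinates permuted. Regard $\mathrm{PullSq}(\mc C, E)$ as a bisimplicial object by grouping the first two coordinates into a single bisimplicial variable; dually, regard $\Pull(\mc C, E)$ as a trisimplicial space constant in the third coordinate. More concretely, consider the degeneracy map
\begin{equation*}
    \Pull(\mc C, E)\longrightarrow \mathrm{PullSq}(\mc C, E),\qquad D\mapsto \big([n]\times[m]\times[\ell]\xrightarrow{\pr_{12}}[n]\times[m]\xrightarrow{D}\mc C\big).
\end{equation*}
This is well defined, since degenerate squares in the $\ell$-direction are trivially Cartesian. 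It is a section of the restriction to $\mathrm{PullSq}(\mc C, E)_{\bullet,\bullet,0}$, so it suffices to show that this degeneracy map becomes an equivalence after applying $\Gr$. Part (i) then follows: the two inclusions are both sections of the collapse maps, and symmetry in the second and third coordinates shows they give the same inverse. Part (ii) follows by restricting everything to the $0$-th first coordinate, where $\mathrm{PullSq}(\mc C,E)_{0,\bullet,\bullet}=\Sq(\mc C)$ and the degeneracy map is $\Hor(N\mc C)\to\Sq(\mc C)$, whose transpose is the counit equivalence.

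\textbf{Step 1 (fix the $E$-direction).} First I would show, as in \Cref{lemma:CntoC0rightfib}, that a diagram $D\in\mathrm{PullSq}(\mc C,E)_{n,m,\ell}$ is right Kan extended from its restriction to $\{n\}\times[m]\times[\ell]\cup[n]\times\{(0,0)\}$. Here the first coordinate is the $E$-direction and the last row is an arbitrary element of $\Sq(\mc C)_{m,\ell}$. Thus $\mathrm{PullSq}(\mc C,E)(-,m,\ell)\to\mathrm{PullSq}(\mc C,E)(-,0,0)$ is a right fibration with fiber $\Sq(\mc C)_{m,\ell}$ over the last vertex. The same holds for $\Pull(\mc C,E)$ regarded trisimplicially, with fiber $\Hor(N\mc C)_{m,\ell}=N(\mc C)_m$.

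\textbf{Step 2 (collapse).} Now apply the two-step reduction. Consider the bisimplicial spaces with first variable $[n]$ and second variable the diagonal in $(m,\ell)$, which is legitimate by the higher diagonal approximation proposition. On the $E$-fragment (second variable $0$), the map is an identity, which gives hypothesis (i) of \Cref{lemma:dimcollapserightfib}. On the horizontal fragment (first variable $0$), the map is $\diag\Hor(N\mc C)\to\diag\Sq(\mc C)$, which becomes an equivalence after applying $\ascat$ because both sides glue to $\mc C$. This gives hypothesis (ii). Factoriality is exactly Step 1.

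\textbf{Main obstacle.} The step I expect to be hardest is making the diagonalization in $(m,\ell)$ compatible with factoriality. Right fibrations are preserved under diagonal restriction because the pullback squares of Step 1 hold levelwise in $(m,\ell)$, but one must check carefully that $\Gr^{(3)}$ agrees with $\Gr$ of this diagonalized bisimplicial space. To handle this, I would first pass to associated categories in the $(m,\ell)$-directions, using $\Gr^{(2)}$ via the inductive formula for $\Gr^{(n)}$, and only then invoke \Cref{lemma:dimcollapserightfib}. The left-fibration preservation in \Cref{prop:CSpreservesnicestuff} is what makes this order of operations valid.
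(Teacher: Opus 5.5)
Your strategy is the paper's. You use the Kan-extension description to get a right fibration in the $E$-direction, and you diagonalize the last two coordinates to a bisimplicial space $\mb C'$. That step needs only $\diag\mathrm{PullSq}(\mc C,E)=\diag\mb C'$ and higher diagonal approximation, so the worry in your last paragraph is already settled by your Step 2. You then conclude with \Cref{lemma:dimcollapserightfib}. The one structural difference is direction. You apply the lemma to the degeneracy $\Pull(\mc C,E)\to\mb C'$, whereas the paper applies it to the diagonal restriction $F\colon \mb C'\to\Pull(\mc C,E)$, $D\mapsto D\circ(\id\times\diag)$, of which your map is a section. Two steps need correcting.

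\emph{The corner in Step 1 is wrong.} In each Cartesian square of $\Pull(\mc C,E)$ the \emph{initial} vertex is the pullback. Pasting therefore gives $D(i,j,k)\simeq D(n,j,k)\times_{D(n,m,\ell)}D(i,m,\ell)$. So $D$ is right Kan extended from $\{n\}\times[m]\times[\ell]\cup[n]\times\{(m,\ell)\}$, and the right fibration is $(\id,\langle m\rangle,\langle \ell\rangle)^\ast$. The column over $(0,0)$ together with the last slice does not determine $D$: for sets with $E$ all maps, $D(1,0,0)=\emptyset$ leaves $D(0,1,0)$ arbitrary. The $(0,0)$ in \Cref{lemma:CntoC0rightfib} comes from the $2,3$-op there. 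Consequently you need the variant of \Cref{lemma:dimcollapserightfib} permitted by \Cref{rem:genofrightfiblemma}, as the paper also uses.

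\emph{Your deduction of (i) does not work as written.} Restriction to the slice $\ell=0$ is not natural in $[\ell]$. It is therefore not a map of trisimplicial spaces, and there is no ``collapse'' retraction to push through $\Gr$. Symmetry in the last two coordinates only shows that $\Gr$ of the two inclusions differ by the automorphism of $\Gr\mathrm{PullSq}(\mc C,E)$ induced by the swap. It does not show that this automorphism is the identity.

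The fix is the retraction $F$. Both degeneracy maps $\Pull(\mc C,E)\to\mb C'$, constant in either of the two diagonalized coordinates, satisfy $F\circ\iota=\id$, because $\id\times\diag$ followed by either projection is the identity. Once one of them becomes an equivalence after $\Gr$, so does $F$, and then both inclusions are inverse to $\Gr F$; this is (i). The same retraction is also the clean justification of your hypothesis (ii): $N(\mc C)\to\diag\Sq(\mc C)$ is a section of the diagonal restriction $\diag\Sq(\mc C)\to N(\mc C)$, which becomes an equivalence on associated categories by diagonal approximation. This is exactly why the paper applies the lemma to $F$ directly.

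The idea in your last paragraph can be pushed to a proof that avoids \Cref{lemma:dimcollapserightfib}. Let $\mc P_n$ be the full subcategory of $\Fun^\mathrm{cart}([n],\mc C)$ on chains in $E$. Then $\mathrm{PullSq}(\mc C,E)_{n,\bullet,\bullet}=\Sq(\mc P_n)$ and $\Pull(\mc C,E)_{n,\bullet}=N(\mc P_n)$. Gluing the last two coordinates first, via the inductive formula for $\Gr^{(3)}$, therefore identifies both sides with $\mc P_\bullet$ levelwise.
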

\begin{proof}
    Let $\mb C = \mathrm{PullSq}(\mc C, E)$. By a routine Kan extension argument similar to \Cref{lemma:CntoC0rightfib}, one sees that for every $m$, $\ell$, the map $(\id,\langle m\rangle, \langle \ell\rangle)^\ast : \mb C(-,m,\ell)\to \mb C(-,0,0)$ is a right fibration.\par
    Now, consider the bisimplicial space $\mb C'$ given by
    \begin{equation*}
        \begin{tikzcd}
            \Delta^\mathrm{op}\times\Delta^\mathrm{op}\arrow{r}{\id\times \diag} &[30pt] \Delta^\mathrm{op}\times\Delta^\mathrm{op}\times\Delta^\mathrm{op}\arrow{r}{\mb C} & \Ani.
        \end{tikzcd}
    \end{equation*}
    Seeing as there is an equality $\diag\mb C = \diag \mb C'$, diagonal approximation gives an equivalence $\Gr\mb C'\simeq \Gr\mb C$. We then have a composition along the diagonal map
    \begin{equation*}
        \begin{tikzcd}[row sep=0pt]
            F : \mb C'\arrow{r} & \Pull(\mc C, E) \\
            D : [n]\times [m]\times [m]\to \mc C\arrow[mapsto]{r} & {[n]\times [m]\xrightarrow{\id\times \diag}[n]\times [m]\times [m]\xrightarrow{D}\mc C}.
        \end{tikzcd}
    \end{equation*}
    The conditions of \Cref{lemma:dimcollapserightfib} (see also \Cref{rem:genofrightfiblemma}) then hold for $F$. Indeed, $F(-,0)$ evidently induces an equivalence, and $F(0,-)$ induces an equivalence after passing to associated categories since it is the map $\diag\Sq(\mc C)\to N(\Gr \Sq(\mc C))\simeq N(\mc C)$ occurring in diagonal approximation for $\Sq(\mc C)$. It follows that $\Gr\mb C\simeq \Gr \mb C'\simeq \Gr \Pull(\mc C, E)$.\par
    The claims about this equivalence are readily checked.
\end{proof}

The content of this lemma is as follows: Given a map $\Pull(\mc C, E)\to \Sq(\mc D)$ corresponding on squares, i.e.\ $(1,1)$-simplices, to
\begin{equation*}
    \begin{tikzcd}
    	x & {y'} && {D(x)} & {D(y')} \\
    	y & z && {D(y)} & {D(z)}
    	\arrow["{\bar g}", from=1-1, to=1-2]
    	\arrow["{\bar f}"', from=1-1, to=2-1]
    	\arrow["\lrcorner"{anchor=center, pos=0.125}, draw=none, from=1-1, to=2-2]
    	\arrow[""{name=0, anchor=center, inner sep=0}, "f"', from=1-2, to=2-2]
    	\arrow["{G(\bar g)}", from=1-4, to=1-5]
    	\arrow[""{name=1, anchor=center, inner sep=0}, "{F(\bar f)}"', from=1-4, to=2-4]
    	\arrow["{F(f)}", from=1-5, to=2-5]
    	\arrow["g", from=2-1, to=2-2]
    	\arrow["{G(g)}"', from=2-4, to=2-5]
    	\arrow[shorten <= 15pt, shorten >= 25pt, squiggly={
               pre length=15pt, post length=25pt
             }, from=0, to=1]
    \end{tikzcd}
\end{equation*}
it extends essentially uniquely to a map $\mathrm{PullSq}(\mc C, E)\to \Sq^{(3)}(\mc D)$ and this extension on cubes, i.e.\ $(1,1,1)$-simplices, is given by
\begin{equation*}
    \begin{tikzcd}
    	& {x_2} && {y'_2} &&&& {D(x_2)} && {D(y'_2)} \\
    	{x_1} && {y'_1} &&&& {D(x_1)} && {D(y'_1)} \\
    	& {y_2} && {z_2} &&&& {D(y_2)} && {D(z_2)} \\
    	{y_1} && {z_1} &&&& {D(y_1)} && {D(z_1)}
    	\arrow["{\bar{g_2}}"'{pos=0.3}, from=1-2, to=1-4]
    	\arrow["{\bar{f_2}}"'{pos=0.3}, from=1-2, to=3-2]
    	\arrow["{f_2}"'{pos=0.3}, from=1-4, to=3-4]
    	\arrow["{G(\bar{g_2})}"{pos=0.3}, from=1-8, to=1-10]
    	\arrow["{F(\bar{f_2})}"{pos=0.3}, from=1-8, to=3-8]
    	\arrow["{F(f_2)}"{pos=0.3}, from=1-10, to=3-10]
    	\arrow["{h_x}", from=2-1, to=1-2]
    	\arrow["{\bar{g_1}}"'{pos=0.3}, from=2-1, to=2-3]
    	\arrow["{\bar{f_1}}"'{pos=0.3}, from=2-1, to=4-1]
    	\arrow["{h_{y'}}", from=2-3, to=1-4]
    	\arrow["{f_1}"'{pos=0.3}, from=2-3, to=4-3]
    	\arrow["{G(h_x)}"{pos=0.2}, from=2-7, to=1-8]
    	\arrow["{G(\bar{g_1})}"'{pos=0.3}, from=2-7, to=2-9]
    	\arrow[""{name=0, anchor=center, inner sep=0}, "{F(\bar{f_1})}"'{pos=0.3}, from=2-7, to=4-7]
    	\arrow["{G(h_{y'})}"{pos=0.2}, from=2-9, to=1-10]
    	\arrow["{F(f_1)}"{pos=0.3}, from=2-9, to=4-9]
    	\arrow["{g_2}"'{pos=0.3}, from=3-2, to=3-4]
    	\arrow["{G(g_2)}"'{pos=0.3}, from=3-8, to=3-10]
    	\arrow["{h_y}"', from=4-1, to=3-2]
    	\arrow["{g_1}"'{pos=0.3}, from=4-1, to=4-3]
    	\arrow["{h_z}"', from=4-3, to=3-4]
    	\arrow["{G(h_y)}"'{pos=0.7}, from=4-7, to=3-8]
    	\arrow["{G(g_1)}"'{pos=0.3}, from=4-7, to=4-9]
    	\arrow["{G(h_z)}"', from=4-9, to=3-10]
    	\arrow[shorten <= 15pt, shorten >= 25pt, squiggly={
               pre length=15pt, post length=25pt
             }, from=3-4, to=0]
    \end{tikzcd}
\end{equation*}
where the left, right, front and back Cartesian faces are mapped under the original map, and the top and bottom faces, which are only assumed to be commutative, are sent to their pushforward under $G$.\par
Using this expansion lemma, we may construct our desired equivalence $\Gr \Pull(\mc C, E, S)\simeq \Span^{\mc K}(\mc C, E)$.

\begin{theorem}\label{thm:trisimpgluing}
    There exists an equivalence
    \begin{equation*}
        \Gr\Pull(\mc C, E, S)^{2,3\mathrm{-op}}\simeq \Span^{\mc K}(\mc C, E)
    \end{equation*}
    under which
    \begin{enumerate}
        \item the composite
        \begin{equation*}
            \Span(\mc C, E)\simeq \Gr \Pull(\mc C, E, S)^{2,3\mathrm{-op}}_{\bullet,0,\bullet}\to \Gr\Pull(\mc C, E, S)^{2,3\mathrm{-op}}\simeq \Span^{\mc K}(\mc C, E)
        \end{equation*}
        corresponds to the map $\iota$ of \Cref{prop:descr2ofSpanG}
        \item the composite
        \begin{equation*}
            \begin{tikzcd}
                (\TwC)^\mathrm{op}\arrow{r}[below]{\simeq}[above]{{\text{\normalfont\Cref{thm:mainthmintext}}}} &[25pt] \Gr\Pull(\mc C, E, S)^{2,3\mathrm{-op}}_{0,\bullet,\bullet}\arrow{r} & \Gr\Pull(\mc C, E, S)^{2,3\mathrm{-op}}\simeq \Span^{\mc K}(\mc C, E)
            \end{tikzcd}
        \end{equation*}
        corresponds to the map $(\TwC)^\mathrm{op}\simeq \Span^{\mc K}(\mc C, \mc C^\simeq)\to \Span^{\mc K}(\mc C, E)$.
    \end{enumerate}
\end{theorem}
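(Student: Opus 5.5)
The plan is to collapse the trisimplicial space one dimension at a time. First glue the $\mc C$- and $S$-directions using the universal exchange theorem. This yields a bisimplicial space in the remaining $E$-direction together with a single collapsed direction, and we identify that bisimplicial space with $\mb D = \Pull^{\mc K}(\mc C, E)^{2\mathrm{-op}}$. We already know $\Gr\mb D\simeq\Span^{\mc K}(\mc C,E)$ from the bisimplicial description of labeled span categories.

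\textbf{Step 1: partial gluing.} Set $\mb C=\Pull(\mc C,E,S)^{2,3\mathrm{-op}}$ and regard it as a simplicial object (in the first, $E$-variable) of bisimplicial spaces in the last two variables. For fixed $n$, the bisimplicial space $\mb C(n,-,-)$ consists of $n$-step towers of $S$--$\mc C$ pullback grids connected by $E$-arrows. Apply $\Gr$ levelwise in the last two variables, i.e.\ diagonal-approximate the pair $(2,3)$ into a single simplicial direction. This produces a bisimplicial space $\mb C'$ with $\Gr\mb C'\simeq\Gr\mb C$, by the iterated diagonal approximation proposition, exactly as in the proof of \Cref{lemma:expansion}.

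\textbf{Step 2: comparison map.} Construct a map $\mb C'\to\mb D$, or a zigzag through an intermediate bisimplicial space. On the horizontal fragment $n=0$, $\mb C(0,-,-)$ is $\Pull(\mc C,S)^{\mathrm{op}}$. Its gluing is $(\TwC)^{\mathrm{op}}$ by \Cref{thm:mainthmintext}, taken relative to $S$, which is precisely $(\mc C^{\mc K})^{\mathrm{op}}$. On the other hand, $\ascat\mb D(0,-)\simeq(\mc C^{\mc K})^\mathrm{op}$ by \Cref{lemma:ascatofD0}. To get an honest map, rather than just an abstract equivalence on fragments, I would pass to $(-)_{\mathrm{CS}}$ in the collapsed direction. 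Using \Cref{lemma:CntoC0rightfib}, $\mb C(-,m,\ell)\to\mb C(-,0,0)$ is a right fibration. Hence $\mb C(n,-,-)$ is the pullback of $\mb C(0,-,-)$ along the $E$-data, and functoriality of $\Gr$ then builds a map into the $\mc C^{\mc K}$-labelled squares. The labels are produced by the exchange theorem, via \Cref{thm:mainthmintext} applied to the base-changed pair. The essential point is that Cartesian cubes base-change $S$-grids along $E$-arrows, so that \Cref{prop:propsofreltan}(ii) transports the Thom-twist labels compatibly.

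\textbf{Step 3: apply \Cref{lemma:dimcollapserightfib}.} Both $\mb C'_{\mathrm{CS}}$ (or $\mb C'$) and $\mb D$ are factorial. For $\mb D$ this is \Cref{lemma:DntoD0rightfib}; for $\mb C'$ it follows from \Cref{lemma:CntoC0rightfib} together with \Cref{prop:CSpreservesnicestuff}. On vertical fragments, both sides have $(n,0)$-simplices equal to $\Pull(\mc C,E)_{n,0}=N(E)_n$, so the map is an equivalence there. On horizontal fragments, the induced map on associated categories is the equivalence $(\TwC)^{\mathrm{op}}\simeq(\mc C^{\mc K})^{\mathrm{op}}$ from Step 2. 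Hence $\Gr\mb C\simeq\Gr\mb C'\simeq\Gr\mb D\simeq\Span^{\mc K}(\mc C,E)$. Claims (i) and (ii) then follow by tracing the fragments. The $m=\ell=0$ slice gives $\Span(\mc C,E)\to\Span^{\mc K}(\mc C,E)$, which equals $\iota$ because labels are zero there. The $n=0$ slice is by construction the map from $(\TwC)^{\mathrm{op}}\simeq\Span^{\mc K}(\mc C,\mc C^\simeq)$, using \Cref{prop:descr1ofSpanG}(ii).

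\textbf{Main obstacle.} The hardest part is Step 2: producing a genuine map of bisimplicial spaces that realizes the universal exchange equivalence levelwise and naturally in $n$. I would try to do this via the Expansion lemma pattern. First, extend to an auxiliary trisimplicial space $\mathrm{Pull}^{\mc K}$ in which the $(2,3)$-slices are $\Pull^{\mc K}$-labelled grids. Then check that restricting along the diagonal gives both $\mb D$ and a space receiving a map from $\mb C$. Naturality in $n$ should come from functoriality of $\Gr\Pull(-,-)$ in pullback-preserving functors, applied to base change along $E$-maps.
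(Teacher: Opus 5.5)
Your architecture is the paper's. You restrict $\mb C$ along the diagonal of the $S$- and $\mc C$-coordinates to get $\mb C'$ with $\Gr\mb C'\simeq\Gr\mb C$, compare $\mb C'$ with $\Pull^{\mc K}(\mc C,E)^{2\mathrm{-op}}$, and conclude with \Cref{lemma:dimcollapserightfib}. (Factoriality of $\mb C'$ is immediate from \Cref{lemma:CntoC0rightfib}, since $\langle 0\rangle$ on the diagonal is $(\langle 0\rangle,\langle 0\rangle)$, so \Cref{prop:CSpreservesnicestuff} is not needed there.) The gap is Step~2, the only step with real content: you never construct the comparison map $\mb C'\to\mb D$, and the route you sketch cannot produce it. The Cartesian squares of \Cref{lemma:CntoC0rightfib} identify $\mb C(n,m,\ell)$ with $\mb C(0,m,\ell)\times_{\mb C(0,0,0)}\mb C(n,0,0)$ only for each fixed $(m,\ell)$. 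The map $(\id,\langle 0\rangle,\langle 0\rangle)^\ast$ is natural in the $E$-variable but not in the other two. A face map deleting the vertex at which the $E$-string is attached moves the attaching point and replaces the string by a base change of it. So ``$\mb C(n,-,-)$ is the pullback of $\mb C(0,-,-)$ along the $E$-data'' is not a pullback of bisimplicial spaces. Gluing it and applying the exchange theorem separately for each $n$ therefore does not assemble into a map natural in the collapsed direction. ``Base change along $E$-maps'' has the same defect, since the object $D(n,m,\ell)$ over which one base changes varies with $(m,\ell)$. The auxiliary trisimplicial space of your last paragraph is not specified. More to the point, you do not say where a map from $\mb C$ into it would come from, which is exactly the difficulty.

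The missing idea is to apply \Cref{lemma:expansion} to the map you already have: the universal exchange map $\Pull(\mc C,S)\to\Sq(\TwC)$, which factors through $\Sq(\mc C\xrightarrow{s}\TwC\;{=\joinrel=}\;\TwC)$. This yields $\mathrm{PullSq}(\mc C,S)\to\Sq^{(3)}(\TwC)$ in which both $\mc C$-directions are sent through the zero section $s$. After reordering coordinates, $\Pull(\mc C,E,S)$ is a sub-trisimplicial space of $\mathrm{PullSq}(\mc C,S)$, with the $E$-direction playing the role of one of the two $\mc C$-directions (its faces are Cartesian, more than is required). Restrict to it, then form the diagonal restriction in the $S$- and $\mc C$-coordinates, composing on the target with restriction along $\id\times\diag$. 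This gives a map from the unopped $\mb C'$ to $\Sq(\TwC)$ with two properties. It factors through $\Sq(\TwC\;{=\joinrel=}\;\TwC\xleftarrow{s}E)$, since $E$-arrows receive zero labels. It lies over $\Pull(\mc C,E)\subseteq\Sq(\mc C)$, by pasting Cartesian squares. The defining pullback of $\Pull^{\mc K}(\mc C,E)$ then supplies the map, natural in all variables by construction. After taking opposites in the second coordinate, your Step~3 goes through. On horizontal fragments the relevant check is that $\diag\Pull(\mc C,S)^{\mathrm{op}}\to\mb D(0,-)\to N((\TwC)^{\mathrm{op}})$ is the diagonal approximation map, so \Cref{lemma:ascatofD0} applies. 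One smaller slip: claim~(i) concerns the slice $(\bullet,0,\bullet)$, in which only the $S$-coordinate is trivial and which is $\Pull(\mc C,E)^{2\mathrm{-op}}$. The $m=\ell=0$ slice is just $N(E)$.
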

\begin{proof}
    Properties (i) and (ii) will be readily checked from the construction, so we construct the desired map and show that it is an equivalence.\par
    We start with the equivalence
    \begin{equation*}
        \Gr \Pull(\mc C, S)\simeq \TwC
    \end{equation*}
    coming from \Cref{thm:mainthmintext} which corresponds to a map
    \begin{equation}\label{eq:mainthmmapfortri}
        \Pull(\mc C, S)\longrightarrow \Sq(\TwC)
    \end{equation}
    which, from its construction, factors as a composite
    \begin{equation}\label{eq:horizfactsthroughs}
        \Pull(\mc C, S)\to \Sq(\mc C\xrightarrow{s} \TwC \;{=\joinrel=}\; \TwC)\to  \Sq(\TwC)
    \end{equation}
    where the latter map is the forgetful map.\par
    Now, we may apply \Cref{lemma:expansion} to \eqref{eq:mainthmmapfortri} to produce a map
    \begin{equation*}
        \mathrm{PullSq}(\mc C, S)\longrightarrow \Sq^{(3)}(\TwC).
    \end{equation*}
    Up to reordering coordinates, we have an inclusion $\Pull(\mc C, E, S)\hookrightarrow \mathrm{PullSq}(\mc C, S)$ and we may restrict the above map to obtain a map
    \begin{equation}\label{eq:inducedtrisimplicialmap}
        \Pull(\mc C, E, S)\longrightarrow \Sq^{(3)}(\TwC).
    \end{equation}
    Now we set $\mb C = \Pull(\mc C, E, S)$ and consider the bisimplicial space $\mb C'$ given by
    \begin{equation*}
        \begin{tikzcd}
            \Delta^\mathrm{op}\times\Delta^\mathrm{op}\arrow{r}{\id\times \diag} &[30pt] \Delta^\mathrm{op}\times\Delta^\mathrm{op}\times\Delta^\mathrm{op}\arrow{r}{\mb C} & \Ani.
        \end{tikzcd}
    \end{equation*}
    Taking the diagonal along the first two dimensions of $\Sq^{(3)}(\TwC)$ and restricting along the diagonal yields a map
    \begin{equation}\label{eq:restrtodiag}
        \begin{tikzcd}[row sep=0pt]
            \Sq^{(3)}(\TwC)_{\bullet,\bullet,\ast}\arrow{r} & \Sq(\TwC) \\
            D : [n]\times [m]\times [m]\to \TwC\arrow[mapsto]{r} & {[n]\times [m]\xrightarrow{\id\times \diag} [n]\times [m]\times [m]\xrightarrow{D}\TwC}.
        \end{tikzcd}
    \end{equation}
    Restricting \eqref{eq:inducedtrisimplicialmap} to the diagonal on the first two dimensions then post-composing with \eqref{eq:restrtodiag} gives a map
    \begin{equation*}
        \mb C'\longrightarrow \Sq(\TwC).
    \end{equation*}\par
    By \eqref{eq:horizfactsthroughs}, this map factors as
    \begin{equation*}
        \mb C'\longrightarrow \Sq(\TwC \;{=\joinrel=}\; \TwC\xleftarrow{s} E)\longrightarrow  \Sq(\TwC)
    \end{equation*}
    and, by the pasting lemma for Cartesian squares, the projection $\mb C'\to\Sq(\mc C)$ lands inside $\Pull(\mc C, E)$. Thus we have a commutative square
    \begin{equation*}
        \begin{tikzcd}
            \mb C'\arrow{r}\arrow{d} & \Sq(\TwC \;{=\joinrel=}\; \TwC\xleftarrow{s} E)\arrow{d} \\
            \Pull(\mc C, E) \arrow{r} & \Sq(\mc C)
        \end{tikzcd}
    \end{equation*}
    which induces a map $\mb C'\to \Pull^{\mc K}(\mc C, E)$.\par
    Taking opposites in the second coordinate, we obtain a map
    \begin{equation*}
        F : (\mb C')^{2\mathrm{-op}}\longrightarrow \Pull^{\mc K}(\mc C, E)^{2\mathrm{-op}}.
    \end{equation*}
    We claim $F$ is an equivalence after gluing. The result will then follow since we have an equivalence $\Gr \Pull(\mc C, E, S)^{2,3\mathrm{-op}}\simeq \Gr (\mb C')^{2\mathrm{-op}}$ via diagonal approximation as $\diag \Pull(\mc C, E, S)^{2,3\mathrm{-op}} = \diag (\mb C')^{2\mathrm{-op}}$.\par
    To show this, we apply \Cref{lemma:dimcollapserightfib}. By \Cref{lemma:CntoC0rightfib,lemma:DntoD0rightfib}, the bisimplicial spaces occurring are factorial. Condition (i) follows as there is a natural chain of equivalences $(\mb C')^{2\mathrm{-op}}(-, 0)\simeq N(E)\simeq \Pull^{\mc K}(\mc C, E)^{2\mathrm{-op}}(-, 0)$ the composite of which corresponds to $F(-,0)$. Finally, condition (ii) follows from \Cref{lemma:ascatofD0} since the composite
    \begin{equation*}
        \begin{tikzcd}
            \diag \Pull(\mc C, S)^\mathrm{op}\simeq (\mb C')^{2\mathrm{-op}}(0,-)\arrow{r}{F(0,-)} &[30pt] \Pull^{\mc K}(\mc C, E)^{2\mathrm{-op}}(0, -)\arrow{r} & N((\TwC)^\mathrm{op})
        \end{tikzcd}
    \end{equation*}
    is the diagonal approximation map occurring under the identification $\Gr \Pull(\mc C, S)^\mathrm{op}\simeq (\TwC)^\mathrm{op}$ given by \Cref{thm:mainthmintext}.
\end{proof}

\subsection{Main result}

With the trisimplicial space description of the labeled span category, we are now equipped to construct the canonical enhancement of a given $3$-functor formalism to encode Poincar\'e duality and Thom twists.

\begin{theorem}\label{thm:extforweakcohom}
    Let $\mc D : \Span(\mc C, E)^\otimes\to \Cat$ be a $3$-functor formalism and $S$ the class of weakly cohomologically smooth morphisms. Then there exists a canonical extension of $\mc D$ along $\iota : \Span(\mc C, E)\to \Span^{\mc K}(\mc C, E)$ to a functor
    \begin{equation*}
        \mc D^\mathrm{Th} : \Span^{\mc K}(\mc C, E)\longrightarrow \Cat
    \end{equation*}
    where $\mc K$ is the functor
    \begin{equation*}
        \begin{tikzcd}[row sep=0pt]
            \mc C^\mathrm{op}\arrow{r} & \CAlg(\Ani) \\
            x\arrow[mapsto]{r} & \mc K(\Vect_{(\mc C, S)}(x)).
        \end{tikzcd}
    \end{equation*}
    Moreover, there is a canonical identification of the exceptional pullback
    \begin{equation*}
        (-)^! : S^\mathrm{op}\longrightarrow \Cat
    \end{equation*}
    with
    \begin{equation*}
        \begin{tikzcd}
            S^\mathrm{op} \arrow{r}{(-)_\natural^\mathrm{op}} & (\TwC)^\mathrm{op}\arrow{r} & \Span^{\mc K}(\mc C, E)\arrow{r}{\mc D^\mathrm{Th}} & \Cat.
        \end{tikzcd}
    \end{equation*}
\end{theorem}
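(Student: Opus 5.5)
The plan is to use \Cref{thm:trisimpgluing}. It identifies $\Span^{\mc K}(\mc C, E)$ with $\Gr\Pull(\mc C, E, S)^{2,3\mathrm{-op}}$, so a functor $\Span^{\mc K}(\mc C, E)\to\Cat$ is the same as a map of trisimplicial spaces
\begin{equation*}
    \Pull(\mc C, E, S)^{2,3\mathrm{-op}}\longrightarrow \Sq^{(3)}(\Cat).
\end{equation*}
The three directions of a cube in $\Pull(\mc C, E, S)$ are the $E$-direction, the $S$-direction and the $\mc C$-direction. I will send them respectively to the functors $f_!$, $f^!$ and $f^\ast$ coming from $\mc D$.

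The construction proceeds in three steps.

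\emph{Step 1.} The restriction of $\mc D$ to $\Span(\mc C, E)$ is already a map $\Pull(\mc C, E)^{2\mathrm{-op}}\to\Sq(\Cat)$, via $\Gr\Pull(\mc C, E)^{2\mathrm{-op}}\simeq\Span(\mc C, E)$. This packages base change between $(-)_!$ and $(-)^\ast$. Restricting $\mc D$ along $\Span(\mc C, S)\subseteq\Span(\mc C, E)$ gives the same data for $S$.

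\emph{Step 2.} I would produce an exchange theorem between $(-)^\ast:\mc C^{\op}\to\Cat$ and $(-)^!:S^{\op}\to\Cat$, i.e.\ a map $\Pull(\mc C, S)^{\op}\to\Sq(\Cat)$. On a pullback square with $f,\bar f\in S$, this is the base change map $\bar g^\ast f^!\to\bar f^! g^\ast$, obtained by mating the base change equivalence for $(-)_!$. Weak cohomological smoothness gives $f^!\simeq f^!(1)\otimes f^\ast(-)$ together with $\bar g^\ast f^!(1)\simeq\bar f^!(1)$, and these show that the mate is an equivalence. To get the full coherences I would use the biadjointability machinery of \Cref{subsec:exchthms}. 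Regard $\mc D$ restricted to $\mc C^{\op}$ as valued in the $2$-category $\Cat$, and pass to the $(2)$-op. The squares in $\Pull(\mc C,S)$ then go to squares that are vertically adjointable (via $f_!\dashv f^!$) with invertible mates. This is precisely the biadjointable-square input, so it produces the exchange map, and \Cref{thm:mainthmintext} extends it to a functor $(\TwC)^{\op}\to\Cat$.

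\emph{Step 3.} I would glue Steps 1 and 2 into a trisimplicial map. Along each $E$-layer the faces are $E$/$\mc C$ and $E$/$S$ base change squares. I would build the whole thing from a single functor $\mc D$ restricted to cubes, taking mates in the $S$-direction as in Step 2. Concretely, take the map $\Pull(\mc C,E,S)\to\Sq^{(3)}(\mc C)\to\Sq^{(3)}(\Span(\mc C,E))\to\Sq^{(3)}(\Cat)$ given by $\mc D$. Then replace every $S$-direction edge by its right adjoint, using the $\textsc{Adj}$-lifting along that coordinate. This requires showing that every square pairing an $S$-edge with an $E$- or $\mc C$-edge is biadjointable. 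For $E$ with $S$ this is base change for $(-)_!$ together with the adjoint $f^!$. For $\mc C$ with $S$ it is Step 2.

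Once the trisimplicial map exists, restricting to the $(\bullet,0,\bullet)$ face recovers $\mc D$ on $\Span(\mc C,E)$. By \Cref{thm:trisimpgluing}(i), this gives $\mc D^{\mathrm{Th}}\circ\iota\simeq\mc D$. Restricting to the $(0,\bullet,\bullet)$ face gives the functor $(\TwC)^{\op}\to\Cat$ of Step 2. By \Cref{thm:trisimpgluing}(ii) and \Cref{thm:mainthmintext}, precomposing with $(-)_\natural^{\op}$ then yields $(-)^!$ on $S^{\op}$, which is the second claim.

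I expect the main obstacle to be Step 3. There one must lift a whole trisimplicial family of grids to $\textsc{Adj}$ in one coordinate, coherently with the other two. This requires an $n$-dimensional analogue of the statement that $\map_{\Cat_2}(\textsc{Adj}_n\times[m]\times[\ell],\mb D)\to\map([n]\times[m]\times[\ell],\mb D)$ is the inclusion of grids whose relevant squares are adjointable, and I would adapt \cite[Proposition 2.12]{cnossen2025universality}. A secondary difficulty is checking that the mates in the $\mc C$/$S$ direction are invertible. This reduces, via the projection formula for $f_!$, to \Cref{def:weakcohomsmooth}(ii).
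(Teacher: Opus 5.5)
Your Step 3 is essentially the paper's argument. You produce a trisimplicial map out of $\mc D$, check that the $S$-edges are right adjointable against the $E$- and $\mc C$-edges, pass to right adjoints in the $S$-coordinate, and apply \Cref{thm:trisimpgluing}, reading off the two claims from the $(\bullet,0,\bullet)$ and $(0,\bullet,\bullet)$ faces.

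However, one step in it is wrong: you treat the squares pairing an $S$-edge with an $E$-edge as adjointable for free (``base change for $(-)_!$ together with the adjoint $f^!$''). Take a Cartesian square with $f,\bar f\in S$ and $g,\bar g\in E$. The mate $\bar g_!\bar f^!\to f^!g_!$ of $f_!\bar g_!\simeq g_!\bar f_!$ is not formally invertible, and it fails for non-smooth $f$. For sheaves on $\mathbb{A}^1$, let $f$ be the inclusion of the origin and $g$ that of its complement: the fibre product is empty, so $\bar g_!\bar f^!=0$, but $f^!g_!\neq 0$. The mate is invertible here only because $f$ is weakly cohomologically smooth:
\[ f^!g_!\simeq f^!(1)\otimes f^\ast g_!\simeq f^!(1)\otimes\bar g_!\bar f^\ast\simeq\bar g_!\big(\bar g^\ast f^!(1)\otimes\bar f^\ast\big)\simeq\bar g_!\big(\bar f^!(1)\otimes\bar f^\ast\big)\simeq\bar g_!\bar f^!. \]
This uses base change, the projection formula and both conditions of \Cref{def:weakcohomsmooth}, and one must also check that the composite is the mate. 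It is the same kind of input as your $\mc C$/$S$ case; the paper takes both from \cite[Lemma 4.5.13]{heyer20246}. Note also that you only need right adjointability in the $S$-direction, not biadjointability.

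The pointwise check in Step 2 is fine, but its coherence argument does not work as written. The biadjointability construction of \Cref{subsec:exchthms} builds an exchange theorem between adjoints of the single functor $(-)^\ast$, so it cannot output $(-)^!$, and the adjunction $f_!\dashv f^!$ never enters it. It is also unnecessary. The $(0,\bullet,\bullet)$ face of your Step 3 map already is the $(-)^\ast$/$(-)^!$ exchange theorem, and its vertical fragment is $(-)^!$ by construction. \Cref{thm:trisimpgluing}(ii) together with \Cref{thm:mainthmintext} then gives the second claim.

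Two smaller points.
\begin{enumerate}
\item There is no map $\Sq^{(3)}(\mc C)\to\Sq^{(3)}(\Span(\mc C,E))$. The paper instead obtains the trisimplicial map from $\mc D$ by a variant of \Cref{lemma:expansion}: it extends $\Pull(\mc C,E)^{2\mathrm{-op}}\to\Sq(\Cat)$ uniquely to grids whose two $E$-directions span merely commutative faces, then restricts to $\Pull(\mc C,E,S)^{3\mathrm{-op}}$.
\item The obstacle you anticipate disappears after currying out the first and third coordinates. The $S$-direction then maps into $N(\Fun([n]\times[\ell],\Cat))$, where the left adjoints are exactly the right adjointable natural transformations. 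So passing to right adjoints needs no new $\textsc{Adj}$-lemma.
\end{enumerate}
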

\begin{proof}
    Under the equivalence $\Span(\mc C, E)\simeq \Gr \Pull(\mc C, E)^{2\mathrm{-op}}$, the $3$-functor formalism $\mc D$ corresponds to a map
    \begin{equation*}
        \Pull(\mc C, E)^{2\mathrm{-op}}\to \Sq(\Cat).
    \end{equation*}
    By a similarly proven variant of \Cref{lemma:expansion}, this corresponds uniquely to a map
    \begin{equation*}
        \mb C^{3\mathrm{-op}}\to \Sq^{(3)}(\Cat)
    \end{equation*}
    where $\mb C$ is the sub-trisimplicial space of $\Sq^{(3)}(\mc C)$ consisting of diagrams $D : [n]\times [m]\times [\ell]\to\mc C$ such that
    \begin{enumerate}
        \item for all $0\le i\le n$, $D|_{\{i\}\times [m]\times [\ell]}$ belongs to $\Pull(\mc C, E)_{m,\ell}$
        \item for all $0\le i\le m$, $D|_{[n]\times \{i\}\times [\ell]}$ belongs to $\Pull(\mc C, E)_{n,\ell}$.
    \end{enumerate}
    We then restrict this map to obtain a map
    \begin{equation}\label{eq:preadjointpd}
        \Pull(\mc C, E, S)^{3\mathrm{-op}}\longrightarrow \Sq^{(3)}(\Cat).
    \end{equation}
    From here, we curry out the first and third dimension. Doing this to $\Sq^{(3)}(\Cat)$ results in the functor
    \begin{equation*}
        \begin{tikzcd}[row sep=0pt]
            \Delta^\mathrm{op}\times\Delta^\mathrm{op}\arrow{r} & \PSh(\Delta) \\
            {([n],[\ell])}\arrow[mapsto]{r} & N(\Fun([n]\times [\ell], \Cat)).
        \end{tikzcd}
    \end{equation*}
    Now, for any category $C$ and $2$-category $\mc X$, the left adjoints in $\Fun(C, \mc X)$ consist precisely of right adjointable natural transformations. By \cite[Lemma 4.5.13]{heyer20246}, if $f$ is weakly cohomologically smooth, then for every Cartesian square
    \begin{equation*}
        \begin{tikzcd}
        	x & {y'} \\
        	y & z
        	\arrow["{\bar g}", from=1-1, to=1-2]
        	\arrow["{\bar f}"', from=1-1, to=2-1]
        	\arrow["\lrcorner"{anchor=center, pos=0.125}, draw=none, from=1-1, to=2-2]
        	\arrow["f"', from=1-2, to=2-2]
        	\arrow["g", from=2-1, to=2-2]
        \end{tikzcd}
    \end{equation*}
    in $\mc C$, the natural map $\bar g^\ast f^!\to \bar f^!g^\ast$ is an equivalence and if $g,\bar g\in E$, then the natural map $\bar g_!\bar f^!\to f^!g_!$ is an equivalence. It follows that, after currying the first and third dimensions, the map \eqref{eq:preadjointpd} lands inside the non-full subcategory of $\Fun([n]\times [\ell], \Cat)$ consisting of left adjoints. Lifting to homotopy coherent adjunctions and passing to right adjoints, we obtain a map
    \begin{equation*}
        \Pull(\mc C, E, S)^{2,3\mathrm{-op}}\longrightarrow \Sq^{(3)}(\Cat).
    \end{equation*}
    By \Cref{thm:trisimpgluing}, this corresponds to a map $\Span^{\mc K}(\mc C, E)\to \Cat$ which gives the desired extension.
\end{proof}

To finish, we also wish to enhance this extension to include all virtual vector bundles in the case where all the Thom twists happen to be invertible. For this we will prove the following proposition.

\begin{proposition}\label{prop:localizationoflabeledspan}
    Let $(\mc C, E, G)\in\mathrm{LabPair}$. Then, letting $G^\mathrm{gp}$ be the pointwise group completion of $G$, the induced map $\Span^G(\mc C, E)\to \Span^{G^\mathrm{gp}}(\mc C, E)$ is a localization at the class of morphisms of the form $X = (X, \xi) = X$.
\end{proposition}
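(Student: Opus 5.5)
We prove this via the bisimplicial model $\Span^G(\mc C, E)\simeq \Gr \Pull^G(\mc C, E)^{2\mathrm{-op}}$, which is natural in $(\mc C, E, G)\in\mathrm{LabPair}$, combined with the localization result \Cref{prop:locofCG} for $\mc C^G\to\mc C^{G^\mathrm{gp}}$.

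Set $\mb D = \Pull^G(\mc C, E)^{2\mathrm{-op}}$ and $\mb D^\mathrm{gp} = \Pull^{G^\mathrm{gp}}(\mc C, E)^{2\mathrm{-op}}$. By \Cref{lemma:DntoD0rightfib} both are factorial with respect to the coordinate convention of \Cref{rem:genofrightfiblemma}. On vertical fragments the map is an equivalence, since both $\mb D(-,0)$ and $\mb D^\mathrm{gp}(-,0)$ are $N(E)$. On horizontal fragments, \Cref{lemma:ascatofD0} identifies $\ascat\mb D(0,-)\simeq(\mc C^G)^\mathrm{op}$, and likewise $\ascat\mb D^\mathrm{gp}(0,-)\simeq(\mc C^{G^\mathrm{gp}})^\mathrm{op}$. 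By \Cref{prop:locofCG}, the map between these is the localization at the labels $(\id_x,\xi)$.

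The plan is an upgraded form of \Cref{lemma:dimcollapserightfib}. Let $W$ be the labels $(\id_x,\xi)$, sitting in $\mb D(0,1)$. We claim that $\Gr\mb D[W^{-1}]\to\Gr\mb D^\mathrm{gp}$ is an equivalence. This suffices, because $\Gr \mb D[W^{-1}]$ corepresents maps $\mb D\to\Sq(\mc X)$ that invert $W$ on the horizontal fragment. Such maps are exactly functors $\Span^G(\mc C,E)\to\mc X$ inverting the spans $X=(X,\xi)=X$, since these spans are the image of $W$ under the horizontal inclusion.

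To prove the claim, form the pushout $\mb D'=\mb D\amalg_{\Hor(\mb D(0,-))}\Hor(N(\mc C^{G^\mathrm{gp}})^\mathrm{op})$. Its gluing is $\Gr\mb D[W^{-1}]$, because $\Gr\Hor(Z)\simeq\ascat Z$ and $\Gr$ preserves colimits. Directly verifying that $\mb D'$ is factorial is messy. Instead we show that $(-)_\mathrm{CS}$ of the map $\mb D\to\mb D^\mathrm{gp}$, restricted over the horizontal base, is a base change along a localization. Concretely, factoriality says each $\mb D(n,-)\to\mb D(0,-)$ is a left fibration, and the squares in $n$ are Cartesian over $\mb D(0,-)$ because the labels are pulled back, as in \Cref{lemma:equivcondforsegalforD}. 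Consequently $\mb D(n,-)\simeq\mb D(0,-)\times_{\mb D^\mathrm{gp}(0,-)}\mb D^\mathrm{gp}(n,-)$: the $n$-direction data is pure $E$-data together with pullbacks of a label, which is insensitive to group completion. Applying \Cref{thm:rightfibpresbylocal}, left fibrations over $\mb L_\mathrm{CS}$ correspond, and localizing the base then yields the pushout description of $\mb D^\mathrm{gp}_\mathrm{CS}$.

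The main obstacle is exactly this last step. Left fibrations over $(\mc C^G)^\mathrm{op}$ do not in general descend to the localization $(\mc C^{G^\mathrm{gp}})^\mathrm{op}$; they do so precisely when the straightened functor inverts $W$. So we must check that the left fibration $\mb D_\mathrm{CS}(n,-)\to(\mc C^G)^\mathrm{op}$ straightens to a functor sending each $(\id_x,\xi)$ to an equivalence. Its fiber over $x$ is the space of length-$n$ chains in $E$ ending at $x$, i.e.\ Cartesian pullback data. A label acts on this fiber by relabeling the pulled-back labels only, which is invertible as a map of spaces even though $\xi$ is not invertible in $G(x)$, so the fibration does descend. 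With descent established, the gluing of the resulting diagram of categories over $\Delta^\mathrm{op}$ commutes with localizing the base, and this gives the claim.
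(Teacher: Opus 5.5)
Your reduction (it suffices to show $\Gr\mb D[W^{-1}]\to\Gr\mb D^{\mathrm{gp}}$ is an equivalence) and your base-compatibility observation are both correct; the latter is what \Cref{prop:equivofbaseimpbasecompat} and \Cref{prop:CSpreservesnicestuff} give. But the step you call the main obstacle is not one. Once $\mb D_{\mathrm{CS}}(n,-)\simeq\mb D_{\mathrm{CS}}(0,-)\times_{\mb D^{\mathrm{gp}}_{\mathrm{CS}}(0,-)}\mb D^{\mathrm{gp}}_{\mathrm{CS}}(n,-)$, the left fibration over $(\mc C^G)^{\mathrm{op}}$ is already pulled back from $(\mc C^{G^{\mathrm{gp}}})^{\mathrm{op}}$, so descent is automatic. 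Also, the fiber over $x$ consists of chains in $E$ carrying no labels, so $(\id_x,\xi)$ acts as the identity rather than by ``relabeling''.

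Meanwhile, the step that actually carries the proof, that gluing commutes with localizing the base, is only asserted, and it is not formal. Levelwise, your pushout $\mb D'$ inverts only the degenerate copies $s_n(W)$ in $\ascat\mb D(n,-)$. By contrast, $\ascat\mb D_{\mathrm{CS}}(n,-)\to\ascat\mb D^{\mathrm{gp}}_{\mathrm{CS}}(n,-)$ inverts every morphism lying over $W$, including label-endomorphisms of a non-identity $e\in E$, and these are not in the saturation of $s_n(W)$. For example, take $\mc C=[1]$, $E=\mc C$, $G$ constant at $\mb N$ and $n=1$: the endomorphism of the object $0\to 1$ with label $1$ stays non-invertible in $\ascat\mb D(1,-)\amalg_{(\mc C^G)^{\mathrm{op}}}(\mc C^{G^{\mathrm{gp}}})^{\mathrm{op}}$. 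So $\mb D'_{\mathrm{CS}}\not\simeq\mb D^{\mathrm{gp}}_{\mathrm{CS}}$, and the comparison can only hold after gluing, which needs an argument.

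That argument is what the paper supplies. The base-compatibility squares have left-fibration legs, so \Cref{lemma:fibrationsandlocalization} shows each $\ascat\mb D_{\mathrm{CS}}(n,-)\to\ascat\mb D^{\mathrm{gp}}_{\mathrm{CS}}(n,-)$ is a localization at all morphisms lying over $W$. Hence $\mb D_{\mathrm{CS}}\to\mb D^{\mathrm{gp}}_{\mathrm{CS}}$ is a levelwise epimorphism in $\Fun(\Delta^{\mathrm{op}},\Cat)$. Therefore restriction $\map(\mb D^{\mathrm{gp}}_{\mathrm{CS}},\Sq(\mc X))\to\map(\mb D_{\mathrm{CS}},\Sq(\mc X))$ is a monomorphism onto the maps whose level-$n$ components invert those morphisms. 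One then checks that this condition is detected at level $0$. An $(n,1)$-simplex lying over $(\id_x,\xi)$ has each row of the form $(\id,\bar\xi)$, with $\bar\xi$ a pulled-back label, so each row lies in $W$. Its image in $\Fun([n],\mc X)$ is therefore a pointwise equivalence. Only after these steps does the functor corepresented by $\Gr\mb D^{\mathrm{gp}}$ match that of $\Gr\mb D[W^{-1}]$, i.e.\ functors $\Span^G(\mc C,E)\to\mc X$ inverting the spans $X=(X,\xi)=X$.
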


For this, we recall a lemma.

\begin{lemma}\label{lemma:fibrationsandlocalization}
    Let
    \begin{equation*}
        \begin{tikzcd}
        	{\mc D} & {\mc D'} \\
        	{\mc C} & {\mc C[W^{-1}]}
        	\arrow[from=1-1, to=1-2]
        	\arrow["p"', from=1-1, to=2-1]
        	\arrow["\lrcorner"{anchor=center, pos=0.125}, draw=none, from=1-1, to=2-2]
        	\arrow[from=1-2, to=2-2]
        	\arrow[from=2-1, to=2-2]
        \end{tikzcd}
    \end{equation*}
    be a Cartesian square of categories where both vertical morphisms are left fibrations and $\mc C\to\mc C[W^{-1}]$ is a localization at a class $W$ of morphisms. Then $\mc D\to\mc D'$ is a localization of $\mc D$ at the collection of morphisms $p^{-1}(W)$.
\end{lemma}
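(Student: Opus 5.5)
The idea is to reduce the claim to the universal property of localizations, phrased fiberwise over $\mc C$. The first thing to settle is what universal property a localization $\mc D\to\mc D'$ at $p^{-1}(W)$ must have. Concretely, for every category $\mc E$, restriction should give an equivalence between $\Fun(\mc D',\mc E)$ and the full subcategory of $\Fun(\mc D,\mc E)$ on functors inverting $p^{-1}(W)$.

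Since $q:\mc D'\to\mc C[W^{-1}]$ is a left fibration, I will straighten it to a functor $F:\mc C[W^{-1}]\to\Ani$. Then $\mc D'\simeq\int F$, and since the square is Cartesian, $\mc D\simeq\int (F\circ\gamma)$, where $\gamma:\mc C\to\mc C[W^{-1}]$ is the localization. It is convenient to write unstraightening as a colimit, for instance via the coend formula (the covariant analogue of \Cref{thm:gepcoendunstraightening}):
\begin{equation*}
    \int F\simeq \int^{c} F(c)\times \mc C[W^{-1}]_{c/}^{\mathrm{op}}
\end{equation*}
or equivalently $\mc D'\simeq\colim_{(c,x)\in\int F}\ast$ in a suitable sense. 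A cleaner alternative is the standard fact (Kerodon, the same tag cited in \Cref{prop:locofCG}) that localizations are stable under pullback along cocartesian fibrations. I would cite that directly if allowed.

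To prove it by hand, I would use the description of a left fibration over $\mc C[W^{-1}]$ as a colimit of representables: $\mc D'\simeq \colim_{(c,x)\in \int F}\,\mc C[W^{-1}]_{c/}$ in $\Cat_{/\mc C[W^{-1}]}$. Pullback along $\gamma$ commutes with colimits in $\Cat_{/-}$ for left fibrations, since these are exponentiable and pullback preserves colimits. This reduces the problem to the case of corepresentable left fibrations $\mc C[W^{-1}]_{c/}$, and then, since $\gamma$ is essentially surjective, to $c=\gamma(c_0)$.

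The next step is to show that $\mc C\times_{\mc C[W^{-1}]}\mc C[W^{-1}]_{\gamma c_0/}\to\mc C[W^{-1}]_{\gamma c_0/}$ is a localization at the preimage of $W$. The target is the localization of $\mc C_{c_0/}$ at $W$, computing colimits via the left fibration. Both sides have weakly contractible, initial-object-based descriptions, and I expect to verify this by a cofinality argument. Finally, localizations are closed under colimits in $\Cat$, with the marked classes being the images. Since $p^{-1}(W)$ is the union of the pulled-back classes, the result follows.

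The main obstacle is the representable case, specifically identifying the comma category with a localization. That is exactly where the Kerodon result does the real work, so I would first try to invoke it verbatim. I would fall back on the marked-simplicial-set argument (cocartesian fibrations over marked localizations are marked equivalences) only if that invocation fails.
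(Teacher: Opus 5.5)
Your primary route matches the paper's proof, which is just a citation of Kerodon Tag 038U: pulling back a localization $\mc C\to\mc C[W^{-1}]$ along a cocartesian fibration gives a localization at the cocartesian morphisms lying over $W$. To match the statement exactly, add one observation. Every morphism of a left fibration is cocartesian, so the inverted class is all of $p^{-1}(W)$.

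Do not rely on the by-hand reduction as written, though. Decomposing $\mc D'$ into corepresentable left fibrations only reduces the lemma to its own special case. The claim you offer for that case, that ``the target is the localization of $\mc C_{c_0/}$ at $W$'', is false.

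Here is a counterexample. Let $\mc C$ have two parallel arrows $f,g\colon x\to y$ and take $W=\{f\}$, so $\mc C[W^{-1}]\simeq B\mathbb{N}$.
\begin{itemize}
\item $\mc C_{x/}$ is the span $f\leftarrow \id_x\to g$, and inverting the leg over $f$ gives $[1]$.
\item On the other hand, $(B\mathbb{N})_{\ast/}\simeq(\mathbb{N},\le)$, which is not equivalent to $[1]$.
\end{itemize}
The pullback $\mc C\times_{B\mathbb N}(B\mathbb N)_{\ast/}$ does localize to $(\mathbb N,\le)$, as the lemma predicts. However, it is much larger than $\mc C_{x/}$ and has no initial object: $(x,0)$ admits no map to $(x,1)$. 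In any case, weak contractibility or cofinality cannot certify that a functor is a localization.

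There are also some smaller slips.
\begin{itemize}
\item The covariant coend formula is $\int^c F(c)\times\mc C[W^{-1}]_{c/}$, with no $\mathrm{op}$.
\item The co-Yoneda colimit of corepresentables is indexed by $(\int F)^{\mathrm{op}}$, not $\int F$.
\item Pullback along $\gamma$ does commute with these colimits, but not for the reason you give. On left fibrations it is precomposition $\Fun(\mc C[W^{-1}],\Ani)\to\Fun(\mc C,\Ani)$, and unstraightening into $\Cat_{/B}$ preserves colimits. Exponentiability of left fibrations is not the relevant property, and the map $\gamma$ itself is generally not exponentiable.
\end{itemize}
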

\begin{proof}
    This follows from \cite[\href{https://kerodon.net/tag/038U}{Tag 038U}]{kerodon}.
\end{proof}

\begin{proof}[Proof of \Cref{prop:localizationoflabeledspan}]
    Let $Y = \Pull^{G}(\mc C, E)^{2\mathrm{-op}}$ and $X = \Pull^{G^\mathrm{gp}}(\mc C, E)^{2\mathrm{-op}}$. Then we have a natural map $Y\to X$ of bisimplicial spaces which is an equivalence on vertical fragments, both yielding $N(E)$. By \Cref{prop:equivofbaseimpbasecompat}, it follows that $Y\to X$ is base compatible. Hence, by \Cref{prop:CSpreservesnicestuff}, we have that $Y_\mathrm{CS}\to X_\mathrm{CS}$ is a base compatible map of factorial bisimplicial spaces.\par
    This yields a pullback square
    \begin{equation*}
        \begin{tikzcd}[column sep=40pt]
        	{\ascat Y_{\mathrm{CS}}(n,-)} & {\ascat X_{\mathrm{CS}}(n,-)} \\
        	{(\mc C^G)^\mathrm{op} \simeq \ascat Y_\mathrm{CS}(0,-)} & {\ascat X_\mathrm{CS}(0,-)\simeq (\mc C^{G^\mathrm{gp}})^\mathrm{op}}
        	\arrow[from=1-1, to=1-2]
        	\arrow[from=1-1, to=2-1]
        	\arrow["\lrcorner"{anchor=center, pos=0.125, rotate=0}, draw=none, from=1-1, to=2-2]
        	\arrow[from=1-2, to=2-2]
        	\arrow[from=2-1, to=2-2]
        \end{tikzcd}
    \end{equation*}
    of categories for all $n\ge 0$ where the vertical morphisms are left fibrations. Note that we have also applied \Cref{lemma:ascatofD0} to identify $\ascat Y_\mathrm{CS}(0,-)$ and $\ascat X_\mathrm{CS}(0,-)$. By \Cref{prop:locofCG} $\mc C^G\to \mc C^{G^\mathrm{gp}}$ is a localization at the collection of morphisms of the form $(\id_x, \xi)$ for $x\in\mc C$, $\xi\in G(x)$. We deduce from \Cref{lemma:fibrationsandlocalization} that
    \begin{equation*}
        \ascat Y_\mathrm{CS}(n, -)\longrightarrow \ascat X_\mathrm{CS}(n, -)
    \end{equation*}
    is a localization for every $n$.\par
    At this point, we recall some basic facts about epimorphisms. For a category $\mc D$, we say that a morphism $f : x\to y$ is an epimorphism if for every $z\in \mc D$, the induced map $\map_{\mc D}(y, z)\to \map_{\mc D}(x, z)$ is a monomorphism of spaces, i.e.\ an inclusion of connected components. Since a map of spaces is a monomorphism if and only if its diagonal is an equivalence, if $\mc D$ has coproducts then $f$ is an epimorphism precisely when the codiagonal of $f$ is an equivalence. By this codiagonal criterion, if $\mc D$ has coproducts we observe that epimorphisms in $\Fun(K, \mc D)$ are precisely the pointwise epimorphisms for any category $K$. From this, we learn that $Y_\mathrm{CS}\to X_\mathrm{CS}$ is an epimorphism in $\Fun(\Delta^\mathrm{op}, \Cat)$.\par
    Consider now the induced map
    \begin{equation*}
        \Span^G(\mc C, E)\simeq \Gr Y\simeq \Gr Y_\mathrm{CS}\longrightarrow \Gr X_\mathrm{CS}\simeq \Gr X\simeq \Span^{G^\mathrm{gp}}(\mc C, E).
    \end{equation*}
    For any category $\mc D$, we have a commutative diagram
    \begin{equation}\label{eq:commsquareloc}
        \begin{tikzcd}
        	{\map_{\Cat}(\Span^{G^\mathrm{gp}}(\mc C, E),\mc D)} & {\map_{\Cat}(\Span^G(\mc C, E), \mc D)} \\
        	{\map_{\Fun(\Delta^\mathrm{op}, \Cat)}(X_\mathrm{CS}, \Sq(\mc D))} & {\map_{\Fun(\Delta^\mathrm{op}, \Cat)}(Y_\mathrm{CS}, \Sq(\mc D))}.
        	\arrow[from=1-1, to=1-2]
        	\arrow["\simeq", from=1-1, to=2-1]
        	\arrow["\simeq"', from=1-2, to=2-2]
        	\arrow[from=2-1, to=2-2]
        \end{tikzcd}
    \end{equation}
    By the above, the bottom morphism is a monomorphism of spaces and hence so is the top morphism. Moreover, by taking degenerate diagrams we have a coCartesian section
    \begin{equation*}
        \ascat Y_\mathrm{CS}(0,-)\longrightarrow\ascat Y_\mathrm{CS}(n, -)
    \end{equation*}
    which surjects the morphisms inverted in the localization $\ascat Y_\mathrm{CS}(n, -)\longrightarrow \ascat X_\mathrm{CS}(n, -)$. It follows that a morphism $F : Y_\mathrm{CS}\to \Sq(\mc D)$ belongs to the essential image of
    \begin{equation*}
        \map_{\Fun(\Delta^\mathrm{op}, \Cat)}(X_\mathrm{CS}, \Sq(\mc D)) \longrightarrow \map_{\Fun(\Delta^\mathrm{op}, \Cat)}(Y_\mathrm{CS}, \Sq(\mc D))
    \end{equation*}
    if and only if $F_1 : \ascat Y_\mathrm{CS}(0,-)\to \mc D$ belongs to the essential image of
    \begin{equation*}
        \map_{\Cat}((\mc C^{G^\mathrm{gp}})^\mathrm{op}, \mc D)\simeq \map_{\Cat}(\ascat X_\mathrm{CS}(0,-), \mc D)\longrightarrow \map_{\Cat}(\ascat Y_\mathrm{CS}(0, -), \mc D)\simeq \map_{\Cat}((\mc C^{G})^\mathrm{op}, \mc D).
    \end{equation*}
    Translating this condition under the equivalences of \eqref{eq:commsquareloc}, we see that the top morphism surjects precisely those components consisting of $F : \Span^G(\mc C, E)\to\mc D$ which send each span of the form $X = (X,\xi) = X$ to an equivalence.\par
    Since the map $\Span^G(\mc C, E)\to \Span^{G^\mathrm{gp}}(\mc C, E)$ sends this collection $W$ of arrows to equivalences, we get an induced map $\Span^G(\mc C, E)[W^{-1}]\to \Span^{G^\mathrm{gp}}(\mc C, E)$ which gives an equivalence of corepresenting functors, as required.
\end{proof}

As a corollary, we may extend Thom twists to all virtual vector bundles when restricting attention to cohomologically smooth morphisms.

\begin{corollary}\label{cor:invThomtwistext}
    Let $\mc D : \Span(\mc C, E)^\otimes\to \Cat$ be a $3$-functor formalism and $S$ the class of cohomologically smooth morphisms. Then there exists a canonical extension of $\mc D$ along $\iota : \Span(\mc C, E)\to \Span^{K}(\mc C, E)$ to a functor
    \begin{equation*}
        \mc D^\mathrm{Th} : \Span^{K}(\mc C, E)\longrightarrow \Cat
    \end{equation*}
    where $K$ is the functor
    \begin{equation*}
        \begin{tikzcd}[row sep=0pt]
            \mc C^\mathrm{op}\arrow{r} & \CAlg(\Ani) \\
            x\arrow[mapsto]{r} & K(\Vect_{(\mc C, S)}(x)).
        \end{tikzcd}
    \end{equation*}
    Moreover, there is a canonical identification of the exceptional pullback
    \begin{equation*}
        (-)^! : S^\mathrm{op}\longrightarrow \Cat
    \end{equation*}
    with
    \begin{equation*}
        \begin{tikzcd}
            S^\mathrm{op} \arrow{r}{(-)_\natural^\mathrm{op}} & (\mc C^K)^\mathrm{op}\arrow{r} & \Span^{K}(\mc C, E)\arrow{r}{\mc D^\mathrm{Th}} & \Cat.
        \end{tikzcd}
    \end{equation*}
\end{corollary}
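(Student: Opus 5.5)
The plan is to combine \Cref{thm:extforweakcohom} with \Cref{prop:localizationoflabeledspan}. Write $S'$ for the class of cohomologically smooth morphisms; it is contained in the class of weakly cohomologically smooth morphisms, and it is closed under pullback by the remark following \Cref{def:weakcohomsmooth}. I would first check that the proof of \Cref{thm:extforweakcohom} goes through verbatim with $S'$ in place of the full weakly smooth class. The only input used there was \cite[Lemma 4.5.13]{heyer20246}, which applies to every member of $S'$. Running that argument gives an extension
\begin{equation*}
    \mc D^\mathrm{Th}_{\mc K} : \Span^{\mc K}(\mc C, E)\longrightarrow \Cat,
\end{equation*}
with $\mc K(x) = \mc K(\Vect_{(\mc C, S')}(x))$. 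It comes with the identification of $(-)^!$ on $S'$ with the composite through $(-)_\natural^\mathrm{op}$.

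Next I would apply \Cref{prop:localizationoflabeledspan} with $G = \mc K$. By \Cref{prop:partialKthygroupcmpl}, $G^\mathrm{gp} = K$, so $\Span^{\mc K}(\mc C, E)\to \Span^K(\mc C, E)$ is the localization at the spans $X = (X,\xi) = X$. The existence and canonicity of the factorization $\mc D^\mathrm{Th}$ therefore reduce to one check: $\mc D^\mathrm{Th}_{\mc K}$ must send each such span to an equivalence. The factorization is then unique, because localization induces a monomorphism on mapping spaces with the stated image.

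That check is the main step. Following the argument of \Cref{cor:exttovirtvbexch}, $\pi_0\mc K(\Vect(x))$ is generated as a monoid by classes of actual vector bundles \cite[Proposition 4.10]{yuan2023integral}, and composites of equivalences are equivalences. It therefore suffices to treat $\xi = [x\xrightarrow{s} e\xrightarrow{p} x]$ with $p\in S'$. By property (iv) of \Cref{thm:univexchplusbasechange}, visible in the construction through \Cref{thm:mainthmintext}(i)(a) and \Cref{thm:trisimpgluing}(ii), this span acts as $\Sigma^\xi = s^\ast p^!$. Poincar\'e duality for $p$ (\Cref{def:weakcohomsmooth}(i)) gives $p^!\simeq p^!(1_x)\otimes p^\ast(-)$. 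Since $s^\ast$ is symmetric monoidal and $s^\ast p^\ast\simeq \id$, we get $s^\ast p^!\simeq s^\ast p^!(1_x)\otimes(-)$.

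Here $p^!(1_x)$ is $\otimes$-invertible because $p\in S'$, and $s^\ast$ preserves invertible objects, so $\Sigma^\xi$ is tensoring with an invertible object and hence an equivalence. The delicate part is identifying the image of the span with $s^\ast p^!$, which requires tracing the right-adjoint passage in \Cref{thm:extforweakcohom}. Once that is done, the identification of $(-)^!$ on $S'$ transports along the localization, using that $(\mc C^{\mc K})^\mathrm{op}\to\Span^{\mc K}(\mc C,E)$ is compatible with $(\mc C^{K})^\mathrm{op}\to \Span^K(\mc C,E)$ by naturality in $\mathrm{LabPair}$.
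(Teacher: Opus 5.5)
Your proposal is correct and follows essentially the same route as the paper: localize via \Cref{prop:localizationoflabeledspan} and \Cref{prop:partialKthygroupcmpl}, reduce to actual vector bundles as in \Cref{cor:exttovirtvbexch}, then show that $\Sigma^\xi\simeq s^\ast p^!\simeq s^\ast p^!(1_x)\otimes(-)$ is invertible. The only difference is how you get the $\mc K(\Vect_{(\mc C,S)}(-))$-labeled extension: the paper restricts the weakly-smooth extension of \Cref{thm:extforweakcohom} along the induced map of labeled span categories, whereas you rerun that construction with the smaller class, and either works.
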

\begin{proof}
    Take the extension $\mc D^{\mathrm{Th}'}$ of $\mc D$ coming from \Cref{thm:extforweakcohom} and restrict it to $\Span^{\mc K}(\mc C, E)$ where $\mc K$ is the functor
    \begin{equation*}
        \begin{tikzcd}[row sep=0pt]
            \mc C^\mathrm{op}\arrow{r} & \CAlg(\Ani) \\
            x\arrow[mapsto]{r} & \mc K(\Vect_{(\mc C, S)}(x)).
        \end{tikzcd}
    \end{equation*}
    in the obvious manner. By \Cref{prop:localizationoflabeledspan} and \Cref{prop:partialKthygroupcmpl}, our claimed extension follows by showing that each Thom twist is invertible.\par
    By the same argument as in \Cref{cor:exttovirtvbexch}, it will suffice to show that for each vector bundle
    \begin{equation*}
        \begin{tikzcd}
            \xi = x\arrow{r}{s} & e\arrow[two heads]{r}{p} & x,
        \end{tikzcd}
    \end{equation*}
    the associated Thom twist is invertible. But we have that
    \begin{equation*}
        \begin{aligned}
            \Sigma^{\xi} &\simeq s^\ast p^! \\
            &\simeq s^\ast(p^!(1_x)\otimes p^\ast) \\
            &\simeq s^\ast p^!(1_x)\otimes (-).
        \end{aligned}
    \end{equation*}
    Since $p$ is cohomologically smooth, $p^!(1_x)$ is $\otimes$-invertible and hence so is $s^\ast p^!(1_x)$, as required.
\end{proof}

\bibliographystyle{alpha}
\bibliography{references}

\end{document}